\newtheorem{theorem}{Theorem}
\newtheorem*{theorem*}{Theorem}
\newtheorem{proposition}[theorem]{Proposition}
\newtheorem*{proposition*}{Proposition}
\newtheorem{corollary}[theorem]{Corollary}
\newtheorem*{corollary*}{Corollary}
\newtheorem{lemma}[theorem]{Lemma}
\newtheorem*{lemma*}{Lemma}
\theoremstyle{definition}
\newtheorem{definition}[theorem]{Definition}
\newtheorem*{definition*}{Definition}
\newtheorem*{question*}{Question}
\newtheorem{remark}[theorem]{Remark}
\newtheorem*{remark*}{Remark}
\renewcommand{\cal}[1]{\mathcal{#1}}
\newcommand{\C}{\mathbb{C}}
\newcommand{\Z}{\mathbb{Z}}
\newcommand{\on}[1]{\operatorname{#1}}
\newcommand{\term}[1]{\textsf{#1}}
\newcommand{\iV}{\mathrm{V}}
\newcommand{\iI}{\mathrm{I}}
\newcommand{\iA}{\mathrm{A}}
\newcommand{\iB}{\mathrm{B}}
\newcommand{\iC}{\mathrm{C}}
\newcommand{\hT}{\mathrm{T}}
\newcommand{\iD}{\mathrm{D}}
\newcommand{\image}[3]{
\bigskip
\noindent
\begin{minipage}{\linewidth}
\centering%
{#3}%
\\
\captionsetup{type=figure}\captionof{figure}{#1}%
\ifx&#2&%
\else
\label{#2}
\fi
\end{minipage}
\par\medskip
}
\newcommand{\latin}[1]{\emph{#1}}
\title{Observations on the hex clusters of the Spectre tilings}
\date{}
\author{Arnaud Chéritat\thanks{CNRS researcher, Institut de Mathématiques de Toulouse, UMR 5219, Université de Toulouse}}
\begin{document}

\maketitle

\abstract{Decorating the Spectre tile with hexagons reveals triangular hexagonal clusters whose structure we study. In the process we reprove that the Spectre tilings exist and are uniquely hierarchical. The proof is not computer-assisted.}

\tableofcontents

\section{Introduction}\label{sec:intro}

This article focuses on the second aperiodic monotile discovered by Smith \latin{et alii}, the chiral one called Tile(1,1) a.k.a.\ the Spectre\footnote{We deviate slightly from the terminology of \cite{chiral}, see \Cref{note:2}.} and announced in \cite{chiral}.

\medskip

The tile has a shape that is (bounded by) a non-convex 13-gon, which, as the authors explain, is better viewed as a 14-gon whose edges all have the same length but which has one flat angle.

\nopagebreak

\image{The Spectre. The 7 non-right or flat angles measure 120\textdegree.}{}{
\includegraphics[scale=1.5]{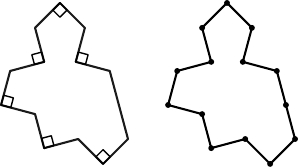}
}

It was proved in \cite{chiral} that it can tile the plane without reflections, but only aperiodically. The key ingredient is that tilings with this shape are \emph{uniquely hierarchical}. We will not reprove this classical implication but among other things we give an independent proof of the unique hierarchical nature of tilings with the Spectre.

\medskip

Here is a sample of a whole plane tiling:

\nopagebreak

\image{}{}{
\includegraphics[width=12cm]{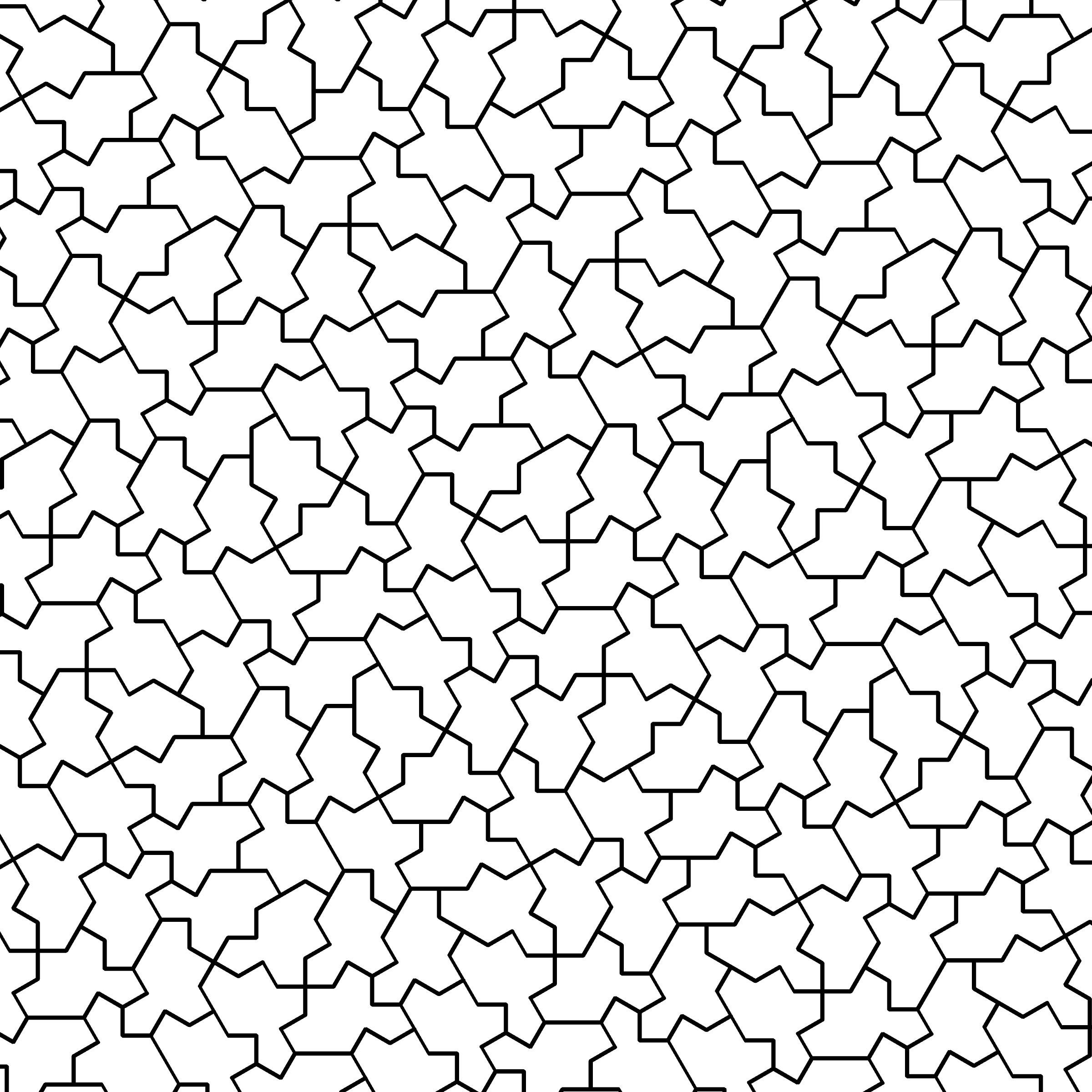}
}

It was obtained by applying the construction algorithm explained in \cite{chiral}, Figure~A1, page~21, to construct a finite patch that is ensured to be extendable into a whole plane tiling.

In such tessellations, we do not allow ourselves to reflect the tile.\footnote{In \cite{chiral} the authors explain how to modify the tile outline so as to prevent reflected tiles to assemble with unreflected tiles. The result is actually what they call a Spectre. Here, since mixing the tile with its reflection is never considered, we also call Spectre the undeformed Tile(1,1).\label{note:2}}
In \cite{chiral}, it was proved that all tiles appear in 12 possible orientations differing of a turn$/12$, with a majority of them differing only by a turn$/6$. They are called the \term{even} tiles and the other ones are called the \term{odd} tiles, which is coherent with their bearing w.r.t.\ a reference even tile being written as $k\times 30\degree$, with $k$ even or odd.
The asymptotic proportion of odd tiles is known (see \cite{chiral}) and is between $1/8$ and $1/9$.

\medskip

The construction can be pushed to get a tiling as big as one wants, for instance:

\nopagebreak

\image{}{}{
\includegraphics[width=13cm]{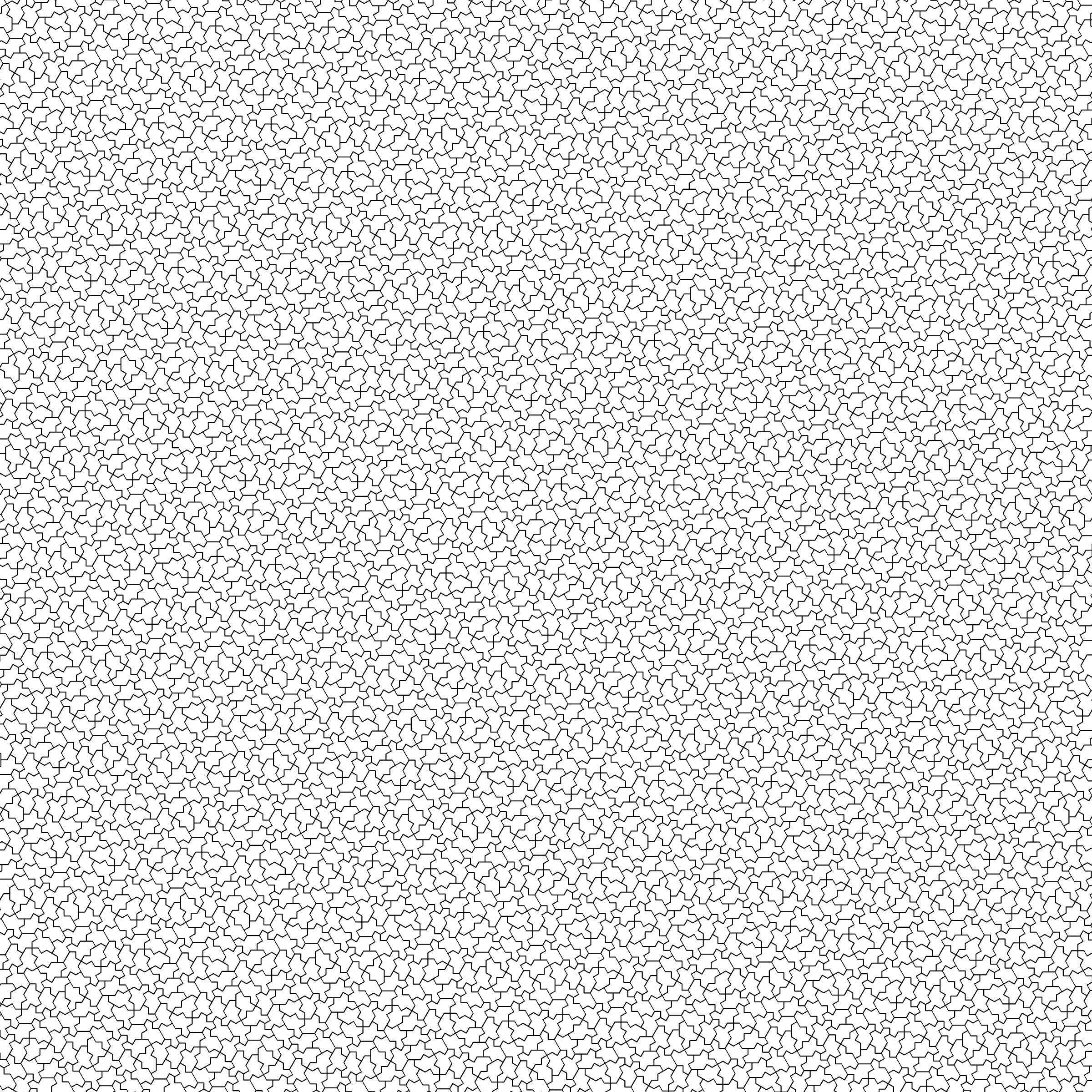}
}

Without further markings, it is hard to distinguish any structure. In \cite{chiral} the authors investigate this structure to prove that the shape can tile the whole Euclidean plane, but also that no whole plane tiling can have a translation period.

\medskip

Yoshiaki Araki\footnote{Inspired by a remark Craig Kaplan made on another decoration of the tile by Dale Walton.} realized one could decorate Tile(1,1) as follows:

\nopagebreak

\image{}{}{
\includegraphics[scale =1]{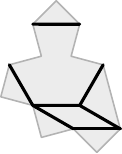}
}

so as to reveal a beautiful subjacent tiling into hexagons, squares and rhombs:

\nopagebreak

\image{}{}{
\includegraphics[width=13cm]{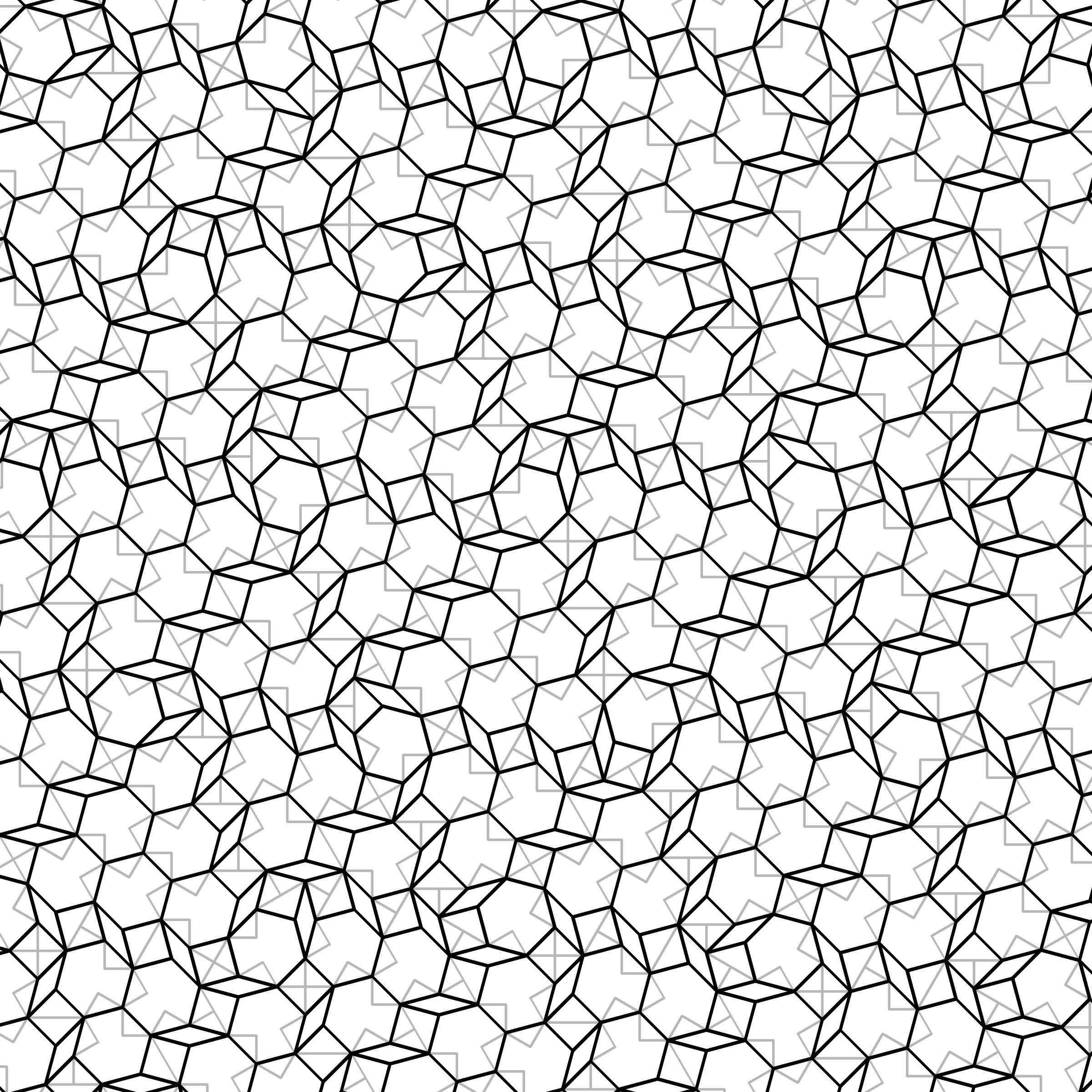}
}

Here is the same picture but showing the hexagons only, using yellow for odd tiles and blue for even ones:

\nopagebreak

\image{}{}{
\includegraphics[width=12cm]{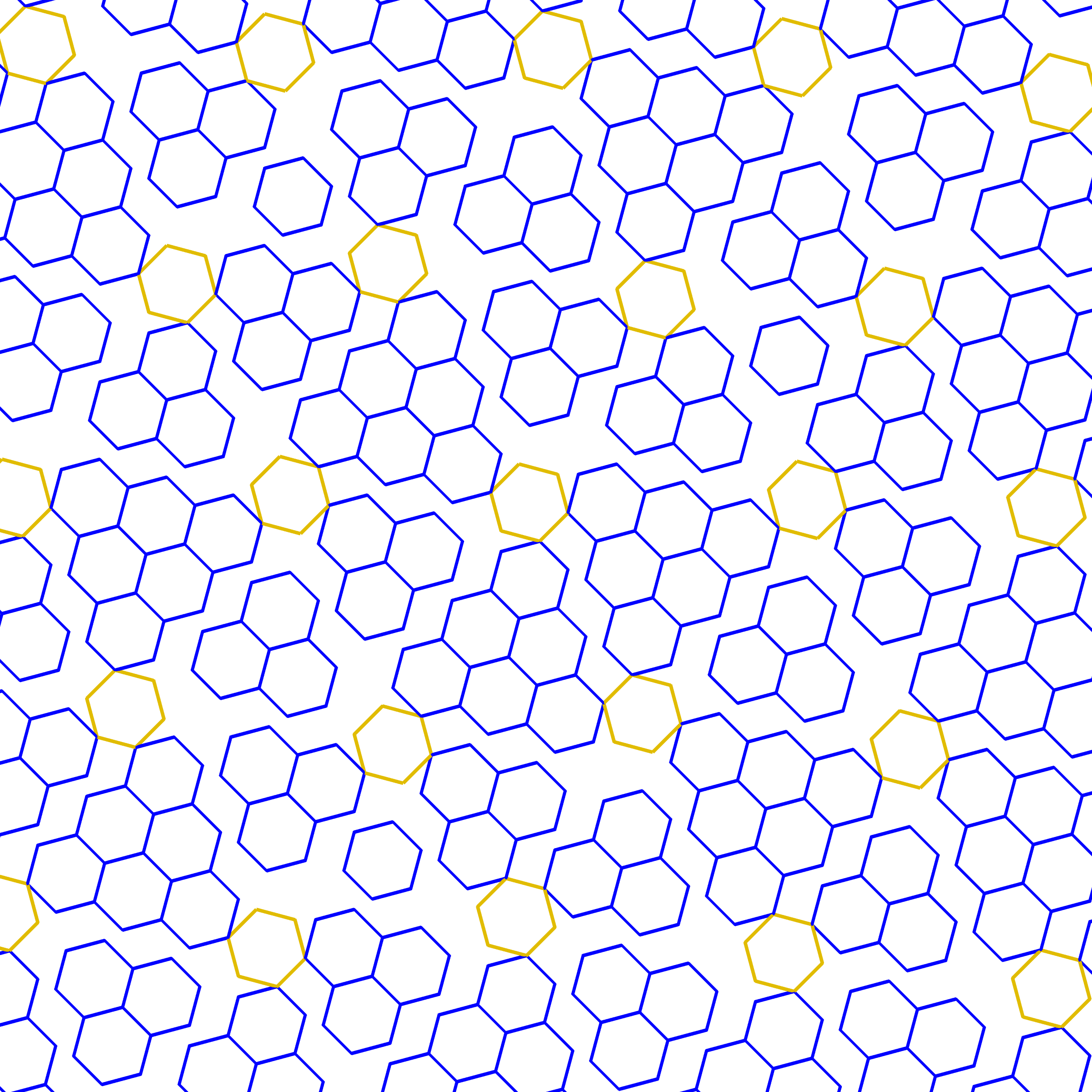}
}

In the sequel we will often abbreviate hexagon as \term{hex} and hexagons as hexes.
An interesting blue/yellow colouring of the whole hex/rhomb/squares graph is possible using the left decoration for even tiles and the right one for odd tiles:

\nopagebreak

\image{}{}{
\includegraphics[scale=1]{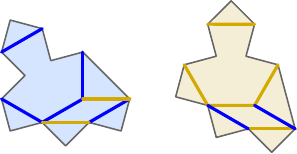}
}

Varying the shade of blue of Spectres to distinguish better their boundaries, the decorated Spectre tiling looks as follows:

\nopagebreak

\image{}{fig:superpo}{
\includegraphics[width=13cm]{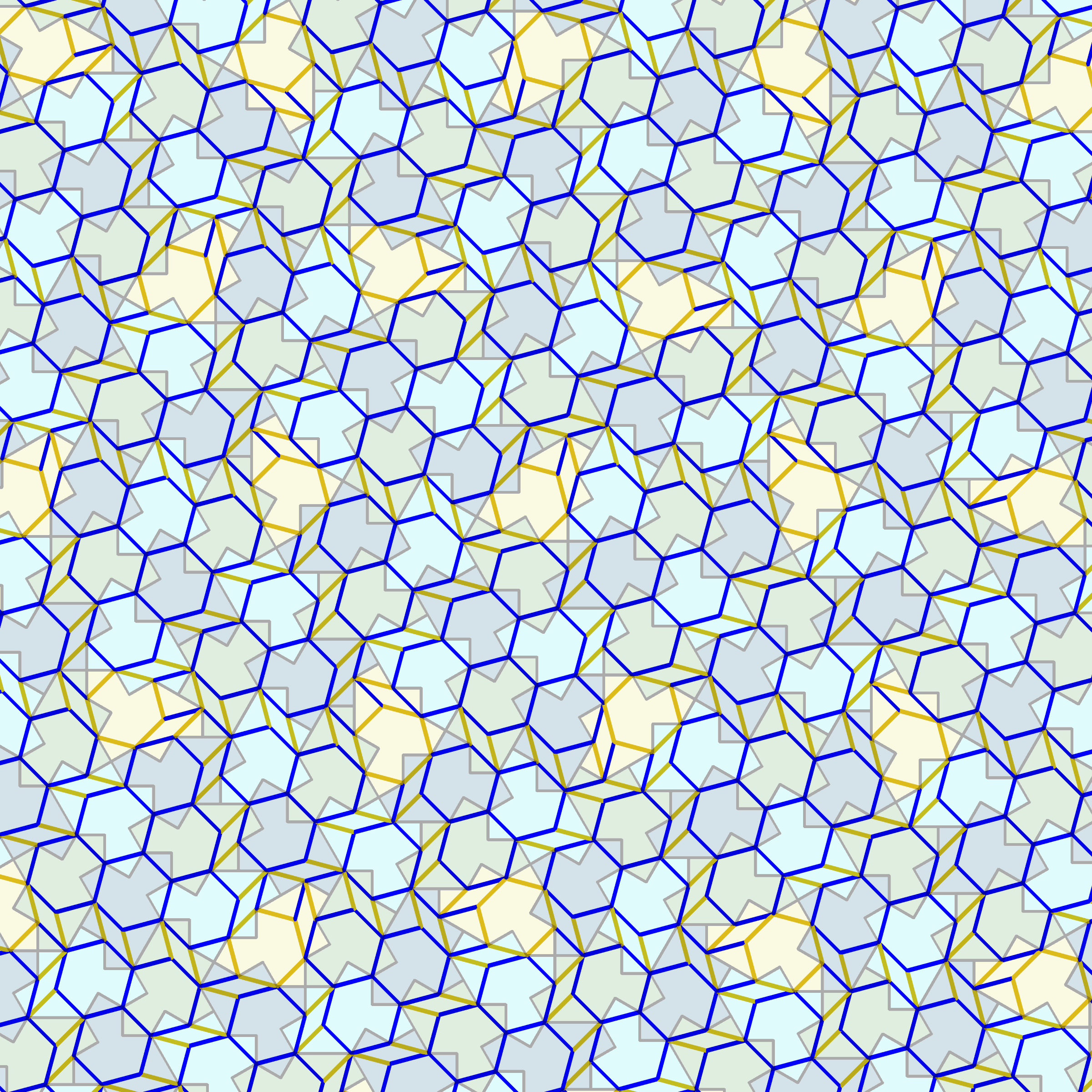}
}

Let us show the decorations only on a different and bigger patch:

\nopagebreak

\image{}{}{
\includegraphics[width=13cm]{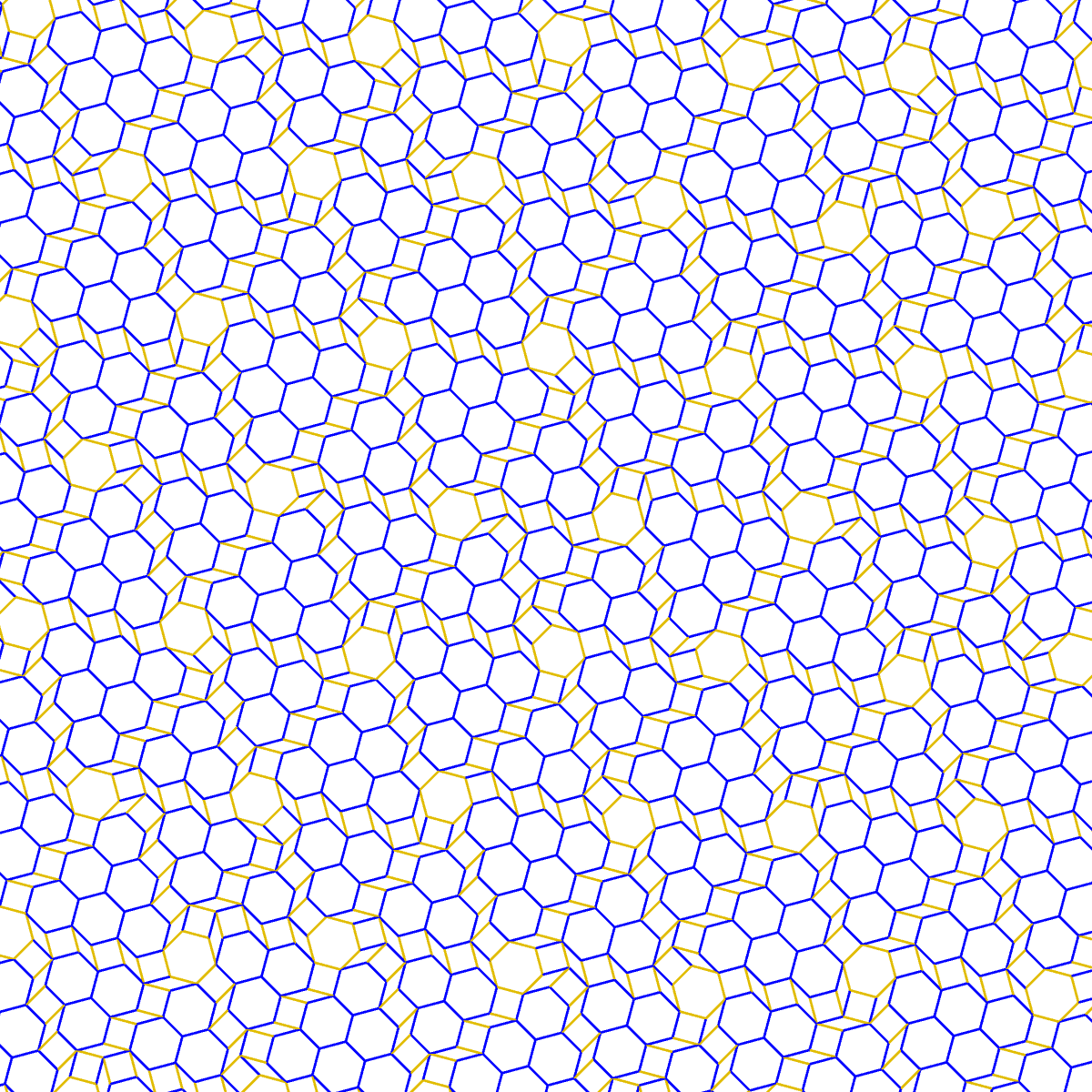}
}

We observe that the blue graph seems to have only bounded connected components made of hexagons that are isolated or arranged in triangles, with or without antennae.

\begin{definition*}
We will call \term{cc} the connected components of the blue graph.
\end{definition*}

Each cc seems to be in contact with three yellow hexagons and each yellow hexagon seems to be in contact with 6 blue components, one at each of its corners.
It is as if the blue cc's correspond to the triangles in a regular triangular tessellation of the plane which would have been deformed (such a correspondence is qualified as \term{combinatorial}), and the yellow hexagons to the vertices of such a tessellation.

The blue and yellow segments all have the same length.
If we multiply the length of the yellow segments by a common factor to make them bigger we get:

\nopagebreak

\image{}{fig:yb-2}{
\includegraphics[width=13cm]{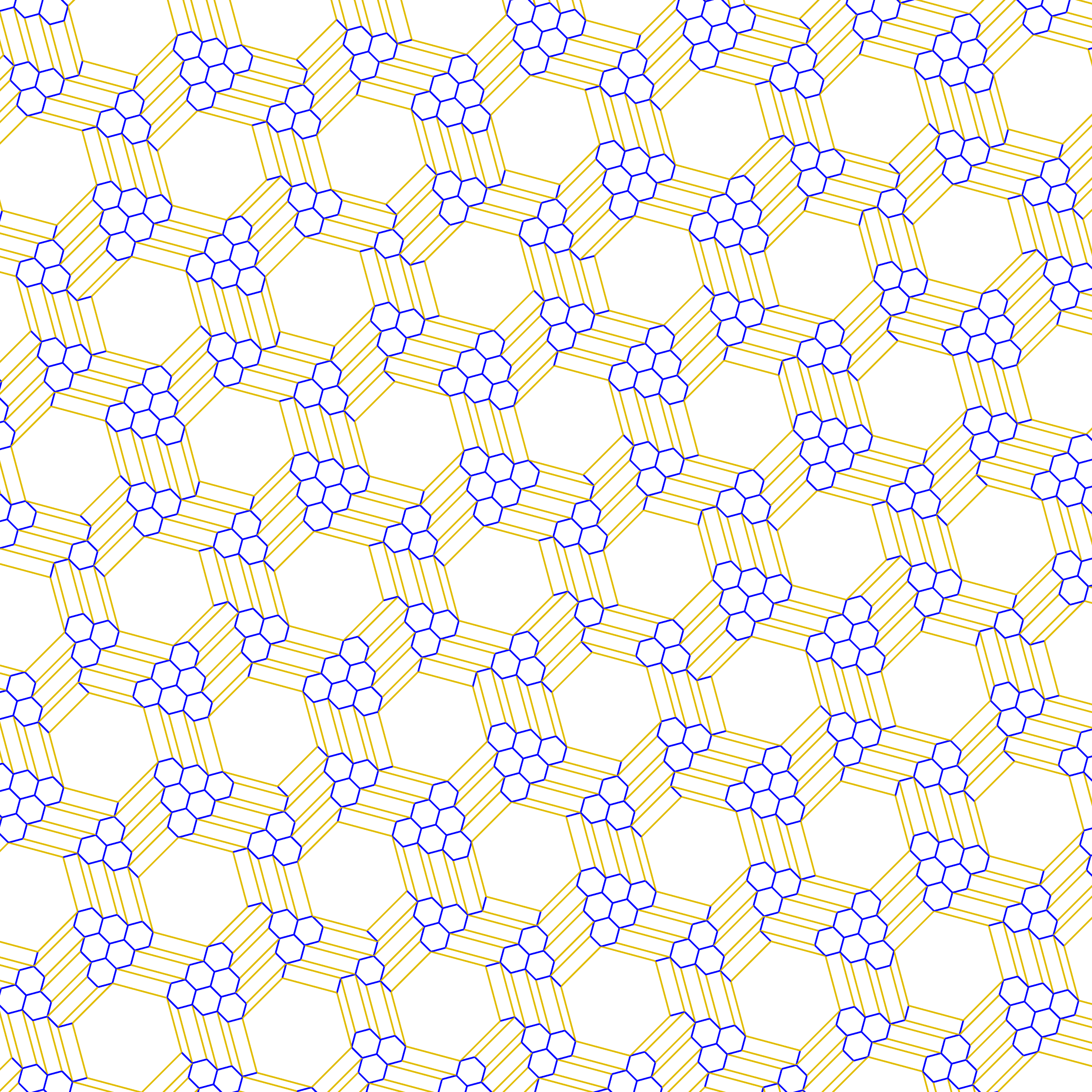}
}

In a way dual to the previous, the yellow hexagons would correspond to a regular hexagonal tessellation of the plane and the blue cc's to its vertices.

\image{Four ways to represent duality between blue triangles and yellow hexes.}{fig:dual}{
\includegraphics[height=7cm]{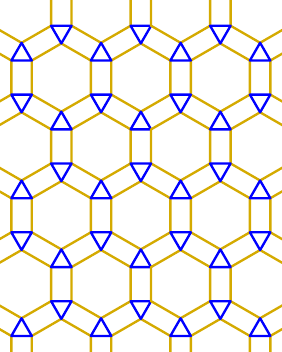}
\includegraphics[height=7cm]{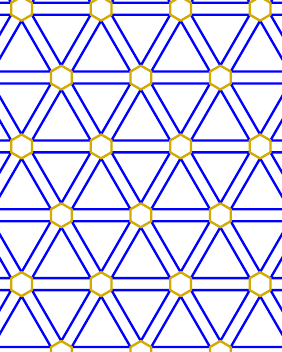}

\includegraphics[height=7cm]{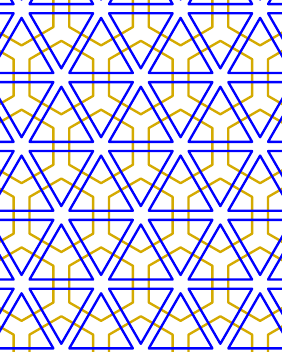}
\includegraphics[height=7cm]{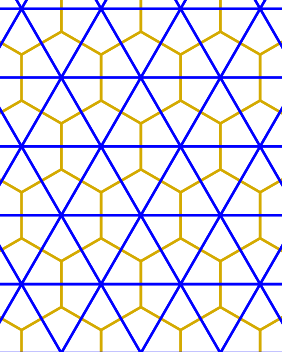}
}

In this article we will call \term{honeycomb} the regular hexagonal tessellation of the whole plane.

\medskip

When squeezing the blue segments the yellow hexes seem to compact into a honeycomb.
If instead we squeeze the yellow segments, the blue hexes seem to compact into a honeycomb too.

\nopagebreak

\image{}{fig:yb-3}{
\includegraphics[width=13cm]{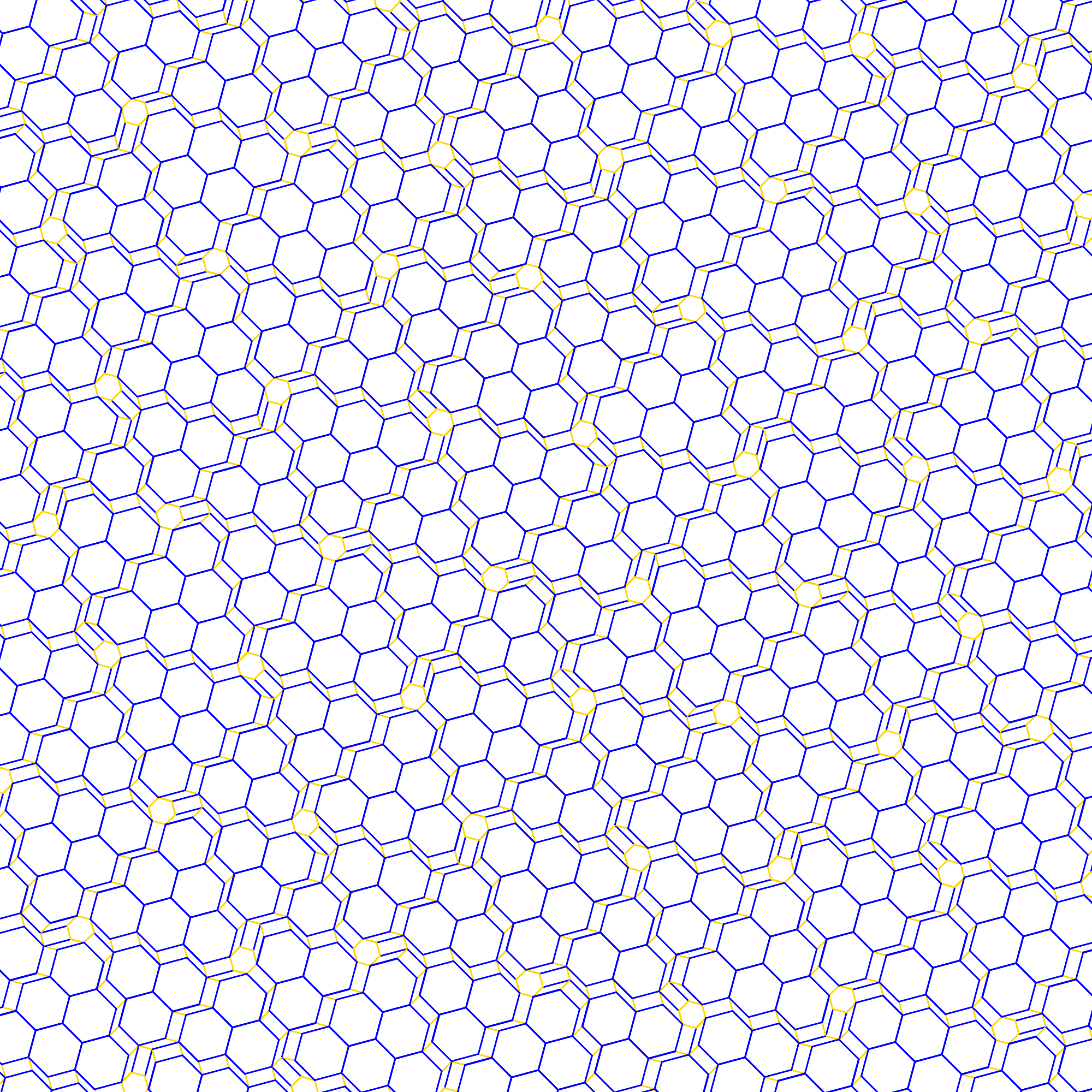}
}

Let us call \term{cluster} the maximal groups of blue hexes that touch each other. Clusters are subsets of cc's, possibly strict subsets.
We observe that each cluster comes into sets of $T_1=1$, $T_2=3$ or $T_3=6$ hexes arranged in a triangle.
We will denote them $\hT1$, $\hT2$, $\hT3$ where $\hT$ stands for triangle.
Each cc seems to contain exactly one cluster.

These observations will be proved in \Cref{prop:T}.
If they are true, clusters and cc's are in one-to-one correspondence, and also in one-to-one correspondence with the vertices (corners) of a yellow honeycomb. Let us call $HY$ the latter tessellation.
Each $\hT2$ or $\hT3$ cluster come in two possible orientations, and in fact this seems to follow the orientation of the triangle it corresponds to in the dual tessellation to $HY$.

This hints at the fac that the $\hT1$ clusters also have a natural orientation, which we will justify too.

Colouring the regular blue hex tiling w.r.t such triangle sets gives pictures like:

\nopagebreak

\image{}{fig:bo}{
\includegraphics[width=13cm]{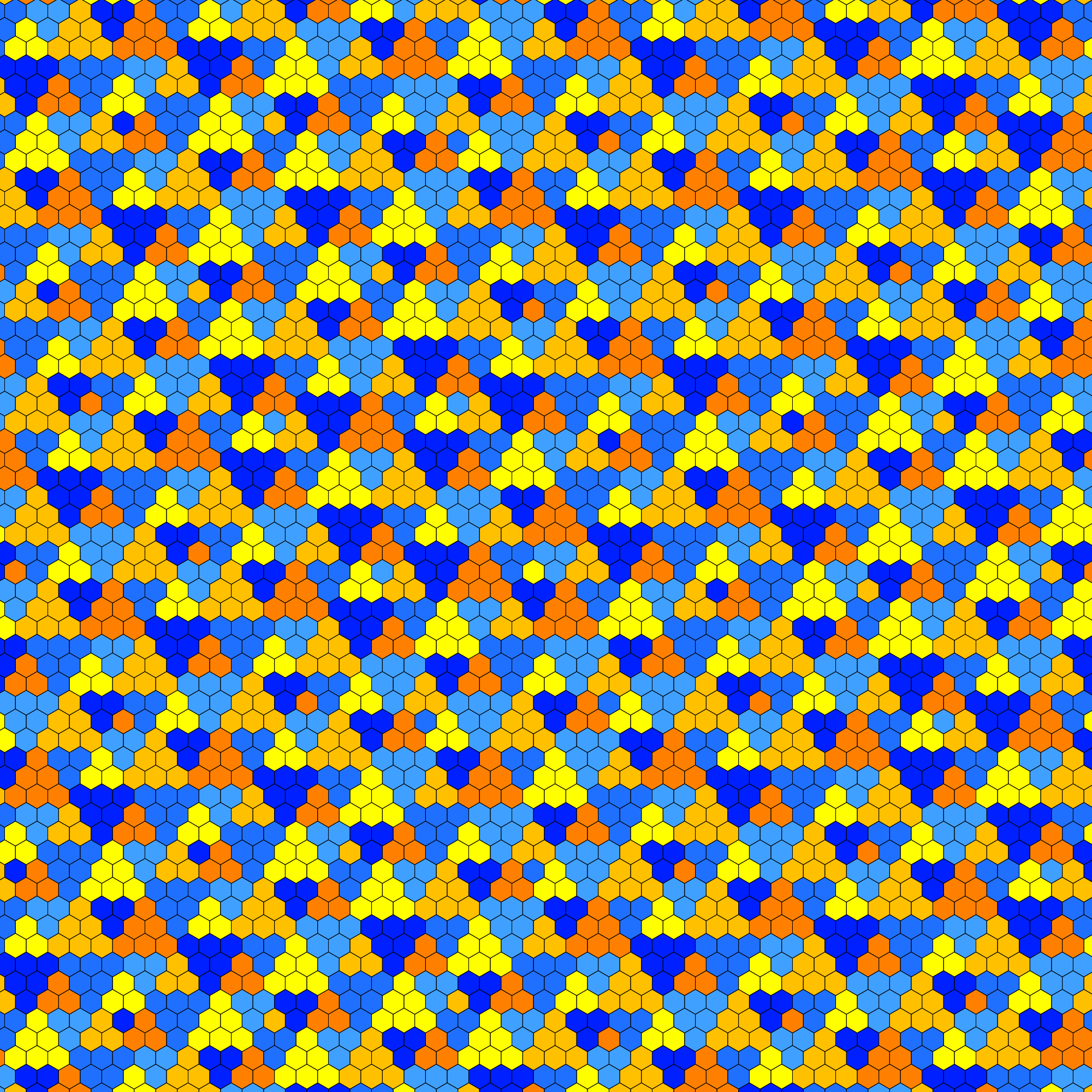}
}

\medskip

In this article, we investigate the self-similar structure of this partition.

The key word self-similar is not that much used in the field as \emph{hierarchical structure}. Many proofs of existence of whole plane tilings, for the Spectre and Hat monotiles \cite{chiral,march} and before them of many other non-periodic tilesets, including Penrose tiles, are done via substitution systems, where a finite arrangement of shapes is used and replaced by an new arrangement of the same shapes, and the procedure repeated to cover large pieces of the plane. The proof of aperiodicity is usually done by reversing the procedure and proving that any whole plane tiling can be de-substituted. But this is also the case of the periodic tiling by the square for instance.
A key point in aperiodicity is \emph{uniqueness} of the way the de-substitution can be done, and tilings with this property are called \emph{uniquely hierarchical}. In general, unique hierarchy automatically provides a substitution system.

For the Spectre and the Hat, substitution systems and/or unique hierarchy has been proved in several ways, both by the original authors and by others. See for instance the works of Shigeki Akiyama, Yoshiaki Araki, Erhard Künzel and James Smith, some of which are available as \cite{AA,jS}. Unpublished work of Künzel include a substitution system in terms of tree structure. Unpublished work of Pieter Mostert uses edge substitution.

\image{This picture uses the colour scheme of \Cref{fig:bo} but burst as in \Cref{fig:yb-3}, and here the blue lines have been turned in one direction and the yellow lines in the other direction, for better alignment. Incidentally it has some aesthetic value.}{fig:bu}{
\includegraphics[width=13cm]{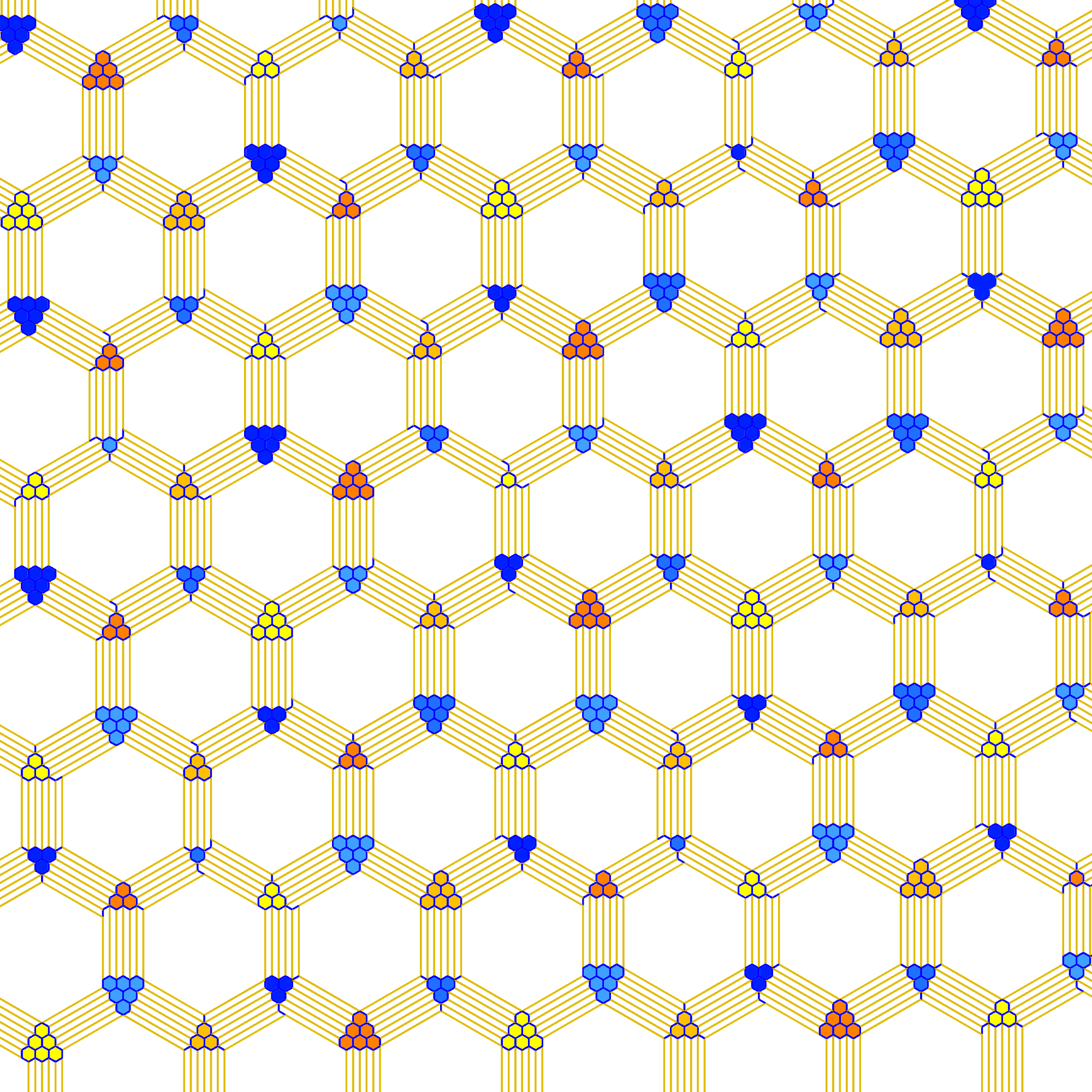}
}

\bigskip

In \cite{chiral}, the authors partitioned the tiles in any whole plane tiling by the Spectre into patches of 8 or 9 tiles, called H8 and H9, each containing exactly one odd tile.
They form up to deformation a honeycomb, arranged exactly like $HY$.
The blue $\hT n$ partition and the H8/H9 partition are sort of dual to each other. We investigate this duality further in \Cref{ss:oh}

\bigskip

When passing from a tiling with the Spectre to the yellow/blue graph tracing hexes, squares and rhombs, we a priori lost information.
The orientation of the Spectre associated to a hex in a $\hT n$ or to a yellow hex is only retained up to a rotation of a sixth of a turn: we only know if it is even or odd (i.e.\ its class modulo 2 in $\Z/12\Z$).
But maybe the arrangement of the nearby hexes forces its value?
We will see that this is the case, provided we use the yellow hexes and retain how they are attached to nearby clusters/cc's.

Then we compacted the blue hexes into a partition of the honeycomb in sets of type $\hT1$, $\hT2$, $\hT3$. Doing so we removed the yellow hexes from the picture and a priori lost two more pieces of information:
which blue cc's have antennae and how the yellow hexes are attached to the blue cc's.
During the compaction, the 6 corners of a yellow hex are merged into a single vertex of the blue honeycomb.
What we can do to keep a piece of information is to locate where these vertices are: call them \term{dots}.
It turns out that their position is enough to recover the two pieces of information above: antennae and yellow hex position in the yellow/blue decorated tiling graph.

But what if we do not put the dots?

\begin{question*}
  Can the tiling be deduced from the partition only?
\end{question*}

We will see in \Cref{sec:double} that the answer is: not always.
The process cannot be purely local.
And we may end up with situations where there are more than one solution.

But first we can observe (and we will prove) that there is another constraint on the orientation of the tiles.
Recall that the $\hT n$ clusters seem to be in correspondence  with the triangles of a the tessellation of the plane by regular triangles, dual to $HY$ (see \Cref{fig:yb-2,fig:bu} again).
These triangles come in two orientations and the $\hT 2$ and $\hT 3$ seem to mimic it.
Let us call \term{point up} and \term{point down} these two orientations.
For a fixed orientation of the triangle, it seems that  an even tile has its hex in a $\hT n$ cluster, and the orientation of the tile seems to be constrained to only three of the 6 possible orientations, differing by multiples of a third of a turn, if the corresponding triangle points up; or to the three other even orientations, if the triangle points down.
By the way, this gives a natural interpretation of pointing up or down for the $\hT1$ clusters too.

Here of an extract is the condensed hex decomposition of a whole plane Spectre tiling, with supplementary information: clusters, dots making the condensed yellow hexes, and a line in the blue hexes indicating the orientation of the underlying spectre as on the figure below:

\nopagebreak

\image{}{}{
\includegraphics[scale=1]{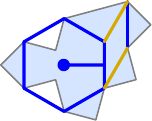}
}

\image{}{fig:partition_arrows_dots}{
\includegraphics[width=13cm]{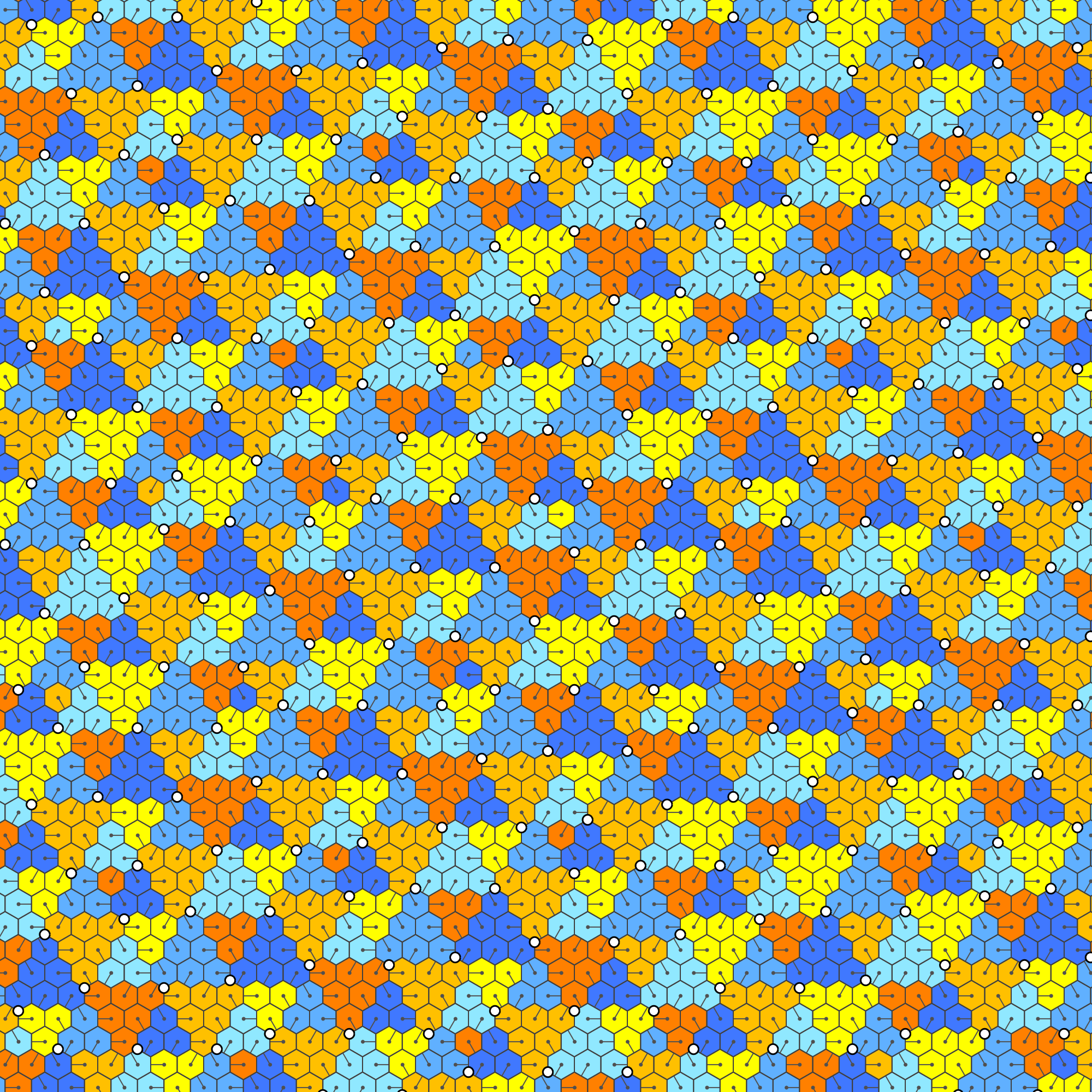}
}

The authors of \cite{chiral} proved that tilings with the Spectre are (uniquely) hierarchical.
More precisely the H8 and H9 patches are arranged in a way so that they can be themselves grouped in bigger patches that follow the same kind of arrangement, so can be grouped themselves further, infinitely many times for a whole plane tiling.\footnote{There is a subtlety here: the H9 patches have to be marked as distinct according to the way they touch their neighbouring patches, yielding 8 different kinds of H9, see Section~4 of \cite{chiral}, especially Figures~4.1 and~4.2.}
The even Spectre tiles themselves can be added at the bottom of this hierarchy, but the odd tiles have to be disregarded.\footnote{The authors of \cite{chiral} decided to merge them with a specific adjacent even tile yielding a piece they called the \term{mystic} and that plays the role of an H8 patch while the other even tiles play the role of an H9.}
However, as centres of the H8 and H9 patches, they correspond in some way to the first level of patches. 

It is thus to be expected that the dots in \Cref{fig:partition_arrows_dots} actually come into their own triangular clusters of 1, 3 or 6 dots.
And we can observe that if we colour them according a bipartite colouring of the vertices of the blue hex graph, then they seem to be coming in such clusters, which moreover have a nice geometrical property: \emph{they trace the vertices of regular triangles}.
In the picture below, we removed some information for more visibility.

\image{We invite the reader to spot the regular triangles traced by dots of a given colour. A solution is given in the next figure.}{fig:trace}{
\includegraphics[width=13cm]{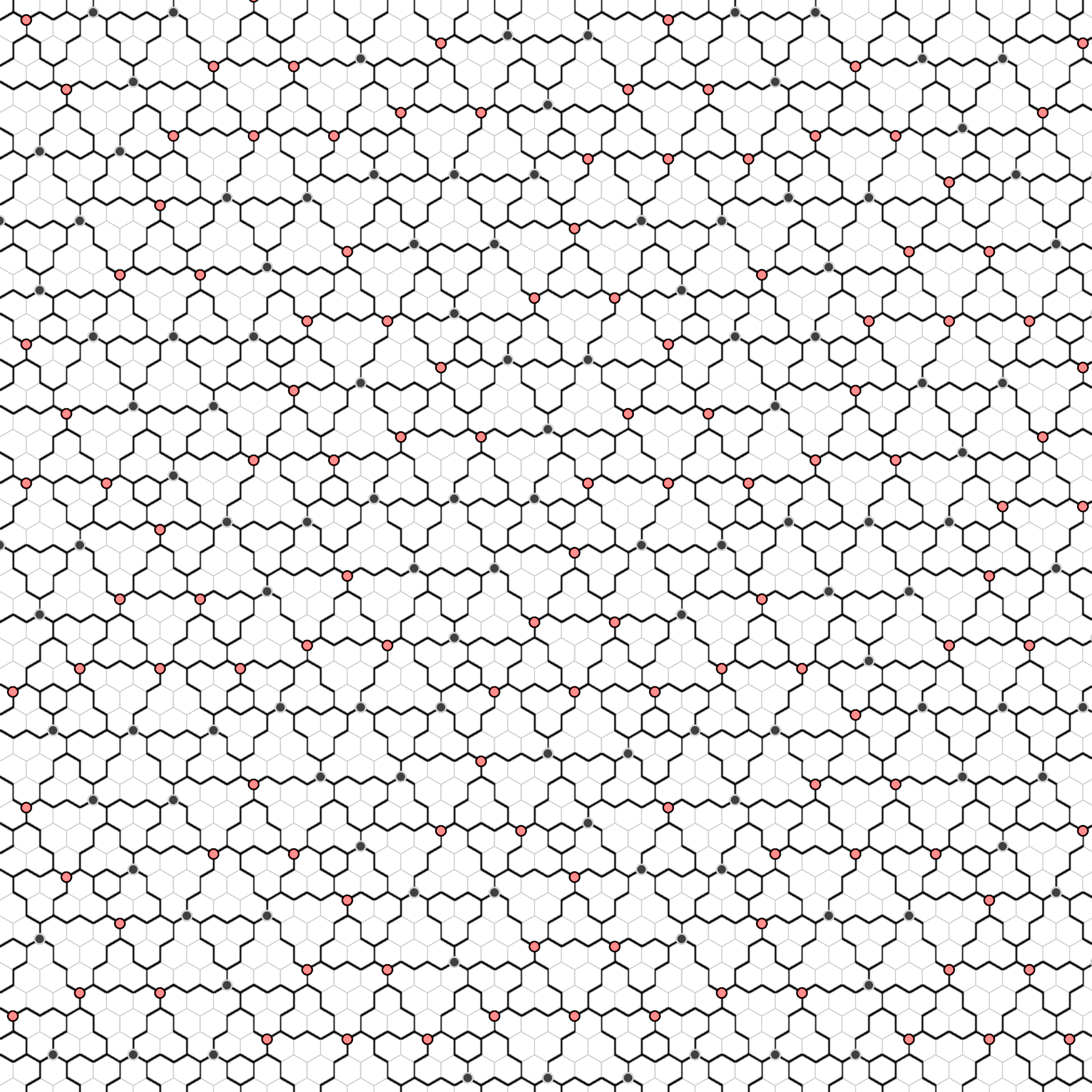}
}

The triangular clusters in which the yellow hexagons group seem to be \emph{combinatorially} (as opposed to \emph{geometrically}) dual to a, say green, honeycomb, i.e.\ they can be associated to the vertices of such a graph: see \Cref{fig:ov} (more on this in \Cref{ss:3-levels}).
If we choose the green honeycomb $HG$ to be in the same scale and orientation as the blue one $HB$, there seems to be a formula relating, for a given yellow hex $h$:
\begin{itemize}
\item the vertex $b$ in $HB$ of the dot associated to $h$
\item the yellow hex centre $y$ of $h$ in $HY$
\item the vertex $g$ in $HG$ associated to the yellow cluster $h$ is in.
\end{itemize}
If we use complex numbers $a,b,c\in\C$ to locate these points, the formula should look like
\[ b=3.y+j.g+\on{cst}\]
where $j\in\C$ is the principal \nth{3} root of unity, $\on{cst}$ is some constant depending on the position of the origin in $HB$, $HY$ and $HG$, and provided we chose the orientation of $HB$, $HY$ and $HG$ appropriately. We prove this in \Cref{prop:B-Y-G-coord}

In particular the green coordinate could be deduced as a simple linear combination of the blue and yellow coordinates.

\image{}{fig:ov}{
\scalebox{0.2}{
\begin{tikzpicture}
\clip(-30.441,-24.5) rectangle (30.441,24.5);
\node at (-0.35,1.72)
{\includegraphics[scale=1]{img-5g.png}};
\node at (0,0) {\includegraphics[scale=1]{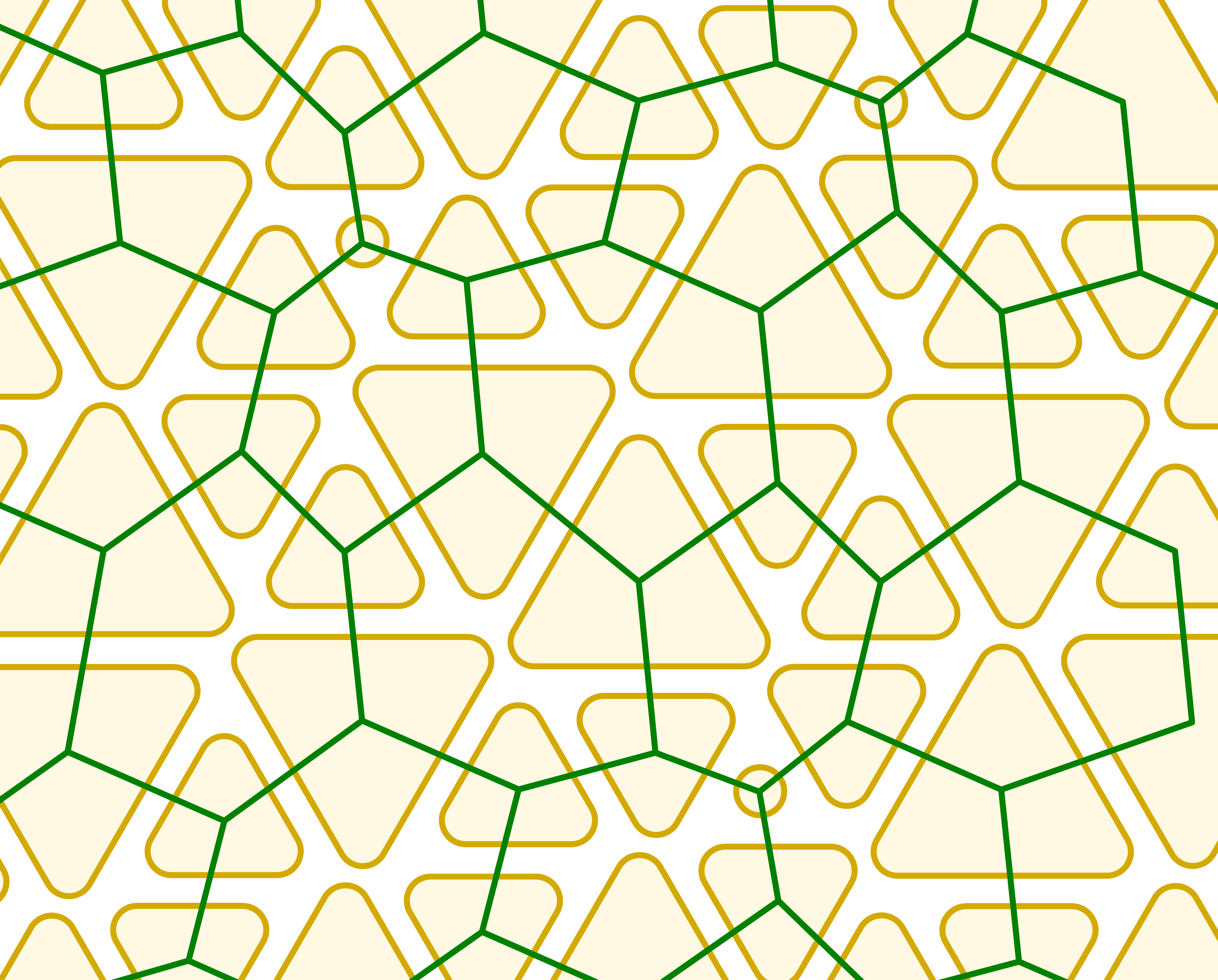}};
\end{tikzpicture}
}
}

\subsection{Acknowledgements}

I would like to thank, besides the authors of \cite{chiral,march}, David Smith, Joseph Samuel Myers, Craig S. Kaplan, and Chaim Goodman-Strauss, discoverers of this wonderful tile, the following people with whom I had many discussions:
Nan Ma who boosted my interest in the subject by explaining us how to lift the tiling to 4D space (not used in the present article), and with which I started a more in-depth study of the tiling: see \cite{web}.
Pieter Mostert, who revealed to me the importance of the hexagons and some insight about the cut-and-project methods (not used in the present article).
Several persons in a newsgroup discussion forum, including Yoshiaki Araki,
George Baloglou, Maurizio Paolini, Joshua Socolar and Dale Walton.

The free program \textsf{Inkscape} was extremely useful in producing the many pictures of the present document.
The figures make extensive use of colour and I apologize to the colour-blind people, for they will have to use specific computer programs of physical filters to distinguish some of them.

This research was possible thanks to the great freedom left by the CNRS to its researchers, and the facilities of the Toulouse Mathematics Institute at Université de Toulouse.

For the Spectre and the Hat, existence and aperiodicity of tilings has been proved in several ways, both by the authors of \cite{march,chiral} and by others. The original articles used computer assisted searches in some parts.
See for instance the works of Shigeki Akiyama, Yoshiaki Araki, Erhard Künzel and James Smith, some of which are available in \cite{AA,jS}, for examples of non computer-assisted proofs of existence for Hat and Spectre, and of aperiodicity for Hat tilings.
We believe that having a substitution system, which proves existence of whole plane tilings with a monotile, is a strong step towards the proof of aperiodicity of all tilings with this monotile.
Yet, to our knowledge, there was no announced non computer assisted proof of aperiodicity of Spectre tilings before the present article.


\section{Analysis}\label{sec:analysis}

From now on all angles are expressed in terms of fractions of a full turn. For instance $1/12$ means $1/12$-th of a turn, i.e.\ $360\degree/12=30\degree$.

We call \term{honeycomb} a tessellation of the whole plane by regular hexagons.

The word \term{orientation} will often be used to designate the bearing of an object, i.e.\ the amount it has been turned by, with respect to a reference position. This is not to be confused with other classical uses of this term in mathematics: for instance given a segment, we can give it a preferred direction, i.e.\ decide which of its vertices is first and which is last. Here we call this a \term{directed} segment instead of \emph{oriented} segment. With this terminology, a segment's orientation is a number modulo $1/2$ (of a turn) and an oriented segment's orientation is a number modulo~1.


\subsection{The Spectre and the \texorpdfstring{$\iD2$}{D2} and \texorpdfstring{$\iD3$}{D3} tiles}

People observed that the Spectre shape splits into two shapes, both mirror symmetric and rotationally symmetric of order respectively 3 and 2.
They are sometimes called $\iD3$ and $\iD2$ by reference to their symmetry group and we will use this denomination here.

\image{Splitting of the Spectre into pieces $\iD3$ (top) and $\iD2$ (bottom). Splitting of the Spectre decoration.}{fig:split}{
\includegraphics[scale=1]{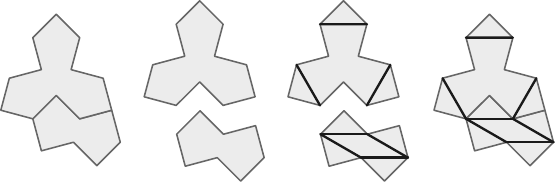}
}

Any tiling by the Spectre (with or without reflections) gives a tiling by $\iD3$ and $\iD2$, as on \Cref{fig:SpD32}.
But the converse is false: 
notwithstanding the fact that $\iD3$ or $\iD2$ alone can tile the plane, there are also tilings mixing the two but for which there are on average more tiles of one type than the other, as on \Cref{fig:unbal}, whereas to come from a Spectre tiling, the $\iD3$ and $\iD2$ must come in pairs (attached in a specific way).

\image{$\iD3$ and $\iD2$ tiling coming from a whole plane Spectre tiling.}{fig:SpD32}{
\includegraphics[scale=0.75]{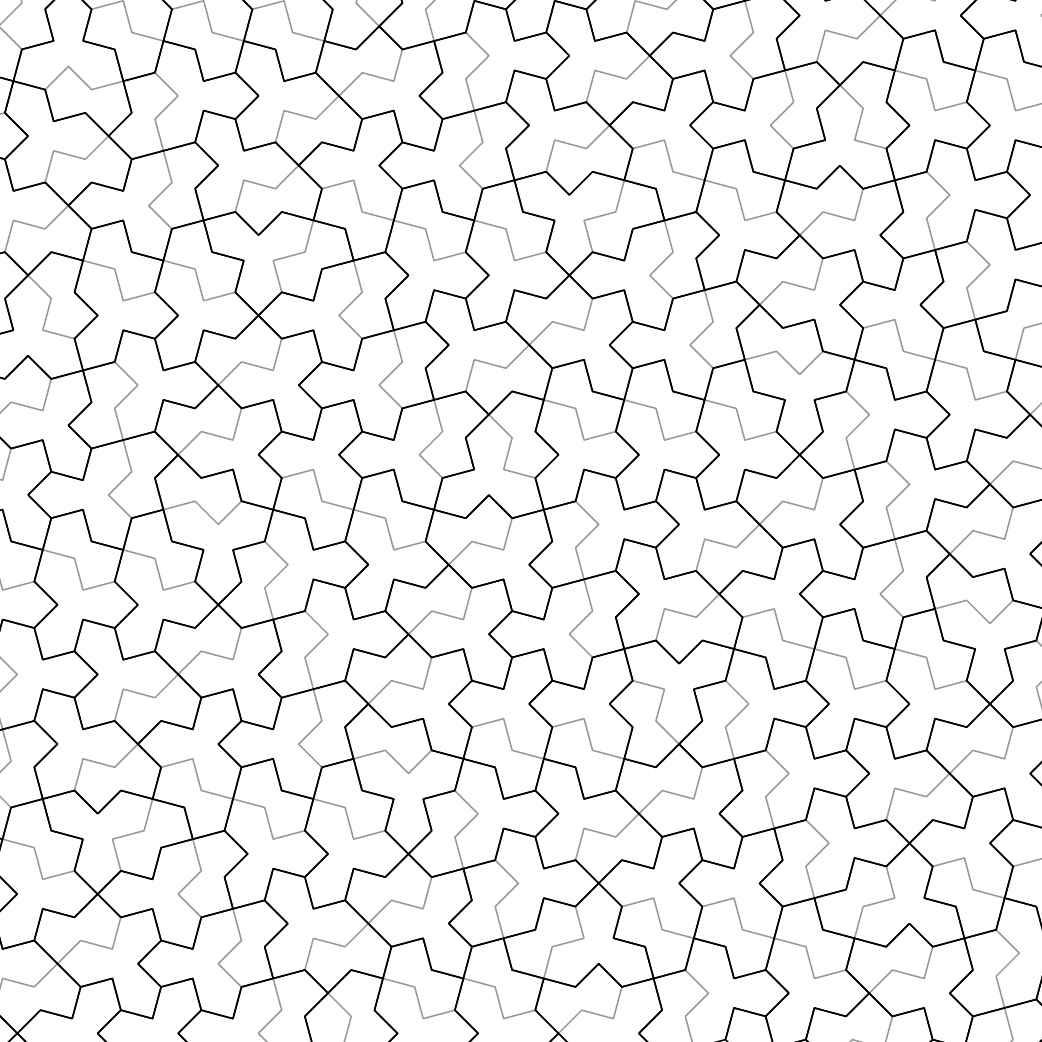}
}

\image{A periodic tiling with $\iD3$ and $\iD2$, for which there are on a fundamental domain (indicated in darker shades) $3$ tiles of shape $\iD2$ for $2$ tiles of shape $\iD3$.}{fig:unbal}{
\includegraphics[scale=0.85]{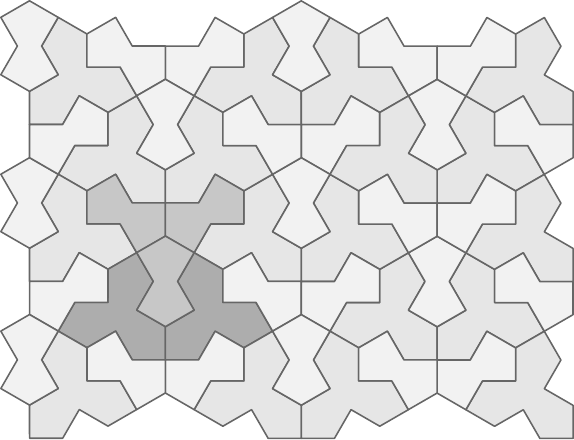}
}

Shape $\iD3$ can be constructed from a regular hexagon by placing along each edge the long side of an isosceles right triangle (angles $1/8$, $1/8$, $1/4$). They are alternatively placed inside and outside and we call them in/out dents.
The same holds for $\iD2$, starting from a rhomb (with small angle $1/12$) but its 4 dents are placed outside.

\image{}{fig:D32-1}{
\includegraphics[scale=1.2]{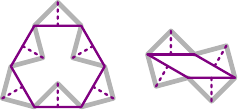}
}

The internal angles of the shapes $\iD3$ and $\iD2$ are all equal to $1/4$, $3/4$ and $1/3$ for the first and $1/4$, $1/3$ and $2/3$ for the second.
In any $\iD3$ and $\iD2$ tiling, the vertices must match with vertices, edges with edges. Moreover a vertex with angle $k/4$ can only match with a $k'/4$ and a $k/3$ with a $k'/3$.

In particular the inward dents of a $\iD3$ can only receive an outward dent of a $\iD3$ or $\iD2$, as illustrated below:

\image{The three possible ways an inward dent of a $\iD3$ can be completed.}{fig:notch-fill}{
\includegraphics[scale=1.2]{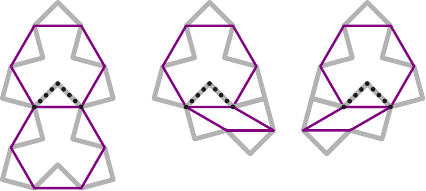}
}

The rhomb from which $\iD2$ is constructed is also its decoration. The decoration of $\iD3$ traces only three sides of the hexagon from which it is constructed.
However by the above, in any whole plane tiling by $\iD3$ and $\iD2$, the decoration of the adjacent tiles filling the three inward dents must complete the hexagon (see \Cref{fig:split,fig:notch-fill}).

Notice that on both shapes $\iD3$ and $\iD2$, the vertices whose angles are multiple of $1/4$ alternate with those whose angles are multiple of $1/3$.
Note also that the decoration vertices are exactly those vertices of the two shapes whose angle is a multiple of $1/3$.

\begin{proposition}\label{prop:d32_hrs}
Consider any whole plane tiling by the shapes $\iD3$ and $\iD2$. Their decorations trace a graph cutting the plane into regular hexagons associated to the $\iD3$ pieces, rhombs associated to the $\iD2$ pieces, and squares, filled with 4 dents.
\end{proposition}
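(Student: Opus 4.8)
The plan is to exhibit, inside any such tiling, an explicit partition of the plane into the three announced kinds of pieces and then to identify the decoration graph with the union of their boundaries. To the $\iD3$ tile $P$ one attaches the regular hexagon $H_P$ it is built from, and to the $\iD2$ tile $Q$ the rhomb $R_Q$ it is built from. As plane regions, $R_Q$ is $Q$ with its four out-dents removed, while $H_P$ is $P$ with its three out-dents removed and its three in-dents (the congruent triangles cut out of the hexagon) added back. The decoration of $Q$ is exactly $\partial R_Q$; the decoration of $P$ is the three sides of $H_P$ carrying out-dents, i.e.\ the three chords cutting straight across those out-dents, while the chords across the three in-dents of $P$ are the other three sides of $H_P$ and are \emph{not} drawn by $P$.

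First I would analyse the tiling vertices. By the matching rules recalled earlier, at each vertex either all angles are multiples of $1/4$ or all are multiples of $1/3$; since the angles that occur are $1/4,3/4$ and $1/3,2/3$ and they must add up to a full turn, a ``quarter'' vertex is either four $1/4$'s or a $3/4$ together with a $1/4$, and a ``third'' vertex is either three $1/3$'s or a $2/3$ together with a $1/3$. An in-dent apex carries angle $3/4$ and an out-dent apex carries angle $1/4$, so every in-dent apex meets exactly one out-dent apex; as edges meet edges and the two triangles are congruent, that out-dent fills that in-dent. Hence every in-dent is filled by exactly one out-dent, and an out-dent that fills no in-dent has its apex at a vertex where four out-dents meet: four congruent right isosceles triangles with a common right-angle apex, legs identified in pairs, and hypotenuses forming and tiling a square --- a ``square filled with four dents''. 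Moreover, since the decoration vertices are exactly the tile vertices whose angle is a multiple of $1/3$, they are precisely the ``third'' vertices of the tiling.

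Then I would assemble the partition. Since each in-dent of $P$ is filled by an out-dent whose hypotenuse is the corresponding chord, the three sides of $H_P$ not drawn by $P$ are drawn by its neighbours, so $\partial H_P$ lies in the decoration graph; likewise $\partial R_Q$ does, and so does the boundary of each square (its four sides are chords across out-dents). Conversely, every drawn segment is the chord across some out-dent, which is a side of the hexagon or rhomb of that out-dent's own tile, so the decoration graph equals $\bigcup_P\partial H_P\cup\bigcup_Q\partial R_Q$ together with the square boundaries. It remains to check that the regions $H_P$, $R_Q$ and the squares tile the plane. An out-dent is a subregion of a single tile, and either fills a unique in-dent or belongs to a unique square; using this, one verifies that $\bigcup_P H_P\cup\bigcup_Q R_Q$ is the plane minus the out-dents sitting in squares, so adjoining the squares gives the whole plane, and that the interiors of all these regions are pairwise disjoint (each is assembled from interior-disjoint tiles and out-dents). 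Finally each side of each region is shared, in full, with exactly one other region of the list --- across a side of $H_P$ one finds, according to where the out-dent on that side goes, another hexagon, a rhomb, or a square --- so these chords never cross in their interiors, the graph has no vertices other than region corners, and the complementary faces are exactly the regular hexagons $H_P$, the rhombs $R_Q$, and the squares.

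The step I expect to be most delicate is this last bookkeeping: tracking, for every hexagon or rhomb side, which region lies on its far side, and thereby checking at once that the pieces cover the plane, have pairwise disjoint interiors, and abut only along complete common edges, so that the decoration graph acquires no extra crossing vertices. Everything else is the short vertex-angle case analysis, which is essentially immediate from the matching rules stated just before the proposition.
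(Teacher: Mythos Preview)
Your proof is correct and follows essentially the same approach as the paper: identify the hexagons and rhombs as faces of the decoration graph, then argue that the remaining regions are tiled by out-dents (right isosceles triangles with hypotenuse on the graph and legs off it) and hence must be squares. The paper's proof is much terser---it states in two sentences that any cut part not a hex or rhomb is covered by out-dents and ``can only be a square covered by 4 dents''---whereas you make explicit the vertex-angle case analysis (four $1/4$'s versus $3/4+1/4$) that justifies this last step.
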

\begin{proof}
Any cut part $P$ which is not the hex of a $\iD3$ or the rhomb of a $\iD2$ is covered by outward dents, which are isosceles right triangles. The long side of these triangles are part of the graph, the other two are not. Such piece $P$ can only be a square covered by 4 dents.
\end{proof}

We call this graph the \term{decoration graph} and the corresponding tiling into hexes, rhombs and squares the \term{decoration tiling}.
Since all polygons of this graph have an even number of sides, it follows that it carries a \emph{bipartite colouring}, unique up to colour renaming.
We call \term{improved graph} the decoration graph together with a bipartite colouring.

\begin{proposition}\label{prop:bipar}
In a whole plane tiling by $\iD3$ and $\iD2$, the colour of a vertex $v$ of the improved graph is determined by the class modulo $2$ of $k$, where $k/12$ designates the orientation\footnote{Relative to any fixed reference direction.} of any segment ending on $v$ on the boundary of any tile $\iD3$ or $\iD2$ having $v$ as vertex.
\end{proposition}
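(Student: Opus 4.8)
The plan is to recognise the assignment $v\mapsto (k\bmod 2)$ as a proper $2$-colouring of the decoration graph. That graph is connected, and a connected graph admits at most one proper $2$-colouring up to interchanging the two colours; so such a colouring has to be the bipartite one, and that is exactly the claim. Three things have to be done: check that the parity of $k$ is well defined at each decoration vertex; check that it changes along every decoration edge; and combine the two.

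First I would note that, with the reference direction chosen suitably, every edge of every $\iD3$ or $\iD2$ tile has orientation in $\frac1{12}\Z$: inside a single tile, two edges sharing a vertex have orientations differing by the interior angle there, which modulo $1/2$ lies in $\frac1{12}\Z$ (the interior angles are $1/4,1/3,2/3,3/4$), and adjacent tiles share edges; hence $k\in\Z$ and $k\bmod 2$ makes sense. Now fix a decoration vertex $v$ and run through the tiles $T_0,\dots,T_{m-1}$ meeting at $v$. Each $T_j$ has exactly two boundary segments ending on $v$ (two dent legs), differing in orientation by the interior angle of $T_j$ at $v$; that angle is $1/3$ or $2/3$, never $1/4$ or $3/4$, because a $1/4$ or $3/4$ angle of a tile occurs at a dent tip and dent tips meet only dent tips — four outward tips at the centre of a square, or one outward tip filling a $3/4$ inward notch — and neither kind of point lies on the decoration graph. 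Since $1/3$ and $2/3$ are, modulo $1/2$, in $\frac16\Z$, the two segments of $T_j$ at $v$ carry the same parity of $k$; moreover $T_j$ and $T_{j+1}$, being glued along a dent, share the leg of that dent ending on $v$. Going around the cycle, all tiles at $v$ agree, and I write $k(v)$ for this common parity.

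The main step is to show that $k(v)\ne k(v')$ whenever $v,v'$ are the two endpoints of a decoration edge $e$. Every decoration edge is a side of the hexagon of some $\iD3$ or of the rhomb of some $\iD2$ (two squares are never adjacent), hence the hypotenuse of a dent of that tile $T$; let $m$ be the tip of the dent. Then $v$ and $v'$ are vertices of $T$, and the legs $vm$ and $v'm$ are boundary segments of $T$ ending on $v$ and on $v'$ respectively. In the isosceles right triangle with hypotenuse $e$ and apex $m$, the orientations of those two legs are obtained from that of $e$ by shifting by $+1/8$ and by $-1/8$ modulo $1/2$, so they differ by $1/4$ modulo $1/2$; equivalently the associated values of $k$ differ by $3$ modulo $6$, which is odd. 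As $k(v)$ may be read off $vm$ and $k(v')$ off $v'm$, the parities at $v$ and $v'$ are opposite.

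Combining, $v\mapsto k(v)\bmod 2$ is a proper $2$-colouring of the connected decoration graph, hence coincides with the bipartite colouring up to the (immaterial) choice of which colour is which; this is the proposition. The one step I expect to be genuinely fiddly is the local geometry behind well-definedness — establishing that dent tips never land on the decoration graph and that consecutive tiles around a decoration vertex are glued along a dent — but it reduces to inspecting the handful of local configurations allowed by \Cref{fig:notch-fill} and the square.
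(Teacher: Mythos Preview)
Your proof is correct and follows the same idea as the paper's: a decoration edge is the hypotenuse of a right-angled dent, so passing from one endpoint to the other shifts the orientation index $k$ by $\pm 3$, flipping its parity; this makes $v\mapsto k(v)\bmod 2$ a proper $2$-colouring, hence the bipartite one. The paper's version is much terser --- it takes for granted the well-definedness of $k(v)\bmod 2$, which you spell out carefully, but that well-definedness is already implicit in the sentence just before the proposition (decoration vertices are exactly the tile vertices with angle a multiple of $1/3$), so your extra work here is reproving something the paper has already noted.
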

\begin{proof}
Indeed the edges of the decoration graph correspond to right angles on the boundary of the shapes $\iD3$ and $\iD2$. Jumping from one vertex to the other one along such an edge hence changes the orientation index $k$ by adding/subtracting 3.
\end{proof}

Like for the Spectre, in any whole plane tiling with the shapes $\iD3$ and $\iD2$ the pieces can only come in orientations differing by a multiple of $1/12$.
The decorations of $\iD3$ and $\iD2$ are composed of segments of the same length, that come in orientations differing by a multiple of $1/3$ in $\iD3$ and of $1/12$ in $\iD2$, and when we turn and assemble them, they all differ by multiples of $1/12$.
In particular they come into two classes (or one if only $\iD3$ is used) modulo $1/6$, which we can colour blue and yellow.

The pieces of type $\iD3$ and $\iD2$ come respectively in 4 and 6 orientations, and the colouring of their marking only depends on this orientation, as on the figure below:

\nopagebreak

\image{}{}{
\includegraphics[scale=1.1]{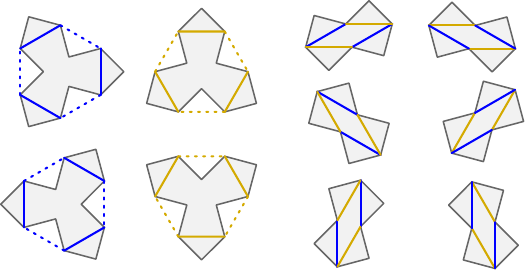}
}

\nopagebreak

By \Cref{prop:bipar}, the bipartite colouring of the vertices of the improved graph also only depends on the orientation of the pieces, see the figure below. Dents are on the right of the blue segments when followed from their black dot to their white dot, and on the left of the yellow segments followed the same way: from black to white dot.

\image{}{fig:byrg}{
\includegraphics[scale=1.1]{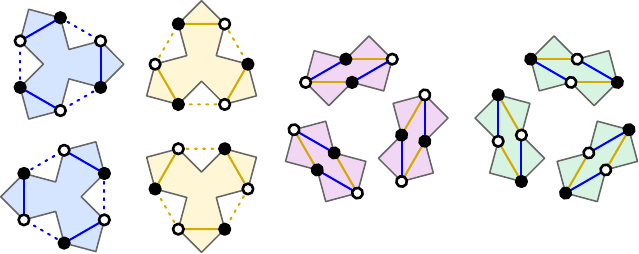}
}

The improved decoration for the $\iD3$ and $\iD2$ tiling of \Cref{fig:unbal} looks as follows:

\nopagebreak

\image{Improved decoration graph for a whole plane tiling by $\iD3$ and $\iD2$ not coming from a Spectre tiling. The orientation is turned by $1/24$ compared to \Cref{fig:byrg}.}{}{
\includegraphics[scale=0.85]{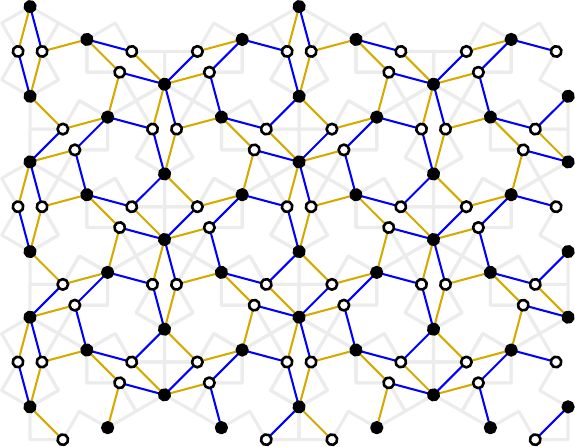}
}

We conclude this section by a summary of simple properties of the decoration tiling into hexes, rhombs and squares.
These tiles can only touch at vertices or along a common edge.
Call two tiles \term{adjacent} if they share an edge.

\begin{proposition}\label{prop:dec-adj}
For the decoration of a $\iD3$ and $\iD2$ tiling:
no two rhombs can be adjacent; no two squares can be adjacent.
\end{proposition}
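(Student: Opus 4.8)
The plan is to analyse the decoration graph edge by edge, exploiting the dissection of the tiles recalled just before \Cref{prop:d32_hrs}: a $\iD3$ is a regular hexagon carrying three inward and three outward dents, a $\iD2$ is a rhomb carrying four outward dents, each dent being an isosceles right triangle whose long side lies along a side of that hexagon or rhomb, and the decoration of a tile traces exactly the long sides of its \emph{outward} dents (the three out-dent sides of a $\iD3$'s hexagon, all four sides of a $\iD2$'s rhomb). First I would check that every edge $e$ of the decoration graph is the long side of a unique dent, necessarily an outward one belonging to a uniquely determined tile $T=T(e)$: $e$ lies in the graph precisely because it is such a side for some tile, and it cannot serve two tiles at once, since the two tabs would then have to occupy the two opposite sides of $e$, forcing one of them into the body of the other tile along an edge where that body has no inward dent — impossible. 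Thus the body of $T(e)$ (its hexagon, or its rhomb) lies on one side of $e$, which I call the \emph{body side}, and the tab sticks out on the other, the \emph{tab side}.

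Next I would identify the two regions of the decoration tiling meeting along $e$. On the body side it is the hexagon- or rhomb-region of $T(e)$, by \Cref{prop:d32_hrs}. On the tab side, that same proposition leaves only two possibilities for the tab: it is one of the four dents filling a square, in which case the region there is a square; or it fills an inward dent of a neighbouring tile, which by the matching rules must be a $\iD3$ (only $\iD3$ has inward dents), the tab then lying inside that neighbour's hexagon, so the region there is a hexagon. Hence: along any edge of the decoration graph, one side carries a hexagon or a rhomb and the other side carries a hexagon or a square; in particular a square never occupies a body side, and a rhomb never occupies a tab side.

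The proposition is then immediate: adjacent rhombs would put a rhomb on the tab side of their common edge, and adjacent squares would put a square on the body side, both of which have just been excluded. The only real work — and the main point to get right — is the local bookkeeping of the first two steps: keeping careful track of which of the six sides of a $\iD3$'s hexagon are traced by the tile itself and which are completed by the neighbour filling the corresponding inward dent, and verifying that the body-side/tab-side alternative is genuinely exhaustive. Once that is in place, no further argument is needed.
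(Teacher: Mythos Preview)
Your argument is correct and is precisely what the paper's one-line proof ``This is an immediate consequence of the orientation of dents'' is shorthand for: each decoration edge carries a unique outward dent, so it has a well-defined body side (hexagon or rhomb) and tab side (hexagon or square), whence rhombs cannot sit on tab sides and squares cannot sit on body sides. You have simply unpacked the same idea in more detail.
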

\begin{proof}
This is an immediate consequence of the orientation of dents.
\end{proof}

\begin{proposition}\label{prop:blue-hex-touch}
Two blue hexes in contact must touch along an edge. 
\end{proposition}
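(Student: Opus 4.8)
The plan is to argue by contradiction. The decoration tiling is edge-to-edge and all its edges have the same length (each is the hypotenuse of one of the congruent right-isosceles dents), so if two blue hexes are in contact but do not share an edge, they must still share at least one vertex $v$; I would then derive a contradiction by a purely local analysis around such a $v$. The faces of the decoration tiling incident to $v$ occur in a cyclic order, and their interior angles at $v$ sum to a full turn; by \Cref{prop:d32_hrs} the only possible faces are hexes (interior angle $1/3$), squares ($1/4$), acute rhombs ($1/12$) and obtuse rhombs ($5/12$). The two blue hexes $H_1$ and $H_2$ occupy two of these angular sectors, each of angle $1/3$; since they share no edge, going around $v$ there are two ``gaps'' of other faces separating $H_1$ from $H_2$, and each gap is nonempty (an empty gap would mean the bounding edge at $v$ is a side of both hexes, hence shared).

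The first key step is a colour/parity count. As one turns around $v$, the orientation index $k$ of the directed edge leaving $v$ (taken mod $12$; its class mod $2$ is exactly the blue/yellow dichotomy) changes, when crossing a face, by that face's interior angle measured in twelfths of a turn: by $4$ across a hex, by $3$ across a square, by $1$ or $5$ across a rhomb. Hence the parity of $k$ is preserved across a hex and flipped across any square or rhomb — this is essentially the $\pm3$ computation behind \Cref{prop:bipar}, together with the fact that the decoration segments of a $\iD3$ differ by multiples of $1/3$. Now all six edges of a regular hexagon lie in the same class modulo $1/6$, so every edge of a blue hex is blue; in particular the two edges of $H_1$ at $v$ and the two edges of $H_2$ at $v$ are blue. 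Following the edges across one gap from the bordering edge of $H_1$ to the bordering edge of $H_2$, the parity of $k$ therefore returns to its starting value, so \emph{each gap contains an even number of non-hex faces}.

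The second key step is an angle inequality. If one gap contains no non-hex face, it consists only of hexes and so has angle at least $1/3$; then the total angle at $v$ is at least $1/3+1/3+1/3+1/12>1$, impossible. So each gap contains at least two non-hex faces, each of angle at least $1/12$, whence the total angle at $v$ is at least $\tfrac13+\tfrac16+\tfrac13+\tfrac16=1$; equality forces each gap to be exactly two acute rhombs and nothing else. But two acute rhombs in a gap are consecutive around $v$, hence adjacent, contradicting \Cref{prop:dec-adj}. In every case we reach a contradiction, so $H_1$ and $H_2$ must share an edge.

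I expect the only genuine obstacle to be the careful bookkeeping of the local picture: making precise that ``in contact but not along an edge'' forces a shared vertex with two honestly nonempty gaps, and lining up the parity of $k$ at the two ends of each gap with the blueness of the bordering hex edges. Once that setup is pinned down, the parity observation, the angle count, and the no-two-rhombs-adjacent constraint of \Cref{prop:dec-adj} finish the argument with essentially no further computation.
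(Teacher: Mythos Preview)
Your argument is correct and ends on the same contradiction as the paper (two adjacent rhombs, violating \Cref{prop:dec-adj}), but the route is different. The paper's proof is one geometric observation: all blue hexes are \emph{parallel} --- they share the same three edge directions --- so two of them meeting only at a vertex $v$ must occupy opposite $1/3$-wedges, leaving two gaps of exactly $1/6$ each; filling a $1/6$ wedge with faces whose angles are among $1/3$, $1/4$, $1/12$, $5/12$ forces two acute rhombs, and these are then adjacent.

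You reach the same ``two acute rhombs per gap'' conclusion without invoking parallelism explicitly: a parity count on the direction index $k$ forces an even number of non-hex faces in each gap, and then an angle-sum squeeze pins each gap to exactly $1/6$. This is valid, and in fact your parity step is a disguised form of the parallelism --- ``every edge of a blue hex is blue'' is exactly the statement that blue hexes share the same three edge directions. So your proof is a correct but longer variant: the paper reads the gap size off directly from the geometry, whereas you recover it through the parity/angle bookkeeping. A small bonus of your version is that it makes explicit why no third hex can sit inside a gap, which the paper's $1/6$ computation leaves implicit.
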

\begin{proof}
Indeed if they were to touch only at a vertex, since blue hexes are parallel, this means that there would remain two angles of $1/6$, which can each only be filled by two acute angles of rhombs. But rhombs cannot share an edge (\Cref{prop:dec-adj}).
\end{proof}

Two adjacent hexes not only have parallel edges but their vertices colours are identical by translation. This reflects that two $\iD3$ with one filling an inward dent of the other have the same orientation.

\image{}{}{
\includegraphics[scale=1]{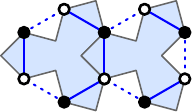}
}

\subsection{Spectre tilings}

\subsubsection{Clusters}

Consider now a whole plane tiling by the Spectre, and the associated improved decoration, as on \Cref{fig:enriched-deco}.

\image{}{fig:enriched-deco}{
\includegraphics[scale=0.75]{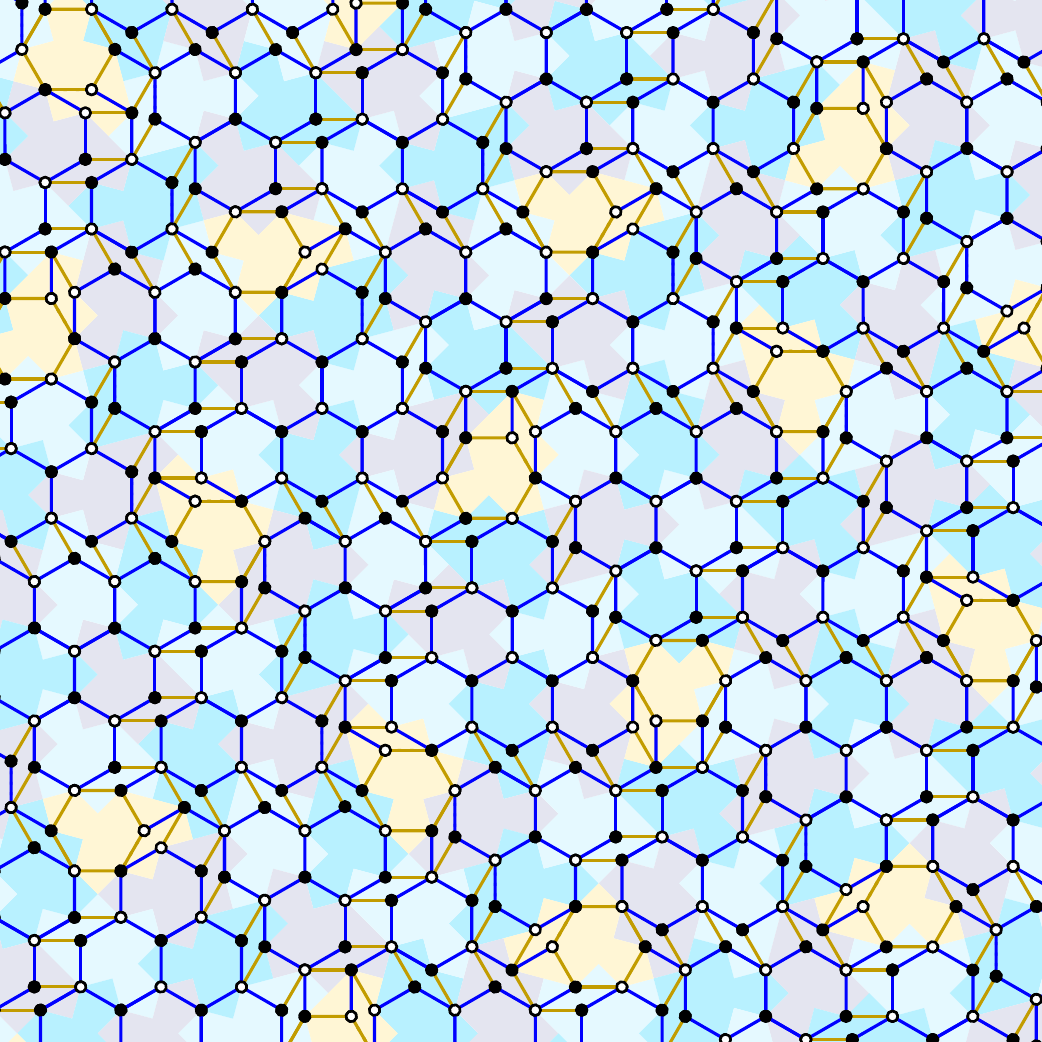}
}

Note that for the corresponding $\iD3$, $\iD2$ tiling, there must be a pairing between adjacent $\iD3$ and $\iD2$ tiles, i.e.\ a pairing of the hexagons and the adjacent rhombs. Because the Spectre cannot be reflected, pink $\iD2$ (third column) in \Cref{fig:byrg} can only be associated to blue $\iD3$ and the green $\iD2$ (fourth column) to yellow $\iD3$ so we may as well colour them as follows:

\image{Orientations match those of \Cref{fig:enriched-deco}}{fig:col-conv-spec}{
\includegraphics[scale=1.1]{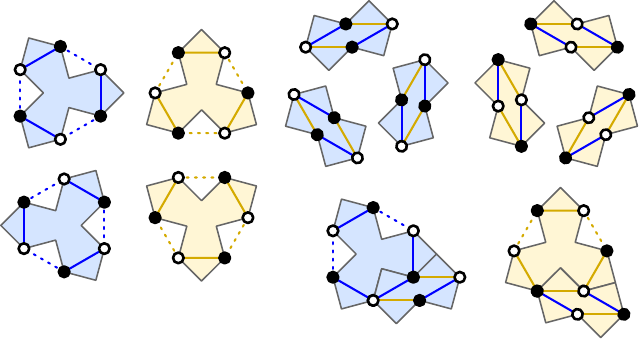}
}

We now study how blue hexes can cluster. Recall that by \Cref{prop:blue-hex-touch}, two blue hexes in contact must touch along an edge. 

\begin{definition}
We call \term{clusters} equivalence classes of blue hexes under adjacency.
\end{definition}

Let us start with a simple completion rule:

\begin{lemma}\label{lem:completion}
For any edge shared between two blue hexes, the black dot of this edge is surrounded by 3 blue hexes.
In other words: if we have

\nopagebreak

\image{}{fig:A0}{
\includegraphics[scale=1]{hextouch-A0.pdf}
}

then we have the darker piece below:

\nopagebreak

\image{}{}{
\includegraphics[scale=1]{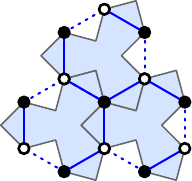}
}
\end{lemma}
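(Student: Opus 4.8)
The plan is to argue locally at the black dot, using angles, the bipartite colouring, and the dent-orientation convention of \Cref{fig:byrg}. Write $p$ for the black dot of the edge $e$ shared by the two blue hexes $H_1,H_2$. A regular hexagon has interior angle $1/3$, so the corners of $H_1$ and $H_2$ already occupy $2/3$ of the full turn at $p$, leaving exactly $1/3$ for the remaining decoration faces through $p$. The available corner angles are $1/3$ (any hex), $1/4$ (a square) and $1/12$, $5/12$ (the acute, resp.\ obtuse, corner of a rhomb); in twelfths, $4a+3s+q_1+5q_2=4$ forces the multiset of fillings to be $\{1/3\}$, $\{1/4,1/12\}$ or $\{1/12,1/12,1/12,1/12\}$. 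So the remaining $1/3$ is filled by (a) one more hex, (b) a square together with the acute corner of one rhomb, or (c) the acute corners of four rhombs.

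Cases (a) and (c) are immediate. In (a) the three hexes $H_1,H_2,H_3$ fill the entire turn at $p$, so $H_3$ is adjacent to $H_1$ along an edge; adjacent hexes are translates of one another (their $\iD3$ pieces, one filling an inward dent of the other, have the same orientation, as noted after \Cref{prop:blue-hex-touch}), hence $H_3$ lies in the same colour class as $H_1$, i.e.\ is blue, and $p$ is surrounded by the three blue hexes $H_1,H_2,H_3$ --- the desired conclusion. In (c) each of the four rhombs presents its acute corner at $p$, which on the underlying $\iD2$ is a vertex of tile-angle $1/3$; together with the $\iD3$'s under $H_1,H_2$ this would give six distinct tiles each contributing a tile-angle $1/3$ at $p$, for an angle sum of $2\neq 1$ --- impossible. (Equivalently, the four rhombs leave no room between them, so two of them are adjacent, contradicting \Cref{prop:dec-adj}.)

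The substance of the lemma is the exclusion of case (b), and this is where being at the \emph{black} dot and working in a Spectre tiling are essential. Put the faces around $p$ in cyclic order as $H_1,R,S,H_2$, with edges $k,h,g,e$ separating consecutive pairs. A square face is a union of four inward-pointing dents, so --- orienting each of its edges from its black to its white endpoint and recalling that a dent sits to the right of a blue segment and to the left of a yellow one --- its four edges alternate blue, yellow, blue, yellow with the blue ones running clockwise around the square; for a rhomb face, whose four dents point outward, the same holds but with the blue edges running counter-clockwise. Since $g$ bounds the blue hex $H_2$ it is blue, hence a blue edge of $S$, so the adjacent edge $h$ is a yellow edge of $S$, hence a yellow edge of $R$ incident to its acute corner $p$; tracking the black-to-white directions (all emanating from the black vertex $p$) then pins the four edge directions at $p$ to be multiples of $1/12$ with successive gaps $1/12,1/4,1/3,1/3$. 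Finally one brings in the Spectre pairing: every $\iD2$ is glued to a $\iD3$ of its own parity along a rhomb edge that is an \emph{inward} dent of that $\iD3$ and carries the matching colour (\Cref{fig:col-conv-spec}); since the two halves of a Spectre sit rigidly relative to one another, one checks that the acute corner of $R$ cannot be the black dot $p$ while also carrying the edge $h$ to the square and the edge $k$ to the blue hex $H_1$ (along which $H_1$ is forced to present an inward dent) with the sector sizes just found. Hence (b) is impossible, only (a) survives, and the lemma is proved. I expect this closing verification to be the one genuinely delicate point: cases (a) and (c) are a line each, whereas eliminating the square-plus-rhomb picture requires propagating the dent convention all the way around $p$ together with the rigidity of the $\iD3$--$\iD2$ attachment, and one must take care not to be off by a reflection or a half-turn.
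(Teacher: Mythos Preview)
Your case split at the decoration level into (a) a third hex, (b) a square plus an acute rhomb corner, and (c) four acute rhomb corners is correct, and your disposals of (a) and (c) are fine. The problem is case (b): you set up the picture carefully --- the cyclic order $H_1,R,S,H_2$, the colours of $k,h,g,e$, the clockwise/counter-clockwise running of blue/yellow edges on squares and rhombs --- but then the actual exclusion is left as ``one checks that the acute corner of $R$ cannot be the black dot $p$\ldots''. That is the whole content of the lemma, and as written it is not an argument. In fact nothing you have recorded about the local picture at $p$ alone rules out (b); the contradiction only appears once you chase where the companion $\iD3$ of the $\iD2$ underlying $R$ would have to sit, and you never do that.

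The paper's proof is shorter precisely because it stays at the $\iD3$/$\iD2$ level rather than the decoration level. At the black dot one of $H_1,H_2$ has an inward dent on its edge into the $1/3$ gap; that in-dent can be filled only by a blue $\iD3$ (giving case (a)) or by a yellow $\iD2$ (giving exactly your case (b)). In the latter case the companion yellow $\iD3$ must attach along a \emph{yellow} edge of the rhomb. One of those yellow edges is your $h$, already against the square $S$; the other is the far yellow edge $qr$, and at $q$ the angles $1/3$ (from $H_1$) plus $5/12$ (obtuse rhomb corner) leave exactly $1/4$, forcing a square there too. So neither yellow edge can meet a hex and the companion has no room. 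That is the missing step; once you see the $\iD2$ is yellow and track its two yellow edges, (b) dies in two lines.
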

\begin{proof}
Indeed in the figure below, the pink inward dent can only be filled in by a blue $\iD3$ or a yellow $\iD2$, and only in one way in each case.

\image{}{}{
\includegraphics[scale=1]{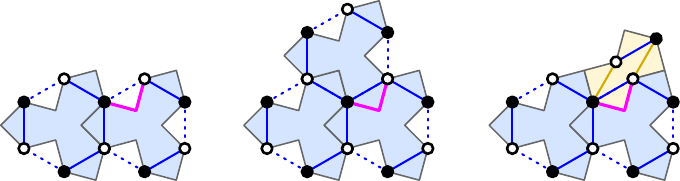}
}

But in the second case, the companion yellow $\iD3$ of the yellow $\iD2$ has no room to be placed (this is easier to see on the decoration graph: where the yellow rhomb is in contact with the two hexes, there only remains room for squares, so we cannot fit its companion yellow hex).
\end{proof}

\begin{proposition}\label{prop:T}
  A blue hex cluster is necessarily a $\hT1$, $\hT2$ or $\hT3$.
\end{proposition}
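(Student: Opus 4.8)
The plan is to argue inside the honeycomb carrying the blue hexes of a cluster. By \Cref{prop:blue-hex-touch} and the remark following it, the blue hexes of a cluster are pairwise parallel translates glued along edges, so they form a connected sub-family of the cells of one fixed honeycomb $\mathcal H$; a cluster is a maximal such family. In $\mathcal H$ every vertex is a corner of exactly three cells, any two of which are edge-adjacent, so \Cref{lem:completion} translates into: \emph{if $v$ is a black vertex of $\mathcal H$ and two of the three cells at $v$ are blue, then so is the third} (the edge shared by the two blue cells is incident to $v$, which being black is its black endpoint). Hence the number of blue cells around a black vertex is $0$, $1$ or $3$; call the vertex \emph{saturated} in the last case. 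I shall also use the pairing, noted above, of each hexagon of the decoration with an \emph{adjacent} rhomb: it shows that at least one of the six edges of any blue hex is shared with a rhomb, hence not with a blue hex.

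Fix a blue hex $H$. Its six edges come in three pairs, one pair at each of its three black corners. A blue neighbour of $H$ across an edge incident to the black corner $v$ forces $v$ to be saturated (by \Cref{lem:completion}, $v$ being the black endpoint of that edge); conversely a saturated black corner of $H$ contributes exactly two blue neighbours of $H$, the two other cells at that corner. Thus $H$ has $2k$ blue neighbours, where $k\in\{0,1,2,3\}$ is its number of saturated black corners, and $k=3$ is impossible since it would force all six edges of $H$ to be shared with blue hexes. So every blue hex has $0$, $2$ or $4$ blue neighbours, and when it has $4$ its unique unsaturated black corner is the one carrying the adjacent rhomb. This already disposes of every cluster without a hex of blue-degree $4$: if such a cluster has a hex with no blue neighbour, that hex is the whole cluster, a $\hT1$; otherwise every hex has blue-degree $2$, and picking one, $H$, with blue neighbours $A,B$, these lie at a common saturated black corner $v$, so $\{H,A,B\}$ is the triangle of the three cells at $v$, after which $A$ (already adjacent to both $H$ and $B$) and likewise $B$ have no further blue neighbour, so the cluster is exactly this $\hT2$.

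It remains to treat a cluster with a hex $H$ of blue-degree $4$, having saturated black corners $v_1,v_2$ (its third black corner, which carries the rhomb, being unsaturated). The corners $v_1$ and $v_2$ subtend an angle $1/3$ at the centre of $H$, so $H$ has a single white corner $w$ adjacent to both, and the edges $v_1w$ and $wv_2$ of $H$ are shared with two of the blue neighbours of $H$, say $N_1$ (with corners $v_1,w$) and $N_2$ (with corners $w,v_2$); the remaining two blue neighbours are the third cell $M_1$ at $v_1$ and the third cell $M_2$ at $v_2$. Now $H,N_1,N_2$ are precisely the three cells at $w$, so $N_1$ and $N_2$ are edge-adjacent; by \Cref{lem:completion} the black endpoint $u$ of $N_1\cap N_2$ is then saturated, so its third cell $H_6$ is blue, and $H_6$ is distinct from the other five hexes (the edge $N_1\cap N_2$ belongs to none of them). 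The six hexes $H,N_1,N_2,M_1,M_2,H_6$ form precisely a $\hT3$: the three cells $H,N_1,N_2$ around the central vertex $w$, together with the third cell --- $M_1$, $M_2$, $H_6$ respectively --- at each of the three saturated black vertices $v_1,v_2,u$ sitting at the outer corners of that triangle.

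What remains --- and the step I expect to be the real obstacle --- is that this cluster cannot be enlarged, i.e.\ that each of the tip hexes $M_1,M_2,H_6$ has blue-degree exactly $2$; here the argument stops being bookkeeping and becomes a finite but genuinely geometric case check. I would argue by contradiction: if a tip, say $M_1$, had a second saturated black corner $p$ --- necessarily distinct from $v_1$ and from the black corner of $M_1$ that carries its own rhomb --- then the third cell at $p$ would be blue, and, continuing the local analysis used to prove \Cref{lem:completion} (the only possible completions of an inward $\iD3$-dent, as in \Cref{fig:notch-fill}, and the lack of room for the companion of a competing yellow $\iD3$), this cell together with the tiles it forces would conflict with the tiles already forced around the $\hT3$. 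Carrying out this check for each of the three tips and each candidate extra saturated corner --- a bounded verification, most transparently done on the decoration graph with the help of the figures above --- rules out any enlargement, and therefore every blue hex cluster is a $\hT1$, a $\hT2$ or a $\hT3$.
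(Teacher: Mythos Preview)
Your argument is correct up to the construction of the $\hT3$ around a degree-$4$ hex, but the final step---showing the three tip hexes $M_1,M_2,H_6$ cannot have a further saturated black corner---is left as an unperformed ``bounded verification''. As written this is a genuine gap: nothing you have established rules out, say, $M_1$ acquiring a second saturated corner and the cluster continuing to grow; your degree bound $\le 4$ does not by itself prevent an unbounded chain of such extensions, and the vague appeal to redoing the dent-filling analysis of \Cref{lem:completion} is not an argument.

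The paper closes this gap with a much shorter global observation that you have in fact already half-proved. First, \Cref{lem:completion} forces the whole cluster to be \emph{triangular}: any two adjacent blue hexes complete to three around their common black vertex, and iterating this, a connected family closed under that rule is either some $\hT n$ or contains arbitrarily large ones. Second---and this is exactly your ``$k=3$ is impossible'' step---a $\hT n$ with $n\ge 4$ contains a hex all six of whose edges are shared with blue hexes, so its $\iD3$ has no room for a paired $\iD2$, contradicting the Spectre pairing. In other words, the rhomb obstruction you used locally to cap the degree at $4$ is precisely what bounds the side of the triangle globally; your case split on degree is unnecessary once one notes triangularity, and the unfinished tip analysis disappears.
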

\begin{proof}
If a cluster is not a $\hT1$ then there are at least two adjacent blue hexes.
By \Cref{lem:completion}, the cluster is necessarily triangular (or is infinite containing arbitrarily large triangular arrangements).
This triangle has a side made of at most 3 hexes, otherwise there is at least a hex inside that is completely surrounded by hexes, preventing it to have a paired rhomb.

\image{The central $\iD3$ cannot be paired with a $\iD2$.}{}{
\includegraphics[scale=1]{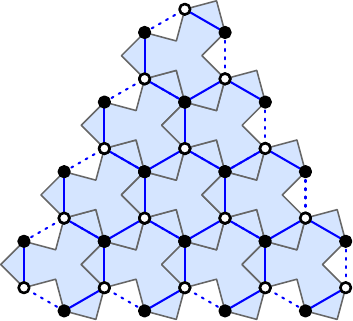}
}
\end{proof}

Here are the three possible clusters, up to rotation:

\nopagebreak

\image{Clusters $\hT1$, $\hT2$ and $\hT3$, up to rotation}{fig:clus}{
\includegraphics[scale=1]{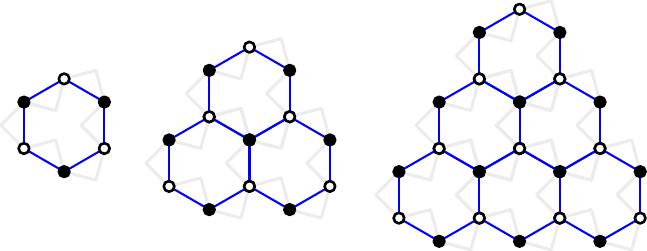}
}

Each $\iD3$ corresponding to these hexes has a paired $\iD2$, and this pair completely surrounds a black dot.

Blue hexes with bipartite colouring can come in two orientations differing by $1/3$, that correspond to the two possible orientations of the underlying blue $\iD3$.
This means that the clusters $\hT1$, $\hT2$ and $\hT3$ also come in two possible orientations. For $\hT2$ and $\hT3$, this orientation can be deduced from the orientation of the hex arrangement, but for $\hT1$ this is invisible if we only look at the blue hexagon.
By abuse of language we say that the clusters in \Cref{fig:clus} \term{point up}, though they also point in two other directions.
And we will say that the other orientation \term{points down}.

\image{Down pointing clusters.}{fig:clus-2}{
\scalebox{1}[-1]{\includegraphics[scale=1]{hextouch-A3b.pdf}}
}

\subsection{From cluster to cc}

\subsubsection{Visual aids for Spectre tilings}\label{ss:visual-aids}

In this section we indicate boundary shapes that can be visually identified and used to exclude situation, or force the presence of a given tile in a given orientation.

The reader should bear in mind the following: the Spectre can be seen as a 14-gon with equal length edges with vertices of two types: 7 with angles multiples of 1/3 and 7 with angles multiple of 1/4 (one of them flat).
Then in a whole plane tiling by the Spectre, for two tiles in contact, each vertex of one tile touches a vertex of the other one, and their type is the same.

\begin{proposition}\label{prop:impo-11}
The following polygonal line cannot be included, even after rotation, in the union of boundaries of tiles in a whole plane tiling by the Spectre, where the red lines have the same length as the Spectre edge (seen as a 14-gon).

\nopagebreak

\image{}{fig:impo-11}{
\includegraphics[scale=0.85]{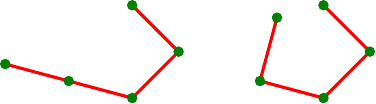}
}
\end{proposition}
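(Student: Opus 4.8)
The plan is to reason by contradiction. Suppose that, after some rotation, the polygonal line of \Cref{fig:impo-11} lies inside the union of tile boundaries of a whole-plane Spectre tiling. Since every edge of a Spectre (viewed as a $14$-gon) has the common length of the red segments, and since in a tiling vertices match vertices and edges match edges, each red segment of the line is a full edge of some tile, and each vertex where the line turns is a genuine tile vertex whose type — angle a multiple of $1/3$ or of $1/4$ — is read off from the turning angle. I would begin by recording this data along the whole line.

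Next I would determine the forced local picture tile by tile. At each vertex of the line, the tiles sitting on one side contribute interior angles of the matching type that sum to the prescribed angle; because the Spectre offers very few angles of each type, these sums decompose essentially uniquely, pinning down the adjacent tile and its orientation (recall that all tiles occur in the $12$ orientations that are multiples of $1/12$, and that no reflections are allowed). Placing one tile exposes new partial vertices and new free edges, which force the next tile, and so on; I would push this propagation as far as it goes. It is often cleanest to translate the situation into the decoration graph via \Cref{prop:d32_hrs}: the forced edges become forced hexes, rhombs and squares, and the parity colouring of \Cref{prop:bipar} together with \Cref{prop:dec-adj} and \Cref{prop:blue-hex-touch} makes the rigidity transparent.

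The contradiction should then emerge as one of a small number of standard failure modes: a vertex whose accumulated angle exceeds a full turn; a vertex of type $1/4$ forced to meet one of type $1/3$; a $\iD3$ piece (a decoration hexagon) that ends up completely encircled, so that its paired $\iD2$ has no room — exactly the obstruction used in the proofs of \Cref{lem:completion} and \Cref{prop:T} — or, making essential use of the no-reflection hypothesis, a spot that can only be filled by a mirror image of the Spectre. I expect the real work to be the case analysis: at one or two vertices of the line the angle may admit more than one admissible filling, so the argument branches, and each branch must be driven independently to its own contradiction. The delicate point is to show that every branch dies after at most one or two further forced tiles, so that the case tree stays small; the remainder is bookkeeping with angles and vertex-type parity.
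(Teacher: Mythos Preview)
Your plan is a reasonable general strategy for this kind of forbidden-pattern statement, and nothing in it is wrong as such, but it is vastly heavier than what the situation requires. The paper's proof is a single sentence: look at the \emph{central} vertex of the polygonal line, where the two red segments meet at an angle of $1/3$, and check directly that no (unreflected) Spectre can be placed so that this $1/3$ angle is one of its interior corners bounded by those two edges; on one of the two sides only the mirror image of the Spectre would fit, and reflections are disallowed. That is the entire argument --- no propagation, no decoration graph, no case tree.

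In other words, the contradiction is purely local at one vertex and follows from a twelve-way inspection (the twelve orientations of the Spectre). Your proposal to record data along the whole line, force tiles outward, translate into the $\iD3$/$\iD2$ decoration via \Cref{prop:d32_hrs}, and invoke \Cref{prop:bipar}, \Cref{prop:dec-adj}, \Cref{prop:blue-hex-touch}, \Cref{lem:completion} and \Cref{prop:T} would eventually also reach a contradiction, but none of that machinery is needed here. What you are missing is not an idea but the observation that the obstruction is already present at a single vertex; the heavier apparatus you describe is exactly what the paper deploys for the \emph{later} and harder statements (e.g.\ \Cref{prop:autofill-1} and beyond), where \Cref{prop:impo-11} is itself used as a one-step visual test. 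So: carry out the twelve-orientation check at the central $1/3$ vertex explicitly, note that only a reflected tile fits on one side, and you are done.
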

\begin{proof}
A direct inspection shows that the Spectre cannot fit the 1/3 angle at the central vertex. (On the left, only the reflected Spectre fits.)
\end{proof}

\begin{proposition}\label{prop:autofill-1}
In each of the figures below, if the polygonal line on the left of the arrow is included in the union of boundaries of tiles in a whole plane tiling by the Spectre, then a tile must be placed as the red tile on the right of the arrow.

\nopagebreak

\image{}{}{
\begin{tikzpicture}
\node at(0,0){\includegraphics[scale=1]{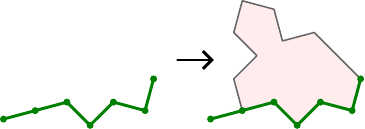}};

\node at(7,0){\includegraphics[scale=1]{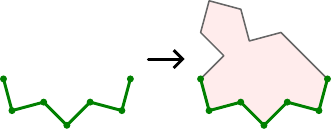}};
\end{tikzpicture}
}

\image{}{}{
\begin{tikzpicture}
\node at(0,0){\includegraphics[scale=1]{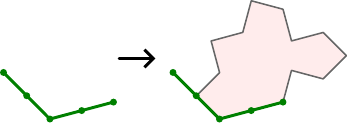}};

\node at(7,0){\includegraphics[scale=1]{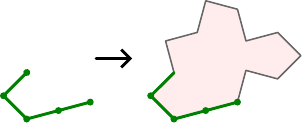}};
\end{tikzpicture}
}

\image{}{}{
\begin{tikzpicture}
\node at(0,0){\includegraphics[scale=1]{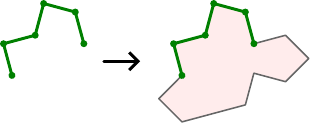}};
\node at(7,0){\includegraphics[scale=1]{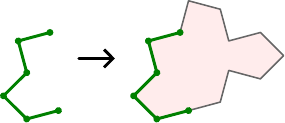}};
\end{tikzpicture}
}
\end{proposition}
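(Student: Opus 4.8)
The plan is to dispose of all three figures by a single local scheme: in each case the polygonal line on the left of the arrow determines, at one of its vertices $v$, an angular gap that must be filled by tiles, and I will show this gap admits exactly one legal Spectre placement, namely the red tile on the right.

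First, for each figure I would single out the vertex $v$ where the forced tile must attach, and read off from the two edges of the given line meeting at $v$ both the angular sector still to be filled and the \emph{type} of $v$ as a Spectre vertex — that is, whether its internal angle there is a multiple of $1/3$ or of $1/4$, using the description preceding \Cref{prop:impo-11} of the Spectre as a $14$-gon with seven vertices of each type and all edges of a common length. Since in a whole plane tiling vertices meet only vertices of the same type and edges meet edges, the set of tiles that can even be attempted at $v$ is finite and small.

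Next I would enumerate those candidate placements explicitly. The Spectre occurs in only $12$ orientations and is never reflected; fixing the type of $v$ and the first bounding edge of the gap pins down which of the seven vertices of the $14$-gon can sit at $v$ and in which orientation, so only a handful of candidates survive. For each surviving candidate I would argue one of the following: it \emph{is} the asserted red tile; or, together with the given polygonal line, it produces a sub-configuration that is a rotated copy of the forbidden line of \Cref{fig:impo-11}, so \Cref{prop:impo-11} rules it out; or it creates a $1/3$ gap that, exactly as in the proof of \Cref{prop:impo-11}, could only be completed by an (inadmissible) reflected Spectre; or it simply leaves a residual angular gap whose measure cannot be written as a nonnegative combination of the admissible Spectre angles ($1/4$, $1/3$, $2/3$, $3/4$, etc.), an impossibility. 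Discarding all such candidates leaves the red tile as the unique legal completion.

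The step I expect to be the main obstacle is the \emph{exhaustiveness} of this case split: one must be sure that every orientation of the Spectre genuinely distinct at $v$ has been listed, and that the exclusion invoked for each — whether \Cref{prop:impo-11}, the reflection obstruction, or an angle-sum count — is actually valid for that particular figure and not merely suggested by the picture. Once the bookkeeping is fixed (list the $\le 12$ orientations, drop those with the wrong vertex type or an incompatible first edge, then kill the remaining few), each of the three figures collapses to a short finite verification, the three differing only in the local data read off at their respective vertices $v$.
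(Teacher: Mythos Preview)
Your plan is correct and is essentially the paper's own argument: fix a vertex on the given polygonal line, list the few Spectre placements compatible with the edge directions and the vertex type there, and eliminate all but the red one. The paper organizes the six cases into three pairs rather than treating them uniformly: for the first pair every alternative placement at the dent produces a rotated copy of the forbidden line of \Cref{prop:impo-11}; for the middle pair the $1/3$ angle at the central vertex admits literally one fitting orientation, so no further exclusion is needed; for the last pair there is exactly one alternative placement, and the paper exhibits it and observes (via the resulting outline) that it cannot be completed. Your catalogue of exclusion mechanisms --- \Cref{prop:impo-11}, the reflection obstruction, and angle-sum incompatibility --- covers all of these, so once you actually carry out the enumeration your proof will coincide with the paper's; just be aware that the middle two cases are simpler than your general scheme suggests (no appeal to \Cref{prop:impo-11} is needed there), and that for the last two the single surviving alternative is dispatched by inspection of the outline it creates rather than by an abstract angle count.
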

\begin{proof}
In the first two situations, every other way to fill the dent at the central vertex gives rise to an outline forbidden by \Cref{prop:impo-11} (the red polygonal lines in the figure below).

\image{}{}{
\includegraphics[scale=0.8]{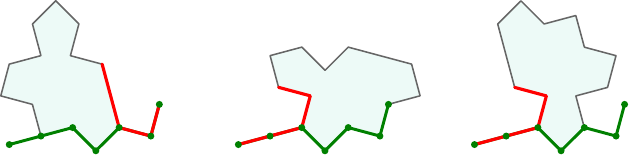}
}

\image{}{}{
\includegraphics[scale=0.8]{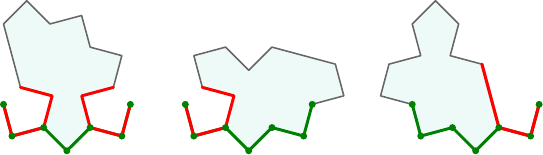}
}

In the next two situations, there is only one way to fit a tile at the 1/3 angle of the central vertex.

In the last two, the only other way to fill the circled vertex is as follows:

\image{}{}{
\begin{tikzpicture}
\node at (0,0){\includegraphics[scale=0.8]{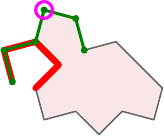}};
\node at (4,0){\includegraphics[scale=0.8]{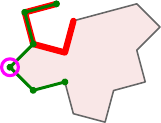}};
\end{tikzpicture}
}
\end{proof}

\subsubsection{Properties of odd tiles}

\begin{proposition}\label{prop:no-full-blue}
No whole plane tiling by the Spectre can have only even or only odd tiles.
\end{proposition}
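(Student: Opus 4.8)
The plan is to argue by contradiction, handling the two cases in parallel; I describe the case in which every tile is even. First I would pass to the decoration tiling (\Cref{prop:d32_hrs}): the hypothesis then says that every $\iD3$ is blue, every hexagon of the decoration is blue, and, in the colour convention of \Cref{fig:col-conv-spec}, every $\iD2$ is of the ``pink'' type, so no yellow $\iD2$ occurs. Every blue hexagon lies in a blue cluster, which by \Cref{prop:T} is a $\hT1$, $\hT2$ or $\hT3$, hence finite; so it is enough to show that no such cluster can occur in an all-even tiling.

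I would fix a cluster $C$ and examine the $\iD3$'s of its hexagons. Each $\iD3$ has three inward dents; one is consumed by the companion $\iD2$ of that $\iD3$ (the partner it is glued to inside a Spectre). By the dent-matching rule recalled before \Cref{prop:d32_hrs}, each remaining inward dent is filled either by an outward dent of another $\iD3$ or by an outward dent of a $\iD2$. In the first case the two $\iD3$'s share their orientation, so the second one is again blue and its hexagon, being adjacent, lies in the same cluster $C$. In the second case I would argue --- exactly as in the proof of \Cref{lem:completion}, where a re-entrant (``pink'') dent was shown to admit only a blue $\iD3$ or a yellow $\iD2$ --- that the $\iD2$ is forced to be yellow, which is excluded. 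Granting this, every inward dent of every $\iD3$ of $C$ is accounted for inside $C$: one per hexagon goes to its companion, and the other two per hexagon are matched across edges internal to $C$ with outward dents of neighbouring hexagons. Since each internal edge of $C$ supplies an outward dent to exactly one of its two hexagons, this forces $C$ to have $2|C|$ internal edges, whereas a $\hT1$, $\hT2$, $\hT3$ has respectively $0$, $3$, $9$ of them --- a contradiction in each case. Hence no cluster occurs, contradicting the presence of blue hexagons.

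For the all-odd case I would swap the two colours: all $\iD3$'s are yellow and all $\iD2$'s ``green''. Yellow hexagons are mutually parallel (their $\iD3$'s have orientations differing by multiples of $1/3$), so the arguments behind \Cref{prop:blue-hex-touch}, \Cref{lem:completion} and \Cref{prop:T} apply to them verbatim, and the same dent-count goes through with ``pink'' $\iD2$'s now playing the excluded role.

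The main obstacle is the claim, used above, that in an all-even tiling a $\iD2$ filling a non-companion inward dent of a blue $\iD3$ must be yellow: a priori an outward dent of a ``pink'' $\iD2$ (the companion of some \emph{other} blue $\iD3$) might fill it. I expect to exclude this with the vertex-colouring constraint of \Cref{prop:bipar} together with \Cref{prop:impo-11} and \Cref{prop:autofill-1}: after placing such a pink $\iD2$, following the outline it creates around the re-entrant vertices and applying \Cref{prop:autofill-1} repeatedly should either reproduce the forbidden outline of \Cref{prop:impo-11} or force a tile in an odd orientation. Pinning down this finite but intricate local check, and confirming that the companion $\iD2$ really consumes exactly one inward dent of its $\iD3$, are the steps that need the most care.
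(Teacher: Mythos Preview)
There is a real gap. Your central claim---that in an all-even tiling a $\iD2$ filling a non-companion inward dent of a blue $\iD3$ is forced to be yellow---does not follow ``exactly as in the proof of \Cref{lem:completion}''. In that lemma the black dot $v$ already carries two hexes, leaving only $1/3$ of a turn; a $\iD2$ placed there must therefore meet $v$ at its \emph{acute} rhomb corner ($1/12$), and it is this angle budget that pins the $\iD2$ orientation to the yellow class. For a generic inward dent of a $\iD3$---say one of the two free dents of a $\hT1$---there is no second hex at $v$ and no such budget: the $\iD2$ may equally well meet $v$ at its \emph{obtuse} corner ($5/12$). That placement is exactly the companion-type attachment, hence a \emph{blue} $\iD2$, and nothing local prevents this $\iD2$ from being the actual companion of a different blue $\iD3$ sitting on its far blue edge. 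So the two remaining inward dents of a $\iD3$ in a cluster need not be matched across internal cluster edges at all, and your $2|C|$ count collapses. The fix you sketch via \Cref{prop:impo-11} and \Cref{prop:autofill-1} would have to rule out this obtuse-corner placement, which is a bona fide adjacency between two even Spectres; those autofill rules do not obviously do that, and you have not carried the check out.

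The paper's argument goes a different way and sidesteps the issue. In an all-even tiling the yellow decoration edges occur only on $\iD2$ pieces; since rhombs cannot border rhombs (\Cref{prop:dec-adj}), each yellow edge of a $\iD2$ abuts a square, so the $\iD2$'s line up in infinite square--rhomb chains along the yellow direction. A short geometric check shows the blue sides of the squares in such a chain cannot be filled by $\iD2$'s, hence are filled by $\iD3$'s; five consecutive chain rhombs then force a blue-hex cluster too large for \Cref{prop:T}.
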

\begin{proof}
By rotation, is enough to prove the claim for even tiles.
By contradiction, assume this is possible and consider the decoration graph and also the induced $\iD3$ and $\iD2$ tiling.
In this case we have that the yellow edges belong only to $\iD2$ pieces.
The dent on these yellow edges can only belong to squares. So each $\iD2$ is part of an infinitely long chain as on the left column of \Cref{fig:chain}.

The squares cannot be filled by pieces of type $\iD2$ because the dent must be supported by a blue segment and the piece would have to be oriented as on the second column and hence cannot fit.

So they are filled by pieces of type $\iD3$ as on the third column. But then we get clusters of blue hexes that are too big and contradict \Cref{prop:T}.

\image{A chain of 5 pieces $\iD2$ placed as on the left column (blue $\iD2$ with touching dents supported by yellow segments) cannot appear in a whole plane tiling of the plane by Spectres. }{fig:chain}{
\includegraphics[scale=0.95]{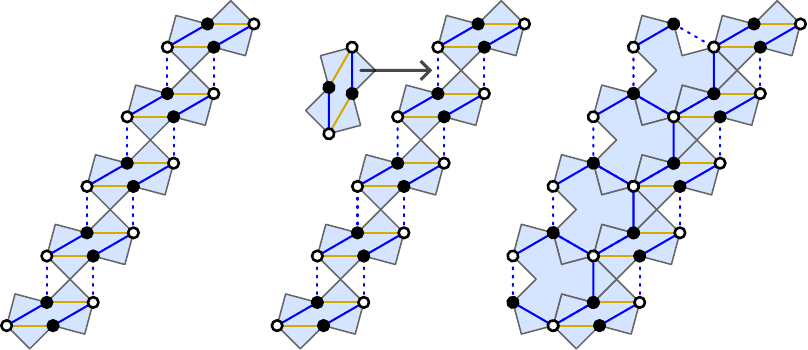}
}
\end{proof}

The central argument in \Cref{fig:chain} will be generalized in \Cref{lem:no-spec-rs-chain}.

\medskip

We reprove (and improve) the forced environment of odd tiles explained in Section~4 of \cite{chiral}.
The proof of the following key proposition goes through the examination of many cases.
It is not obvious how to optimize that and I did not manage to take a significant advantage of the tile decorations for that.
It would be nice to have a more conceptual proof of \Cref{prop:odd-env-1}.

\begin{remark*}
By seeing a draft of the present notes, the author of \cite{jS} informed us that for Hats tilings, the latter article includes a conceptual proof of the existence of a parity class which cannot form clusters with more than one tile, from which a neighbouring environment is somewhat easy to deduce.
It is quite possible that one could adapt these arguments for Spectre tilings.
\end{remark*}

\begin{proposition}\label{prop:odd-env-1}
Consider in a whole plane tiling by the Spectre, two tiles of different parity and in contact along at least one edge.
Then they are, up to a rotation, part of the following arrangement, with one of them being the red tile. The non-red tiles all have a parity opposite to the red one.

\image{}{fig:odd-env}{
\includegraphics[scale=0.666]{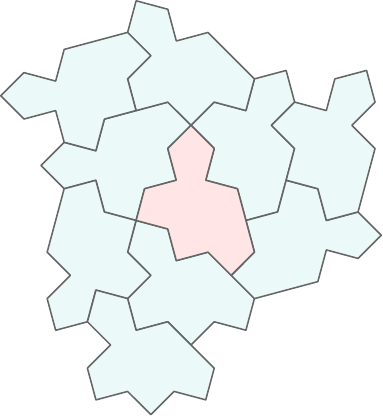}
}
\end{proposition}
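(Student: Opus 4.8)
The plan is to establish \Cref{prop:odd-env-1} by growing the forced neighbourhood of the odd tile of the pair, working directly with the Spectre tiles and using the visual aids of \Cref{ss:visual-aids} to force tiles and the blue-cluster constraints to eliminate impossible branches. First I would normalise. Since the two tiles have opposite parity, the rotation carrying one onto the other is an \emph{odd} multiple of $1/12$ — the parity of a Spectre is exactly its orientation class modulo $2$ — and along the shared edge the vertex types ($1/3$-vertices versus $1/4$-vertices) must match, as recalled just before \Cref{prop:impo-11}. Fixing the odd tile in a reference position, these constraints together with non-overlap leave only finitely many ways an even tile can be glued to it along an edge; I would take these as the base configurations. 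Observe that the red tile of \Cref{fig:odd-env} is necessarily the odd one of the pair, since the non-red tiles are to have opposite parity and the growth below surrounds the odd tile with even tiles.

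For each base configuration I would then propagate outwards. The engine is routine: pick a vertex $v$ on the boundary of the current patch where the surrounding angles do not yet sum to $1$; the deficit is a sum of Spectre vertex angles of the type of $v$; then \Cref{prop:autofill-1} frequently forces the unique next tile, \Cref{prop:impo-11} excludes some of the other placements, and where real ambiguity remains one branches. A branch dies when some placement creates, after rotation, a line forbidden by \Cref{prop:impo-11}, or when the tiles it forces produce a blue-hex cluster that is not a $\hT1$, $\hT2$ or $\hT3$ (for instance a blue hex surrounded by blue hexes), contradicting \Cref{prop:T} via \Cref{lem:completion} and \Cref{prop:blue-hex-touch}; the chain argument underlying \Cref{prop:no-full-blue} (\Cref{fig:chain}) can be used in the same spirit. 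The cluster bound of \Cref{prop:T} is what forces the case tree to terminate.

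Finally I would check that every branch ends either in a contradiction or as a sub-patch of a rotation of the arrangement of \Cref{fig:odd-env}, and that this arrangement is itself internally consistent — it occurs inside the whole-plane tilings produced by the construction algorithm of \cite{chiral} — so that no branch reaching it is killed. This yields the statement, the red tile being the odd member of the pair and the other tiles of the arrangement its forced even environment.

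The chief obstacle is the size and bookkeeping of this analysis: there are several base configurations and many branchings, and the substance of the proof lies in arranging it so that it is manifestly exhaustive and verifiable — fixing a sensible order in which to treat exposed vertices, and isolating a short list of recurrent forbidden outlines beyond \Cref{prop:impo-11}. A subtlety worth flagging is that a tile need not be forced by the local angle condition alone, so excluding a placement can require inspecting one or two further rings; the growth is therefore not a purely local propagation and the induction has to allow such look-aheads. As the remark preceding the statement suggests, a more conceptual argument — perhaps modelled on the parity-class argument of \cite{jS} for the Hat — would be preferable, but I do not see how to avoid the case analysis here.
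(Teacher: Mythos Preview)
Your plan is the paper's approach in outline: enumerate the base adjacencies of an opposite-parity pair and grow outward with \Cref{prop:autofill-1} and \Cref{prop:impo-11}, branching where needed. Two points deserve comment.

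First, the assertion that the red tile is ``necessarily the odd one of the pair'' is premature and, as stated, circular. \Cref{cor:odds} and the convention that odd tiles are the isolated ones come \emph{after} this proposition; the paper's own case analysis in fact produces branches in which the \emph{even} tile emerges as the central red one, and disposes of them by noting that swapping colours and rotating by $-1/12$ recovers the reference picture. If you fix the odd tile and only look for it to become the red centre you will mis-read those branches as failures. The repair is easy---allow either tile to be red, or invoke the $1/12$-rotation symmetry between parities explicitly---but it must be made.

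Second, the paper organises the casework more tightly than a generic grow-and-branch: it first isolates two reusable forbidden patches (\Cref{prop:impo-13,prop:impo-12}), pins down exactly three base adjacencies up to $1/12$ rotation, and funnels two of them through the ``mystic'' pair so that one analysis does most of the work; the last tile of the ring then requires a separate, longer deduction. Notably \Cref{prop:T} never enters---all eliminations go through \Cref{prop:impo-11}, \Cref{prop:autofill-1} and the two auxiliary patches. Invoking the cluster bound is legitimate (it is proved earlier and does not depend on this proposition), but within the $\sim 9$-tile ring being grown it is unlikely to fire, and it is certainly not what makes the case tree terminate: termination is simply that the target patch has bounded size.
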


Since the proof is a bit long, we moved its content in \Cref{sub:pf-odd-env-1}. It makes an extensive use of the visual aids of \Cref{ss:visual-aids}.

\medskip

This allows us to reprove one of a key observations of \cite{chiral} (for this we could have stopped at \Cref{fig:last-1} in the previous proof).

\begin{corollary}\label{cor:odds}
In every whole plane tiling by the Spectre, there is a parity class for which the tiles are isolated.
\end{corollary}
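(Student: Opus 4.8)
The plan is to deduce the corollary from \Cref{prop:odd-env-1}, reading it as the assertion that one of the two parity classes is an \emph{independent set} for the edge-adjacency relation on tiles. By \Cref{prop:no-full-blue} both parities occur, and since any two tiles of a whole plane tiling are joined by a chain of edge-adjacent tiles, there is at least one \emph{mixed edge}, i.e.\ an edge shared by two tiles of opposite parity. Fix one mixed edge and apply \Cref{prop:odd-env-1}: up to rotation its two tiles lie in the fixed arrangement of \Cref{fig:odd-env}, one of them being the red tile $R_0$, and every tile of that arrangement — in particular every neighbour of $R_0$ — has the parity opposite to $R_0$. Hence $R_0$ is isolated within its parity class; write $p$ for that parity. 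Everything then hinges on the following statement, call it $(\star)$: \emph{for every mixed edge, the red tile provided by \Cref{prop:odd-env-1} has parity $p$.}

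Granting $(\star)$, I would finish as follows. Suppose some tile $t$ of parity $p$ is adjacent to another tile of parity $p$. If $t$ also had a neighbour $s$ of parity $1-p$, then $\{t,s\}$ is a mixed edge, so by \Cref{prop:odd-env-1} together with $(\star)$ its red tile has parity $p$ and must therefore be $t$; but then all neighbours of $t$ have parity $1-p$, contradicting that $t$ has a parity-$p$ neighbour. So $t$ has no neighbour of parity $1-p$. The same reasoning applies to every tile of the maximal cluster $K$ of parity-$p$ tiles containing $t$: each member of $K$ again has a parity-$p$ neighbour inside $K$ (as $|K|\ge 2$ and $K$ is connected), hence, by the argument just given, no parity-$(1-p)$ neighbour at all. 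Thus no edge leaves $K$, so by connectedness of the tiling $K$ is the whole tiling, contradicting \Cref{prop:no-full-blue}. Therefore no parity-$p$ tile has a parity-$p$ neighbour, which is exactly the claim, the isolated class being the one of parity $p$.

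The real obstacle is $(\star)$, the global consistency of the red/non-red dichotomy. A purely combinatorial argument does not suffice here: the abstract hypothesis ``every mixed edge has an endpoint surrounded only by tiles of the opposite parity'' is compatible with configurations containing isolated tiles of \emph{both} parities, so one must exploit the rigidity of \Cref{fig:odd-env}. In that arrangement the red tile is pinned down by its local surroundings — its full neighbourhood is displayed and consists of opposite-parity tiles forming a single connected cluster around it — so two overlapping copies of the arrangement must assign the shared tile the same role. I would therefore establish $(\star)$ by propagating this agreement outward from the starting mixed edge through overlapping copies of the \Cref{fig:odd-env} patch, using connectedness of the tiling; and, as the parenthetical remark preceding the corollary indicates, for this one needs only the initial part of the proof of \Cref{prop:odd-env-1} rather than the full arrangement.
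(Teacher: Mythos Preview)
Your reduction to the claim $(\star)$ is correct, and the argument you give assuming $(\star)$ is fine. But $(\star)$ is left as a sketch: ``propagating agreement through overlapping copies of the patch'' is not a proof, and you yourself flag it as the real obstacle. The paper closes exactly this gap with a short, concrete observation that you are missing.

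The point is that being ``the red tile'' is an \emph{intrinsic} property of a tile, not something that depends on which mixed edge you approached it from. In the arrangement of \Cref{fig:odd-env}, every cyan tile that is adjacent to the red tile is \emph{also} adjacent to another cyan tile of the same parity. Hence the non-red endpoint of a mixed edge always has a same-parity neighbour and therefore can never play the red role for any other mixed edge. So call a tile \term{iso} if all its neighbours have the opposite parity; this is equivalent to being the red tile of some (equivalently, every) mixed edge incident to it. With this intrinsic notion in hand, the paper runs a one-line chain argument: along any chain of adjacent tiles, two consecutive non-iso tiles must have the same parity (else one of them would be iso), and an iso tile's neighbours are non-iso of the opposite parity; hence all iso tiles along the chain share a parity. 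By connectedness of the tiling, all iso tiles in the plane share a parity, which is your $(\star)$. This replaces your vague ``propagation through overlapping patches'' with a direct argument and requires only the weaker form of \Cref{prop:odd-env-1} noted before the corollary.
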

\begin{proof}
We proceed as in \cite{chiral}, Section~4, page~13.
Every two tiles in the tiling can be connected by a chain of adjacent tiles.
Two adjacent tiles of different parity must be in the configuration of \Cref{prop:odd-env-1} up to rotation, and we call (in this proof) \term{iso} the one of the two tiles corresponding to the red central tile of the configuration.
Only one of the two tiles is iso since in the configuration, all cyan tiles adjacent to the red one are also adjacent to another cyan tile, and all cyan tiles of the configuration have the same parity, opposite to the red.
Now consider any chain of adjacent tiles. In it, the iso tiles are isolated: the next and previous tiles cannot be iso, and they have a different parity from the iso. Moreover for two consecutive tiles in the chain of different parity, one must be iso. By contraposition, two consecutive non-iso tiles must have the same parity. It follows that, along the chain, iso tiles must all have the same parity.
\end{proof}

We can assume that is this parity class is the \emph{odd} one and this is what we do on the rest of this article. 

\subsubsection{Properties of cc}\label{ss:prop-cc}

Consider the blue/yellow decoration graph generated by a tiling with the Spectre.
By convention we use use blue tiles, with a blue hexagon, for even tiles and yellow tiles, with a yellow hexagon, for odd tiles, where we assume as we just explained, that the odd tiles are the isolated ones.

\begin{definition}\label{def:cc}
We call \term{cc} the connected components of the blue graph in the decoration graph (i.e.\ without the yellow segments).
\end{definition}

There is the following observation: 

\begin{proposition}
The only vertices such that all edges sharing it have the same colour, are vertices at the meeting point of three blue hexes.
\end{proposition}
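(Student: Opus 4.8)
The plan is to prove the two implications separately. For ``three blue hexes $\Rightarrow$ monochromatic vertex'': since a regular hexagon has interior angle $1/3$ and $3\cdot\tfrac13=1$, three hexagonal faces incident to $v$ already exhaust the full angle around $v$, so they are the only faces there; the edges at $v$ are then precisely the three edges separating consecutive pairs among them, each a side of a blue hex and hence blue. So all edges at $v$ have the same colour.

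For the converse, assume all edges at $v$ share a colour. I would first recall that the blue/yellow colour of an edge is its direction modulo $1/6$: the decoration segments of $\iD3$ and $\iD2$, once assembled in a whole-plane tiling, all have directions that are multiples of $1/12$, and two such segments are equicoloured exactly when their directions differ by a multiple of $1/6$. Now examine the faces around $v$; by \Cref{prop:d32_hrs} each is a hex, a rhomb or a square. Along the boundary of a hexagon the direction turns by $1/6$ at each vertex, so all six sides of a hexagon are equicoloured; but along the boundary of a rhomb (interior angles $1/12$ and $5/12$) it turns by $1/12$ or $5/12$, and along a square by $1/4$, and none of these is $\equiv 0 \pmod{1/6}$. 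Hence in a rhomb or a square the two sides meeting at any vertex have opposite colours, so a rhomb or square incident to $v$ would contribute one blue and one yellow edge at $v$ --- contradicting our assumption. Therefore every face at $v$ is a hex, all of the common colour, and once more the angle count (each hex contributes $1/3$ and the total is $1$) forces exactly three of them to meet at $v$.

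It remains to exclude three \emph{yellow} hexes at $v$; this is the step I expect to be the main obstacle. Three hexagonal faces around $v$ are pairwise adjacent (consecutive faces around $v$ share an edge incident to $v$, and among three faces every pair is consecutive). I claim two adjacent hexes force the Spectres carrying them to be adjacent: the hexagon side they share carries, on each of its two sides, a built-in dent of the corresponding $\iD3$; since two outward dents would overlap there and two inward dents would leave an uncovered region straddling that edge, one $\iD3$ must have an outward dent filling an inward dent of the other along that side (as in \Cref{fig:notch-fill} and the remark after \Cref{prop:blue-hex-touch}), so the two $\iD3$'s --- hence their Spectres --- share the two legs of that dent and are adjacent. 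Consequently, three yellow hexes at $v$ would mean three pairwise adjacent odd Spectres, contradicting \Cref{cor:odds}. So the three hexes are blue, which completes the proof. The delicate point is the bookkeeping in this last step: verifying that ``two adjacent hexagonal faces'' genuinely yields ``two adjacent Spectres'' rather than two Spectres merely meeting at a vertex, and in particular that the shared dent-legs lie on the exposed boundaries of both Spectres and not on the internal $\iD3$--$\iD2$ cut.
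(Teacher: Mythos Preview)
Your proof is correct, and the worry you flag at the end is easily resolved: if $\iD3_1$ and $\iD3_2$ share the two legs of a dent, those legs cannot simultaneously lie on the internal $\iD3_1$--$\iD2_1$ seam, for then $\iD2_1$ and $\iD3_2$ would occupy the same region of the plane. Hence the dent legs are genuinely on both Spectre boundaries and the Spectres are adjacent.

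Your route is genuinely different from the paper's. You work with the \emph{faces} of the decoration tiling and use the direction-mod-$1/6$ interpretation of colour to see that a rhomb or a square contributes one edge of each colour at every corner (since $1/12$, $5/12$, $1/4$ are all odd multiples of $1/12$); monochromaticity then forces only hex faces, the angle count gives three of them, and \Cref{cor:odds} rules out yellow. The paper instead works \emph{tile by tile}: it observes (from the decoration pictures) that every decoration vertex of an even Spectre carries a blue edge and every decoration vertex of an odd Spectre carries a yellow edge; isolation of odd tiles then kills all-yellow vertices, and an all-blue vertex must be touched only by even tiles at their ``all-blue'' decoration vertices, which are exactly the hex corners. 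Your argument is more intrinsic to the hex/rhomb/square tiling and avoids inspecting the Spectre decoration vertex by vertex; the paper's argument is shorter once one accepts the two figure-based claims about per-tile decorations. Both ultimately lean on the isolation of odd tiles.
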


\begin{proof} All vertices in an even tile touch at least one blue edge, and by \Cref{prop:odd-env-1} all odd tiles are isolated. So no vertex can touch only yellow edges.

\image{}{}{
\includegraphics[scale=1]{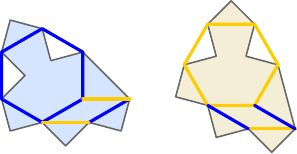}
}
 
All vertices in an odd tile touch at least one yellow edge, so the only way to get a purely blue vertex is that each tile it touches is a blue tile, touched at one of the blue vertices below, which all belong to a hex.

\image{}{}{
\includegraphics[scale=1]{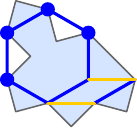}
}

\end{proof}

\begin{proposition}\label{prop:impo-2}
The following configuration cannot appear, even rotated, in a whole plane tiling by the Spectre.

\nopagebreak

\image{}{}{
\includegraphics[scale=0.75]{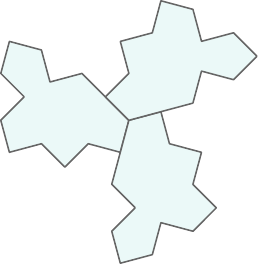}
}
\end{proposition}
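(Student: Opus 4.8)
The plan is to argue by contradiction, exactly in the spirit of the other impossibility results of this section: assume the displayed configuration occurs in a whole plane Spectre tiling, force the surrounding tiles until the patch is large enough, and then invoke a previously established structural constraint. First I would lift the configuration to the underlying Spectre tiling together with its associated $\iD3$, $\iD2$ tiling, using the colour conventions of \Cref{fig:col-conv-spec}: each blue hexagon is the hexagon of an even $\iD3$ belonging to an even Spectre, each yellow segment lies on the boundary of an odd Spectre or of a $\iD2$ paired with an even $\iD3$, and by \Cref{cor:odds} all odd tiles are isolated.

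Next I would propagate forced tiles outward from the given patch. Wherever the configuration shows an edge shared by two blue hexes, \Cref{lem:completion} forces a third blue hexagon around the corresponding black dot; wherever it shows one of the boundary patterns of \Cref{prop:autofill-1}, that proposition pins down an adjacent Spectre; and any remaining inward $\iD3$-dent admits only the three completions of \Cref{fig:notch-fill}, of which all but the intended one should be discarded by exhibiting the outline forbidden in \Cref{prop:impo-11}. Iterating this, the hypothesised configuration grows into a definite larger patch.

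The contradiction should then arise from one of the same three sources used in \Cref{prop:T} and \Cref{prop:no-full-blue}: either some $\iD3$ becomes completely enclosed by blue hexes and so has no room for its paired $\iD2$; or the blue hexes accumulated around a black dot form a cluster strictly larger than a $\hT3$, contradicting \Cref{prop:T}; or an odd tile is forced adjacent to another odd tile, contradicting \Cref{prop:odd-env-1}.

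The main obstacle is the branching in the forced-completion step. At each inward dent there are a priori the three options of \Cref{fig:notch-fill}, and one must check that every branch other than the intended one dies within a step or two by producing a \Cref{prop:impo-11} outline (or, occasionally, by stranding the companion $\iD3$ of a $\iD2$, as in the proof of \Cref{lem:completion}). The delicate part is the bookkeeping: tracking which vertices carry angles that are multiples of $1/3$ and which multiples of $1/4$, and which segments are blue versus yellow, so that no illegal vertex match is overlooked. Each individual exclusion, however, is a direct inspection of exactly the kind already performed for \Cref{prop:impo-11} and \Cref{prop:autofill-1}.
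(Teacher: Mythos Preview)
Your overall strategy---force neighbouring tiles and run into a contradiction---is exactly what the paper does, and one of your three proposed contradiction sources (a violation of \Cref{prop:odd-env-1}) is even mentioned by the paper as an alternative route. So the plan is sound.

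However, the actual proof is considerably lighter than what you describe. The paper works entirely at the level of Spectre tiles, without passing through the $\iD3$/$\iD2$ decomposition, \Cref{lem:completion}, or the three-way branching of \Cref{fig:notch-fill}. It simply applies \Cref{prop:autofill-1} repeatedly: each step forces a single new Spectre, with no branching to manage, until the growing patch develops a boundary portion that no Spectre can fill (or, earlier, a patch that already contradicts \Cref{prop:odd-env-1}). In particular the contradiction is a local dead end of the autofill sequence, not an oversized blue cluster or an enclosed $\iD3$ as you conjecture; neither \Cref{prop:T} nor the pairing argument is invoked. Your heavier toolkit would presumably also reach a contradiction, but the bookkeeping you identify as ``the delicate part'' is precisely what the paper's more direct Spectre-level argument avoids.
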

\begin{proof}
It forces the following sequence by repeated application of \Cref{prop:autofill-1} which ends up in a configuration with non-extendable parts (or we can invoke \Cref{prop:odd-env-1} at the second frame).

\image{}{}{
\includegraphics[scale=0.6]{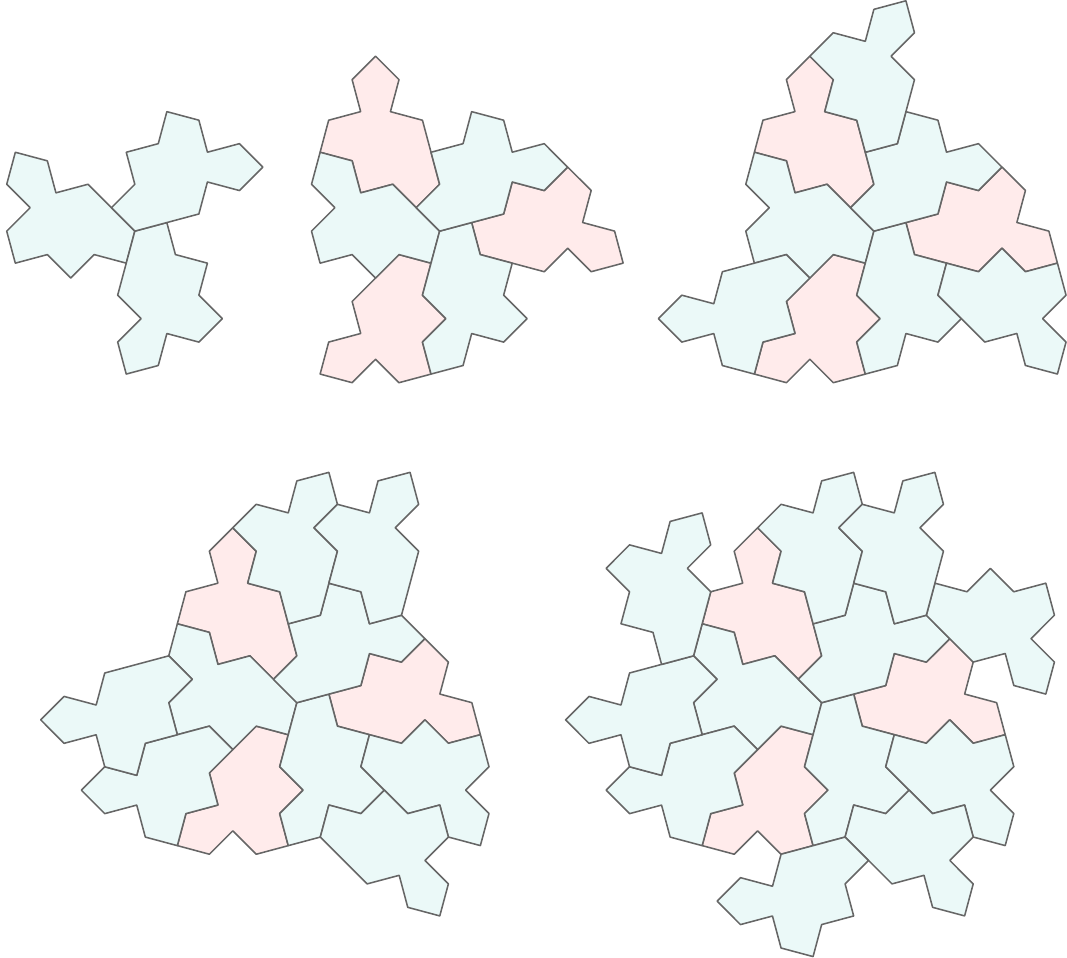}
}
\end{proof}

\begin{proposition}\label{prop:ne}
Each cc contains at least one blue hex.
\end{proposition}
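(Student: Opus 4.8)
The plan is to reduce the statement to the claim that every blue edge of the decoration graph lies on a blue hexagon, and then to establish that claim; along the way I would first isolate one small auxiliary fact.

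\emph{Auxiliary fact: no two yellow hexes are adjacent.} A common side of two yellow hexes would be a side of the $\iD3$ of each of two odd Spectre tiles, and it is not one of the sides along which that $\iD3$ is glued to its $\iD2$ (there the neighbouring cell is a rhomb, not a hexagon); hence it lies on the boundary of both odd Spectres, making them adjacent, which contradicts \Cref{cor:odds}. I would state and prove this first, as it short-circuits several branches later.

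\emph{Reduction.} A cc is, by definition, a non-empty connected component of the blue graph — non-empty because the blue graph has no isolated vertex: every vertex of the decoration graph is a corner of some hexagon or rhomb cell, hence of some Spectre tile, and if that tile is odd then by \Cref{prop:odd-env-1} the vertex is also a corner of an adjacent even tile, while the decoration of an even tile (an all-blue hexagon together with a rhomb, at each corner of which exactly one blue side meets) carries a blue side at every one of its corners. So each cc contains a blue edge $e$. Now every edge of the decoration graph is a side of a hexagon cell or of a rhomb cell, since it is shared by two cells which by \Cref{prop:dec-adj} cannot both be squares. A regular hexagon has all six sides of the same orientation modulo $1/6$, hence of the same colour; so if $e$ lies on a hexagon cell $H$, then $H$ is a blue hex, its six sides form a blue $6$-cycle, and $H$ lies in the cc of $e$ — done. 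Otherwise $e$ is a side of a rhomb cell $R$ (the rhomb of a $\iD2$ piece) and of no hexagon cell; but $e$ is shared by $R$ and one more cell, which is then neither a hexagon nor, by \Cref{prop:dec-adj}, a rhomb, hence a square $S$, one of whose four dents is supplied by the outward dent of the $\iD2$ sitting on $e$. It remains to exclude this last configuration.

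\emph{Excluding a blue rhomb-side adjacent to a square — the main obstacle.} The approach is to reconstruct the forced neighbourhood of $e$: the square $S$ is covered by four outward dents coming from four distinct Spectre tiles, and these together with the Spectre of $R$ (its $\iD2$ and its paired $\iD3$) determine a growing patch; translating back to Spectre outlines and applying \Cref{prop:autofill-1} repeatedly, with \Cref{prop:impo-11} and \Cref{prop:impo-2} to kill the offending branches, one forces enough tiles to reach an impossibility — a non-extendable outline, a blue hex cluster larger than a $\hT3$ (forbidden by \Cref{prop:T}), or a pair of adjacent yellow hexes (forbidden by the auxiliary fact above). I expect this to be the crux, and to be organizational rather than conceptual: the argument branches according to which cells complete the two endpoints of $e$ — the angle sums already pin this down sharply, since at the endpoint where $R$ contributes its obtuse angle $5/12$ and $S$ its right angle the remaining $1/3$ of a turn is filled by a single hexagon or by one acute rhomb together with one square, while at the other endpoint the remaining $2/3$ admits only a short list — but each branch must be pushed separately to its contradiction, using the dent-and-colour conventions of \Cref{fig:byrg} and, repeatedly, \Cref{lem:completion} and the adjacent-yellow-hex obstruction.
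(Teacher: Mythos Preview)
Your reduction overshoots: you set out to prove that \emph{every} blue edge is a side of a blue hexagon, i.e.\ that the configuration ``blue edge flanked by a rhomb and a square'' never occurs. This is false. The antennae analysed just after this proposition (see the paragraph leading to \Cref{fig:antenatips}, and the cc shapes in \Cref{prop:T1-cc,prop:T2-cc-1,prop:T2-cc-2}) are exactly blue segments that protrude from a hex tip without being the side of any hex; by your own dichotomy together with \Cref{prop:dec-adj}, each such segment is bordered by a rhomb on one side and a square on the other. Every $\hT1$ cluster carries three antennae, and $\hT1$'s do occur in whole-plane tilings, so the ``impossibility'' you plan to reach in your last paragraph will not materialise: that branch is consistent.

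What the proposition requires, and what the paper proves, is only that every blue edge lies in the same cc as some blue hex. The paper keeps this weaker target: it splits on whether the blue segment sits in an even or an odd tile, and in the even case enumerates the tiles that can occupy a specific neighbouring position; some branches are killed (by \Cref{prop:impo-11}, \Cref{prop:impo-2}, or isolation of odd tiles), but the surviving ones are resolved by \emph{exhibiting} a blue hex touching the segment or one blue edge away, not by contradiction. Your auxiliary fact about non-adjacent yellow hexes and your angle bookkeeping at the endpoints of $e$ are sound and could help organise such a case analysis, but the goal in each branch must be ``locate a blue hex in this cc'', not ``derive a contradiction''.
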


\proof
A blue segment not bounding a hex bounds a square.
Disappointingly, we have to proceed by inspection of many cases.

First, the blue segment of an even tile that is outlined in red below:

\nopagebreak

\image{}{}{
\includegraphics[scale=1]{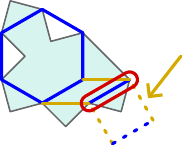}
}

The yellow segment indicated by the arrow will be in a tile, which must be one of the following cases.

\image{}{}{
\includegraphics[scale=0.6]{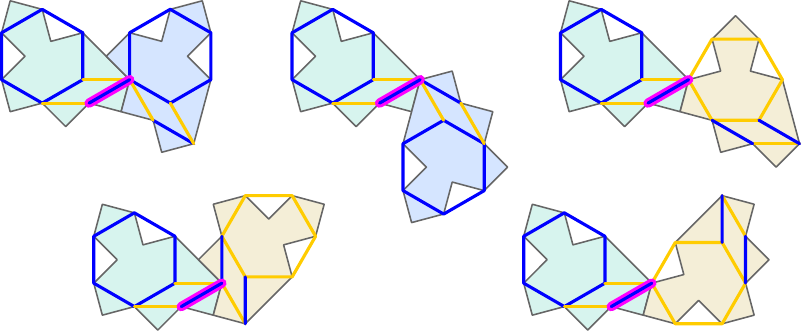}
}

However, not all cases can be realized: two cases have red outlines which fit no tile, and another one forces two odd tiles to be adjacent.

\image{}{}{
\includegraphics[scale=0.6]{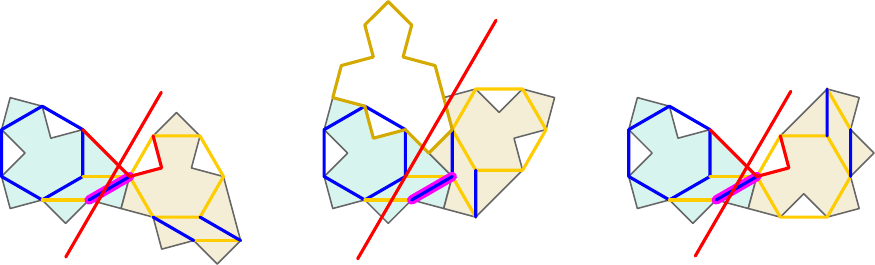}
}

\noindent Alternatively we can invoke \Cref{prop:odd-env-1} to rule out these same four cases.

In the two remaining situations, the leftmost one has a hex touching the initial blue segment, and in the the second one we look at what can fit in the upper right free space :

\image{}{}{
\includegraphics[scale=0.66]{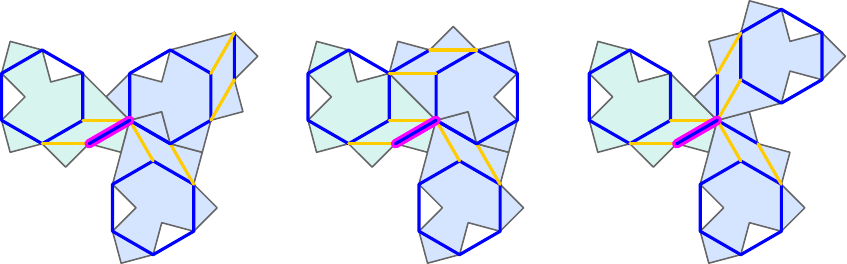}
}

The first two have a hex, and the last one cannot appear by 
\Cref{prop:impo-2}.

We now study the case of the blue segments of an odd tile. By \Cref{prop:odd-env-1} a yellow tile has a fixed immediate environment, which includes the following two blue tiles:

\nopagebreak

\image{}{}{
\includegraphics[scale=0.75]{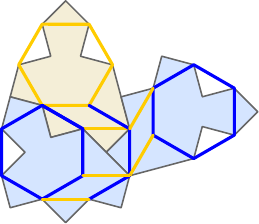}
}

The left blue segment of the odd tile is on a hex so we only need to inspect the right one.
The free corner in the common segment of the blue tiles can only complete as follows:

\image{}{}{
\includegraphics[scale=0.66]{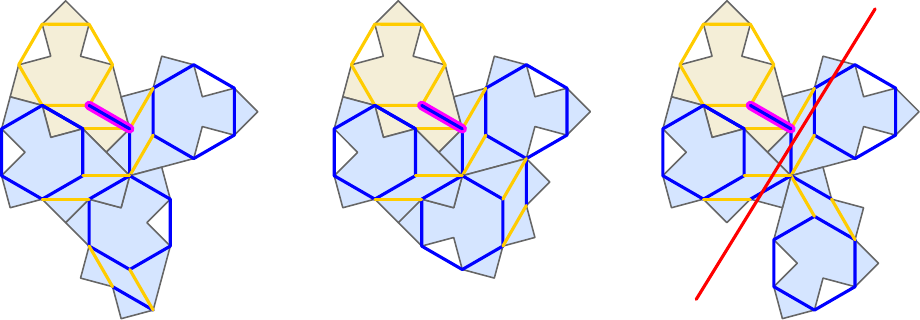}
}

The right one has already been excluded above and the other two have a hex.
\qed\medskip

\begin{proposition}\label{prop:impo-16}
In the $\iD3$ and $\iD2$ tiling induced by a whole plane tiling by the Spectre, the following arrangement or its rotations cannot appear:

\nopagebreak

\image{}{}{
\includegraphics[scale=0.75]{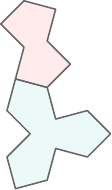}
}
\end{proposition}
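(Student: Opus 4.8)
The plan is to assume the displayed arrangement occurs inside the $\iD3$/$\iD2$ tiling induced by some whole-plane Spectre tiling, and to propagate forced tiles until one of the obstructions already established is contradicted. First I would lift the picture from the decoration graph back to Spectre tiles: each hexagon in the figure is the $\iD3$ half of a Spectre and therefore carries a paired $\iD2$ (a rhomb) attached as in \Cref{fig:col-conv-spec}, and each rhomb in the figure is a $\iD2$ whose paired $\iD3$ is likewise determined up to the blue/yellow orientation ambiguity of \Cref{prop:bipar}. This yields a small patch of Spectre tiles with a few exposed inward dents and $1/3$-angle corners on its boundary.

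Next I would fill those exposed features one at a time using the visual aids of \Cref{ss:visual-aids}. At each $1/3$-angle corner there are only a handful of ways to seat a Spectre, and \Cref{prop:autofill-1} together with \Cref{prop:impo-11} usually leaves exactly one; at each inward dent of a $\iD3$, \Cref{lem:completion} (or the argument in its proof) forces the completing blue hex. Iterating this — exactly as in the proof of \Cref{prop:impo-2} — grows an almost uniquely determined patch, and the residual ambiguity should be organised into a small case tree.

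Finally I would check that every leaf of that tree is impossible: either a forbidden outline of \Cref{prop:impo-11} appears; or the forced blue hexes assemble into a cluster strictly larger than a $\hT3$, contradicting \Cref{prop:T}; or two odd tiles are forced to be adjacent, contradicting \Cref{cor:odds} (equivalently \Cref{prop:odd-env-1}); or the already-excluded configuration of \Cref{prop:impo-2} resurfaces. As in \Cref{lem:completion} and \Cref{prop:no-full-blue}, it is often cleanest to detect the contradiction on the decoration graph, where ``no tile fits'' reads as ``only squares can go here, so some $\iD3$ has no room for its paired $\iD2$.''

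The step I expect to be the main obstacle is the bookkeeping rather than any single deduction: one must keep the branching small and, above all, track the orientations (blue versus yellow, point-up versus point-down) consistently through the propagation, since it is precisely a mismatched orientation that produces the impossible $1/3$-angle fit of \Cref{prop:impo-11}. If the case tree threatens to blow up, a useful economy — suggested parenthetically in the proof of \Cref{prop:impo-2} — is to invoke \Cref{prop:odd-env-1} as early as possible to prune whole branches at once.
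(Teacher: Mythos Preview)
Your proposal is a plan rather than a proof: you describe a methodology (lift to Spectres, propagate with the visual aids, build a case tree, check leaves against \Cref{prop:impo-11}, \Cref{prop:T}, \Cref{cor:odds}, \Cref{prop:impo-2}) but do not actually carry out a single forced deduction specific to this configuration. Whether the case tree stays small is precisely the question, and you have not shown that it does.

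More importantly, the paper's argument is far shorter and never leaves the $\iD3$/$\iD2$ level. It goes: in the displayed arrangement one particular $\iD2$ has only one possible $\iD3$ to pair with; once that pairing is made, a certain inward dent can only be filled by a $\iD2$; but then that new $\iD2$ has no room for its companion $\iD3$. Three steps, no lift to Spectres, no case tree, no appeal to \Cref{prop:autofill-1} or \Cref{prop:odd-env-1}. Ironically, your closing remark --- that contradictions are often cleanest on the decoration graph as ``some $\iD3$ has no room for its paired $\iD2$'' --- is exactly the paper's whole proof, not a fallback. You should lead with that idea: stay at the $\iD3$/$\iD2$ level, chase the forced pairings in the given figure, and look for a $\iD2$ whose paired hexagon cannot fit.
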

\begin{proof}
The $\iD2$ and $\iD3$ must pair so as to form spectres.
The red $\iD2$ can only be paired with the red $\iD3$ as in the central frame in the figure below. The green inward dent can only be filled by the hatched red $\iD2$. But there is no room left for the associated $\iD3$ of the latter.

\image{}{}{
\includegraphics[scale=0.75]{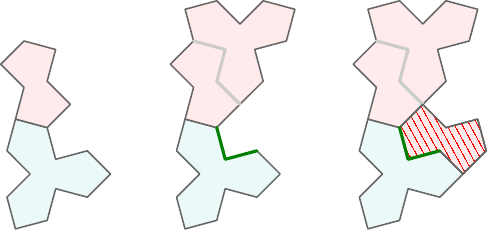}
}
\end{proof}

The following consequence extends \Cref{lem:completion}.

\begin{corollary}\label{cor:protr-1}
If a blue segment protrudes from the black dot of a blue hex, then this segment belongs to two adjacent blue hexes.

\image{}{}{
\includegraphics[scale=0.75]{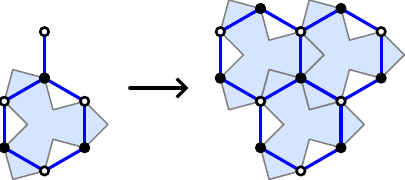}
}
\end{corollary}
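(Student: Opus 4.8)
The plan is to show that the protruding blue segment $e$ is a hexagon edge common to two blue hexes, both meeting $H$ at the vertex $v$; this is precisely the assertion, and it simultaneously exhibits $v$ as the centre of a corner of a $\hT$-cluster, just as in \Cref{lem:completion}. First I would fix the local picture at $v$ in the decoration tiling: the blue hex $H$ occupies a wedge of angle $1/3$ at $v$, bounded by its two edges $f_1,f_2$ incident to $v$ (both blue), and the remaining $2/3$ of the angle at $v$ is partitioned by other decoration faces, each of angle $1/3$ (a hexagon), $1/4$ (a square), or $1/12$ or $5/12$ (a rhomb), subject to \Cref{prop:dec-adj} (no two rhombs, and no two squares, adjacent). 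The segment $e$ is neither $f_1$ nor $f_2$, so it points into the $2/3$-wedge and separates two of these faces. Two blue hexes on the two sides of $e$ would, together with $H$, account for three disjoint $1/3$-wedges, hence for the whole angle at $v$, so the target configuration is rigid and the task is to exclude every other possibility.

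The core step brings in the underlying $\iD3$/$\iD2$ tiling and the fact that its pieces are glued into Spectres. Every decoration edge is a hexagon edge of some $\iD3$ or a rhomb edge of some $\iD2$, and carries the colour of that piece; since all hexagon edges of a given $\iD3$ have the same colour, a blue segment is never an edge of a yellow hex, and since a $\iD3$ filling the inward dent of a $\iD3$ keeps the same orientation, a blue hex edge never borders a yellow hex. Combined with \Cref{prop:dec-adj}, this leaves only four a priori possibilities for the two faces the blue segment $e$ separates: two blue hexes; a blue hex and a rhomb; a blue hex and a square; or a rhomb and a square. It remains to eliminate the last three, using that $e$ protrudes from the black dot $v$ of the blue hex $H$. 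In each case I would complete the forced local environment — in the $\iD3$/$\iD2$ picture each $\iD2$ must be matched with a $\iD3$, and the companion of any newly forced $\iD2$ must find room — and derive a contradiction; the decisive exclusion is \Cref{prop:impo-16}, with \Cref{prop:impo-11}, \Cref{prop:impo-2} and the forced odd-tile environment of \Cref{prop:odd-env-1} handling the residual sub-cases. What survives is a blue hex $H_1$ on one side of $e$ and a blue hex $H_2$ on the other.

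Finally, $H_1$ and $H_2$ share the edge $e$, which has $v$ as an endpoint, and $H$ — a third blue hex at $v$ — shares an edge with each of $H_1,H_2$ by \Cref{prop:blue-hex-touch}; applying \Cref{lem:completion} to the pair $H_1,H_2$ then confirms that a full corner of a $\hT$-cluster sits at $v$. In particular $e$ belongs to the two adjacent blue hexes $H_1,H_2$, which is the claim.

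I expect the main obstacle to be exhaustiveness in the core step: one must make sure that every way a blue segment can leave $v$ without being an edge of $H$ is covered — in particular the a priori possibility that $e$ bounds a rhomb arising from a $\iD2$ paired with some blue $\iD3$, or that $e$ bounds a square — and that each discarded configuration genuinely reduces to \Cref{prop:impo-16} (or to \Cref{prop:impo-11}, \Cref{prop:impo-2}, or \Cref{prop:odd-env-1}). This is the same ``no room for the companion piece'' mechanism already used in the proofs of \Cref{lem:completion} and \Cref{prop:ne}, and I would organise the argument so that \Cref{prop:impo-16} carries essentially all of the weight.
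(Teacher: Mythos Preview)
Your plan would work, but it is more elaborate than the paper's short argument. Rather than tabulating the four possible pairs of decoration faces on either side of $e$, the paper asks a single question: which $\iD3$/$\iD2$ tile \emph{draws} the segment $e$? Since $v$ is a black dot (and hence the far end of $e$ is white), the dent rule recorded around \Cref{fig:byrg} forces the only candidates to be a blue $\iD3$ or a \emph{yellow} $\iD2$; the latter is precisely the forbidden configuration of \Cref{prop:impo-16}, so one obtains a blue hex $H_1$ adjacent to $H$ in a single stroke. The remaining $1/3$ sector at $v$ is then either a third blue hex or again hosts a yellow $\iD2$, and that yellow $\iD2$ leaves no room for its companion yellow $\iD3$ --- done. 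Your face-pair decomposition misses this shortcut because it does not isolate the one consequence of the black-dot hypothesis that matters: any rhomb appearing at $v$ in the relevant position must come from a yellow $\iD2$, not a blue one. (Relatedly, your assertion that a decoration edge ``carries the colour of that piece'' holds for $\iD3$ but fails for $\iD2$: every rhomb has two blue and two yellow edges.) Once this is recognised, \Cref{prop:impo-16} and the no-room-for-the-companion argument carry everything, and there is no need to invoke \Cref{prop:impo-11}, \Cref{prop:impo-2}, or \Cref{prop:odd-env-1}.
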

\begin{proof}
Only a blue hex or a yellow $\iD2$ can be placed to contain this blue segment with the colours of its extremities.
The latter has been excluded by \Cref{prop:impo-16}, so it is a blue hex.
Then we get the two possibilities on the right of the following figure:

\image{}{}{
\includegraphics[scale=0.75]{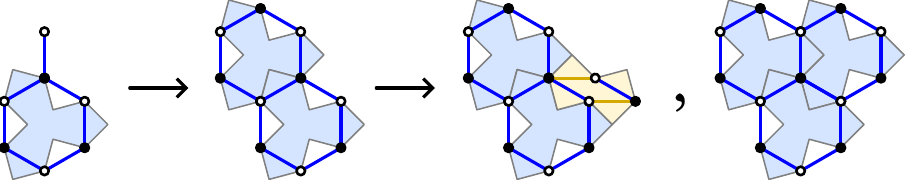}
}

\noindent The one with a yellow $\iD2$ does not leave room for its associated yellow $\iD3$, so we are in the last configuration.
\end{proof}

Below we show the only two ways a specific arrangement of $\iD3$ and $\iD2$ can be filled at a specific dent (in green). In one of the two ways, the blue segment with a violet highlight belongs to a blue hex.

\image{}{fig:f1}{
\includegraphics[scale=1]{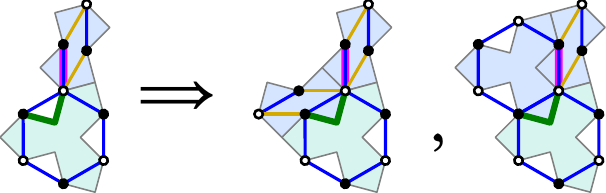}
}

We now inspect all the possible ways a blue segment can protrude from a blue hex without being itself in a hex.
By \Cref{cor:protr-1}, it has to stem from a white dot of the hex.
We get the following possibilities, up to rotations, for the cyan tile (not all may appear in an infinite tiling):

\image{A list of possible antennae. Not all may actually be allowed in a whole plane tiling by the Spectre.}{fig:antenatips}{
\includegraphics[scale=0.666]{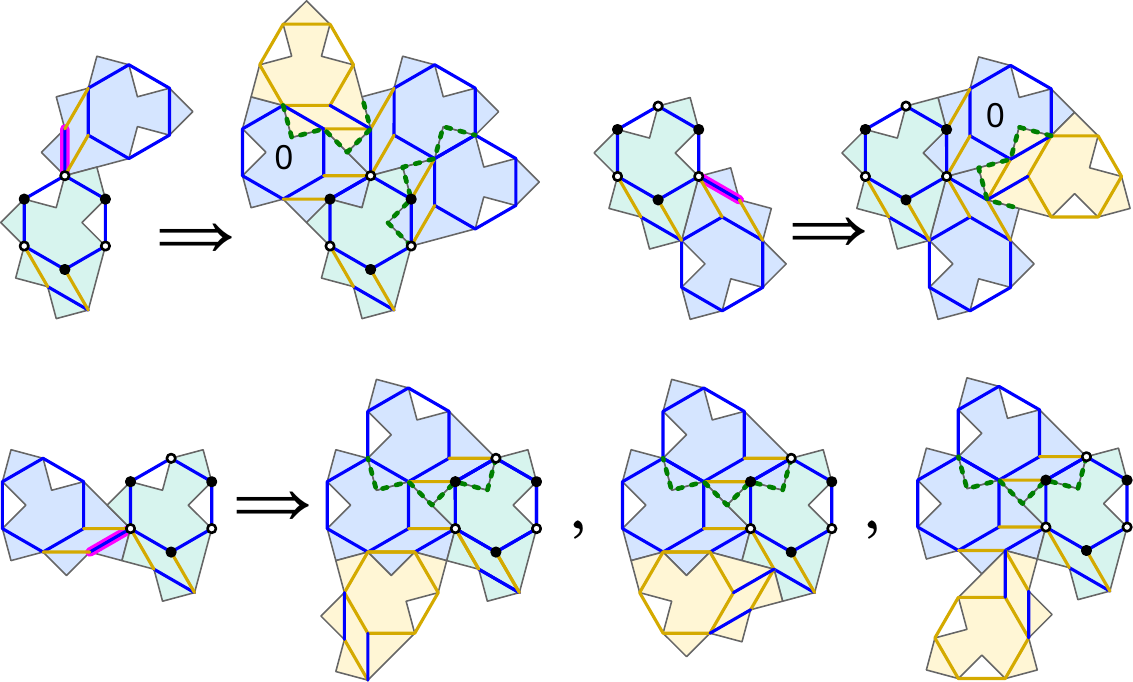}
}

The tiles marked $0$ are deduced from the paragraph above \Cref{fig:f1} and the fact that the highlighted blue segment shall not be on the boundary of a blue hex.

\medskip

Recall that we defined \emph{clusters} as maximal sets of adjacent blue hexes and proved that in whole plane tilings they can only come in three possible triangular configurations, that we called $\hT1$, $\hT2$ and $\hT3$.
We also noted that each $\hT n$ can come in two orientations, related to the orientation of the underlying piece $\iD3$.

\begin{definition}\label{def:tips}
For a $\hT n$, let us define \term{tips} as below:

\nopagebreak

\image{Position of the tips of a $\hT n$ pointing up}{}{
\includegraphics[scale=0.75]{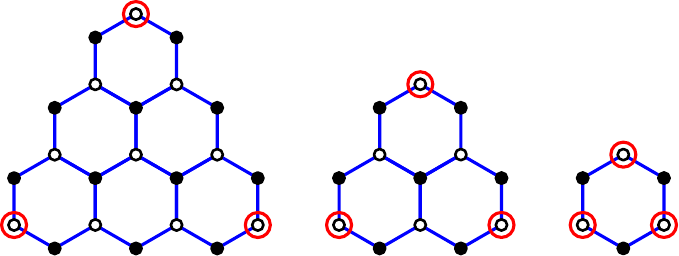}
}
In the picture above, the $\hT n$ point up and the red circles indicate what we call the tips. They are in particular white dots of the white/black dot colouring of vertices introduced earlier.

\image{Position of the tips of a $\hT n$ pointing down}{}{
\scalebox{1}[-1]{\includegraphics[scale=0.75]{tips-b.pdf}}
}
\end{definition}

From the analysis above it follows that:

\begin{proposition}
A cc contains at most one hex cluster and a possible antenna at each of its three tips.
\end{proposition}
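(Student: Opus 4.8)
The plan is to assemble the facts gathered so far in this subsection rather than to prove anything genuinely new. By \Cref{prop:ne} the cc contains a blue hex, which by \Cref{prop:T} lies in a cluster $K$ equal to a $\hT1$, $\hT2$ or $\hT3$. I would establish: (i) every blue segment of the cc that is not an edge of a hex of $K$ emanates from a boundary white dot of $K$; (ii) such a segment can only emanate from one of the three tips; and hence (iii) $K$ is the only cluster of the cc, with at most one antenna at each tip.

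For (i): a vertex of $K$ is either surrounded by three hexes of $K$ --- the case of the black dot of any edge shared between two hexes of $K$, by \Cref{lem:completion} --- or else lies on the boundary of $K$. At a vertex surrounded by three hexes of $K$ the full turn is used up, so nothing can protrude. At a boundary black dot of $K$, a protruding blue segment would by \Cref{cor:protr-1} be shared by two blue hexes edge-adjacent to the one we started from, which by maximality lie in $K$; but then that dot is surrounded by three hexes of $K$, contradicting that it is on the boundary of $K$. Hence a blue segment of the cc not carried by a hex of $K$ emanates from a boundary white dot of $K$, and by the enumeration of \Cref{fig:antenatips} it is the initial segment of one of the antennae listed there.

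For (ii): I would run through the boundary white dots of $\hT1$, $\hT2$ and $\hT3$ in both orientations. The tips of \Cref{def:tips} are precisely the boundary white dots at which only one hex of $K$ meets --- equivalently, where the exterior angle is $2/3$ of a turn --- and there are always exactly three of them; at every other boundary white dot two hexes of $K$ are edge-adjacent, leaving an exterior wedge of just $1/3$ of a turn. In that narrow wedge a protruding blue segment cannot bound a blue hex, for such a hex would fill the whole wedge, be edge-adjacent to the two hexes of $K$ already present, and hence join $K$, contradicting \Cref{prop:T}; so it would bound a square, which pins down the rest of the wedge up to a rhomb, and I would then derive a forbidden configuration from the $\iD3$/$\iD2$ pairing together with the visual aids \Cref{prop:impo-11,prop:autofill-1,prop:impo-16}, exactly as in the paragraph preceding \Cref{fig:f1}. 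This leaves only the tips, and at a tip the enumeration of \Cref{fig:antenatips} allows at most one antenna.

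For (iii): by (i) and (ii) the only blue segments of the cc outside $K$ are antennae attached at tips, and each arrangement listed in \Cref{fig:antenatips} is bounded and has no blue segment reaching a further blue hex, so a path of blue segments can never leave $K$, traverse antennae, and arrive at a second cluster --- whence $K$ is the unique cluster and the cc is $K$ together with at most one antenna at each of its three tips. The main obstacle is step (ii): the case analysis over the boundary white dots of $\hT1$, $\hT2$ and $\hT3$ matched against the antenna list has to be carried out carefully --- the $1/3$-turn wedge being delicate since a rhomb has a $1/12$-turn corner, so the contradiction really comes from the missing companion $\iD3$ rather than from angles alone --- and along the way one must also confirm that each antenna genuinely terminates, i.e.\ that the far (black) end of an antenna's blue segment carries no further protruding blue segment, a point not covered by \Cref{cor:protr-1} and to be read off the explicit shapes of \Cref{fig:antenatips}.
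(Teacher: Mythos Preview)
Your proposal is correct and follows the same route as the paper: use \Cref{prop:ne,prop:T} to obtain the cluster $K$, then \Cref{cor:protr-1} for black boundary dots and \Cref{fig:antenatips} for the antennae. The paper's proof is two sentences because it extracts a little more from \Cref{fig:antenatips} than you do: that enumeration already shows every listed antenna touches \emph{exactly one} blue hex --- so no antenna can sit at a non-tip boundary white dot, where two hexes of $K$ meet --- and reaches no further blue hex. This single observation handles your (ii) and (iii) simultaneously, without a separate wedge analysis.

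Your own step (ii) would also go through, but faster than your sketch suggests. At an inner-corner white dot the region wedged between a boundary edge of $K$ (blue) and a hypothetical protruding blue edge has two adjacent blue sides at that vertex; since adjacent edges of a square or a rhomb carry different colours (parallel sides have the same colour, and both colours occur), this region is forced to be a hex, which you already argued joins $K$. So the contradiction is immediate from the edge colouring, and no appeal to $\iD3$/$\iD2$ pairing or to \Cref{prop:impo-11,prop:autofill-1,prop:impo-16} is needed there.
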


\begin{proof} Indeed we saw that a cc contains a cluster, that a cluster is a $\hT n$ for $n=1$, $2$ or $3$, and \Cref{cor:protr-1} implies that no tip can grow on the sides of a $\hT n$. \Cref{fig:antenatips} shows than an antenna touches one blue hex and no more.
\end{proof}

We already mentioned that blue hexes with the bipartite vertex colouring come in two possible orientations.
Actually we can extend this to all blue segments, in a compatible way, using the improved decoration, the one with black/white dots at the vertices of the blue/yellow decorated graph.
Blue segments come in three orientations, six if we direct them. 
Let us direct them from the black dot to the white.
The six orientations differ by a rotation by multiples of $1/6$. 
Let us group them into their two classes modulo rotation by multiples of $1/3$.

\begin{proposition}\label{prop:oc}
In the improved decorated graph of a tiling by $\iD3$ and $\iD2$,
two blue edges sharing a vertex and directed as above belong to the same orientation class.
Two directed blue edges connected by a yellow edge belong to opposite classes.
\end{proposition}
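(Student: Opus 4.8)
The plan is to work entirely inside the decoration tiling, directing every decoration edge from its black dot to its white dot as prescribed, and to compute the black-to-white directions shape by shape. Around a regular hexagon the edge directions advance by $1/6$; since its vertices alternate in colour (\Cref{prop:bipar}), the black-to-white convention reverses the edge on every second step, adding $1/2$, and a short computation puts the six directions in the form $\beta,\beta+1/3,\beta+2/3$ (each twice), so \emph{all} edges of a hexagon lie in one class. For a square the four black-to-white directions come out as $\theta,\theta+1/4,\theta+1/2,\theta+3/4$; its two blue edges are the two parallel (opposite) edges — the other parallel pair being yellow, because the two edge-directions of a square differ by $1/4\not\equiv0\pmod{1/6}$ — and their black-to-white directions differ by exactly $1/2$, hence lie in opposite classes. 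The identical computation for the rhomb (exterior angles alternating $1/12$ and $5/12$) again shows its two blue edges are opposite and in opposite classes. This per-shape lemma is the only arithmetic really needed.

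For the first assertion, fix a vertex $v$: by \Cref{prop:bipar} and the colour convention, the decoration edges at $v$ all point away from $v$ (if $v$ is black) or all towards it, so their directions can be listed cyclically with gaps equal to the corner angles ($1/3$ for a hexagon, $1/4$ for a square, $1/12$ or $5/12$ for a rhomb), and two blue edges at $v$ are in the same class iff the gap between them is a multiple of $1/3$. It suffices to treat two cyclically consecutive blue edges; between them every intermediate edge is yellow, which forces the intermediate faces to be either a single hexagon (gap $1/3$) or else a square-or-rhomb, then some yellow hexagons, then a square-or-rhomb. One then excludes the two ``like'' combinations: a hexagon edge bordering a square must be an outward dent of that hexagon (a notch there would have to be filled by a $\iD3$ or $\iD2$ bump whose core would lie inside the square, contradicting \Cref{prop:d32_hrs}), whereas a hexagon edge bordering a rhomb must be an inward dent, filled by that rhomb's $\iD2$ bump; since inward and outward dents alternate around each hexagon, along the chain of intermediate hexagons the first and last dents are of incompatible types unless the two end faces are one square and one rhomb, and then the gap is $1/4+1/12=1/3$ or $1/4+5/12=2/3$, again a multiple of $1/3$ (the degenerate cases of two adjacent squares/rhombs, or a square directly adjacent to a rhomb, being covered by \Cref{prop:dec-adj} and by the same $1/4+1/12$ or $1/4+5/12$ count). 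Transitivity around $v$ then gives the first assertion.

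For the second assertion, let $b_1,b_2$ be blue edges joined by a yellow edge $uw$; by the first assertion $\operatorname{class}(b_1)$ equals the common class of blue edges at $u$ and $\operatorname{class}(b_2)$ that at $w$. If $uw$ is an edge of a rhomb (or a square), the two edges of that face meeting $uw$ are blue, one at $u$ and one at $w$, and they are opposite edges of the face, hence in opposite classes by the per-shape lemma — done. Otherwise both faces along $uw$ are hexagons, necessarily yellow; then (given that $b_1,b_2$ exist) each of $u,w$ has around it exactly the faces $\{H,H',\text{one square},\text{one rhomb}\}$, and the bump/notch status of $uw$ — it is a bump of one of $H,H'$ and a notch of the other, as for any pair of adjacent hexagons — pins down which of the square and the rhomb abuts which hexagon at $u$ and at $w$. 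Reading off the black-to-white directions of $b_1$ and $b_2$ from this, where the extra half-turn arising because $u$ is black while $w$ is white is exactly what tips the balance, gives $\operatorname{class}(b_1)\ne\operatorname{class}(b_2)$.

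I expect the main obstacle to be precisely these non-mechanical steps: excluding ``square$\dots$square'' and ``rhomb$\dots$rhomb'' between consecutive blue edges in the first assertion, and the two-yellow-hexagon subcase of the second. Both rest on the same device — the alternation of inward/outward dents around a hexagon, combined with the fact (\Cref{prop:d32_hrs}) that squares are pure interstitial pieces containing no tile core while rhombs are $\iD2$-cores with no room for a foreign bump — so making this dent-parity argument airtight is the crux; everything else is routine angle arithmetic.
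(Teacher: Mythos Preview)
Your proof is correct, but it takes a substantially different and more hands-on route than the paper's.

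The paper's two-sentence proof rests on structure implicit in \Cref{prop:bipar} and its proof: at a $1/3$-type vertex all tile edges point in the same direction modulo $1/3$, so the tile-edge index $k$ is in fact well-defined modulo $4$, not merely modulo $2$. The class of the directed blue edges at $v$ is then a function of $k_v \bmod 4$; and since crossing a decoration edge shifts $k$ by $+3$ or $-3$ with the sign determined by the edge colour together with the vertex colour, one reads off that blue edges preserve the class while yellow edges flip it. The paper suppresses this unpacking entirely.

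Your approach instead stays inside the hex/square/rhomb tiling and argues face-by-face around a vertex. The heart of it is the dent-parity device: a hexagon edge bordering a square must be an outdent while one bordering a rhomb must be an indent, and since indents and outdents alternate around each hexagon, the run of faces between two consecutive blue edges at a vertex can never be square--(yellow hexagons)--square or rhomb--(yellow hexagons)--rhomb. That exclusion is exactly what forces the gap to be a multiple of $1/3$. The same device disposes of the two-yellow-hexagon subcase of the second assertion, which the paper's terse argument does not explicitly address. This is a genuinely independent line: longer, but entirely self-contained, needing nothing from \Cref{prop:bipar} beyond the bare bipartiteness.

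One small expository wrinkle in your last subcase: the two half-turns (one from reversing the outgoing direction of $uw$ at $w$, one from the black-to-white convention at the white vertex) actually cancel, so it is not ``the extra half-turn'' that tips the balance but rather the square/rhomb asymmetry between the two wedges --- which your dent argument has already pinned down correctly. The conclusion stands; only the attribution of the mechanism is slightly off.
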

\begin{proof}
The first claim follows from the fact that the colouring is bipartite and that the undirected blue edges have only 3 orientations.
The second claim follows from the fact that the colouring is bipartite and that in the decoration graph we have, besides the hexagons, only squares and rhombs, where the parallel sides have the same colour, so opposite directions w.r.t.\ the vertex colouring.
\end{proof}

\subsubsection{Interface between cc}

In the sequel, unless explicitly stated, all statements concern whole plane tilings by the Spectre.

\medskip

Recall that the notion of \emph{cc} was defined in \Cref{def:cc} as
the connected components of the blue graph in the decoration graph (i.e.\ without the yellow segments).
Consider a cc. By \Cref{prop:oc}, all its directed segments belong to the same class modulo rotation by a multiple of $1/3$.
We can thus associate a class to the whole cc.
Consider now two cc's. We say that they are \term{adjacent} if there is a yellow segment linking them. From the same proposition:

\begin{corollary}
Two adjacent cc's have opposite classes. The corresponding $\hT n$ thus have opposite classes.
\end{corollary}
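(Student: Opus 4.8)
The plan is to reduce everything to \Cref{prop:oc} together with the connectedness built into the definition of a cc. First I would make explicit the assignment of a class to each cc that the text announces just above the statement: by \Cref{def:cc} a cc is a connected component of the blue graph, so any two of its directed blue edges are joined by a path of blue edges; applying the first assertion of \Cref{prop:oc} edge by edge along such a path shows that all directed blue edges of the cc lie in one and the same orientation class modulo rotation by a multiple of $1/3$, which we take as the class of the cc. In the same breath, the hex cluster contained in the cc — it exists by \Cref{prop:ne} and is a $\hT1$, $\hT2$ or $\hT3$ by \Cref{prop:T} — has all of its bounding blue edges among the edges of the cc, hence inherits that class.

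Next I would handle the adjacency. If two cc's $C$ and $C'$ are adjacent there is, by definition, a yellow segment linking a blue edge $a$ of $C$ to a blue edge $a'$ of $C'$. The second assertion of \Cref{prop:oc} says that two directed blue edges joined by a yellow edge lie in opposite classes, so $a$ and $a'$ are in opposite classes; since $a$ carries the class of $C$ and $a'$ the class of $C'$, the two cc's have opposite classes. By the first paragraph the clusters $\hT n$ they contain then also have opposite classes, which is the last assertion.

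I do not expect a real obstacle here — this is the ``From the same proposition'' remark the paper makes — but the two points to state carefully are that the definition of adjacency of cc's is arranged precisely so that the witnessing yellow segment connects a blue edge of $C$ to a blue edge of $C'$ (and not two blue edges of a single cc), and that each cc actually contains a blue edge to carry the class, which is exactly the content of \Cref{prop:ne}.
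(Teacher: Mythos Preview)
Your proposal is correct and is exactly the argument the paper intends: the corollary is stated immediately after the sentence ``From the same proposition'' and carries no separate proof, so you have simply unpacked the invocation of \Cref{prop:oc} (first clause for constancy of the class along a cc, second clause across the linking yellow segment), together with \Cref{prop:ne} to guarantee a hex cluster carrying that class. There is nothing to add.
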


We now proceed with a more precise description of the environment of a cc.

\begin{proposition}\label{prop:cc-descr}
 Each cc touches three yellow hexagons at three contact vertices related as follows to the three tips of the $\hT n$ the cc contains.
 For each tip the contact vertex is either: 
 \begin{itemize}
 \item the other end of the antenna if there is one attached to the tip,
 \item the tip itself,
 \item the next vertex on the $\hT n$ boundary followed in the counter-clockwise direction.
 \end{itemize}
 Between two such contact vertices, there are 3 interfaces to 3 adjacent cc's. Each interface consists in squares and rhombs in alternation, with parallel yellow edges. They end with two yellow hexagons.
\end{proposition}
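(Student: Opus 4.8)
\emph{Plan of proof.} The argument is a bounded case check resting on the classification already obtained. By \Cref{prop:T} a cc is a cluster $\hT n$ with $n\in\{1,2,3\}$ together with at most one antenna at each of its three tips, and by the discussion around \Cref{fig:antenatips} (which uses \Cref{cor:protr-1}) the antennae form a short explicit list, each touching exactly one blue hex of the cluster. The combinatorial type of $\hT n$ has a $3$-fold rotational symmetry cyclically permuting the three tips, and the assertion is equivariant under it, so it suffices to analyse (a) the neighbourhood of one tip, and (b) the arc of the cc boundary joining two consecutive contact vertices.

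\emph{Step (a): the contact vertex at a tip.} Distinguish whether an antenna sits at the tip. If it does, it is one of the types of \Cref{fig:antenatips}; its \enquote{cyan} tile is even (blue), and propagating the odd tiles this forces, via \Cref{prop:odd-env-1} together with the completion and exclusion results \Cref{prop:autofill-1,prop:impo-2,prop:impo-16}, determines the local tiling, from which one reads off a unique yellow hexagon touching the cc near the tip, located at the white end of the antenna. If no antenna sits at the tip, the completions of the angular gaps at the tip (a white dot of $\hT n$) form a short list, again pruned by \Cref{prop:odd-env-1}; in each of them there is again a unique nearby yellow hexagon touching the cc, located at the tip itself or at the vertex following the tip counter-clockwise along $\hT n$, and these exhaust the cases. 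This yields the three contact yellow hexagons at the three listed vertices, pending the verification in (b) that no further yellow hexagon meets the cc.

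\emph{Step (b): the interfaces.} Walk along the cc boundary from one contact vertex to the next. By \Cref{prop:d32_hrs} every tile just outside is a hexagon, a square or a rhomb; a blue hexagon there would enlarge the cluster, contrary to \Cref{prop:T}, so one meets only squares, rhombs and \enquote{intermediate} yellow hexagons. Every square or rhomb met along the arc has two (opposite) blue edges and two (opposite) yellow edges, opposite sides being monochrome as in the proof of \Cref{prop:oc}, and by \Cref{prop:dec-adj} no two squares and no two rhombs are adjacent, so along a maximal run of them they alternate while all crossed yellow edges stay parallel; such a run has its far long side on the blue segments of a second cc, of the opposite orientation class by \Cref{prop:oc}, and is capped at each short end by a yellow hexagon, since the vertex there presents an angular gap that, by the local analysis of (a) and the auto-fill lemmas, only a yellow hexagon fills (in particular the run is finite, being squeezed between two finite cc's, and cannot turn, by the chain mechanism already used in \Cref{prop:no-full-blue}). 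Matching the finitely many endpoint configurations from (a) then shows that between two consecutive contact vertices there are exactly three such runs, hence three distinct adjacent cc's, and exactly two intermediate yellow hexagons; in particular the cc touches precisely the three contact yellow hexagons.

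\emph{Main obstacle.} Step (a) is the laborious part: the forced-environment argument of \Cref{prop:odd-env-1} must be pushed through every antenna type of \Cref{fig:antenatips} and every antenna-free completion, and this is the bulk of the work. The claim in (b) that an interface run is necessarily finite and capped by yellow hexagons also needs some care, but relies only on the device already used to prove \Cref{prop:no-full-blue}, so it introduces no new difficulty.
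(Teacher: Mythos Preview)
Your overall strategy---enumerate the local possibilities at a tip (with and without antenna) to locate a nearby yellow hex, then walk the cc boundary through alternating squares and rhombs until a yellow hex is reached---is exactly what the paper does. The paper's proof is in fact slightly terser than yours: it records the tip-neighbourhood cases in two figures (the analogues of your step~(a)), then argues the chain of quadrilaterals inductively from the contact point, using only \Cref{prop:dec-adj} and the colour/parallelism of opposite sides.

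However, you have misread the statement. The sentence ``Between two such contact vertices, there are 3 interfaces to 3 adjacent cc's'' means that there are three interfaces \emph{in total}, one for each of the three arcs of the cc boundary between consecutive contact vertices; it does not mean three runs per arc. Consequently there are \emph{no} intermediate yellow hexagons along an arc: each arc is a single square/rhomb chain running directly from one contact yellow hex to the next. Your step~(b) conclusion (``exactly three such runs, hence three distinct adjacent cc's, and exactly two intermediate yellow hexagons'') is therefore wrong as stated, and indeed inconsistent with your own final clause that the cc touches precisely the three contact yellow hexagons---if two extra yellow hexes capped runs along each arc, they too would touch the cc. If you actually carry out the case check you outline, you will discover one run per arc, not three; the method is sound, but the target you aimed at is off.
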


\image{Example of a cc and its three interfaces to adjacent cc's.}{}{
\includegraphics[width=8cm]{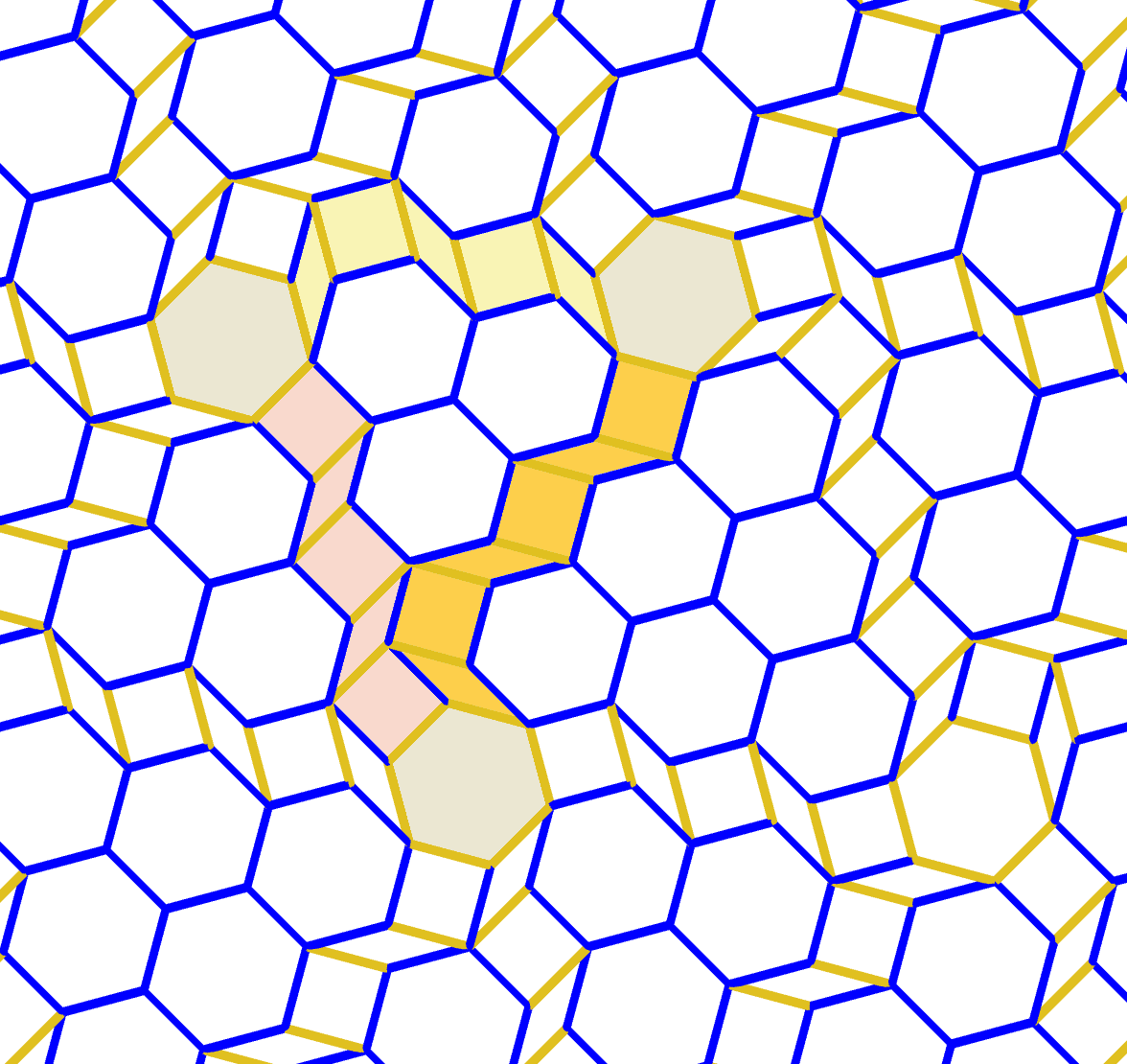}
}

\begin{proof}
Possible antennae ends were enumerated in Figure~\ref{fig:antenatips}. In particular there is always a yellow hex at the end.

Let us study further the environment of a $\hT n$ tip. In the next figure we show all the possible top tip neighbourhoods. It is a subset of all the possible neighbourhoods, where we exclude the presence of another hex touching the tip.

\image{A list of possible top tip neighbourhoods. The vertex circled in red represent the top tip of an up pointing $\hT n$. Not all those configurations can actually appear in a whole plane tiling: for instance the second one is forbidden by \Cref{prop:odd-env-1}}{fig:tips}{
\includegraphics[scale=0.74]{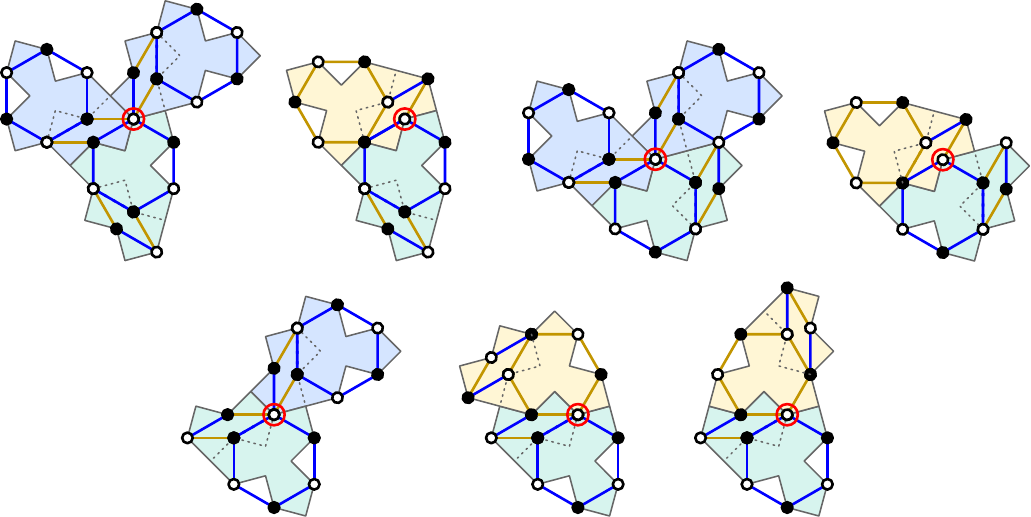}
}

We see that those which do not have an antenna touch a yellow hex either at the tip or at the next vertex following the blue hex boundary in the counter clockwise direction.

In the figure below, we sum up the possible ways a top tip of an up pointing cluster is connected to a yellow hex, as a consequence of \Cref{fig:antenatips,fig:tips}:

\image{Top tip circled in red, contact point in green.}{fig:tips-2}{
\includegraphics[scale=0.75]{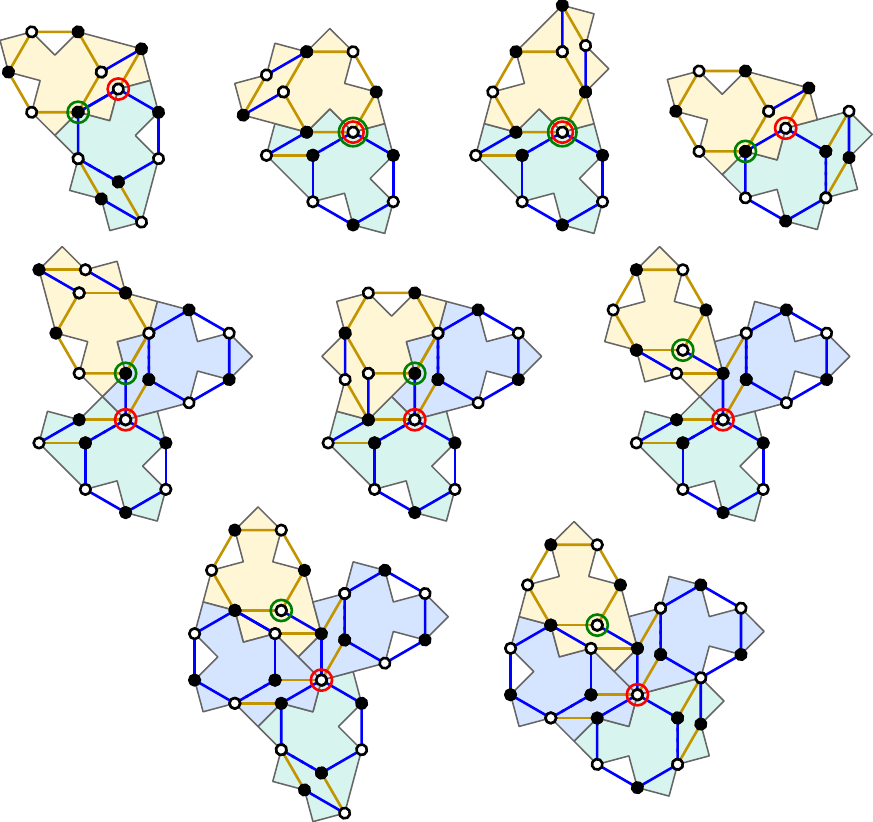}
}

Consider one of the three yellow hexes in contact with the cc. The two sides of this yellow hex that are adjacent to the contact point can each be seen as the starting element of a finite sequence of parallel yellow segments attached to the boundary of the cc and ending on another yellow hex in contact.

\image{How we start following the boundary on two examples}{}{
\includegraphics[scale=1.25]{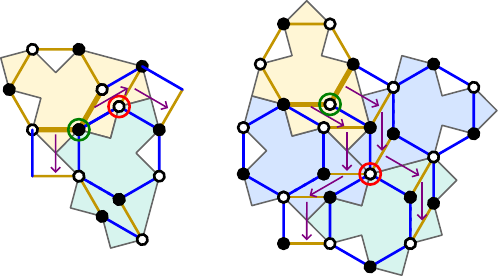}
}

This claim follows from induction by following the cc boundary: on \Cref{fig:tips-2}, inspection of the cases show that the first blue segment in this boundary together with the yellow segment are part of the boundary of a quadrilateral which is either a square or a rhomb.
The opposite sides of this quadrilateral are parallel segments of the same colours.
Opposite to the yellow segment, there can be a square or a rhomb or a hex.
No two squares or two rhombs can follow each other (\Cref{prop:dec-adj}).
\end{proof}

\subsection{A list of cc}\label{sub:cc-list}

We will enumerate all possible types of cc an their interfaces
(this information includes the contact points with the yellow hexes).
We will also see that a $\hT 3$ never has antennae while a $\hT 2$ has between 1, 2 or 3 of them, and a $\hT 1$ necessarily has 3.

\medskip

In the sequel we use the following \term{marking} in the hexagons to indicate which pieces of type $\iD3$ and $\iD2$ pair to form Spectres:

\nopagebreak

\image{Marking of the hexagon of a spectre. The marking is a dot at the centre of the hexagon plus a broken line that links the centre of the hexagon to the midpoint of the side of the hexagon in common with the rhomb of the associated $\iD2$, then to the centre of the rhomb. }{fig:marking-a}{
\includegraphics[scale=1]{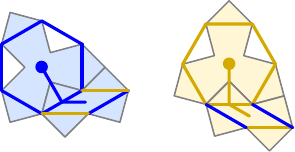}
}

\image{Left: We removed the Spectre, and hence its composing $\iD3$ and $\iD2$, from the previous picture, but maintained the decoration (hex$+$rhomb) and the marking. Right: we only kept the hexagon and the marking, which now has a small segment sticking out.}{fig:marking-b}{
\includegraphics[scale=0.9]{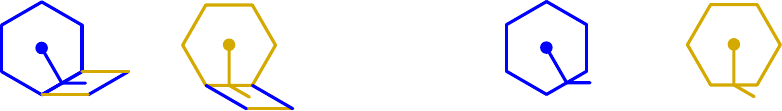}
}

We find interesting to look at the interpretation of \Cref{fig:odd-env} in terms of marked hexagons.

\nopagebreak

\image{Environment of an odd tile, from the point of view of the decoration graph. The two dashed hexes are respectively deduced from \Cref{lem:completion} and the description of antennae in \Cref{prop:ne,fig:antenatips} (or more simply a direct inspection of what can fit there).
}{fig:odd-env-2}{
\includegraphics[scale=0.25]{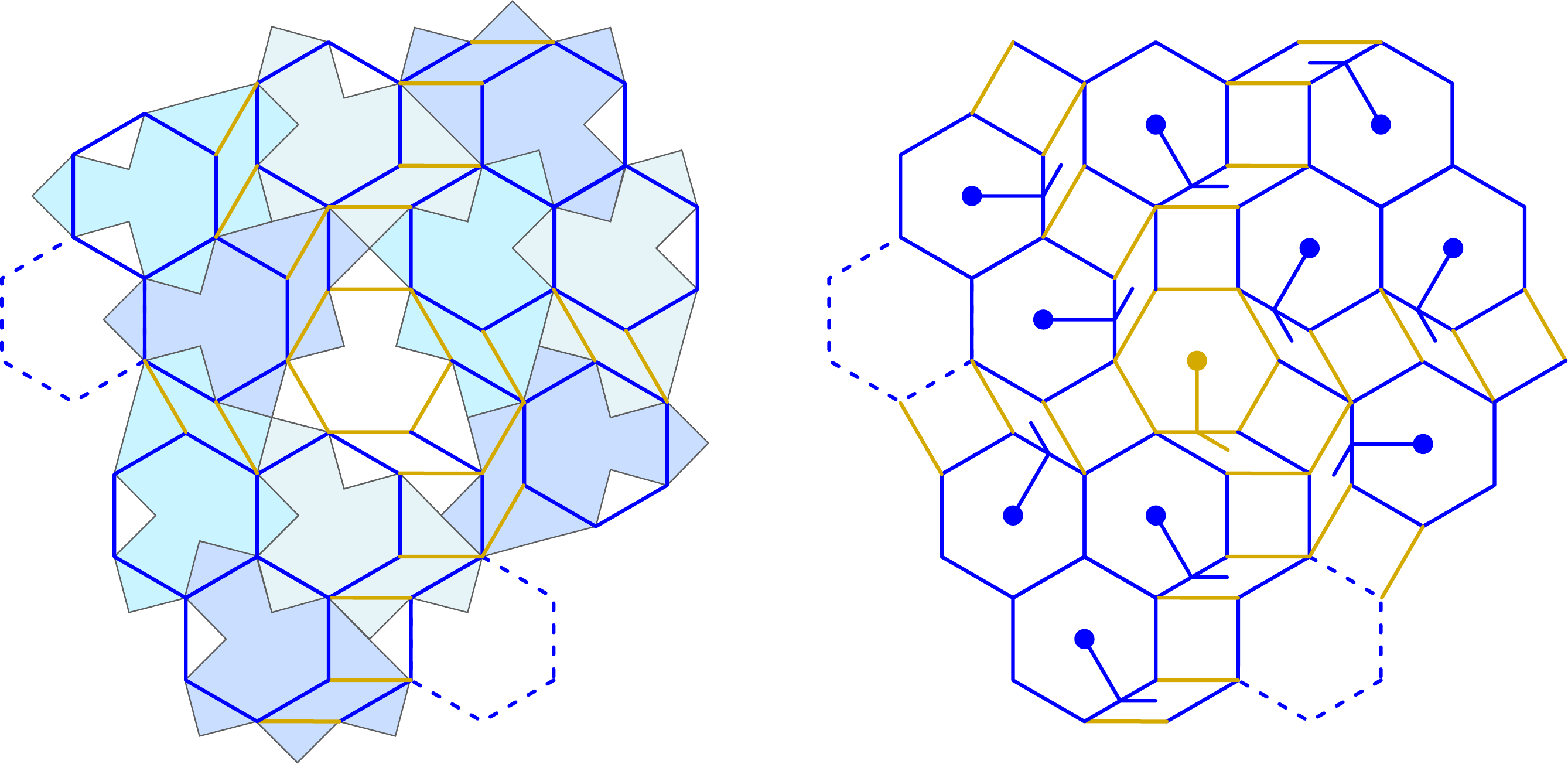}
}

The next six statements, from \Cref{prop:T1-cc} to \Cref{prop:T3-cc}, have their proofs moved to \Cref{ss:pf-cc-types}.

\begin{proposition}\label{prop:T1-cc}
Every cc of a $\hT 1$ is necessarily as on the picture below:\footnote{Actually we will see in \Cref{cor:T1-tto} that the lower left arrow can only have one orientation.}

\nopagebreak

\image{}{fig:T1-a}{
\includegraphics[scale=0.75]{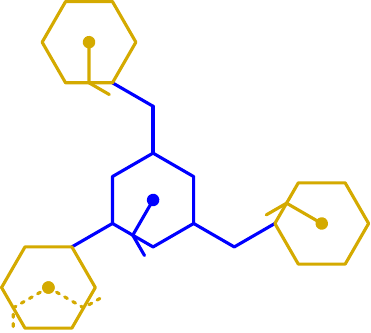}
}
\end{proposition}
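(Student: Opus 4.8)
The plan is to build the cc outward from the single blue hexagon $H$ of the $\hT1$ and show that at every stage the continuation is forced; recall from the analysis preceding this proposition that such a cc is $H$ together with at most one antenna at each of its three tips. Write $S$ for the even Spectre whose piece $\iD3$ has hexagon $H$, and $\iD2_S$ for the companion rhomb glued to one edge of $H$. \emph{First} I would pin down the ring of decoration tiles immediately around $H$. Each of the three inward dents of $\iD3_S$ is completed (\Cref{fig:notch-fill}) by an outward dent of a $\iD3$ or of a $\iD2$; if it were a $\iD3$, that piece's hexagon would share an edge with $H$, hence be a blue hexagon adjacent to $H$ (two $\iD3$ in this relation have the same orientation, so the same parity), contradicting the hypothesis that $H$ is a $\hT1$. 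So all three inward dents are filled by rhombs, which border $H$ along full edges; and an outward-dent edge of $\iD3_S$ can border neither a hexagon (another adjacent blue hexagon, impossible) nor a rhomb (two outward dents would overlap), so it borders a square. Thus $H$ is bordered cyclically by three rhombs (one of them $\iD2_S$) and three squares, in alternation.

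\emph{Second}, I would analyse each white dot of $H$; these are exactly the three tips of the $\hT1$ (\Cref{def:tips}). By \Cref{prop:cc-descr} each tip is joined to a yellow hexagon either through an antenna, or directly, or through the next vertex of $\partial H$ in the counter-clockwise sense, and by \Cref{cor:protr-1} any blue segment leaving $H$ leaves from a white dot and then lies on two adjacent blue hexes, which is impossible for a $\hT1$; so whatever grows at a tip that is not itself a yellow-hex contact must be one of the antennae catalogued in \Cref{fig:antenatips}. The crucial claim is that a tip of a $\hT1$ can never be \emph{bare}: going through the no-antenna entries of \Cref{fig:tips,fig:tips-2} and following the forced continuations — repeated use of \Cref{prop:autofill-1}, together with the exclusions \Cref{prop:impo-11,prop:impo-2,prop:impo-16} and \Cref{prop:odd-env-1} — each case reaches a contradiction: a forbidden outline appears, or a second blue hexagon becomes adjacent to $H$, or two odd tiles end up adjacent. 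This is exactly where the $\hT1$ hypothesis (no neighbouring blue hex, from the first step) is indispensable, since the very same local pictures at a tip are perfectly legitimate for $\hT2$ and $\hT3$.

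\emph{Third}, with an antenna now forced at all three tips, I would identify the antenna type: among the possibilities of \Cref{fig:antenatips}, the ``$0$'' cases and those incompatible with the paragraph preceding \Cref{fig:f1} (the highlighted blue segment may not lie on a blue hex) are discarded, leaving the antenna drawn in \Cref{fig:T1-a}. Each antenna terminates on a yellow hexagon, i.e.\ an odd tile; applying \Cref{prop:odd-env-1} around each of these three odd tiles and imposing that $\iD3_S$ must pair with $\iD2_S$ (the marking of \Cref{fig:marking-a}) forces the remaining tiles and makes the three local pictures fit together in only one way, namely \Cref{fig:T1-a}. The extra rigidity asserted in the footnote — that the marking arrow has a single admissible orientation — is a separate matter, postponed to \Cref{cor:T1-tto}.

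I expect the second step to be the main obstacle: there is no visible shortcut around the case enumeration, the chains of forced placements can run several tiles deep before a contradiction surfaces, and one must invoke the $\hT1$ hypothesis at precisely the right moment; the first and third steps are comparatively mechanical given the enumerations and forced-filling lemmas already in hand.
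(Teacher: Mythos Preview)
Your plan is sound in outline but takes a different and heavier route than the paper. The paper does not treat the three tips symmetrically. Instead it \emph{fixes the orientation of the Spectre} carrying $H$ (equivalently, fixes where the companion rhomb $\iD2_S$ sits), which immediately pins the top tip to just the first two entries of the catalogue \Cref{fig:tips}; one of those two is already known to be excluded, so the top tip is forced outright. From there, repeated application of \Cref{prop:autofill-1} propagates the configuration around the hexagon and determines the other two tips automatically, leaving only a single indent with two admissible fillings --- the two cases shown in \Cref{fig:T1-a}. No separate ``each tip must carry an antenna'' step is needed.

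Your symmetric approach can be made to work, but your Step~2 is where the cost lies, and it is genuinely higher than you may expect: the no-antenna tip configurations of \Cref{fig:tips} are \emph{not} locally contradictory in isolation (they occur for $\hT2$ and $\hT3$), so to exclude them for $\hT1$ you must feed in the ring structure from your Step~1 together with the position of $\iD2_S$, and the three tips are not interchangeable once that marking is fixed --- you would end up doing three distinct case analyses rather than one. The paper's shortcut is precisely to let the marking collapse the top-tip analysis to a single case and then let \Cref{prop:autofill-1} carry the information to the other tips for free; this replaces your three independent tip arguments by one tip argument plus mechanical propagation. Your Step~1 (the alternating rhomb/square ring) is correct and a nice observation, but the paper does not need it explicitly, and your Step~3 is subsumed once the autofill chain has run.
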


\begin{proposition}\label{prop:T2-impo}
A $\hT 2$ with markings as below cannot occur.

\nopagebreak

\image{}{}{
\includegraphics[scale=0.75]{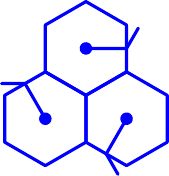}
}
\end{proposition}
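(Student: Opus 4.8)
The plan is to argue by contradiction and to propagate forced tiles, in the spirit of \Cref{lem:completion} and \Cref{prop:ne}. Recall that the three blue hexes of a $\hT2$ pairwise share an edge and meet at a single common black dot $b_0$ --- this is exactly how \Cref{lem:completion} produced the triangular shape in the proof of \Cref{prop:T} --- and that, again by \Cref{lem:completion}, $b_0$ is already surrounded by those three hexes, so no rhomb lies at $b_0$. On the other hand each of the three underlying pieces $\iD3$ must be paired with a $\iD2$ to form a Spectre, and (as noted just after \Cref{fig:clus}) this pair completely surrounds a black dot. Hence for each hexagon the companion rhomb is attached along one of the four edges of that hexagon lying on the boundary of the $\hT2$, and it surrounds one of the two non-central black corners of that hexagon; the marking of \Cref{fig:marking-a} records exactly this choice, and the pattern drawn in the statement is one such assignment, which we must show is impossible.

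The first step is to pass from the marked-hex picture to the induced $\iD3$/$\iD2$ tiling: replace each marked hexagon by that hexagon together with its companion rhomb, so that the patch to be excluded becomes a concrete and small arrangement of pieces $\iD3$ and $\iD2$. I would then list the inward dents on the boundary of this patch. By \Cref{prop:d32_hrs} and the angle-matching rules, each such dent can only be filled by a $\iD3$ or a $\iD2$; the colour conventions of \Cref{fig:col-conv-spec}, the requirement that a $\iD2$ come with its companion $\iD3$, and \Cref{prop:autofill-1} then reduce the possibilities at each dent to one, or at worst to a very short list.

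The core of the argument is a bounded propagation of forced pieces around the patch. I expect it to terminate in one of the obstructions already established: an outline forbidden by \Cref{prop:impo-11} (or by \Cref{prop:impo-16}, or \Cref{prop:impo-2}); a $\iD2$ whose companion $\iD3$ has nowhere to be placed --- the recurrent ``no room for the companion $\iD3$'' phenomenon used in \Cref{lem:completion}, \Cref{cor:protr-1} and \Cref{prop:impo-16}; a blue hex cluster forced to grow beyond a $\hT3$, contradicting \Cref{prop:T}; or two adjacent tiles whose forced parities violate \Cref{prop:odd-env-1}. In the sub-cases where no purely local contradiction appears, \Cref{prop:odd-env-1} should be the finishing tool, just as in the proof of \Cref{prop:ne}; and one can shorten the work using \Cref{prop:T1-cc}, since the shape of a $\hT1$ is forced as soon as the end of an antenna is reached.

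The main obstacle is bookkeeping rather than depth. There are several nearly symmetric cases --- the two orientations of the $\hT2$ and, for each hexagon, the choice of which non-central black corner the companion rhomb surrounds --- and at every step one must check that the outline produced so far genuinely satisfies the hypotheses of the visual aids of \Cref{ss:visual-aids}. I would treat one orientation in full detail, dispatch the ``obviously local'' sub-cases first with \Cref{prop:autofill-1} and \Cref{prop:impo-11}, and keep \Cref{prop:odd-env-1} in reserve for the remaining one or two sub-cases.
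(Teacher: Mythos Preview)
Your general strategy---pass to the $\iD3$/$\iD2$ picture, propagate forced pieces at the inward dents, and finish with \Cref{prop:odd-env-1}---is exactly the route the paper takes. But what you have written is a plan, not a proof: you list the tools and the kinds of obstruction you ``expect'' to meet, without actually identifying a single dent, enumerating its fillings, or showing where the contradiction lands. As it stands this would not be accepted as a proof of the proposition.

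The paper's actual argument is much shorter than your outline suggests, and the reason is a simplification you have missed: the specific marking pattern in the statement has an order~$3$ rotational symmetry. So there is only one corner to analyse, not ``several nearly symmetric cases'' or ``two orientations of the $\hT2$''. At that corner the paper looks at a single inward dent on the boundary of the $\hT2$; since by hypothesis no further blue hex may touch the cluster, there are exactly two ways to fill it. One of them immediately places an odd tile adjacent to the patch in an orientation incompatible with the forced neighbour in \Cref{prop:odd-env-1}; the other forces a couple of further tiles and then yields the same contradiction. That is the whole proof.

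If you rewrite, start by stating the $3$-fold symmetry, pick one boundary dent explicitly, and work through its two completions. You will find that \Cref{prop:odd-env-1} finishes both branches quickly; none of \Cref{prop:impo-16}, \Cref{prop:impo-2}, \Cref{prop:T}, or \Cref{prop:T1-cc} is needed.
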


\begin{proposition}\label{prop:T2-cc-1}
A $\hT 2$ with markings as below left has a cc as below right:

\nopagebreak

\image{}{}{
\begin{tikzpicture}
\node at(-2,0) {\includegraphics[scale=0.75]{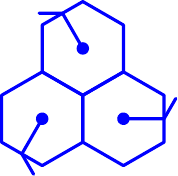}};
\node at(2,0) {\includegraphics[scale=0.66]{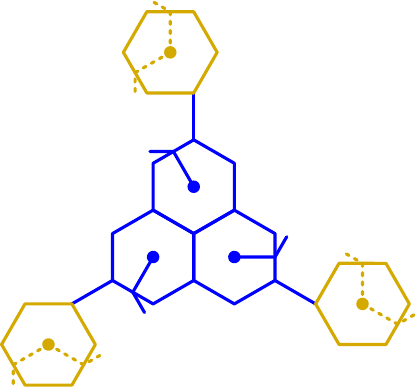}};
\end{tikzpicture}
}
\end{proposition}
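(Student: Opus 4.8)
I would run the same kind of guided case analysis used for \Cref{prop:ne} and \Cref{prop:odd-env-1}: start from the $\hT2$ with its prescribed markings, read off the forced local geometry, and then pin down each of the three tips separately. The statement being local in nature, almost everything is already available: \Cref{prop:cc-descr} tells us the gross shape of a cc, and the hard classification work (possible antennae, possible tip neighbourhoods, forbidden sub-outlines) has been done in \Cref{fig:antenatips,fig:tips} and in \Cref{prop:impo-11,prop:impo-2,prop:impo-16,prop:odd-env-1}.

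First, I would record what the markings give. On each of the three blue hexes of the $\hT2$, the broken marking points to the edge carrying the companion rhomb ($\iD2$); the three mutually shared edges of the cluster cannot carry a rhomb, so the markings of the left picture place the three companion $\iD2$'s — hence the three Spectres — in definite positions around the cluster. By \Cref{cor:protr-1} together with maximality of the cluster, no blue segment protrudes from the black dots on the boundary of the $\hT2$, so every antenna stems from a tip (a white dot), and the cc is the $\hT2$ together with at most one antenna at each of its three tips (compare \Cref{prop:cc-descr}). It thus suffices to determine the three tips.

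Second, I would treat each tip in turn. The possible local pictures at a tip form the finite list of \Cref{fig:antenatips} (antenna shapes) crossed with \Cref{fig:tips} (tip neighbourhoods); against this list I impose the extra constraint coming from the companion rhomb that the marking has just placed next to that tip. Propagating forced tiles with the auto-fill rules of \Cref{prop:autofill-1}, and discarding every branch that produces a sub-outline forbidden by \Cref{prop:impo-11}, \Cref{prop:impo-2} or \Cref{prop:impo-16}, or an impossible odd-tile neighbourhood by \Cref{prop:odd-env-1}, each tip collapses to exactly the antenna (or absence of antenna) shown in the right picture. Together with the first step this yields the claimed cc.

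\textbf{Expected main obstacle.} The difficulty is combinatorial bookkeeping rather than any single clever idea: the raw enumeration at the three tips branches widely, and almost every branch must be killed by spotting a forbidden sub-outline or an impossible odd-tile environment. One also has to verify that, once the three companion rhombs are fixed, the three tips are genuinely independent — no choice forced at one tip propagates around the cluster to re-constrain another — which is precisely what legitimizes the tip-by-tip treatment. I expect the written-out proof to be a sequence of annotated pictures, as in \Cref{prop:ne} and \Cref{prop:odd-env-1}, rather than a computation.
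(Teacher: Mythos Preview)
Your plan is essentially what the paper does: a local case analysis at the tips, using the already established toolbox (\Cref{prop:autofill-1}, \Cref{prop:impo-11}, \Cref{prop:odd-env-1}, \Cref{fig:antenatips,fig:tips}) to force or exclude each branch, presented as a chain of annotated pictures. One shortcut the paper exploits and you do not mention: this particular marking of the $\hT2$ has order-$3$ rotational symmetry, so the paper analyzes a single corner and invokes rotation for the other two, which also dissolves your worry about cross-constraints between tips.
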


\begin{proposition}\label{prop:T2-cc-2}
The cc of a T2 can only be as in the previous proposition or as follows:

\nopagebreak

\image{}{}{
\includegraphics[scale=0.66]{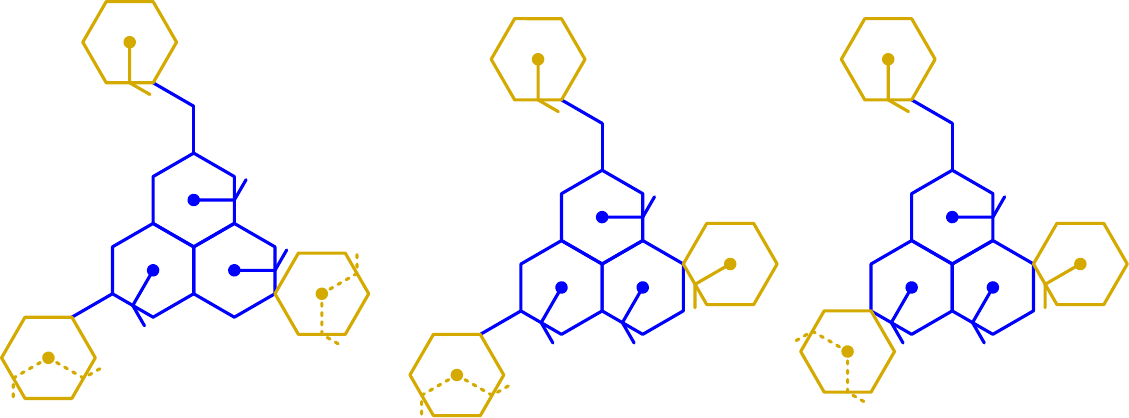}
}
\end{proposition}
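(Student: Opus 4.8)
The plan is to turn the statement into a finite case check on the markings of the three hexes making up the $\hT2$, and then, for each surviving marking, to rebuild the cc one tip at a time with the help of the visual aids of \Cref{ss:visual-aids} and the antenna list of \Cref{fig:antenatips}.

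First I would enumerate the admissible markings of the $\hT2$. The three hexes of a $\hT2$ meet at a common central vertex, so they carry one coherent black/white colouring of their vertices; the rhomb paired to each hex surrounds one of that hex's black dots, and it cannot be the central one, which is already completely surrounded by the three $\iD3$ pieces. If the surrounded black dot were one lying on an edge interior to the $\hT2$, a blue segment would protrude from it in a way forbidden by \Cref{cor:protr-1} (equivalently \Cref{prop:impo-16}); this leaves, for each hex, the two positions exchanged by the reflection in the axis of the $\hT2$ through that hex. Listing the resulting global marking patterns up to the order-$3$ rotational symmetry of the $\hT2$ (reflections are never allowed) produces a short list, and \Cref{prop:T2-impo} discards the forbidden one(s). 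What survives is the marking pattern of \Cref{prop:T2-cc-1}, whose cc is already determined there, together with the remaining pattern(s), which are the ones drawn in the figure of the present statement.

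For each surviving pattern not covered by \Cref{prop:T2-cc-1}, I would reconstruct the cc tip by tip (tips in the sense of \Cref{def:tips}). The marking pattern fixes the orientations of the $\iD2$ pieces glued to the cluster, hence the parities and orientations of the odd tiles surrounding it, which already prunes the local possibilities. At a given tip, \Cref{cor:protr-1} rules out a blue segment running along a side of the $\hT2$, so any antenna there is one of those in \Cref{fig:antenatips}; the autofill rules of \Cref{prop:autofill-1}, together with the exclusions \Cref{prop:impo-11}, \Cref{prop:impo-2}, \Cref{prop:impo-16} and \Cref{prop:odd-env-1}, then leave a unique antenna (or none) and, via \Cref{prop:cc-descr}, a definite contact vertex with a yellow hex. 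Doing this at all three tips and checking that the three local pictures agree on the shared data gives exactly the cc of the statement.

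I expect the last step to be the main obstacle: the tip-by-tip reconstruction is a genuine case analysis in which one must check that every branch other than the claimed one meets one of the forbidden local configurations, and that the surviving branches are globally compatible --- the same kind of bookkeeping as in the proofs of \Cref{prop:ne} and \Cref{prop:odd-env-1}. A secondary point requiring care is the claim in the first step that only the axis-symmetric marking positions occur, which depends on applying the completion and protrusion lemmas correctly at the internal edges of the $\hT2$.
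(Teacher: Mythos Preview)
Your plan matches the paper's approach: the proof is explicitly omitted there with the remark that it is ``by a similar kind of analysis'' to \Cref{prop:T2-cc-1}, namely fixing the marking pattern of the $\hT2$ and then reconstructing the cc tip by tip with the visual aids of \Cref{ss:visual-aids}, \Cref{prop:odd-env-1}, and the antenna list of \Cref{fig:antenatips}. One small correction in your first step: the two admissible $\iD2$ positions on each hex are \emph{not} exchanged by the reflection in the axis of the $\hT2$ through that hex (that reflection swaps the black/white vertex colouring and hence swaps inward with outward dents), they are simply the two inward-dent sides of the $\iD3$ lying on the exterior of the $\hT2$, the third being blocked by the adjacent hex --- but your count of two options per hex and four $C_3$-orbits of markings is correct, so the plan goes through.
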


\begin{lemma}\label{lem:T3-env}
A $\hT3$ is environed as follows, in terms of the $\iD3$ and $\iD2$ induced tiling (equivalently, in terms of the yellow/blue decoration graph):

\image{Two equivalent depictions of the environment of a $\hT3$. Some of the $\iD2$ and $\iD3$ have a determined pairing into a Spectre (such pairs are filled with the same colour and their common boundary is in white). For others (in two pale blue shades), we do not know in advance. All the $\iD3$ and $\iD2$ and Spectre are ``blue'' in our previous denomination.}{fig:T3-env}{
\includegraphics[scale=0.75]{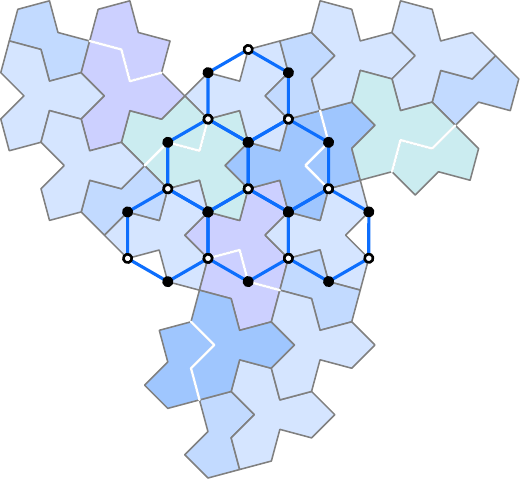}
\quad
\includegraphics[scale=0.75]{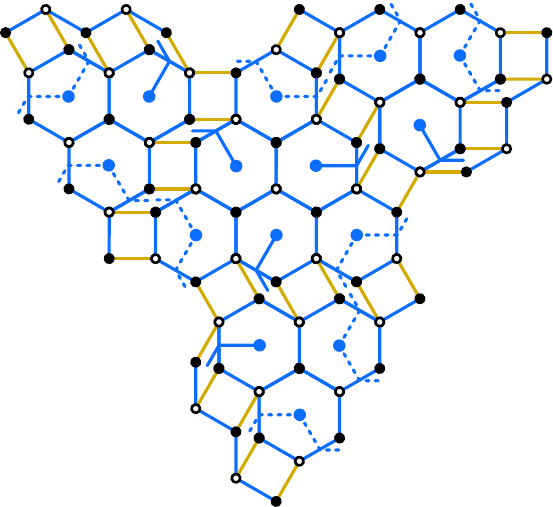}
}
\end{lemma}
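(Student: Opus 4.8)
The plan is to bootstrap from the Spectre pairing constraint and then close up the neighbourhood of the $\hT3$ with the visual aids of \Cref{ss:visual-aids} and the antenna catalogue of \Cref{fig:antenatips}, in the same spirit as the deferred proofs of \Cref{prop:T1-cc,prop:T2-cc-1,prop:T2-cc-2}. I would argue in the $\iD3$/$\iD2$ picture and translate to the decoration graph via \Cref{prop:d32_hrs}. After a rotation we may assume the $\hT3$ points up, and we label its six blue hexes: three \emph{corner} hexes and three \emph{edge} hexes. Since the tiling comes from a Spectre tiling (so $\iD3$ and $\iD2$ pieces come in pairs, by \Cref{fig:split}), each of the six underlying $\iD3$ pieces carries a companion $\iD2$ whose rhomb fills one of its inward dents (a notch) and hence shares an edge with its hexagon, the pair wrapping a black dot --- this is the discussion following \Cref{fig:clus}, with the marking of \Cref{fig:marking-a} recording the choice.

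The first step is to place the six companions. By \Cref{lem:completion}, every black dot interior to the $\hT3$ (where three of its hexes meet) is surrounded by those three hexes, which already fill the full turn there, so no companion can sit at an interior black dot; thus each companion sits at a boundary black dot of the $\hT3$ and protrudes outward. Going hex by hex, the notch of each $\iD3$ that can legally host its companion is essentially unique: for an edge hex the only notch on the boundary is forced, and for a corner hex there are two candidate notches, which \Cref{prop:dec-adj} (no two rhombs adjacent), \Cref{cor:protr-1} and \Cref{prop:impo-16} cut down to the placement shown in \Cref{fig:T3-env}. This fixes the six monochrome-filled Spectre pairs of the figure.

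The second step fills the boundary of the $\hT3$ between consecutive companion rhombs and rules out antennae. By \Cref{prop:d32_hrs} the tiles there are only hexes, rhombs and squares, with no two squares and no two rhombs consecutive (\Cref{prop:dec-adj}); following the boundary as in the proof of \Cref{prop:cc-descr}, each step is forced, and the patch closes up as in \Cref{fig:T3-env}. At each of the three tips (\Cref{def:tips}), \Cref{cor:protr-1} forbids a blue segment protruding from the \emph{sides} of the $\hT3$ --- such a segment would lie on two further blue hexes, enlarging the cluster and contradicting \Cref{prop:T} --- and the list of possible tip neighbourhoods in \Cref{fig:tips,fig:tips-2}, already pruned of those with an extra hex at the tip, shows that the only ones compatible with the companions fixed in step one are the antenna-free configurations; hence a $\hT3$ has no antenna. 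The same inspection exhibits, at each corner, a further $\iD3$ and a further $\iD2$ forced to be present but whose mutual pairing is not determined by the patch: these are the two pale-blue shades in \Cref{fig:T3-env}. Assembling the three tips and the three interfaces between them --- using the threefold rotational symmetry of the $\hT3$ to run the local analysis once and transport it --- yields precisely \Cref{fig:T3-env}.

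The main obstacle is the volume of cases in the first two steps: pinning the six companions, and then showing the boundary fills in uniquely, each amount to excluding many local alternatives, and the exclusions rest on \Cref{prop:impo-11,prop:autofill-1,prop:impo-2,prop:impo-16} together with the antenna list of \Cref{fig:antenatips}. I would keep this manageable by carrying out one tip and one adjacent interface in full detail, then invoking the symmetry of the $\hT3$ under rotation by $1/3$ for the other two, and finally checking that the only residual freedom is the pairing of the pale-blue pieces, the global up/down orientation having been absorbed into the initial rotation.
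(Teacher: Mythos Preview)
Your plan is in the same spirit as the paper's proof---a local case analysis in the $\iD3$/$\iD2$ picture, exploiting the Spectre pairing constraint and the completion rule---but the order of deductions is different, and one step is under-justified.

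The paper does \emph{not} begin by locating all six companion $\iD2$ pieces. It first looks at certain external inward dents on the $\hT3$ boundary and argues that each must be filled by a blue $\iD2$: a blue $\iD3$ is excluded because it would enlarge the cluster beyond a $\hT3$, and a yellow $\iD2$ is excluded by a direct geometric impossibility. Only after these external $\iD2$ pieces are in place does the paper observe that three of the six $\iD3$ now have a unique adjacent $\iD2$, forcing their pairing. The remaining pairings emerge only after several further steps: filling another hollow with a forced $\iD3$, excluding a configuration that would strand a $\iD2$ with no possible partner, invoking \Cref{lem:completion} to add external blue hexes, and then reading off the last companion placements. In short, the paper interleaves ``fill a dent'' and ``deduce a pairing'' rather than separating them into two phases.

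Your step~1 claims the corner-hex companions are already pinned by \Cref{prop:dec-adj}, \Cref{cor:protr-1} and \Cref{prop:impo-16} before any of the outer environment is built. This is the soft spot: those three results do not obviously single out one of the two candidate notches at a corner hex when nothing external has yet been placed, and the paper's proof strongly suggests that some outer $\iD2$ and $\iD3$ pieces must be laid down first before the corner pairings become forced. You might be able to make a direct argument work, but as written the justification is a gesture rather than a proof; you would need to exhibit, for each rejected notch, the concrete local obstruction.

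A secondary point: you fold the ``no antenna'' conclusion into this lemma. In the paper that is the content of \Cref{prop:T3-cc}, which is \emph{deduced from} \Cref{lem:T3-env} by a further case split on the two possible positions of the lower-left corner Spectre. The present lemma only establishes the environment of \Cref{fig:T3-env}; your step~2 is doing the work of the next proposition as well.
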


\begin{proposition}\label{prop:T3-cc}
A T3 has no antenna: it is its own cc. It can only be as follows on its lower left corner, and similarly for the other two by rotation.

\nopagebreak

\image{}{fig:T3}{
\includegraphics[scale=0.66]{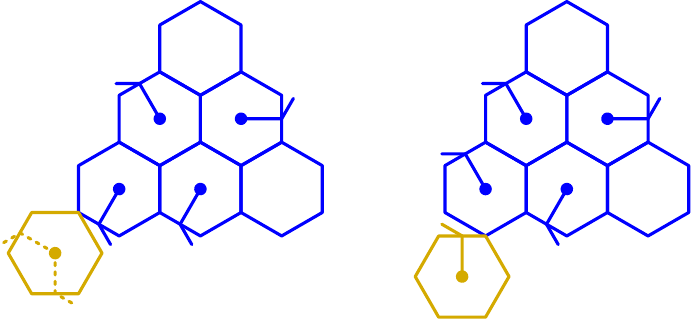}
}
\end{proposition}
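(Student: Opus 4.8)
The plan is to take the forced environment of a $\hT3$ given by \Cref{lem:T3-env} (\Cref{fig:T3-env}) as the starting point and to determine, near one corner, the pairings of the two ``pale blue'' families of $\iD3$ and $\iD2$ that \Cref{lem:T3-env} leaves undecided. Because the $\iD3$/$\iD2$ tiling around a $\hT3$ is already pinned down by \Cref{lem:T3-env} and that configuration is invariant under the $1/3$ rotation permuting the three corners, it suffices to work at the lower-left corner.

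By \Cref{prop:cc-descr} the cc of this $\hT3$ meets, at each tip, a yellow hexagon either through an antenna attached to the tip, or at the tip itself, or at the next boundary vertex counter-clockwise. First I would exclude the antenna: an antenna would be a blue segment protruding from the tip white dot that lies on no blue hex, whereas in \Cref{fig:T3-env} the tiles occupying that vertex and its incident edges are already placed and carry no such segment; going through the list of antenna shapes in \Cref{fig:antenatips} and checking each against \Cref{fig:T3-env} (completing the environment a little further with \Cref{prop:autofill-1} if needed, and excluding a yellow $\iD2$ on a protruding blue segment by \Cref{prop:impo-16} as in \Cref{cor:protr-1}) shows none of them is compatible. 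Hence a $\hT3$ is its own cc.

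It then remains to fix, at the lower-left corner, which of the two admissible contact vertices occurs and how the Spectre pairing is completed there. I would proceed exactly as for \Cref{prop:T2-cc-1} and \Cref{prop:T2-cc-2}: starting from the determined part of \Cref{fig:T3-env}, fill each still-free dent along the corner using the pairing constraint (every $\iD3$ must be matched with an adjacent $\iD2$), the auto-completion rules of \Cref{prop:autofill-1}, and the forbidden outlines of \Cref{prop:impo-11}, \Cref{prop:impo-2} and \Cref{prop:impo-16}; at each step the discarded alternatives either create a forbidden outline or leave some $\iD3$ with no room for its companion $\iD2$ (the ``no room for the paired rhomb'' argument already used in \Cref{lem:completion} and \Cref{cor:protr-1}). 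Comparing the outcome with \Cref{fig:T3} finishes the proof, and by the rotational symmetry the other two corners are identical.

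The main obstacle is the exhaustiveness of this last case analysis: several $\iD3$'s near the corner have a priori two candidate slots for their paired $\iD2$, and discarding one of them in each case requires keeping careful track of the black/white dot colouring — hence of the $\iD3$ and $\iD2$ orientations, cf.\ \Cref{prop:bipar} and \Cref{prop:oc} — so that no placement is overlooked. This bookkeeping is delicate but entirely of the same routine nature as the arguments already carried out for the $\hT1$ and $\hT2$ clusters.
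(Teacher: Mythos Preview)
Your overall strategy --- start from \Cref{lem:T3-env}, branch on the remaining $\iD3$/$\iD2$ pairings at one corner, and eliminate the bad branches --- is the same as the paper's. Two points are worth adjusting.

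First, the paper does not treat ``exclude antennae'' as a separate preliminary step. It simply notes that the Spectre containing the lower-left blue hex has exactly two possible pairings with an adjacent $\iD2$, follows each to its forced continuation, and the absence of an antenna falls out of that. Your plan to rule out antennae by matching \Cref{fig:antenatips} against \Cref{fig:T3-env} is plausible, but since the undetermined ``pale blue'' pairings of \Cref{fig:T3-env} are precisely what sit at the corner, you will find yourself doing essentially the same two-way branch anyway; the separation does not buy anything.

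Second, and more importantly, the elimination of the bad branch in the paper is carried out with \Cref{prop:odd-env-1}, not with the lighter tools you list (\Cref{prop:autofill-1}, \Cref{prop:impo-11}, \Cref{prop:impo-2}, \Cref{prop:impo-16}, ``no room for the paired rhomb''). Concretely: after forcing a yellow $\iD2$ near the corner, its companion yellow $\iD3$ can sit on only one side, and then a further forced blue tile ends up adjacent to the yellow one in a way that violates the odd-tile environment of \Cref{prop:odd-env-1}. This is exactly the mechanism used in \Cref{prop:T2-impo}, which is part of the $\hT2$ analysis you say you are mimicking --- so your toolkit should include \Cref{prop:odd-env-1}. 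Without it the case analysis does not close, and your proposal as written has a gap at precisely that step.
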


As already remarked, in each case $n=1$, $2$ or $3$, the yellow hex associated to a tip of a $\hT n$ cluster is either at the end of an antenna if there is one, or at the tip, or at a vertex on the boundary of the cluster, situated one blue segment away from the tip, counter-clockwise.

From this, we get the following possibilities for the cc's and their \emph{interfaces} as defined in \Cref{prop:cc-descr}.
Not all those possibilities may appear in an actual tiling of the whole plane by the Spectre. This is one of the main figures of the article and we will refer to it often.

\image{In this list, we marked interfaces between adjacent cc's with a letter corresponding to their shape. The plus or minus is there to indicate that a $+$ can only fit a $-$ and vice-versa. The `$\iI$' has no sign because it fits with itself. We also indicated which rhombs are blue and which are yellow. The blue rhombs that match external blue hexes have a small marking.
Note that the the yellow hexes with a solid yellow marking are those linked to a yellow rhomb of one of the interfaces. The other ones have a dotted marking indicating the two indents of the underlying $\iD3$ that are still free for a matching yellow rhomb.}{fig:labeled-cc-2}{
\makebox[\textwidth][c]{\includegraphics[scale=0.575]{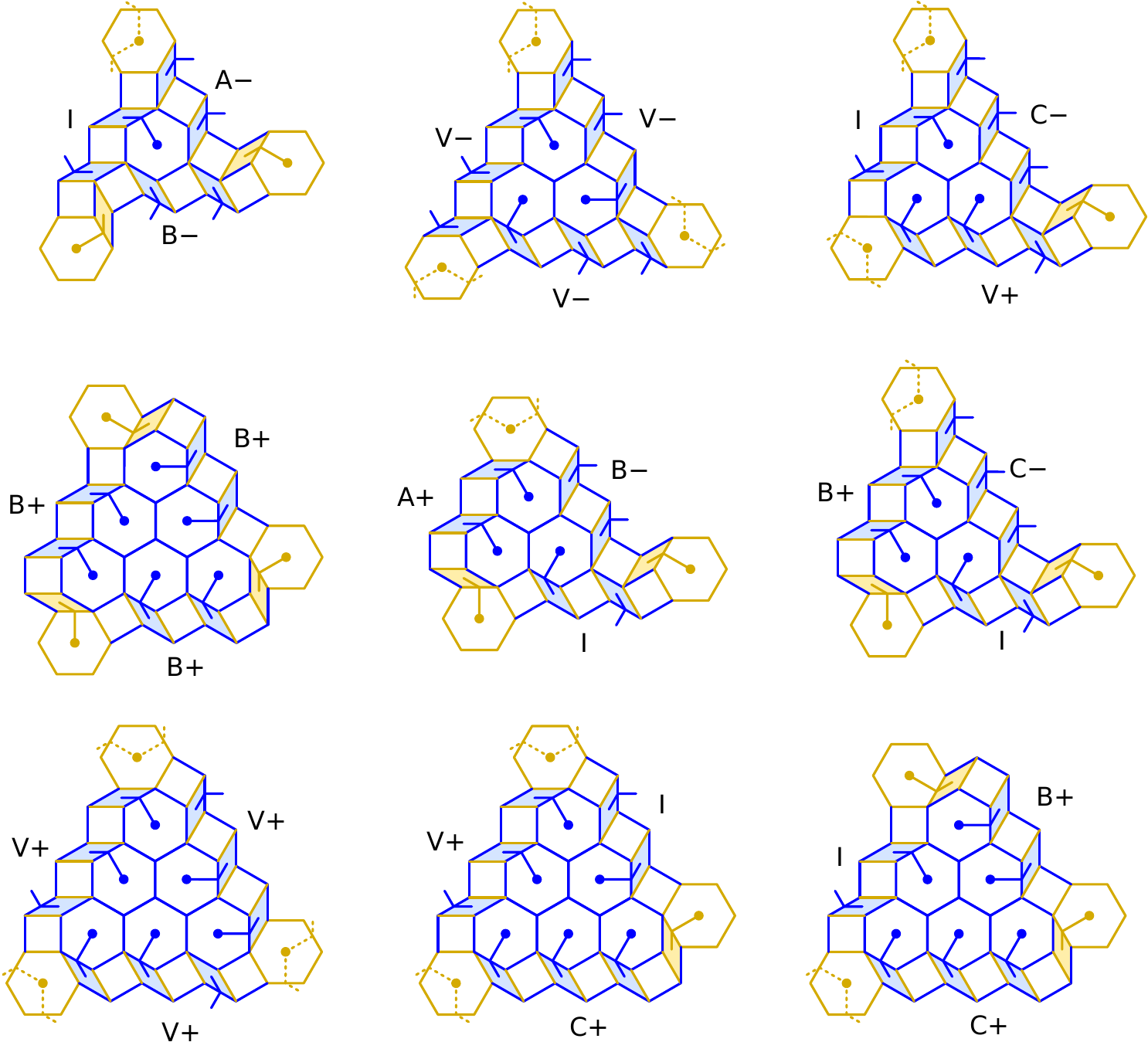}}
}

Two neighbouring cc's in the decoration graph of a whole plane Spectre tiling must have a common interface, in the sense that two blue hex and a chain of rhombs and squares linking them must be the same.
In other words, the interfaces in the pieces of \Cref{fig:labeled-cc-2} must superimpose.

Let us sum up what we obtained:

\begin{theorem}\label{thm:pieces-ne}
Consider the hex, squares and rhombs tiling associated via the decoration graph to a whole plane tiling by the Spectre.
It decomposes as a union of pieces as in \Cref{fig:labeled-cc-2}, rotated by multiples of $1/6$, superimposing at their interfaces and yellow hexes.
\end{theorem}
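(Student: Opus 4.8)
The plan is to collect the local descriptions established above into one global statement, so the proof is mostly synthesis. First I would recall the ambient picture: by \Cref{prop:d32_hrs} the associated $\iD3$/$\iD2$ tiling has a decoration graph cutting the plane into hexagons (one per tile, blue for even and yellow for odd), rhombs (one per $\iD2$) and squares; by \Cref{cor:odds} and our standing convention the yellow hexes are precisely the odd tiles and they are isolated. The blue hexes split into cc's (\Cref{prop:blue-hex-touch} makes adjacency of blue hexes geometrically tame), and by \Cref{prop:ne} every cc contains a blue hex, hence a nonempty cluster, which by \Cref{prop:T} is a $\hT1$, $\hT2$ or $\hT3$; \Cref{lem:completion} and \Cref{cor:protr-1} show the cluster sits inside the cc as a single triangular block, so a cc has exactly one cluster.

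Next I would feed in the case analysis. By the six statements from \Cref{prop:T1-cc} to \Cref{prop:T3-cc} (in particular the exclusions in \Cref{prop:T2-impo} and the environment \Cref{lem:T3-env}), each cc --- cluster plus antennae, with its three contact yellow hexes placed as in \Cref{prop:cc-descr} --- is, after a rotation by a multiple of $1/6$, one of the configurations drawn in \Cref{fig:labeled-cc-2}. Adjoining \Cref{prop:cc-descr}, each of the three sides of the cc between two consecutive contact yellow hexes splits into three interfaces, each an alternating chain of squares and rhombs with parallel yellow edges joining two yellow hexagons; reading these chains off the finitely many cc types yields the finite list of interface shapes, which I would name $\iA^{\pm}$, $\iB^{\pm}$, $\iC^{\pm}$, $\iI$ as in \Cref{fig:labeled-cc-2}. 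An interface is shared by the two cc's it separates and must appear as the same chain of tiles from both sides; the bipartite and direction constraints of \Cref{prop:bipar} and \Cref{prop:oc}, together with \Cref{prop:dec-adj}, force the two presentations to be reverses of one another, which is exactly the matching rule: a $+$ interface meets only a $-$ interface, and $\iI$ meets $\iI$.

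It then remains to see that these pieces cover every tile and overlap only along interfaces and at yellow hexes. Every blue hex lies in its cc, hence in a piece. Every yellow hex is an odd tile, so by \Cref{prop:odd-env-1} (equivalently \Cref{fig:odd-env-2}) its fixed environment consists of blue hexes belonging to cc's together with the squares and rhombs of the interfaces ending at it, and it appears as an interface endpoint of those pieces. For squares and rhombs: by \Cref{prop:dec-adj} a square is surrounded only by hexes and rhombs and a rhomb only by hexes and squares, so from any square or rhomb one walks through the decoration tiling to a blue hex, and \Cref{prop:cc-descr} then places that square or rhomb into one of the interfaces of the corresponding cc (blue rhombs paired with a hex outside the piece being exactly the ones carrying the small marking in \Cref{fig:labeled-cc-2}). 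Finally, two distinct pieces can meet only along a common interface or at a shared interface-endpoint yellow hexagon, because the clusters and antennae of distinct cc's are disjoint. Combining these three observations gives the stated decomposition.

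The main obstacle is the bookkeeping in the last two paragraphs rather than any new geometric input: one has to check that the interface list harvested from the cc types is genuinely closed under the $+/-$ matching (so that the local pieces actually glue), that no square or rhomb escapes every interface, and --- this being the laborious part, deferred to the appendix --- that the case enumeration underlying \Cref{prop:T1-cc} through \Cref{prop:T3-cc} really is exhaustive. Everything else is a direct assembly of the results already in hand.
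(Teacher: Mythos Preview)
Your proposal is correct and mirrors the paper's approach: the theorem is presented there without a separate proof, introduced simply as ``Let us sum up what we obtained'', since it is exactly the synthesis of \Cref{prop:ne}, \Cref{prop:T}, \Cref{prop:cc-descr} and the case analysis \Cref{prop:T1-cc}--\Cref{prop:T3-cc} that you have written out explicitly. One small slip: each side of a cc between two consecutive contact yellow hexes carries a \emph{single} interface, not three --- the three interfaces of \Cref{prop:cc-descr} are one per side of the (triangular) cc, as is visible in the triangle tilesets of \Cref{sec:triset}.
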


Conversely:

\begin{theorem}\label{thm:pieces-su}
 Any arrangement of the pieces above such that
 \begin{enumerate}
 \item the pieces cover the plane,
 \item the blue hexagons of distinct pieces are disjoint,
 \item the interfaces overlap in a matching way (disregarding the yellow hex markings): $\iA+$ with $\iA-$, $\iB+$ with $\iB-$, $C+$ with $C-$, $\iV+$ with $\iV-$ and $\iI$ with $\iI$, (then the yellow hexagons match)
 \item\label{item:yellow-rule} each yellow hex has been given a marking by a solid (not dotted) yellow marking by at least one piece; no yellow hex has been given two different solid yellow marking,
 \end{enumerate}
 gives rise to a tiling by Spectres.
\end{theorem}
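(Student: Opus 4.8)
The plan is to run the derivation of \Cref{fig:labeled-cc-2} backwards. Every piece in that figure was obtained by cutting, out of an actual whole plane Spectre tiling, one cc together with its three sides of interfaces; consequently each piece carries a canonical decomposition of all its hexagons, interface rhombs and interface squares into parts of $\iD3$ and $\iD2$ shapes, together with a pairing of these into Spectres: each blue hexagon is an even $\iD3$, each yellow hexagon an odd $\iD3$, each interface rhomb a $\iD2$, and each interface square is cut by the four dents of its four neighbours (which, by \Cref{prop:dec-adj}, are hexagons or rhombs, not squares). The marking drawn inside a hexagon (\Cref{fig:marking-a}) records which rhomb is paired with it. First I would record as a lemma that the combinatorial type of a piece determines this decomposition uniquely, with the single exception that the $\iD2$ paired with a yellow hexagon is pinned down only once a solid yellow marking has been selected --- which is exactly the data supplied by condition~(\ref{item:yellow-rule}). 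This is a finite inspection of the list in \Cref{fig:labeled-cc-2}, resting on \Cref{prop:T1-cc,prop:T2-impo,prop:T2-cc-1,prop:T2-cc-2,lem:T3-env,prop:T3-cc} and on the description of tips and antennae in \Cref{fig:antenatips,fig:tips-2}.

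\textbf{Gluing the local decompositions.} Given an arrangement of pieces satisfying (1)--(4), I would next show that the local decompositions glue to a global one. Two distinct pieces can meet only along shared interfaces and shared yellow hexagons: indeed, by condition~(2) the blue hexagons are pairwise disjoint, and in every piece anything that is not a blue hexagon or an antenna belongs to an interface or to a yellow hexagon. On a shared interface, the matching rule~(3) forces the two pieces to present the very same chain of rhombs and squares with the same edge colours, hence the same $\iD3$/$\iD2$ decomposition of that chain (each square being cut in the way dictated by its four neighbours, which are hexagons or rhombs of the two pieces). On a shared yellow hexagon, both pieces agree that it is an odd $\iD3$, and condition~(4) asserts that its paired $\iD2$ --- necessarily a yellow interface rhomb, by the structure of \Cref{fig:labeled-cc-2} --- is prescribed, consistently and exactly once. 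Finally, condition~(1) covers the plane. Assembling these facts, the hexagons and rhombs of all the pieces form a covering of the plane by $\iD3$ and $\iD2$ shapes which is edge-to-edge and vertex-to-vertex (this being inherited, locally, from each piece, which is a genuine patch of a $\iD3$/$\iD2$ tiling), i.e.\ a whole plane $\iD3$/$\iD2$ tiling; and the chosen pairings constitute a perfect matching of its $\iD3$'s with its $\iD2$'s.

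\textbf{Conclusion.} It then remains to observe that a $\iD3$ attached to a $\iD2$ in the way recorded by the markings is precisely a Spectre (\Cref{fig:split}), and that the blue-with-blue / yellow-with-yellow colour convention of \Cref{fig:col-conv-spec} guarantees the Spectre so obtained is never the reflected one. Hence the matched pairs assemble into a covering of the plane by unreflected Spectres, with no two of them overlapping (within a piece this is clear, across pieces the overlaps lie in interfaces and yellow hexagons where the decompositions coincide), that is, a whole plane tiling by the Spectre.

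\textbf{Main obstacle.} The crux is the gluing step, and inside it the verification that on every overlap the two pieces' $\iD3$/$\iD2$ decompositions really coincide --- including the cuts of the interface squares and, most importantly, the pairings of the yellow hexagons --- so that no $\iD3$ is orphaned and no $\iD2$ is claimed twice. This is exactly where conditions~(2), (3) and~(4) enter essentially, and checking it amounts to going through the interface types $\iA^{\pm}$, $\iB^{\pm}$, $C^{\pm}$, $\iV^{\pm}$, $\iI$ and the solid/dotted yellow markings of \Cref{fig:labeled-cc-2} one case at a time; the argument is finite but not short. By comparison, once the gluing is in place, the facts that the assembled $\iD3$/$\iD2$'s form a legitimate tiling and that matched pairs are genuine (unreflected) Spectres are essentially immediate.
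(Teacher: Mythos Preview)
Your proposal is correct and follows essentially the same route as the paper: replace each hexagon by a $\iD3$ and each rhomb by a $\iD2$, verify that the blue hex/rhomb pairings are consistent across matching interfaces (the paper does this by the interface-by-interface inspection you describe as the ``main obstacle''), use condition~(\ref{item:yellow-rule}) to fix the yellow hex/rhomb pairing, and conclude that the paired $\iD3$/$\iD2$'s are unreflected Spectres covering the plane. The one point the paper makes more explicit than you do is the coverage of the interface \emph{squares}: it checks that each square receives at least three dents from its own piece and the fourth from the adjacent piece, so that the Spectres really cover everything --- you allude to this (``each square being cut in the way dictated by its four neighbours, which are hexagons or rhombs of the two pieces'') but it deserves to be stated as a separate verification rather than folded into ``inherited locally''.
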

\begin{proof}
Indeed the blue hex/rhomb association always match when the letters and sign follow the rules:
below we show the interfaces facing each other.
Bear in mind that the number of hexes behind a given interface could depend on the piece: for instance $\iB+$ can bound cc's with a $T3$ and cc's with a $T2$, $\iB-$ can bound cc's with a $T1$ and cc's with a $T2$, etc.

\image{List of interfaces and their correspondence}{fig:interface-list}{
\includegraphics[scale=0.41]{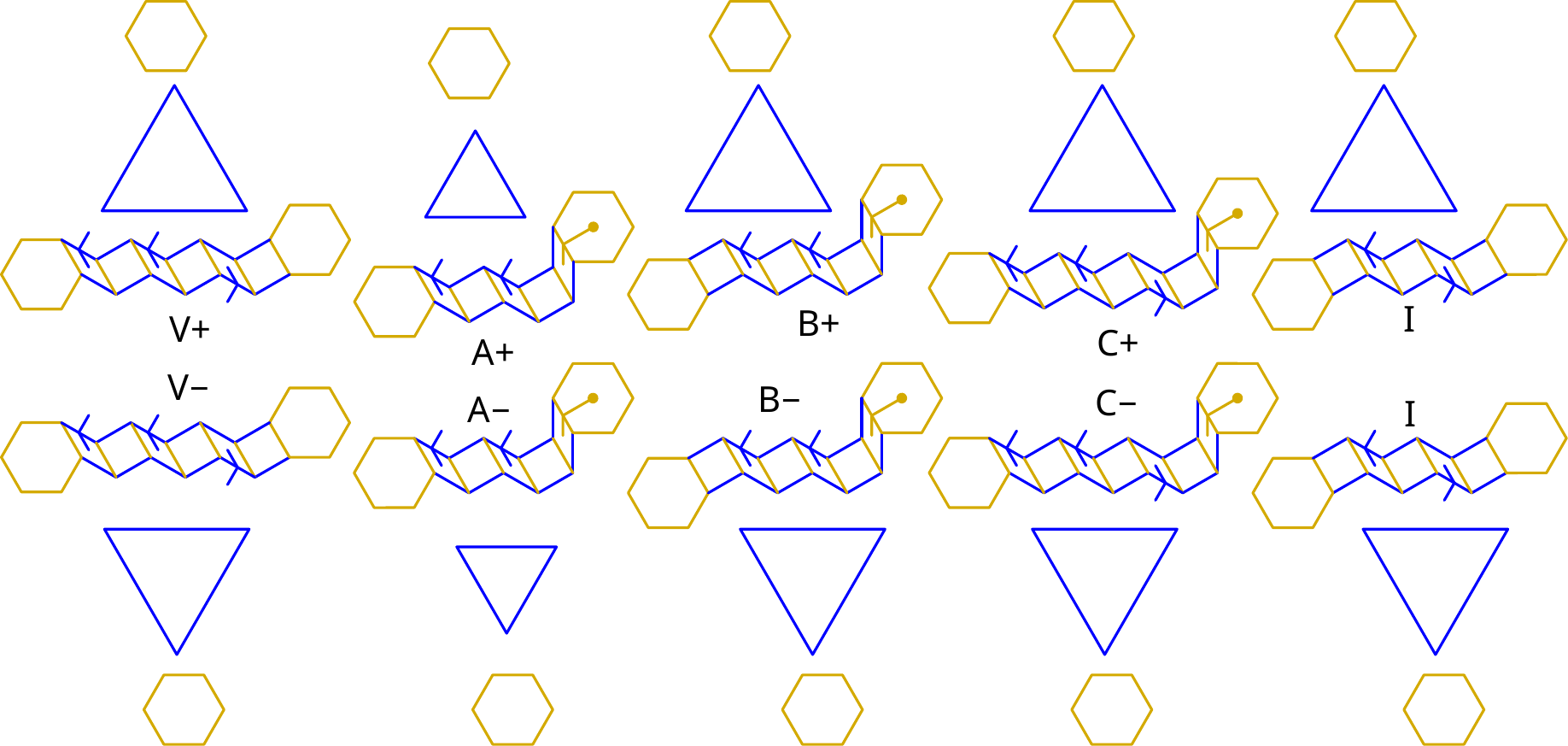}
}

Let us be more explicit. In the figure above, the triangles represent cc's.
Consider an interface of a cc and its rhombs, with markings pointing towards the cc (towards the triangle in the sketches), or away.
We know that in the cc, there is a hex matching with each rhomb whose marking points towards the triangle. The rhombs whose marking points away will point towards the adjacent triangle, for which we thus know that there is a matching cc.

Now replace all hexes by $\iD3$'s, whose orientations are given by the marking (it points to an indent of the $\iD3$), and replace all rhombs by $\iD2$'s.
They will be disjoint and if the conditions of the theorem are satisfied, we pair all $\iD3$ and $\iD2$ and get disjoint Spectres.

There remains to check that the Spectres cover the whole plane. In our situation the whole plane is tiled by hexes, rhombs and squares, so there remains to check that the squares are covered.
All squares belong to an interface and inspection of all the pieces in \Cref{fig:labeled-cc-2} (take the hexagon orientation into account, knowing that the marking points to an indent of the $\iD3$) show that at least three of their four sides are covered by a dent: the only side which may fail to be is the outer part of the interface (alternatively, the way the pieces were made already ensures that). But since that part is the inner part of the same interface for the adjacent cc, it is covered too.
\end{proof}

Rule \ref{item:yellow-rule} is important: in the figure below the central yellow hex has no matching yellow rhomb. In other examples, there could be more than one yellow rhomb for only one hex.

\image{}{}{
\includegraphics[scale=0.7]{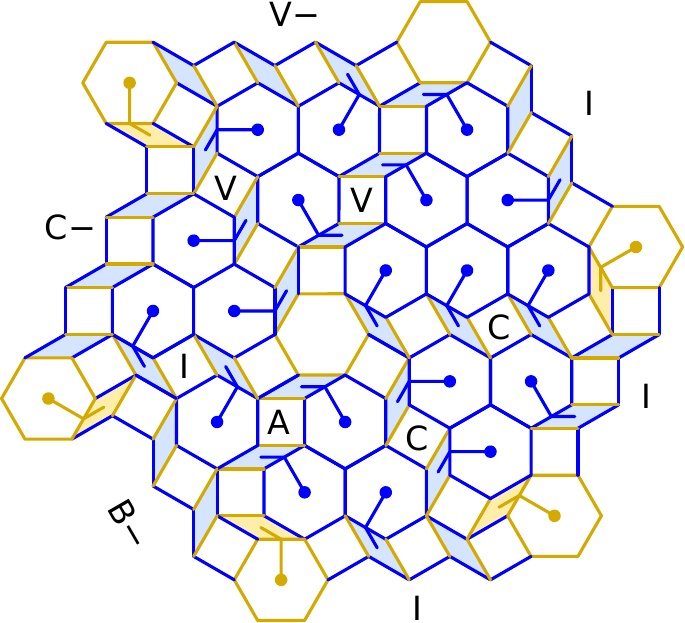}
}

\subsection{Contracted graph and partition of the honeycomb}\label{sub:condensed}

\subsubsection{For a general \texorpdfstring{$\iD3$}{D3} and \texorpdfstring{$\iD2$}{D2} tiling}

This section is not strictly needed for the analysis done here and can be safely skipped.

\medskip

Given a D2/D3 tiling and its yellow/blue decoration graph, we may contract every yellow hex to a point, and every square and rhomb to a blue segment, by collapsing each parallel yellow lines as below:

\nopagebreak

\image{}{}{
\includegraphics[scale=1]{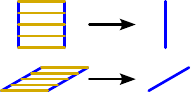}
}

All the collapsing classes on an example:

\nopagebreak

\image{}{}{
\includegraphics[scale=0.66]{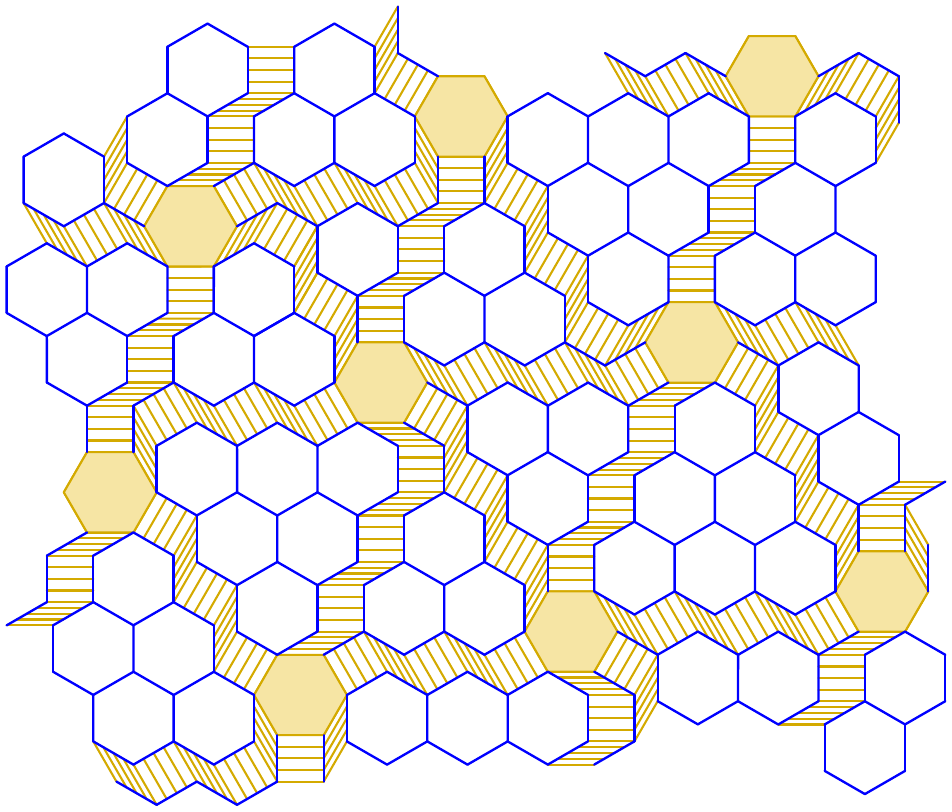}
}

Collapsing only the following parallel strand:

\nopagebreak

\image{}{}{
\includegraphics[scale=0.66]{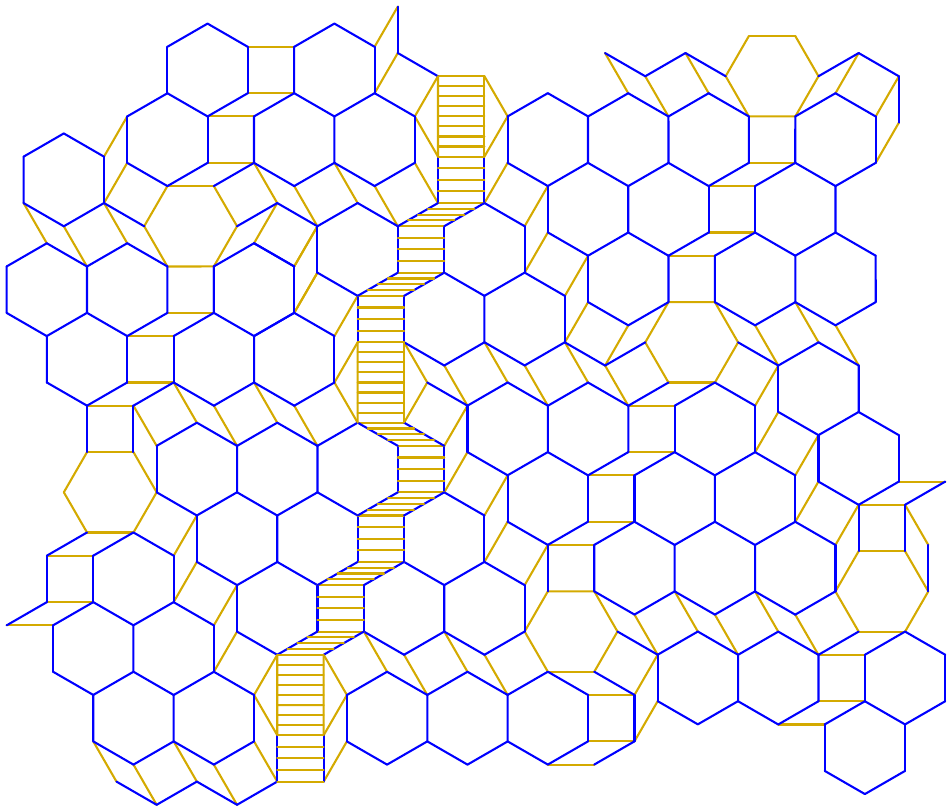}
}

gives:

\nopagebreak

\image{}{}{
\includegraphics[scale=0.66]{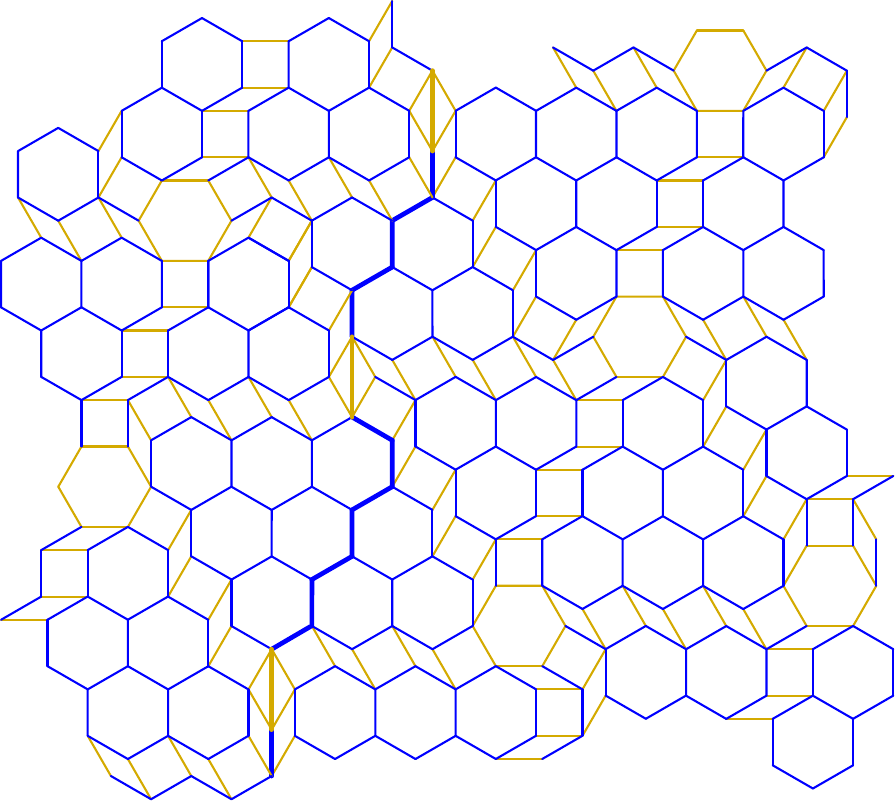}
}

The contracted graph is a subset of the honeycomb.
It may or may not be a strict subset.
For instance, if one starts from a tiling made solely of $\iD2$ tiles\ldots

\image{}{}{
\includegraphics[scale=0.66]{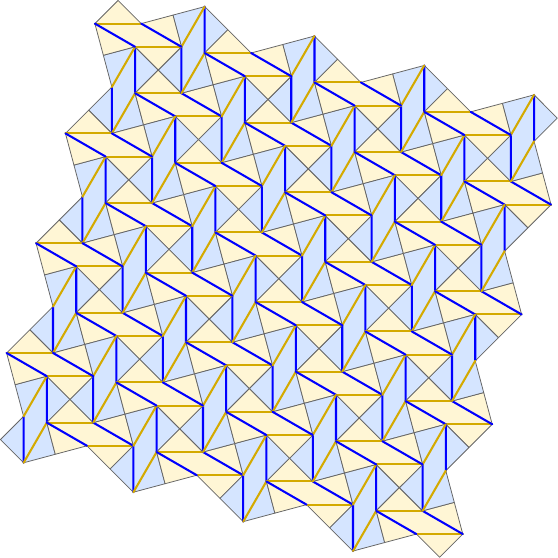}
}

\ldots\ then after collapsing all the yellow segments, there remains only one broken blue line.

\nopagebreak

\image{}{}{
\includegraphics[scale=0.65]{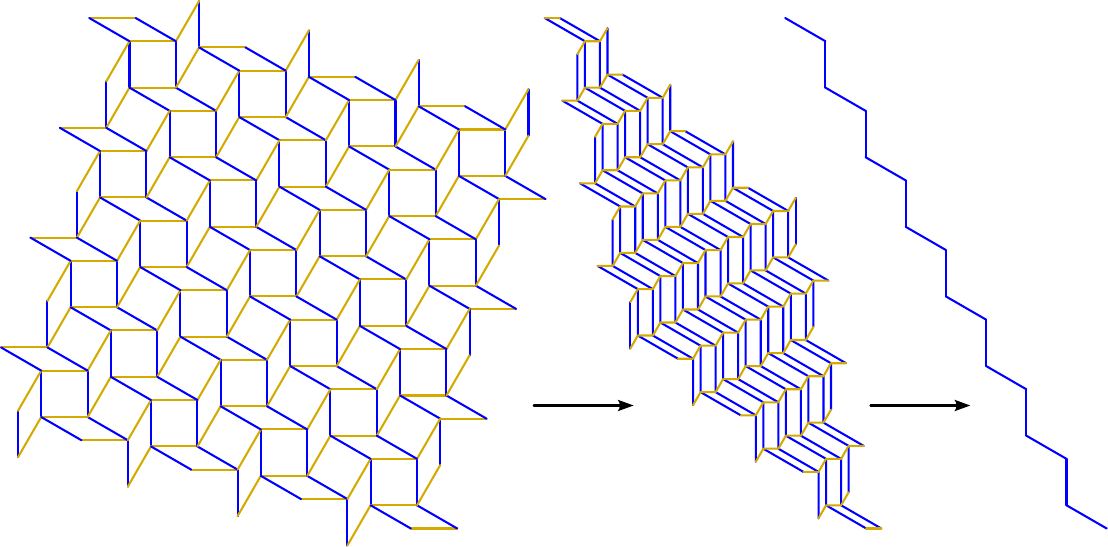}
}

On the decoration graph a whole plane tiling by D2/D3's, each fracture extends both ways to infinity.
Because we can proceed fracture by fracture, the quotient actually naturally maps to the plane so that every blue segment and every blue hex of the non-quotiented graph is injected in the collapsed graph by a translation.

\begin{remark*}
A word of caution: the white/black vertex colouring of the improved decoration graph does not pass to the quotient.
However the quotient being a subset of a hexagonal lattice, whose vertices has its own bipartite colouring.
\end{remark*}

The quotient map can be seen as a map from the plane to the plane, sending every blue segment or hex to a blue segment or a hexagon of some hexagonal tiling of the plane (honeycomb).

\begin{definition}
This projection is called the \term{yellow contraction}. \end{definition}

\begin{proposition}\label{prop:contr-inj}
The yellow contraction maps different blue hexagons of the decoration graph to different hexagons of the honeycomb.
\end{proposition}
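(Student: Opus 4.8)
The plan is to run the yellow contraction one fracture at a time — which the discussion above has already justified — and to check that collapsing a single fracture is a self-homeomorphism of the plane which can never make two blue hexagons overlap; an induction over the fractures then yields the proposition.

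First I would pin down the local shape of a single-fracture collapse. A fracture $F$ is a bi-infinite chain of squares and rhombs in which consecutive tiles share a yellow edge. Since adjacent tiles of the decoration tiling meet along parallel edges, all the yellow edges occurring in $F$ are parallel, say to a unit vector $u$; moreover in each tile of $F$ its two yellow edges are opposite sides, and the collapse crushes to a point every segment parallel to $u$ joining the tile's two blue edges — these ``fibres'' foliate $F$, the shared yellow edges being the boundary fibres. An elementary inspection of the two tile shapes shows that such a fibre is in both cases a translate of $\pm s\,u$, $s$ denoting the common edge length: for a square the blue edges are perpendicular to $u$, for the $1/12$-rhomb $u$ is the direction of the yellow edges, and in either case the fibre has length $s$. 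Thus $F$ is an embedded bi-infinite strip of constant width $s$ in the direction $u$, separating the plane into two closed complementary regions; and the collapse $\pi_F$ is the identity on one of them and a translation by a vector $v_F$ of length $s$ parallel to $u$ on the other, with the sign of $v_F$ fixed once and for all since a planar strip is untwisted. The images of the two regions then have disjoint interiors and cover the plane, glued along the simple bi-infinite blue polygonal line into which $F$ collapses; in particular $\pi_F$ is a homeomorphism of the plane.

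Second, the key point is that a blue hexagon $H$ is a face of the decoration tiling, so its interior meets neither the interior of any fracture nor any edge or vertex of the tiling; being connected, $H$ therefore lies entirely in one of the two complementary regions of every fracture, hence is moved rigidly, without deformation, by every $\pi_F$ and by their composition $\pi$. I would then prove by induction on the number of collapsed fractures that distinct blue hexagons always have images with disjoint interiors. This holds before any collapse. If it holds after some collapses and we collapse one more fracture $F$, let $G_1\neq G_2$ be two blue-hexagon images; each lies in one complementary region of $F$. If they lie in the same region, $\pi_F$ moves both by the same rigid motion, so disjointness of interiors persists. If they lie in opposite regions, $\pi_F$ sends them into the two images of the regions, which have disjoint interiors. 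Either way $G_1$ and $G_2$ keep disjoint interiors, hence stay distinct. Finally, two distinct parallel regular hexagons of side $s$ with disjoint interiors have centres at distance at least $\sqrt3\,s$ — each contains the open disc of radius $\frac{\sqrt3}{2}s$ about its centre — and the yellow contraction, being the limit of its finite partial collapses (each moving every blue hexagon rigidly), moves the centre of $H_i$ to the limit of the centres of the finite-stage images; so the separation $\geq\sqrt3\,s$ holding at every finite stage passes to the limit and $\pi(H_1)\neq\pi(H_2)$ whenever $H_1\neq H_2$. Since these images are hexagons of the honeycomb by construction, this is the statement.

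The main obstacle is the second paragraph: establishing that collapsing a single fracture is genuinely a self-homeomorphism of the plane of the stated form. This splits into (i) the routine ``constant width $s$'' computation for the square and the $1/12$-rhomb, and (ii) the fact that a fracture is a \emph{properly embedded} bi-infinite strip — which is precisely what the earlier statement that each fracture ``extends both ways to infinity'' secures — together with the elementary topological fact that excising such a strip and regluing its two boundary curves by the translation identifying the endpoints of its fibres gives back the plane. The well-definedness and the convergence of the (possibly infinite) composition $\pi$ are not an additional difficulty, having already been granted when $\pi$ was introduced.
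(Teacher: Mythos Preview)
Your approach is exactly the paper's: collapse one fracture at a time and observe that blue-hex interiors stay disjoint throughout. The paper's own proof is a single sentence to this effect, so you have supplied considerably more detail than the original.

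One point to watch: in a general $\iD3$/$\iD2$ tiling a fracture in direction $u$ can pass through \emph{yellow} hexagons (a yellow hex has a pair of opposite $u$-parallel yellow edges and the chain may enter through one and leave through the other). Your description of $F$ as ``a bi-infinite chain of squares and rhombs'' and the ensuing constant-width-$s$ computation therefore does not cover the general case as written; through a yellow hex the $u$-extent of the strip is not constant, so the single-fracture collapse is not literally ``identity on one side, translation by $v_F$ on the other''. The fix is easy and does not disturb your skeleton: what you actually use is only that $F$ is a properly embedded bi-infinite strip separating the plane into two closed regions, that each blue hexagon lies entirely in one of them, and that the images of the two regions under $\pi_F$ have disjoint interiors. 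These facts survive the presence of yellow hexes in $F$ (each region still maps injectively, and the two images are glued along the one-dimensional collapse of $F$), so your induction and the final limit step go through unchanged once you rephrase the first paragraph in these purely topological terms rather than via the constant-width translation model.
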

\begin{proof}
We saw that the yellow contraction can be obtained by collapsing fractures one after the other. In such collapses, the blue hexagons (their interior) remain disjoint.
\end{proof}

As we have seen, surjectivity depends on the $\iD3$/$\iD2$ tiling under study.
We will see that it is surjective if it comes from a Spectre tiling.

\medskip

Similarly we define the \term{blue contraction}, mapping yellow segments and hexes to (a subset of) a yellow hexagonal tiling of the plane.
Every cc is contracted to a point by the blue contraction, hence the cc is a connected component of the preimage of a vertex by the blue contraction.
Actually, the cc must be the whole preimage of the vertex: this can be proved by uncollapsing the fractures one by one: preimages remain connected throughout the process, and actually \emph{simply connected}.
(This is also a consequence of Moore's theorem relating plane quotients and end of isotopies.)

\subsubsection{For Spectre tilings}

In this section we assume that the $\iD3$ and $\iD2$ tiling comes from a Spectre tiling.
Consider the decoration graph and the tessellation into hexes, squares and rhomb it induces.
Consider as above the yellow contraction: it induces a mapping of the plane hosting the decoration graph to the plane hosting a pure regular hexagonal tessellation (honeycomb).

\begin{proposition}\label{prop:spec-cond}
The yellow contraction induces a bijective correspondence between the blue hexes of the decoration graph and the hexagons of the honeycomb.
\end{proposition}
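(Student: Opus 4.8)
We need to show the yellow contraction is a bijection between blue hexes of the decoration graph and hexes of the honeycomb. Injectivity is already Proposition~\ref{prop:contr-inj}. So the content is surjectivity: every hex of the honeycomb is hit.

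**Approach.** The yellow contraction is built by collapsing yellow fractures (the maximal bi-infinite strands of parallel yellow segments) one at a time; blue hexes stay disjoint and the image is at every stage a subset of a honeycomb. The target honeycomb is a genuine tiling of the whole plane, so surjectivity amounts to showing the image of the blue hexes has no gaps: after all collapsing, every complementary region has shrunk to nothing. Equivalently: in the contracted picture, around any vertex or any edge of the honeycomb, a blue hex must appear.

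Let me think about what the obstruction to surjectivity is before collapsing. A hex of the honeycomb is missed exactly when, in the decoration graph, some region that is *not* a blue hex (a square, a yellow rhomb, or a yellow hex) fails to be collapsed away onto a blue segment/point. Yellow hexes do collapse to points and yellow rhombs/squares collapse to blue segments by construction, so genuinely the only way a honeycomb hex goes uncovered is if, after collapsing, two blue hexes that are "combinatorially adjacent" in the honeycomb fail to actually touch — there is a phantom hex between them.

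**Key steps, in order.**

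First I would set up the contracted picture concretely: the blue hexes, with their black/white boundary dots, map by translation into a fixed honeycomb $H$; a blue hex $h$ and its six neighbouring hex-positions in $H$ are at issue. For each edge $e$ of $h$ in $H$, I must show the hex of $H$ across $e$ is the image of a blue hex of the decoration graph.

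Second, I would translate this into the language already developed: the edge $e$ of $h$ corresponds, before collapsing, to an edge of a blue hex of the decoration graph, and I must understand what lies across it. By Proposition~\ref{prop:dec-adj} and the list of interfaces, across a blue hex edge there is either another blue hex (in which case we are done — same cluster), or the start of an interface: a yellow rhomb, then a square, alternating, ending at a yellow hex, on the far side of which the interface continues to the *adjacent* cc. I would invoke Theorem~\ref{thm:pieces-ne}: the whole decoration tiling is a union of the pieces of Figure~\ref{fig:labeled-cc-2}, glued along matching interfaces. When the yellow fractures of an interface are collapsed, the yellow rhombs and squares shrink to blue segments and the terminal yellow hex to a point, so the two blue hexes at the two ends of the interface — one in each cc — become adjacent in the contracted graph, i.e. the hexes of $H$ they occupy are neighbours. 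Running this over the three interfaces of each cc and all six directions around each blue hex shows every neighbour-position of every image hex in $H$ is again an image hex.

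Third, I would upgrade "every neighbour of an image hex is an image hex" to "every hex of $H$ is an image hex" by a connectedness argument: $H$ is connected through hex-adjacency, the image of the blue hexes is nonempty (Proposition~\ref{prop:ne} — every cc has at least one blue hex), and the previous step shows the image set is closed under passing to a neighbouring hex; hence it is all of $H$.

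**Main obstacle.** The delicate point is the bookkeeping in the second step: making sure that for *every* edge of *every* blue hex — not just the tips of clusters but also the hexes on the sides of a $\hT2$ or $\hT3$ — the region across that edge is part of an interface that, after collapsing, lands us on a genuine neighbouring blue hex, with nothing left over. This is exactly the content assembled in Proposition~\ref{prop:cc-descr}, Theorem~\ref{thm:pieces-ne} and Figure~\ref{fig:labeled-cc-2}: each cc's boundary is partitioned into exactly three interfaces, each interface is a clean square/rhomb alternation capped by two yellow hexes, and the pieces glue along matching interfaces to cover the plane. Granting those, the collapse of the interfaces' yellow fractures carries neighbouring cc's into hex-adjacency in $H$, and surjectivity follows. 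So the proof is essentially: "by Theorem~\ref{thm:pieces-ne} the decoration tiling is a gapless union of the Figure~\ref{fig:labeled-cc-2} pieces; collapsing the yellow fractures of every interface makes the blue hexes across it adjacent in the honeycomb; since the set of honeycomb hexes reached by blue hexes is nonempty and closed under hex-adjacency, it is everything; combined with injectivity (Proposition~\ref{prop:contr-inj}) the yellow contraction is a bijection."
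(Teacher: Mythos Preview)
Your overall scaffold is the same as the paper's: injectivity is Proposition~\ref{prop:contr-inj}, and for surjectivity you show the image set of blue hexes is nonempty and closed under hex-adjacency in the honeycomb, hence everything. Good.

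Where you diverge is in the key step, and there are two issues.

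\textbf{First, unnecessary heaviness.} The paper does not invoke Theorem~\ref{thm:pieces-ne} or the cc classification at all. Instead, it argues directly: given an edge $e$ of a blue hex $B$, either another blue hex sits across $e$ (done), or a chain of squares and rhombs with blue edges parallel to $e$ begins there. By Lemma~\ref{lem:no-spec-rs-chain} (proved immediately afterward, independently of any cc analysis), such a chain is bounded in length, so it terminates at a hex; that hex has a blue edge (parallel to $e$), hence is a blue hex, and after collapsing it is the honeycomb-neighbour of $B$'s image. This is a two-line argument using one elementary lemma, whereas you are pulling in the entire Figure~\ref{fig:labeled-cc-2} classification.

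\textbf{Second, conceptual confusion in your key step.} What you describe as ``the start of an interface: a yellow rhomb, then a square, alternating, ending at a yellow hex'' is not an interface in the paper's sense. Interfaces (Proposition~\ref{prop:cc-descr}) run \emph{along} the cc boundary with parallel \emph{yellow} edges; they are one square/rhomb thick, and their ends are the \emph{yellow} hexes at the contact points --- not blue hexes. So the phrase ``the two blue hexes at the two ends of the interface'' is simply wrong. What you actually need is the chain \emph{perpendicular} to $e$, with parallel \emph{blue} edges, which is exactly what the paper uses. If you try to repair your argument via Theorem~\ref{thm:pieces-ne}, you still have to deal with antennae: across a given square/rhomb of an interface, the far blue side might be an antenna segment of the adjacent cc rather than a hex edge, and you have not addressed that case. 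The perpendicular-chain argument sidesteps this entirely because the chain cannot terminate on an antenna (antennae are blue segments, not polygons, so the chain passes them).

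In short: your plan is rescuable, but as written it leans on the wrong structure and mislabels the geometry. The paper's route via Lemma~\ref{lem:no-spec-rs-chain} is both shorter and avoids the confusion.
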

\begin{proof}
Injectivity has already been seen in \Cref{prop:contr-inj}.
For surjectivity it is enough to prove that for any hexagon $H$ that an image of a hex, any adjacent hexagon $H'$ is also an image of a hex.
Let $B$ be a blue hex whose image is $H$.
The edge of $H$ along which $H'$ is adjacent, corresponds to an edge $e$ of $B$. If there is a hex adjacent to $B$ along $e$ then we are done.
Otherwise, from $e$ starts a chain of rhombs and squares, whose blue edges are parallel to $e$. Such a chain cannot be infinitely long by the lemma below, so must end on a hexagon. The image of this hexagon will then be $H'$.
\end{proof}

Above, we used the following result:

\begin{lemma}\label{lem:no-spec-rs-chain}
Consider a chain of adjacent squares and rhomb with all blue edges parallel or all yellow edges parallel. The number of elements of this chain is bounded for Spectre tilings.
\end{lemma}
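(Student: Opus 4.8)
The plan is to argue that an infinitely long chain of squares and rhombs with all blue (resp.\ all yellow) edges parallel cannot occur, by showing it would force an unbounded union of structures already forbidden by the earlier analysis. The key observation is that such a chain is a ``strand'' bounded on its two long sides by two parallel broken lines made of blue segments (if we fix the blue edges to be parallel) — or of yellow segments — and that along each side the tiles lying just outside the strand are highly constrained.

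First I would treat the case where all the \emph{yellow} edges of the chain are parallel. Then each square in the chain has its dent supported by a blue segment, and by \Cref{prop:dec-adj} no two squares and no two rhombs are consecutive, so the chain alternates square/rhomb/square/\dots. Each rhomb in the chain is a $\iD2$, which must be paired with a $\iD3$ (i.e.\ with an adjacent blue hex) to form a Spectre. Following the chain, the paired $\iD3$'s line up along one side of the strand, producing a row of blue hexes each adjacent to the next — exactly the situation analysed in \Cref{lem:completion} and \Cref{prop:T}: a row of more than three mutually adjacent blue hexes forces a blue hex completely surrounded by blue hexes (as in the figure accompanying the proof of \Cref{prop:T}), whose underlying $\iD3$ has no room for a paired $\iD2$. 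This is essentially the mechanism already used in \Cref{prop:no-full-blue} and in \Cref{fig:chain}, and the paper itself flags this lemma as ``the central argument in \Cref{fig:chain} \dots\ generalized''. Hence such a yellow-parallel chain has length bounded by a constant (in fact bounded in terms of the allowed cluster sizes $T_1, T_2, T_3$).

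Next I would treat the case where all the \emph{blue} edges are parallel. Now the squares in the chain have their dents supported by yellow segments, so the tiles just outside the strand cannot be the paired $\iD3$'s of those squares; instead, following the blue boundary line on one side of the strand, one uses \Cref{prop:oc}: consecutive directed blue edges sharing a vertex lie in the same orientation class, and a blue edge separated from another by a yellow edge lies in the opposite class. The two long sides of the strand are joined across the strand by yellow edges of the chain, so they belong to opposite classes; walking along one side, \Cref{cor:protr-1} and the enumeration of antennae (\Cref{fig:antenatips}) together with \Cref{prop:impo-16} / \Cref{prop:impo-2} restrict what can be attached. An infinitely long such strand would produce either an infinite blue cc that is not a finite $\hT n$ plus antennae — contradicting \Cref{prop:T} and the cc analysis — or an unbounded sequence of the forbidden configurations of \Cref{prop:impo-16,prop:impo-2,prop:impo-11}. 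Either way we get a contradiction, so the chain is finite with a universal bound.

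The main obstacle I expect is the second (blue-parallel) case: unlike the yellow-parallel case, there is no immediate pairing argument producing a forbidden row of blue hexes, so one must carefully track the boundary of the strand using the visual-aid lemmas (\Cref{prop:impo-11,prop:autofill-1}) and the cc/antenna classification to rule out each way the strand could propagate indefinitely. It may be cleanest to reduce the blue-parallel case to the yellow-parallel case by noting that a blue-parallel strand, together with the tiles forced on one of its sides, eventually exhibits a yellow-parallel sub-strand of comparable length, and then invoke the first case; making that reduction precise, with an explicit bound, is the delicate step.
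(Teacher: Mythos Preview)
Your yellow-parallel case is on the right track and correctly points to \Cref{fig:chain} and \Cref{prop:T}, but it is imprecise in one place and misses a clean step. You write that ``each rhomb \dots\ must be paired with a $\iD3$ (i.e.\ with an adjacent blue hex)'', but rhombs in the chain can be yellow $\iD2$'s as well as blue ones. The paper's argument handles this directly: a \emph{yellow} rhomb must have one of its dents in a yellow hex, so its presence terminates the chain; hence the interior of the chain consists of blue rhombs only, and then the \Cref{fig:chain} argument (five consecutive blue rhombs force an oversized blue cluster, contradicting \Cref{prop:T}) gives the bound. Also, your claim that the paired $\iD3$'s ``line up along one side of the strand'' is not what actually happens in \Cref{fig:chain}: there it is the \emph{squares} that must be filled by blue $\iD3$'s, and those are the hexes that form the forbidden large cluster.

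The more important point is that your entire blue-parallel case is unnecessary, and this is where you lose the thread. The paper's first sentence is the key move you are missing: the proof is written so as to be \emph{independent of \Cref{cor:odds} and \Cref{prop:odd-env-1}}, i.e.\ it nowhere uses which parity class is the isolated one. Since the blue/yellow labelling is then purely conventional, the two cases are symmetric and it suffices to treat the yellow-parallel one. Your proposed reduction of the blue-parallel case via \Cref{prop:oc}, \Cref{cor:protr-1}, antennae, etc.\ is vague precisely because those results \emph{do} depend on the odd/even convention, so they break the symmetry and force you into genuine extra work. Drop that case and instead note the symmetry; then tighten the yellow-parallel argument as above.
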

\begin{proof}
We will give a proof that is independent of \Cref{cor:odds} and \Cref{prop:odd-env-1}, so we can assume all parallel edges are yellow.

Because dents protrude from rhombs and must come inside squares,
squares and rhombs must alternate along the chain.
We will call \term{consecutive} rhombs of the chain that touch via a single vertex, i.e.\ those that are adjacent to the same square of the chain.
Consider the colour of the rhombs, i.e.\ the colour, yellow or blue, of the associated $\iD2$ according to \Cref{fig:col-conv-spec}.

Because we are coming from a Spectre tiling, yellow rhombs must have one dent in a yellow hex, so if there is one, it terminates the chain.
We saw in the proof of \Cref{prop:no-full-blue} that the chain cannot contain $5$ consecutive blue rhombs (\Cref{fig:chain}).
It follows that the chain is limited to 6 rhombs and 5 squares.\footnote{Actually they are shorter: we saw in \Cref{fig:labeled-cc-2} that for one colour of the parallel direction there is at most 4 rhombs and 3 squares, for the other, it follows from the same figure and \Cref{fig:odd-env-2} that there is at most 2 rhombs and 1 square.}
\end{proof}

A consequence of \Cref{prop:spec-cond} is that the partition of the blue hexes into clusters induces a partition of the honeycomb into triangular groups of $1$, $3$ and $6$ hexagons.
\Cref{fig:bo} showed an example of contracted graph, where we used colouring to keep track of the $\hT n$.
\Cref{fig:trace} showed an example where we used thick lines instead, and added the information of the position of the contracted yellow hexes using dots (in this picture the dots have been coloured using the bipartite colouring of the vertices of the honeycomb).

We claim that this partition plus the dots are enough information to recover the initial Spectre tiling.
We can also determine which partitions together with dots come from a Spectre tiling.

First, on the figure below, on the left we present the cc's together with the three yellow hex they touch, contracted to a dot. (The fourth will never appear in a situation coming for a  whole plane Spectre tiling but this does not matter at this point.) We still consider that the boundary of the cc is the union of broken lines between dots, that we still call an \term{interface}.

\image{}{fig:contracted-pieces}{
\includegraphics[scale=0.5]{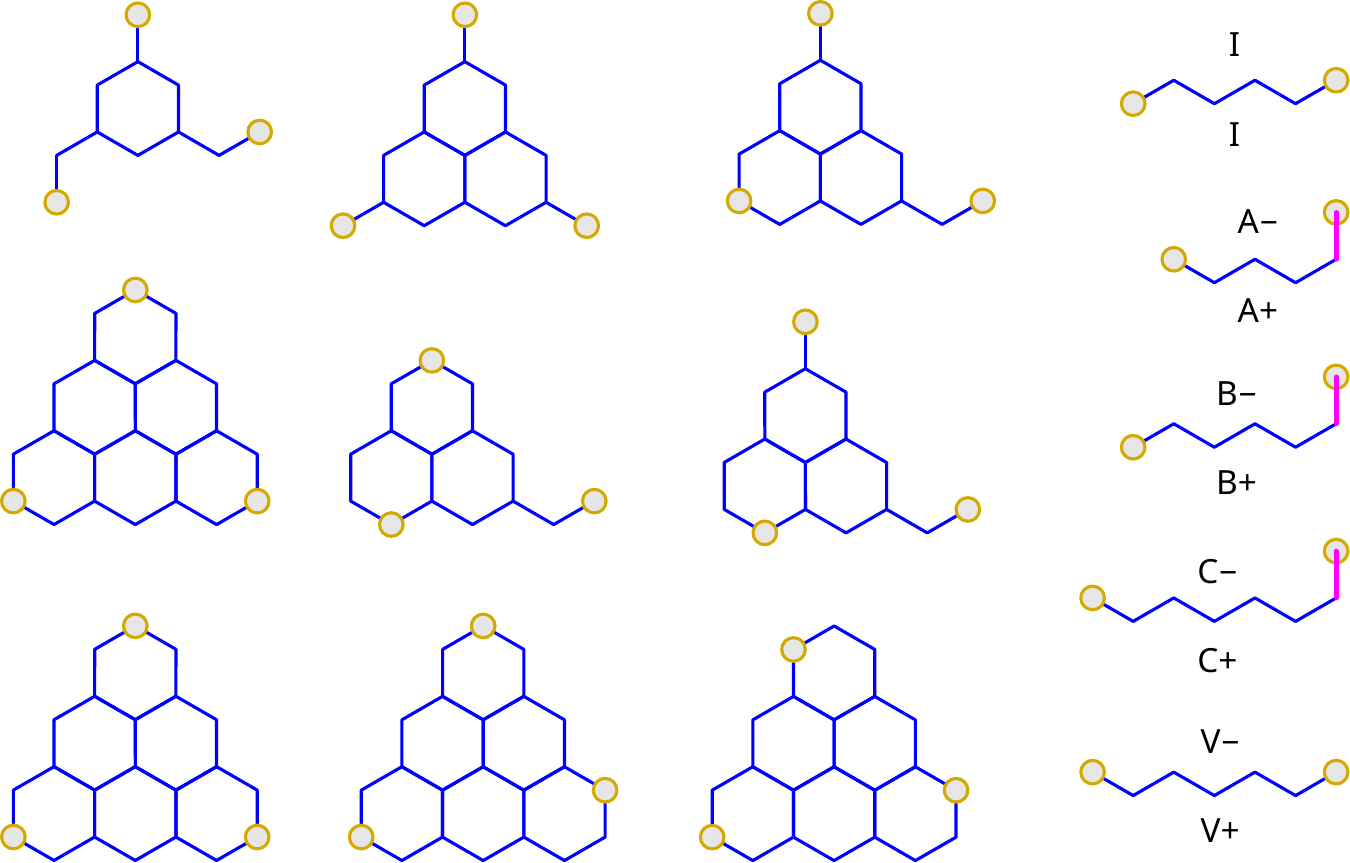}
}

On the right of the figure above, we stress the following important fact:
\begin{lemma}\label{lem:iss}
the nature of an interface only depends on its shape as a broken line.
\end{lemma}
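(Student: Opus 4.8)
The plan is a finite verification against the catalogue already established in \Cref{fig:labeled-cc-2} (equivalently, its contracted form \Cref{fig:contracted-pieces}).

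First I would pin down what the yellow contraction does to an interface. By \Cref{prop:cc-descr} an interface is a chain of squares and rhombs in alternation, all of whose yellow edges are parallel, running between two yellow hexes. In the decoration graph, at any vertex of a square the two sides differ in orientation by $1/4$, and at any vertex of a rhomb by $1/12$; neither is a multiple of $1/6$, so the two sides meeting at a vertex carry different colours, hence opposite sides carry the same colour: every square and every rhomb has two parallel blue sides and two parallel yellow sides, all of the common Spectre edge length. Collapsing the parallel yellow strand thus identifies the two yellow sides of each quadrilateral to a point and its two blue sides to a single blue segment of that length, while consecutive quadrilaterals, sharing a yellow side, produce segments sharing an endpoint. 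So an interface contracts to a polygonal path (broken line) joining the two resulting dots, all of whose segments have the Spectre edge length and all of whose turning angles are multiples of $1/12$ (by \Cref{prop:bipar} and \Cref{prop:oc}); its ``shape as a broken line'' is precisely this turning sequence.

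Next I would run through the finitely many interface types in \Cref{fig:labeled-cc-2} -- up to rotation only $\iA\pm$, $\iB\pm$, $\iC\pm$, $\iV\pm$, $\iI$, together with the degenerate cases where the interface reduces to a single blue segment -- and read off, for each, the broken line the above contraction produces: its length and its turning sequence. Tabulating these, one checks by inspection that the map ``interface type $\mapsto$ broken line'' is injective. Here it matters that, the Spectre tilings being chiral, broken lines are compared up to rotation only, so a handed type and its mirror (say $\iA+$ and $\iA-$) give genuinely non-congruent paths while the self-mirror type $\iI$ gives a symmetric one; this is consistent with, and confirmed by, the figure.

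Finally, the \emph{nature} of an interface -- which letter and sign it carries, which of its quadrilaterals are blue squares, blue rhombs or yellow rhombs, the direction of the marking on each blue rhomb, and whether each terminal yellow hex receives a solid marking -- is exactly the data attached to its type in \Cref{fig:labeled-cc-2}; so recovering the type from the broken line recovers all of it. (If one prefers not to invoke the figure for the markings: a blue rhomb of an interface is the $\iD2$ paired with a $\iD3$ that is a blue hexagon, and the side on which that hexagon sits is forced by the local configuration of the chain, hence by the broken line; a terminal yellow hex gets a solid marking exactly when the last quadrilateral on its side is a yellow rhomb, which is likewise visible.) The only real work -- and the only possible obstacle -- is bookkeeping: one must be confident that \Cref{fig:labeled-cc-2} is exhaustive, which follows from \Cref{prop:cc-descr} and the statements from \Cref{prop:T1-cc} to \Cref{prop:T3-cc}, and that a given letter always denotes the same quadrilateral chain even when it bounds clusters of different sizes (e.g.\ $\iB+$ occurs both on a cc whose cluster is a $\hT3$ and on a cc whose cluster is a $\hT2$, but the $\iB$ chain and its markings are the same in both). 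Granting this, injectivity of the finite table is immediate.
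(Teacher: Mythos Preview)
Your proposal is correct and takes essentially the same approach as the paper: both reduce the claim to a finite inspection of the interface catalogue in \Cref{fig:labeled-cc-2} (and its contracted form in \Cref{fig:contracted-pieces}/\Cref{fig:interface-list}), checking that distinct interface types yield distinct broken-line shapes. The paper dispatches this in a single sentence (``pure observation of \Cref{fig:labeled-cc-2,fig:interface-list}''), whereas you spell out what the contraction does, what ``nature'' comprises, and why the table is exhaustive; your extra care is sound but not a different method.
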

It follows from pure observation of \Cref{fig:labeled-cc-2,fig:interface-list}.
Note that on three of the possible interface pairs, we highlighted a particular segment in pink: the ones of type $\iA$, $\iB$ and $\iC$.
It follows directly from \Cref{thm:pieces-ne,thm:pieces-su} that:

\begin{proposition}\label{prop:ces}
The data of a partition $P$ of the honeycomb $\cal H$ and a collection $D$ of dots on some of the vertices of $\cal H$, come from a Spectre tiling if and only if: 
\begin{enumerate}
\item It is an assemblage of pieces of \Cref{fig:contracted-pieces}, where two pieces in contact either only touch on one dot of each piece, or they are in contact along a whole interface between two dots.
\item Any dot in $D$ has one and only one of the three segments of $\cal H$ ending on it which is a pink segment of an interface pair as on \Cref{fig:contracted-pieces}.
\end{enumerate}
\end{proposition}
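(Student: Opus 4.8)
The statement is, in essence, the image of \Cref{thm:pieces-ne,thm:pieces-su} under the yellow contraction, and my plan is to make this precise by building a dictionary between the two pictures, then reading conditions 1 and 2 off the hypotheses of \Cref{thm:pieces-su}. First I would record the dictionary. By \Cref{prop:spec-cond} the yellow contraction sends the blue hexes of a Spectre decoration tiling bijectively onto the hexagons of a honeycomb $\cal H$; hence the cluster partition becomes a partition $P$ of $\cal H$ into $\hT1$, $\hT2$, $\hT3$ groups, and the contracted yellow hexes become a set $D$ of vertices of $\cal H$. Each piece of \Cref{fig:labeled-cc-2} maps to a piece of \Cref{fig:contracted-pieces}: the $\hT n$ cluster is unchanged, the yellow hexes shrink to dots, and each interface (a chain of alternating squares and rhombs between two yellow hexes) shrinks to a broken line of blue segments of $\cal H$ between two dots. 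Conversely, \Cref{lem:iss} says the type and sign of an interface are determined by the shape of that broken line, so a contracted piece determines the decoration piece that produced it, up to exactly the ambiguity that \Cref{thm:pieces-su} already disregards, namely the solid/dotted status of the yellow-hex markings. Under the dictionary, ``two decoration pieces superimpose along an interface'' reads ``two contracted pieces share a whole broken line between two dots'', and ``they share only a yellow hex'' reads ``they touch at a single dot''; and a yellow hex carries a solid yellow marking from a piece exactly when, inside that piece, it is attached to a yellow rhomb of an interface, and the contraction of that rhomb is precisely a pink segment of the matching interface pair of \Cref{fig:contracted-pieces}.

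For necessity I would take $(P,D)$ coming from a Spectre tiling, apply \Cref{thm:pieces-ne} to the decoration tiling, and contract: the dictionary immediately yields an assemblage of pieces of \Cref{fig:contracted-pieces} satisfying the contact rule of condition 1. Since a decomposition coming from an actual tiling also satisfies the hypotheses of \Cref{thm:pieces-su}, rule~\ref{item:yellow-rule} holds --- every yellow hex gets exactly one solid yellow marking --- which under the dictionary is exactly condition 2. For sufficiency I would inflate each contracted piece back to its decoration piece, using \Cref{lem:iss} to turn each broken line into a definite chain of squares and rhombs and to fix the orientations and markings of the two blue hexes it joins, then glue the inflated pieces as prescribed by condition 1. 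When two contracted pieces share a whole interface, the two decoration interfaces have the same broken-line shape, hence by \Cref{lem:iss} the same type, hence matching signs ($\iA+$ with $\iA-$, $\iB+$ with $\iB-$, $\iC+$ with $\iC-$, $\iV+$ with $\iV-$, $\iI$ with $\iI$), and by \Cref{fig:interface-list} they then superimpose with their blue-hex/rhomb matchings and yellow hexes aligned; when two contracted pieces touch at a dot, the inflated pieces meet only at the corresponding yellow hex. The blue hexes of distinct pieces stay disjoint because $P$ is a partition of $\cal H$ into the clusters carried by the pieces, and condition 2 is, again by the dictionary, rule~\ref{item:yellow-rule}. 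All four hypotheses of \Cref{thm:pieces-su} then hold, so the inflated arrangement is the decoration tiling of a Spectre tiling whose yellow contraction is $(P,D)$.

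The conceptual work sits entirely in \Cref{thm:pieces-ne,thm:pieces-su,lem:iss}; the rest is bookkeeping, and the step I expect to need the most care is the inflation in the sufficiency direction. One has to check that \Cref{lem:iss} is genuinely reversible --- the broken-line shape pins down both the chain of squares and rhombs and the orientations of the bounding blue hexes --- and that around each dot the three incident contracted interfaces and clusters inflate to a locally consistent picture, i.e.\ that the local models of \Cref{fig:labeled-cc-2} around a yellow hex are exhaustive. The example drawn right after the proposition, where a yellow hex receives no matching yellow rhomb, is the reminder that condition 2 is independent of condition 1 and cannot be omitted.
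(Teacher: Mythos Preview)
Your approach is correct and is essentially the same as the paper's: the paper states that \Cref{prop:ces} ``follows directly from \Cref{thm:pieces-ne,thm:pieces-su}'' and gives no further argument, so your proposal is simply an explicit unpacking of that one-line deduction via the yellow-contraction dictionary and \Cref{lem:iss}. If anything, you have written out more than the paper does.
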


Moreover, since in \Cref{fig:labeled-cc-2}, the interface completely determine the rhomb/hex pairing for the blue hexes, and the yellow rhomb/hex pairing is determined by the unique $\iA$, $\iB$ or $\iC$ interface going to it:

\begin{proposition}\label{prop:cts}
Given data as in the previous proposition, the Spectre tiling is completely determined.
\end{proposition}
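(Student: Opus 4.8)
The plan is to invert the yellow and blue contractions one layer at a time, checking that every reconstruction step is forced by the data, so that from $(P,D)$ one recovers a single Spectre tiling — that is, the map sending a Spectre tiling to its pair $(P,D)$ is injective on the data coming out of \Cref{prop:ces}.

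First I would recover the decomposition into contracted pieces. By \Cref{prop:ces} the data $(P,D)$ is \emph{some} assemblage of the pieces of \Cref{fig:contracted-pieces}; I would argue this assemblage is in fact unique. Each part of $P$ is a $\hT n$ with three prescribed tips, and at each tip the presence or absence of an antenna and the vertex of $\cal H$ carrying the associated dot are determined by which vertices near the tip lie in $D$, together with the pink-segment condition of \Cref{prop:ces}: inspection of \Cref{fig:contracted-pieces} shows that exactly one contracted piece sits over a given $\hT n$ compatibly with the surrounding dots. Hence all cc's, their antennae, and the broken-line interfaces between neighbouring cc's are determined by $(P,D)$.

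Next I would un-contract. By \Cref{lem:iss} the full nature of each interface — its sign and letter among $\iA,\iB,\iC,\iV,\iI$, and hence the chain of squares and rhombs it carries, with the colours and hexagon markings recorded in \Cref{fig:labeled-cc-2} — depends only on the shape of its broken line, which is part of the data. Replacing every interface by the corresponding chain of squares and rhombs, every dot by a yellow hexagon, and every cluster by its marked blue hexagons reconstructs the whole decoration tiling into hexes, squares and rhombs, together with the marking inside each blue hexagon (equivalently, the induced $\iD3$/$\iD2$ tiling together with the pairing of each blue $\iD3$ with its $\iD2$). The only datum not yet fixed is the $\iD2$-partner of each yellow $\iD3$; but the marking of a yellow hexagon is forced by the unique $\iA$-, $\iB$- or $\iC$-interface touching it, which is exactly condition~2 of \Cref{prop:ces}. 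Since a marking points to an indent of the underlying $\iD3$, the orientation of every Spectre is thereby pinned down among its twelve possibilities, not merely its parity. By \Cref{thm:pieces-su} the resulting arrangement is a genuine whole-plane Spectre tiling, and by construction it is the only one whose images under the yellow and blue contractions carry the partition $P$ and the dots $D$.

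The main obstacle is the first step: showing that $(P,D)$ forces the contracted-piece decomposition — that the antenna and contact data at every cluster tip are unambiguous, and that the interface types read off distinct broken lines glue consistently around each dot. This is a finite case check against \Cref{fig:contracted-pieces,fig:labeled-cc-2,fig:interface-list}; everything afterwards is bookkeeping driven by \Cref{lem:iss} and \Cref{fig:labeled-cc-2}.
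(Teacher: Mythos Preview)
Your proof is correct and follows essentially the same approach as the paper's: the paper's argument is the single sentence preceding the proposition, observing that the interfaces of \Cref{fig:labeled-cc-2} determine the blue rhomb/hex pairings and that the yellow pairing is fixed by the unique $\iA$, $\iB$ or $\iC$ interface at each dot. You expand this into an explicit reconstruction, adding a uniqueness argument for the contracted-piece decomposition that the paper takes as implicit (since a piece is nothing but a $\hT n$ together with its three nearby dots), but the logical content is the same.
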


We end this section by a property that will be used to prove a relation later between various coordinates (\Cref{prop:B-Y-G-coord}).
We make use of complex numbers. It can be safely skipped since it is not used for most of the statements. The proof is immediate.

\begin{lemma}\label{lem:coord}
Let $\rho=e^{2\pi i /6}\in\C$ denote the principal primitive \nth{6} root of unity. Assume that the centre of the hexagons of the blue honeycomb forms the lattice $\Lambda = \Z+\rho\Z$, and hence the sides of the hexagons are of the form $\rho^k c$ with\footnote{Alternatively we can take $c = 1/(\rho+\rho^2)=i\sqrt{3}$.} $c=1/(1+\rho)$.
Then the vectors on the right of \Cref{fig:contracted-pieces}  are respectively
\begin{itemize}
\item for $\iI$-type, to $(2+3\rho)\times c$;
\item for $\iA$-type, to $(1+3\rho)\times c$;
\item for $\iB$-type, to $(1+4\rho)\times c$;
\item for $\iC$-type, to $(2+4\rho)\times c$;
\item for $\iV$-type, to $(3+3\rho)\times c$.
\end{itemize}
For each pair of dots on any valid hex partition with dots, the vector between two dots on an interface is the product by a power of $\rho$ and of the vector given above for the interface type $\iI$, $\iA$, \ldots, $\iV$.
\end{lemma}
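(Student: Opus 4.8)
The plan is to compute each vector directly from the shape of the corresponding broken line in \Cref{fig:contracted-pieces}, using the normalization $\Lambda=\Z+\rho\Z$ for the centres of the blue honeycomb. First I would fix coordinates: with this lattice, the six edges emanating from any hexagon centre point in the directions $\rho^k$ for an appropriate offset, and have length $|c|$ with $c=1/(1+\rho)$; indeed the vector joining two adjacent hexagon centres is $1$ or $\rho$ (or a unit times a power of $\rho$), and a hexagon side, being orthogonal to the line joining the two centres it separates and one third as... more precisely, two adjacent centres differ by $\rho^k(\rho+\rho^2)\cdot(\text{something})$; the clean statement is that hexagon sides are the six vectors $\rho^k c$. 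I would verify this once and for all by checking that $(1+\rho)c = 1$, so that a chain of a side, then a $\rho$-rotated side, etc., around a hexagon closes up, and that going from one hexagon centre to the adjacent one amounts to crossing one shared edge, which is consistent with $c=1/(1+\rho)$. The footnote's alternative $c = 1/(\rho+\rho^2) = i\sqrt3$ is just the statement that $\rho+\rho^2 = i\sqrt3$ after noting $\rho = e^{i\pi/3}$, and either normalization gives the same final vectors up to the overall conventions.

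Next, for each interface type $\iI$, $\iA$, $\iB$, $\iC$, $\iV$, I would read off from the right-hand side of \Cref{fig:contracted-pieces} the sequence of hexagon-edge steps that the broken line makes from one dot to the other, choosing the rotational representative so that the steps use only the directions $c$, $\rho c$, $\rho^2 c$ (i.e. the three "black-to-white" blue-segment directions of one orientation class, consistent with \Cref{prop:oc}). Then the vector between the two dots is simply the sum of these steps. For instance, the $\iI$-interface broken line consists of a fixed number of segments whose directions, suitably grouped, contribute $2c + 3\rho c = (2+3\rho)c$; the $\iA$-interface is the $\iI$ one shortened by one $c$-step, giving $(1+3\rho)c$; the $\iB$-interface lengthens $\iA$ by one $\rho c$-step to $(1+4\rho)c$; $\iC$ adds a further $c$-step to $(2+4\rho)c$; and $\iV$, the longest, gives $(3+3\rho)c$. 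Each of these is a routine tally of segment directions along the picture, and I would present the count for each of the five types in a short enumerated list mirroring the statement.

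The only genuinely delicate point is the choice of rotational representative and the verification that the three "blue directions of one class" really are $\{c,\rho c,\rho^2 c\}$ rather than the other class $\{\rho^{-1}c,\rho^{-2}c,-c\}$ — but this is exactly the content of \Cref{prop:oc}, which tells us that all directed blue segments of a single cc lie in one class modulo rotation by $1/3$, and since an interface is a path along cc boundaries the segments alternate classes only across yellow edges; reading \Cref{fig:interface-list} fixes the bookkeeping. Once the representative is fixed, the final clause of the lemma is immediate: on any valid partition with dots, each interface is obtained from one of the five model interfaces of \Cref{fig:contracted-pieces} by a rotation by a multiple of $1/6$ (\Cref{thm:pieces-ne}, \Cref{lem:iss}), hence the vector between its two dots is the corresponding model vector multiplied by the matching power of $\rho$. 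I expect the main obstacle to be purely presentational — making the per-type segment counts legible without reproducing the figure — rather than mathematical; as the excerpt itself notes, \emph{the proof is immediate} once the coordinates are set up.
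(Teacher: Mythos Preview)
Your approach is essentially the paper's: the proof is declared immediate, being a direct tally of hexagon-edge steps along each interface in \Cref{fig:contracted-pieces}, followed by the observation that any interface in a valid partition is a rotate of one of the five models. The only nuance the paper adds that you gloss over is that the raw segment count naturally involves the direction $\rho^2 c$ as well (e.g.\ type $\iA$ reads $2+2\rho+\rho^2$), and one then uses the identity $\rho^2=\rho-1$ to reduce to the stated forms; your heuristic ``$\iA$ is $\iI$ shortened by one $c$-step'' gives the right answer but is not the actual geometry of the broken lines.
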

For type $\iA$, we used that $\rho^2=\rho-1$ to transform the expression $2+2\rho+\rho^2$ into $1+3\rho$. We did similarly for types $\iB$ and $\iC$.

\subsection{Equivalent tilesets}\label{sec:triset}

By \Cref{thm:pieces-ne,thm:pieces-su}, tiling the whole plane with the Spectre is combinatorially equivalent to tiling the whole plane with the following marked regular triangles following the rules:
\begin{enumerate}
\item the interfaces match (disregarding the yellow hex arrows),
\item each vertex is in contact with one and only one yellow segment, where
\end{enumerate}
the two yellow segments superimposed in $A+/A-$, $B+/B-$ and $C+/C-$ interfaces are counted as one.

\image{Some triangles have been rotated for faster visual lookup of markings.}{fig:triset-1}{
\includegraphics[scale=0.6]{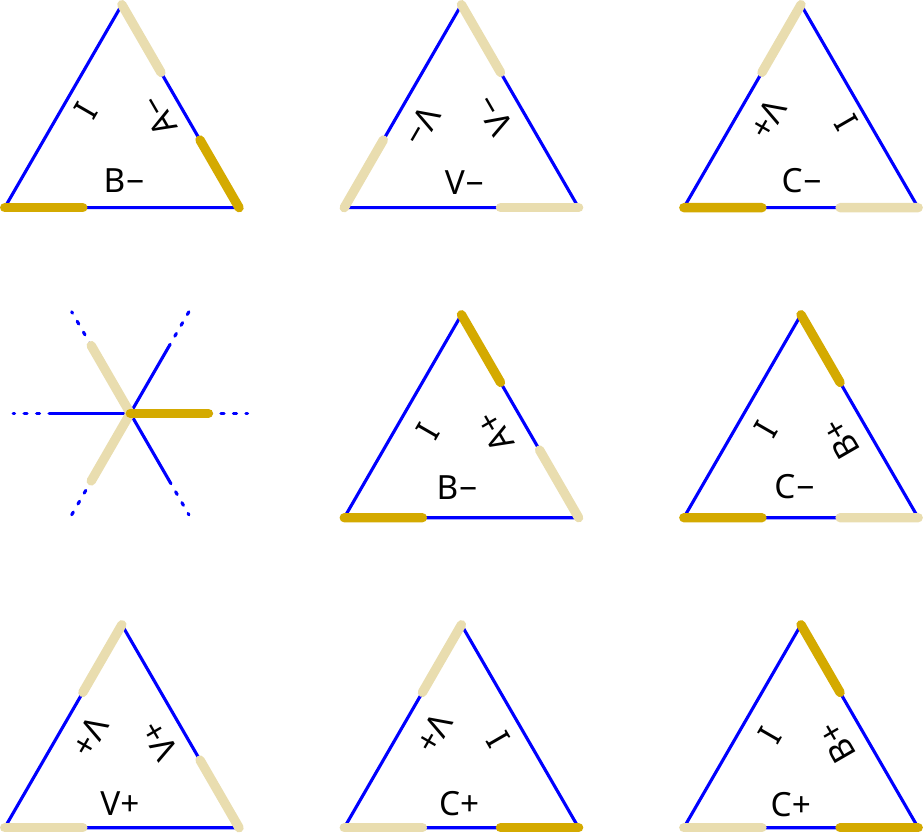}
}

Alternatively we can use the following tileset, which will enforce the yellow edge rule.

\nopagebreak

\image{A correct tiling with these tiles have to tile the plane in the classical sense and respect the markings.}{fig:tri-6b}{
\includegraphics[scale=0.6]{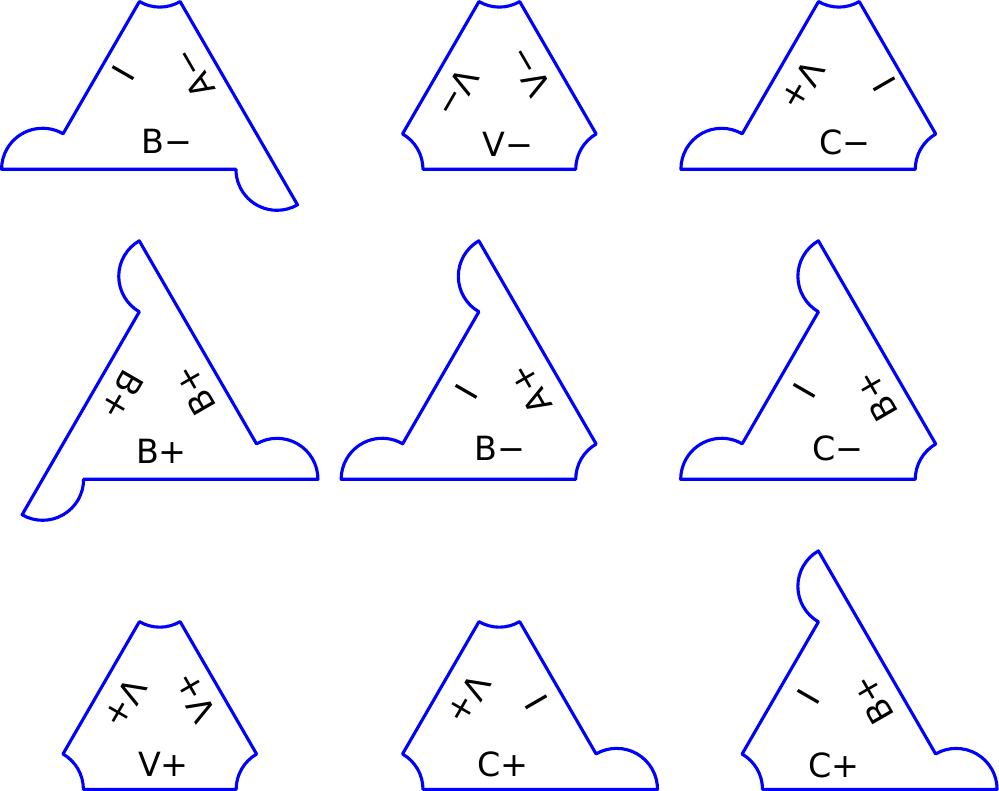}
}

or:

\nopagebreak

\image{}{fig:triset-2}{
\includegraphics[scale=0.6]{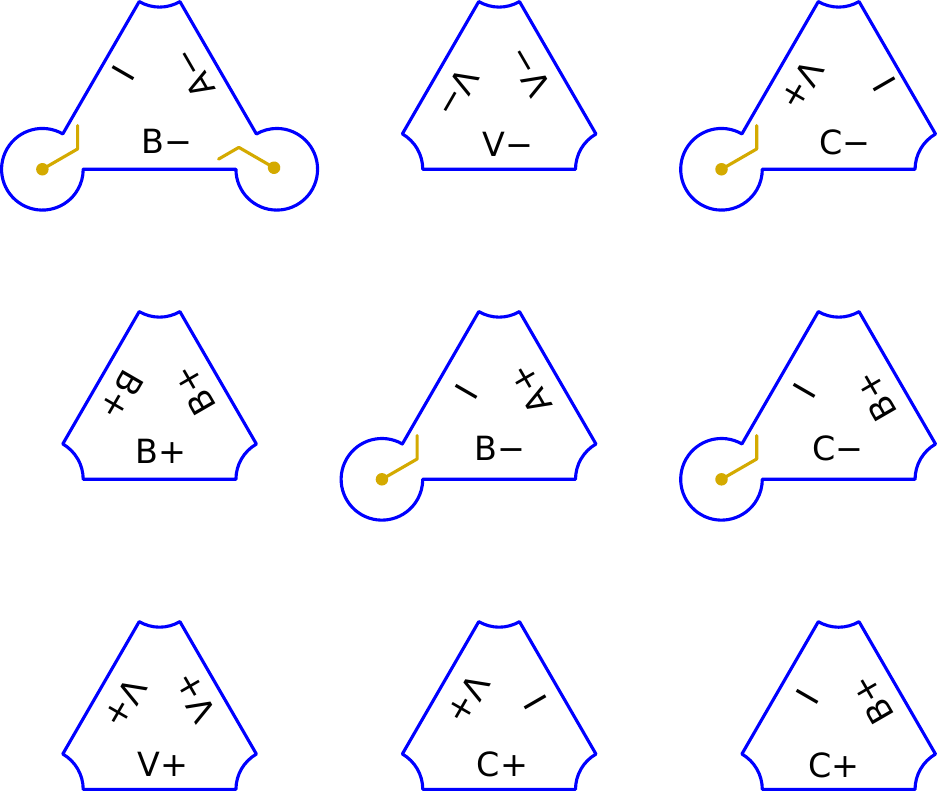}
}

or:

\nopagebreak

\image{}{fig:triset-3}{
\includegraphics[scale=0.6]{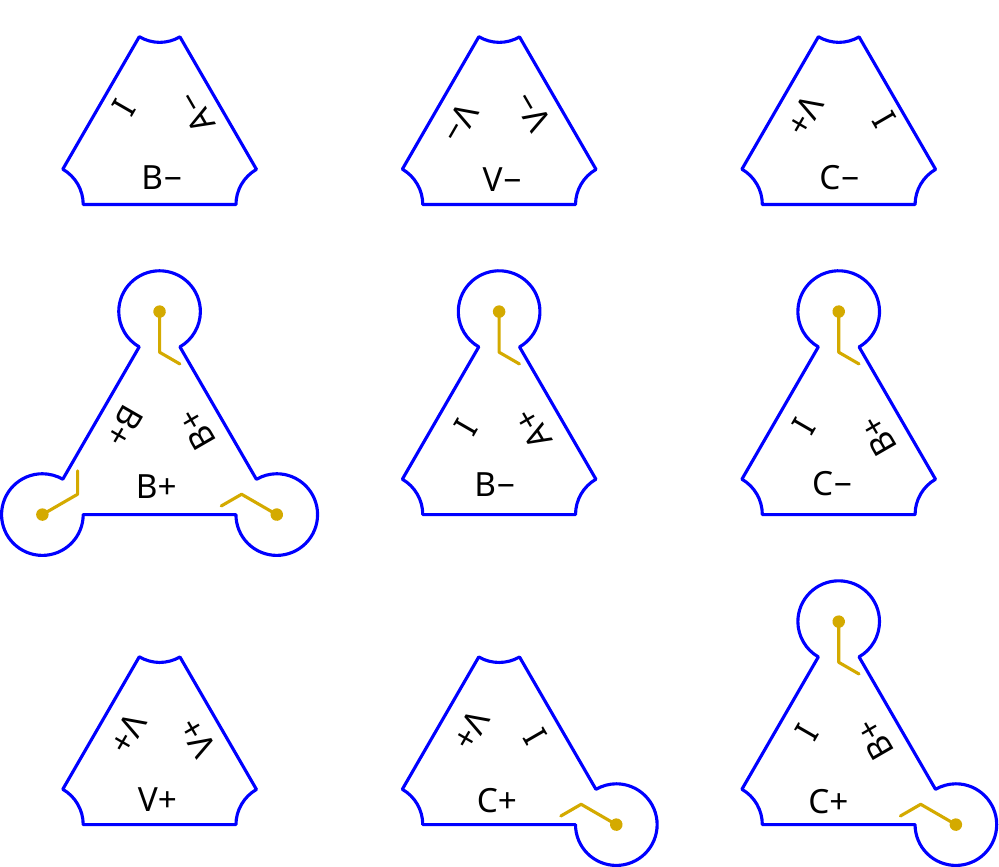}
}

Tiling with all those tilesets are combinatorially equivalent.
Note that we can add some supplementary marking as below to ease laying them out:

\image{Tilings with the pieces of \Cref{fig:triset-1} automatically respect the beige markings, and at each vertex, stem \emph{alternatively} unmarked and marked half-segments. Recall that to correspond to a Spectre tiling, we must also follow the rule that every vertex should have exactly one yellow marking, as shown on the centre left.}{fig:triset-1c}{
\includegraphics[scale=0.6]{triangle-tileset-1c.pdf}
}

It can be directly checked that the interface labels must respect the new introduced colour. It can also be deduced using \Cref{thm:pieces-su}, and the fact that it corresponds to the orientation of the yellow hexes.

With these easier depictions to deal with, let us first prove:

\begin{proposition}
The triangle marked $\mathrm{B}+$, $\mathrm{B}+$, $\mathrm{B}+$ cannot appear in a whole plane tiling.
\end{proposition}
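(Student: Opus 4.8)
The plan is to suppose a triangle $T$ with all three interfaces of type $\iB+$ occurs, to view it through \Cref{thm:pieces-ne} as a cc of a whole plane Spectre tiling, and to contradict the one‑yellow‑marking‑per‑vertex rule (the yellow‑marking condition of \Cref{thm:pieces-su}, equivalently the second condition of \Cref{prop:ces}). A first observation, from \Cref{fig:interface-list}: a $\iB+$ interface can only bound a cc whose cluster is a $\hT2$ or a $\hT3$, so $T$ itself is such a cc, and each of its three neighbours $T_1,T_2,T_3$, which all present $\iB-$ to $T$, has a $\hT1$ or a $\hT2$ cluster. By \Cref{prop:T1-cc,prop:T2-cc-1,prop:T2-cc-2,prop:T3-cc} and \Cref{lem:T3-env} there are then only finitely many possibilities for $T$ with its free pieces resolved so that all three interfaces become $\iB+$, and, for each, finitely many possibilities for $T_1,T_2,T_3$.

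To keep this check genuinely bounded I would first normalise the geometry with \Cref{lem:coord}. The three vectors joining consecutive contact dots of $T$ equal $\rho^{k_1}(1+4\rho)c$, $\rho^{k_2}(1+4\rho)c$, $\rho^{k_3}(1+4\rho)c$, and since they close up a triangle their sum is $0$, so $\rho^{k_1}+\rho^{k_2}+\rho^{k_3}=0$. Among sixth roots of unity this forces $\{k_1,k_2,k_3\}\equiv\{0,2,4\}$ or $\{1,3,5\}\pmod 6$: the three contact dots of $T$ form an equilateral triangle and $T\cup T_1\cup T_2\cup T_3$ is invariant under the order‑three rotation about the centre of $T$. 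This reduces everything to a single neighbour, say $T_1$, the two contact dots of $T$ that it carries, and the way it meets $T_2$ at their shared contact dot of $T$. A second normalisation: since $(1+4\rho)c=2+\rho\in\Z[\rho]$ (using $\rho^2=\rho-1$), a $\iB$ interface joins two vertices of the same class of the bipartite colouring of the honeycomb's vertices, so the three contact dots of $T$ have the same colour, which further restricts which interfaces $T_1$ may additionally carry.

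With these reductions the contradiction should come out of pink‑segment bookkeeping. By the second condition of \Cref{prop:ces} every contact dot carries exactly one ``pink'' ($\iA$, $\iB$ or $\iC$) segment. The three $\iB+$ interfaces of $T$ already pin down which pink segments sit at the contact dots of $T$; running through the admissible shapes of $T_1$ coming from \Cref{prop:T1-cc,prop:T2-cc-1,prop:T2-cc-2} one then checks that the other two interfaces forced on $T_1$ — and, by the rotation, on $T_2$ and $T_3$ — either place a second pink segment on a contact dot of $T$ that $T_1$ shares with $T_2$, or leave some contact dot with no pink segment at all. Either way the rule is violated, so $T$ cannot occur. (Equivalently: once the geometry is fixed as above, one simply reads off \Cref{fig:labeled-cc-2} that no cc presents three interfaces of type $\iB+$.)

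The main obstacle is exactly this last enumeration: one must determine precisely which resolutions of the free pieces of a $\hT3$ (\Cref{lem:T3-env,fig:T3-env}) or which antenna configurations of a $\hT2$ (\Cref{prop:T2-cc-1,prop:T2-cc-2}) yield all three interfaces equal to $\iB+$, and then propagate the induced markings onto the adjacent $\hT1$/$\hT2$ cc's far enough — out to the contact dots shared by two neighbours of $T$ — to reach the clash. The yellow‑marking condition is the one genuinely non‑local rule, so it is essential that the bookkeeping be carried that far; fixing coordinates via \Cref{lem:coord} at the outset is what prevents the search from spiralling out.
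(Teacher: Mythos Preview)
Your parenthetical shortcut at the end is simply wrong: \Cref{fig:labeled-cc-2} \emph{does} list a cc with three $\iB+$ interfaces (it is the fourth piece in that figure, the very one the proposition is ruling out). The list was compiled \emph{before} any global constraint was applied, and the caption explicitly warns that not every listed piece need occur in a whole plane tiling. So nothing can be ``read off''.

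The main line of your argument is a reasonable plan but not a proof. Your coordinate reduction is correct: the three $\iB$-vectors $\rho^{k_i}(1+4\rho)c$ summing to zero does force an equilateral arrangement, and $(1+4\rho)c=2+\rho\in\Lambda$ is a valid parity observation. But the heart of the matter --- the ``pink-segment bookkeeping'' over the admissible shapes of $T_1$ and its meeting with $T_2$ --- you leave as an enumeration you have not carried out. Until that case check is written down and seen to close, there is no contradiction on the page.

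The paper's argument is shorter and stays entirely at the level of the triangle tileset of \Cref{fig:triset-1}, never unpacking back to cc's or coordinates. It places the $(\iB+,\iB+,\iB+)$ triangle, uses the one-yellow-segment-per-vertex rule to force each of the three adjacent triangles (only one of the two $\iB-$ candidates survives the vertex rule, and there is a unique $\iA-$ tile), then continues forcing two more rings outward until two adjacent interfaces fail to match. The point is that the triangle abstraction already encodes the yellow-marking condition as a purely local vertex constraint, so the propagation is mechanical and terminates quickly; translating back to cc's, as you do, reintroduces exactly the geometric bookkeeping that the triangle tileset was designed to hide.
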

\begin{proof}
Let us use the tileset of \Cref{fig:triset-1}
On the bottom line can only go two types of marked triangle but the one on the left creates a vertex with two yellow lines.

\image{}{}{
\includegraphics[scale=0.5]{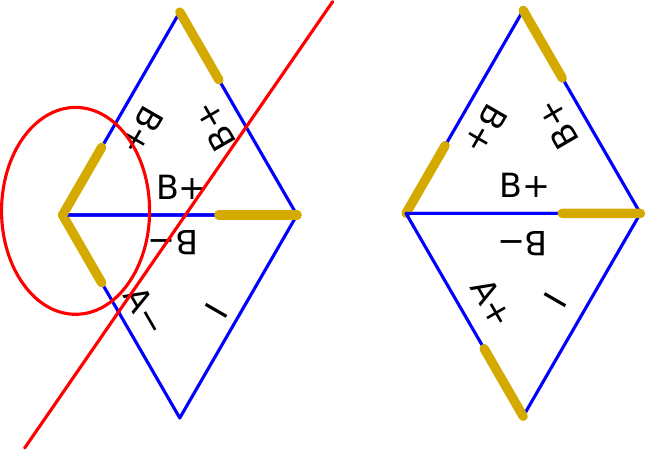}
}

By symmetry this forces the 3 adjacent tiles to the initial one.
Since there is only one $\iA-$ we have:

\image{}{}{
\includegraphics[scale=0.5]{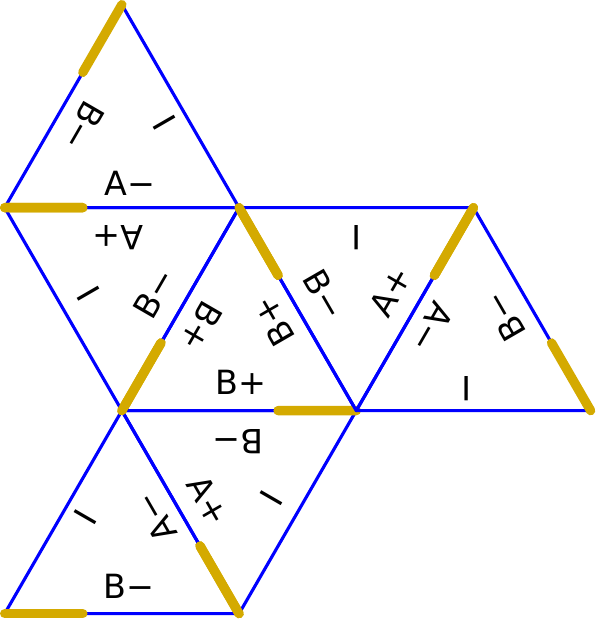}
}

Then around a corner where there is already 4 triangles there are two $\iI$ interfaces.
Moreover the $\iI$ that must fit can have no yellow segment touching its ends, because the placed tiles already have such segments.
There is only one such triangle, with markings $\iI$, $\iV+$, $\iC-$.
So we have

\image{The last two tiles do not fit: the interfaces circled in red do not match.}{}{
\includegraphics[scale=0.5]{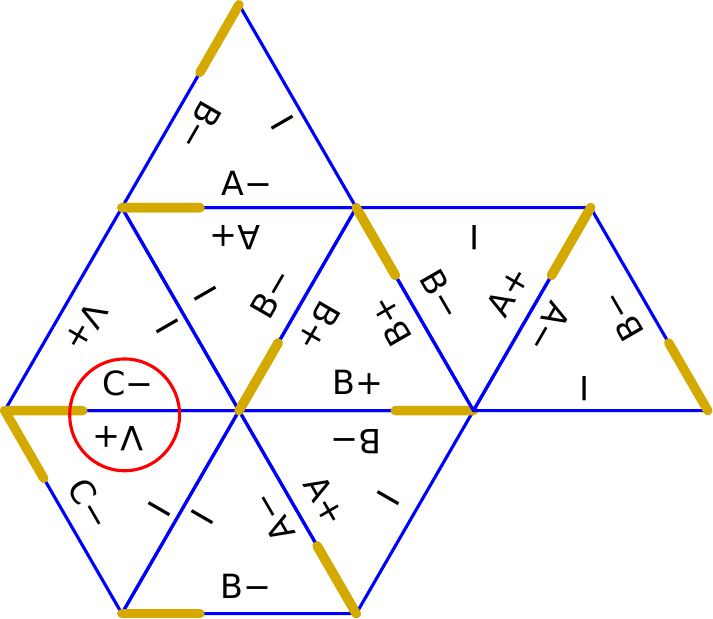}
}

but the two tiles just added do not fit with the rest.
\end{proof}

\subsection{Triangle vertex neighbourhood}

Among the several proposed equivalent tilesets presented before, we choose to use the following pieces:

\nopagebreak

\image{The lightness of the blue shade has been chosen to reflect the size of the $\hT n$ of the corresponding cc: light blue: $\hT1$, medium blue: $\hT2$, dark blue: $\hT3$.
The white and black dots must match with dots of the same colour and represent the orientation of the yellow hex (they correspond to the white and black dots on the improved decoration of the yellow hexes, and the white dots also correspond to the beige and yellow markings of \Cref{fig:triset-1c}).
Each triangle ``vertex'' must be as on the centre left. We recall that the white dots are a visual aid and are not necessary: a whole plane tiling with the pieces above, disregarding the white dots, automatically has to match them.}{fig:triset-7h}{
\includegraphics[scale=0.6]{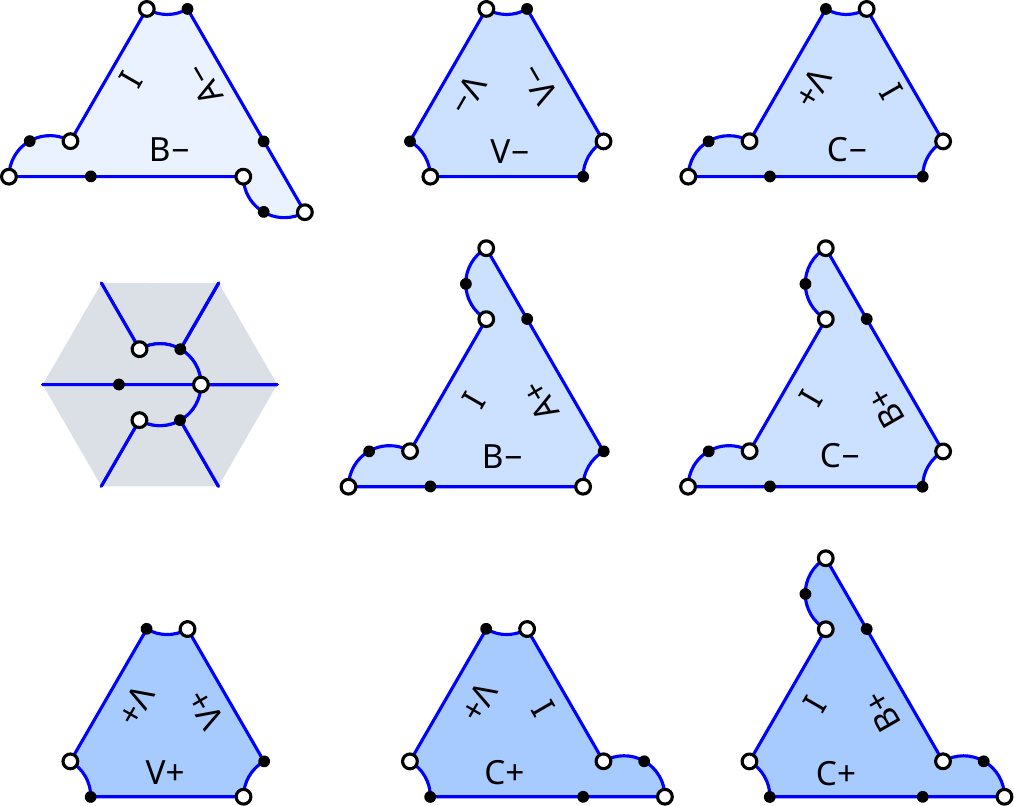}
}

These triangles will arrange as a regular triangular lattice with circular sectors added/removed. The vertices of the underlying triangular lattice are not indicated in the figure above but are ``implied''. We need a name to distinguish them from the actual vertices of the pieces in the figure.

\begin{definition}
We call \term{triangle vertices} the vertices of the triangles that the pieces are built from (before addition/removal of the circular sectors).
\end{definition}

\clearpage

The complete list of compatible arrangements around a triangle vertex is not very long to figure out by hand (not all may appear in a complete plane tiling):

\nopagebreak

\image{In this figure all the central yellow hexes have the same orientation of the associated marking (telling where is the paired yellow rhomb), depicted on top.}{fig:vertex-list}{
\includegraphics[scale=0.36]{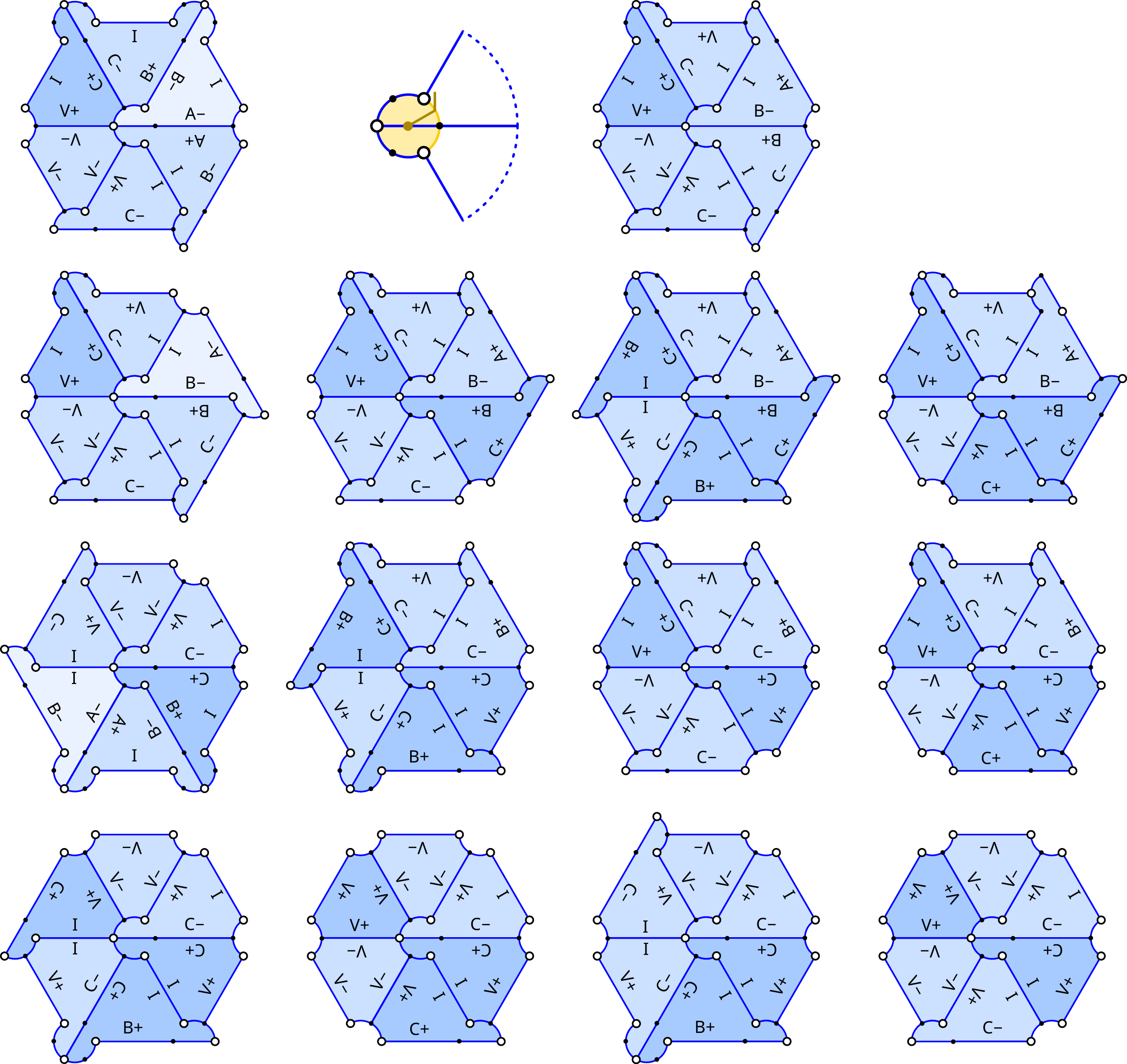}
}

\subsection{Yellow clusters \texorpdfstring{$\hT'n$}{T'n}}\label{ss:hTp}

\subsubsection{Definitions}

To prove that tilings with these triangles are uniquely hierarchical, we will identify the next generation of triangles as spotted in the introduction.
We will call them $\hT'n$. They are supposed to be like the triangular clusters $\hT n$ but for the yellow hexes.
Observation guides us into considering candidates following the rules:
\begin{enumerate}
\item A $\iV$-type interface should link two yellow hexes in the same cluster. 
\item An $\iI$-type interface should link two adjacent cluster.
\end{enumerate}

So we start enforcing this as terminology:

\begin{definition}\label{def:V-clu}
In a whole plane tiling by the pieces of \Cref{fig:triset-7h}, we declare as \term{$\iV$-connected} any two triangle vertices (yellow hex) that are linked by a $\iV$-type segment.
Components for this connection relation will be called \term{yellow clusters}.
\end{definition}

Recall that the vertices correspond to yellow hexes, which is why we chose that name for this higher level cluster notion.
We will prove in the present section that yellow clusters come in triangular sets $\hT'n$ for $n=1$, $2$ and $3$.

\begin{definition}\label{def:I-adj}
We declare two components \term{adjacent} if they are linked by an $\iI$-type triangle edge.
\end{definition}

For the moment this definition is a bit artificial. A priori a cluster could be adjacent to itself. We will see that this cannot be the case.
Keep also in mind that two hex clusters with two hexes that are in contact but linked by another type of triangle edge may or may not be adjacent according to the definition above.

\subsubsection{Shape of yellow clusters}

Note that there is only one triangle having a $\iV-$ side, and it is the one of type $\iV-$, $\iV-$, $\iV-$. 
So any two $\iV$-connected vertices are on the edge of some $\iV-$, $\iV-$, $\iV-$ triangle.

Note also that any tile with a $\iV+$ side has a $\iV-$, $\iV-$, $\iV-$ tile attached to this side.
In particular the tile of type $\iV+$, $\iV+$, $\iV+$ has a
$\iV-$, $\iV-$, $\iV-$ tile on each side:

\image{}{}{
\includegraphics[scale=0.5]{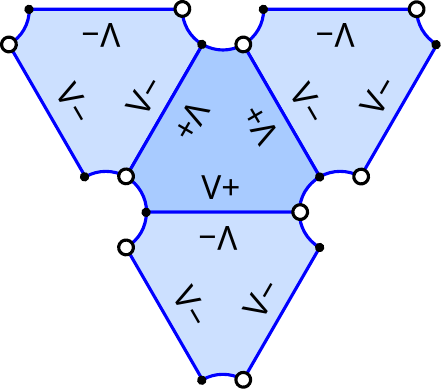}
}

\begin{proposition}\label{prop:Tp3}
Two triangles of type $\iV-$, $\iV-$, $\iV-$ that share a triangle vertex are necessarily part of a configuration as above.
Such a configuration shares no triangle vertex with any other type $\iV$ interface.
The triangle vertices of the configuration thus form a $\hT'3$.
\end{proposition}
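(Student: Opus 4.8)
The plan is to argue entirely within the triangle tileset of \Cref{fig:triset-7h}, using the observation made just before the statement: the only triangle possessing a $\iV-$ side is the $\iV-,\iV-,\iV-$ triangle, and any triangle with a $\iV+$ side must have a $\iV-,\iV-,\iV-$ triangle glued to that side (since $\iV+$ interfaces only pair with $\iV-$ interfaces, and only the all-$\iV-$ triangle provides a $\iV-$). First I would take two triangles $T_1,T_2$ of type $\iV-,\iV-,\iV-$ sharing a triangle vertex $v$. Each of them, along its two $\iV-$ edges emanating from $v$, must be matched by a neighbour having a $\iV+$ edge; I would run through \Cref{fig:vertex-list} (the complete list of arrangements around a triangle vertex) and check that the only way two all-$\iV-$ triangles can meet at a common vertex is the configuration depicted above, namely the central $\iV+,\iV+,\iV+$ triangle surrounded by three $\iV-,\iV-,\iV-$ triangles — so in fact $T_1$ and $T_2$ are two of the three petals around a common $\iV+,\iV+,\iV+$ core, and the third petal is forced as well. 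This is the first assertion.

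Next, for the second assertion I would show the configuration shares no triangle vertex with any further $\iV$-type interface. The configuration has three ``outer'' triangle vertices (the tips of the three $\iV-,\iV-,\iV-$ petals). At each such outer vertex, two of the incident triangle edges are the two remaining $\iV-$ edges of a petal, and these must be matched by $\iV+$ edges of triangles \emph{outside} the configuration; but a $\iV+$ edge of an external triangle forces yet another $\iV-,\iV-,\iV-$ triangle against it, which would be part of a \emph{new} $\hT'3$-configuration. The point to verify, again by inspecting \Cref{fig:vertex-list}, is that at an outer vertex the local picture is already ``full'' of the two petals plus whatever is forced by their $\iV-$ edges, leaving no room for an additional $\iV$-type edge incident to that vertex that is not already accounted for; in particular no two distinct such configurations can share a triangle vertex. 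Combined with the fact that $\iV$-connection is by definition exactly adjacency along a $\iV$-type segment, this shows the $\iV$-connected component of $v$ consists precisely of the three triangle vertices of the core $\iV+,\iV+,\iV+$ triangle — indeed those three vertices are pairwise joined by $\iV$-type segments (the edges of the core, shared with the petals), and no vertex outside the configuration is $\iV$-connected to any of them. Hence the yellow cluster is a triangular set of three yellow hexes, i.e.\ a $\hT'3$.

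The main obstacle I expect is the case analysis around the outer triangle vertices: one must make sure that \Cref{fig:vertex-list} really is exhaustive and that, reading off each admissible local configuration, the two $\iV-$ edges of a petal leaving an outer vertex cannot be ``absorbed'' in a way that lets a second $\iV+,\iV+,\iV+$ core, or a lone $\iV-,\iV-,\iV-$ triangle from a different cluster, touch that vertex. A secondary subtlety is bookkeeping of orientations: a $\iV-,\iV-,\iV-$ triangle occurs in both point-up and point-down orientations, so the check at each outer vertex has to be done for the correct handedness; I would reduce this to a single case by the rotational/reflectional symmetry of the core configuration. Once the vertex list is granted, the rest is a direct reading of the figure and the matching rules, with no computation.
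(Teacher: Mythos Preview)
Your proposal contains a substantive error in the final assertion: a $\hT'3$ consists of \emph{six} yellow hexes, not three. The configuration (a central $\iV+,\iV+,\iV+$ triangle with three $\iV-,\iV-,\iV-$ petals) has six triangle vertices --- the three corners of the core plus one outer tip per petal --- and all six are $\iV$-connected: the inner three are joined by the edges of the core, and each outer tip is joined to two inner vertices by the two outer edges of its petal, which are themselves $\iV$-type interfaces (petal side $\iV-$, external side $\iV+$). So your conclusion that ``the $\iV$-connected component of $v$ consists precisely of the three triangle vertices of the core'' is wrong; the component is all six, and that is precisely what makes it a $\hT'3$ rather than a $\hT'2$.

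This miscount propagates into your treatment of the second assertion. You examine only the three outer tips, but at each inner vertex only three of the six surrounding triangles lie in the configuration, so the two fully external edges there must also be shown not to be $\iV$-type. Your reasoning at the outer tips is also garbled: you write that ``a $\iV+$ edge of an external triangle forces yet another $\iV-,\iV-,\iV-$ triangle against it,'' but the triangle sitting against that $\iV+$ edge is the petal you already have, not a new one. What you actually need to exclude is a $\iV$-type edge among the \emph{remaining} four external edges at an outer tip (respectively the two at an inner vertex).

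For comparison, the paper avoids the vertex list altogether. For the first claim it uses the black/white dot alternation around a vertex: two $\iV-,\iV-,\iV-$ triangles sharing a vertex must have the same orientation, hence are separated by a common neighbour carrying two $\iV+$ sides, which can only be $\iV+,\iV+,\iV+$; its third side then forces the third petal. For the second claim it bootstraps from the first: an extra $\iV$-type edge at any of the six vertices produces a new $\iV-,\iV-,\iV-$ triangle sharing that vertex with one of the existing petals, hence (by the first claim) a second full $\iV+,\iV+,\iV+$ configuration abutting the first; one more forced $\iV-,\iV-,\iV-$ petal of that new configuration then leaves a vertex hole that cannot be filled. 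Your appeal to \Cref{fig:vertex-list} could in principle replace the first step, but for the second step the recursive argument is both shorter and handles all six vertices simultaneously.
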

\begin{proof}
Because the white and black dots must alternate around a vertex, two $\iV-$, $\iV-$, $\iV-$ triangles sharing a triangle vertex must be oriented similarly, thus must be adjacent to a common triangle. The latter must have two $\iV+$ interfaces, so can only be of type $\iV+$, $\iV+$, $\iV+$.
The third edge of the latter must match a $\iV-$ and there is only the piece $\iV-$, $\iV-$, $\iV-$ that has a $\iV-$ interface.

If there would be another $\iV$-type edge connecting to one of the triangle vertices of the arrangement, it would imply by the same argument the presence of a $\iV+$, $\iV+$, $\iV+$ piece in contact with the arrangement as on the figure below left.
Then there would be a $\iV-$, $\iV-$, $\iV-$ piece as on below right.
But the triangle vertex hole marked in red cannot be filled.

\image{}{}{
\includegraphics[scale=0.5]{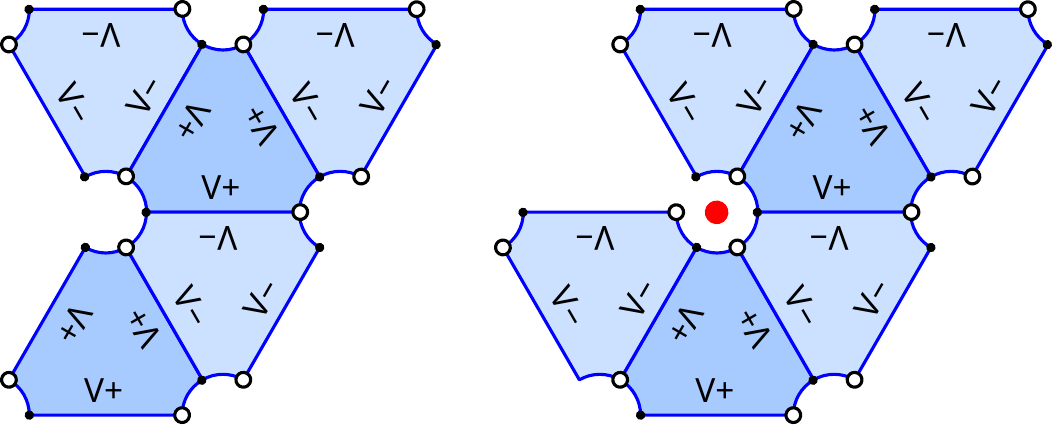}
}
\end{proof}

\begin{proposition}\label{prop:Tp2}
If a triangle of type $\iV-$, $\iV-$, $\iV-$ does not share a triangle vertex with another tile of the same type, then it shares no triangle vertex with any other type $\iV$ edge.
Its triangle vertices thus form a $\hT'2$.
\end{proposition}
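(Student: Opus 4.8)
The plan is to run a short argument by contradiction, using only the two structural remarks recorded just before \Cref{prop:Tp3}: the piece of type $\iV-$, $\iV-$, $\iV-$ is the \emph{only} tile bearing a $\iV-$ side, and --- since among the interface labels $\iV+$ matches only $\iV-$ --- every $\iV$-type triangle edge occurring in the tiling has a $\iV-$, $\iV-$, $\iV-$ tile glued along it. Write $\Delta$ for the given tile and $v_1,v_2,v_3$ for its three triangle vertices; by hypothesis, no \emph{other} tile of type $\iV-$, $\iV-$, $\iV-$ is incident to any $v_i$.

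First I would establish the first assertion: no $\iV$-type triangle edge other than the three sides of $\Delta$ is incident to one of the $v_i$. Suppose $e$ were such an edge, with an endpoint $v\in\{v_1,v_2,v_3\}$. Since $\iV+$ faces only $\iV-$, one of the two tiles meeting along $e$ has $e$ as a $\iV-$ side, hence is of type $\iV-$, $\iV-$, $\iV-$ by the uniqueness remark; call it $T$. As $e$ is not a side of $\Delta$, we have $T\ne\Delta$; but $T$ has $v$ as a triangle vertex, so $T$ and $\Delta$ are two distinct tiles of type $\iV-$, $\iV-$, $\iV-$ sharing the triangle vertex $v$, contradicting the hypothesis. (The third side of $\Delta$ is opposite $v_i$, so the only $\iV$-type edges incident to a given $v_i$ that are sides of $\Delta$ are its two $\iV-$ sides at that $v_i$.)

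Granting this, the description of the yellow cluster is immediate: $v_1,v_2,v_3$ are pairwise $\iV$-connected through the three $\iV-$ sides of $\Delta$, while the previous step shows that no $\iV$-type edge leaves the set $\{v_1,v_2,v_3\}$; hence the $\iV$-connected component of $v_1$ is exactly $\{v_1,v_2,v_3\}$, the set of corners of the equilateral triangle $\Delta$, i.e.\ a $\hT'2$. I do not expect a real obstacle; the one point requiring care is making sure that in the displayed contradiction the forced $\iV-$, $\iV-$, $\iV-$ tile $T$ is genuinely distinct from $\Delta$ --- which is exactly where the assumption ``$e$ not a side of $\Delta$'' is used, so that the hypothesis is truly contradicted rather than the argument being circular. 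As in the proof of \Cref{prop:Tp3}, the white/black dot alternation around a triangle vertex is not needed here: the two remarks on $\iV$-sides do all the work.
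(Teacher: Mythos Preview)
Your proof is correct and follows essentially the same approach as the paper's: assume another $\iV$-type edge meets a triangle vertex of $\Delta$, observe that its $\iV-$ side forces a second tile of type $\iV-,\iV-,\iV-$ sharing that vertex, contradicting the hypothesis. The paper's proof is the same argument in two sentences; your version simply spells out the step $T\ne\Delta$ and the deduction of the $\hT'2$ shape more explicitly.
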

\begin{proof}
If such an edge existed, it would be a $\iV+$/$\iV-$ pair, so another triangle with a $\iV-$ would be there.
Since the only triangle with a $\iV-$ is the $\iV-$, $\iV-$, $\iV-$ one, this contradicts the assumption.
\end{proof}

It follows that yellow clusters can only be triangular of type $\hT'n$ for $n=1$, $2$ and $3$, as it was the case for $\hT n$ blue hex clusters.

\medskip

Like for $\hT n$, the $\hT'n$ come in two orientations, which we also call \term{point up} and \term{point down}.
Like for $\hT n$, the orientation is linked to the orientation modulo $1/3$ of the marking of the disk at the centre of its hexagons (i.e.\ \latin{in fine} of the yellow hex of the decorated graph that this disk represents): this comes from the dot marking on \Cref{fig:triset-7h} for the $(\iV+,\iV+,\iV+)$ and the $(\iV-,\iV-,\iV-)$ pieces.
(Alternatively we can argue that on \Cref{fig:labeled-cc-2}, the $\iV$ interfaces all have ending yellow hexes in the same class modulo $1/3$ of the orientation, and this is logically equivalent to the fact that $\iV$ interfaces have an even number of edges on the cc, so link vertices of the same colour in a bipartite colouring of the blue honeycomb.)
This allows to extend the notion of orientation to $\hT'1$ clusters too.

The $\iI$-type interfaces change the orientation class modulo $1/3$: to prove it one can adapt any of the three arguments given for $\iV$ above. It follows in particular that: a cluster is not adjacent to itself.

We will characterize better the adjacency of yellow clusters and prove it mimics a regular triangle tiling, like the blue cluster did.

\subsection{First properties of the  triangular tileset}

Below we change the information on the tiles and we reorder for even faster visual lookup: 

\nopagebreak

\image{}{fig:triset}{
\includegraphics[scale=0.55]{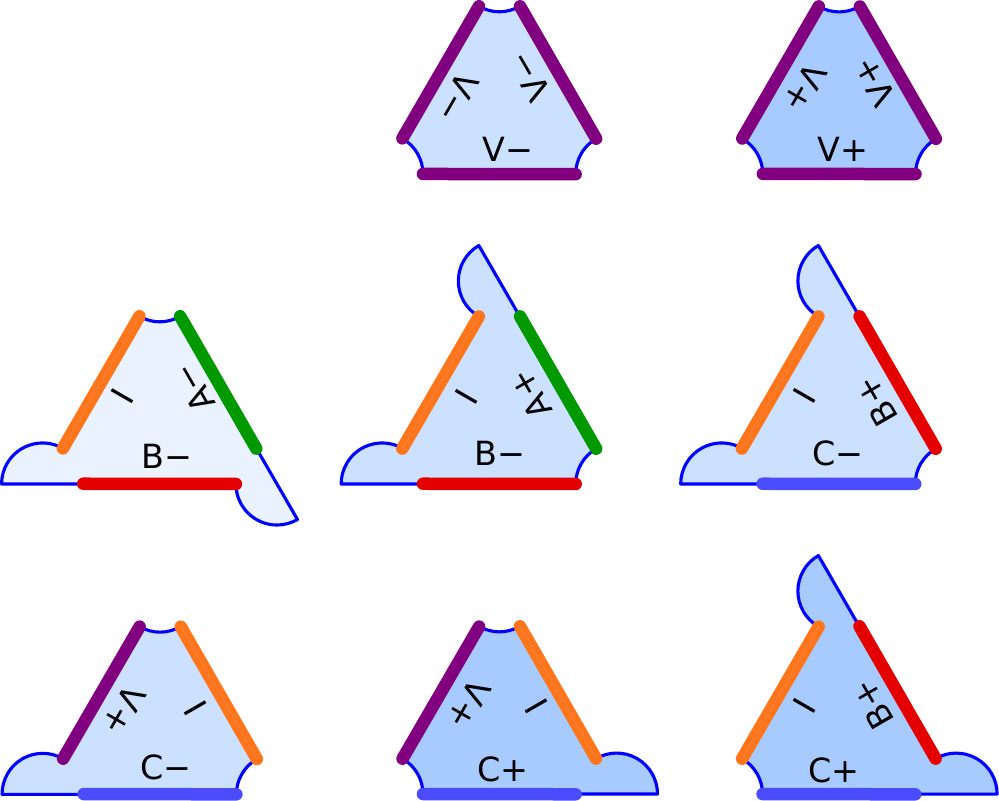}
}

In this case the local vertex arrangements look like this:

\nopagebreak

\image{}{fig:vlist}{
\includegraphics[scale=0.36]{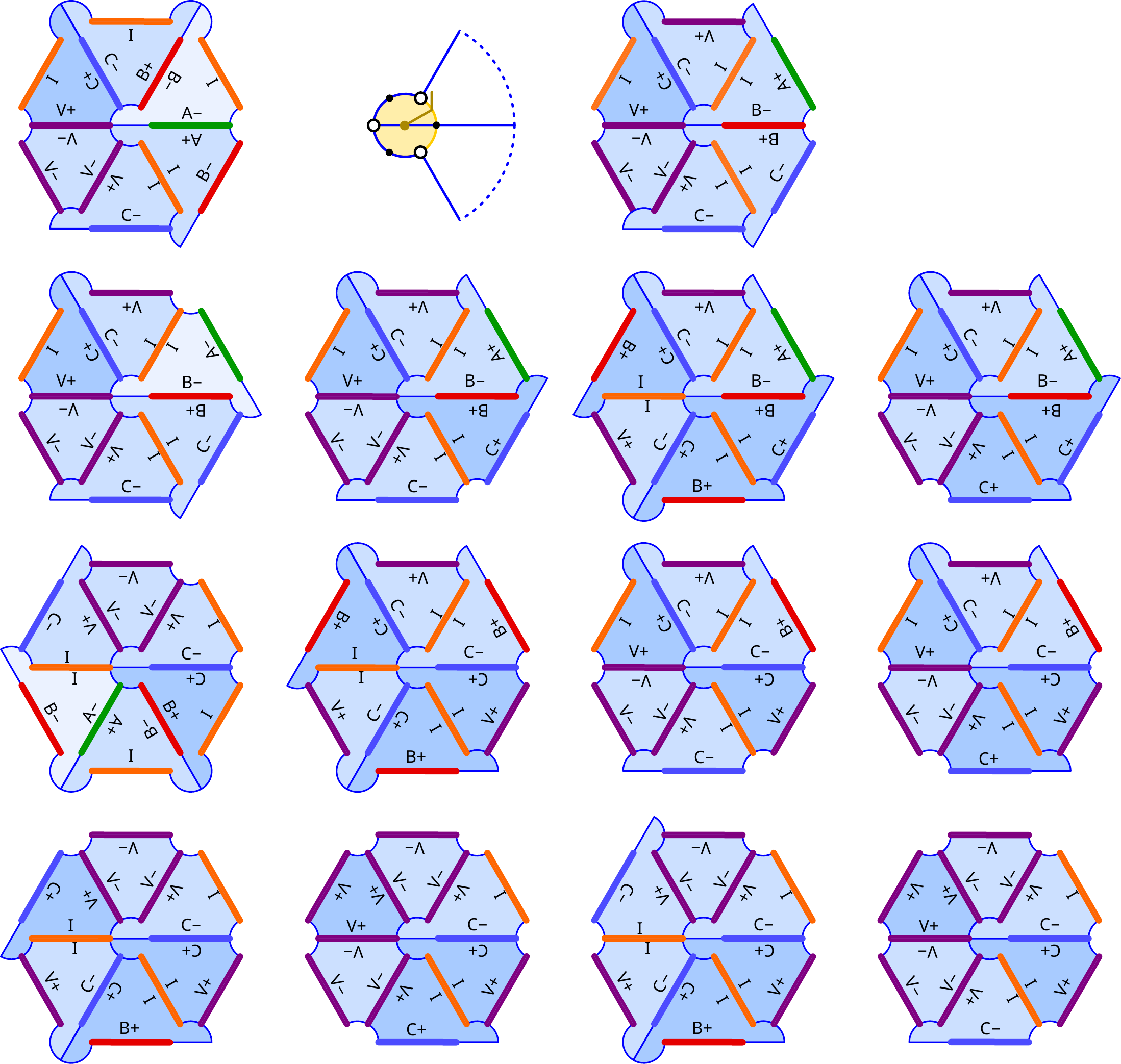}
}

\begin{remark}
In the sequel we are going to make a lot of deductions from those triangle tilings. Several of these may be analogue to those made earlier with Spectre tiles up to \Cref{sub:cc-list} (some the proofs are in \Cref{sec:later}) and there may be a way to unify them by starting at a lower level, see \Cref{ss:pts}.
\end{remark}

\begin{proposition}\label{prop:around-tri}
The only possible arrangement around the unique T1 triangle, i.e.\ the one marked ($\iB-$,$\iA-$, $\iI$), is as follows:

\nopagebreak

\image{}{fig:N1}{
\includegraphics[scale=0.5]{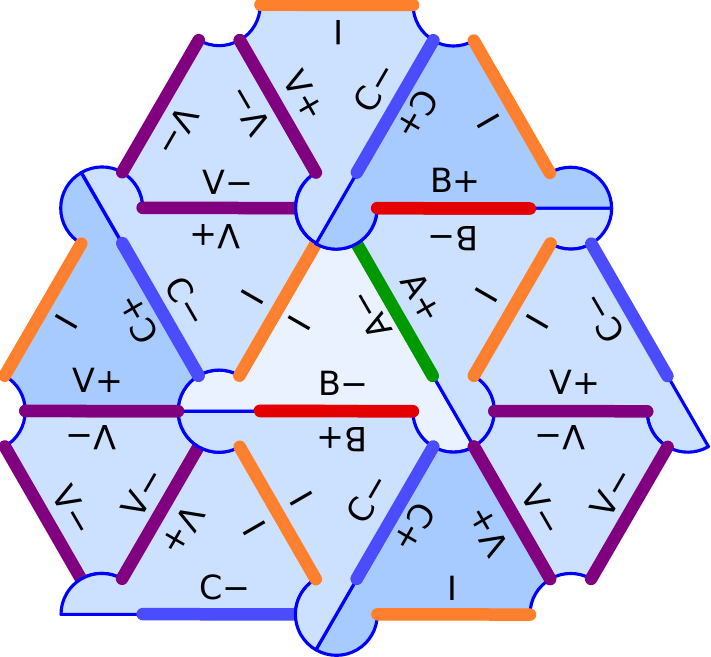}
}
\end{proposition}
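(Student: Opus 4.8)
The statement concerns the tileset of \Cref{fig:triset} (equivalently \Cref{fig:triset-7h} after the relabelling), and asks us to pin down the entire neighbourhood of the unique $\hT1$ triangle, the one carrying the edge labels $(\iB-,\iA-,\iI)$. The natural approach is a forced-placement argument: fill the three triangles sharing an edge with the central one, then the three filling the ``corner gaps'', using (i) the constraint that an interface of a given sign can only meet one of the opposite sign, (ii) the white/black dot alternation around each triangle vertex, and (iii) the yellow-edge rule that every triangle vertex touches exactly one yellow segment, together with the complete vertex-neighbourhood list \Cref{fig:vlist} (resp.\ \Cref{fig:vertex-list}).

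\textbf{Key steps, in order.} First I would place the triangle adjacent across the $\iI$ edge: since $\iI$ fits only with $\iI$, the neighbour must be one of the tiles having an $\iI$ side, and the dot-alternation plus the single-yellow-segment rule at the two shared triangle vertices should leave only one choice (the other $\iI$-bearing tiles would force a second yellow segment at a vertex that already has one, or violate dot parity). Second, across the $\iA-$ edge: there is a \emph{unique} tile with an $\iA-$ side (the pink-highlighted $\iA$ interface pairs $\iA+$ with $\iA-$), so this neighbour is immediately determined. Third, across the $\iB-$ edge: similarly only the $\iB+$-bearing tiles qualify, and again dot-alternation at the shared vertices together with the vertex list should isolate a single admissible tile. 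At this point three triangle vertices of the $\hT1$ each already see several placed triangles; I would then go vertex by vertex, consulting \Cref{fig:vlist}: at each such vertex the partial fan of triangles, the already-used-up yellow segment, and the forced dot colours determine the remaining triangle(s) uniquely, giving the three ``corner'' triangles of \Cref{fig:N1}. Finally I would check consistency: the three newly forced corner triangles must themselves match along their shared edges and not create a doubly-yellow vertex, which holds because the configuration in \Cref{fig:N1} is exhibited and one only needs to verify no alternative branch survived — i.e., that at every choice point above the eliminated options indeed fell afoul of one of the three rules.

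\textbf{Main obstacle.} The delicate part is not any single deduction but the \emph{bookkeeping}: making sure that at each of the three edges and then each of the three vertices, \emph{all} competing tiles are genuinely excluded, rather than merely the obvious ones. In particular the $\iB-$ edge and the vertex fans are the places where more than one tile superficially fits the interface label, so the elimination must lean carefully on the dot-parity constraint and on the ``exactly one yellow segment per vertex'' rule (using that the $\hT1$ triangle's own edges $\iA-$, $\iB-$, $\iI$ already contribute yellow markings at its corners, as recorded in \Cref{fig:triset-1c}). I expect the cleanest writeup to proceed purely by annotated figures, as in the analogous \Cref{prop:T1-cc}: each frame shows the forced tile and the red-circled interface or vertex that kills every alternative, so that the reader can verify the exhaustiveness of the case split by inspection of \Cref{fig:triset} and \Cref{fig:vlist}.
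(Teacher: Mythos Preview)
Your plan has the right ingredients and overlaps with the paper's own proof, which in fact offers two arguments: a one-line ``this follows from \Cref{fig:vlist}'' and then a detailed sequential placement. Your proposal is a hybrid of these. Two points are worth flagging, one about scope and one about order.

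On scope: the arrangement in \Cref{fig:N1} is not just the three edge-neighbours plus three ``corner'' triangles; it is the full star around the $\hT1$ triangle, roughly a dozen pieces. The paper's sequential argument places about ten tiles in turn, so your plan as written underestimates what has to be pinned down.

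On order: the paper begins at the $\iA-$ side precisely because there is a \emph{unique} tile carrying $\iA+$, so that neighbour is forced with no further reasoning; it then walks clockwise, each new tile being forced by the accumulated half-disk and interface constraints of the already-placed ones. Your proposed order---$\iI$ first, then $\iA-$, then $\iB-$---makes the first and third steps genuinely harder. Several tiles carry an $\iI$ side, and several carry $\iB+$; indeed the uniqueness of $\iB$-pairings is only established \emph{after} this proposition, via \Cref{lem:no-2nd}. Your claim that dot-parity plus the single-yellow-segment rule, applied with only the central tile in place, already singles out the $\iI$- and $\iB$-neighbours is not justified: with four of the six triangles at each shared vertex still unknown, the ``exactly one yellow segment'' constraint has no bite. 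The paper sidesteps this by postponing the $\iB-$ neighbour until eight surrounding tiles are fixed, at which point a half-disk hole that would otherwise go unfilled forces the choice of $(\iB+,\iI,\iC+)$. This is a tactical rather than a strategic gap---since you intend to invoke \Cref{fig:vlist} anyway, the argument survives---but if you want a self-contained direct proof, start from the $\iA+$ neighbour and walk clockwise as the paper does. (Minor slip: you write ``unique tile with an $\iA-$ side'' where you mean $\iA+$.)
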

\begin{proof}
This follows from \Cref{fig:vlist}. Alternatively we can proceed as follows.

Starting from the $\iA-$ and going around the neighbouring tiles in clockwise order, we see that for the first 5 neighbours, there is each time only one tile that can fit the previously placed tiles: for the first one this is because there is only one triangle with an $\iA+$. For the next, because there is only one tile with an I interface that has both ends empty of half disks (any half disk would overlap with the already present ones). Etc.

\image{}{}{
\includegraphics[scale=0.5]{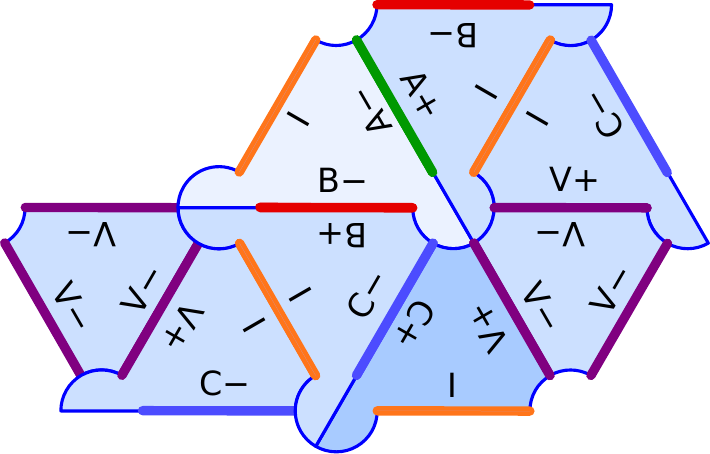}
}

The next tile should have a $\iV+$ interface with a free end on its right (left and right ends of an interface are determined by rotating the triangle so that the interface is at its bottom).
There are only two possibilities. One of them is $\iV+$, $\iV+$, $\iV+$, but it implies the presence of a $\iV-$, $\iV-$, $\iV-$ piece on its right, creating a contact between an $\iV-$ and the $\iI$ of the initial triangle.
So we get the other one, $\iV+$, $\iC+$, $\iI$.
The next two tiles, still in the clockwise order, are uniquely determined by the interfaces too.

\image{}{}{
\includegraphics[scale=0.5]{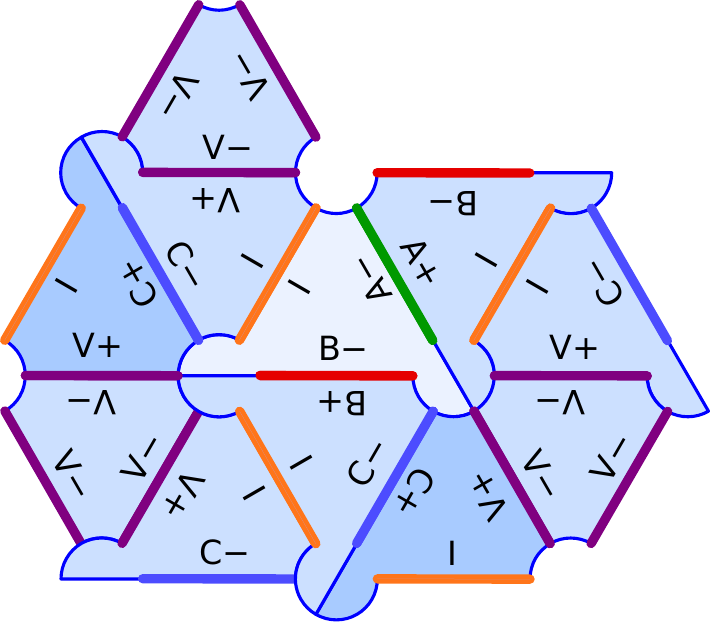}
}

Then on the free $\iB-$, we need a tile with a half-circle on the right of the corresponding $\iB+$, for otherwise the hole would stay empty. There is only one such triangle: $\iB+$, $\iI$, $\iC+$, and then the last one is determined.
\end{proof}

\begin{corollary}\label{cor:T1-tto}
When the first configuration of \Cref{fig:labeled-cc-2} appears in a whole plane tiling, the orientations of the markings of all yellow hexagons are determined:

\image{}{fig:T1-b}{
\includegraphics[scale=0.75]{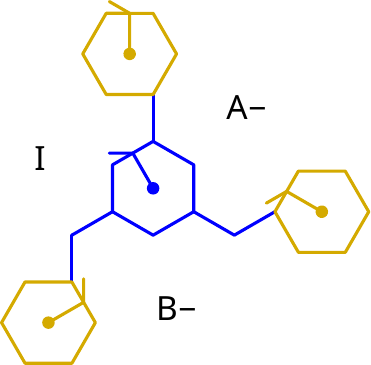}
}
\end{corollary}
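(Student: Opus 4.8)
The plan is to deduce this directly from \Cref{prop:around-tri}, of which it is essentially a reformulation. First I would match up the two pictures: the first configuration of \Cref{fig:labeled-cc-2} is the cc of the unique $\hT1$ cluster (cf.\ \Cref{prop:T1-cc}), and under the equivalence between whole plane tilings by the Spectre and whole plane tilings by the marked triangles of \Cref{fig:triset-7h} (\Cref{thm:pieces-ne} together with \Cref{thm:pieces-su}) this cc corresponds to the unique $\hT1$ triangle, namely the one marked $(\iB-,\iA-,\iI)$. \Cref{prop:around-tri} then says that the triangles around it are forced, as displayed in \Cref{fig:N1}.

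Next I would translate the forced patch back into information about yellow hexagons. In \Cref{fig:triset-7h} each triangle vertex records the yellow hexagon it represents: the white and black dots fix the orientation of its marking modulo $1/3$, while the full orientation of the marking --- i.e.\ which side of the hexagon carries the paired yellow rhomb --- is recovered from the unique $\iA$-, $\iB$- or $\iC$-type interface incident to that vertex, by \Cref{prop:cts}. The step to carry out is then to read off \Cref{fig:N1}, for each of the three yellow hexagons touched by the $\hT1$ cc, the dots and the incident $\iA$/$\iB$/$\iC$ interface that the forced patch provides, and to conclude that the marking of each of these three yellow hexagons is thereby determined.

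Finally I would transport these determined markings through the dictionary of \Cref{fig:triset-7h} and \Cref{fig:labeled-cc-2} and check that the outcome is precisely \Cref{fig:T1-b}. I expect this last step to be the only real obstacle: it is pure bookkeeping, but one must keep careful track of the rotations by multiples of $1/6$ and of the left/right conventions for the ends of interfaces when placing the forced dots and interface labels back onto the three yellow hexagons of the cc. Everything substantive is already contained in \Cref{prop:around-tri}.
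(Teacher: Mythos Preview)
Your proposal is correct and is exactly the intended argument: the paper states the corollary immediately after \Cref{prop:around-tri} with no separate proof, so the whole content is the translation you describe---identify the first configuration of \Cref{fig:labeled-cc-2} with the $(\iB-,\iA-,\iI)$ triangle, invoke \Cref{prop:around-tri} to force its neighbours, and read off the yellow hex markings from the forced vertex data. The only minor simplification is that you need not go through \Cref{prop:cts}: in the tileset of \Cref{fig:triset-7h} the disk markings at each triangle vertex already encode the full orientation of the yellow hex (see the caption of \Cref{fig:vertex-list}), so once \Cref{fig:N1} is established the markings can be read off directly.
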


Looking only at the type of interfaces, we realize that collapsing the $\iV$ type edges in the previous diagram leads to a hexagon of dots related by $\iI$-type edges:

\nopagebreak

\image{Dotted lines are $\iI$-type interfaces, violet ones are $\iV$-type.}{fig:8-6}{
\includegraphics[scale=0.3]{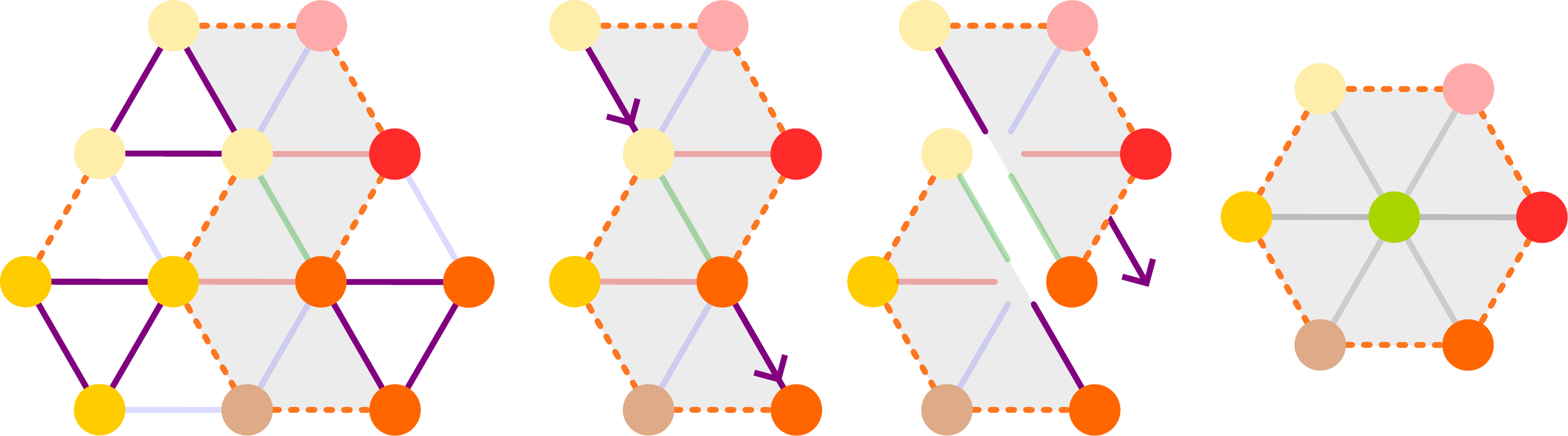}
}

These will play the next generation of hexagons after yellow and blue.
Among the 6 greyed out tiles, the 4 ones having a B interface (red segment) form a parallelogram with 6 triangle vertices, one for each corner of the big hexagon.

\begin{lemma}\label{lem:no-2nd}
The following configuration does not appear:

\nopagebreak

\image{}{}{
\includegraphics[scale=0.5]{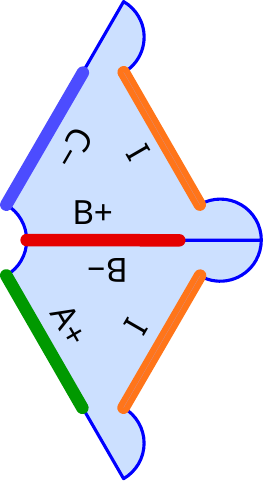}
}
\end{lemma}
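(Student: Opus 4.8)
The plan is to argue by contradiction exactly in the style of \Cref{prop:around-tri} and of the preceding proposition on the $(\iB+,\iB+,\iB+)$ triangle. Assume the configuration of the statement occurs inside a whole plane tiling by the pieces of \Cref{fig:triset}, read off which interface labels and which black/white dots are already exposed along its boundary, and then propagate forced tiles outward using only two constraints: each free interface can be completed by an essentially unique neighbour because of the sign rule ($\iA+$ pairs with $\iA-$, $\iB+$ with $\iB-$, $\iC+$ with $\iC-$, $\iV+$ with $\iV-$, $\iI$ with $\iI$), and around every triangle vertex the black and white dots must alternate, which kills any candidate neighbour that would place a half-disk on an already occupied dot. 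The vertex-arrangement list \Cref{fig:vlist} is the bookkeeping device: vertex by vertex it tells us which completions survive.

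Concretely I would first mark the exposed boundary of the given patch together with the dots it already carries, then travel around it, say clockwise, filling one tile at a time. At each step either the interface sign forces the tile outright, or the dot-alternation rule forbids all but one of the tiles matching the exposed label — this is precisely the mechanism used in \Cref{prop:around-tri}, where at one step ``there is only one tile with an $\iI$ interface that has both ends empty of half disks''. I expect the propagation to run for several tiles and then to close off, either with a triangle-vertex hole that no remaining piece can fill, or with a forced pair of adjacent interfaces of equal sign, or with a vertex receiving two dots of the same colour; any of these is the sought contradiction. Wherever the chain of forced moves forks, I would carry both branches until each closes (as in the $(\iB+,\iB+,\iB+)$ argument, the branching is shallow here).

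The main obstacle is that the contradiction is \emph{not} local: as the text warns for the analogous Spectre-level statements (compare \Cref{prop:impo-2}, whose proof ``forces the following sequence by repeated application'' before a non-extendable part surfaces), one must not stop the propagation too early, and one must check that every step really is forced. A recurring subtlety in this propagation is that any tile carrying a $\iV\pm$ edge immediately drags in a $(\iV-,\iV-,\iV-)$ or $(\iV+,\iV+,\iV+)$ neighbour by the reasoning of \Cref{prop:Tp3,prop:Tp2}, and it is usually this imported triangle that collides with the original patch. A secondary nuisance is tracking the white/black dots through rotations by multiples of $1/6$; keeping the orientation conventions of \Cref{fig:triset-7h} straight is what makes the alternation argument airtight.
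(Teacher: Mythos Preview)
Your plan would work --- the paper even remarks that the claim ``can also be directly checked by trying to fill this vertex neighbourhood'' --- but it is the long road. The paper's own argument is a two-line counting trick using \Cref{fig:vlist} directly, and you are missing it.

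The observation is this: the configuration in question is a particular internal edge (a pair of adjacent triangles) that, by inspection of the vertex-arrangement list \Cref{fig:vlist}, occurs \emph{exactly once} among all legal vertex neighbourhoods, namely inside the second arrangement, and there it sits on one specific side of the central vertex. Now suppose this configuration appears somewhere in a whole-plane tiling. Look at the \emph{other} endpoint of the shared edge. That endpoint is also a triangle vertex and must itself be surrounded by one of the arrangements of \Cref{fig:vlist}; in particular, the same two-triangle configuration must appear as an internal segment of \emph{that} arrangement too, but now seen from the opposite side. Since the segment occurs only once in the whole list, there is no arrangement available to complete the second vertex. Contradiction.

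So instead of propagating outward tile by tile and waiting for a collision, the paper simply exploits that \Cref{fig:vlist} is already exhaustive: an edge of a whole-plane tiling must be internal to a valid arrangement at \emph{both} of its endpoints, and a segment appearing only once in the list cannot satisfy this. Your propagation argument is essentially re-deriving a small piece of \Cref{fig:vlist} by hand; it buys nothing over the list you already have, and it is considerably more error-prone (as you yourself note, tracking dot colours through rotations is a nuisance). The paper's approach also makes the corollary immediate: since the forbidden segment lived only in the second arrangement of \Cref{fig:vlist}, that arrangement is itself excluded, and the surviving $\iB\pm$ pairings drop out at once.
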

\begin{proof}
It appears only once as an internal segment of the list of \Cref{fig:vlist} (the second configuration), upside-down. As a consequence, its left vertex (right vertex in \Cref{fig:vlist}) circular hole cannot be filled for the configuration would appear elsewhere in the list.
This can also be directly checked by trying to fill this vertex neighbourhood.
\end{proof}

This excludes the second configuration in \Cref{fig:vlist}.
Also, given that on \Cref{fig:triset}, the third and eight tile cannot match their $\iB$ interface because of two half circles would overlap, the previous lemma forces the pairing between the $\iB$-type interfaces: on \Cref{fig:triset}, tile 3 and tile 5 on one hand, tiles 4 and 8 on the other hand.

\begin{corollary}\label{cor:AB-into-green}
Any $\iA$ or $\iB$ interface extends into \Cref{fig:N1}.
\end{corollary}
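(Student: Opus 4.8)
The plan is to treat $\iA$ interfaces and $\iB$ interfaces separately, and in each case to force the appearance of the unique $\hT1$ triangle $(\iB-,\iA-,\iI)$ in a prescribed position, so that \Cref{prop:around-tri} finishes the job by supplying the whole of \Cref{fig:N1}.

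First I would dispose of the $\iA$ interfaces. Reading off the list \Cref{fig:labeled-cc-2} (equivalently \Cref{fig:triset}), there is exactly one triangle carrying an $\iA-$ side, namely the $\hT1$ triangle, and exactly one carrying an $\iA+$ side (this last uniqueness was already used in the proof of \Cref{prop:around-tri}). Since an $\iA$ interface always glues an $\iA+$ side to an $\iA-$ side, in a whole plane tiling such an interface is necessarily that single gluing, with the $\hT1$ triangle on one of its sides. By \Cref{prop:around-tri} the neighbourhood of that $\hT1$ triangle is a rotated/translated copy of \Cref{fig:N1}, and the $\iA+$ tile sitting across the $\iA$ edge is one of the tiles of that copy; hence the $\iA$ interface lies inside \Cref{fig:N1}.

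Next, the $\iB$ interfaces. By the remark recorded just after \Cref{lem:no-2nd} --- the pair of half-disks that would overlap, forcing the $\iB$-pairing --- a $\iB$ interface in a whole plane tiling is, up to rotation, one of exactly two gluings (in the numbering of \Cref{fig:triset}, tile $3$ with tile $5$, or tile $4$ with tile $8$). The $\hT1$ triangle carries a $\iB-$ side, hence participates in one of these two forced pairs; for a $\iB$ interface of that type one argues exactly as in the $\iA$ case, invoking \Cref{prop:around-tri}. For a $\iB$ interface of the other type, I would re-run the propagation argument of \Cref{prop:around-tri}: starting from the given interface and filling in forced tiles one at a time around it, each new tile being determined by the label of the free side just created together with the overlap and white/black-dot constraints visible in \Cref{fig:vlist}, until a $\hT1$ triangle --- and therefore all of \Cref{fig:N1} --- is reached. (Equivalently, one checks directly that \Cref{fig:N1} already contains a $\iB$ interface of each of the two types, among the four $\iB$-bearing tiles that form the parallelogram noted after \Cref{fig:8-6}, and that a $\iB$ interface determines its surroundings there.)

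The hard part is precisely this last propagation, for the type of $\iB$ interface neither of whose two incident triangles is the $\hT1$ triangle. It is a finite case analysis of exactly the same flavour as the proof of \Cref{prop:around-tri} (at each step there is a unique triangle with a prescribed labelled side and without half-disk collisions), and in practice the cleanest route is simply to read it off \Cref{fig:N1} together with \Cref{fig:vlist}.
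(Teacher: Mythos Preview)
Your argument is correct and matches the paper's intended (unwritten) proof: the corollary is stated immediately after the forced $\iB$ pairings, and the reader is expected to combine those with \Cref{prop:around-tri} exactly as you do. One remark collapses your ``hard'' $\iB$ case to a single step of propagation: reading off \Cref{fig:triset} (or \Cref{fig:labeled-cc-2}), every triangle \emph{without} a $\iB$ side carries only labels from $\{\iI,\iV\pm,\iC\pm\}$, so the unique $\iA+$ tile must itself have a $\iB$ side; since the tile $\iB$-paired with the $\hT1$ triangle is $(\iB+,\iI,\iC+)$ (end of the proof of \Cref{prop:around-tri}) and has no $\iA$, the $\iA+$ tile necessarily sits in the \emph{other} $\iB$ pairing, and its $\iA+$ side forces the $\hT1$ triangle adjacent, after which \Cref{prop:around-tri} supplies all of \Cref{fig:N1}.
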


\subsection{Packed tilesets}

\subsubsection{Packs}

Thanks to \Cref{cor:AB-into-green}, we can pack tiles into a smaller tileset and use a simpler notation as follows:

\nopagebreak

\image{}{fig:pack}{
\makebox[\textwidth][c]{\includegraphics[scale=0.5]{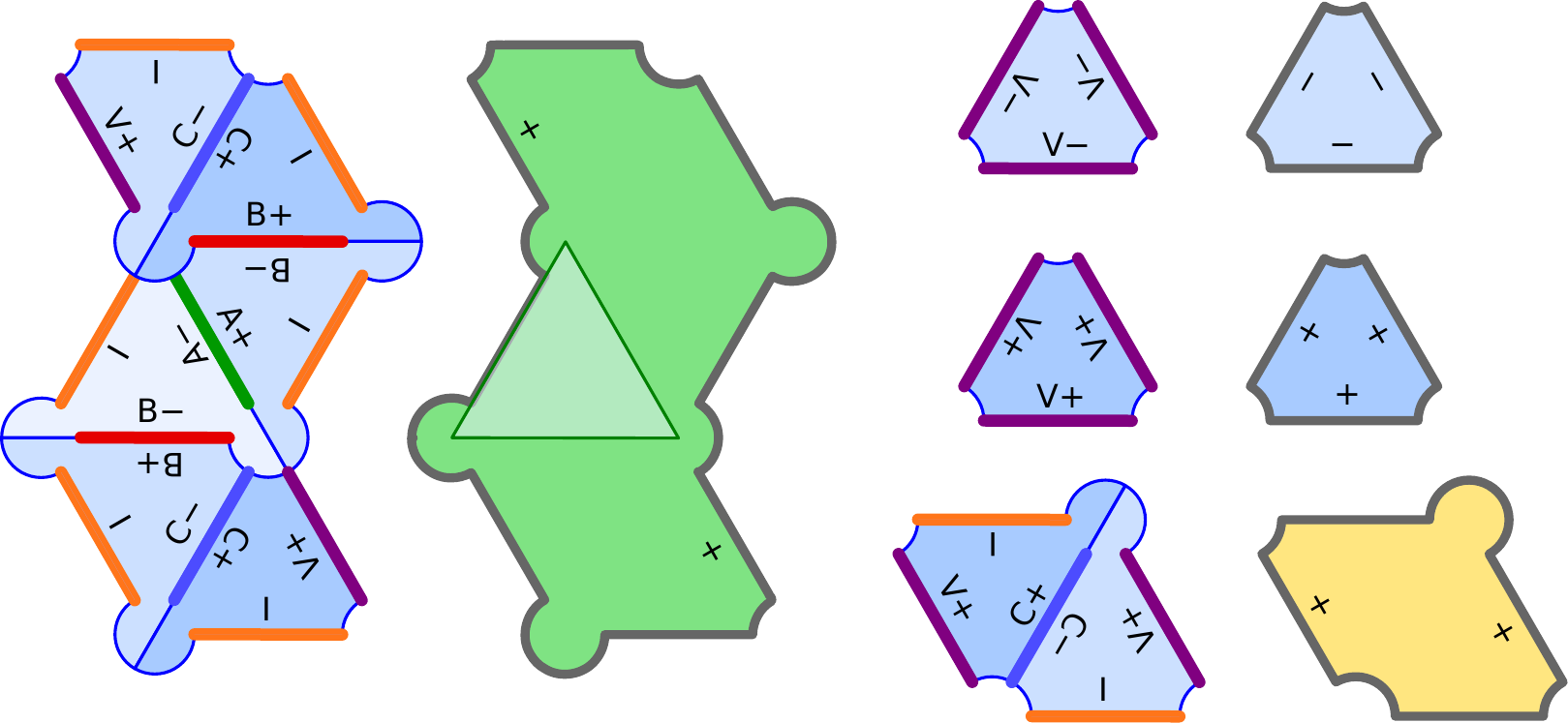}}
}

\begin{theorem}\label{thm:pack}
Whole plane tilings with those pieces, respecting that $+$ and $-$ must go together, are equivalent to whole plane tilings with any of the triangle tilesets of \Cref{fig:triset,fig:triset-7h,fig:tri-6b}, so to whole plane Spectre tilings.
This corresponds to cutting the plane along the $\iV$ (violet) and $\iI$ (orange) type edges.
\end{theorem}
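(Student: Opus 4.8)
The plan is to prove the equivalence in two directions, using the earlier equivalences as a black box. The previous sections establish (via \Cref{thm:pieces-ne,thm:pieces-su}, \Cref{cor:AB-into-green} and the material leading to \Cref{fig:pack}) that Spectre tilings are combinatorially equivalent to tilings with the triangle tilesets of \Cref{fig:triset,fig:triset-7h,fig:tri-6b}, so it suffices to show that the packed pieces of \Cref{fig:pack} produce exactly the same whole-plane tilings as, say, the tileset of \Cref{fig:triset-7h}, with the correspondence being: a pack is a maximal group of triangles glued along the interfaces \emph{other than} $\iV$ and $\iI$, i.e.\ along $\iA$, $\iB$, $\iC$ interfaces. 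First I would make precise what \Cref{cor:AB-into-green} gives us: every $\iA$-type and every $\iB$-type interface, wherever it occurs in a whole-plane tiling with the triangles of \Cref{fig:triset}, is forced to sit inside a copy of the configuration of \Cref{fig:N1}. Examining \Cref{fig:N1} (and the packing recipe displayed just before \Cref{fig:pack}), one reads off that the $\iA$ and $\iB$ interfaces inside it are glued in a fixed way, and that what remains free on the boundary of such a group are exactly $\iV$-type and $\iI$-type edges. This is precisely the assertion that cutting a triangle tiling along its $\iV$ and $\iI$ edges yields a tiling by the pieces of \Cref{fig:pack}.

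Next I would argue the forward direction. Take a whole-plane tiling $\mathcal T$ by the triangles of \Cref{fig:triset-7h} (equivalently \Cref{fig:triset}). Cut along every $\iV$- and $\iI$-type edge. Each resulting connected region is a union of triangles glued only along $\iA$, $\iB$, $\iC$ interfaces; by \Cref{cor:AB-into-green} and the discussion above, every such gluing is the interior of a copy of \Cref{fig:N1}, and hence each region is exactly one of the packs listed in \Cref{fig:pack} (one must check the list is complete: this is a finite local check, enumerating which triangles can carry a given $\iA/\iB/\iC$ interface and iterating \Cref{cor:AB-into-green} — the packs of \Cref{fig:pack} are by construction the closures of these components). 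The boundary of a pack consists only of $\iV$- and $\iI$-type edges, each carrying a $+$ or a $-$ or (for $\iI$) no sign, inherited from $\mathcal T$; since in $\mathcal T$ every $\iV+$ meets a $\iV-$, every $\iI$ meets an $\iI$, and orientations of white/black dots are respected, the packs tile the plane respecting the $+/-$ matching rule. Conversely, given any whole-plane tiling by the packed pieces of \Cref{fig:pack} respecting the sign rule, replace each pack by the corresponding group of triangles it was built from: the $\iA$, $\iB$, $\iC$ interfaces internal to each pack are glued correctly by construction, and the external $\iV$- and $\iI$-type interfaces match because the packs matched. One then checks the local vertex conditions (\Cref{fig:vertex-list}, and the dot-alternation condition of \Cref{fig:triset-7h}) are automatically satisfied — inside a pack this is inherited from \Cref{fig:N1}, and along pack boundaries it follows from the sign-matching of the $\iV$ and $\iI$ edges, which is exactly what forces dots to alternate. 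This produces a whole-plane triangle tiling, which by \Cref{thm:pieces-ne,thm:pieces-su} corresponds to a Spectre tiling.

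The main obstacle — really the only nontrivial point — is the completeness of the list in \Cref{fig:pack}: one must be sure that, when cutting along $\iV$ and $\iI$ edges, \emph{no} other connected component shapes can occur. This is where \Cref{cor:AB-into-green} does the heavy lifting: since any $\iA$ or $\iB$ interface drags in all of \Cref{fig:N1}, a component containing such an interface is already a big chunk of \Cref{fig:N1}, and one only has to see that this chunk cannot be extended further by $\iA$, $\iB$, $\iC$ interfaces without either closing up into one of the listed packs or forcing a $\iV$/$\iI$ cut. Components with only $\iC$ interfaces (and no $\iA$, $\iB$) must also be enumerated, but there are few triangles carrying a $\iC$ interface, so this is a short finite check. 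I would present this enumeration as a figure with a one-line justification per case rather than spelling each out, since it is purely mechanical once \Cref{cor:AB-into-green} is in hand. Everything else — that boundary edges are only $\iV$ or $\iI$ type, that matching is preserved both ways, that vertex conditions transfer — is routine bookkeeping of the kind already done repeatedly in the preceding sections.
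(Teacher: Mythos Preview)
Your proposal is correct and follows essentially the same route as the paper: both directions hinge on \Cref{cor:AB-into-green} to force any $\iA$ or $\iB$ interface into a green pack, after which the only remaining check is that triangles with no $\iB$ edge (namely the $(\iV-,\iV-,\iV-)$, $(\iV+,\iV+,\iV+)$, $(\iI,\iV+,\iC+)$ and $(\iI,\iV+,\iC-)$ types) are absorbed into one of the listed packs by looking across their $\iC$ edge. The paper carries out exactly this case split explicitly and then declares the converse immediate, whereas you spend a bit more care on the converse vertex conditions; both are fine, and your identification of the ``completeness of the pack list'' as the only nontrivial point matches the paper's emphasis.
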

\begin{proof}
First note that no two distinct pack can intersect since their boundary is only made of $\iV$ and $\iI$ while internally they only have $\iA$, $\iB$ or $\iC$ edges.

Let us prove that the triangle tiling is covered by packs.
We have already seen that triangles with a $\iB$-type edge must be part of a green pack.
Triangles that have no $\iB$-type edge and which are not of type ($\iV-$, $\iV-$, $\iV-$) or ($\iV+$, $\iV+$, $\iV+$), are of two types: ($\iI$, $\iV+$, $\iC+$) and ($\iI$, $\iV+$, $\iC-$).
Consider a triangle $t$ of of these two types and the triangle that is in contact with it through the $\iC$-type edge: either this triangle is the other type ($\iI$, $\iV+$, $\iC+$) or ($\iI$, $\iV+$, $\iC-$), or it has a $\iB$-type interface.
In the first case $t$ is in a yellow pack, in the second case in a green pack.

The converse statement is immediate.
\end{proof}

\medskip

We can equivalently use the following set with only 3 pieces and no markings:

\nopagebreak

\image{}{fig:3-set}{
\includegraphics[scale=0.4]{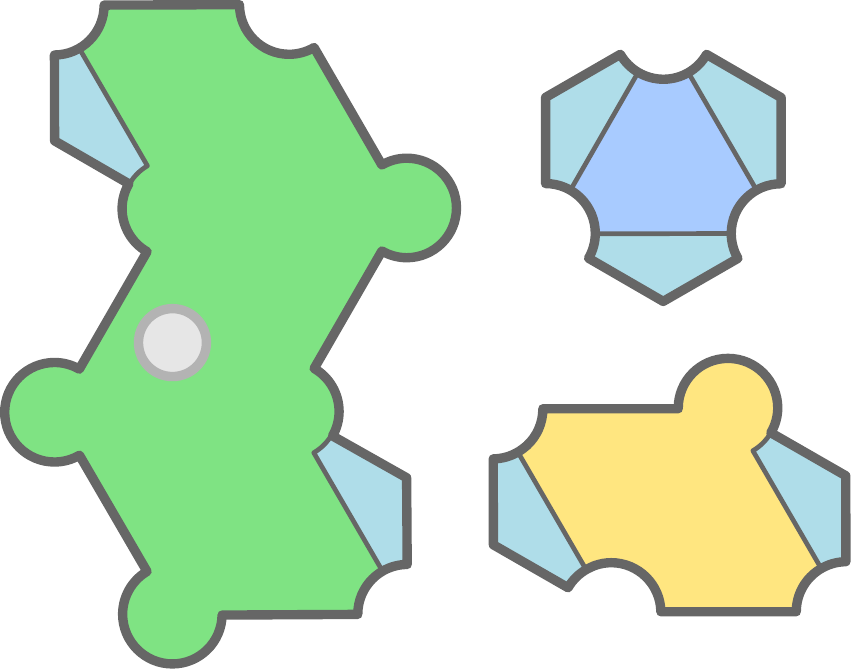}
}

\Cref{prop:around-tri} implies that the big piece is always environed as follows (the environment is actually determined further; see for instance \Cref{lem:gr-pack-env-2}):

\nopagebreak

\image{}{fig:green-pack-env}{
\includegraphics[scale=0.45]{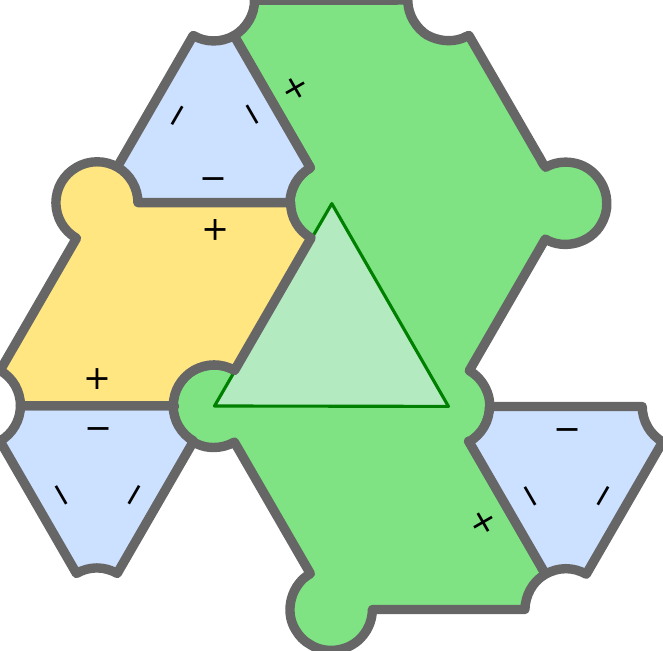}
}

At some point we will use the following decorations on the packed tileset. 

\nopagebreak

\image{}{fig:packed-deco}{
\includegraphics[scale=0.45]{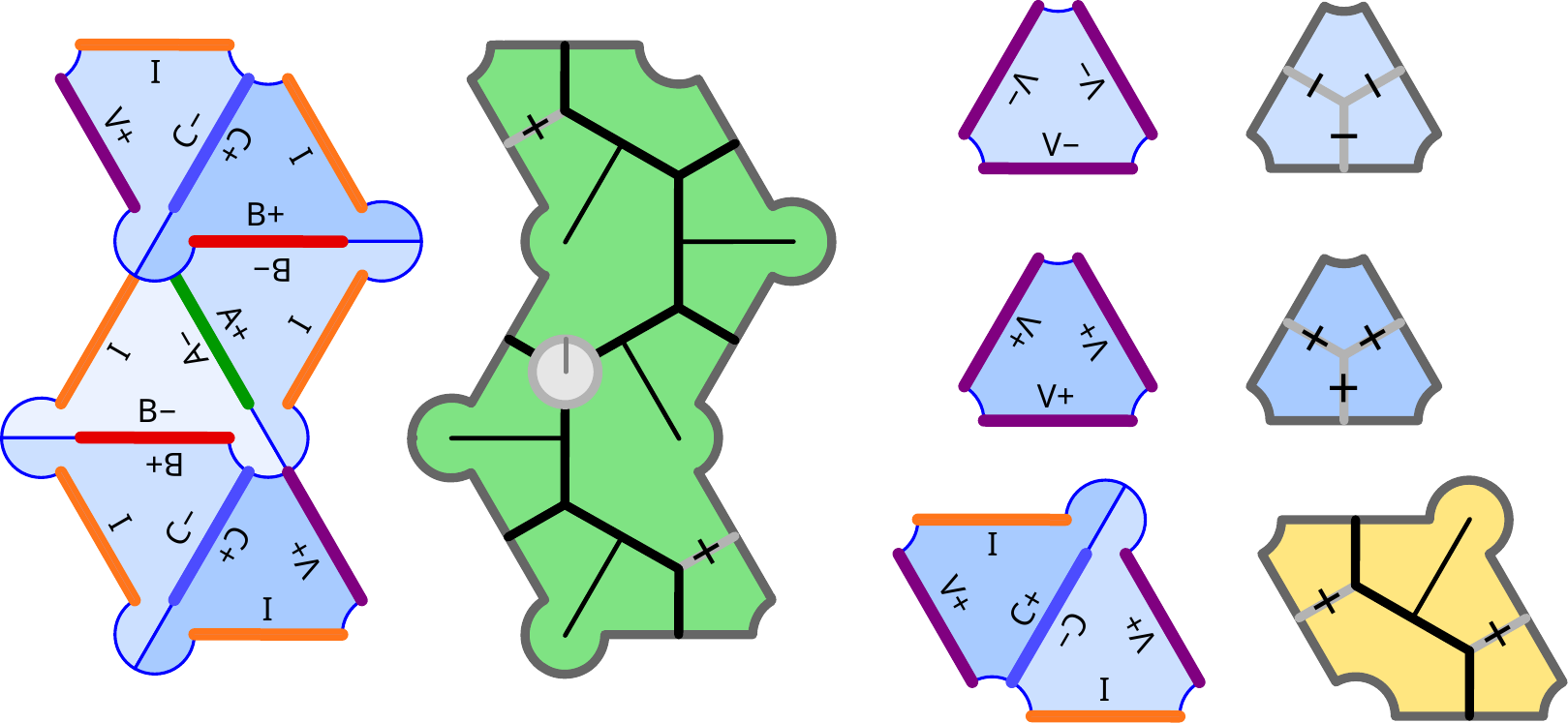}
}

The inner thick lines, in black and in gray, once the packed tiles are assembled in some way, trace hexagons that represent the yellow hexagons represented by the triangle vertices of the triangle pieces composing the packed tiles, \textbf{rotated by $1/12$ in the clockwise direction}.
The gray thick lines stem from, and are orthogonal to, the centre of the the edges of type $\iV$ of the underlying triangles. 
The black thick lines will trace the outline of what will be yellow hexagon clusters: we expect to see triangular clusters of type $1$, $2$ and $3$.

\image{Example of what a yellow pack represents in terms of hexes of the original decoration graph and Spectre tiling. The arrow shows the yellow hex corresponding to the circle. Note the rotation of 1/12 between the marking of that circle and that of the yellow hex.}{fig:wir-1}{
\includegraphics[scale=0.5]{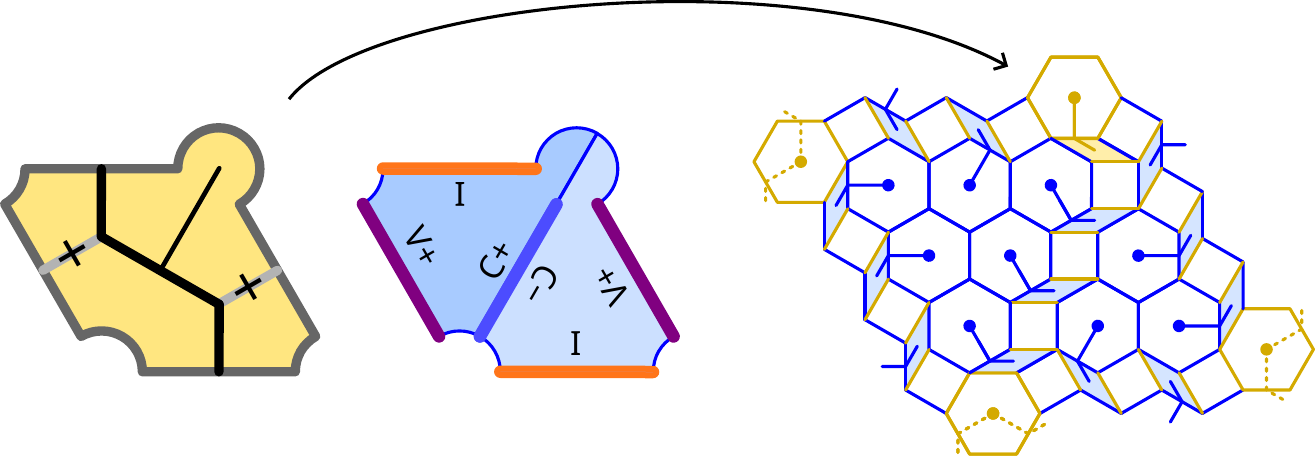}
}

The gray dot is in the centre of the triangle of type ($\iA-$, $\iI$, $\iB-$), the light blue one in \Cref{fig:triset,fig:triset-7h}, which is the only one which represents a cc with a $\hT1$ blue hex cluster (recall that the triangle pieces represent the blue cc's of the yellow/blue decoration graph, which all have a $\hT n$ blue hex cluster).

The thin black lines is then the yellow hex marking, rotated by the same amount of $1/12$ clockwise. It is also the extension of the seam between two half circles in the triangular tileset (\Cref{fig:triset-7h}).
The thin gray line on the gray dot represents the orientation of the blue hexagon marking of the corresponding blue hex $\hT1$ cluster, also rotated by $1/12$ in the clockwise direction.

\image{The gray dot represents a blue hex. Note the $1/12$ rotation between the markings.}{fig:wir-2}{
\includegraphics[scale=0.5]{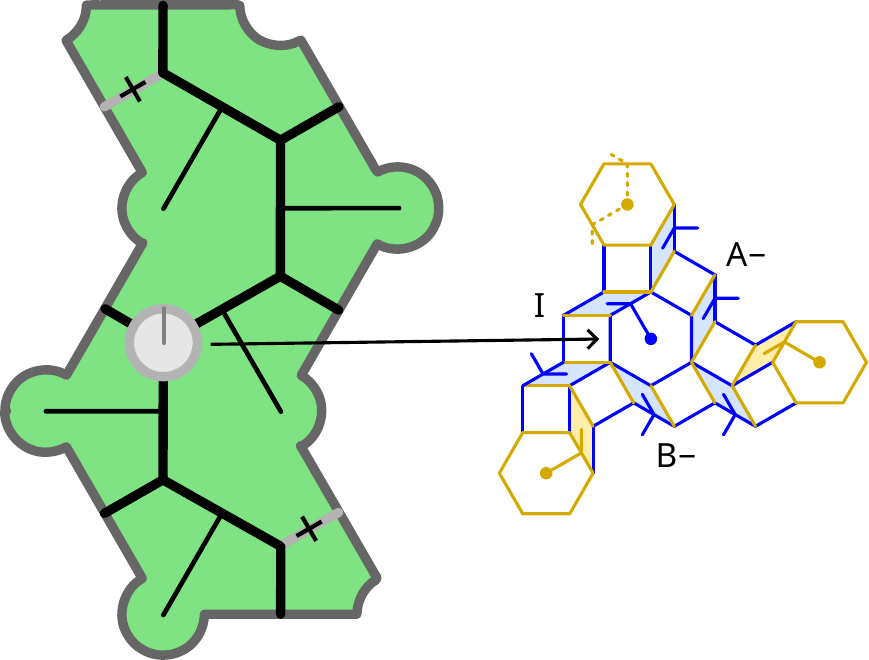}
}

We will sometimes omit the thin black and gray lines on the decorated packed tiles of \Cref{fig:packed-deco}.

\subsubsection{Adjacency and higher level honeycomb}

Let us introduce a third honeycomb into the game, which we call the \term{green honeycomb}. Since we oriented the blue and the yellow honeycombs the same way, it seems appropriate to orient the green one the same way, which is what we do in the sequel.

\image{Green honeycomb}{}{
  \includegraphics[scale=0.666]{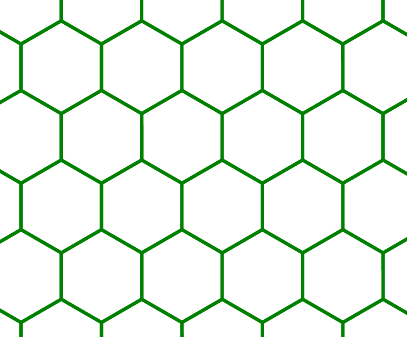}
}

Recall that we called \emph{adjacent} two yellow clusters having two yellow hexagons connected by an $\iI$-type interfaces in the underlying blue triangle tileset. This interface is supported by a blue triangle edge, and we look at the vector of this edge, directed by a choice of first and second yellow cluster.
Given a whole plane tiling by the blue triangle tileset, we will use these vectors to build a correspondence from each clusters to a \emph{vertex} of this green honeycomb.
We sum this up in the following proposition:

\begin{proposition}\label{prop:yc2gh}
There is a bijection from the set of yellow clusters to the set of vertices of the green honeycomb such that adjacent vertices correspond to adjacent yellow clusters. Moreover:
the yellow cluster points up or down exactly like the triangle of the dual tessellation (to the green honeycomb) that contains the vertex; for two adjacent yellow clusters, the vector between the corresponding vertices of the honeycomb has a direction that is rotated by 1/12 in the anticlockwise direction compared to the direction of the vector supporting the $\iI$-type interface. 
\end{proposition}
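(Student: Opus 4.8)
The plan is to run, one level higher, the very passage that produced the yellow honeycomb out of the blue hex clusters: contract the interfaces internal to a yellow cluster and recognise what is left as a regular honeycomb whose vertices are the yellow clusters, with the $\iI$-type interfaces as its edges. The first point to prove is that every yellow cluster is $\iI$-adjacent, in the sense of \Cref{def:I-adj}, to exactly three other yellow clusters. By \Cref{prop:Tp2,prop:Tp3} a yellow cluster is a $\hT'1$ (a lone triangle vertex carrying no incident $\iV$-type edge), a $\hT'2$ (a single $(\iV-,\iV-,\iV-)$ triangle) or a $\hT'3$ (the triple configuration of one $(\iV+,\iV+,\iV+)$ and three $(\iV-,\iV-,\iV-)$ triangles). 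In each of the three cases a finite inspection of the admissible triangle-vertex neighbourhoods (\Cref{fig:vertex-list}, equivalently \Cref{fig:vlist}) shows that exactly three $\iI$-type edges leave the cluster, one at each of its three tips (defined as for the blue $\hT n$); and, since an $\iI$-type edge flips the orientation class modulo $1/3$ — as already established, so that no cluster is adjacent to itself — they reach three clusters distinct from the given one. This gives a $3$-regular graph $\Gamma$ on the set of yellow clusters whose edges are the $\iI$-type interfaces.

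Next I would identify $\Gamma$ with a honeycomb by a contraction. By \Cref{thm:pack} the plane is tiled by the packs of \Cref{fig:pack}, cut precisely along the $\iV$- and $\iI$-type edges, with $\iA$-, $\iB$- and $\iC$-type edges only in their interiors; the packs are the green pack, the yellow packs (each a rhomb made of an $(\iI,\iV+,\iC+)$ and an $(\iI,\iV+,\iC-)$ triangle glued along their $\iC$ edges, hence with boundary $\iI,\iV+,\iV+,\iI$), and the single-triangle packs $(\iV+,\iV+,\iV+)$ and $(\iV-,\iV-,\iV-)$. I would then contract every $\iV$-type edge together with every yellow pack (each retracted onto one of its two $\iI$-type boundary edges); since the $\iV$-edges inside a fixed cluster form a finite tree and the trees of distinct clusters are disjoint, this is a monotone quotient of the plane onto the plane, as with the contractions of \Cref{sub:condensed}. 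Under it each yellow cluster becomes a point, each single-triangle pack is absorbed into such a point, each yellow pack becomes one edge, and each green pack — whose boundary, by \Cref{prop:around-tri} and \Cref{fig:8-6}, is made of $\iV$-edges and six $\iI$-edges — becomes a hexagonal $2$-cell. So the quotient is a $3$-regular tessellation of the plane by hexagons carrying $\Gamma$; it is therefore combinatorially the honeycomb, $\Gamma$ has no multiple edges (so in particular the three clusters reached from a given one are distinct), and two yellow clusters are adjacent iff the corresponding vertices are adjacent in it.

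Finally I would put in the metric. Send one cluster to $0\in\C$ and, inductively, place a cluster reached across an $\iI$-type interface whose supporting regular-triangle edge vector is $u$ at the previously placed point plus $\lambda e^{i\pi/6}u$, where $\lambda>0$ is the constant read off the explicit green pack of \Cref{fig:N1} (or computed as in \Cref{lem:coord}); the factor $e^{i\pi/6}$ is the $1/12$ turn demanded by the statement, and is the same $1/12$ already built into \Cref{fig:packed-deco}. Consistency then reduces to two finite checks on the fully determined configuration \Cref{fig:N1,fig:8-6}: around one triangle vertex the three outgoing $\iI$-vectors, after rotation by $e^{i\pi/6}$, make angles $1/3$ with one another; and around one green pack the six successive $\iI$-vectors, after rotation, form a closed regular hexagon oriented like the blue honeycomb. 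By connectedness of the plane these propagate, so the map of the previous step becomes an isomorphism onto a regular green honeycomb $HG$ with the standard orientation, and by construction the vector between the images of two adjacent clusters is the supporting $\iI$-edge vector turned by $1/12$ anticlockwise. For the up/down statement: the "points up / points down" type of a $\hT'n$ is fixed by the class modulo $1/3$ of the marking at the centre of its hexagons, which is flipped by each $\iI$-edge; hence the types of the six clusters around a green pack alternate, exactly as the six triangles of the dual tessellation of $HG$ incident to the six vertices of a hexagonal face alternate between point-up and point-down; one check on \Cref{fig:N1} then fixes which is which, globally.

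The step I expect to be the real obstacle is this last one: producing $\lambda$ and verifying that the $1/12$-rotated $\iI$-vectors close up into the regular green honeycomb with the correct orientation is the computational heart of the "dots trace regular triangles" observation of the introduction, and although it is only a finite check on \Cref{fig:N1}, keeping the several $1/12$'s, the rotation senses, and the vertex-colour bookkeeping all consistent is where the care goes. The first step's case analysis over \Cref{fig:vertex-list}, though routine, is also somewhat long.
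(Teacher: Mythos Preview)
Your opening count is where this slips. The claim that exactly three $\iI$-type edges leave each yellow cluster, one at each tip, is not generally true: a yellow pack (the two-triangle rhomb of \Cref{fig:pack}) carries two $\iI$-edges on its boundary, and since its two $\iV+$ sides each attach to a $(\iV-,\iV-,\iV-)$ triangle, all four of its corners sit in just two clusters and both $\iI$-edges run between that \emph{same} pair. Yellow packs do occur against $\hT'2$ and $\hT'3$ clusters (see the environment analyses of \Cref{ss:tp2,ss:tp3}), so your graph $\Gamma$ has multi-edges and $\iI$-degree typically exceeding three; the paper accordingly only asserts ``at least one $\iI$-type interface through each side of a cluster'' (\Cref{cor:ee}), never an exact count. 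Your contraction in the second paragraph is closer to a correct argument---green packs really do become hexagonal $2$-cells by \Cref{fig:8-6}---but once the spurious $3$-regularity of $\Gamma$ is removed, you still need to know that exactly three green-pack hexagons meet at every cluster vertex; a planar tessellation by hexagons alone does not force that.

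The paper supplies precisely this missing piece by a different mechanism. It proves a tips lemma: every green pack meets exactly six distinct yellow clusters and contains exactly one tip of each; dually (\Cref{cor:ee}) every yellow cluster meets exactly three green packs. That $6$--$3$ incidence identifies green packs with the hexagons and yellow clusters with the triangles of the picture in \Cref{fig:dual-2}; the global match is then checked by a simple-connectivity argument (elementary loops around hexagons, triangles and the connecting rectangles all close up consistently), and collapsing the triangles of \Cref{fig:dual-2} yields the green honeycomb together with the adjacency statement and the $1/12$ rotation. Your vertex-by-vertex inspection of \Cref{fig:vlist} is not a substitute for the tips lemma: it is the latter---not an $\iI$-edge count---that forces three green packs per cluster and hence the honeycomb structure. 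Your third paragraph on the metric is fine and matches the paper's reasoning.
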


The rest of this section is a proof of the proposition. Its mechanism reveals interesting insights and more properties of the above correspondence.

Recall that the yellow honeycomb is \emph{visible} as the thick black and gray lines on the decorated packed triangles, and as such is a tessellation of the whole plane dual to the tessellation by the regular triangles supporting the triangle pieces (in the way of the fourth image of \Cref{fig:dual}). In particular the interfaces link yellow hexagon centres to adjacent yellow hexagon centres.

First note that an $\iI$-type interface relates two yellow hexes whose orientations are necessarily in different classes modulo $1/3$ of a turn.

Recall that all yellow clusters have been proved to be triangular with 1, 3 or 6 hexagons. We define yellow cluster tips exactly as we defined blue cluster tips in \Cref{def:tips}. We define \term{sides} of a cluster as the part of their boundary between two tips.
Since tips are yellow hexagon vertices, they are at triangle centres.

\begin{lemma}
Each green pack intersect exactly 6 different yellow clusters.
Cluster tips are all in green packs.
Green packs contain exactly one tip of each of the six yellow clusters it intersects.
\end{lemma}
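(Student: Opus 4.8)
The plan is to argue entirely within the combinatorics of the packed tileset of \Cref{fig:pack,fig:packed-deco}, using the fact (\Cref{thm:pack}) that the plane is cut by the $\iV$- and $\iI$-type edges into green packs and yellow packs, so that every triangle of the underlying triangular tileset lies in exactly one pack. First I would recall that by \Cref{prop:Tp3,prop:Tp2} a $\hT'3$ cluster is exactly the six triangle vertices of the configuration made of one $(\iV+,\iV+,\iV+)$ triangle flanked by three $(\iV-,\iV-,\iV-)$ triangles, and that a $\hT'2$ cluster is the three triangle vertices of a lone $(\iV-,\iV-,\iV-)$ triangle; a $\hT'1$ is a single triangle vertex not touched by any $\iV$-type edge. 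Since the $\iV$-type edges are precisely the internal edges of green packs (a green pack is the six triangles around its central $(\iA-,\iI,\iB-)$ triangle, and by the description of \Cref{fig:green-pack-env} and \Cref{prop:around-tri} its $\iV$ edges form the ``star'' joining the six outer triangles), a yellow cluster that contains any $\iV$-type edge is forced to lie partly inside a green pack. The heart of the argument is therefore a purely local count inside one green pack.

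The key steps, in order, are: (1) Using \Cref{prop:around-tri} / \Cref{fig:N1}, pin down the full list of triangle vertices belonging to a single green pack — there are six of them, the six corners of the hexagon of \Cref{fig:8-6} — and for each of the six, read off from \Cref{fig:vertex-list} (equivalently \Cref{fig:triset-7h} and the dot-alternation rule) which incident edges are of type $\iV$. (2) Observe that at each such corner exactly one incident $\iV$-type edge lies inside the green pack and the remaining $\iV$-type edges (if any) leave the pack; hence each corner is a tip of exactly one yellow cluster, and that tip's cluster is determined by following the $\iV$-edges outward. (3) Conclude that the six corners are tips of six distinct yellow clusters: distinctness follows because two corners in the same cluster would have to be joined by a chain of $\iV$-type edges, but inside the pack the only $\iV$-type edges are the six spokes of the star (each joining a corner to an internal vertex that is not a triangle vertex of the packed tileset, or joining consecutive outer triangles), and by \Cref{prop:Tp2,prop:Tp3} such a chain cannot connect two of the six hexagon corners without forcing a forbidden configuration (the red-marked unfillable triangle vertex hole in the proof of \Cref{prop:Tp3}). (4) Show every cluster tip lies in some green pack: a tip is a triangle centre where a $\hT'n$ boundary turns, hence is a vertex incident to a $\iV$-type edge (for $n=2,3$) or — for $\hT'1$ — is handled by noting, via \Cref{cor:T1-tto} and \Cref{fig:T1-b,fig:N1}, that a $\hT'1$ yellow hex also sits at a corner of the \Cref{fig:8-6} hexagon; and $\iV$-type edges only occur inside green packs. (5) Finally, the ``exactly one tip of each'' statement: each of the six clusters meeting the pack contributes its tips to the pack, and by the local picture around the central triangle, no cluster can have two of its (at most three) tips inside the same green pack, since two tips of one cluster are separated along its boundary by a whole side, and a side of a $\hT'n$ leaves the pack (its interfaces are of type $\iA$, $\iB$ or $\iC$, i.e.\ internal green-pack edges only in a restricted pattern that \Cref{fig:N1} shows already accounts for exactly one corner of each neighbouring cluster).

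I expect the main obstacle to be step (3)–(4): showing \emph{distinctness} of the six clusters and that \emph{no cluster re-enters} the green pack at a second tip. The danger is a cluster that wraps around so that two of its tips are both corners of the same hexagon of \Cref{fig:8-6}. Ruling this out cleanly requires combining the exclusion results already available — \Cref{prop:Tp2,prop:Tp3} (a $(\iV-,\iV-,\iV-)$ triangle is either isolated or in the triple configuration, and the triple configuration touches no further $\iV$-edge), \Cref{cor:AB-into-green} (every $\iA$ or $\iB$ interface extends into \Cref{fig:N1}), and the unfillable-hole arguments — with the explicit adjacency pattern of the six outer triangles of a green pack given by \Cref{fig:green-pack-env}. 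Concretely: the six corners split into the four corners of the $\iB$-parallelogram highlighted after \Cref{fig:8-6} and two further corners; chasing the $\iV$-edges from each and invoking \Cref{prop:Tp2,prop:Tp3} shows each leads to a distinct external cluster, and a second return would force either a repeated $(\iV-,\iV-,\iV-)$ triple overlapping the pack (contradicting \Cref{prop:Tp3}) or a $\iV$-edge adjacent to the $\iI$-edge of the central triangle (already excluded in the proof of \Cref{prop:around-tri}). Once distinctness and no-return are in hand, the counting ``$6$ clusters, one tip each'' is immediate, and the lemma follows.
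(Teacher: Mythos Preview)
Your plan rests on several structural claims about the green pack that are not correct, and they are load-bearing.

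First, packs are obtained by cutting the triangle tiling along the $\iV$- and $\iI$-type edges (\Cref{thm:pack}). Hence the \emph{internal} edges of any pack are of type $\iA$, $\iB$, or $\iC$, and the $\iV$-type edges lie on pack \emph{boundaries}. Your step~(2) asserts that ``exactly one incident $\iV$-type edge lies inside the green pack'' and your step~(4) that ``$\iV$-type edges only occur inside green packs''; both are false, and the $\hT'1$ case (no $\iV$-edge at all) cannot be handled this way. Second, the ``six corners of the hexagon of \Cref{fig:8-6}'' are the dots obtained \emph{after collapsing} $\iV$-type edges; they are not the triangle vertices of a single green pack, and the green pack does not have six triangle vertices on its boundary arranged as that hexagon. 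Third, tips are triangle \emph{centres}, not triangle vertices, so a tip is not ``a vertex incident to a $\iV$-type edge'' in any natural sense; your characterization of tips via $\iV$-incidence is not well-formed. With these three issues, steps (2)--(4) do not go through, and the distinctness argument in step~(3) is arguing about the wrong objects.

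The paper's proof is much more direct and avoids all of this. It works with the decorations of \Cref{fig:packed-deco}: the thick black and gray lines on each pack piece already trace the yellow-hex cluster boundaries, and for each of the four pack types the up/down class of every partial cluster it crosses is predetermined (via the dot markings, or for the green and yellow packs from the visible two-hex pieces). The paper then gives a purely local criterion for a tip: it is the unique boundary vertex of a triangular cluster at which the two incident boundary edges point in a specified pair of directions. Applying this criterion to the decoration of each pack type, one simply reads off where the tips are, and finds that only the green pack carries any, exactly six of them, one per cluster it meets. No global ``no-return'' argument is needed; everything is decided by inspecting the four decorated pack pieces.
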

\begin{proof}
The first claim comes from yellow clusters being triangular and examination of the thick line in the green pack (see the figure below).

Note that the topmost tip of an up pointing yellow triangular cluster is its only boundary vertex such that the two boundary edges going to it are directed as on the figure below.

\nopagebreak

\image{}{}{
\includegraphics[scale=0.5]{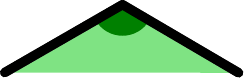}
}

An analogue statement holds for the three tips of up or down pointing triangular clusters.

Now examination of the clusters intersected by the different pack pieces show that their direction up or down is determined in advance (if one rotates them by an odd multiple of 1/6 then one must permute u and d).
This follows for instance from the dot markings of \Cref{fig:triset-7h} but in the case of the green and the yellow pack, this can also be deduced from the orientation class of the clusters with two hexes crossed by the pack, then by the rule that $\iI$ interfaces shall invert.
Then using the rule above, we can identify the tips.

\image{u: up, d: down, dark-green: yellow cluster tip}{}{
\includegraphics[scale=0.4]{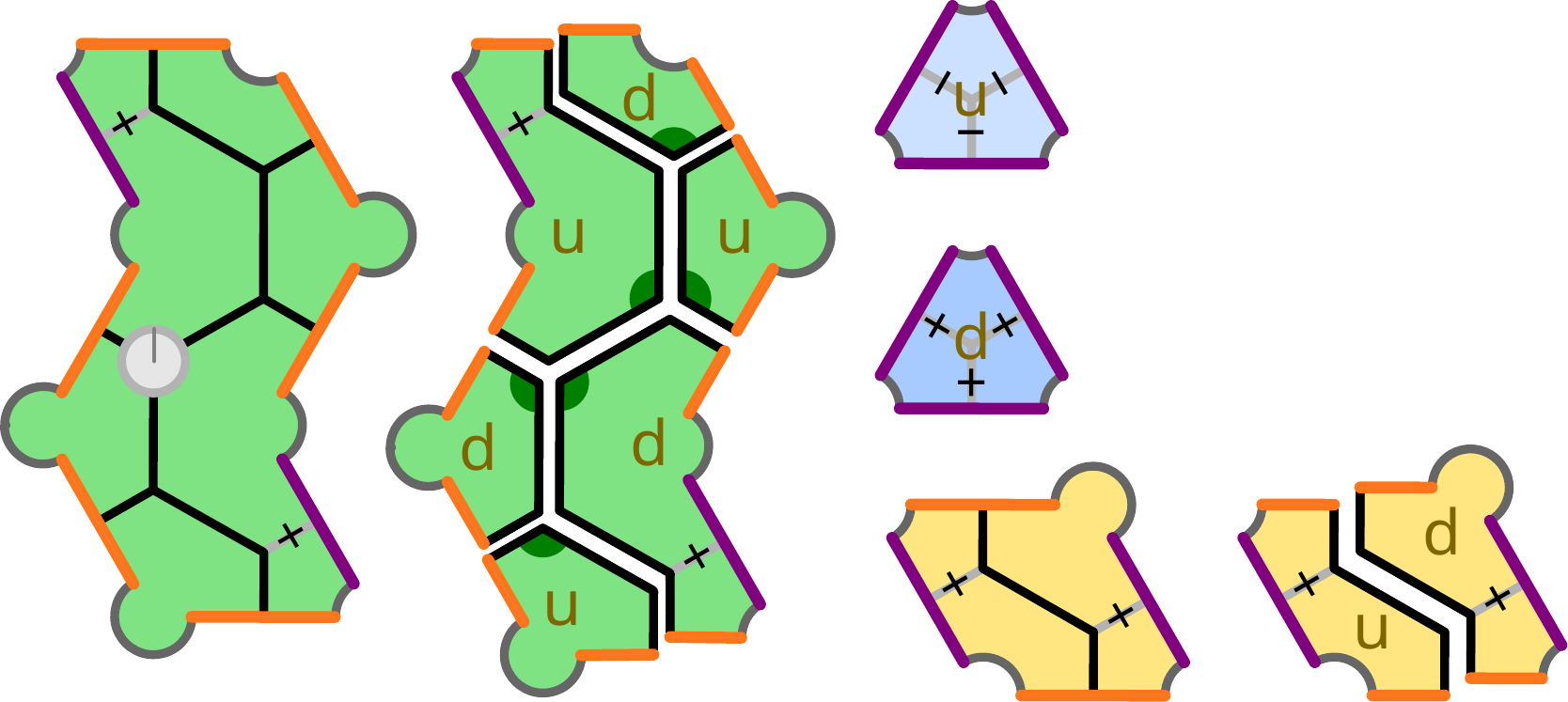}
}
\end{proof}

\begin{corollary}\label{cor:ee}
Each yellow cluster meets exactly three green packs.
There is at least one $\iI$-type interface through each side of a cluster.
\end{corollary}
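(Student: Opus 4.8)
\emph{Proof plan.} The plan is to obtain both assertions from the previous lemma — that a green pack meets exactly six yellow clusters and contains exactly one tip of each — together with \Cref{thm:pack}, which tells us that distinct packs are disjoint. The first assertion will come out as a short counting argument. The second one I would prove by a local analysis anchored at \Cref{prop:around-tri} and \Cref{fig:N1,fig:8-6}, using that the combinatorics in and immediately around a green pack is rigid.

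For the count: a yellow cluster $C$ is triangular, hence has exactly three tips $t_1,t_2,t_3$ (in the sense of \Cref{def:tips}, transported to yellow clusters). By the previous lemma each $t_k$ lies in a green pack, and since distinct packs are disjoint (\Cref{thm:pack}) each $t_k$ lies in a \emph{unique} green pack $P_k$. If $P_i=P_j$ for some $i\neq j$, that pack would contain two distinct tips of $C$, contradicting the previous lemma; so $P_1,P_2,P_3$ are pairwise distinct and $C$ meets at least three packs. Conversely, if a green pack $P$ meets $C$, then $C$ is one of the six clusters $P$ meets, so $P$ contains exactly one tip of $C$, which forces $P=P_k$ for the index $k$ with $t_k\in P$. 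Hence $C$ meets exactly the three packs $P_1,P_2,P_3$, and along the way $k\mapsto P_k$ is a bijection between the three tips of $C$ and the three green packs meeting $C$.

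For the statement about $\iI$-type interfaces I would argue at one tip at a time. Fix a side $s$ of $C$ and let $t$ be one of its two endpoints, lying in the green pack $P$. By \Cref{prop:around-tri} the configuration inside (and around) $P$ is essentially forced, and collapsing its $\iV$-type edges turns the six clusters it meets into the six vertices of a hexagon whose edges are $\iI$-type interfaces (\Cref{fig:8-6}); in particular $C$ is $\iI$-adjacent, through an $\iI$-edge carried by this configuration near $t$, to each of its two cyclic neighbours around $P$. One of those two neighbours borders $C$ along $s$ and the other along the second side of $C$ through $t$, so the corresponding $\iI$-edge is an $\iI$-type interface through $s$. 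The one point that needs genuine care — and is the only real obstacle here — is matching, in \Cref{fig:N1,fig:8-6}, the two sides of $C$ emanating from $t$ with the two $\iI$-edges touching $C$ at $t$, and checking that these are not both attached to the same side; this requires a short case inspection over the shapes $\hT'1$, $\hT'2$, $\hT'3$ and their two orientations (\Cref{prop:Tp2,prop:Tp3}). Everything else (the bijection of the first part, and the mere \emph{existence} of an $\iI$-type interface on each side) then falls out of the rigid local picture.
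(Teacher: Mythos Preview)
Your proof is correct and follows the paper's approach. The first claim is exactly the counting argument the paper alludes to (``easy logical consequence of the previous proposition and the fact clusters have three tips''), and you spell it out cleanly. For the second claim the paper simply says ``examination of the green pack again,'' meaning a direct look at the decorated pack figure where one sees the six $\iI$-type boundary edges cycling through the six clusters; your route through \Cref{fig:8-6} and the cyclic-neighbour picture is the same observation dressed up more structurally, and the case inspection you flag is precisely what ``examination of the green pack'' amounts to in practice.
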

\begin{proof}
The first claim is an easy logical consequence of the previous proposition and the fact clusters have three tips. 
The second claim above follows from examination of the green pack again.
\end{proof}

But there is more to that: examine the picture below.

\nopagebreak

\image{}{fig:pack-to-gh}{
\includegraphics[scale=0.28]{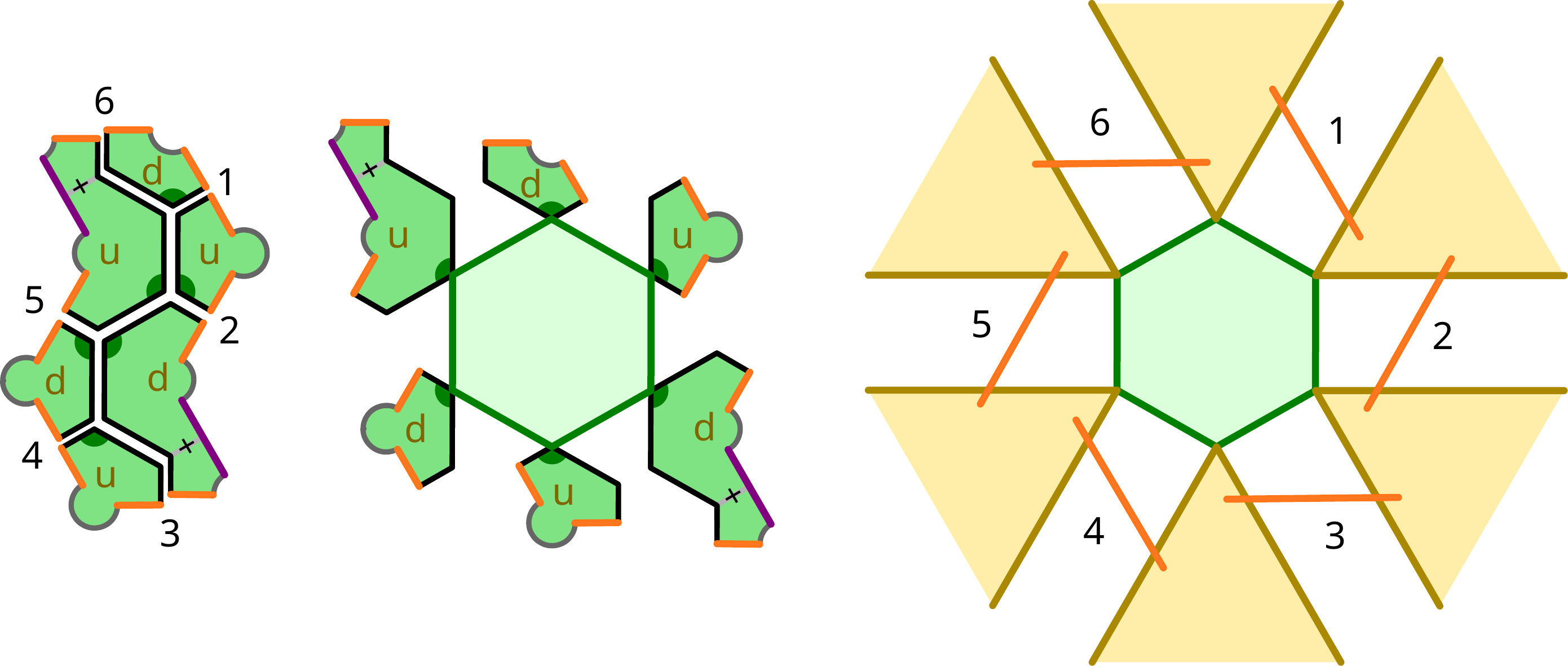}
}

We can associate a green hexagon to the green pack, so that the yellow triangular clusters are related to the hexagons exactly as in the dual regular hexagonal and triangular tessellations are, as on the figure below.

\image{}{fig:dual-2}{
\includegraphics[scale=0.666]{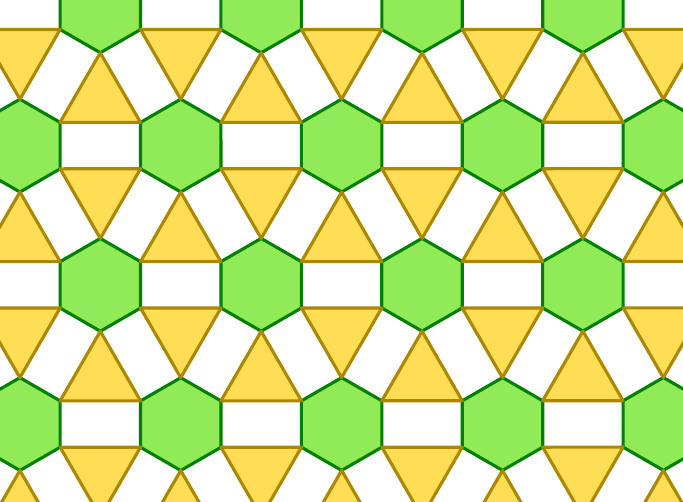}
}

The claim below may now seem obvious, yet we feel it requires an argument.

\begin{proposition}
 The whole set of yellow clusters and green packs are combinatorially arranged, using \Cref{fig:pack-to-gh}, exactly as on \Cref{fig:dual-2}
\end{proposition}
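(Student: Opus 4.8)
The plan is to combine the local correspondence already displayed in \Cref{fig:pack-to-gh} --- one green pack together with the six yellow clusters it meets --- with the global vertex bijection supplied by \Cref{prop:yc2gh}, and to organise the two into a single combinatorial isomorphism with the vertex/hexagon incidence structure of the green honeycomb $HG$ and its dual triangular tessellation. Recall that \Cref{prop:yc2gh} provides a bijection $\phi$ from yellow clusters to vertices of $HG$ which carries $\iI$-type adjacency to edge-adjacency in $HG$ and the up/down orientation of a cluster to that of the dual triangle containing its vertex. So the only thing still missing is to match the green packs with the hexagons of $HG$ in a way compatible with $\phi$ and with all incidences.

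First I would attach a hexagon of $HG$ to each green pack. Fix a green pack $G$. By the lemma preceding \Cref{cor:ee} it meets exactly six yellow clusters $Y_1,\dots,Y_6$, one tip of each, and \Cref{fig:pack-to-gh} exhibits them in a definite cyclic order in which consecutive clusters are $\iI$-adjacent, the linking $\iI$-type edges lying on the boundary of $G$ (the boundary of a pack consists of $\iV$- and $\iI$-type edges, by \Cref{thm:pack}). These six $\iI$-edges are the six sides of the hexagonal region of the triangle tiling underlying $G$ (compare \Cref{fig:8-6} and \Cref{fig:green-pack-env}); computing their vectors as in \Cref{lem:coord} and applying the $1/12$ rotation of \Cref{prop:yc2gh} shows that they become the six successive edge vectors of a single hexagon of $HG$. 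Hence $\phi(Y_1),\dots,\phi(Y_6)$ are exactly the vertices of one hexagon of $HG$, listed in cyclic order; call it $H_G$. Moreover the up/down types of the dual triangles at these six vertices agree with the types of $Y_1,\dots,Y_6$, again by \Cref{prop:yc2gh}, which is exactly the local picture of \Cref{fig:dual-2}.

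Next I would check that $G\mapsto H_G$ is a bijection onto the hexagons of $HG$ respecting all incidences. Since distinct green packs are disjoint (\Cref{thm:pack}), each yellow cluster meets its three green packs (\Cref{cor:ee}) at its three distinct tips, one per pack; and by \Cref{fig:pack-to-gh} the tip of a cluster $Y$ lying in a given green pack is the one ``pointing toward'' that pack's hexagon, so the three tips of $Y$ are in compatible bijection with the three green packs meeting $Y$ and with the three hexagons of $HG$ incident to $\phi(Y)$. Injectivity follows: if $H_G=H_{G'}$ then $G$ and $G'$ meet the same six clusters, hence at the same six tips, hence share a tip, hence coincide. Surjectivity follows too: the three green packs meeting a cluster $Y$ yield, by injectivity, three distinct hexagons through $\phi(Y)$, which are then all of them. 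Finally $G$ meets $Y$ iff $\phi(Y)$ is a vertex of $H_G$: one direction is the definition, the other is forced by the counts (six clusters per pack matching six vertices per hexagon, three packs per cluster matching three hexagons per vertex). Together with the orientation matching of the previous paragraph, this identifies (yellow clusters, green packs, their incidences, up/down) with (vertices of $HG$, hexagons of $HG$, their incidences, dual-triangle orientations), i.e.\ with \Cref{fig:dual-2}.

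The main obstacle is the vector computation in the second step: one must confirm that the six $\iI$-type edges bounding a green pack, turned by $1/12$, really close up into the six successive sides of an \emph{honest} hexagon of $HG$ rather than into some degenerate or re-entrant six-cycle --- a finite but delicate check, cleanest if one lists the six edge vectors in the style of \Cref{lem:coord} and verifies they are the orbit of a single vector under rotation by $1/6$. A secondary subtlety, used for injectivity, is the claim read off \Cref{fig:pack-to-gh} that inside a green pack each of the six clusters presents the tip that points toward the pack's hexagon; this rests on the fact recalled in the lemma preceding \Cref{cor:ee}, that the three tips of a triangular cluster are distinguished by the directions of the boundary edges meeting them.
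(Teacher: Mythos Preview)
Your argument is circular. The bijection $\phi$ of \Cref{prop:yc2gh} that you invoke as a given ingredient is precisely what this whole subsection is establishing: the paper states \Cref{prop:yc2gh} and then writes ``The rest of this section is a proof of the proposition''; the statement you are asked to prove lies inside that proof, and immediately after it the paper deduces the bijection of \Cref{prop:yc2gh} from it (``By the proposition above, this gives a bijective mapping\ldots''). So you may not assume $\phi$ exists in advance; your second and third paragraphs collapse once $\phi$ is removed.

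What the paper actually does is build the correspondence from scratch, and the essential point you are missing is a \emph{global} topological argument. Locally one reads off from \Cref{fig:pack-to-gh} an unambiguous rule sending a step ``from this green pack to that adjacent yellow cluster'' to a step ``from this hexagon to that adjacent triangle'' in \Cref{fig:dual-2}, and vice versa. The issue is well-definedness: why does a closed walk in one picture correspond to a closed walk in the other? The paper handles this by checking the elementary loops (hexagons, triangles, and the rectangular faces of \Cref{fig:dual-2}) and then invoking simple connectivity of the plane to reduce every closed walk to a concatenation of such elementary loops. Equivalently, one builds a piecewise-linear map between the two tilings, observes it is a covering, and concludes it is a homeomorphism because the plane is simply connected. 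This monodromy/covering idea is the heart of the proof and is absent from your proposal.
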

\begin{proof}
There is no ambiguity on the orientation of the hexagons and of the triangles and identification of their respective tips.

Walking around between the hexes and triangles in \Cref{fig:dual-2} corresponds without ambiguity to a walk in the green packs and yellow clusters, and vice-versa.

One main thing is to check that going around a white rectangle in \Cref{fig:dual-2} corresponds to coming back to the same pack or clusters. But the intersection of the boundaries of two adjacent yellow packs is a zigzag broken line and must enter green packs near each end. The claim follows.

The other main ingredient is simple connectivity of the plane.
Because of this, any walk starting from a green hex (or yellow triangle) and coming back to itself can combinatorially decomposed into elementary loops around hexes, triangles and rectangles.
Similarly any walk starting from a pack (or yellow cluster) and coming back to itself can be combinatorially decomposed into elementary loops. Since these loops do not break the correspondence, we get that it is a bijection.

Another approach is to build a piecewise linear map from the plane with the pack tiling to the plane with the hexagons, triangles and rectangles (or the other direction). Such a map will be a covering and since the plane is simply connected, a homeomorphism.
\end{proof}

By collapsing the yellow triangle in \Cref{fig:dual-2} we get the green honeycomb. By the proposition above, this gives a bijective mapping associating a vertex of the green honeycomb to a yellow cluster, such that adjacency in the honeycomb graph corresponds to adjacency as defined before, i.e.\ via the presence of an $\iI$-type edge of the blue triangle pieces connecting yellow clusters. Note that for each such adjacency, the $\iI$ edge direction must be rotated by 1/12 in the counterclockwise direction to give the direction of the corresponding hexagon edge in the green cluster.

\subsubsection{Coordinate relation between three levels of honeycombs}\label{ss:3-levels}

We prove here a relation that was guessed in the introduction. It will not be needed for most statements in this article.

We use the same notations as in \Cref{lem:coord}: $\rho=e^{2\pi i /6}\in\C$, we assume that the centre of the hexagons of the blue honeycomb forms the lattice $\Lambda = \Z+\rho\Z$, and hence the sides of the hexagons are of the form $\rho^k c$ with $c=1/(1+\rho)$.
Below we use a yellow and a green honeycomb, and we also assume that their hexagon centres are the points of the lattice $\Lambda$.
The vertices of the honeycombs are a subset of $c\times\Lambda$.

We denote $HB$, $HY$ and $HG$ the blue, yellow and green honeycombs.

\begin{proposition}\label{prop:B-Y-G-coord}
Consider the data of a partition of $HB$, with dots, associated to a whole plane tiling by the Spectre.
To each dot we associated a hexagon in $HY$. We call $y\in \Lambda$ the coordinate of its centre and $b\in c\times\Lambda$ the coordinate of the dot in $HB$.
The yellow hexagon belongs itself to a yellow cluster $\hT'n$ which we associated to a \emph{vertex} of $HG$.
Denote $g\in c\times\Lambda$ its coordinate.
We choose any dot as a reference and let $b_0$, $y_0$ and $g_0$ denote the associated values.
Then we have the following relation for any other dot:
\[b-b_0 = 3 \times (y-y_0) + \rho^2 \times (g-g_0).\]
(Note that $\rho^2=j=e^{2\pi i/3}$ is the principal primitive third root of unity.)
\end{proposition}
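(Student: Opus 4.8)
The plan is to reduce the statement to a purely local check along interfaces and then propagate by connectedness. The identity is equivalent to saying that $\phi(d):=b(d)-3\,y(d)-\rho^2\,g(d)$ is the same at every dot $d$; since $\phi$ is built additively out of the three coordinates, it is enough to show $\phi(d)=\phi(d')$ whenever $d,d'$ are the two endpoints of a common interface, and then to invoke connectedness of the graph whose vertices are the dots and whose edges are the interfaces. In fact I would only use the $\iV$-type and $\iI$-type interfaces: the dots of any yellow cluster $\hT'n$ are pairwise joined by $\iV\pm$ edges (those of the unique $(\iV-,\iV-,\iV-)$ triangle for $\hT'2$, and those of the $(\iV+,\iV+,\iV+)$ triangle together with the three attached $(\iV-,\iV-,\iV-)$ triangles of \Cref{prop:Tp3} for $\hT'3$), so each cluster is $\iV$-connected, while two adjacent clusters are joined by an $\iI$-type interface and the cluster-adjacency graph is isomorphic to the connected graph of the green honeycomb $HG$ by \Cref{prop:yc2gh}. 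So it remains to establish
\[b(d')-b(d)=3\bigl(y(d')-y(d)\bigr)+\rho^2\bigl(g(d')-g(d)\bigr)\]
for $\iV$-type and for $\iI$-type interfaces.

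For such an interface I would compute the three displacement vectors separately. The vector $b(d')-b(d)$ comes from \Cref{lem:coord}: it is $\rho^k(3+3\rho)c=3\rho^k$ for a $\iV$-interface and $\rho^k(2+3\rho)c=\rho^k(3-c)$ for an $\iI$-interface, for some $k\in\Z/6\Z$ fixed by the orientation of the interface (using $(1+\rho)c=1$, so that $(2+3\rho)c=3(1+\rho)c-c=3-c$). The vector $y(d')-y(d)$, measured in $HY$, is one of the six unit vectors $\rho^m$ of $\Lambda$: the interface is a chain of squares and rhombs with all yellow edges parallel, which the blue contraction collapses onto a single edge of $HY$, so the two yellow hexagons are adjacent in $HY$, and $m$ is read off the common direction of the collapsed yellow edges (the blue contraction acting by translation on yellow segments). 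The vector $g(d')-g(d)$, measured in $HG$, vanishes for a $\iV$-interface since the two hexagons then lie in the \emph{same} yellow cluster (\Cref{def:V-clu}), whereas for an $\iI$-interface it is an edge $\rho^l c$ of $HG$ whose direction is governed by \Cref{prop:yc2gh}. Substituting, the $\iV$-case reduces to $3\rho^k=3\rho^m$, i.e.\ to $y(d')-y(d)=\rho^k$, and, using $-c=\rho^3 c$, the $\iI$-case reduces to the algebraic identity $\rho^k(3-c)=3\rho^k+\rho^{k+3}c$, which holds as soon as $y(d')-y(d)=\rho^k$ and $g(d')-g(d)=\rho^{k+1}c$, so that $\rho^2\bigl(g(d')-g(d)\bigr)=\rho^{k+3}c=-\rho^k c$.

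The genuinely routine part is thus the complex arithmetic just displayed; the substance is the orientation bookkeeping needed to justify that $m=k$ and $l=k+1$. I expect this to be the main obstacle: one must fix once and for all compatible orientations of $HB$, $HY$ and $HG$, and then verify that with those choices the power extracted from the blue contraction, and the power extracted via the $1/12$ turn of \Cref{prop:yc2gh}, really come out as claimed for every orientation $k$ of each interface type. The delicate point is that several planar models coexist — the decoration graph, its yellow contraction $HB$ (in which \Cref{lem:coord} is stated), its blue contraction $HY$, and the equilateral-triangle tileset of \Cref{fig:triset-7h} in whose edges \Cref{prop:yc2gh} is phrased — related by contractions and by that $1/12$ rotation, so that a spurious rotation could easily slip in. The safe way to carry it out is to work directly with the decorated packed tiles of \Cref{fig:packed-deco}: there the triangle vertices carry the $HB$-data, the drawn hexagons carry the $HY$-data (turned by $1/12$), and the thick black outlines carry the yellow-cluster, hence $HG$-, data, all superimposed on a single picture, so for each packed tile the triple $(b(d')-b(d),\,y(d')-y(d),\,g(d')-g(d))$ along each of its interfaces can be checked directly; by \Cref{thm:pack} these finitely many tiles exhaust all the local configurations that can occur, which finishes the verification.
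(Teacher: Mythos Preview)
Your approach is essentially the paper's: reduce to showing $b-3y-\rho^2 g$ is invariant under $\iV$-type and $\iI$-type moves, then invoke connectedness. The paper's proof does exactly this, asserting (via two reference vectors read off figures) that a $\iV$-move gives $(\Delta b,\Delta y,\Delta g)=(3\rho^k,\rho^k,0)$ and an $\iI$-move gives $(\rho^k(2+3\rho)c,\rho^k,\rho^{k+1}c)$, then checking the invariance algebraically and appealing to the fact that any two yellow hexes are linked by a chain of such moves. Your write-up is more explicit about the connectedness argument and more candid about the orientation bookkeeping ($m=k$, $l=k+1$), which the paper handles by pointing to pictures; your proposed verification through the decorated packed tiles of \Cref{fig:packed-deco} is a sound way to pin this down.
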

\begin{proof}
The caption of the figure below recalls the scale and orientation convention, which is the same for each of the three honeycombs.

\image{Dark blue vector has affix $1$ and light blue has affix $c=1/(1+\rho)$.}{}{
\includegraphics[scale=0.4]{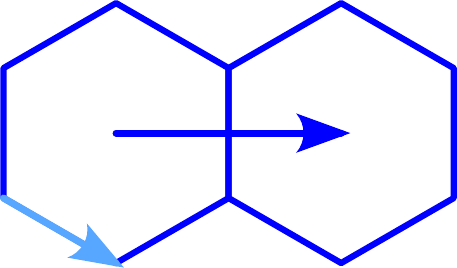}
}

On the figures below the light green vector has affix:
$3$ in $HB$, $1$ in $HY$, $0$ in $HG$;
the dark green vector has affix: $\rho \times (3+2\rho)\times c$ in $HB$, $\rho$ in $HY$, $\rho c$ in $HG$.
For the blue coordinates, this was already computed in \Cref{lem:coord}, realizing that the light green is associated to a $\iV$-type interface, and the dark green one to type $\iI$.

\image{}{}{
\includegraphics[scale=0.25]{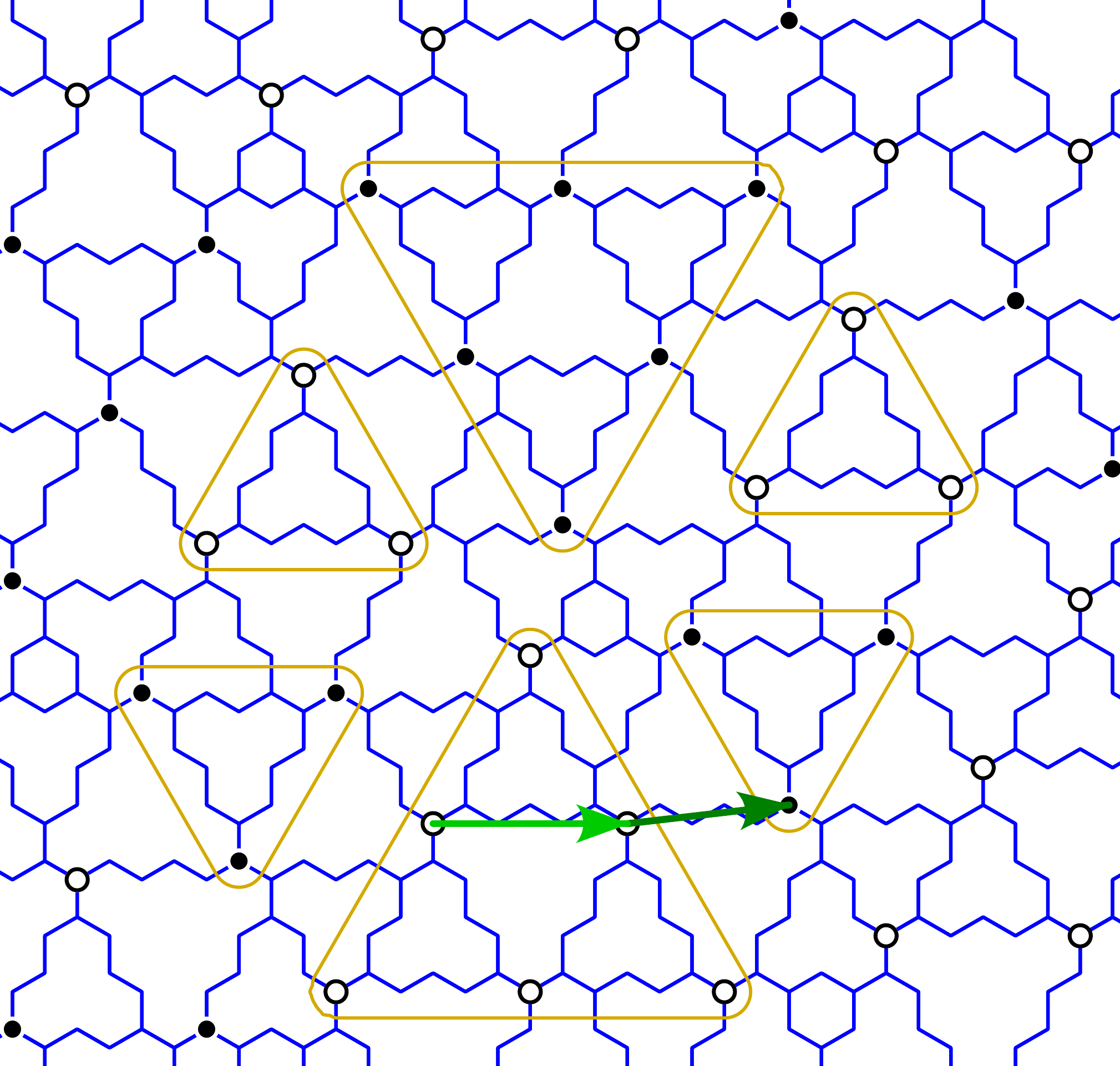}
}

\image{Note that yellow shows reflected possible arrangements compared to possible arrangements of blue or green honeycombs}{}{
\includegraphics[scale=0.35]{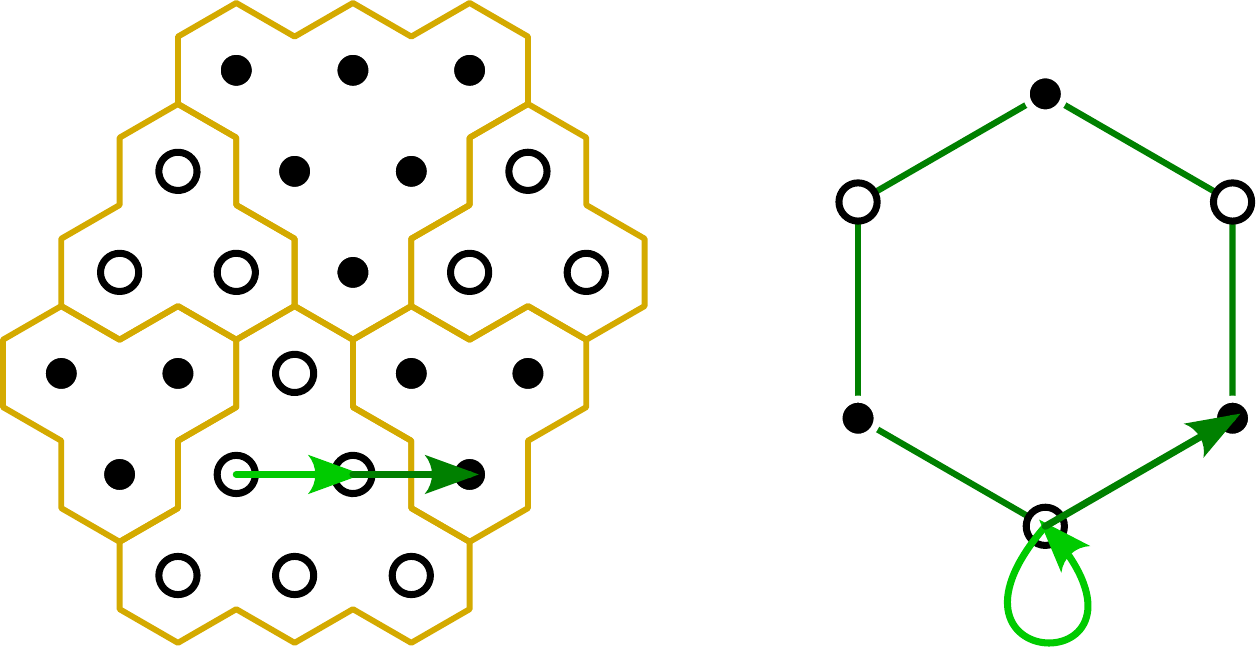}
}

More generally, going from one yellow hex to an adjacent yellow hex in the same cluster moves the centre by some $\rho^k$, $k\in\Z$ (which we may take between $0$ and $5$ included but this does not matter here), while the blue coordinate will move by $\rho^k \times 3$ and the green one will not move.
By definition, two adjacent yellow clusters have a pair of adjacent yellow hexes such that the corresponding dots in the blue honeycomb are extremities of an $\iI$-type interface.
If going from the first to the second is through vector $\rho^{k}$ in the yellow honeycomb, then it is by vector $\rho^{k} (2+3\rho)c$ in the blue and vector $\rho^{k+1} c$ in the green.

In both cases, the quantity $b-3y-\rho^2 g$ is invariant.
Since any two yellow hexes can be related by a finite sequence of moves as above, we are done.
\end{proof}

Note the angle $1/12$ rotation between the $\iI$-type link in the yellow honeycomb and the green one, which was already noticed in the previous section. Be also aware that not all two adjacent yellow hexes of adjacent yellow clusters are related by an $\iI$-type move, yet at least one pair in each adjacent clusters are, as the figure below shows on an example.

\image{All the $\iI$-type links for some situation (actually occurring in whole plane tilings).}{}{
\includegraphics[scale=0.35]{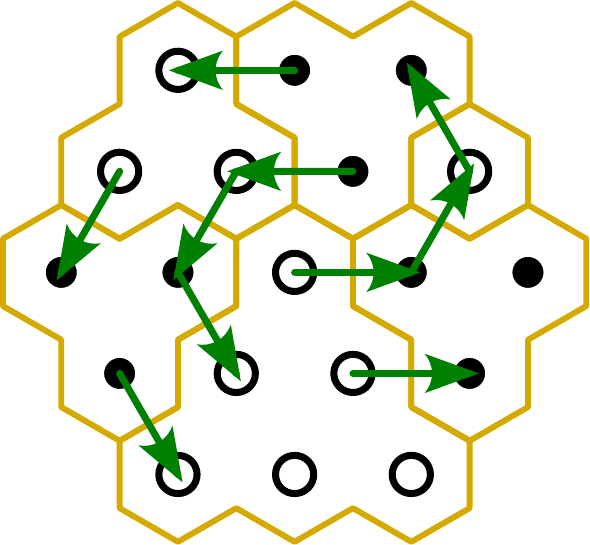}
}

\subsection{Environment of the yellow clusters}\label{sub:env}

Recall that we proved in \Cref{ss:hTp} that every yellow cluster is a $\hT'n$, $n=1$, $2$ or $3$.
We now start the moderately tedious task of classifying all the possible environments of the $\hT'n$ (yellow hex clusters).

\subsubsection{Some visual rules}\label{ss:vis}

We sum up here some configurations that are either impossible or have some implied continuations.

\begin{lemma}\label{lem:fill-1}
Consider a connected partial tiling with the packs. 
Consider a triangle vertex for which among its 6 possible neighbouring triangles, only two are not covered, and they are contiguous.
If the free interfaces have no $+$ or $-$ sign, then the non-covered part can only be filled by a green piece, with the central dot being one of the four dots indicated in the figure below.

\nopagebreak

\image{The thick black lines indicate that the free interface has no $+$ nor $-$ marking. The white circles, including the centre one, may or may not be already filled. (A posteriori the rightmost one cannot.)}{fig:fill-1}{
\includegraphics[scale=0.333]{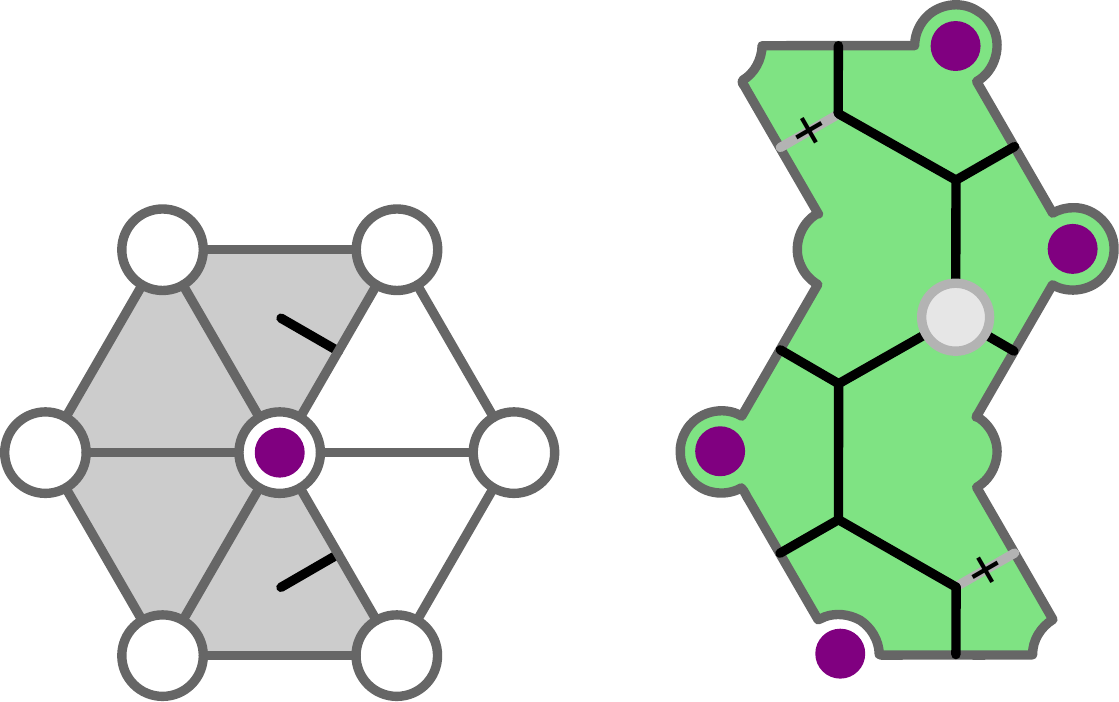}
}
\end{lemma}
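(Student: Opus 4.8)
The plan is to argue entirely from the interface-matching rules of the packed tileset of \Cref{fig:pack}. Write the hole as two lattice triangles $t_1,t_2$ meeting along an edge $e$ through the given triangle vertex $v$, and let $e_1\subset\partial t_1$ and $e_2\subset\partial t_2$ be the two remaining edges of the hole emanating from $v$. Since the four lattice triangles around $v$ other than $t_1,t_2$ are already covered, the interfaces carried by $e_1$ and $e_2$ are already determined; by hypothesis they bear no $+$ or $-$ sign, hence are of type $\iI$, the only signless interface type. Recall also (from \Cref{thm:pack}) that the edges internal to a pack are of type $\iA$, $\iB$ or $\iC$, while its boundary edges are of type $\iI$ or $\iV$.

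First I would show that $t_1$ and $t_2$ are covered by a single pack. If they were covered by two different packs, the edge $e$ would carry an interface between those two packs, hence of type $\iI$ or $\iV$. In the $\iV$ case, $(\iV-,\iV-,\iV-)$ being the only triangle with a $\iV-$ edge, one of $t_1,t_2$ would be such a piece, and then the edge $e_1$ or $e_2$ on its boundary would also be of type $\iV-$, contradicting the hypothesis. In the $\iI$ case, the pack covering $t_1$ would present at $v$ a corner of opening $60\degree$ bounded by the two $\iI$-type edges $e$ and $e_1$; but inspection of \Cref{fig:pack} shows no pack has such a corner (the single-triangle $\iV$-packs carry only $\iV$-edges, the two $60\degree$ corners of the yellow pack are each bounded by one $\iI$- and one $\iV$-edge, and the green pack has no $60\degree$ corner with two $\iI$-sides). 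Hence a single pack $P$ covers both $t_1$ and $t_2$, and the interface it carries along $e$ is internal to $P$, so of type $\iA$, $\iB$ or $\iC$.

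Because $P$ does not overlap the already-placed packs, among the six lattice triangles around $v$ it contains only $t_1$ and $t_2$; hence $P$ presents at $v$ a corner of opening $120\degree$ bounded by $e_1$ and $e_2$, both of type $\iI$. I would then finish by listing, from \Cref{fig:pack}, the boundary corners of opening $120\degree$ whose two sides are both of type $\iI$: the single-triangle packs have none, the yellow pack has none, and the green pack has exactly the ones recorded in \Cref{fig:fill-1}. For each of these, the condition that two triangles of the green pack coincide with $t_1$ and $t_2$ with the $(\iI,\iI)$-corner at $v$ pins down the rigid placement of the green pack (reflections being forbidden), and thus the position of its central dot, the centre of its $(\iA-,\iI,\iB-)$ triangle (cf.\ \Cref{fig:packed-deco}); carrying this out yields the four dots of the figure. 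Alternatively one can read the statement off \Cref{fig:vertex-list}: among the complete triangle-vertex neighbourhoods listed there, those containing two contiguous triangles with $\iI$-type interfaces on their two outer radial edges are exactly four, each completed by a green pack positioned as drawn.

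The one genuinely delicate point is this final inspection --- checking that the list of admissible green-pack placements is complete, and matching each placement to the displayed position of the central dot. Everything upstream is forced by the matching rules and the shapes of the packs, and uses only the local data assumed in the statement (the connectedness hypothesis serving merely to guarantee that the already-placed tiles form a genuine partial pack tiling around $v$).
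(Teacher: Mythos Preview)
Your argument is correct and matches the paper's approach. The paper's own proof is the single sentence: ``This is immediate by examination of all possibilities, and does not take too much time since we have only 4 possible packs pieces.'' You have carried out exactly that examination, with the helpful intermediate step of first showing that a two-pack filling is impossible (forcing $e$ to be an internal $\iA/\iB/\iC$ edge of a single pack) before identifying the admissible $120\degree$ $(\iI,\iI)$-corners among the four pack shapes. The two proofs are the same in spirit; yours simply makes the inspection explicit.
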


This is immediate by examination of all possibilities, and does not take too much time since we have only 4 possible packs pieces.

\begin{lemma}\label{lem:fill-2}
The following configuration can only be filled by a yellow piece:

\nopagebreak

\image{}{fig:fill-2}{
\includegraphics[scale=0.333]{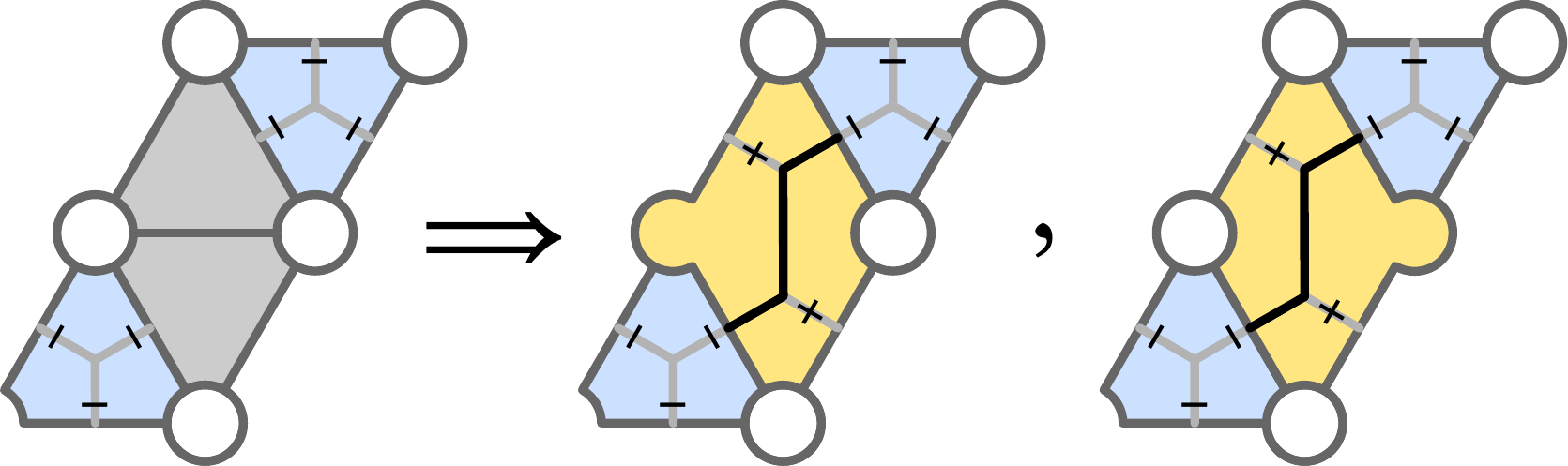}
}
\end{lemma}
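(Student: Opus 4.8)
The plan is to argue exactly as in Lemma \ref{lem:fill-1}: the configuration in \Cref{fig:fill-2} presents a triangle vertex at which all but two contiguous neighbouring triangular slots are occupied, together with a partial record of the free interfaces and of the half-disk pattern forced by the already-placed packs. Because we are working with the packed tileset of \Cref{fig:pack}, there are only four pieces to try — the green, the two orientations of the yellow, and the blue — each in finitely many rotations, so the verification is a short finite check. First I would read off from the figure which of the two free interfaces already carries a $+$ or $-$ sign (and which half-disks around the central triangle vertex are already present), since this is what distinguishes the present case from the purely-unsigned case of \Cref{lem:fill-1}.

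The key steps, in order: (1) identify the free boundary of the two-slot hole as a broken line of triangle edges together with its interface labels; (2) match this broken line against the boundary arcs of each of the four pack pieces in all rotations, discarding any placement whose sign is incompatible with a already-fixed $+/-$ on the hole's boundary (a $+$ can only meet a $-$, $\iI$ only meets $\iI$, etc., by the rule of \Cref{fig:pack}); (3) discard any remaining placement in which a half-disk of the new piece would overlap a half-disk already present at the central triangle vertex, or in which the alternation of marked/unmarked half-segments around that vertex (\Cref{fig:triset-1c}, \Cref{fig:triset-7h}) would be violated; (4) observe that the green piece is excluded here — unlike in \Cref{lem:fill-1} — because its boundary is made only of $\iV$- and $\iI$-type edges (\Cref{thm:pack}), which cannot match the edge types bounding this particular hole, and that the blue piece is excluded for the analogous type/sign reason, leaving exactly one orientation of the yellow pack. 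One can equivalently invoke \Cref{fig:vlist} (or \Cref{fig:vertex-list}): the configuration, once completed, must appear as a vertex neighbourhood in that list, and only the yellow-pack completion does.

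I do not expect a genuine obstacle here: as with \Cref{lem:fill-1,fig:fill-1} and the earlier visual lemmas (\Cref{lem:no-2nd}, \Cref{prop:around-tri}), this is a routine exhaustive check over the four pack pieces and their rotations, and the statement records its outcome. The only point requiring a little care — the ``hard part'', such as it is — is to be sure the figure's half-disk and sign annotations are transcribed correctly, since it is precisely those annotations (rather than the bare shape of the hole) that rule out the green and blue packs and pin down the orientation of the yellow one; so I would state explicitly which annotations are used and note that the reader can re-derive the conclusion directly by attempting to fill the vertex neighbourhood by hand.
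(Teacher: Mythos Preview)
Your high-level approach --- exhaustive examination of the four pack pieces --- is correct and is exactly what the paper does. But the specific exclusion argument you sketch is wrong, and the mechanism you emphasise (half-disk overlap) is not the one that actually operates here.

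Concretely: your step~(4) claims the green piece is excluded ``because its boundary is made only of $\iV$- and $\iI$-type edges \ldots\ which cannot match the edge types bounding this particular hole''. But \emph{every} pack boundary consists only of $\iV$- and $\iI$-type edges --- that is precisely the content of \Cref{thm:pack} --- so the hole boundary is itself $\iV$/$\iI$-type, and this cannot be what excludes the green pack. The configuration of \Cref{fig:fill-2} has two $(-,-,-)$ pieces already placed, and the actual argument is purely about the $+/-$/signless matching rule, not about half-disks: if one tries a $(+,+,+)$ piece in one gray slot, the adjacent gray slot is forced to be $(-,-,-)$, which then meets one of the \emph{initial} $(-,-,-)$ pieces on a `$-$' side --- forbidden; if one tries the green pack with its `$+$' side against one initial $(-,-,-)$, the \emph{other} initial $(-,-,-)$ ends up against a signless side of the green pack --- also forbidden. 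Only the yellow pack survives. So your plan is fine, but you should drop the half-disk reasoning and the mistaken exclusion argument for green, and instead record the two sign-conflict cases above.
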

\begin{proof}
Again it is a simple examination. Trying to fit a $+,+,+$ piece on some gray triangle implies the other gray triangle is a $-,-,-$ but then some interface is in contact with another $-,-,-$, contradicting the rule that no two $-$ interfaces can fit.
If instead one tries to fit the $+$ of a green piece on one of the initial $-,-,-$ piece, the other $-,-,-$ piece touches it on a side without a sign, which is also forbidden.
\end{proof}

For convenience, we reproduce here \Cref{fig:green-pack-env} and the conclusion of the discussion around this figure:

\begin{lemma}\label{lem:fill-3}
Any green pack is environed as follows in any pack tiling covering a sufficiently big neighbourhood of it (independent of the tiling):

\image{}{fig:green-pack-env-b}{
\includegraphics[scale=0.33]{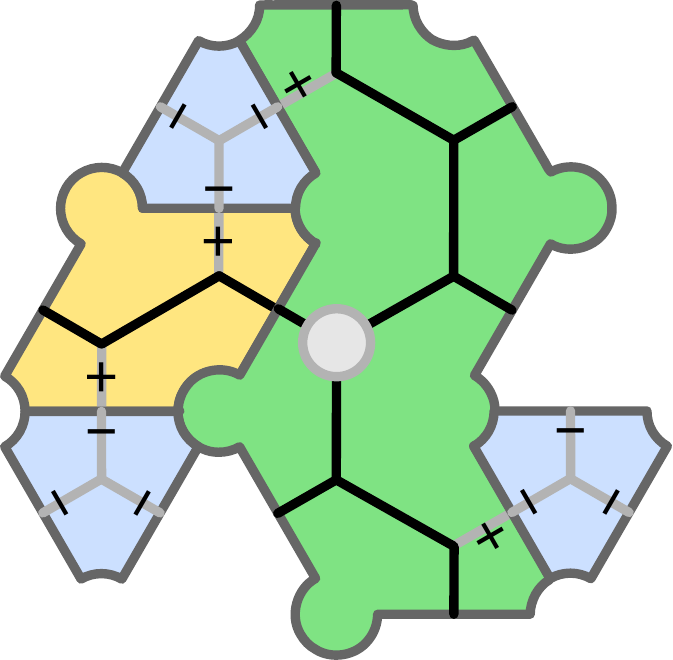}
}
\end{lemma}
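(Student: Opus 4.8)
The plan is to reduce everything to \Cref{prop:around-tri}. Recall that a green pack is, by construction (\Cref{cor:AB-into-green} together with the packing discussion preceding \Cref{thm:pack}), the assembly of triangle pieces displayed in \Cref{fig:N1}: the unique $\hT1$ triangle $(\iB-,\iA-,\iI)$ together with the ring of surrounding triangles that \Cref{prop:around-tri} shows this central triangle forces. By \Cref{thm:pack}, in a pack tiling distinct packs meet only along $\iV$- and $\iI$-type edges of the underlying triangle tileset, so the boundary of a green pack is a \emph{fixed} cyclic word in the letters $\iV+$, $\iV-$, $\iI$, which one reads off from \Cref{fig:8-6} (the hexagon of $\iI$-edges drawn there is what remains after collapsing the violet $\iV$-edges of \Cref{fig:N1}). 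It therefore suffices to show that along each boundary edge of \Cref{fig:N1} the triangle on the far side — and then, repackaging, the pack on the far side — is uniquely determined.

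First I would walk once around the boundary of \Cref{fig:N1}, in the same style as the proof of \Cref{prop:around-tri}: at each boundary edge the neighbouring triangle must (a) match the sign of that edge ($\iV+$ with $\iV-$, $\iI$ with $\iI$, and no two minus interfaces), (b) be compatible at its two triangle vertices with the tiles already placed — the white and black dots alternate around each vertex and the admissible local pictures are exactly those of \Cref{fig:vlist} (equivalently \Cref{fig:vertex-list}) — and (c) avoid the forbidden configuration of \Cref{lem:no-2nd}. A useful shortcut is that $(\iV-,\iV-,\iV-)$ is the only triangle carrying a $\iV-$ edge, so every outward-pointing $\iV+$ of the green pack immediately pulls in a $(\iV-,\iV-,\iV-)$ triangle, and then, by the mechanism of \Cref{prop:Tp3}, a $(\iV+,\iV+,\iV+)$ triangle and the rest of the adjacent pack; likewise each $\iI$ boundary edge, once its two endpoints are constrained by the neighbours already present, admits only one completion. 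Collecting the triangles forced in this way and regrouping them into packs produces precisely \Cref{fig:green-pack-env-b}. Note that the whole argument is local: every step uses only tiles already placed, so it applies to any pack tiling covering a sufficiently large disk around the green pack, with the radius of that disk bounded independently of the tiling.

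The only genuine difficulty I anticipate is bookkeeping. The boundary of the green pack is long, so this is a finite but not short case check, and at each step one must verify that the adjacent triangle is \emph{forced}, not merely admissible, and that the $\pm$ signs it presents on its new outward-facing interfaces are the ones drawn in \Cref{fig:green-pack-env-b}. I do not expect any conceptual obstacle beyond this: \Cref{fig:green-pack-env-b} is by design only the first layer of the environment — exactly the part that is immediate from \Cref{prop:around-tri} and the matching rules — with the sharper description postponed to \Cref{lem:gr-pack-env-2}, so the walk-around need never be iterated more than once.
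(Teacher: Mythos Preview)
Your reduction to \Cref{prop:around-tri} is exactly right, and is the paper's entire argument --- but you have misidentified the green pack, and this leads you to propose far more work than is needed. The green pack is \emph{not} the whole of \Cref{fig:N1}: by \Cref{thm:pack} a pack's boundary consists precisely of its $\iV$- and $\iI$-type edges, and \Cref{fig:N1} already contains several such edges internally (this is exactly what \Cref{fig:8-6} depicts --- the violet and dotted segments there are edges \emph{inside} the configuration of \Cref{fig:N1}, separating its triangles, not edges on its outer boundary). The green pack is only the six triangles of \Cref{fig:N1} that are glued to one another along $\iA/\iB/\iC$ edges; the remaining triangles of \Cref{fig:N1} --- including the $(\iV-,\iV-,\iV-)$ pieces --- already belong to neighbouring packs, and they \emph{are} the environment drawn in \Cref{fig:green-pack-env-b}. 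So the lemma is literally \Cref{fig:N1} redrawn with the six-triangle green pack singled out and the rest regrouped into packs; no walk around the outer boundary of \Cref{fig:N1} is required. The locality clause is then just the observation that the proof of \Cref{prop:around-tri} never looked beyond a bounded neighbourhood of the central triangle.

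The walk-around you sketch would, if carried out, be aiming at a strictly larger environment than \Cref{fig:green-pack-env-b} --- essentially the content of \Cref{lem:gr-pack-env-2}. That is not ``just bookkeeping'': the paper's proof of \Cref{lem:gr-pack-env-2} (in \Cref{ss:recog-pfs}) needs a contradiction argument going several tiles deep, and it is not true that every outer boundary edge of \Cref{fig:N1} ``admits only one completion'' on immediate inspection. So your plan is not wrong in spirit, but it both overshoots the target and underestimates the work that overshooting would require.
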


The claim about the neighbourhood independence follows from the fact that in the proof of \Cref{prop:around-tri}, which the lemma is deduced from, we did not need to look at the tiling beyond some radius around the central triangle.

\subsubsection{\texorpdfstring{$\hT'1$}{T'1}}\label{ss:tp1}

We already noted that a $\hT'1$ is the centre of either the \nth{5} or \nth{8} configuration of \Cref{fig:vlist}.

\begin{proposition}\label{prop:no-5th}
The fifth configuration of \Cref{fig:vlist} cannot occur in a whole plane tiling.
\end{proposition}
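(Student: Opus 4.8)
The plan is to assume that the fifth configuration of \Cref{fig:vlist} occurs in a whole plane tiling by the triangle pieces --- equivalently, by the packed pieces of \Cref{fig:pack}, via \Cref{thm:pack} --- and to derive a contradiction by forcing the surrounding tiles outward, ring by ring, in the spirit of \Cref{prop:around-tri}, \Cref{prop:Tp3} and \Cref{lem:no-2nd}. The central triangle vertex here is a $\hT'1$, so no $\iV$-type edge is incident to it, and I would first record, for each of the six triangles of the configuration, the type and sign of its two outer edges (the ones not incident to that vertex).

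Then I would push outward. Several first-ring neighbours are immediately forced: once an outer edge carries a $+$ or $-$ sign, the matching rule $+\leftrightarrow-$ together with the rule that the circular-sector half-edges stemming from a common triangle vertex must alternate marked/unmarked (see \Cref{fig:triset-7h}) typically leaves a unique choice, exactly as in the proof of \Cref{prop:around-tri}; in particular a $\iV+$ outer edge forces a $(\iV-,\iV-,\iV-)$ neighbour, and two $\iV-$ edges can never abut. At each subsequent step I would invoke, in whatever order the partial data allows: the matching and alternation rules; the fill lemmas \Cref{lem:fill-1} and \Cref{lem:fill-2}, which pin down a green or a yellow pack as soon as some triangle vertex has only two contiguous uncovered neighbours across unsigned interfaces; the forced green-pack environment \Cref{lem:fill-3}, applied to any green pack thus located; and the exclusions already available, namely of the second vertex configuration (\Cref{lem:no-2nd}) and of the $(\iB+,\iB+,\iB+)$ triangle. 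The expected endgame is that the forced region reaches a triangle vertex whose circular-sector hole admits no completion (no neighbourhood of \Cref{fig:vlist} matches the partial data), or is forced to place a $(\iV-,\iV-,\iV-)$ piece one of whose $\iV-$ edges abuts an already-placed $\iV-$ edge; either outcome is impossible.

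The main obstacle is not a single idea but the case bookkeeping: verifying that each ``forced'' step is genuinely forced, that no branch is overlooked, and that the forcing propagates far enough to collide with itself. Since the green-pack environment of \Cref{lem:fill-3} only pins down a bounded neighbourhood, one must check that the region forced around the fifth configuration and the forced environments of the green packs it generates overlap \emph{inconsistently} rather than merely failing to meet; choosing which of the three tips of the central $\hT'1$ to expand from, and the rotational order of the march, is what makes the argument terminate. A helpful bookkeeping device is to carry the decorations of \Cref{fig:packed-deco}: the thick black lines trace the boundaries of the yellow-hex clusters, so a failure surfaces visibly as such a boundary that cannot close into a $\hT'n$, contradicting the classification of \Cref{ss:hTp}.
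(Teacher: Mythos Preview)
Your approach is essentially the same as the paper's: assume the fifth configuration, force the surrounding triangles/packs outward using the matching rules and the visual aids of \Cref{ss:vis}, and arrive at a triangle vertex whose circular hole cannot be filled. The paper carries this out concretely via a short sequence of forced placements recorded in a single figure (\Cref{fig:exclu}), ending exactly with the first of your two anticipated endgames; so your plan is correct, and what remains is only the bookkeeping you already flagged.
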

\begin{proof}
See \Cref{ss:Tpn-env-pfs}.
\end{proof}

So a $\hT'1$ is always the centre of the \nth{8} configuration of \Cref{fig:vlist}, which we recall below left. Below right we indicate the corresponding green packed tiles configuration it induces.

\nopagebreak

\image{The $\hT'1$ is the yellow hex represented by the circular arc at the centre.)}{}{
\begin{tikzpicture}
\node at (0,0) {\includegraphics[scale=0.45]{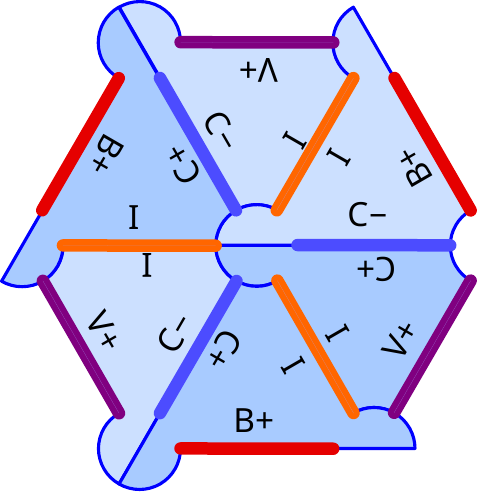}};
\node at (6,0) {\includegraphics[scale=0.3]{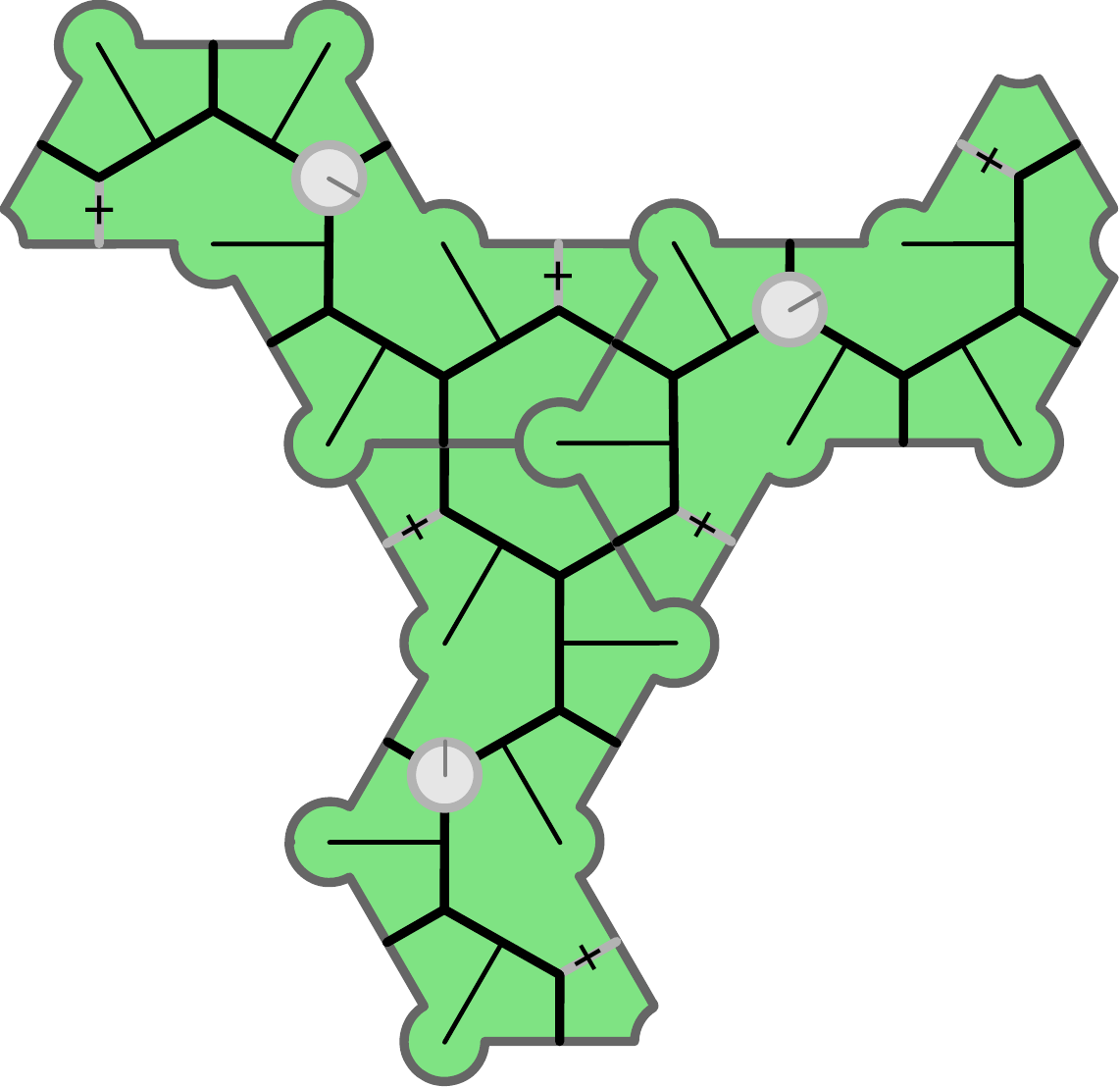}};
\end{tikzpicture}
}

This immediately resonates with the right part of \Cref{fig:T1-b}, which we recall here, rotated then mirror-reflected on the right. 
What resonates with a yellow hex in the figure below is a gray dot in the figure above.

\nopagebreak

\image{Left: a specific cc type; right: its reflection}{}{
\includegraphics[scale=0.66]{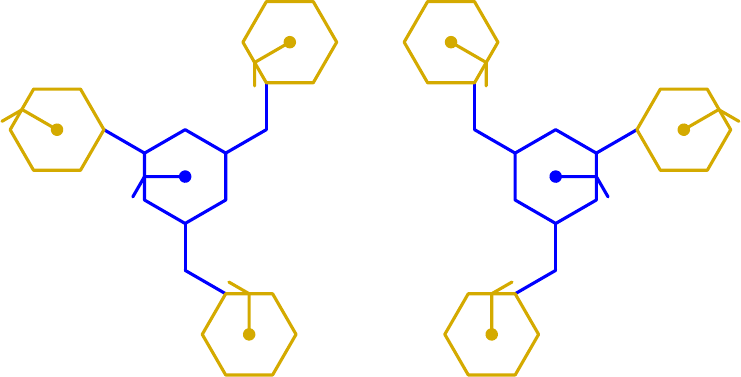}
}

\subsubsection{\texorpdfstring{$\hT'2$}{T'2}}\label{ss:tp2}

Let now us study the environment of a $\hT'2$.
By the analysis of \Cref{ss:hTp} it is associated to a $-,-,-$ triangular piece of the packed tileset of \Cref{fig:packed-deco}.
Recall that by \Cref{prop:Tp3} the $+,+,+$ triangle cannot touch it for a $\hT2'$.

\nopagebreak

\image{What can be in contact with one side of a $\hT'2$}{}{
\includegraphics[scale=0.3]{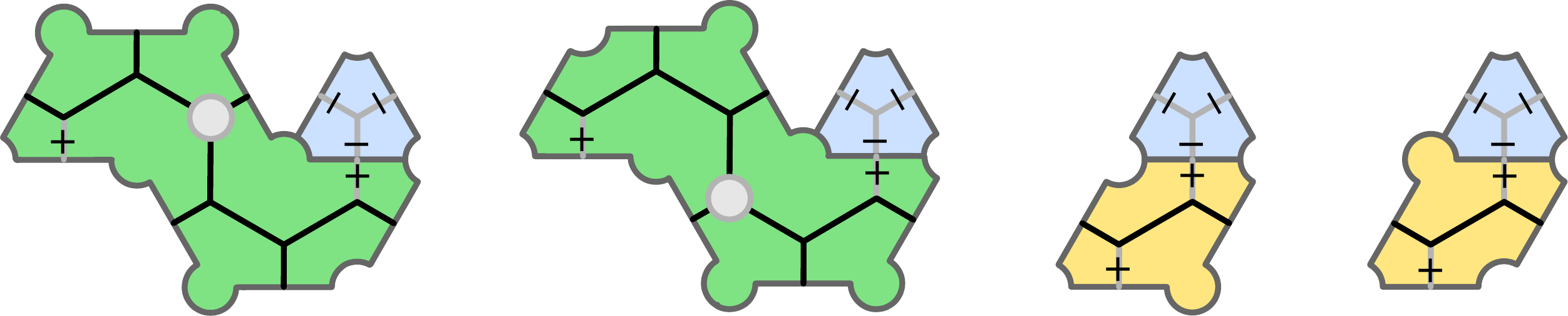}
}

The two diagonal sides of the yellow pack cannot be in contact with another yellow pack, for that would force the presence of a ($-$,$-$,$-$) piece sharing a triangle vertex with the original one, and we would get a $\hT'3$:

\image{}{}{
\includegraphics[scale=0.33]{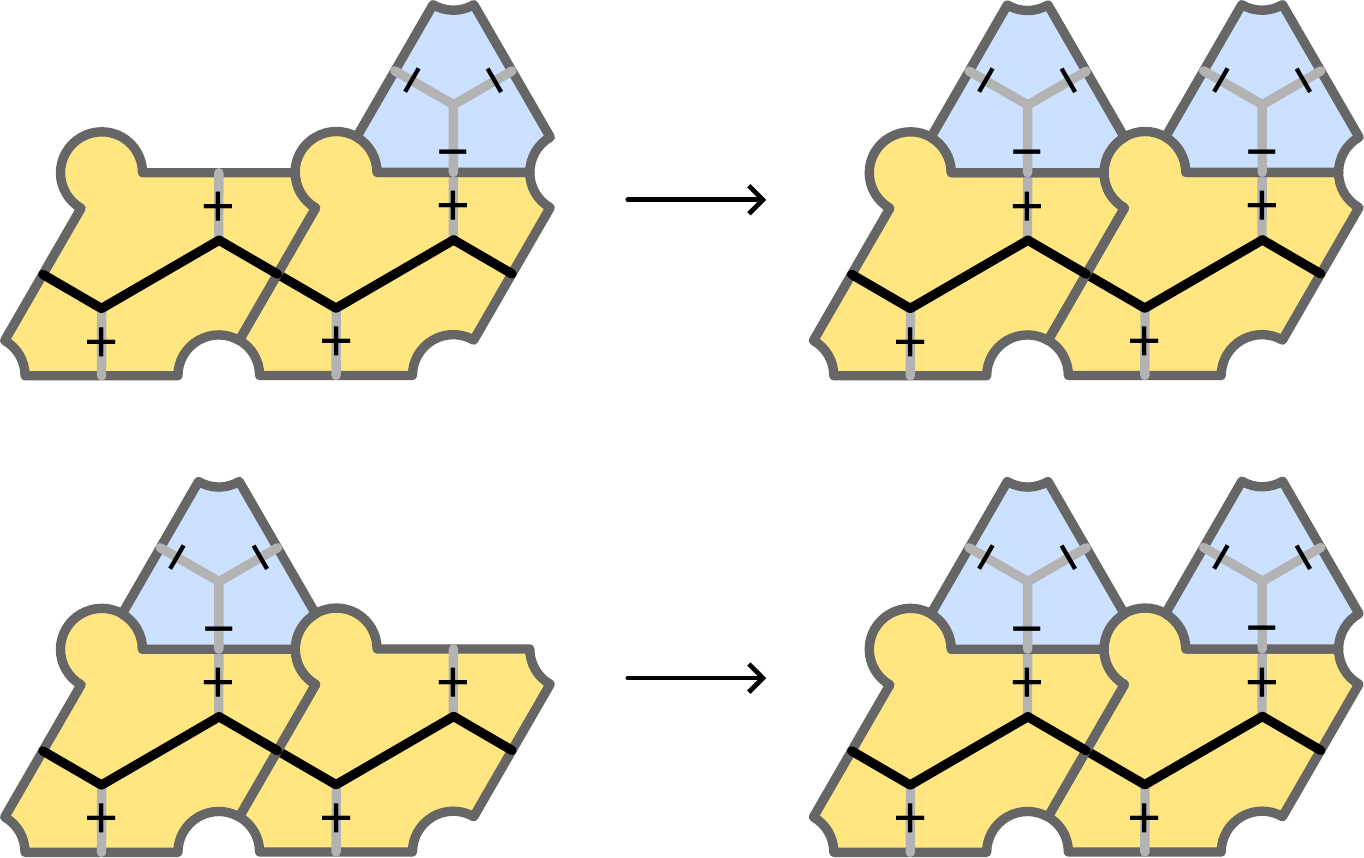}
}

Similarly, the situation below-left is impossible:

\image{}{}{
\includegraphics[scale=0.33]{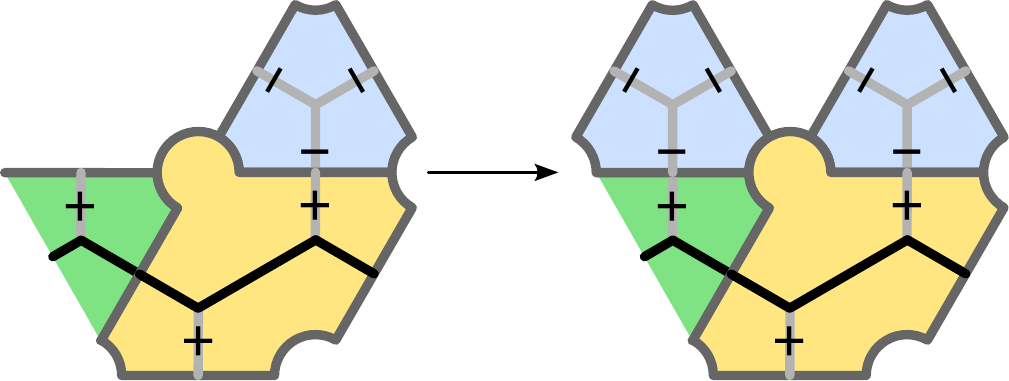}
}

The following is impossible too, because the red dot cannot be filled:

\image{}{fig:Tp2-env-no}{
\includegraphics[scale=0.3]{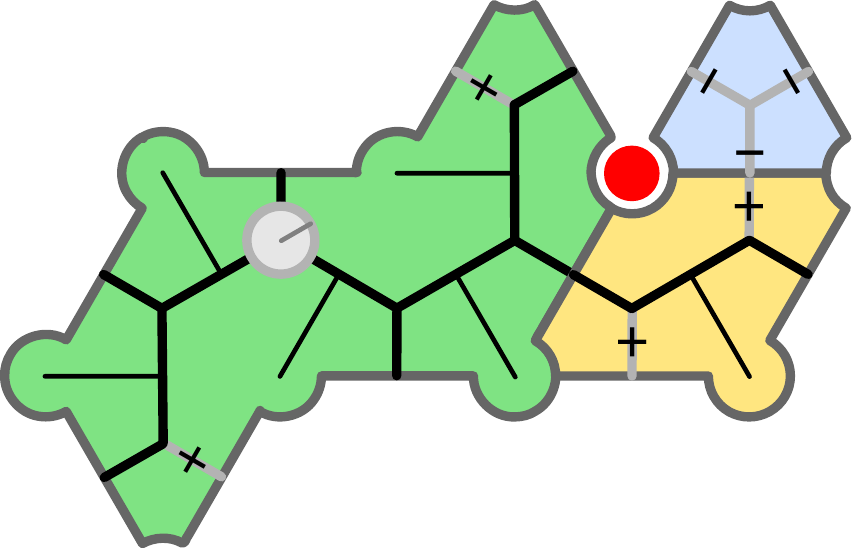}
}
Focusing on the lower left corner, we necessarily are in one of the 6 following cases, where the reader will note that the lower left tip of the $\hT'2$ is occupied by a green triangle, and also that the green piece appears in the 6 possible orientations.

\image{The 6 possibilities have been given labels.}{fig:Tp2-corner}{
\begin{tikzpicture}
\node at (0,0) {\includegraphics[scale=0.26]{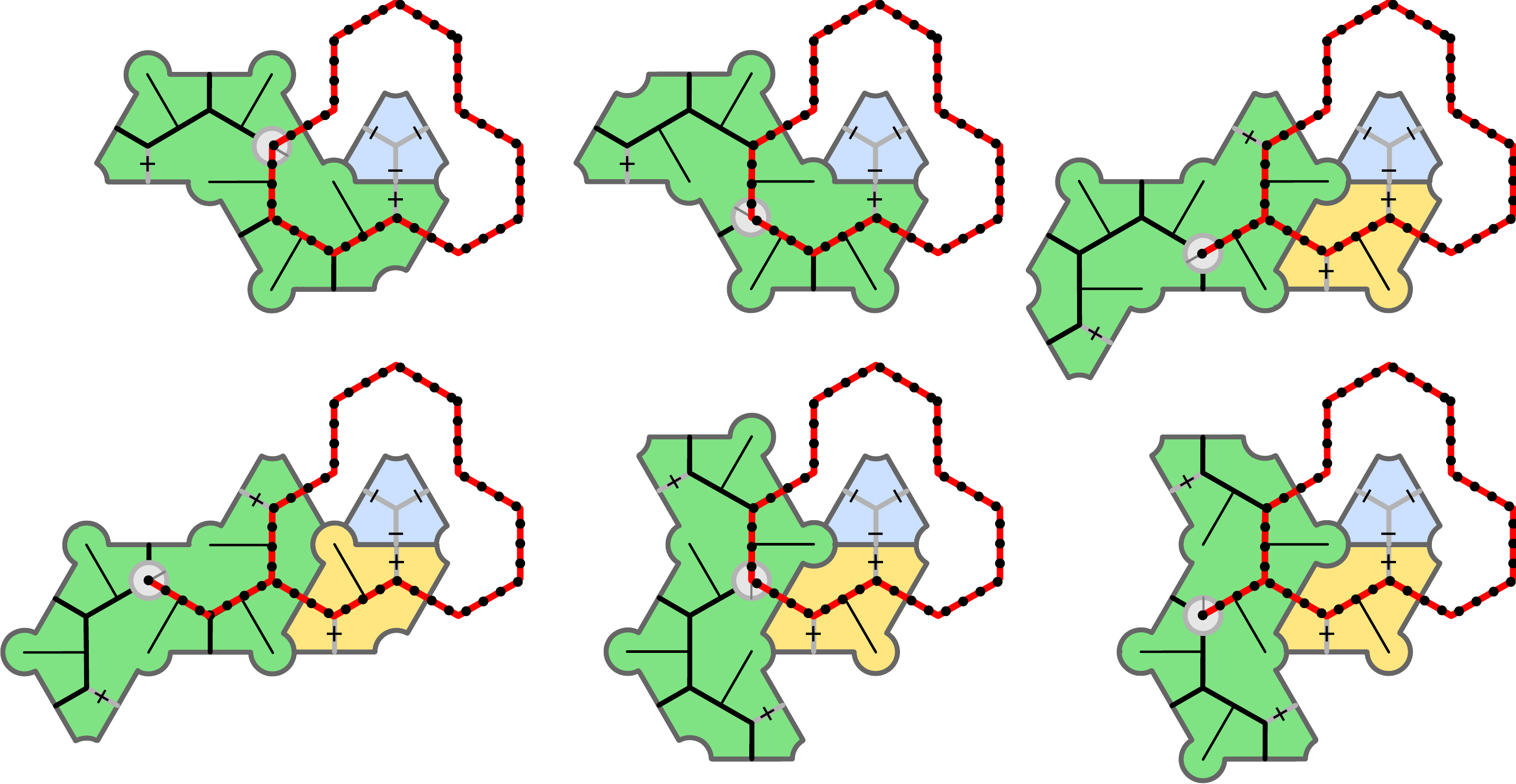}};
\node at (-5,1) {$n$};
\node at (-1,1) {$t$};
\node at (6,0.8) {$a1$};
\node at (-4,-2.8) {$a2$};
\node at (1,-2.8) {$t'$};
\node at (5.5,-2.8) {$a1'$};
\end{tikzpicture}
}

The immediate environment of the central blue triangle is a copy of three of the situations of \Cref{fig:Tp2-corner}, rotated by 0, $1/3$ and $2/3$.

\nopagebreak

We now invite the reader to focus on the circles of position highlighted below:

\image{}{fig:focus}{
\includegraphics[scale=0.3]{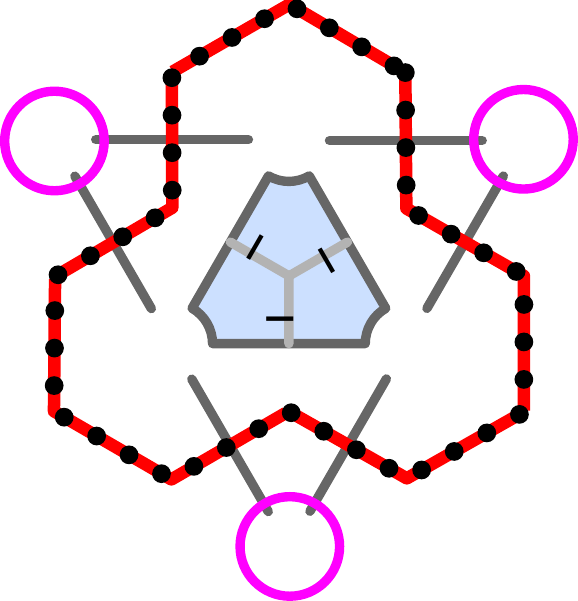}
}

The configurations in the central column of \Cref{fig:Tp2-corner} occupy two of these sites, so no two central column configurations can occur, and in fact no two of the following can occur in clockwise order: ($n$ or $t$ or $t'$) then ($t$ or $t'$ or $a1$ or $a1'$).

In the next statement we exclude a few other possibilities. Nearly all concern the presence of a two segments long antenna $a2$ and something else.

\begin{lemma}\label{lem:Tp2-clock}
Consider two tips following each other un the clockwise order around a $\hT'2$ cluster of a whole plane tiling by the packs.
Then their respective types cannot be in the following list (each item is to be read in the clockwise order):
$(n,n)$,
$(a2,a2)$,
$(a2,n)$,
$(a1,a2)$,
$(a1',a2)$.
\end{lemma}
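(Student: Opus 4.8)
The plan is to continue the site-occupancy bookkeeping begun in the paragraph preceding the statement. We fix a $\hT'2$ cluster, associated as in \Cref{ss:tp2} to a $(-,-,-)$ packed triangle of \Cref{fig:packed-deco}; its three tips are filled, in the three rotations $0$, $1/3$, $2/3$, by copies of the six corner configurations of \Cref{fig:Tp2-corner}. For two tips consecutive in the clockwise order, the relevant data is the pair of corner configurations at those tips together with the packed pieces filling the side of the $\hT'2$ between them; these pieces occupy a ring of triangles around the two triangle vertices carried by the tips, and in each forbidden case the contradiction will be that two forced pieces must occupy the same triangle with incompatible markings, that a triangle-vertex hole is left that no packed piece fits, or that an $\iI$- or $\iV$-type edge is brought into contact with an edge of the forbidden sign.

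Concretely, I would first record, for each of the six corner types of \Cref{fig:Tp2-corner}, which of the circles of position of \Cref{fig:focus} it occupies and which extra packed pieces it forces beyond those already drawn. For the antenna types this uses the visual rules of \Cref{ss:vis}: the far end of the length-two antenna $a2$ presents a triangle vertex with exactly two contiguous uncovered neighbours and unsigned free interfaces, so by \Cref{lem:fill-1} it must be completed by a green piece in one of the four listed positions, and the adjacent site is then occupied too. With this table in hand the five forbidden pairs are dispatched one by one. For $(a1,a2)$, $(a1',a2)$ and $(a2,n)$ one checks, exactly as for the impossibility of \Cref{fig:Tp2-env-no}, that the pieces forced by the $a2$ antenna on its clockwise side overlap a triangle forced by the neighbouring tip (or leave an unfillable vertex there); for $(a2,a2)$ the two antennae point toward one another across the intervening side and their forced completions collide, again using \Cref{lem:fill-1,lem:fill-2}. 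The remaining pair $(n,n)$ is treated separately: two consecutive type-$n$ tips force the side between them to be filled in a unique way, and the resulting triangle vertex has no admissible neighbourhood in \Cref{fig:vlist} — equivalently, the forced completion recreates a $(-,-,-)$ triangle sharing a triangle vertex with the original one, which by \Cref{prop:Tp3} would upgrade the cluster to a $\hT'3$, a contradiction.

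The main obstacle is organisational rather than conceptual: the length-two antenna reaches far enough that one must follow three or four packed pieces past the tip before the clash appears, and one must verify that all of this is confined to a bounded neighbourhood so that the local rules \Cref{lem:fill-1,lem:fill-3} legitimately apply. A secondary subtlety is keeping the clockwise orientation straight, since the list consists of \emph{ordered} pairs: $(a2,n)$ is excluded while $(n,a2)$ is not, the difference being on which side of the antenna the extra occupied site lies. Once the per-type site table is fixed, each of the five verifications is a short finite check using only the four packed pieces of \Cref{fig:packed-deco} and the vertex list \Cref{fig:vlist}.
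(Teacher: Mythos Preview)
Your overall plan matches the paper's approach: a case-by-case elimination using the packed pieces of \Cref{fig:packed-deco} and the visual rules of \Cref{ss:vis}. The paper's proof is entirely figure-based, displaying for each of the five pairs a short chain of forced placements (annotated $E$ when \Cref{lem:fill-3} is invoked, $C$ when \Cref{lem:fill-1} is invoked, and one step marked $A$ where a mauve dot can only be filled by a yellow piece because no green pack fits), each chain terminating in an unfillable hole marked in red. So the method you describe --- build the forced neighbourhood of each tip type, then check that the two forced neighbourhoods clash --- is exactly what is done.

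Two points deserve attention. First, what you have written is a plan, not a proof: the actual content of the lemma is the five finite checks, and you have not carried any of them out. The paper's proof \emph{is} those checks, presented graphically; without the figures or an explicit listing of which triangles are forced and where the clash occurs, your text does not yet establish the lemma.

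Second, your proposed mechanism for the $(n,n)$ case is suspect. You assert that two consecutive type-$n$ tips force a $(-,-,-)$ triangle sharing a triangle vertex with the central one, upgrading the cluster to a $\hT'3$ via \Cref{prop:Tp3}. That is not the contradiction the paper reaches: in all five cases the paper's chain ends in an unfillable vertex or an interface mismatch, not in a spurious $\iV$-edge appearing. A type-$n$ corner places a \emph{green} pack at the tip, not a $(-,-,-)$ piece, and there is no obvious reason two adjacent green packs should conspire to produce a new $(-,-,-)$ adjacent to the centre. You should either carry out the $(n,n)$ deduction explicitly and see what actually fails, or drop the $\hT'3$ heuristic and look for the unfillable hole, as in the other four cases.
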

\begin{proof}
See \Cref{ss:Tpn-env-pfs}.
\end{proof}

All this restricts the possibilities to exactly the same as in \Cref{prop:T2-cc-1,prop:T2-cc-2}, mirrored, which we recall below. Indeed, using the nomenclature above, we get that no two $n$ nor two $a2$ can be present by the first two case of \Cref{fig:focus}.
Every other excluded case concern an $a2$ (length two antenna) and tell us respectively that, in clockwise order, it cannot be followed by an $n$, nor preceded by $a1$ or $a1'$.
Recall also that one cannot have in clockwise order ($n$ or $t$ or $t'$) then ($t$ or $t'$ or $a1$ or $a1'$).
So $n$ is followed by $a2$ and $t$ or $t'$ by $n$ or $a2$.
Hence, with no $n$ nor $a2$, there can only be $a1$ or $a1'$.
We thus only get up to circular permutation, in clockwise order: with an $n$: ($n$, $a2$, $t$ or $t'$ or $a1$ or $a1'$) i.e.\ the last two cases of \Cref{fig:Tp2-refl}; with no $n$ but an $a2$: ($t$ or $t'$, $a2$, $a1$ or $a1'$) i.e.\ the first case; with no $n$ nor $a2$: ($a1$ or $a1'$, $a1$ or $a1'$, $a1$ or $a1'$), i.e.\ the second case.
In each case the hex orientation (visible as thin black or gray lines on \Cref{fig:Tp2-corner}) match those of \Cref{fig:Tp2-refl}.

\image{Mirrored versions of the figures of \Cref{prop:T2-cc-1,prop:T2-cc-2}}{fig:Tp2-refl}{
\includegraphics[scale=0.5]{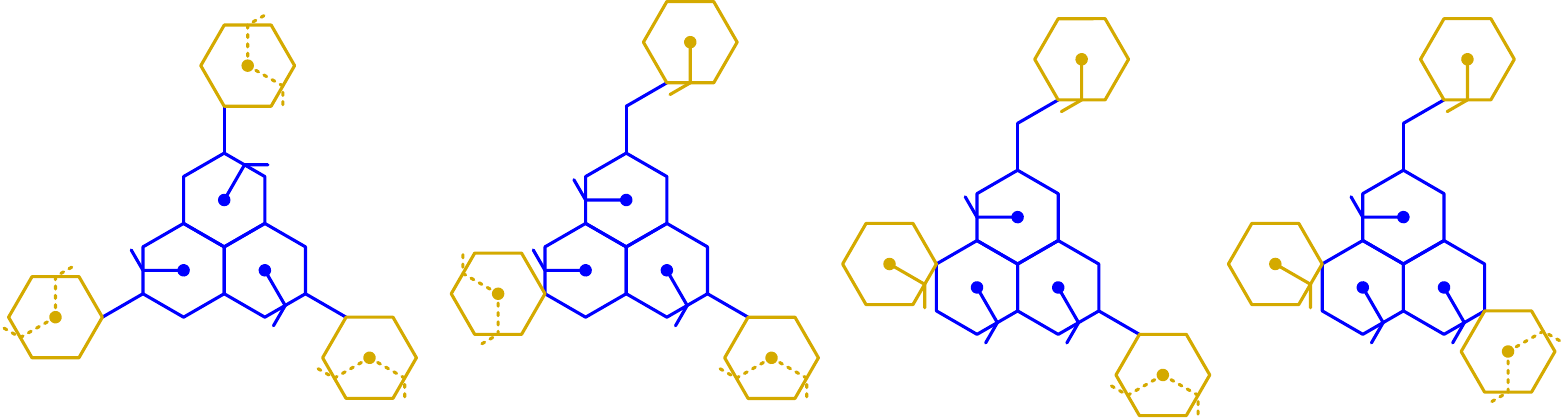}
}

\subsubsection{\texorpdfstring{$\hT'3$}{T'3}}\label{ss:tp3}

\begin{lemma}\label{lem:Tp3-env}
The environment of a $\hT'3$ is as on the figure below:

\nopagebreak

\image{}{}{
\includegraphics[scale=0.3]{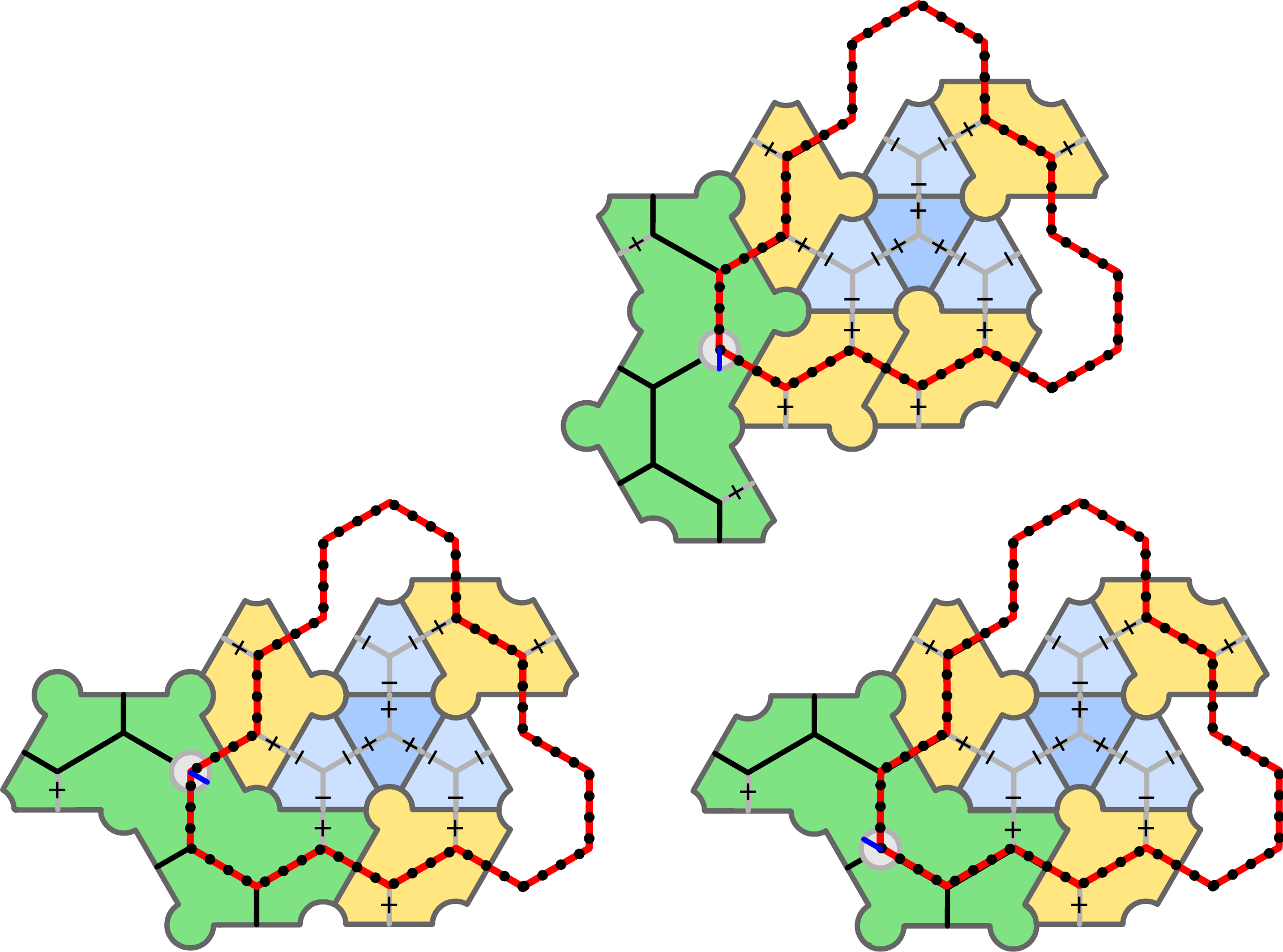}
}
\end{lemma}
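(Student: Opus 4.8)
The statement to prove is \Cref{lem:Tp3-env}, the environment of a $\hT'3$ yellow cluster. The strategy mirrors the earlier treatment of $\hT'1$ (\Cref{ss:tp1}) and $\hT'2$ (\Cref{ss:tp2}): work entirely inside the packed tileset of \Cref{fig:packed-deco}, start from the rigid core forced by \Cref{prop:Tp3}, and then fill outward corner by corner, using the vertex-filling lemmas \Cref{lem:fill-1,lem:fill-2,lem:fill-3}. By \Cref{prop:Tp3}, a $\hT'3$ is exactly the configuration of one $(\iV+,\iV+,\iV+)$ packed triangle surrounded by three $(\iV-,\iV-,\iV-)$ triangles, and moreover this whole patch shares no triangle vertex with any other $\iV$-type edge. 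This fixes six triangle vertices (the six corners of the three outer $(\iV-,\iV-,\iV-)$ triangles) with determined orientations of the central yellow-hex markings, and it tells us that every edge leaving those six vertices, other than the ones already placed, is an $\iA$-, $\iB$-, $\iC$- or $\iI$-type edge — never $\iV$.

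**Key steps, in order.** First I would draw the rigid core and label its six boundary triangle vertices and its six outward-pointing free edges (two at each of the three corners of the big outer triangle, grouped around the three "outer" triangle vertices). Second, at each of these triangle vertices four of the six neighbouring triangle slots are still empty, but the two already-placed triangles are adjacent, so by the symmetric/greedy argument used in \Cref{prop:around-tri} and \Cref{lem:fill-1} the remaining slots get filled essentially uniquely: going around clockwise, each next triangle is forced either because it is the unique piece bearing a given $+$ interface, or because it is the unique piece with an $\iI$ interface whose endpoints are free of half-disks (so that the disk markings do not collide), exactly as in the $\hT'1$ analysis. Since no $\iV$ edge may touch these vertices, the $(\iV+,\iV+,\iV+)$ and $(\iV-,\iV-,\iV-)$ pieces are excluded at every step, which removes the branching that otherwise occurs. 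Third, I would check that the three corners, filled independently, are mutually consistent — i.e. the pieces forced around corner $1$ agree with those forced around corners $2$ and $3$ where their neighbourhoods overlap — and that the $\iB$-type edges appearing on the boundary of the emerging patch pair up as required by \Cref{lem:no-2nd} and the paragraph following it, so each such $\iB$ edge "extends into \Cref{fig:N1}" by \Cref{cor:AB-into-green}. Finally, assemble the three forced corner-neighbourhoods into the single figure claimed in the statement, and read off the orientations of all yellow-hex markings from the dot markings of \Cref{fig:triset-7h}, checking they match the picture.

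**Main obstacle.** The genuine work is the case analysis at each corner: showing that at a triangle vertex with four empty slots and two adjacent occupied ones, the clockwise filling is forced at every one of the four steps. In the $\hT'1$ proof this needed a careful look at the "$\iV+$ with a free end on the right" step, where \emph{two} pieces a priori fit and one ($(\iV+,\iV+,\iV+)$) had to be excluded because it would drag in a $(\iV-,\iV-,\iV-)$ creating a forbidden $\iV$/$\iV$ or $\iV$/$\iI$ contact. Here the analogous exclusions are cheaper — the surrounding rigid core already pins down more of the boundary, and the hypothesis "no further $\iV$ edge touches the six vertices" kills the troublesome $\iV$-pieces outright — but one still has to verify at every step that the half-disk (white/black dot) markings are compatible, i.e. that the alternation of marked and unmarked half-segments around each triangle vertex (\Cref{fig:triset-1c}) is respected, and that no two $-$ interfaces are forced against each other (the rule used repeatedly, e.g. in \Cref{lem:fill-2}). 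I expect the consistency check between the three corners to be routine once each corner is done, because the plane is simply connected and the patch is convex, so there is no monodromy to worry about; the bookkeeping of which of the four packed pieces sits in each of the roughly twenty triangle slots of the final figure is the only laborious part, and it is exactly the kind of finite verification the paper has been doing throughout, so I would present it as "by inspection, proceeding clockwise around each of the three corners as in \Cref{prop:around-tri}", with the figure itself serving as the certificate.
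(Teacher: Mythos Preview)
Your plan has a substantive gap: you assume that the filling around each corner of the $\hT'3$ core is \emph{essentially unique}, as it was for the $\hT'1$ in \Cref{prop:around-tri}. It is not. The lemma's figure lists several admissible corner environments, and the text immediately following the lemma confirms this by saying the possibilities match those of \Cref{prop:T3-cc} mirrored---i.e.\ two configurations per corner. So a proof that ``goes clockwise forcing the next triangle at every step'' cannot succeed as stated: at some step you will meet a genuine branch where more than one pack fits, and both branches survive to the final answer. Your claim that ``the hypothesis `no further $\iV$ edge touches the six vertices' kills the troublesome $\iV$-pieces outright'' and ``removes the branching that otherwise occurs'' is precisely the wrong expectation here.

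The paper's proof is organised differently and accommodates this. It first forces an inner ring (\Cref{fig:T3-env-2-b}) by arguing about which \emph{pack type} can touch certain boundary edges of the core: a green pack has no room, a $(+,+,+)$ is excluded by \Cref{prop:Tp3}, so a yellow pack is forced, and its free $+$ side then forces a further $(-,-,-)$. Only after this forced ring does the proof turn to a corner-by-corner case analysis, where it explicitly tracks which alternatives are eliminated and which survive: for one branch ``only one case is not ruled out'', for another ``no case is ruled out''. Your outline would need to be rewritten to (i) first obtain this forced inner ring by the pack-exclusion argument, and (ii) then enumerate---not eliminate---the remaining corner possibilities, checking each against the visual rules of \Cref{ss:vis}.
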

\begin{proof}
See \Cref{ss:Tpn-env-pfs}.
\end{proof}

All this restricts the possibilities to exactly the same as in \Cref{prop:T3-cc}, mirrored. For convenience we reproduce this figure here:

\nopagebreak

\image{Mirrored and rotated copies of the two situations of \Cref{fig:T3}}{}{
\includegraphics[scale=0.66]{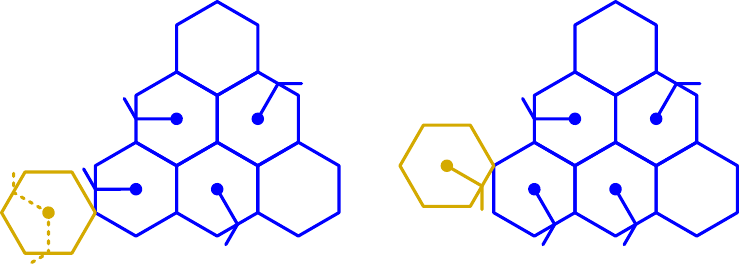}
}

\subsection{Hierarchical structure}

We can thus reprove the results in \cite{chiral}:

\begin{corollary}
Whole plane tilings by the Spectre, without reflections, are uniquely hierarchical.
\end{corollary}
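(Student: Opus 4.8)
The plan is to upgrade the self-similarity assembled in the previous sections into an explicit substitution system and to check that the de-substitution it provides is forced at every level. I would start from a whole plane Spectre tiling $T$ and replace it, via \Cref{thm:pieces-ne,thm:pieces-su,thm:pack}, by the combinatorially equivalent pack tiling, equivalently (via \Cref{prop:ces,prop:cts}) by a honeycomb equipped with a partition into triangular clusters $\hT n$ and a set of dots; call this package the \emph{Spectre data} of $T$. The heart of the argument is a single step: from any instance of Spectre data, produce canonically a new instance of Spectre data, living on a honeycomb blown up by a fixed factor. Granting this \emph{deflation}, the corollary will follow by iteration, provided the deflation is the only grouping compatible with a substitution structure.

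To build the deflation I would use exactly the objects already introduced. In the pack tiling, the $\iV$-connected components of the yellow hexes are the yellow clusters, triangular of type $\hT'n$ by \Cref{prop:Tp3,prop:Tp2}; their $\iI$-adjacency is realised, by \Cref{prop:yc2gh}, as the vertex/edge incidence of the green honeycomb $HG$, a rescaled and rotated copy of the starting honeycomb (the rescaling and the $1/12$ rotation being those recorded in \Cref{prop:yc2gh,lem:coord}). The decorated packs of \Cref{fig:packed-deco} already display $HG$, the partition of its hexagons (the green packs) induced by the $\hT'n$, and, at the three tips of each $\hT'n$, the three adjacent green packs, which play the role the yellow hexes played one level down (\Cref{cor:ee}). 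I would assemble these into the new Spectre data and then verify that it satisfies the conditions of \Cref{prop:ces}. That verification is precisely what \Cref{ss:tp1,ss:tp2,ss:tp3} provide: the environment of a $\hT'1$ is the one of \Cref{fig:T1-b} (using \Cref{prop:no-5th}), the environment of a $\hT'2$ is one of those of \Cref{prop:T2-cc-1,prop:T2-cc-2} (using \Cref{lem:Tp2-clock}), and the environment of a $\hT'3$ is the one of \Cref{prop:T3-cc} (using \Cref{lem:Tp3-env}), all up to the global mirror reflection tracked in those sections. Since the list of admissible environments, the interface types, and the one-yellow-segment-per-vertex rule transfer verbatim, the deflated datum is again Spectre data, so by \Cref{prop:cts} it corresponds to a genuine whole plane Spectre tiling $T'$; equivalently, $T'$ is obtained from $T$ by amalgamating into a single supertile all Spectre tiles lying under a common green hexagon of $HG$, each such supertile containing several cc's and hence more than one tile.

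For uniqueness and iteration I would argue as follows. The yellow clusters and their $\iI$-adjacency are defined straight from $T$ (\Cref{def:V-clu,def:I-adj}) with no choices, and \Cref{prop:Tp3,prop:Tp2} show that they are forced, so the first-level supertiles of any hierarchical decomposition of $T$ must be the green hexagons: the map $T\mapsto T'$ is canonical. Because $T'$ is itself a whole plane Spectre tiling, the same construction reapplies to it, giving $T''$, and so on; the sequence $T = T^{(0)}, T^{(1)}, T^{(2)},\dots$, with each $T^{(k+1)}$ a substitution of $T^{(k)}$ by the fixed rule ``a green hexagon's worth of pieces becomes one larger piece'', is a hierarchy, forced at every step, and since every supertile contains more than one tile the supertiles grow without bound under iteration. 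This yields unique hierarchy. The one genuinely laborious point, and the main obstacle, is the verification invoked above: checking that \emph{every} constraint of \Cref{prop:ces} — not just the triangular shape of the clusters but the matching of each interface type ($\iA$, $\iB$, $\iC$, $\iV$, $\iI$) and the yellow-marking rule — reproduces itself one level up, while keeping the $1/12$ rotation and the overall reflection straight. This is the content of \Cref{sub:env}; once it is in place the induction closes with nothing further to do.
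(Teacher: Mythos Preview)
Your proposal is correct and follows essentially the same route as the paper: pass to the pack tiling via \Cref{thm:pieces-ne,thm:pack}, read off from the decorated packs the next-level honeycomb partition with dots (yellow clusters $\hT'n$ and the grey dots inside green packs), invoke the environment analysis of \Cref{sub:env} to check that this new data satisfies \Cref{prop:ces} up to a global reflection, and iterate. The paper phrases the supertiles as ``big cc's'' (extended yellow clusters reaching to the grey dots) rather than as ``green hexagons of $HG$'', and is explicit about the pink segment needed for the second condition of \Cref{prop:ces}, but the logical skeleton --- canonical deflation, verification via \Cref{ss:tp1,ss:tp2,ss:tp3}, uniqueness from the forced nature of each step --- is the same.
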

\begin{proof}
A whole plane tiling induces a yellow/blue graph which by \Cref{thm:pieces-ne} decomposes into pieces enumerated in \Cref{fig:labeled-cc-2} with superimposing interfaces.

Such a tiling has been seen to be combinatorially equivalent to a tiling with triangular pieces given in \Cref{fig:triset-7h}.
Then the triangles have been proven (\Cref{thm:pack}) to pack into shapes given by \Cref{fig:pack}.
These shapes have been given decorations in \Cref{fig:packed-deco}, which trace in the plane the same type of figure as \Cref{fig:trace}.

Let us mark as pink the central thick black line of the green pack as on the figure below:

\image{}{fig:ps}{
\includegraphics[scale=0.35]{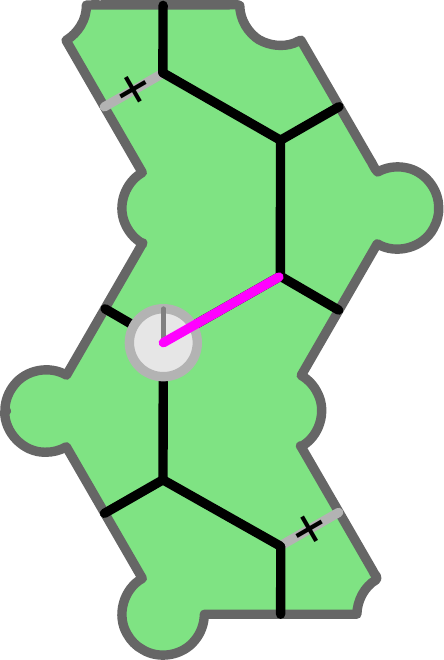}
}

\image{Example of decorated pack tiling, for some whole plane tiling by the Spectre.}{fig:s4}{
\makebox[\textwidth][c]{\includegraphics[scale=0.25]{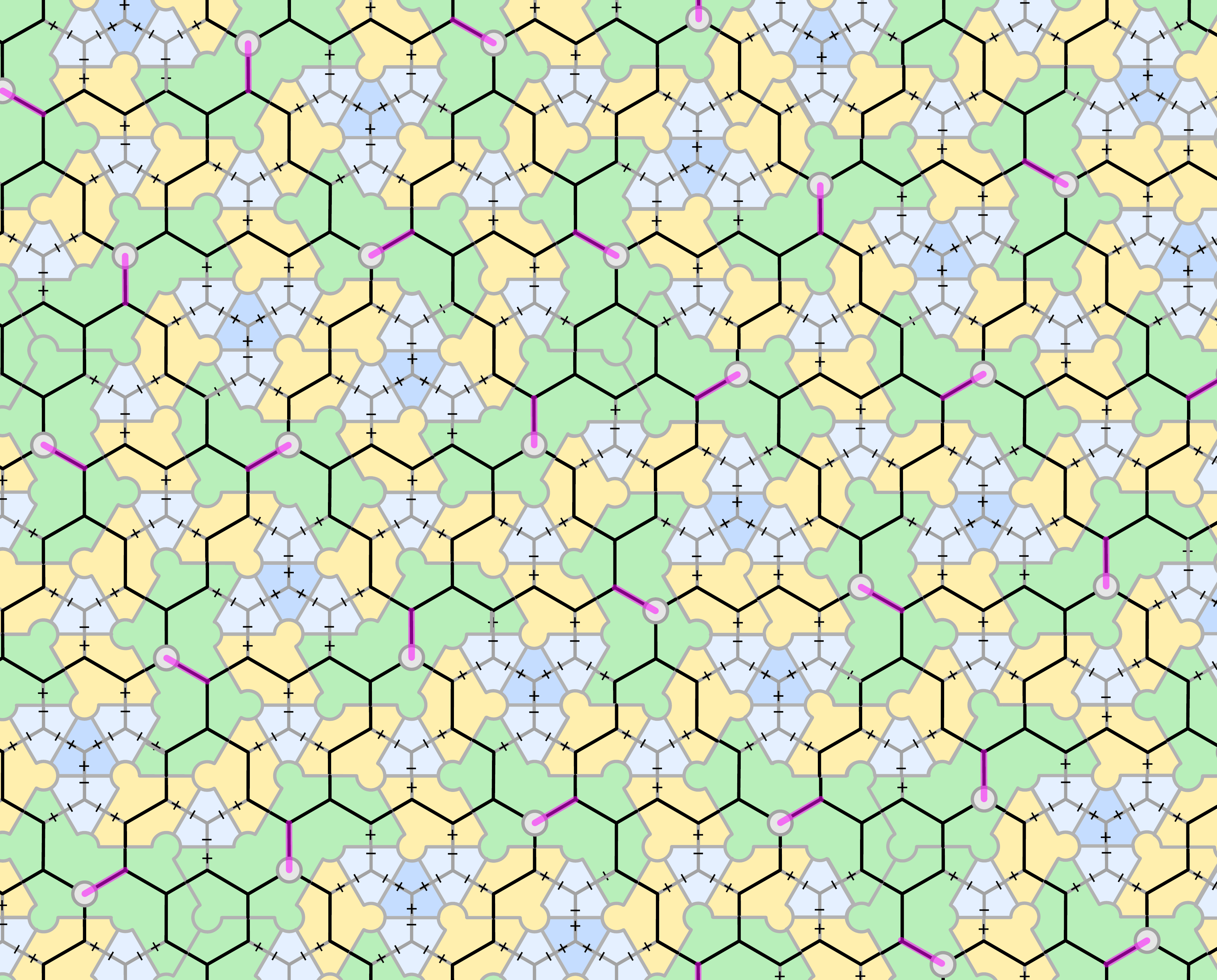}}
}

The thick black and gray lines of those decorations cut the plane into big hexagons and the thick black lines group those hexagons into clusters $\hT'1$,  $\hT'2$,  $\hT'3$, which can be extended within the green packs to reach the big grey dot. Those \term{extended clusters}, which we will also call \term{big cc}, look exactly like the reflection of the blue connected components of the yellow/blue graph that we enumerated in \Cref{sub:cc-list} and called cc.

We can then invoke in \Cref{sub:condensed} the \Cref{prop:ces} (the second rule is respected, using the pink segment an the fact that in the list of possibilities made in \Cref{ss:tp1,ss:tp2,ss:tp3} they always correspond to the pink segment of \Cref{fig:contracted-pieces}, repeated below), which implies that the extended clusters above correspond combinatorially to some reflected Spectre tiling. Then the whole analysis can be carried out again, which proves the hierarchical nature of the tiling.
In every step, the grouping is unique and in the combinatorial equivalence between different kind of tilesets, the correspondence is unique. So the tiling is uniquely hierarchical.

\image{}{table:1}{
\small
\begin{tikzpicture}[every node/.style={align=center}, arrows={[scale=3]}]
\node(a2) at (0,2) {Spectre\\tiling};
\node(a1) at (0,0) {cc $+$\\interfaces};
\node(b) at (2.25,0) {triangles};
\node(c) at (4.25,0) {packs};
\node(d) at (6.5,0) {reflected\\honeycomb\\partition$+$dots};
\node(e2) at (9.25,2) {reflected\\Spectre\\tiling};
\node(e1) at (9.25,0) {reflected\\cc $+$\\interfaces};
\node(1) at (11,0) {etc.};

\draw[<->] [below](a2) -- (a1);

\draw[<->] [right](a1) -- (b);
\draw[<->] [right](b) -- (c);
\draw[->,thick] [right](c) -- (d);
\draw[<->,dotted] [right](d) -- (e1);
\draw[<->] [right](e1) -- (1);

\node at (7.65,1.6) {prop.\\\ref{prop:ces}};
\draw[<->] [below](e2) -- (e1);

\draw[<->] [above right] (d) -- (e2);
\end{tikzpicture}
}

Alternatively, we can bypass the use of Spectres, because the way \Cref{prop:ces} is proved is\ldots\ via the dotted equivalence.
\end{proof}

Moreover we can invert the thick arrow in \Cref{table:1}:

\begin{proposition}\label{prop:inv}
Any whole plane or partial honeycomb partition with dots, which is decomposable as an assembly of reflections of the pieces of  \Cref{fig:copy} such that:
\begin{itemize}
\item pieces are rotated by a multiple of 1/6 and assembled by superimposing their boundary interfaces from dot to dot (they must have the same shape and dots must map to dots),
\item hexes do not overlap and each element of the partition is the set of hexes of a piece.
\item every dot has one and only one of its three incoming segments that is pink in the list on the right of interface pairs of the figure,
\end{itemize}
gives rise to a whole plane or partial tiling by the packed pieces of \Cref{fig:packed-deco}.

\nopagebreak

\image{This is a copy of \Cref{fig:contracted-pieces}. It shows a list of possible cc shapes (with one actually never appearing) together with dots marking where yellow hexagons touch them, for decorated graphs associated to Spectre tilings. But here we do not assume initially that there is a Spectre tiling: only that we have a }{fig:copy}{
\includegraphics[scale=0.5]{inter-ant-4g.pdf}
}
\end{proposition}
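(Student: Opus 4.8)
The plan is to reverse the construction behind the thick arrow of \Cref{table:1}: from the partition $\mathcal P$ with its dots I will read off, piece by piece, the packed tiling of \Cref{fig:packed-deco} whose black‑and‑grey decoration is $\mathcal P$, check that each local choice is forced, and glue everything coherently.

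The first step is to recognise that $\mathcal P$ is governed by \Cref{prop:ces}. Reflecting $\mathcal P$ turns an assembly of reflected pieces of \Cref{fig:copy} into an assembly of the un‑reflected pieces of \Cref{fig:contracted-pieces}, with the same interface‑matching rule and the same ``exactly one pink incoming segment at each dot'' rule --- i.e.\ exactly the hypotheses of \Cref{prop:ces}. Hence by \Cref{prop:ces} and \Cref{prop:cts} the reflected $\mathcal P$ comes from a Spectre tiling, and this tiling (equivalently, by \Cref{lem:iss} and \Cref{fig:labeled-cc-2}, the full cc‑plus‑interface configuration) is completely determined. In particular $\mathcal P$ is, up to reflection, a legal piece‑by‑piece arrangement whose dot data records precisely the pink‑segment condition (the second condition of \Cref{prop:ces}, itself a restatement of condition \ref{item:yellow-rule} of \Cref{thm:pieces-su}), so there is no hidden obstruction to consistency.

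The second, and main, step is to put the packs back. For this I would use the classification of yellow‑cluster environments: \Cref{ss:tp1} shows a $\hT'1$ is always the centre of the eighth vertex configuration of \Cref{fig:vlist}; \Cref{ss:tp2} shows a $\hT'2$ is one of the two situations of \Cref{fig:Tp2-refl}; \Cref{ss:tp3} shows a $\hT'3$ is one of the mirrored situations of \Cref{fig:T3}. In each case that reference supplies the precise local configuration of packed pieces of \Cref{fig:packed-deco} that produces the corresponding cc‑piece of \Cref{fig:copy} as an extended cluster (big cc), with all markings matching after the $1/12$ clockwise rotation (cf.\ \Cref{fig:wir-1,fig:wir-2}) and with the pink segments of \Cref{fig:copy} sitting exactly on the central thick lines of the green packs (\Cref{fig:ps}). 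I would therefore replace each cc‑piece of $\mathcal P$ by the pack configuration the classification attaches to it. Because the interfaces of adjacent cc‑pieces of $\mathcal P$ coincide as broken lines (\Cref{lem:iss}), the corresponding pack configurations agree on the overlaps; that the whole assembly is globally consistent --- that going around any loop one returns to the same packs --- follows from simple connectivity of the plane, exactly as in the loop arguments of \Cref{sub:env}. The result is a well‑defined tiling $\mathcal T$ by the packs of \Cref{fig:packed-deco}, and by the very way the local replacements were chosen, the decoration of $\mathcal T$ traces back $\mathcal P$.

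The hard part is the bookkeeping inside the second step: one must keep track simultaneously of the reflection, the $1/12$ rotation, the interface labels $\iA,\iB,\iC,\iI,\iV$ with their signs, and the position of the pink segments, and be sure that the local pack configurations read off from \Cref{ss:tp1,ss:tp2,ss:tp3} really are the reflected cc‑pieces of \Cref{fig:copy}, and that no yellow hexagon ends up with zero or two solid markings. This is routine --- each local identification is already visible in the figures of \Cref{ss:tp1,ss:tp2,ss:tp3} --- but it is the only place where anything must be verified; once the pieces are known to fit, \Cref{prop:inv} follows from the simple‑connectivity gluing together with \Cref{prop:ces}, using nothing beyond results already established in the paper.
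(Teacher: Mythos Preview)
Your proposal has a genuine gap, and step~1 is a red herring.

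\textbf{Step 1 does not help.} Applying \Cref{prop:ces} to the reflection of $\mathcal P$ produces a Spectre tiling $S$ whose blue honeycomb partition with dots \emph{is} the reflected $\mathcal P$. But the packs of $S$ (via $S\leftrightarrow$ triangles $\leftrightarrow$ packs) decorate to the \emph{next} level of the hierarchy, not to $\mathcal P$ itself. You have produced the object on the right end of \Cref{table:1}, not the packs sitting to the left of the thick arrow you are trying to invert. (The paper does mention, as a remark after its proof, an alternative along these lines via \Cref{ss:pts}, but notes it would require a separate argument it does not carry out.) Also, \Cref{prop:ces} is stated for whole-plane data, while the proposition covers the partial case too.

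\textbf{Step 2 is where the real gap lies.} The results of \Cref{ss:tp1,ss:tp2,ss:tp3} are necessary conditions: \emph{if} a pack tiling exists, \emph{then} each $\hT'n$ is surrounded in one of the listed ways. You are using them in reverse, as a recipe. Two concrete problems arise. First, ``replace each cc-piece by the pack configuration the classification attaches to it'' is ill-posed: a single green pack meets six distinct big cc's (its six circular tips sit inside six different pieces of \Cref{fig:copy}), so packs are not local to cc-pieces and cannot simply be dropped in piece by piece without a rule for who owns what. Second, once you make such a rule explicit (as the paper does in \Cref{fig:sr} or \Cref{fig:sr3c}), the consistency you need is not a monodromy statement amenable to ``simple connectivity'': it is the local check that adjacent substituted patches neither overlap nor leave gaps along their common interface. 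That check is exactly the content the paper supplies and you omit.

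\textbf{What the paper does instead.} The paper gives a direct construction: it places one green pack at each dot (its orientation forced by the pink-segment rule, since the six tip types of \Cref{fig:tip-types} correspond to the six orientations of the green pack), then drops a $(-,-,-)$ piece at each $\hT2$, a $(+,+,+)$ surrounded by three $(-,-,-)$ at each $\hT3$, and finally yellow pieces at the remaining uncovered hex centres. The substance of the proof is verifying disjointness of the green packs (using the explicit list of which honeycomb vertices each covers) and that every hex centre is covered exactly once. This is precisely the ``careful check at the interfaces'' that your proposal defers; the paper also notes your substitution-style route as an alternative, but says explicitly that it too requires that check.
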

\begin{proof}
First we recall the possible tip types for the cc's, according to the position of the associated dot, and the presence or not of an antenna. The tip is associated to the triangle cluster, assumed to point up in the figure below, and we focus here on the lower left tip of the triangle.

\image{Tip types for a lower left tip. Cluster tip circled in green. Dot position indicated by a gray circle, its orientation by a short blue line.}{fig:tip-types}{
\includegraphics[scale=0.25]{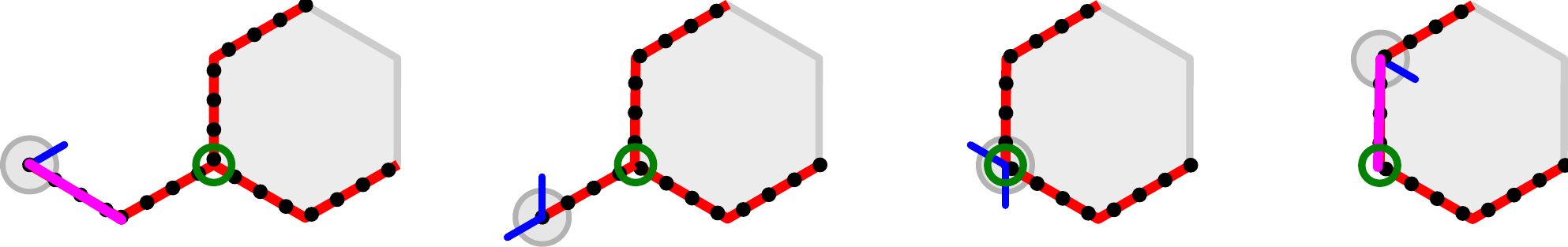}
}

If we are to cover the dot by a green pack respecting the dot orientation, in can only be the as follows.

\image{}{fig:unsub-1}{
\includegraphics[scale=0.25]{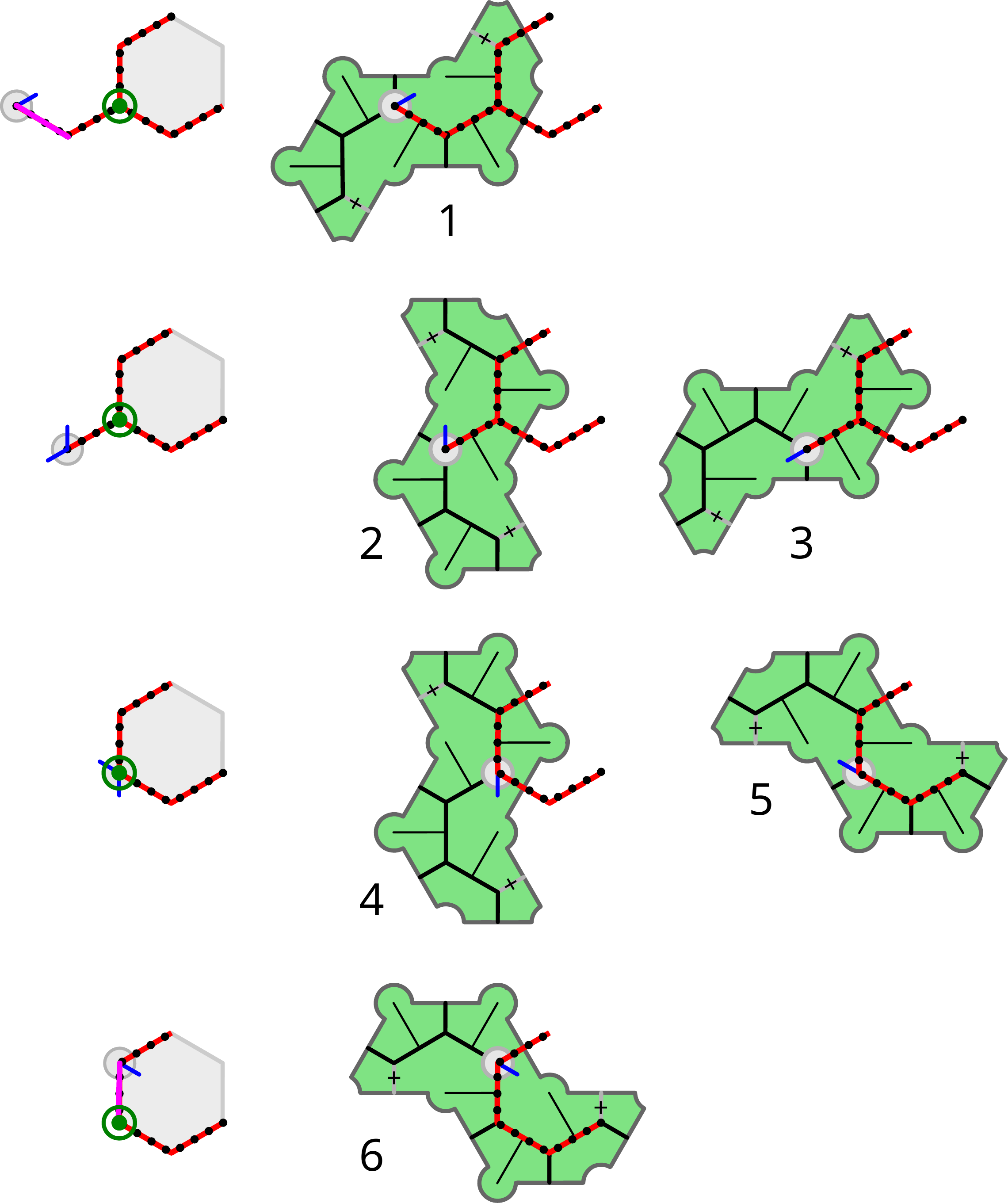}
}

It is remarkable that the dot, hence the pack, comes in the 6 possible orientation once and only once in this figure.

Consider now a dot whose orientation, given by the marking or equivalently the pink edge, we arbitrarily fix as on the figure below with the blue line oriented down. Then there are exactly the 6 possibilities below, where the tip comes in the 6 possible orientation once and only once in this figure.

\image{}{fig:unsub-2}{
\includegraphics[scale=0.25]{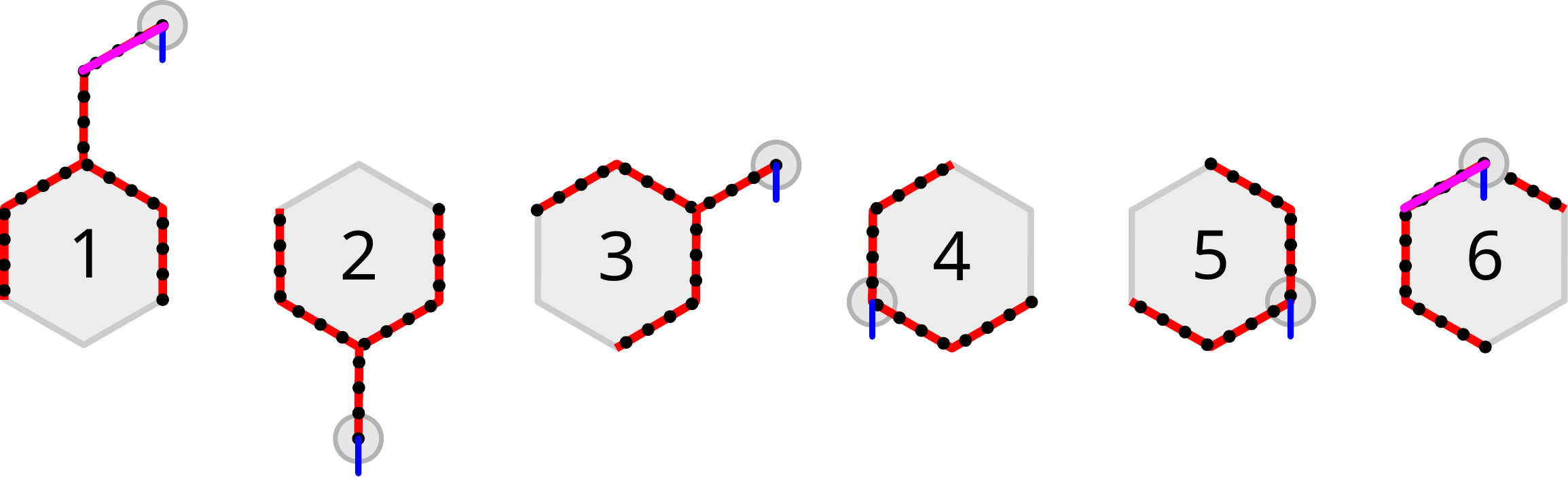}
}

On the figure below we realize that the dot must be environed by a unique assembly. Moreover this assembly uses each possibilities above once.

\image{First, because of the pink segment rule, 1 and 6 must assemble as on the left picture.
The other two hexagons touching the dot can only be 4 and 5 as on the second picture.
Then the hexagons called A and B can only be 3 and 2.
}{fig:unsub-3}{
\includegraphics[scale=0.25]{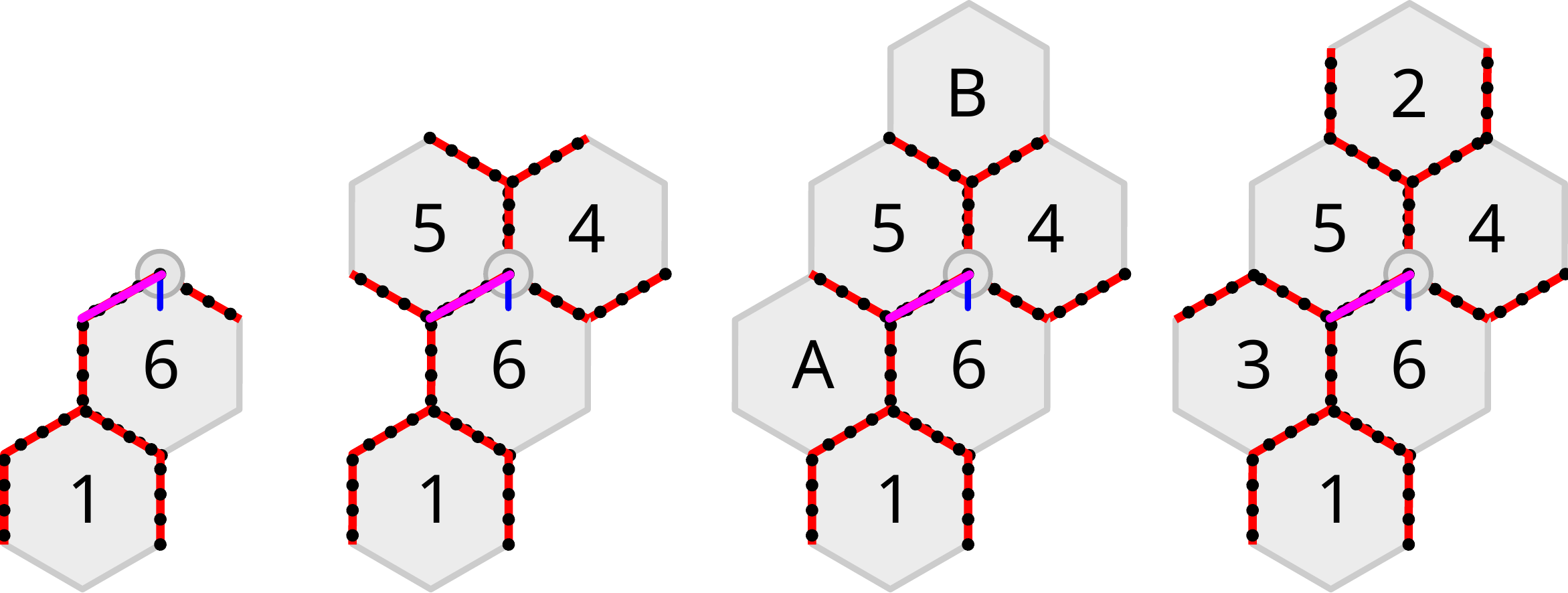}
}

We now examine the effect of placing the green pack on this configuration:

\image{}{fig:unsub-4}{
\includegraphics[scale=0.25]{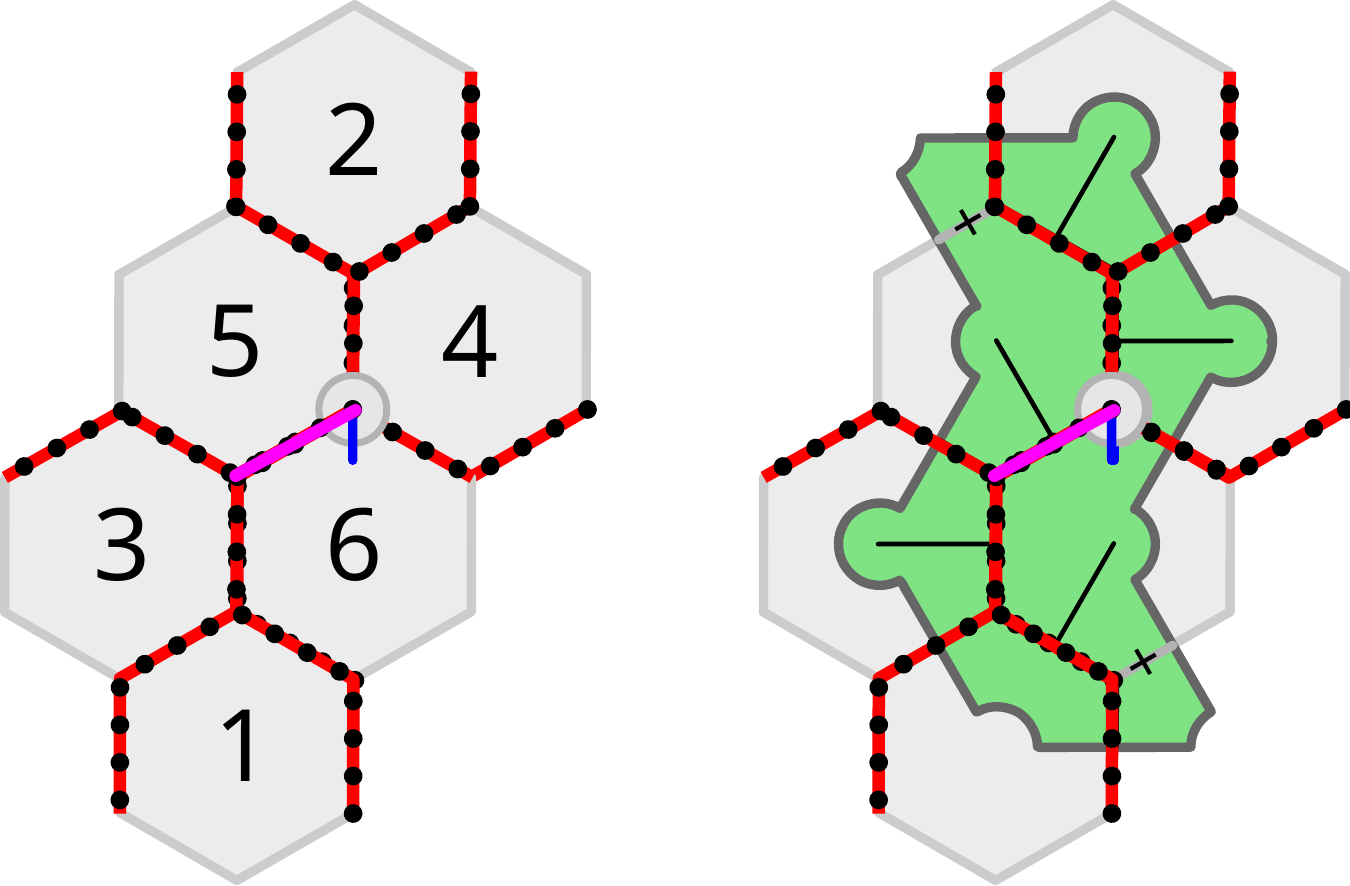}
}

Keeping this in mind we will proceed as follows: first for each dot we put the green pack as above.
We claim that two different dots give disjoint green packs.
Indeed note first that on \Cref{fig:unsub-4}, the only vertices of the honeycomb that a green pack covers belong to the boundary of the reflected cc piece containing its dot.
Then on \Cref{fig:unsub-1} we realize that:

\textit{Fact 1:} the vertices of the cluster that are covered by the pack are in 4 cases the tip plus the next vertex along the cluster boundary in the clockwise direction, and in cases 5 and 6 the two preceding ones.

On a $\hT2$ and a $\hT3$, these vertices are disjoint.
On a $\hT1$ the three tips have antennae and thus we also have disjointness.
We also must check disjointness of the disks at triangle corners, i.e.\ at hex centres.
Actually disks from the green packs are only placed at those hexes that have their cluster tip as a vertex. So for the three dots of a given reflected cc piece, they are different hexagons, unless we are in the case of a $\hT1$.
In that case, since the only reflected cc piece in our collection with a $\hT1$ is as on the figure below, for which in the two allowed cases\footnote{One of which actually never appears if the situation of the proposition covers the whole plane, but that does not matter here.} there is no overlap:

\image{}{fig:unsub-6}{
\includegraphics[scale=0.22]{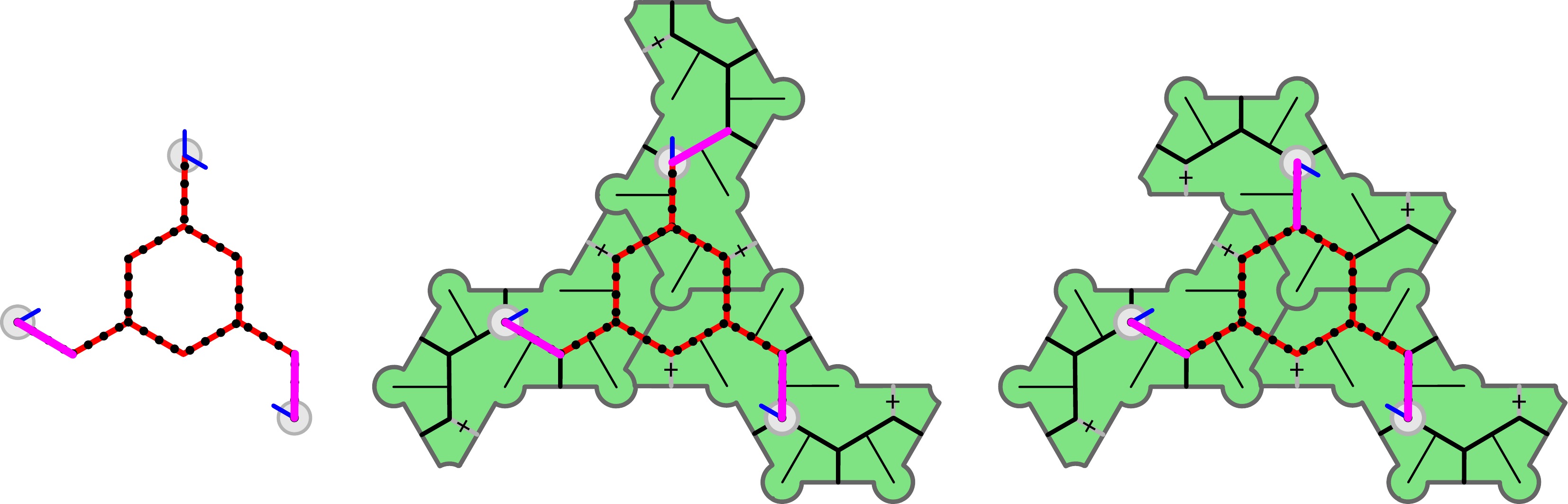}
}

Second, for each $\hT2$ we put a $-,-,-$ tile in the centre and for each $\hT3$ we put a $+,+,+$ one, surrounded by three $-,-,-$ tiles:

\image{}{fig:unsub-5}{
\includegraphics[scale=0.25]{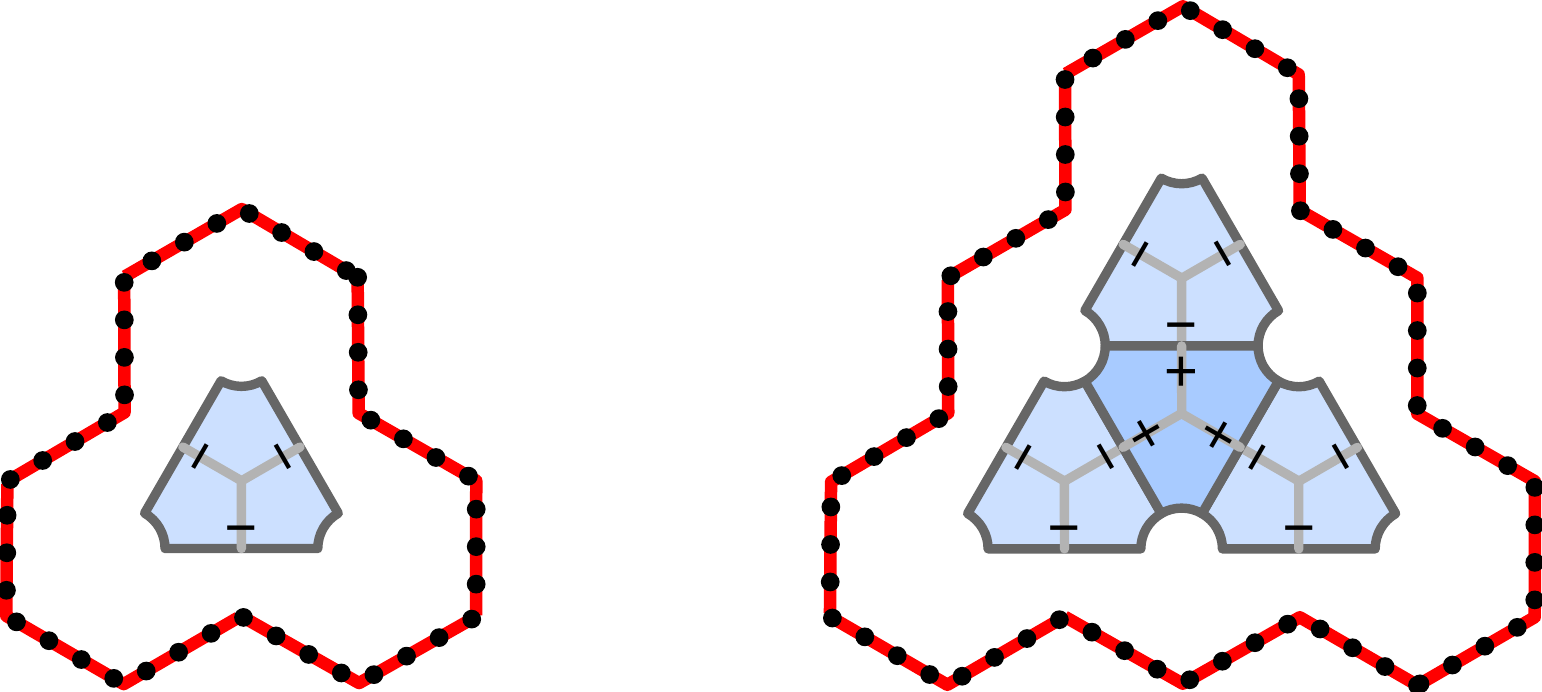}
}

There remains to place yellow tiles. For each $\hT3$ clusters we place three ones as follows:

\image{}{fig:unsub-7}{
\includegraphics[scale=0.25]{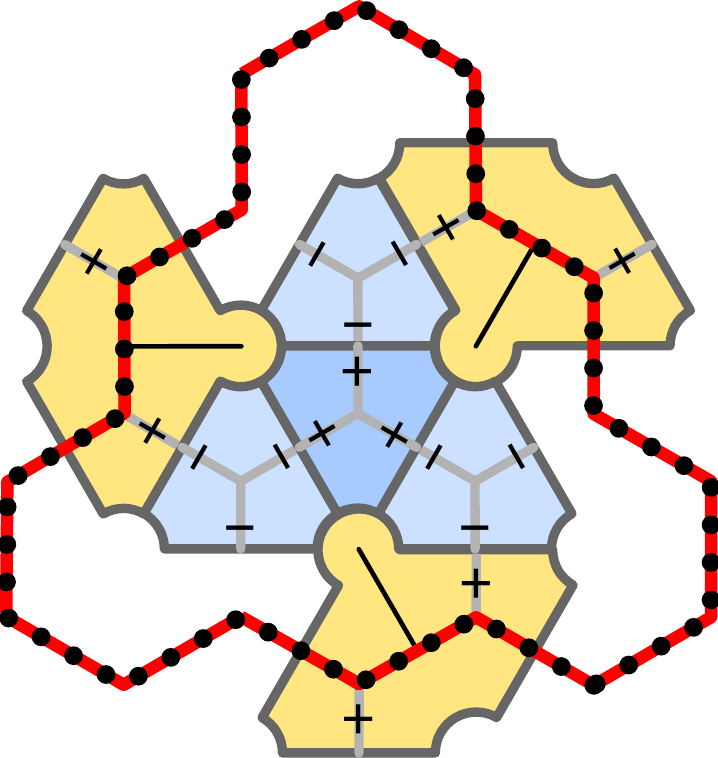}
}

Note that they cannot overlap the vertices covered by the green, by Fact~1 above. Then we place a yellow tile as follows for each type 1 tip of a reflected cc piece with an $\hT2$, on the side preceding it, in the clockwise order. We show the three allowed cases in the figure: the yellow piece is placed the same way in each.

\image{}{fig:unsub-8}{
\includegraphics[scale=0.25]{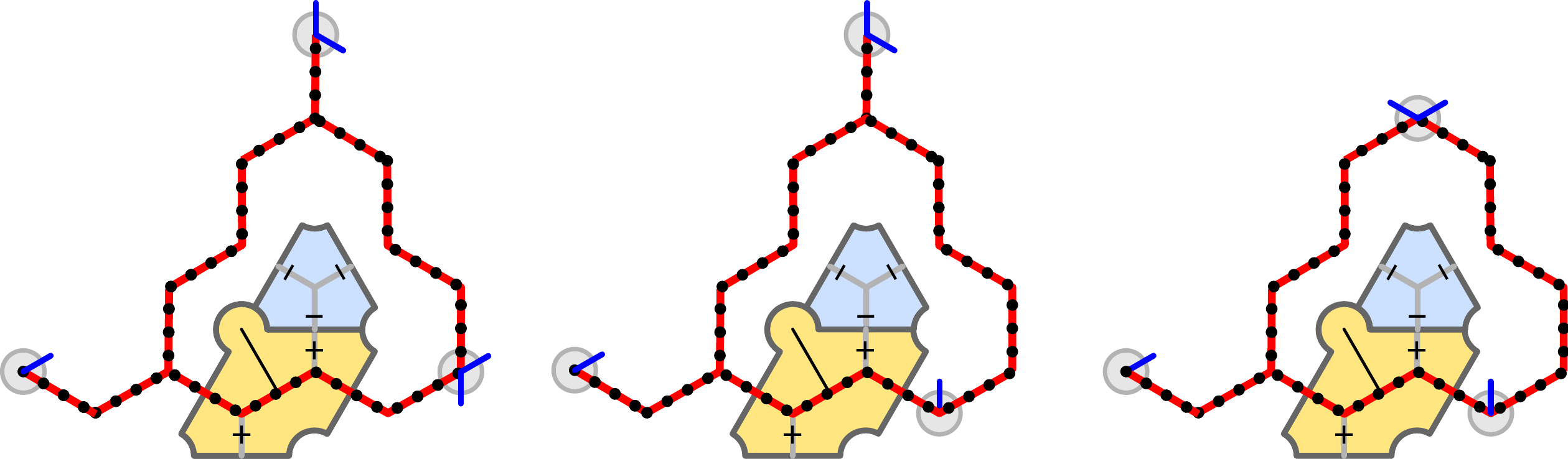}
}

Using Fact 1 again, it covers honeycomb vertices disjoint from the green packs. It is also disjoint from the already placed yellow tiles: it is obvious for the disk and the honeycomb vertices they cover belong to an interface that does not match with those of the reflected cc pieces of $\hT3$ type if we take the dot orientation into account: the interface would have to be of type $\iI$ but the orientation of the two left dots on the figure below are both imposed and do not match.

\image{The interface of a reflected cc piece with a $\hT2$ and preceding a dot at the end of a length 2 antenna, cannot match with a $\hT3$. The only interface shapes that could match would have the shapes above, but then the left dots orientation are different.}{fig:unsub-9}{
\includegraphics[scale=0.25]{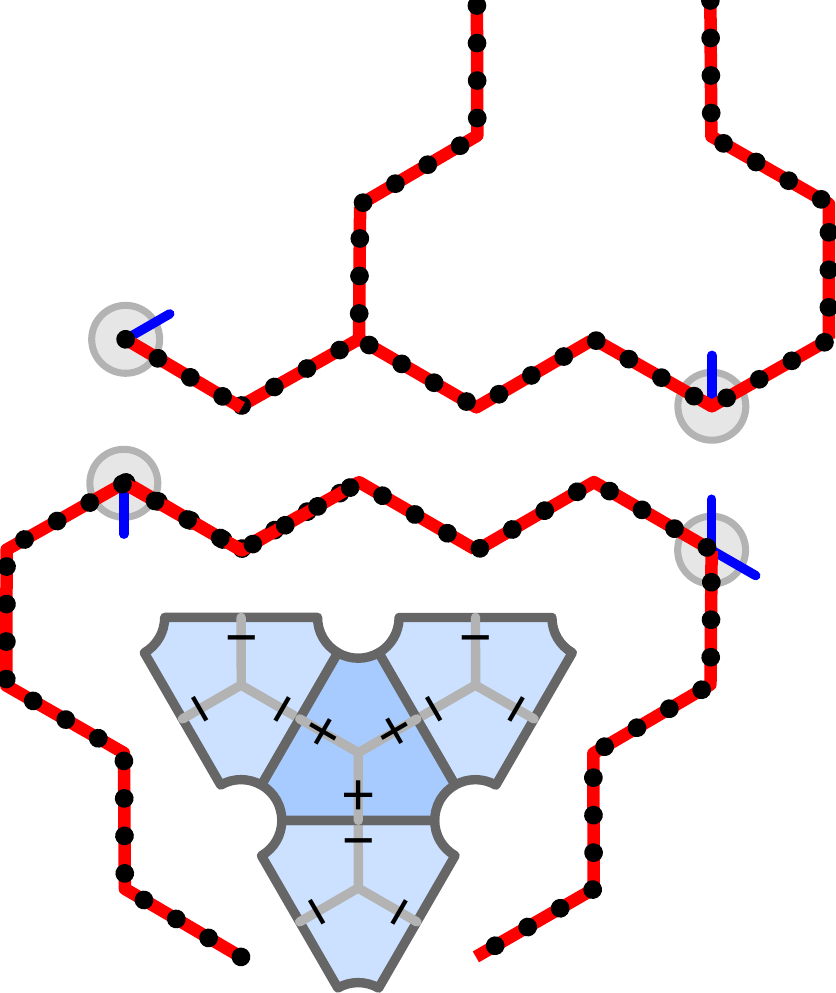}
}

All hex centres are now covered by disks, and only once: the only dots not covered by the green packs are the non-tip hexes of $\hT3$ clusters, covered by yellow packs, and those of tips of type 1, covered exactly once in the case of $\hT1$ by \Cref{fig:unsub-6}, in the case of $\hT2$ covered by the yellow piece of \Cref{fig:unsub-8}, and not appearing for $\hT3$.
\end{proof}

An alternative proof is by looking at one of the substitution systems proposed in \Cref{fig:sr,fig:sr3c} and carefully check that at the interfaces, we get no superimposition and we cover everything.

\begin{remark*}
Another alternative proof could be designed using another interpretation of packs made in \Cref{ss:pts}: considering the data of the proposition, follow the diagonal arrow of \Cref{table:1} to get a reflected Spectre tiling. Then prove that the associated hex centres draw packs as on \Cref{fig:de-2}. We will not go through this approach here. 
\end{remark*}

By \Cref{prop:inv}, we can traverse \Cref{table:1} and thus the hierarchy in the other direction. In particular we reprove\footnote{The fact that the proposition is enough is classical: it goes by extracting subsequences converging on bounded sets and performing a diagonal argument.} the companion to the previous corollary, also a results of \cite{chiral}:
 
\begin{corollary}
There are whole plane tilings by the Spectre, without reflections.
\end{corollary}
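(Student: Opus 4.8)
The plan is to run the classical inflation-plus-compactness argument, whose entire substance is already carried by \Cref{prop:inv}. First I would extract from \Cref{prop:inv} an \emph{inflation step}. Given a valid finite honeycomb partition with dots assembled from reflections of the pieces of \Cref{fig:copy}, \Cref{prop:inv} returns a (partial) tiling by the packed pieces of \Cref{fig:packed-deco}; reading \Cref{table:1} from right to left via \Cref{thm:pack} and \Cref{thm:pieces-su} and the combinatorial equivalences recalled there, this pack tiling unpacks into triangle pieces, then into a piece-by-piece decoration graph, then into a finite (reflected) Spectre patch. Applying the yellow contraction to that patch and recording the positions of the contracted yellow hexes (\Cref{prop:spec-cond}, \Cref{prop:ces}) produces a honeycomb partition with dots of exactly the same kind, up to one global reflection, so the step can be iterated. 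Each iteration rescales linear dimensions by a fixed factor $\lambda>1$ (the size of a pack relative to a hexagon), so the patches grow without bound.

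Next I would fix a seed: a non-empty finite configuration satisfying the three bullet hypotheses of \Cref{prop:inv}, which is all that is needed since partial configurations are allowed throughout. Such a seed is provided, for instance, by a green pack together with its forced neighbourhood (\Cref{lem:fill-3}), or by a short explicit assembly of the pieces of \Cref{fig:copy}, for which the three conditions are checked by direct inspection. Starting from this seed and iterating the inflation step, I obtain a sequence of legal Spectre patches whose diameters tend to infinity; as is standard for substitution systems, one may moreover arrange (by choosing the seed so that it recurs in the interior of one of its own iterates, or simply by passing to a subsequence and translating) that these patches are nested and that every compact subset of the plane is eventually contained in one of them.

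Finally I would pass to the limit by the usual compactness argument. Legality of a patch depends only on a bounded neighbourhood, and on any fixed ball only finitely many patch-types occur, since tiles appear in only $12$ rotational positions (and squares and rhombs likewise in finitely many); hence a Cantor diagonal extraction along an exhaustion of $\mathbb{R}^2$ by balls yields a subsequential limit which is a legal tiling of the whole plane. A priori this tiling might use the mirror image of the Spectre rather than the Spectre itself, because of the reflection picked up at each inflation step; but reflecting the whole tiling once more then yields a tiling in which every tile is a translated and rotated copy of the Spectre, with no reflections, which is the assertion.

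I expect no genuinely hard step here once \Cref{prop:inv} and the structural analysis preceding it are granted: the only points requiring care are the verification of the finite seed against the hypotheses of \Cref{prop:inv}, the elementary bookkeeping that makes the iterates nested and exhausting, and the cosmetic reflection parity; the diagonal/compactness step is entirely routine. In short, the real obstacle was \Cref{prop:inv} itself, which here we may assume.
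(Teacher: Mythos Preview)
Your proposal is correct and follows essentially the same route as the paper: invoke \Cref{prop:inv} to reverse the hierarchy (inflation), iterate to obtain arbitrarily large legal patches, then extract a whole-plane tiling by the standard subsequence/diagonal compactness argument. The paper's own proof is in fact a single sentence plus a footnote sketching exactly this, so you have simply unpacked the details (seed, nesting, reflection parity) that the paper leaves implicit.
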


In the next section we make the procedure of \Cref{prop:inv} explicit and we propose several equivalent substitution systems.

\subsection{Substitution systems}

Substitution systems are an explicit way of going backward in the hierarchy, i.e.\ refining a tiling into a tiling with smaller/more pieces.

\subsubsection{From cc to reflected triangle packs}

One key step in going backwards is \Cref{prop:inv}.
Either its proof, or a careful inspection of the  the list of possible environments determined in \Cref{ss:tp1,ss:tp2,ss:tp3}, give several substitution systems that can work.
If we allow overlaps it is easy.
We are most interested in substitution systems without overlaps, and it turns out to be possible: for this we attach the green packs only to the $\hT'n$ to type 6 points in the nomenclature of \Cref{fig:unsub-1}:

\nopagebreak

\image{}{fig:sr}{
\makebox[\textwidth][c]{\includegraphics[scale=0.24]{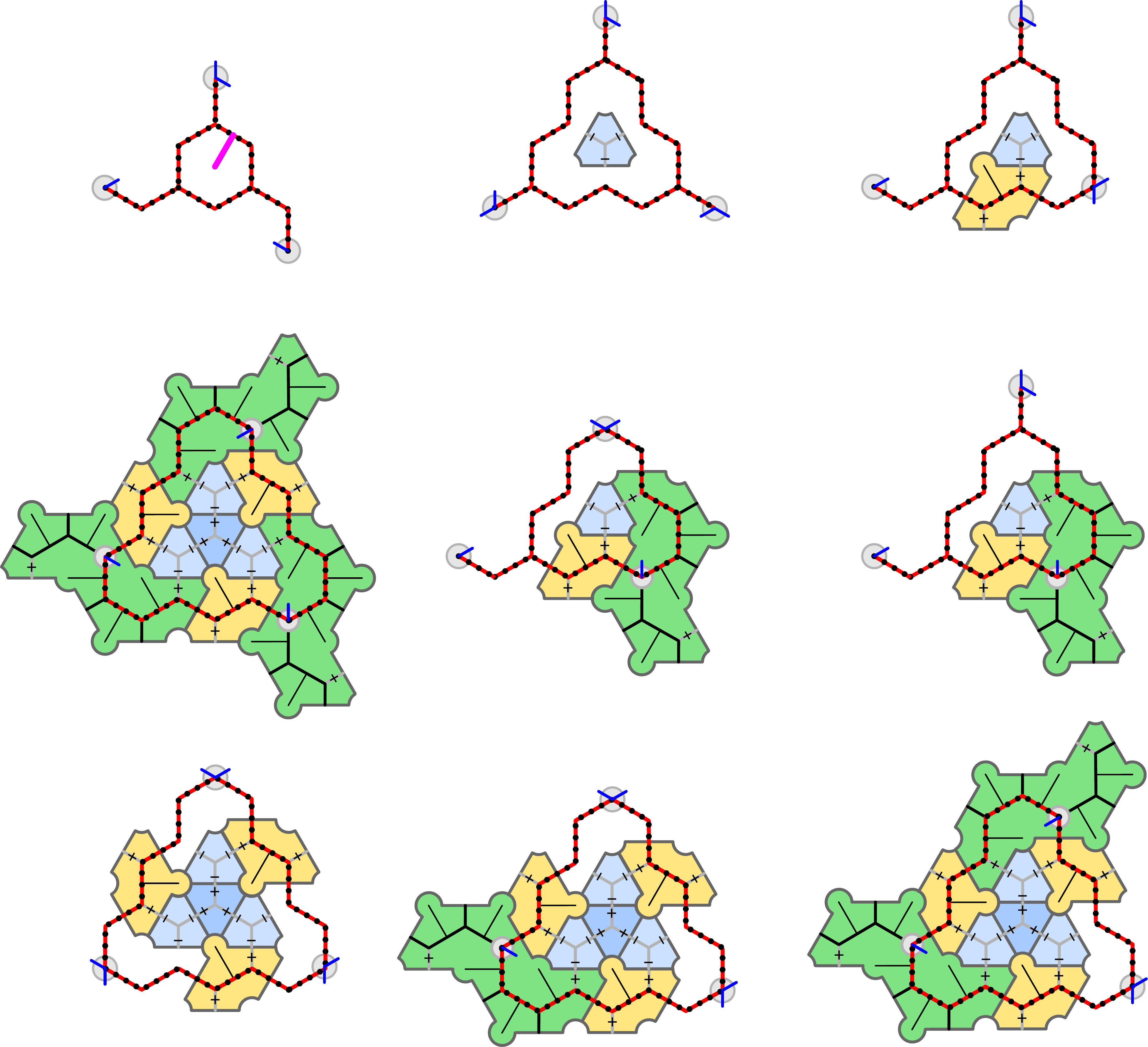}}
}

We get the following variant, somewhat simpler, if we attach the green packs to the length two antenna ends instead:

\nopagebreak

\image{}{fig:sr3c}{
\makebox[\textwidth][c]{\includegraphics[scale=0.24]{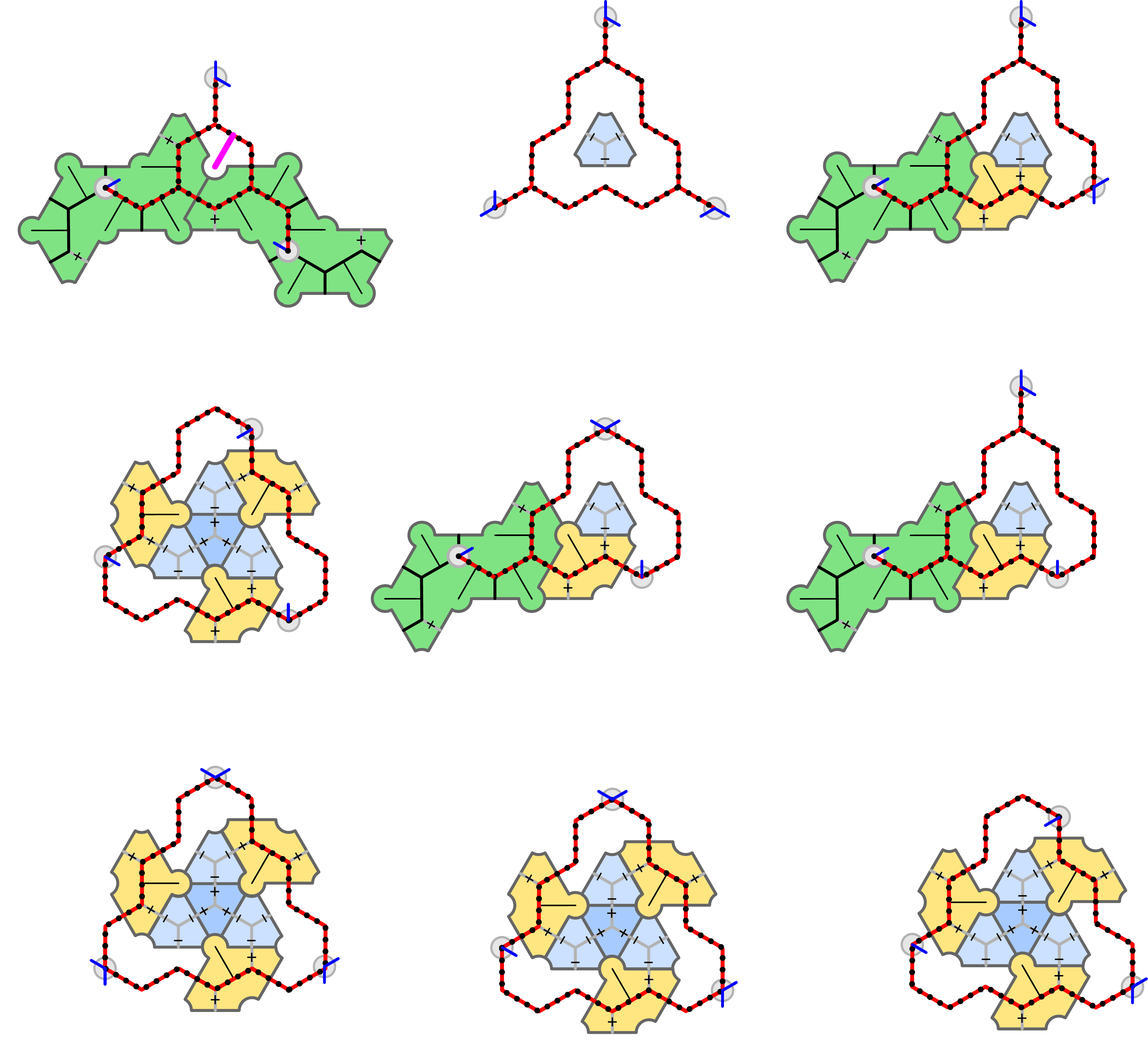}}
}

Conversely, the following figure shows the compacted cc's of the triangle pieces of the packs. Initially this is not a substitution, since this is what the packs represent; however, since we are reversing the procedure, this can be thought of, and used, as a substitution.

\nopagebreak

\image{}{fig:sr-0}{
\includegraphics[scale=0.33]{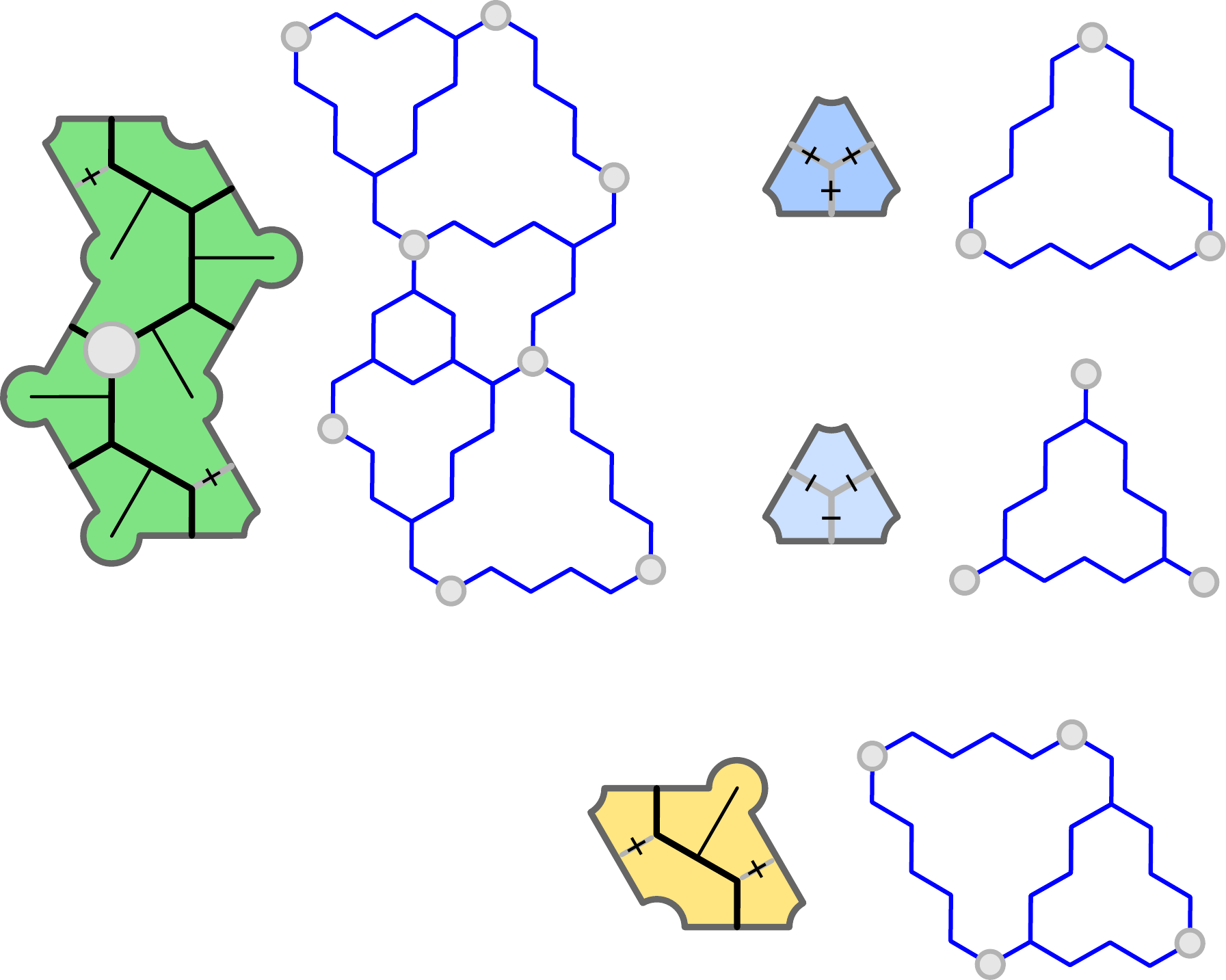}
}

The substitution is then obtained by alternating the two substitutions above (and making reflections).

\image{The two-step substitution procedure}{table:2}{
\small
\begin{tikzpicture}[every node/.style={align=center}, arrows={[scale=3]}]
\node(0) at (-0.75,0) {etc.};
\node(a2) at (1.5,3) {Spectre\\tiling};
\node(a1) at (1.5,1.5) {cc $+$\\interfaces};
\node(a0) at (1.5,0) {honeycomb\\partition$+$dots};
\node(c) at (4,0) {packs};
\node(e2) at (6.5,3) {reflected\\Spectre\\tiling};
\node(e1) at (6.5,1.5) {reflected cc $+$\\interfaces};
\node(e0) at (6.5,0) {reflected\\honeycomb\\partition$+$dots};
\node(f) at (9,0) {packs};
\node(1) at (10.5,0) {etc.};

\draw[<->] (a2) -- (a1);
\draw[<->] (a1) -- (a0);

\draw[<-,thick] (0) -- (a0);
\draw[<-,thick] (a0) -- (c);
\draw[<-,thick] (c) -- (e0);
\draw[<-,thick] (e0) -- (f);
\draw[<-,thick] (f) -- (1);

\draw[<->] (e2) -- (e1);
\draw[<->] (e1) -- (e0);
\end{tikzpicture}
}

On \Cref{fig:sr-0}, it is apparent that to the four packed pieces correspond in the honeycomb polygons traced by the marked points: an octagon with a half-turn symmetry, two regular triangles, and a parallelogram. Actually the octagon is a regular hexagon cut in two halves along a diagonal, where the two halves have been translated away from each other and linked. It will remain of this type under successive substitutions. The two equilateral triangles remain equilateral (and identical) and the parallelogram will remain a parallelogram.

\medskip

To get a Spectre tiling in the end we need to go up in \Cref{table:2}.
One can start from a compacted cc tessellation of all or part of the plane and then space them out using yellow segments oriented as in the interfaces of \Cref{fig:labeled-cc-2}.
Then the rhomb/hex pairing is completely determined, and by replacing in each pair the hex by an $\iD3$ and the rhomb by an $\iD2$ on gets a Spectre tiling.
An example is illustrated in \Cref{fig:wir-1}.

This amounts to assembling the following replacement of the cc's:

\image{}{fig:de-4}{
\includegraphics[scale=0.5]{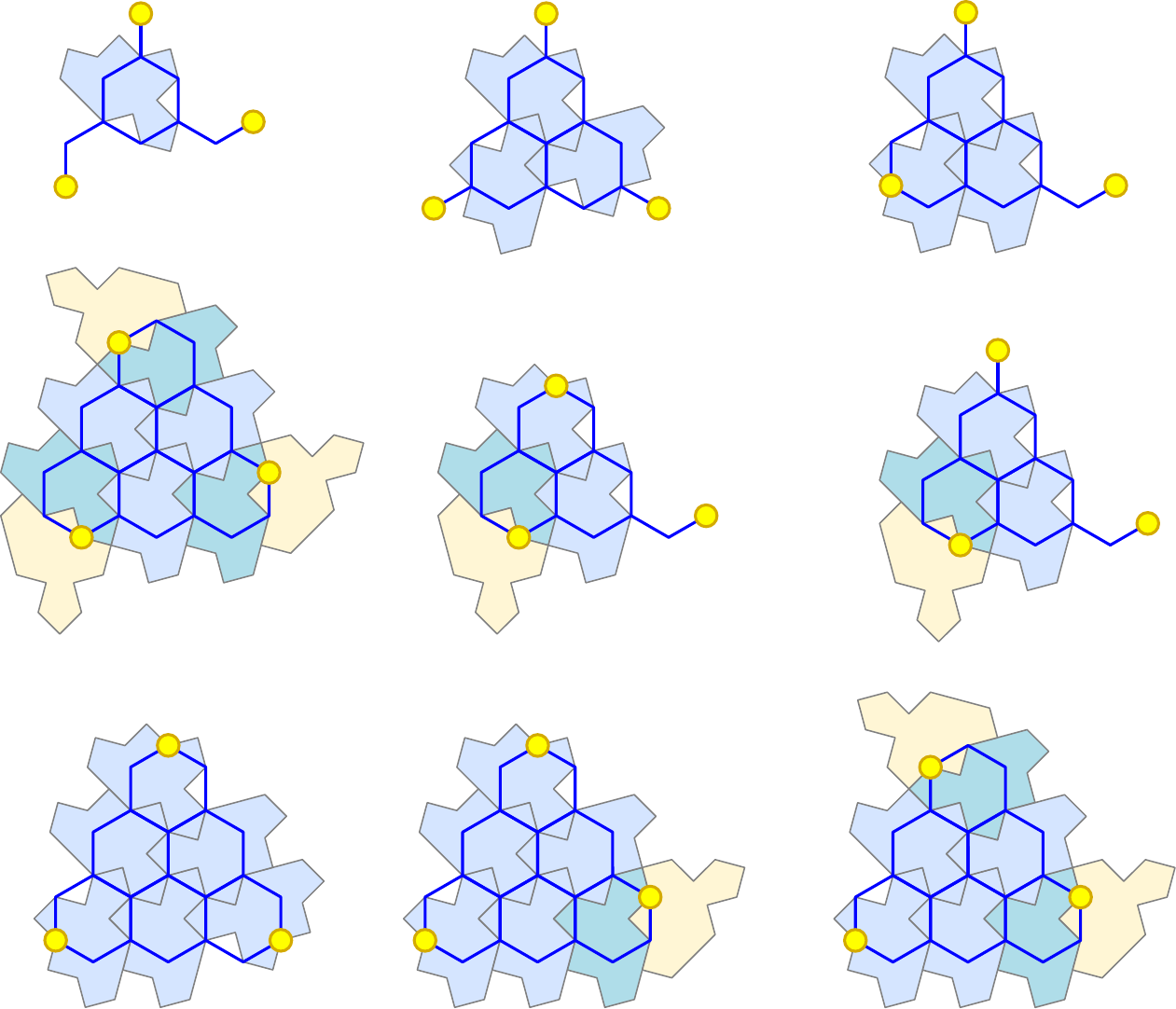}
}

Or equivalently with:

\image{}{fig:de-5}{
\includegraphics[scale=0.5]{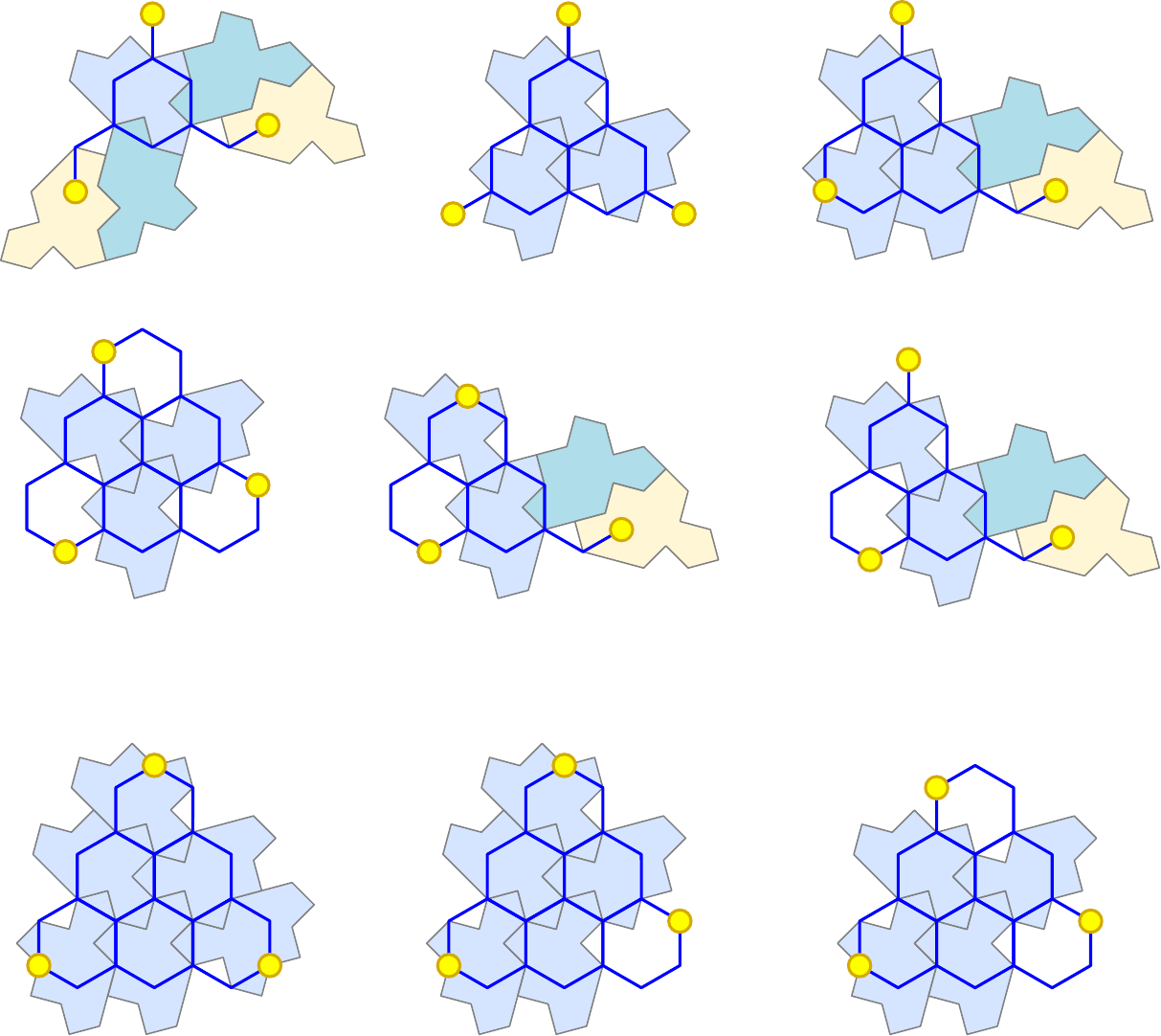}
}

But we can also use another approach: see \Cref{sec:down}.

\medskip

By chaining the two substitutions of \Cref{fig:sr,fig:sr-0} in one order or its opposite, one can express the substitution either  in terms of packed triangle pieces only, or in terms of honeycomb partition~$+$ dots only.

\subsubsection{In terms of triangle packs only}

Substitution rule in terms of the 4 packed triangle pieces:

\nopagebreak

\image{Keeping track of the dots help to assemble the substituted pieces}{fig:sr-7}{
\makebox[\textwidth][c]{\includegraphics[scale=0.25]{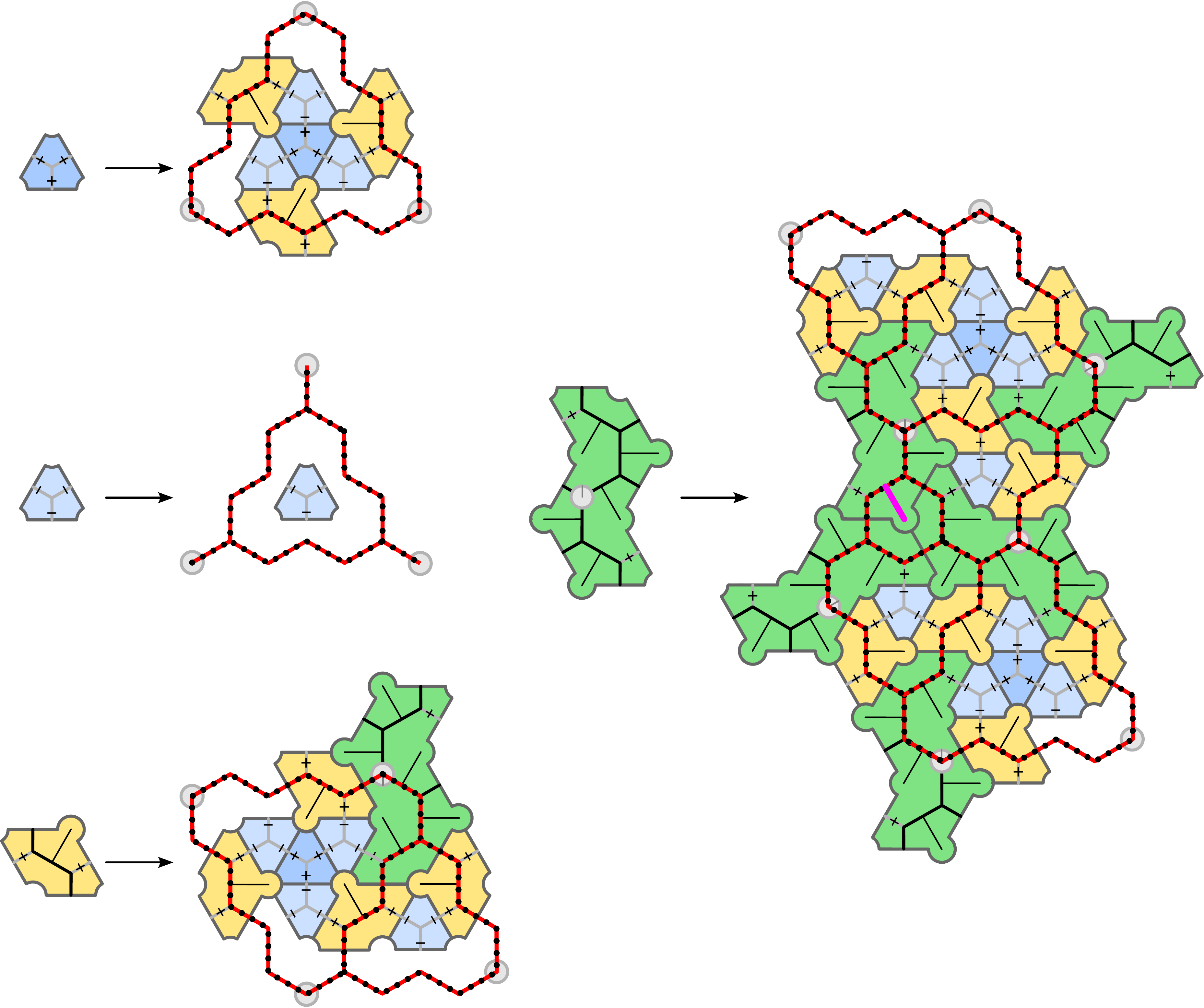}}
}

We see that we can also use the three pieces of \Cref{fig:3-set}.

\nopagebreak

\image{}{fig:sr-8}{
\includegraphics[scale=0.25]{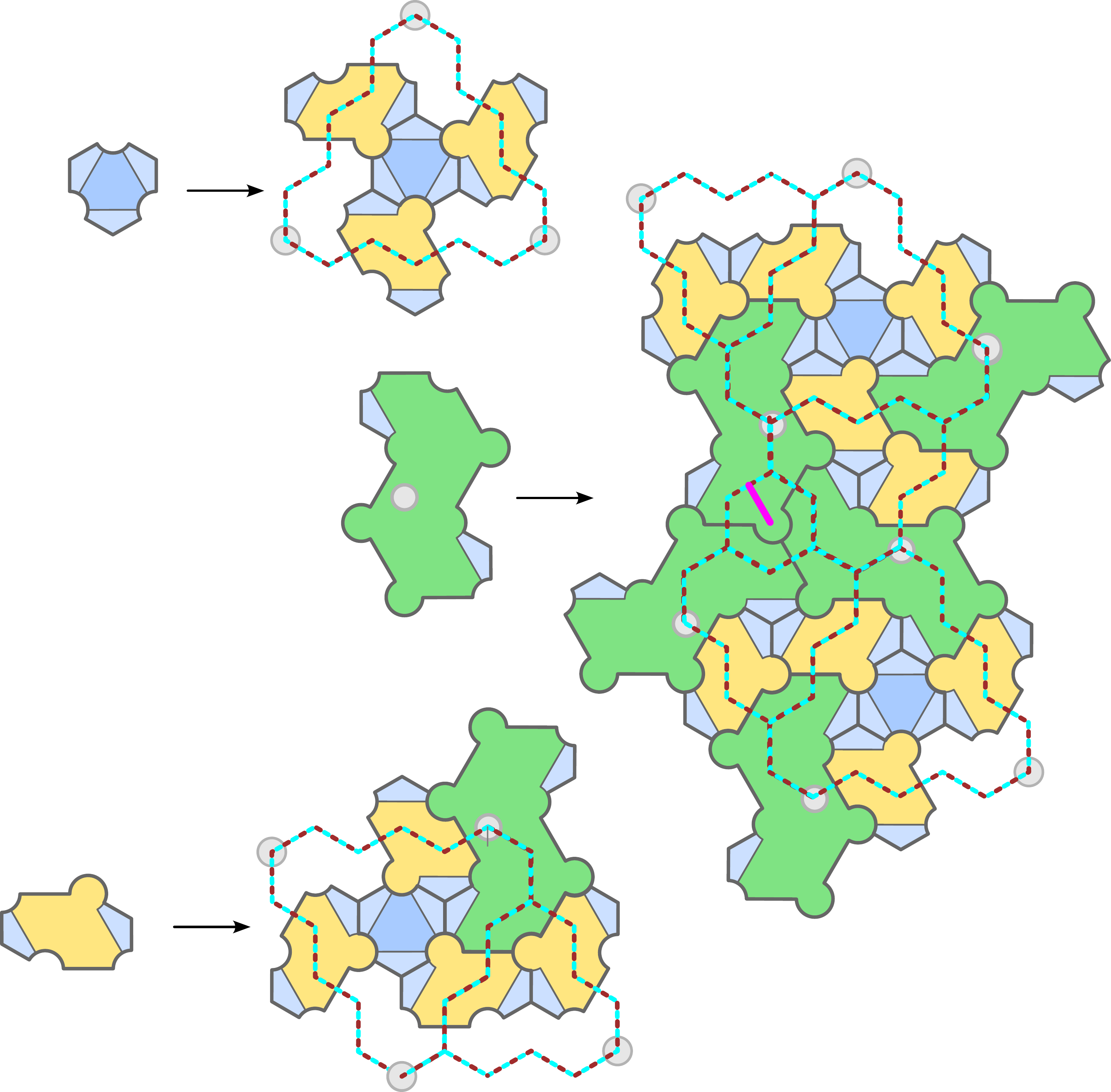}
}

These last two sets are reminiscent of the tile grouping made in the non-chiral tiling of \cite{march}, the H, T, P, F system of what they called meta-tiles.
See more in \Cref{sec:down}.

\subsubsection{Variants}

There are many possible variants.

We can for instance factor each green piece with a yellow one, leading to the turquoise piece and the substitution below: 

\nopagebreak

\image{}{fig:sr-9}{
\makebox[\textwidth][c]{\includegraphics[scale=0.25]{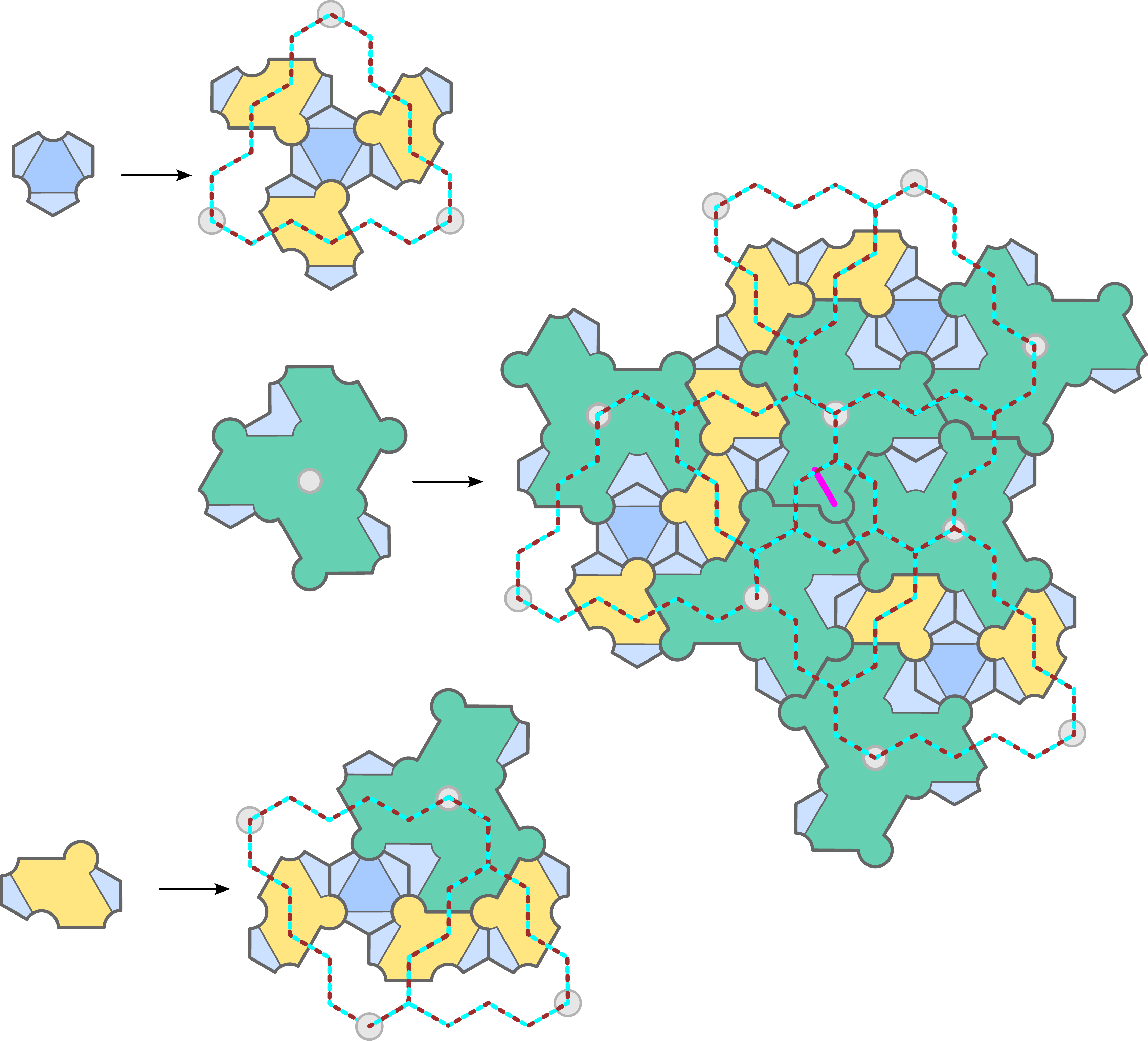}}
}

We could instead start from \Cref{fig:sr-7} and group a yellow and a light blue piece together. 

\nopagebreak

\image{}{}{
\includegraphics[scale=0.33]{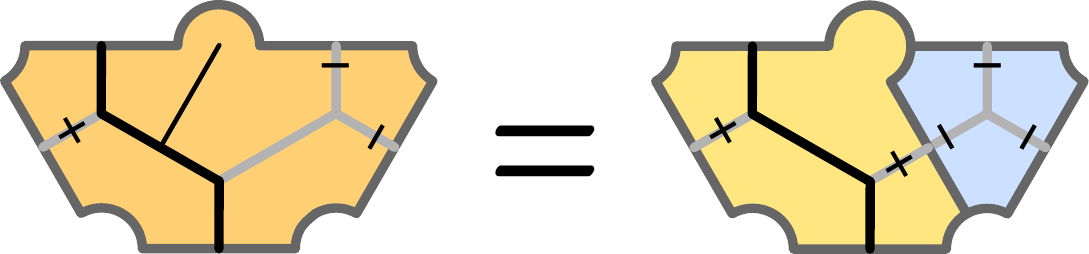}
}

There still remains 4 tiles in this case: 

\nopagebreak

\image{}{fig:sr-7c}{
\makebox[\textwidth][c]{\includegraphics[scale=0.25]{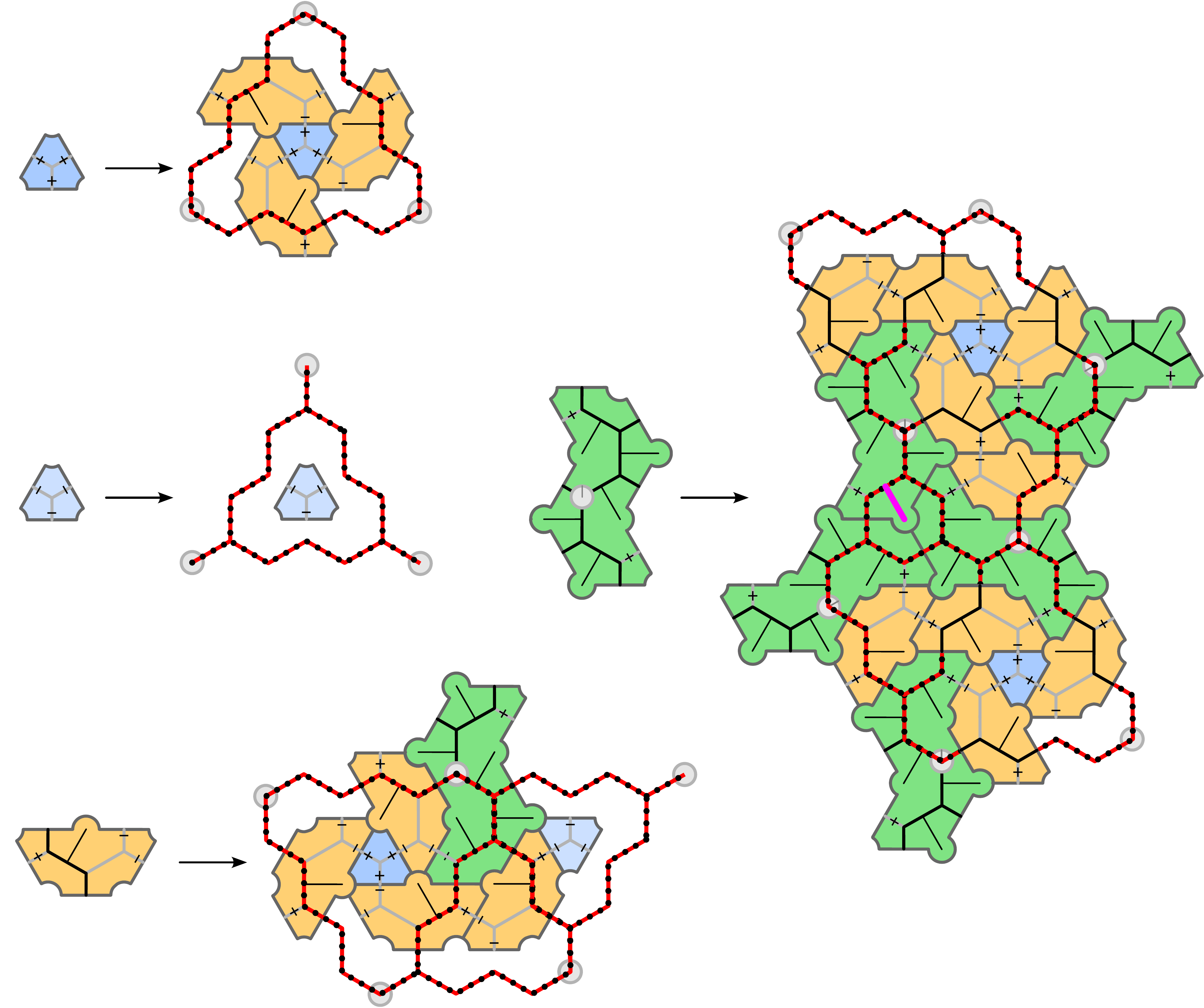}}
}

It is also possible to group tiles as follows (2 variants):

\image{}{fig:violet}{
\makebox[\textwidth][c]{\includegraphics[scale=0.33]{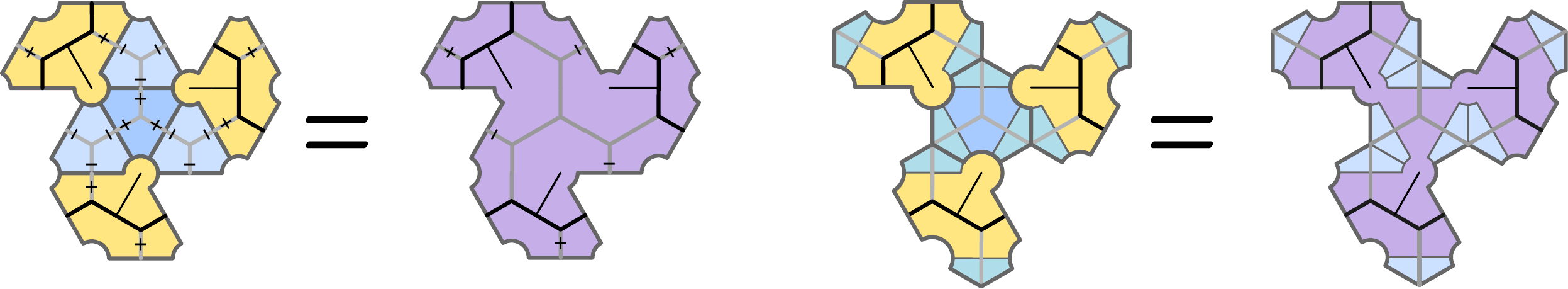}}
}

But we will not show the resulting substitutions here.

\medskip

It is likely that there are a lot of other possible variants.

\subsubsection{In terms of honeycomb partition and dots only}

Finally, we give the substitution rules in terms of $\hT n$+dots only: from \Cref{fig:sr}

\nopagebreak

\image{The \nth{4} configuration never appears in a whole plane tiling.}{fig:sr-10b}{
\makebox[\textwidth][c]{\includegraphics[scale=0.18]{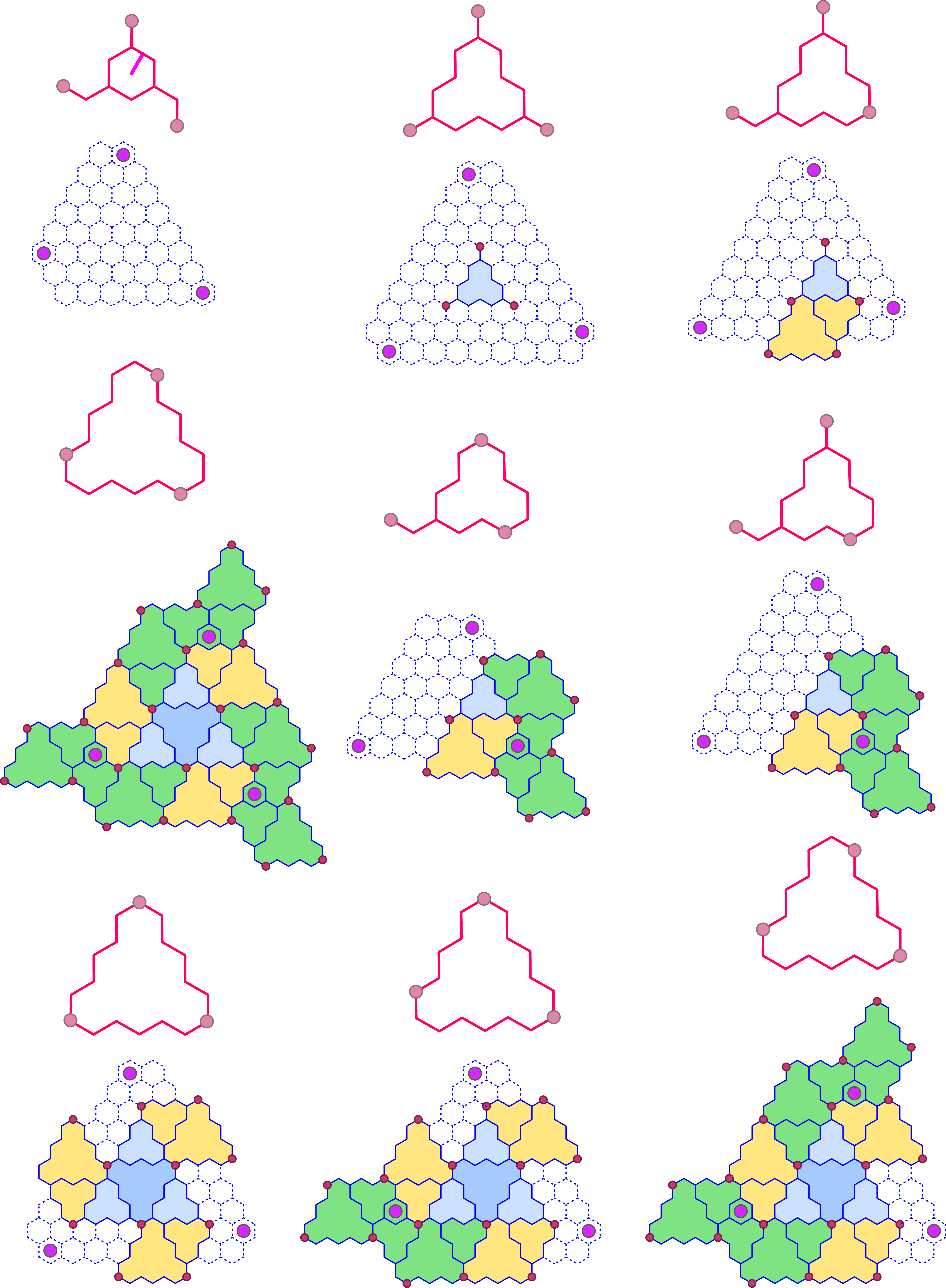}}
}

Or, from the variant \Cref{fig:sr3c}

\nopagebreak

\image{The \nth{4} configuration never appears in a whole plane tiling.}{fig:sr-10c}{
\makebox[\textwidth][c]{\includegraphics[scale=0.18]{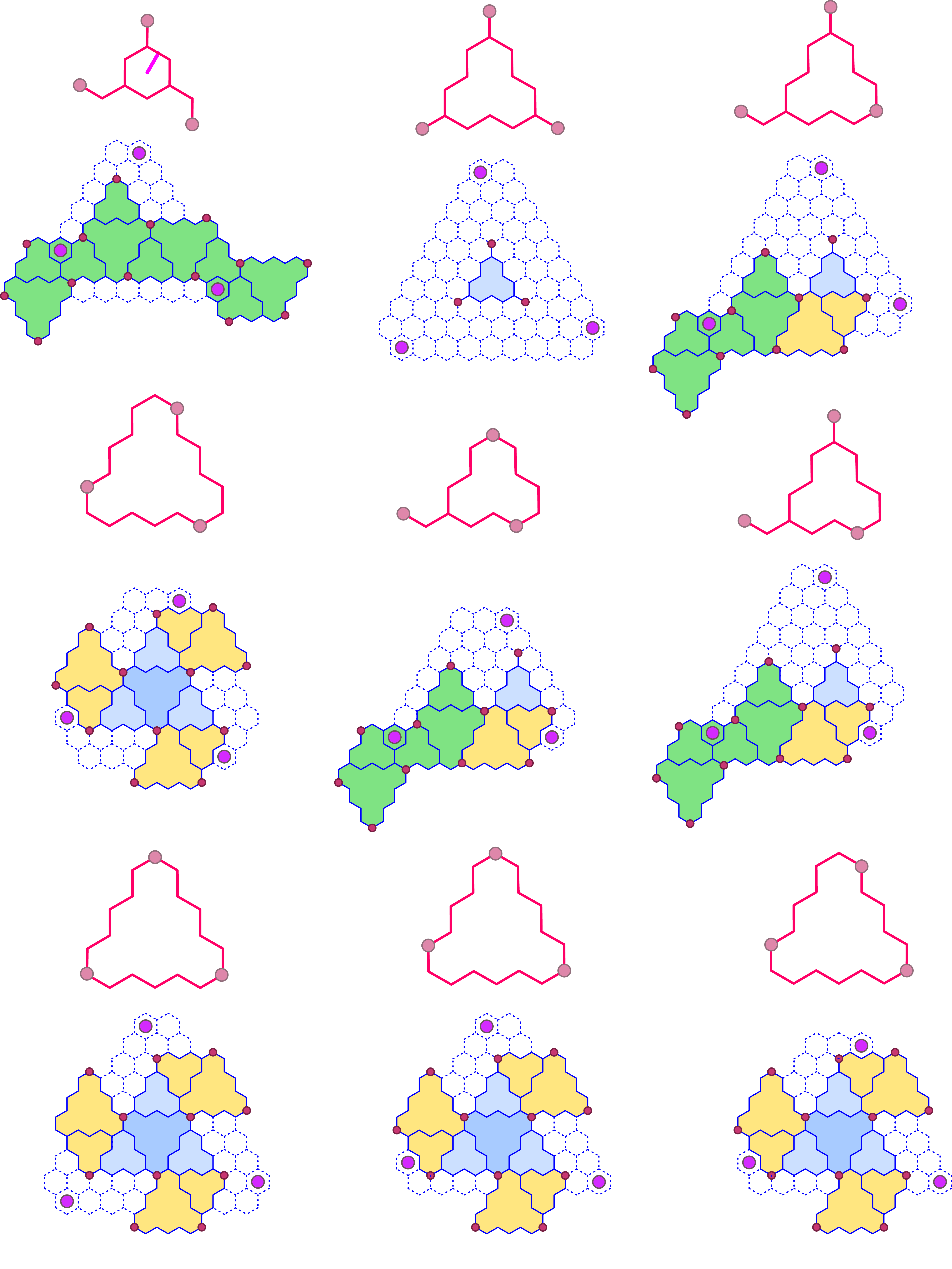}}
}

Consider any of these two substitutions and call it $S$. Consider a whole plane tiling. Its cc's (with dots) can be grouped in packs on which decoration form a new cc system, which itself can be grouped in packs, etc.
We proved that the initial cc's are necessarily obtained by the substitution $S$ from the bigger ones one level above.
On the figure, it can be seen that the \nth{4} kind of cc with dots does not appear after the substitution. This already proves that no whole plane tiling has the \nth{4} kind of cc.

\begin{proposition}
 In a whole plane tiling by the Spectre, all cc types of \Cref{fig:labeled-cc-2} other than the \nth{4} appear. All triangle packs of \Cref{fig:pack} appear in their triangle representation.
\end{proposition}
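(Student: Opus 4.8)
The claim that the \nth{4} configuration does not appear is already established by the remark preceding this proposition: in the substitution $S$ displayed in \Cref{fig:sr-10b} (and equivalently in \Cref{fig:sr-10c}), no output cc-with-dots is of the \nth{4} kind, while a whole plane tiling, being uniquely hierarchical, has its cc's equal to $S$ applied to the cc's of the tiling one level above, as summarised in \Cref{table:2}. Reading the same figures in pack form, via \Cref{fig:sr-7}, excludes the corresponding pack of \Cref{fig:pack}. So only the positive part — that every \emph{other} cc type, and every pack of \Cref{fig:pack}, occurs in every whole plane Spectre tiling — needs an argument, and the plan is to reduce it to a primitivity statement for the substitution.

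First I would note that a whole plane, or reflected whole plane, Spectre tiling contains at least one cc: it contains even tiles by \Cref{prop:no-full-blue}, hence a nonempty blue graph, hence a cc, which by \Cref{prop:ne} contains a blue hex; equivalently, by \Cref{thm:pack}, it is a union of packs from \Cref{fig:pack} and so contains at least one pack, i.e.\ at least one cc of some type via the pack/cc dictionary of \Cref{fig:labeled-cc-2,fig:contracted-pieces}. Next, I would use the unique hierarchical structure proved above: the tiling is the bottom of an infinite tower of tilings in which every level is itself a whole plane (reflected at odd levels) Spectre tiling, and each level is obtained from the next by the substitution $S$; iterating, the original tiling is obtained from the level-$N$ tiling by applying $S$ exactly $N$ times. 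Since the level-$N$ tiling contains at least one cc, it then suffices to prove: there is an $N$ such that $S^{N}$ applied to \emph{any} single cc type produces a patch containing every non-\nth{4} cc type (and, read in pack form, every pack type). The original tiling, containing such a patch inside the $S^{N}$-image of one cc of its level-$N$ tiling, then contains all of them.

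This reduces the proposition to a finite computation. From \Cref{fig:sr,fig:sr-7} I would read off the substitution as a $4\times 4$ matrix $M$ of nonnegative integers on pack types, folding the alternating reflection into the types (i.e.\ working with types up to reflection, so that $M$, or at worst $M^{2}$, is a genuine substitution matrix), and then exhibit a strictly positive power of $M$; I expect this to hold for a small exponent, the concrete point being that the substitution of the largest (green) pack contains a copy of each of the four packs while every pack's substitution contains a green pack, which would already give $M^{2}>0$. Translating back through the pack/cc correspondence, and invoking \Cref{prop:cts} to recover the interface-refined cc type of \Cref{fig:labeled-cc-2} from the $\hT n$-plus-dots data of \Cref{fig:sr-10b}, yields the statement for cc's; reading the very same figures in pack form yields it for \Cref{fig:pack}.

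The only genuine obstacle is the bookkeeping in the last paragraph: pinning down the exact correspondence between the four packs of \Cref{fig:pack}, the cc types of \Cref{fig:labeled-cc-2}, and the output pieces of the substitution figures, keeping the reflections coherent through each level, and then verifying from those figures that the resulting substitution matrix is primitive. Once that is done the result is immediate; everything else is a direct invocation of \Cref{thm:pack}, \Cref{prop:ne}, \Cref{prop:no-full-blue}, \Cref{prop:cts}, \Cref{prop:inv}, and the hierarchy already established.
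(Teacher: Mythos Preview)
Your overall plan is sound, but the primitivity claim at its heart is false, and this is exactly the obstruction the paper's proof has to work around. In the pack-to-pack substitution of \Cref{fig:sr-7}, the single-triangle pack $B$ (the $(\iV-,\iV-,\iV-)$ piece) contains only cc type~2, and cc type~2 under the substitution of \Cref{fig:sr3c} produces only the pack $B$ again. So $B\mapsto B$ is a closed one-cycle: the $4\times 4$ matrix $M$ has a row (or column) supported on a single entry, and no power of $M$ is strictly positive. Your concrete expectation ``every pack's substitution contains a green pack'' is therefore incorrect for $B$; the matrix is \emph{not} primitive.

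The paper's proof does exactly what you propose for all pack types other than $B$ (equivalently, all cc types other than~2): from any of those one reaches every non-\nth{4} type in finitely many steps. It then disposes of the remaining case by the separate geometric observation that a whole plane tiling cannot consist solely of cc's of type~2 (equivalently, solely of $B$ packs): the $(\iV-,\iV-,\iV-)$ triangle needs $\iV+$ neighbours, so a tiling made only of $B$ packs is impossible. Once some cc of type $\neq 2$ is present, your argument goes through. So the fix is small but essential: replace the primitivity claim by primitivity on the complement of $\{B\}$, and add the one-line impossibility of an all-$B$ tiling.
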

\begin{proof}
It makes the argument easier to consider the second substitution system.
Numbering the types of cc from $1$ to $9$, and calling $G$, $Y$, $B$, $D$ the packs (Green with 6 triangles, Yellow with 2, Blue and Dark blue), we get that in the first substitution: from \Cref{fig:sr3c}
$1\to G$,
$2\to B$,
$3 \to G,Y,B$,
$5 \to Y,B,D$,
$6 \to G,Y,B$,
$7 \to Y,B,D$,
$8 \to Y,B,D$,
$9 \to Y,B,D$,
and from \Cref{fig:pack,fig:labeled-cc-2}, the packs
are composed of triangles representing:
for $G$: $1,3,5,6,8,9$; for $Y$: $3$ and $8$; for $B$: $2$; for $D$: $7$.
Starting from anywhere but $2$ and $B$, we see that the set of descendants is everything (apart 4) at every generation.
Starting from $2$ we get only $B$ and from $B$ we get only $2$.
It is impossible to cover the honeycomb only using pieces of type $2$. The claim follows.
\end{proof}

\subsection{Going down one more level}\label{sec:down}

\subsubsection{Packs to Spectres}\label{ss:pts}

Comparison of \Cref{fig:sr,fig:sr-0,fig:odd-env-2,fig:labeled-cc-2} suggests to use the following substitutions from \emph{reflected} packs to Spectre patches.

\nopagebreak

\image{Note: packs are reflected}{fig:p2s}{
\includegraphics[scale=0.5]{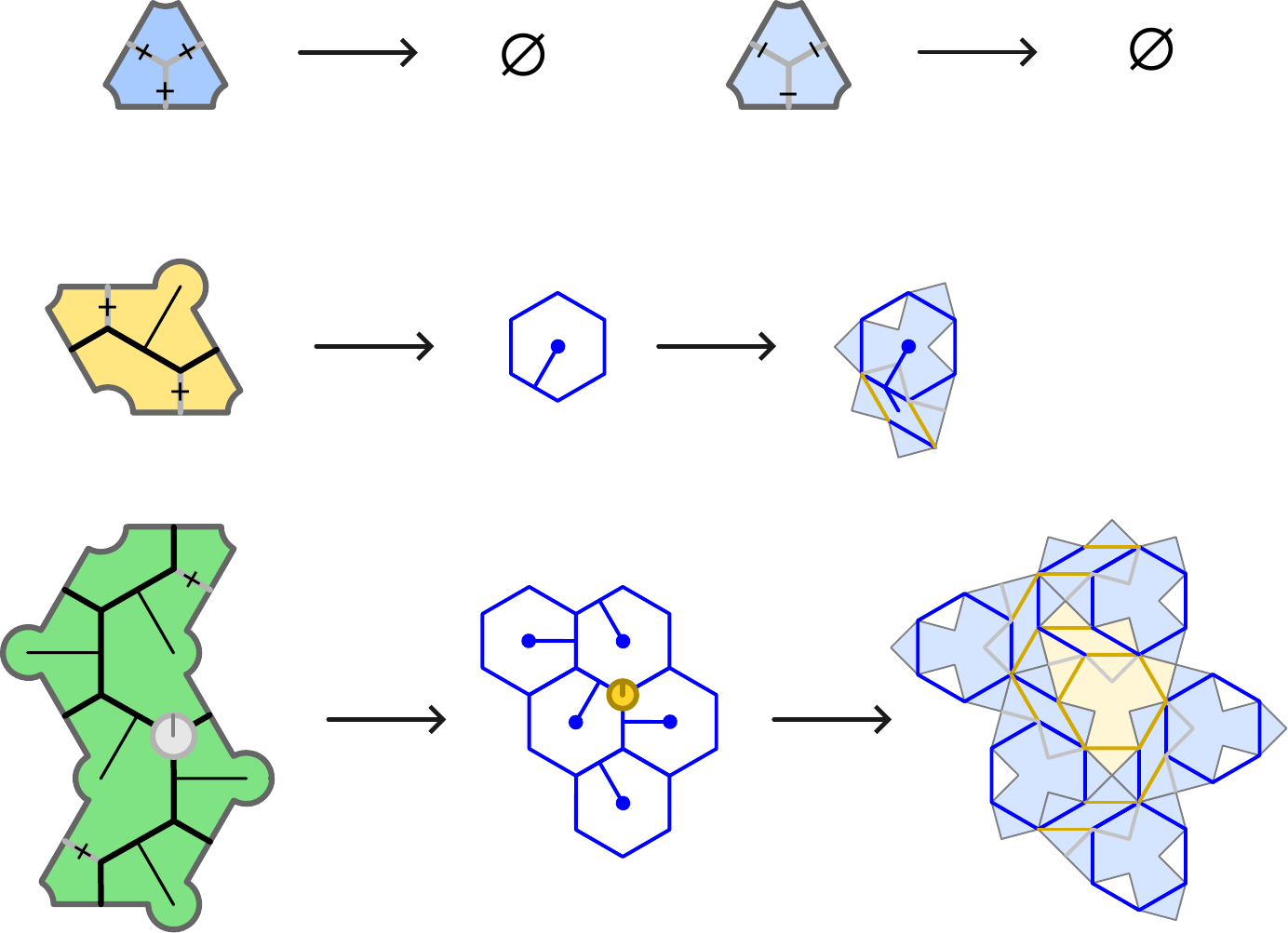}
}

to get the following variants of \Cref{fig:de-4,fig:de-5} for the final substitution step after spacing-out cc's:

\nopagebreak

\image{}{fig:de-3}{
\makebox[\textwidth][c]{\includegraphics[scale=0.5]{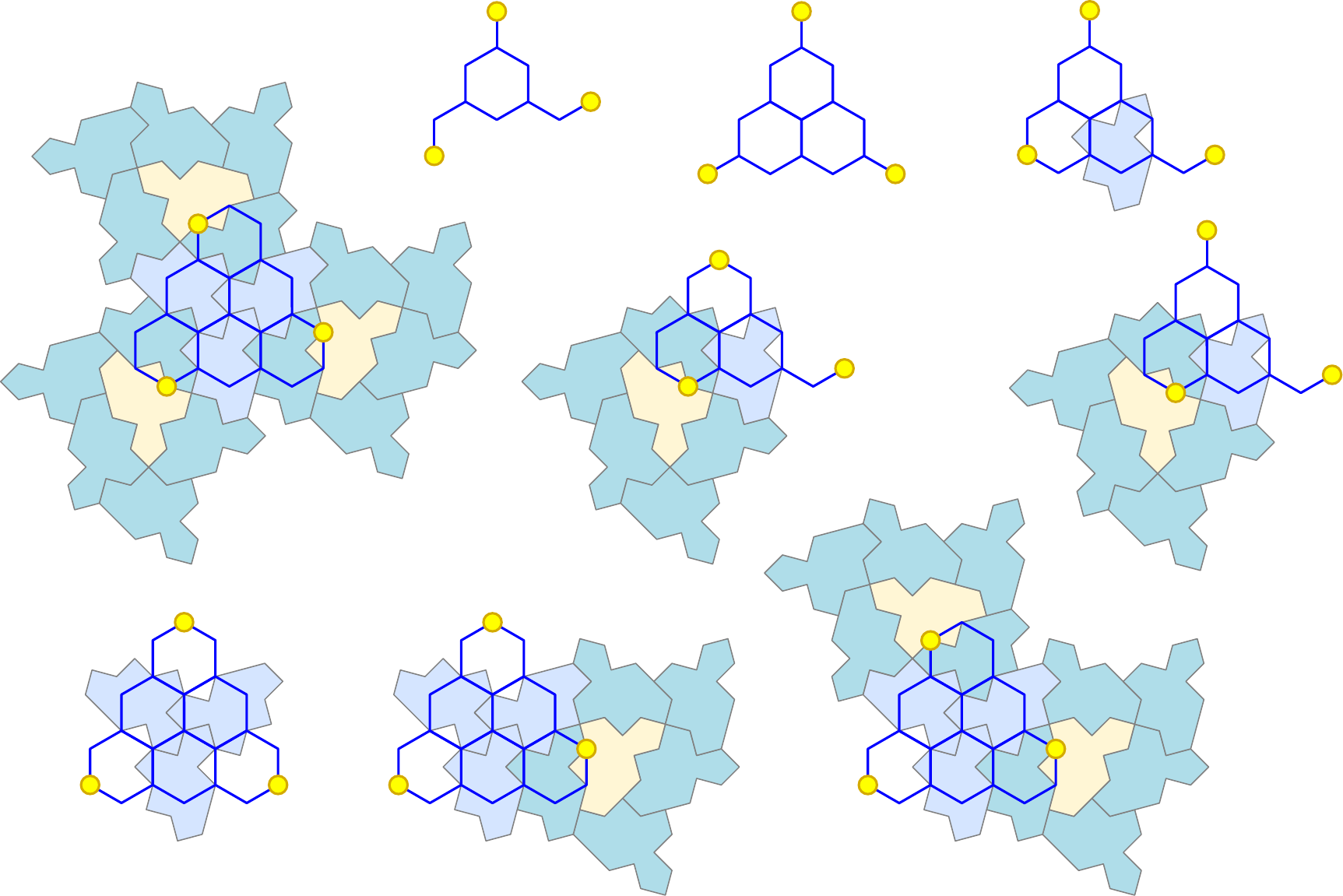}}
}

\image{}{fig:de-6}{
\makebox[\textwidth][c]{\includegraphics[scale=0.5]{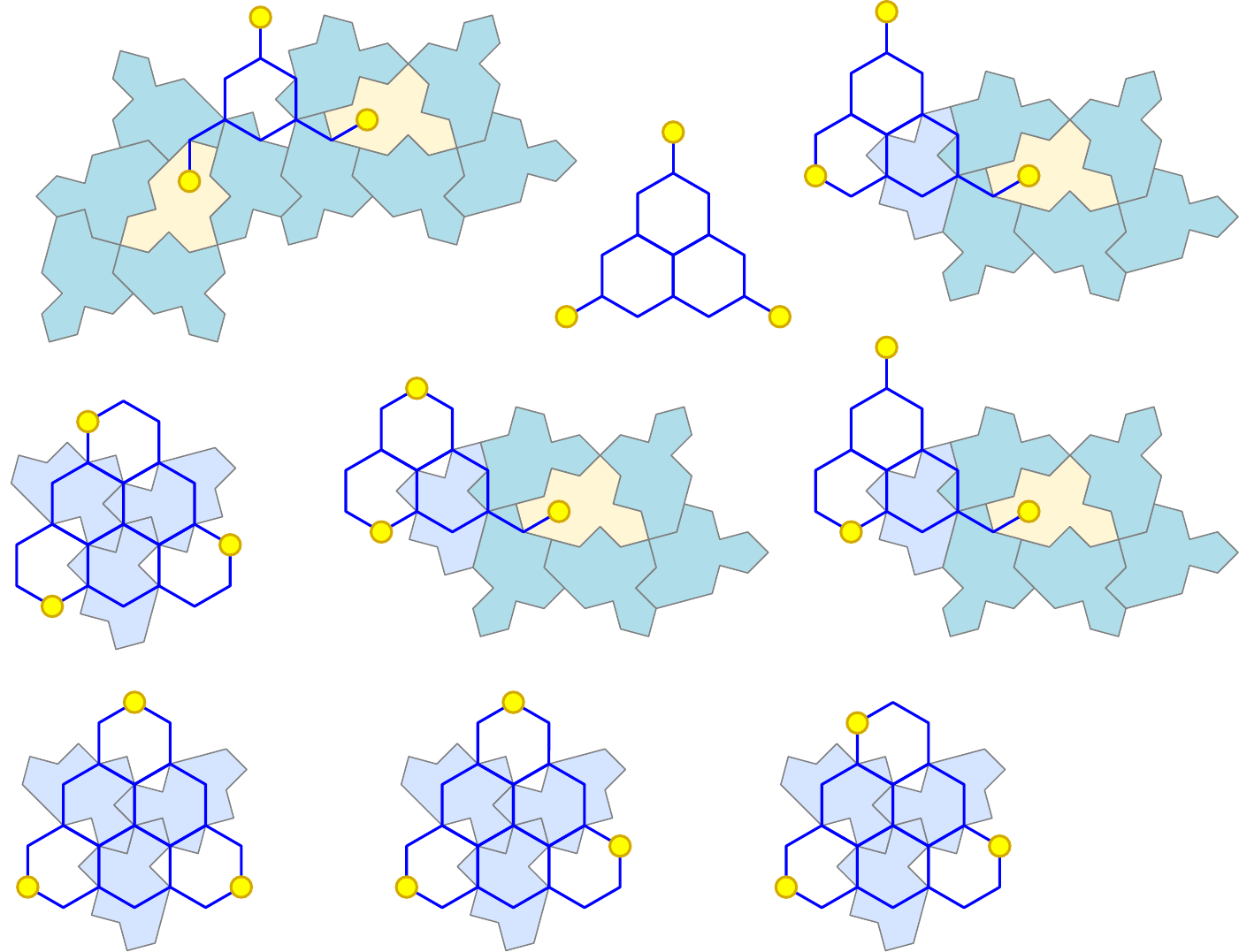}}
}

We get the following packs on an example of the Spectre tiling (compare with \Cref{fig:s4}):

\nopagebreak

\image{Here gray tiles are the odd ones and are part of the green packs.}{fig:extract}{
\includegraphics[scale=0.54]{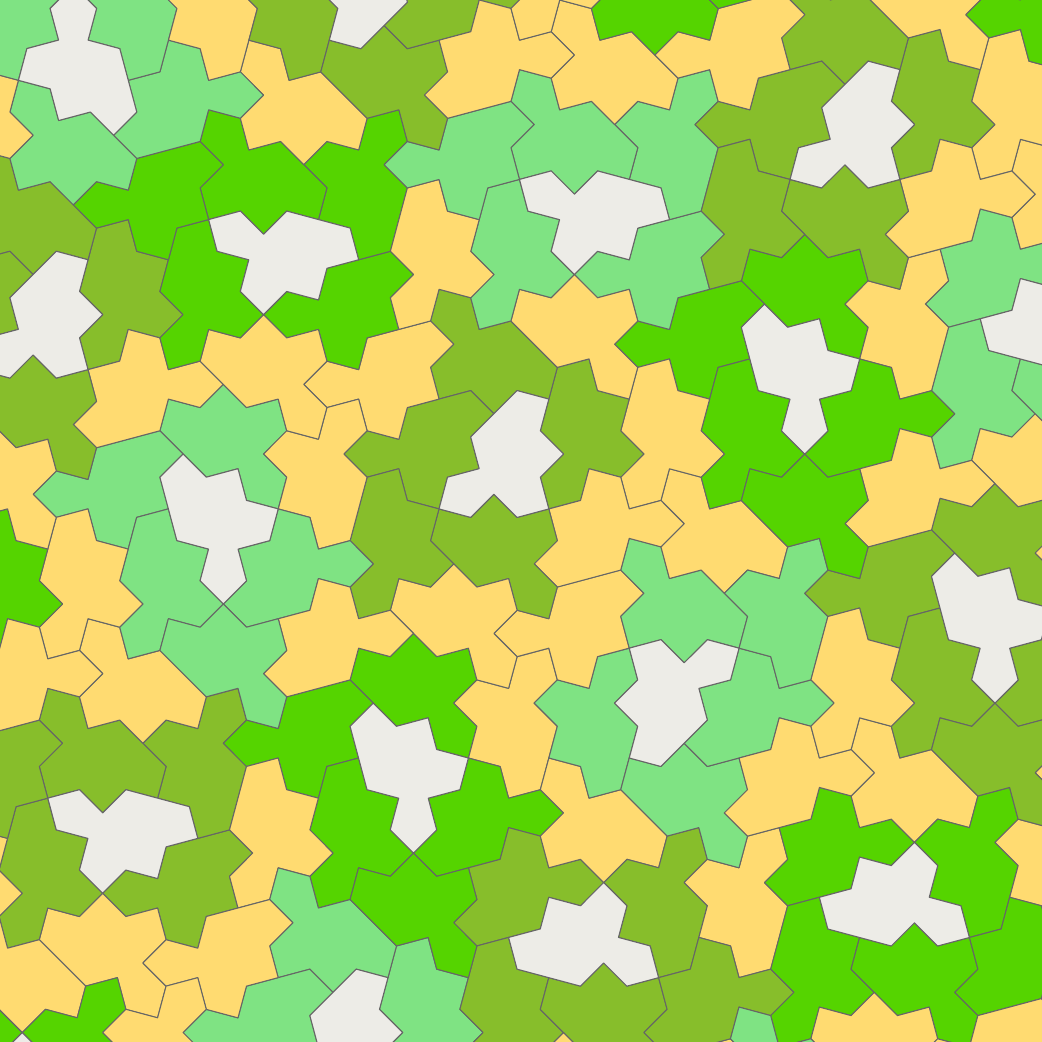}
}

\nopagebreak

\image{Adding the yellow/blue decoration graph and tracing out the triangle patches, using blue hex centres as vertices:}{fig:de-2}{
\includegraphics[scale=0.54]{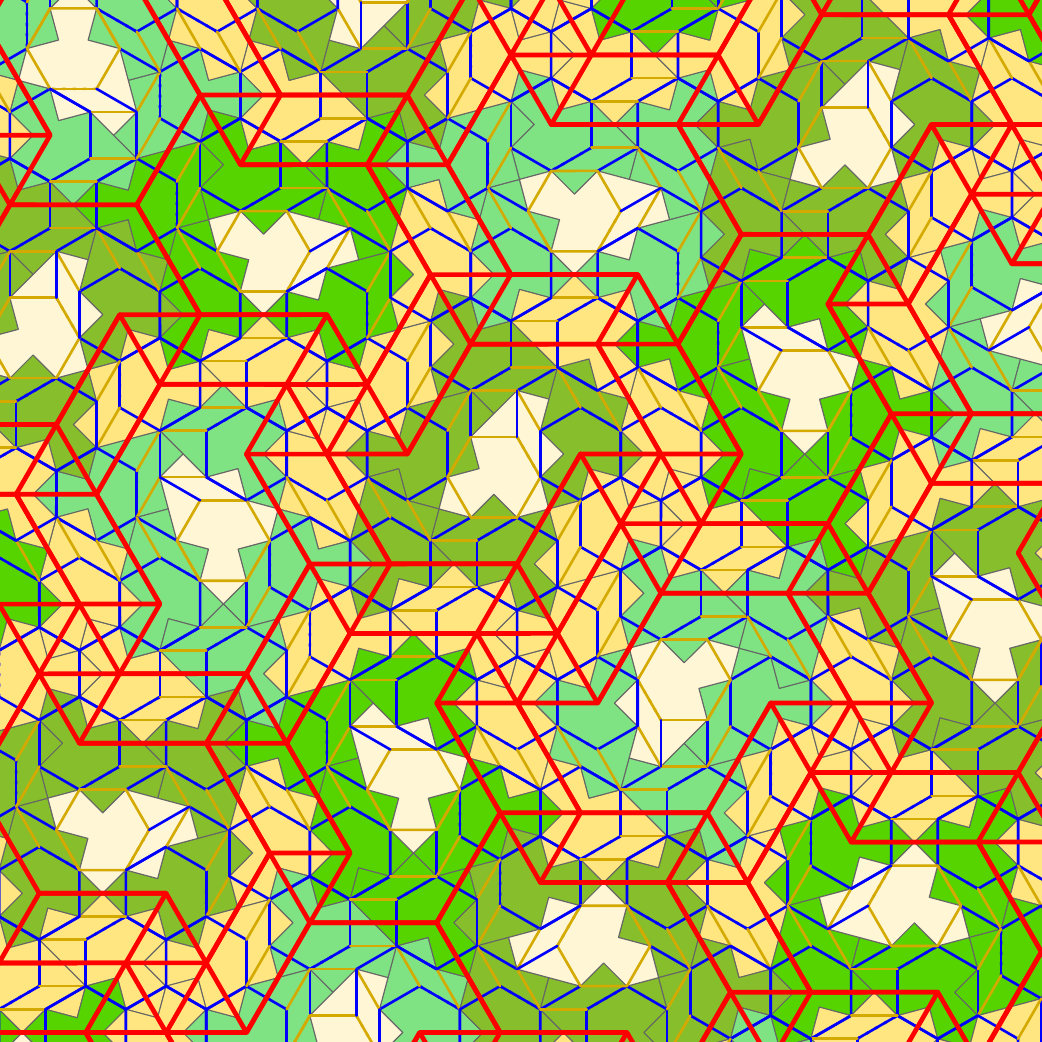}
}

We interpreted packs as a Spectre packing instead of packing of triangles representing cc's. To complete this approach, it would be nice to interpret the triangles in terms of meeting points of 3 even Spectres of $\iD3$ and an optional odd one.
Based on \Cref{fig:de-2}, it seems that the following association can be made for the packs (it extends \Cref{fig:p2s}; if used as a substitution it gives compatible superimpositions).

\nopagebreak

\image{}{}{
\includegraphics[scale=0.5]{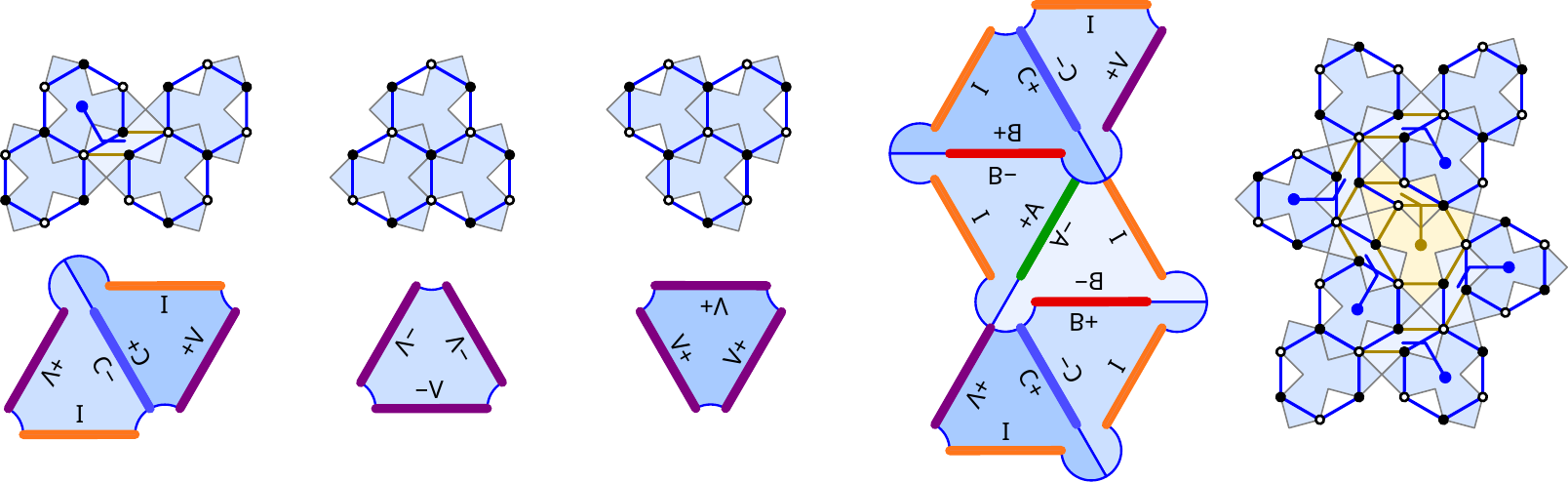}
}

Yielding the following association for the triangles:

\image{}{}{
\includegraphics[scale=0.5]{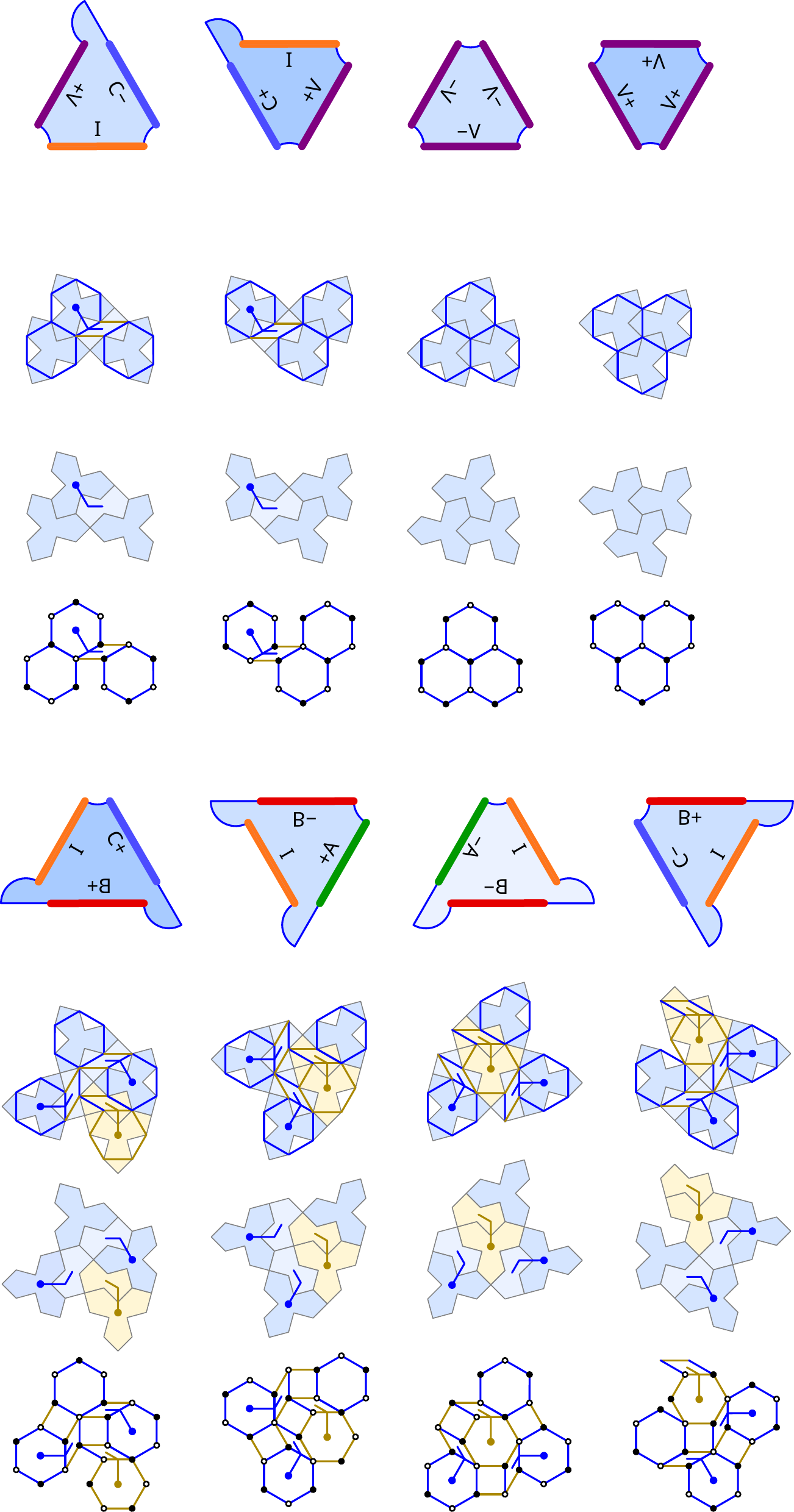}
}

Each triangle is associated to a trio of blue hexes that share a corner under the yellow contraction (see \Cref{sub:condensed}).
We could probably have started from an exhaustive study of that kind of triples instead of the blue cc's, and performed basically the same study as for the triangle tileset of \Cref{sec:triset} (\Cref{fig:triset}), allowing for a shorter deduction path to the substitution structure of the Spectre tiling, as there ought to be repetitions between the sections up to \Cref{sub:cc-list} included, and the study of the properties of the triangle tileset.

\subsubsection{One more spectre around the odd}

As a variant we can use the packs of \Cref{fig:sr-9}:

\nopagebreak

\image{Packs are reflected}{fig:p2s2}{
\includegraphics[scale=0.4]{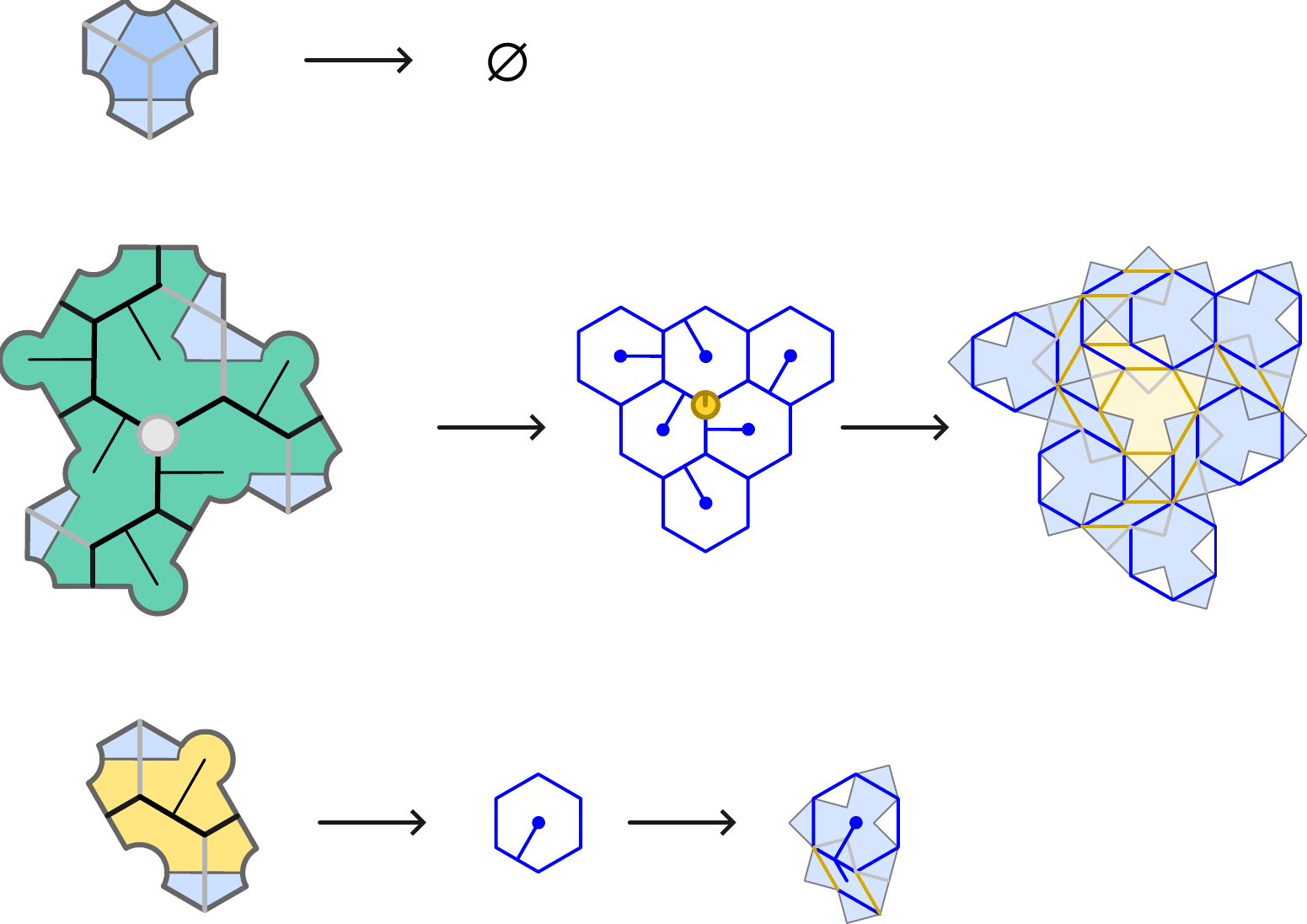}
}

Yielding:

\image{}{fig:de-3b}{
\makebox[\textwidth][c]{\includegraphics[scale=0.5]{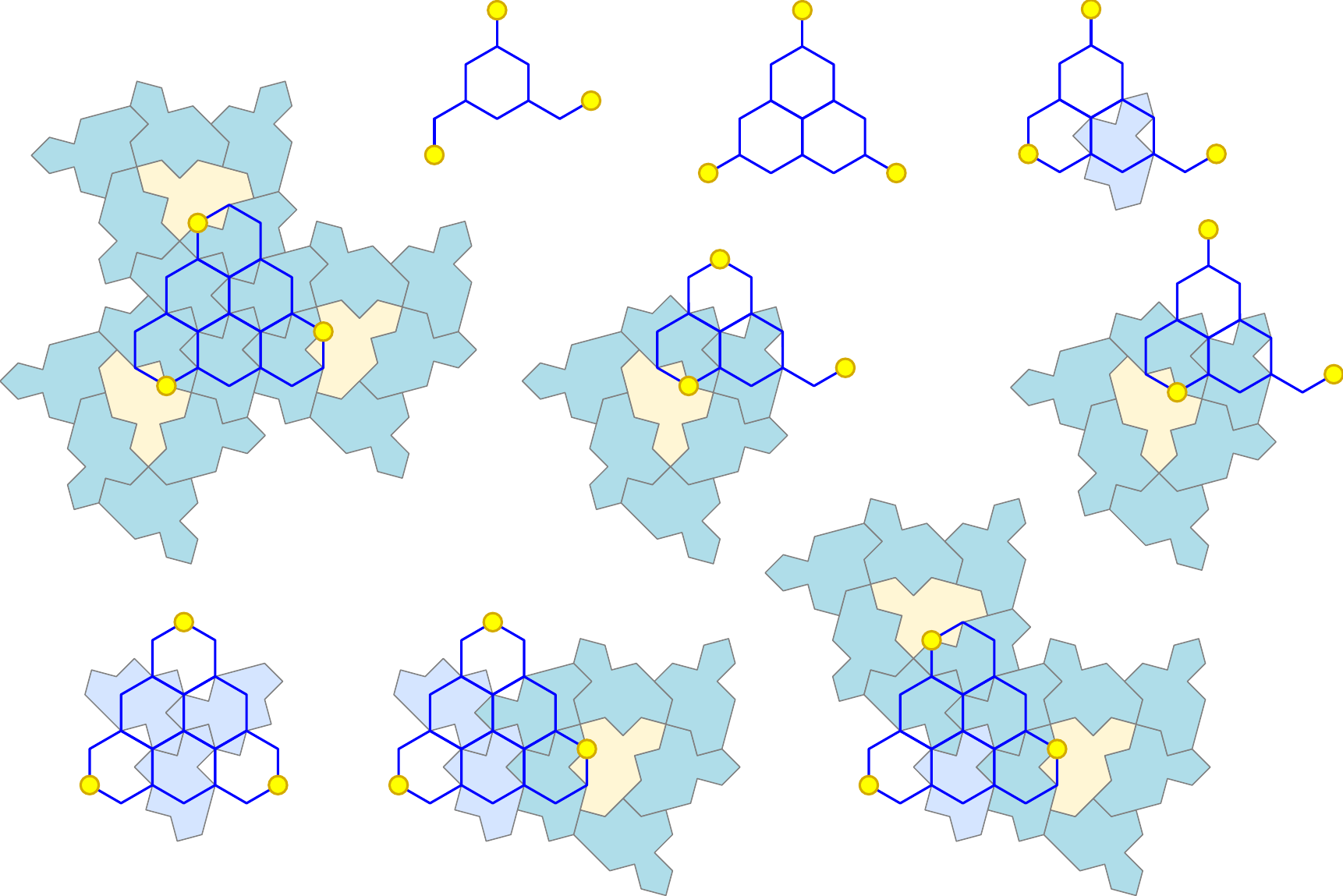}}
}

Or equivalently:

\image{}{fig:de-6b}{
\makebox[\textwidth][c]{\includegraphics[scale=0.5]{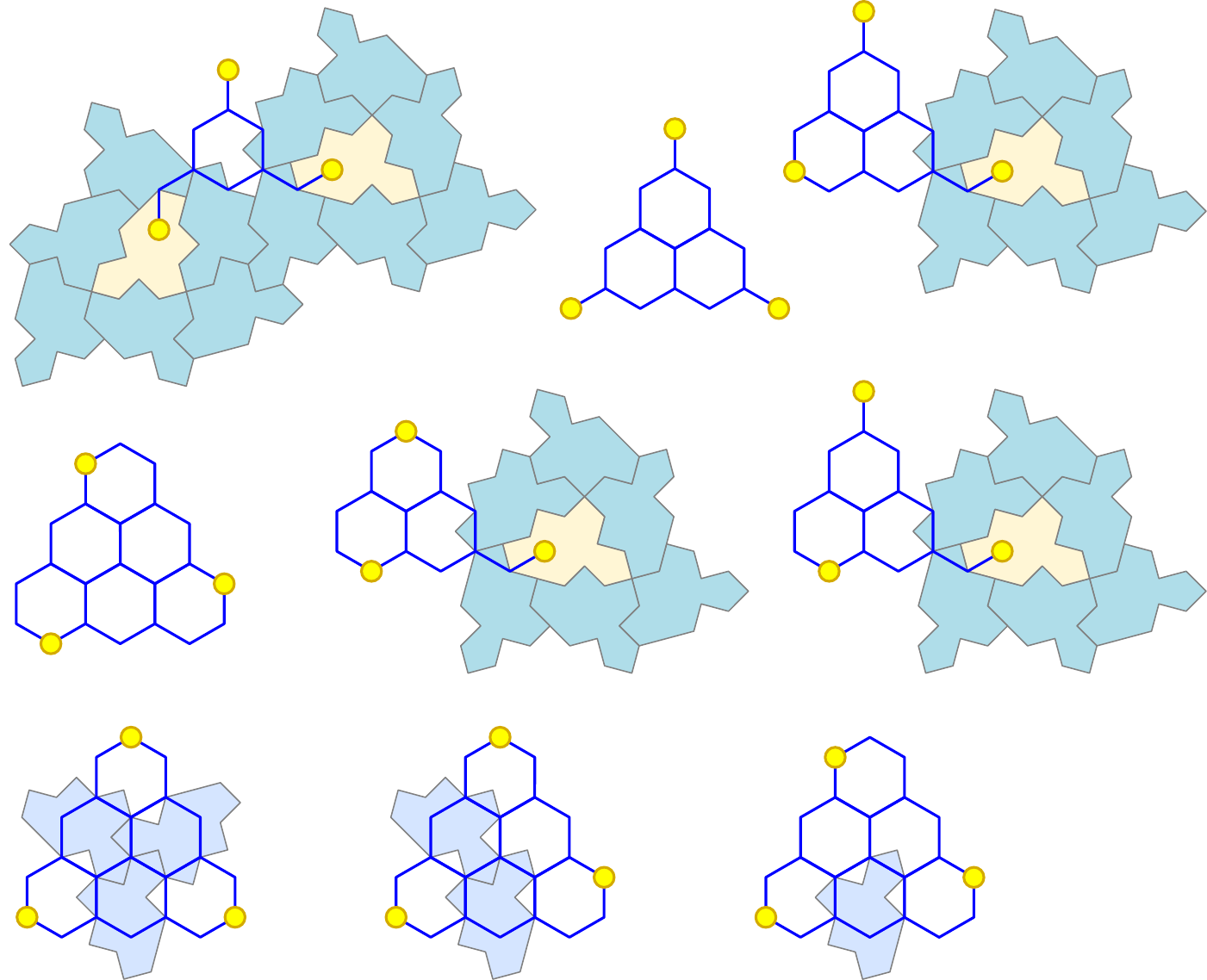}}
}

\subsubsection{Recognizing the hexagons of \texorpdfstring{\cite{chiral}}{[\ref{labelnumber-chiral}]}}\label{ss:oh}

We are now very close to the H8/H9 clusters of \cite{chiral}.
We reproduce them below for convenience:

\image{H9 is H8 with the dark blue tile added}{fig:H8-H9}{
\begin{tikzpicture}
\node at (0,0) {\includegraphics[scale=0.666]{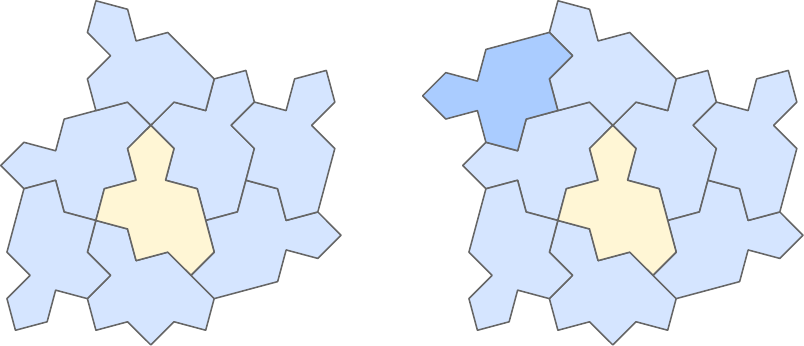}};
\node at (-1.2,-1.8) {H8};
\node at (4.2,-1.8) {H9};
\end{tikzpicture}
}

The H9 is one tile less than the environment of \Cref{fig:odd-env}.

The last version of our clusters above around the yellow Spectre have 7 tiles, including the yellow one.
It is however not possible to integrate the one missing to get an H8, in a way that is compatible with our cc shape$+$dots-based analysis: to do this we need to subdivide shapes of cc into cases, by adding the following information: which orientation (out of two possible) do we have for the yellow hexes at the cc dots for which this orientation is not already known.
We have not tried to determine the implication of this modification on the substitution systems proposed earlier.

\bigskip

To conclude \Cref{sec:analysis}, we propose a way to recognize the nine hex clusters $\Gamma$, $\Delta$, $\Theta$, $\Lambda$, $\Xi$, $\Pi$, $\Sigma$, $\Phi$ and $\Psi$ of \cite{chiral} from the triangle assemblies of \Cref{fig:vlist}.

First we note that in \Cref{fig:vlist}, none of the four cases of the third column can appear in a whole plane tiling.
The one on line~1 has been ruled-out by \Cref{lem:no-2nd} and the one one line~2 by \Cref{prop:no-5th}.

The proof of the following three lemma is to be found in \Cref{ss:recog-pfs}.

For the other two, we start from an improvement on \Cref{fig:green-pack-env}:

\begin{lemma}\label{lem:gr-pack-env-2}
Every green pack is environed as follows:

\nopagebreak

\image{}{fig:gr-pack-env-2}{
\includegraphics[scale=0.33]{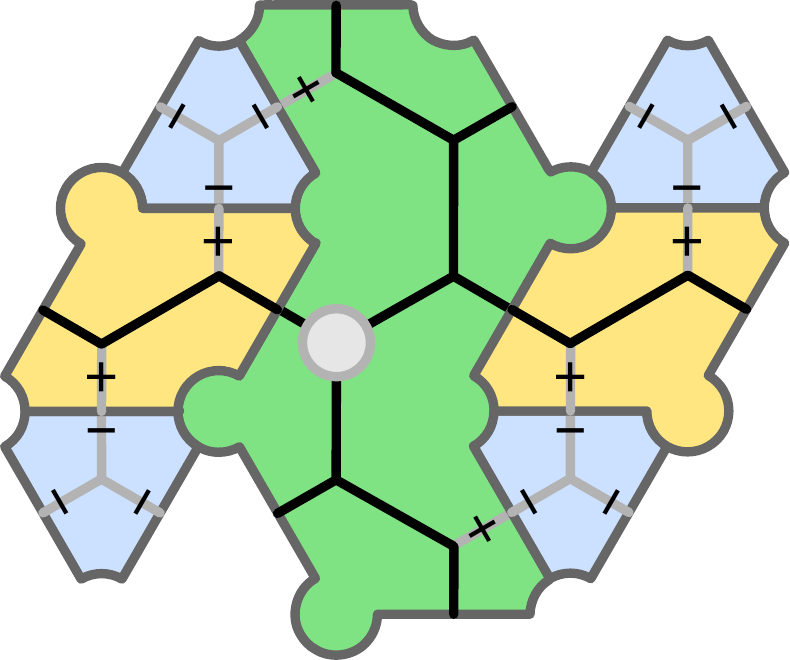}
}
\end{lemma}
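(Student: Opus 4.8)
The plan is to bootstrap from the partial environment already obtained in \Cref{lem:fill-3} (reproduced as \Cref{fig:green-pack-env-b}) and to keep propagating forced tiles outward until the whole of \Cref{fig:gr-pack-env-2} is filled. As in \Cref{lem:fill-3}, none of the placements made below requires looking beyond a bounded neighbourhood of the green pack, so the conclusion will be independent of the ambient tiling.

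First I would list the boundary interfaces and the incompletely surrounded triangle vertices of the configuration of \Cref{fig:green-pack-env-b}. Each exposed interface is forced: a $\iV+$ edge must be completed by a $(\iV-,\iV-,\iV-)$ piece, since that is the only pack carrying a $\iV-$ edge; an $\iI$ edge must be completed by one of the three pieces bearing an $\iI$ edge, and among those the choice is pinned down by the requirement that a half-disk already present at one of the ends of the interface not be overlapped, together with the alternation of black and white dots around a triangle vertex; and every exposed $\iA$ or $\iB$ edge forces, by \Cref{cor:AB-into-green}, an entire fresh copy of \Cref{fig:N1}, i.e.\ another green pack, attached along it. Wherever two contiguous neighbours of an otherwise well-surrounded triangle vertex remain free, \Cref{lem:fill-1} (respectively \Cref{lem:fill-2}) says directly which pack, in which orientation, must be placed there; and the residual ambiguities are killed by the exclusion of the forbidden local patterns of \Cref{lem:no-2nd} and \Cref{prop:no-5th}. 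Iterating, every step is a genuinely forced placement, the front of undetermined tiles recedes, and after finitely many steps one reaches exactly the patch of \Cref{fig:gr-pack-env-2}: at that point every triangle vertex touched by the original green pack is completely surrounded and none of its interfaces is exposed, so the process halts with nothing more forced.

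The point that needs care — and the main obstacle — is the bookkeeping: for each boundary interface and each partially filled triangle vertex one must check that the indicated completion is the \emph{only} one, i.e.\ that every other candidate among the four packs of \Cref{fig:pack} (equivalently the three pieces of \Cref{fig:3-set}) either violates a $+/-$ matching rule, overlaps two half-disks, breaks the dot alternation, or instantiates an already-excluded configuration. With only four pack pieces this is a finite check of the same flavour as the one in \Cref{prop:around-tri}, but it is easy to drop a case; I would organise it as a walk around the boundary of the green pack, completing one triangle vertex at a time in a fixed rotational order exactly as in the proof of \Cref{prop:around-tri}, which keeps the enumeration under control. A convenient final sanity check is that the thick-line decorations of \Cref{fig:packed-deco}, traced through the resulting patch, close up into the expected big hexagons, consistently with \Cref{fig:8-6}.
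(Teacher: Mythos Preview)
Your plan is sound and is essentially the paper's strategy: start from the partial environment of \Cref{lem:fill-3} and eliminate the remaining alternative(s) using the exclusions already in hand, with \Cref{prop:no-5th} doing the decisive work. The paper's execution is however more focused than your general recipe: instead of a full walk around the boundary invoking \Cref{cor:AB-into-green}, \Cref{lem:fill-1}, \Cref{lem:fill-2}, etc., it simply assumes the one remaining alternative to the target picture, runs a short chain of forced placements from \Cref{fig:green-pack-env}, and lands on precisely the arrangement of green packs that was excluded in \Cref{fig:exclu} (the figure in the proof of \Cref{prop:no-5th}). Your systematic boundary sweep would of course reach the same contradiction, but with more bookkeeping than is actually needed here; the extension from \Cref{fig:green-pack-env-b} to \Cref{fig:gr-pack-env-2} hinges on a single branch point, not several.
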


\begin{lemma}\label{lem:impo-arr-1}
The following arrangement cannot occur in a whole plane tiling:

\nopagebreak

\image{}{}{
\includegraphics[scale=0.45]{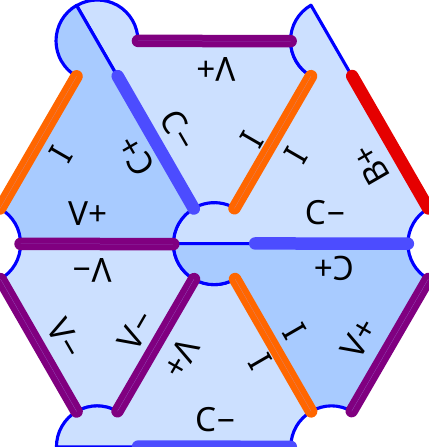}
}
\end{lemma}

\begin{lemma}\label{lem:impo-arr-2}
The following arrangement cannot occur in a whole plane tiling:

\nopagebreak

\image{}{}{
\includegraphics[scale=0.45]{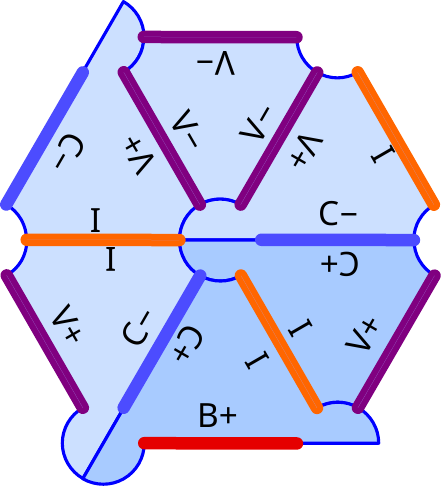}
}
\end{lemma}

\begin{lemma}\label{lem:vlist-2}
All the other vertex environments of \Cref{fig:vlist}, i.e.\ those that are not in the third column, occur in whole plane tilings.
\end{lemma}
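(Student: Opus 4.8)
The plan is to turn this into a finite realizability check and feed it with the structural results already in hand. By the preceding corollary a whole plane tiling by the Spectre exists, and by \Cref{thm:pack} it corresponds to a whole plane tiling by the triangle pieces; by the proposition just before \Cref{sec:down}, every pack of \Cref{fig:pack} — green, yellow, light blue, dark blue — actually occurs in such a tiling, and by \Cref{lem:gr-pack-env-2} the neighbourhood of a green pack is forced and equal to \Cref{fig:gr-pack-env-2}. A vertex of the triangle tiling is surrounded by six triangular slots, so a vertex environment is exactly the combinatorial type of the cluster of packs meeting at that vertex together with the way they are glued along their $\iV$- and $\iI$-type boundary edges; hence it suffices to enumerate the vertex environments that occur (a) strictly inside each of the four packs of \Cref{fig:packed-deco}, and (b) at the locus where two or three packs meet, the admissible contacts being precisely those allowed by the $\iV/\iI$ matching rule together with \Cref{lem:fill-1}, \Cref{lem:fill-2} and \Cref{lem:gr-pack-env-2}.

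Concretely I would first read off, from \Cref{fig:N1} (the $\hT1$-triangle, i.e.\ the green-pack core) and from its forced extension \Cref{fig:gr-pack-env-2}, every vertex environment lying inside a green pack or straddling a green pack and its forced neighbours; since green packs occur and are always environed as in \Cref{fig:gr-pack-env-2}, all of these occur in a whole plane tiling, and they already account for the bulk of the entries of \Cref{fig:vlist}. For the remaining entries — those whose central yellow hex lies in a yellow, light-blue or dark-blue pack — I would place that pack together with the packs forced around it by the matching rules (and, where the requisite neighbouring pack is not immediately forced, one application of the substitution of \Cref{fig:sr-7} or \Cref{fig:sr-8}, which produces only legal patches and hence embeds in a whole plane tiling), and compare the resulting vertex figures with \Cref{fig:vlist}. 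Running through the (short) list one sees that every configuration outside the third column is realized, and — as a built-in sanity check — that no placement ever produces one of the four third-column configurations, which were already excluded by \Cref{lem:no-2nd} and \Cref{prop:no-5th}.

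The only delicate point is guaranteeing that the \emph{combinations} of adjacent packs required to realize the boundary vertex environments genuinely occur together in some whole plane tiling, rather than merely each occurring separately. This is handled by one more appeal to the hierarchy: applying the substitution one level up and using the proposition before \Cref{sec:down} at that level shows that every ``big cc'', hence every pack adjacency consistent with the interface list of \Cref{fig:labeled-cc-2}, appears; this supplies any missing combination. I expect this bookkeeping — tracking the $1/6$ rotations and the single reflection that intervenes when passing between the packed-triangle picture and the honeycomb/cc picture while matching each entry of \Cref{fig:vlist} to a concrete spot in a substituted patch — to be the only real work; no new idea beyond what is already developed is needed.
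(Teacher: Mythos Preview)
Your approach is in the right spirit — use the substitution to produce a legal patch and read off the vertex environments — but it is considerably more elaborate than what the paper does, and some of your intermediate steps are only sketched. The paper's proof is a one-shot exhibition: take a single yellow pack, apply the pack substitution of \Cref{fig:sr-7} (equivalently \Cref{fig:sr-8}) \emph{twice}, and simply observe that each of the ten remaining vertex environments of \Cref{fig:vlist} appears somewhere inside the resulting patch. Since the yellow pack occurs in a whole plane tiling (by the proposition just before \Cref{sec:down}) and substitution preserves embeddability, that single picture finishes the argument.

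By contrast, you split into interior versus boundary vertices, invoke \Cref{lem:gr-pack-env-2} for the green-pack neighbourhood, and then need a separate argument that the specific \emph{combinations} of adjacent packs required at boundary vertices actually co-occur. Your handling of this last point — ``going one level up and using that every big cc appears, hence every pack adjacency consistent with \Cref{fig:labeled-cc-2} appears'' — is not quite a proof as stated: knowing all cc types occur does not immediately give that every admissible local adjacency pattern at a triangle vertex occurs. You would still have to check, case by case, that each of the ten environments is realized, which is exactly the content of the paper's single double-iterate picture. So your route works in principle but reduces to the same final verification, only arrived at through more scaffolding; the paper's shortcut is to skip the case split and just exhibit one sufficiently rich patch.
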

\begin{proof}
Let us number the remaining environments:

\nopagebreak

\image{}{}{
\includegraphics[scale=0.35]{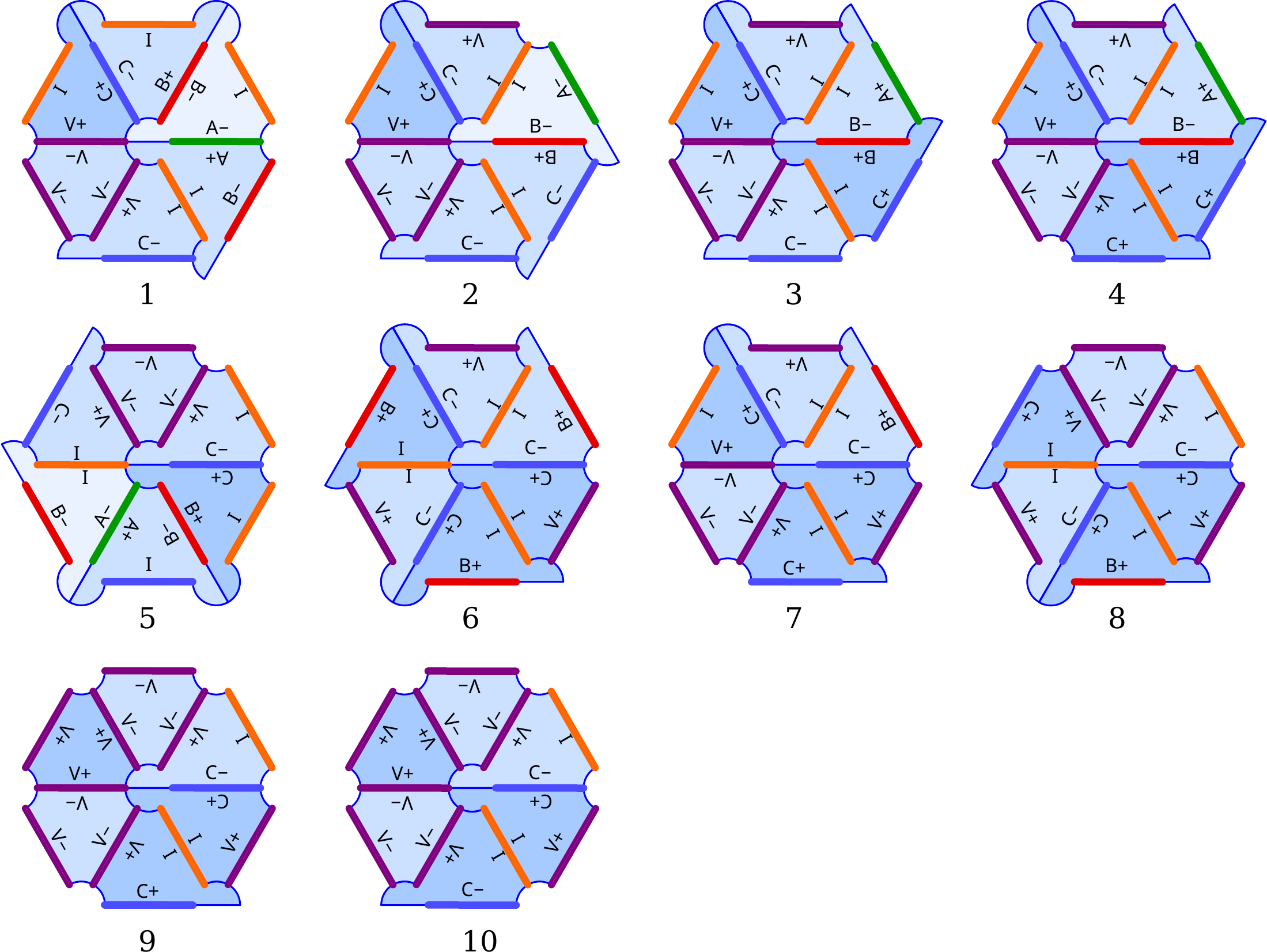}
}

We can interpret them in terms of the packs, which are visually easier to spot:

\image{}{}{
\makebox[\textwidth][c]{\includegraphics[scale=0.21]{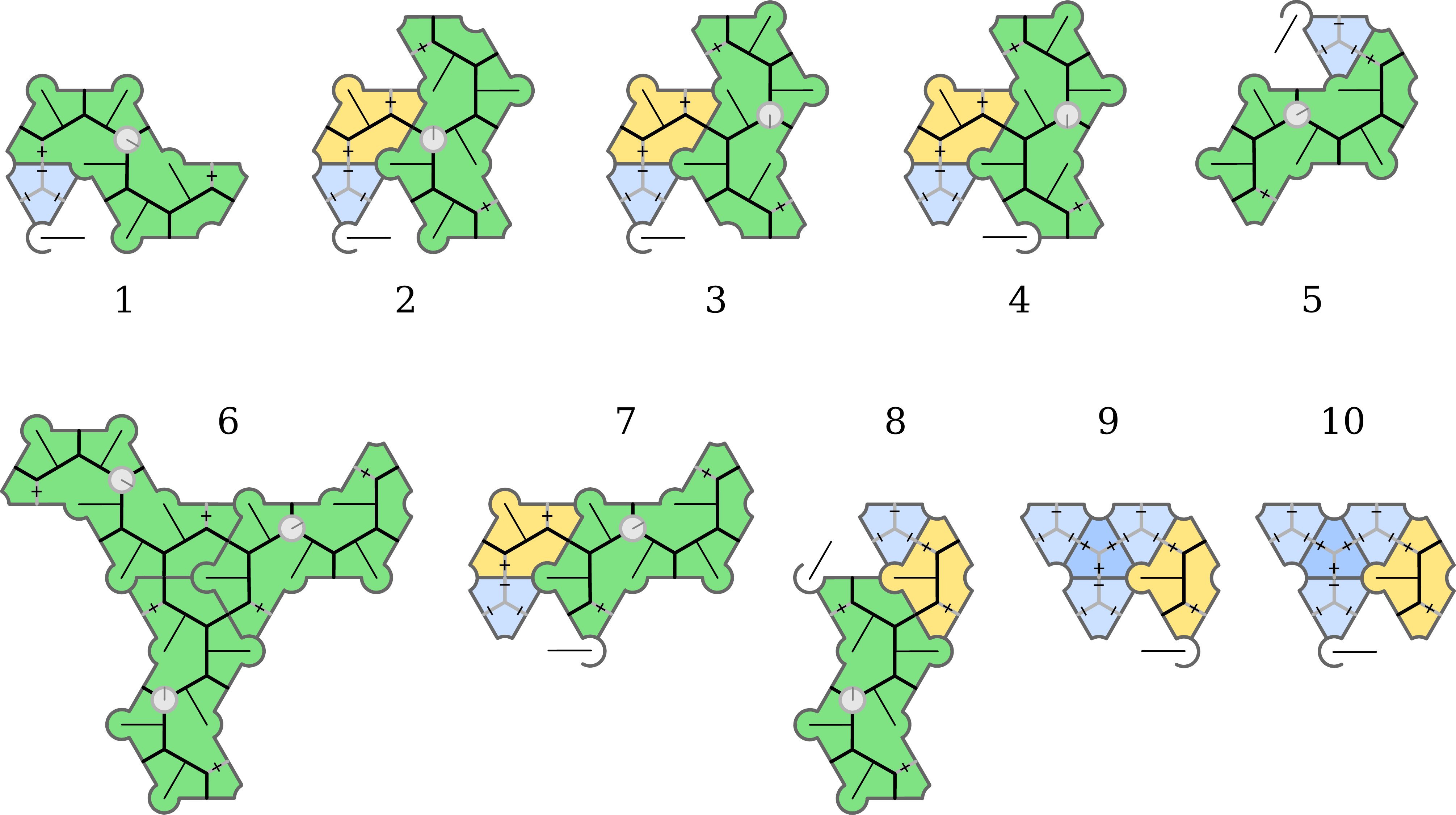}}
}

We look at the the second iteration of a yellow pack and find an instance of each of the situations of the previous figure:

\nopagebreak

\image{}{}{
\makebox[\textwidth][c]{\includegraphics[scale=0.33]{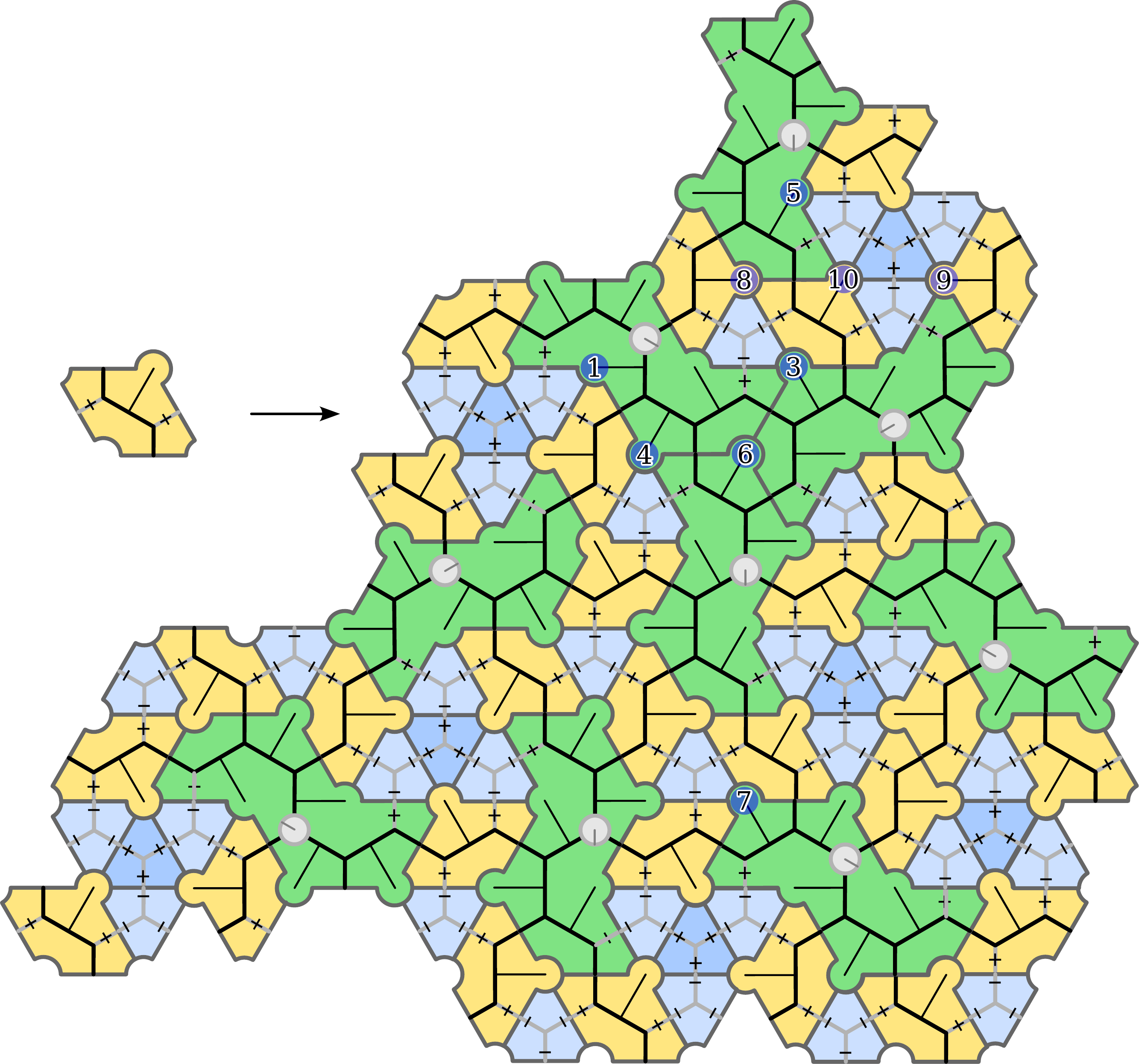}}
}

\end{proof}

Using \Cref{fig:labeled-cc-2,fig:odd-env-2}, we are able to identify the cluster types for the remaining ten arrangements. On the figure we rotated the arrangements by a half-turn, for easier visual identification with Figure~4.1 of \cite{chiral}.
Curiously, Cluster $\Phi$ appears twice.

\image{}{fig:recog}{
\makebox[\textwidth][c]{\includegraphics[scale=0.36]{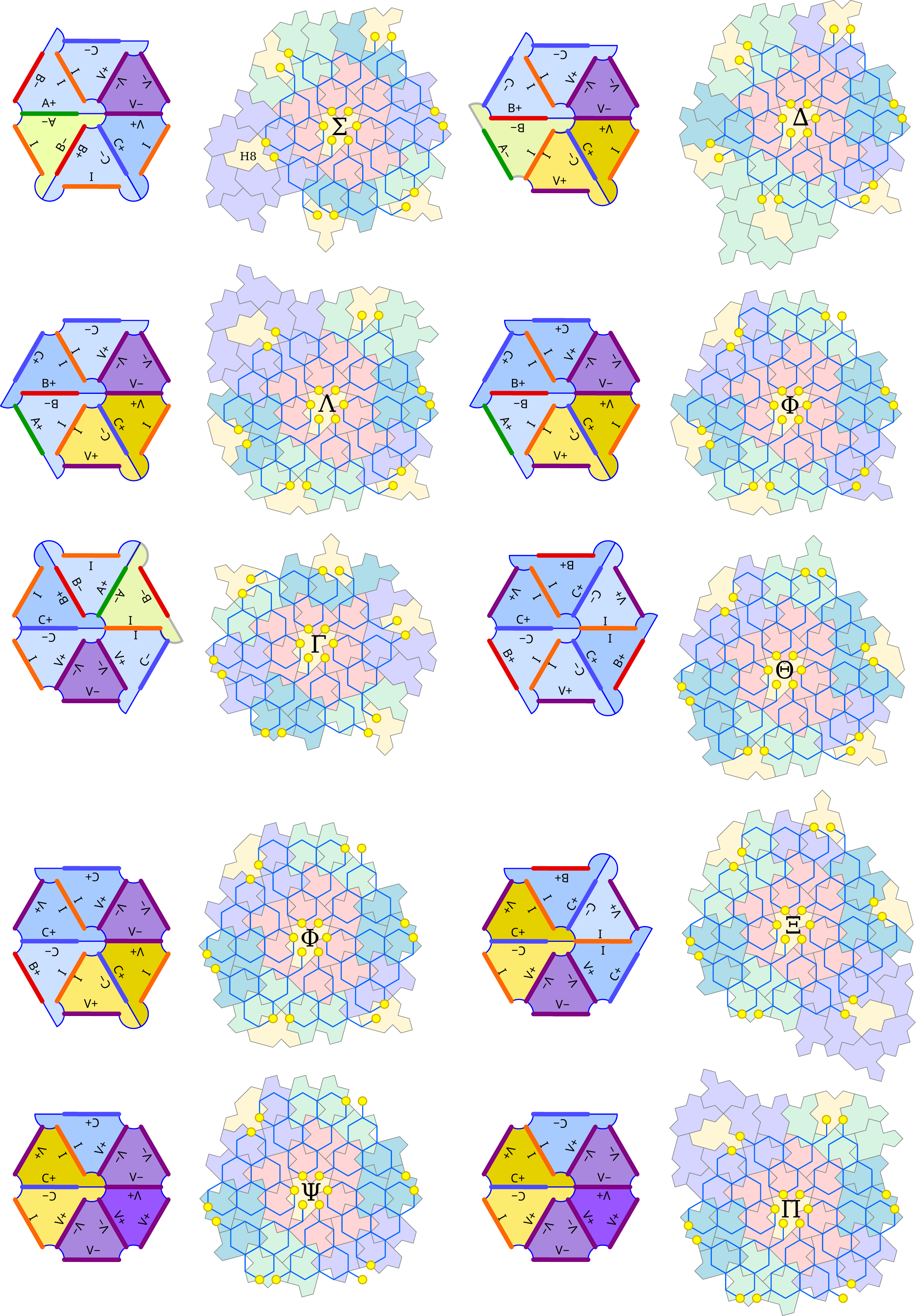}}
}

\section{Deferred proofs}\label{sec:later}

\subsection{Odd tile environment}\label{sub:pf-odd-env-1}

Here we prove \Cref{prop:odd-env-1}.

\begin{proposition}\label{prop:impo-13}
The following arrangement cannot appear in a whole plane tiling by the Spectre.

\nopagebreak

\image{}{}{
\includegraphics[scale=0.75]{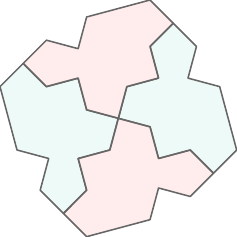}
}
\end{proposition}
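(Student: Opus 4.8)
The plan is to argue by contradiction, in exactly the style already used for \Cref{prop:impo-11,prop:autofill-1}: assume the displayed arrangement occurs inside a whole plane tiling by the Spectre, and propagate forced tile placements around it until a contradiction appears. The only tools needed are the vertex--type matching principle recalled at the start of \Cref{ss:visual-aids} (in a Spectre tiling, two tiles in contact meet vertex--to--vertex, and a vertex of angle a multiple of $1/3$ can only meet another vertex of angle a multiple of $1/3$), together with \Cref{prop:impo-11} (a forbidden polygonal line) and \Cref{prop:autofill-1} (forced completions at a dent).

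Concretely, I would first locate in the displayed arrangement a free vertex carrying an angle that is a multiple of $1/3$ — equivalently a dent whose apex has not yet been surrounded. At such a vertex the shape of the Spectre leaves only one or two ways to fit the next tile (this is precisely the mechanism of \Cref{prop:autofill-1}), so we obtain a new forced tile, or a short branching. Repeating this, each newly placed tile again exposes such a vertex, and I would run the propagation, carrying along each branch separately whenever more than one completion is \emph{a priori} admissible. The process terminates because in every branch one of three things happens after finitely many steps: the union of the tile boundaries comes to contain a rotated copy of the polygonal line forbidden by \Cref{prop:impo-11}; or a vertex of angle $1/3$ is reached at which every candidate Spectre placement either overlaps an already placed tile or would require a reflected tile; or a sub-configuration matching the hypotheses of \Cref{prop:autofill-1} forces a tile that immediately creates one of the previous two situations. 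In each terminal case we have the contradiction, so the arrangement cannot occur.

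The main obstacle I expect is purely bookkeeping: the first critical vertex of the displayed arrangement may admit two completions, and each of these may branch once more before closing up, so the argument is a small tree of forced completions rather than a single chain, and one must be careful that every leaf genuinely reduces to \Cref{prop:impo-11} (or to a non-fillable $1/3$ vertex). As in the proof of \Cref{prop:autofill-1}, the clean way to present this is to draw the intermediate frames with the offending red polygonal lines highlighted; no new idea beyond \Cref{prop:impo-11} and the vertex--type rule is required.
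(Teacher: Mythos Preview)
Your plan matches the paper's approach: a finite case analysis by forced completions, terminating each branch with \Cref{prop:impo-11} or an application of \Cref{prop:autofill-1}. The paper's proof is exactly of this shape, so conceptually you are on target.

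Two points, though. First, what you have written is a description of a strategy, not a proof: you never commit to a specific starting vertex, nor do you verify that each branch actually closes. For a statement like this, whose content is entirely in the finite case check, the proof \emph{is} the enumeration; saying ``run the propagation and every leaf will reduce to \Cref{prop:impo-11}'' is a promise, not an argument. You need to name the vertex, list the candidate completions explicitly, and for each one exhibit the forbidden line (or the chain of \Cref{prop:autofill-1} steps producing it).

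Second, a small technical slip: you propose to start at a vertex of angle a multiple of $1/3$, on the grounds that ``only one or two'' Spectres can fit there. The paper instead begins at the $1/4$ angle of an inward dent of the configuration, where there are four candidate placements, and then disposes of all four (some immediately via \Cref{prop:impo-11}, the others after one or two forced steps from \Cref{prop:autofill-1}). Starting at a $1/3$ vertex is not wrong in principle, but your identification ``a $1/3$ vertex $=$ a dent whose apex has not yet been surrounded'' is off: the dent apices relevant here carry $1/4$ angles, and the $1/3$ vertices of the Spectre do not in general admit only one or two completions without further context. So if you execute your plan as stated you may find more branches than you anticipate, or none of the clean kind you want; following the paper and attacking the $1/4$ dent directly keeps the tree to four leaves.
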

\begin{proof}
In the figure below we look at the 1/4 angle of the inward dent on the upper right. It can be filled in 4 ways. Each leads immediately or by repeated use of \Cref{prop:autofill-1} to an unfillable outline as per \Cref{prop:impo-11}.

\image{}{}{
\includegraphics[scale=0.55]{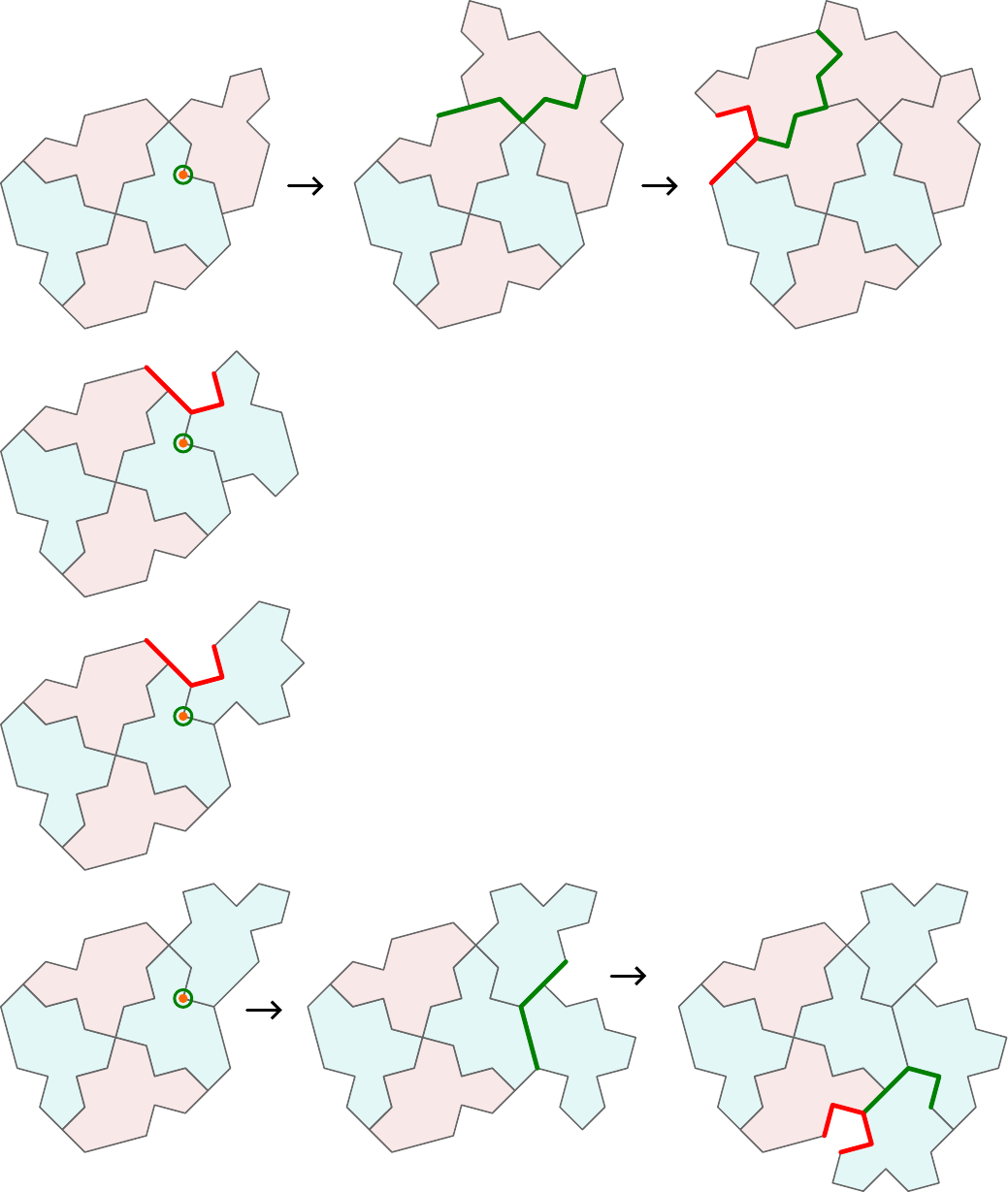}
}
\end{proof}

\begin{proposition}\label{prop:impo-12}
The following arrangements cannot appear in a whole plane tiling by the Spectre.

\nopagebreak

\image{}{}{
\begin{tikzpicture}
\node at (0,0) {\includegraphics[scale=0.666]{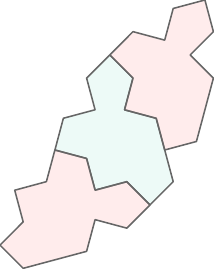}};
\node at (4,0) {\includegraphics[scale=0.666]{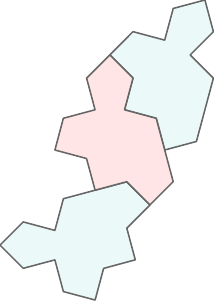}};
\end{tikzpicture}
}
\end{proposition}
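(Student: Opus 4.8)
The plan is to treat each of the two arrangements exactly as \Cref{prop:impo-13} was treated: locate a vertex of the partial outline at which a tile is still forced to sit, enumerate the finitely many ways the Spectre can be placed there (respecting that each vertex of one tile meets a vertex of the same type — $1/3$-type or $1/4$-type — of the other, as recalled before \Cref{prop:impo-11}), and show that every branch collapses.

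First I would isolate, in each of the two pictures, an inward dent whose $1/4$-angle (or, where the geometry forces it, a $1/3$-angle) has not yet been completed, just as in the proof of \Cref{prop:impo-13} we looked at the $1/4$-angle of the upper-right inward dent. Because the two red segments bordering that angle already fix the edge directions, there are only a handful of orientations in which a Spectre can be inserted there; I would list them. For each insertion, I would then propagate the consequences: in most branches a further dent is created whose completion is forced by \Cref{prop:autofill-1}, and after one or two such forced steps the accumulated boundary contains, up to rotation, the polygonal line forbidden by \Cref{prop:impo-11}. In the remaining branches — the ones where no immediate \Cref{prop:impo-11} violation appears — I expect the configuration obtained to contain (a rotation of) the arrangement already ruled out in \Cref{prop:impo-13}, so those branches are eliminated by reduction to the previous proposition. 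Since every branch for each of the two arrangements ends in a contradiction, neither arrangement can occur.

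The second arrangement should be handled in the same way but may require choosing the auxiliary vertex more cleverly (a $1/3$-angle rather than a $1/4$-angle), since its local geometry differs; if a single vertex does not suffice, one first applies \Cref{prop:autofill-1} to bring it into a shape in which the argument above applies verbatim.

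The main obstacle is purely the bookkeeping: correctly enumerating the legal tile placements at the chosen vertex (it is easy to miss or double-count an orientation), and, for each placement, drawing the forced continuation far enough to recognize either the \Cref{prop:impo-11} line or the \Cref{prop:impo-13} arrangement. None of the steps is conceptually hard, but the case tree is wide enough that the figure accompanying the proof (the analogue of \texttt{impo-13-proof.pdf}) will be doing most of the work, and care is needed to check the $1/3$-versus-$1/4$ vertex-type compatibility at every junction so that no spurious branch is overlooked.
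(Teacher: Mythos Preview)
Your proposal is correct and follows essentially the same approach as the paper: for each arrangement, pick the $1/4$-angle of an inward dent, enumerate the (four) legal Spectre placements there, and use \Cref{prop:autofill-1} to force continuations until each branch hits either the forbidden line of \Cref{prop:impo-11} or the configuration of \Cref{prop:impo-13}. The only minor discrepancy is your speculation that the second arrangement might need a $1/3$-angle as the pivot; in fact the paper handles it the same way as the first, again via a $1/4$-angle dent, so no such change is needed.
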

\begin{proof}
We first treat the left configuration. In the figure below we look at the 1/4 angle of some inward dent. It can be filled in 4 ways. Each leads immediately or by use of \Cref{prop:autofill-1} to an unfillable outline as per \Cref{prop:impo-11} or the forbidden situation of \Cref{prop:impo-13}.

\image{}{}{
\includegraphics[scale=0.55]{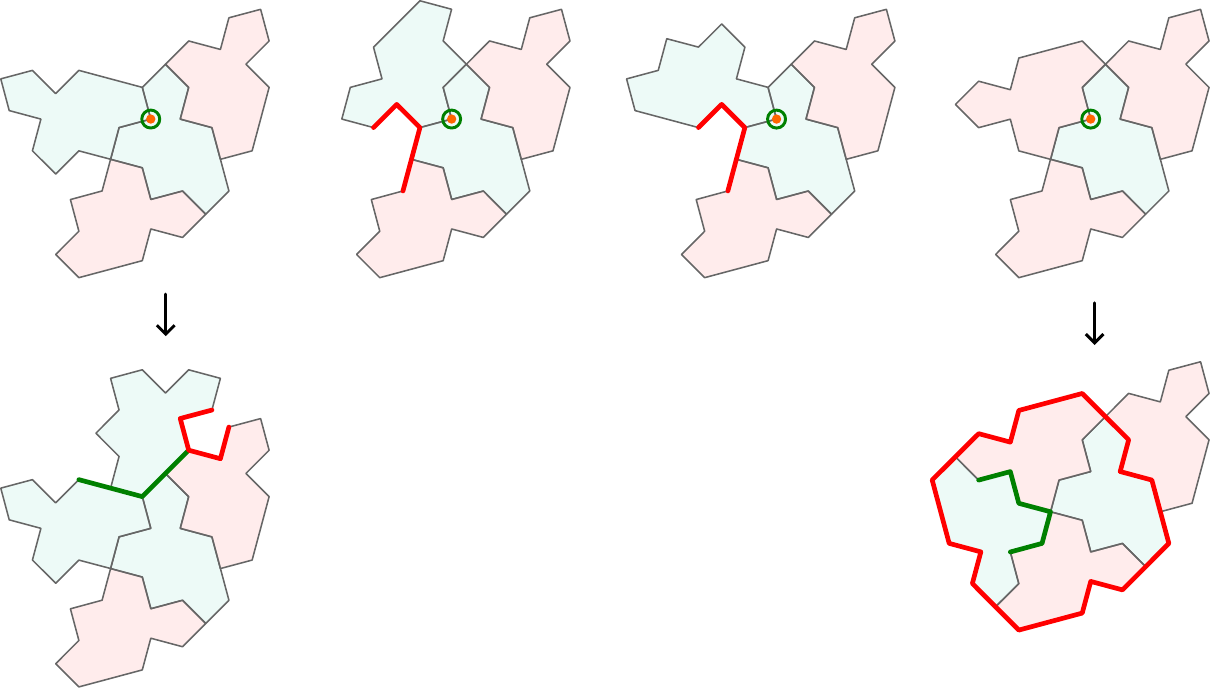}
}

For the right configuration:

\image{}{}{
\includegraphics[scale=0.55]{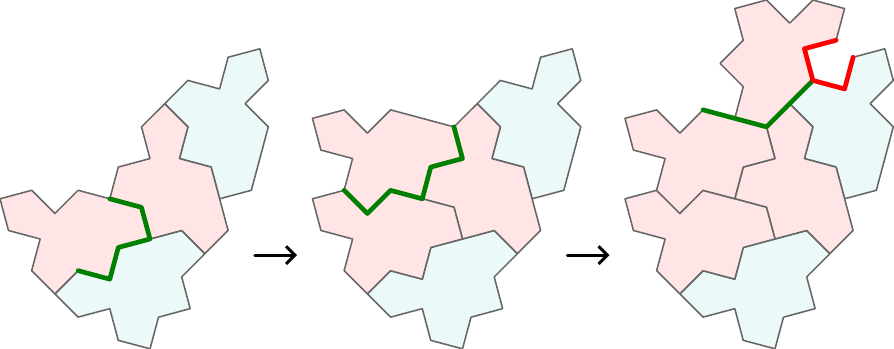}
}
\end{proof}

\begin{proof}[Proof of \Cref{prop:odd-env-1}]
First there are, up to rotation by a multiple of $1/12$, only 3 ways two tiles of different parity can be in contact along at least one edge:

\image{}{fig:3-cases}{
\includegraphics[scale=0.666]{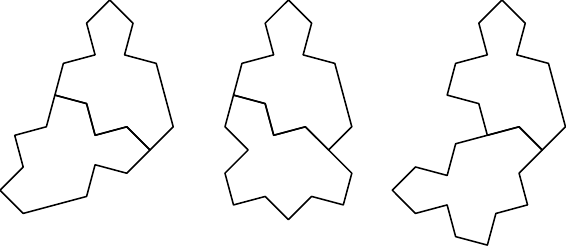}
}

or 6 ways if we only perform turns by multiples of $1/6$:

\image{}{}{
\includegraphics[scale=0.666]{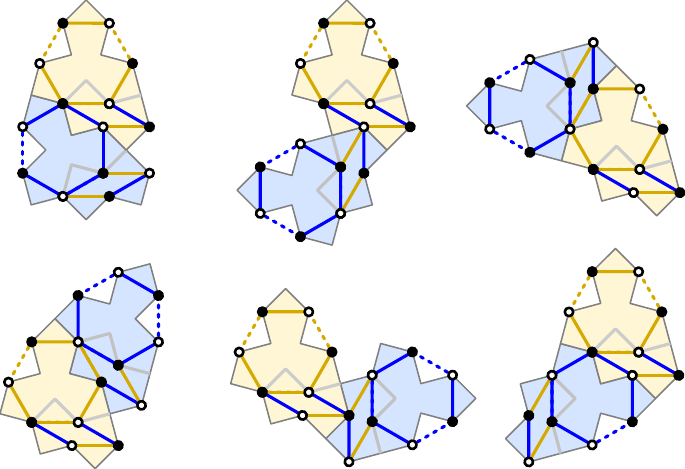}
}

This is proved by fixing the orientation of the yellow tile,  then for each of the 6 possible orientations of the blue tile, trying to place it adjacently.

We now prove that of for each of these three configurations, one of the two tiles is like the red tile in the figure below, up to a rotation of a multiple of $1/12$. (Note that the other of the two tiles may or may not be one of the blue tiles in the figure below.) 

\image{}{fig:oppo-1}{
\includegraphics[scale=0.666]{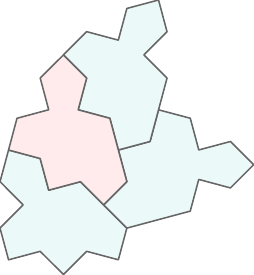}
}

To prove this claim, we first study the central case of \Cref{fig:3-cases} (which has the same shape as the \emph{mystic} of \cite{chiral}):

\image{}{fig:oppo-3}{
\includegraphics[scale=0.55]{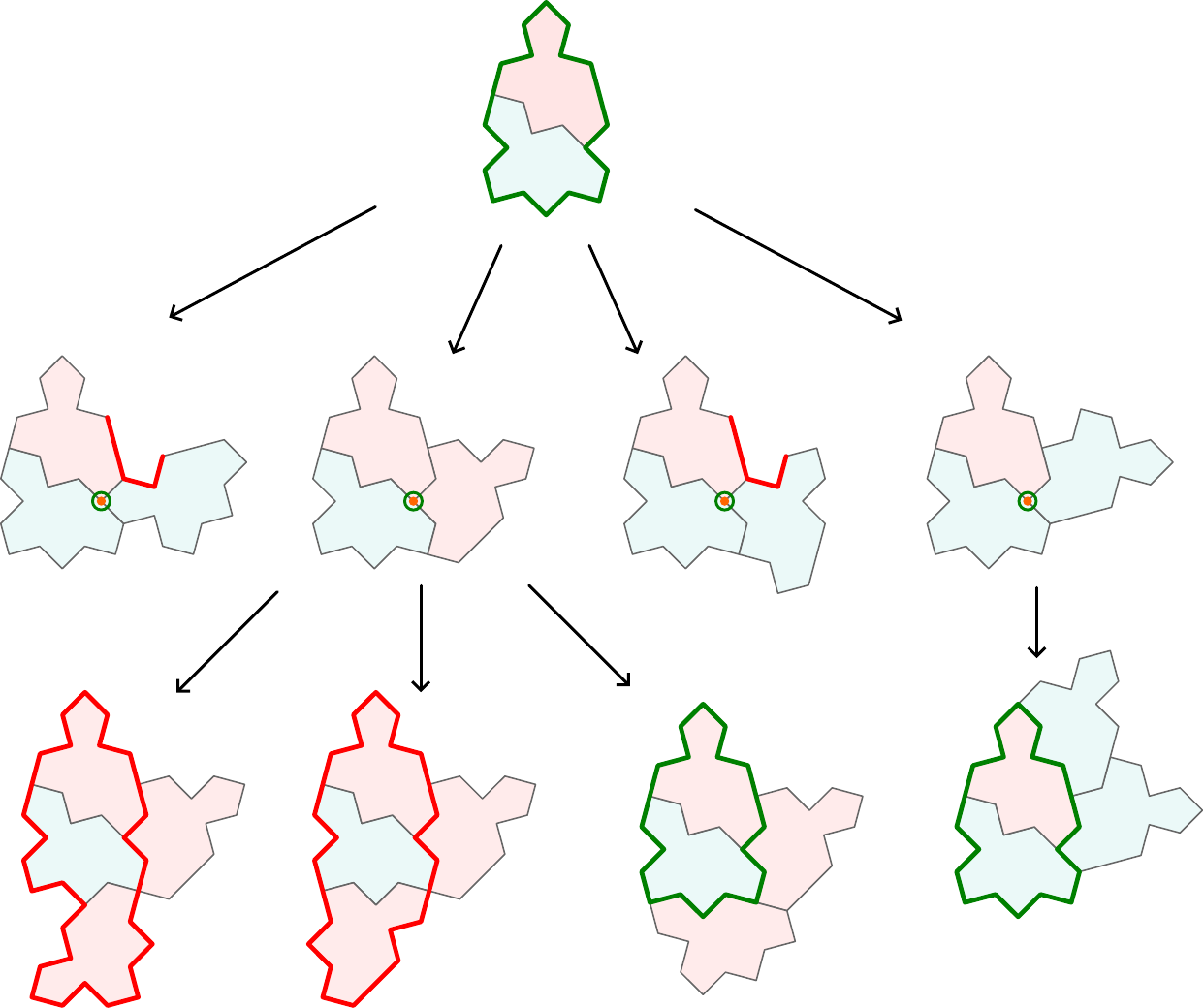}
}

In the figure above, only two cases are possible. The one on the right is exactly as in \Cref{fig:oppo-1}, as is the one left to it after swapping the colours and rotating by $-1/12$.

In the right case of \Cref{fig:3-cases}, we see below that we must have a mystic shape, and can use the previous figure to get the second deduction in the diagram below:

\image{}{}{
\includegraphics[scale=0.55]{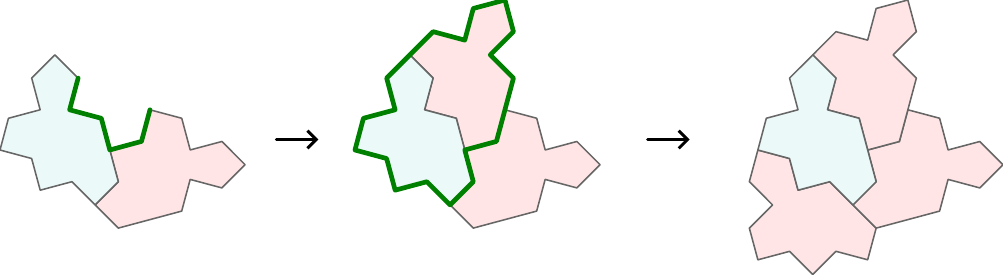}
}

Last, in first case of \Cref{fig:3-cases}, we get, using \Cref{fig:oppo-3} when we see a mystic:

\image{}{}{
\includegraphics[scale=0.55]{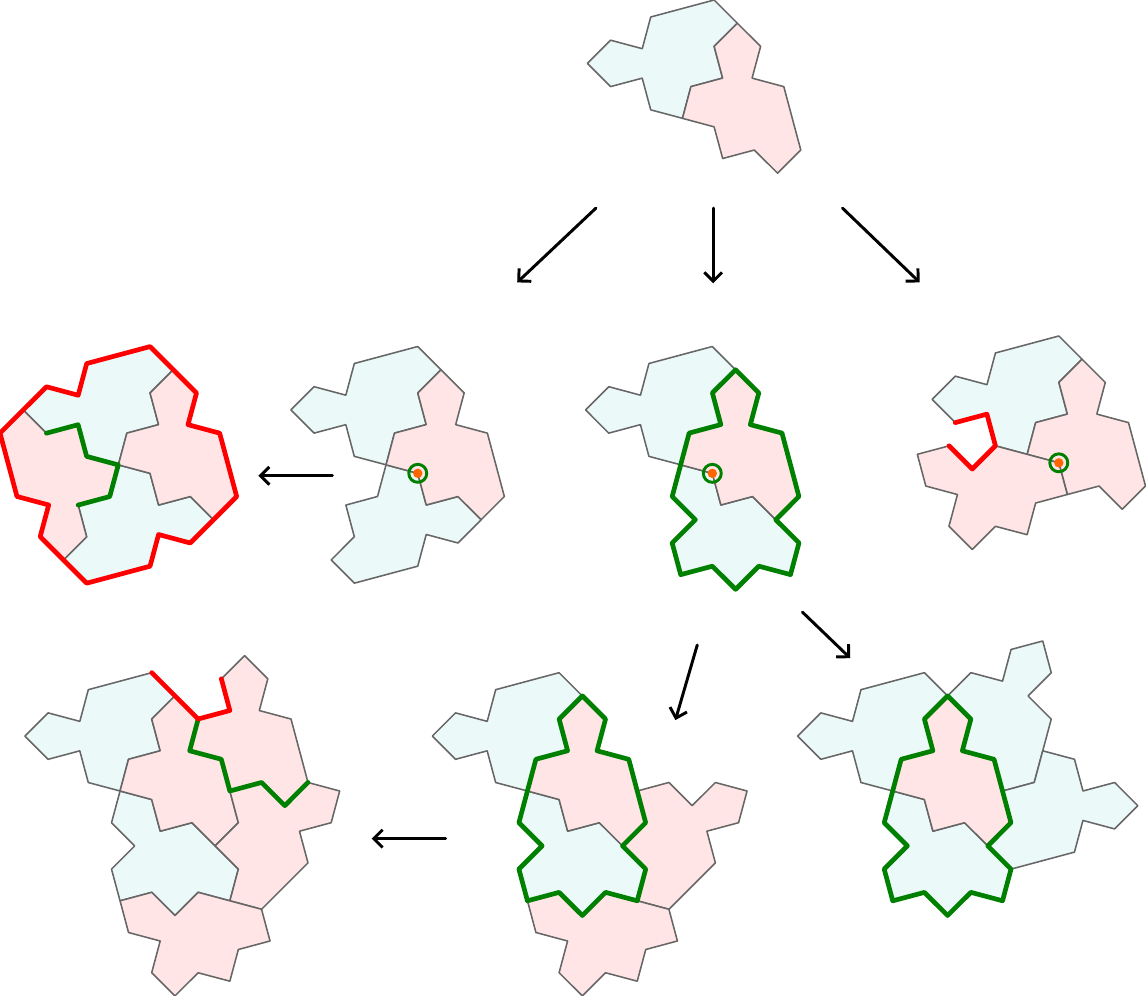}
}

This proves the claim.

From the configuration of \Cref{fig:oppo-1} we deduce:

\image{}{fig:last-1}{
\includegraphics[scale=0.55]{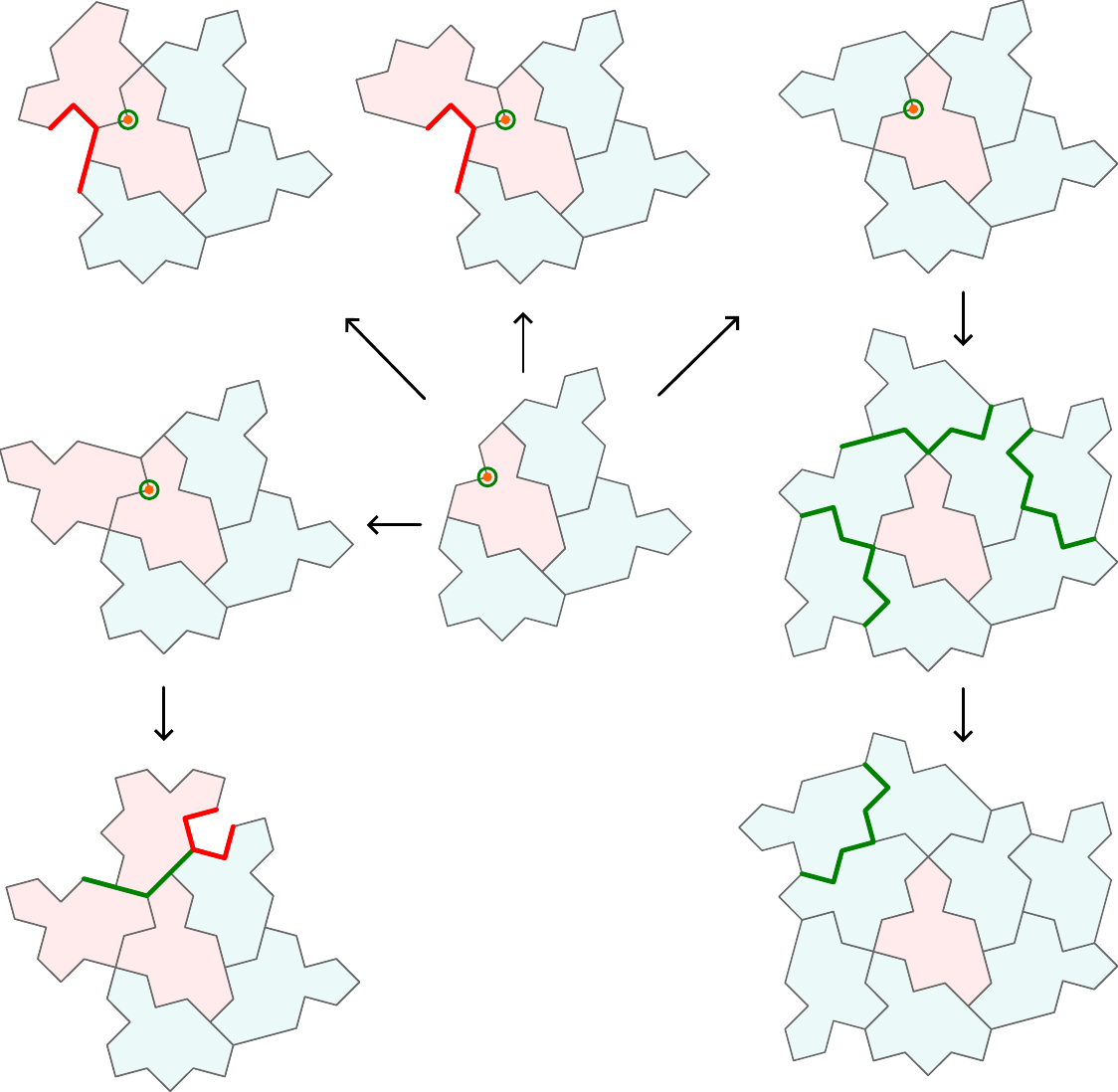}
}

Which is almost the whole wanted picture, except one tile, for which the deduction is a bit long. Note that \emph{this last tile is not needed to deduce that whole plane tilings with the Spectre necessarily have a parity class whose tiles are all isolated}.

The deduction goes as follows: if it were not the case, the only other possibility at the marked indent below would be the tile marked 0.

\image{}{}{
\includegraphics[scale=0.55]{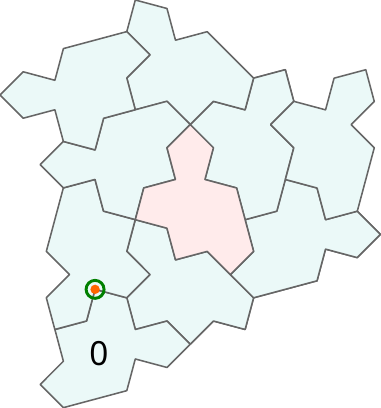}
}

If this were the case, let us look at another indent and at the 4 possible ways to fill it and what this implies. Numbers indicate in which order the tiles can be deduced using \Cref{prop:autofill-1}.

\image{}{}{
\includegraphics[scale=0.48]{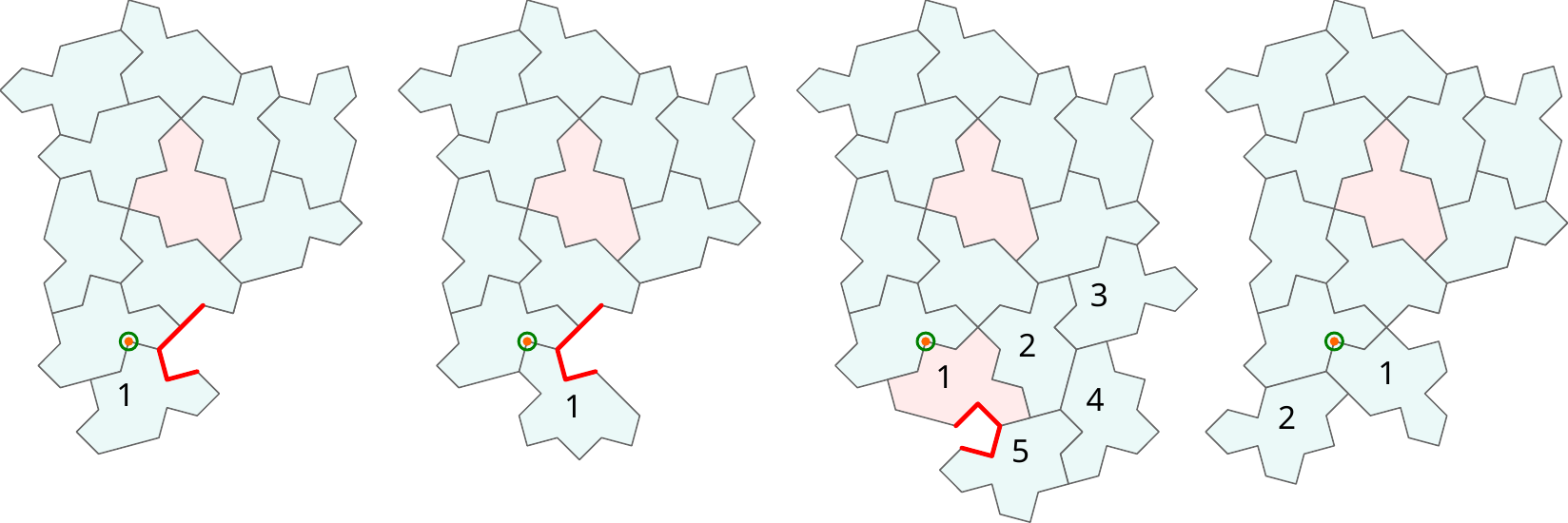}
}

The last case is ruled out as follows:

\image{}{}{
\includegraphics[scale=0.55]{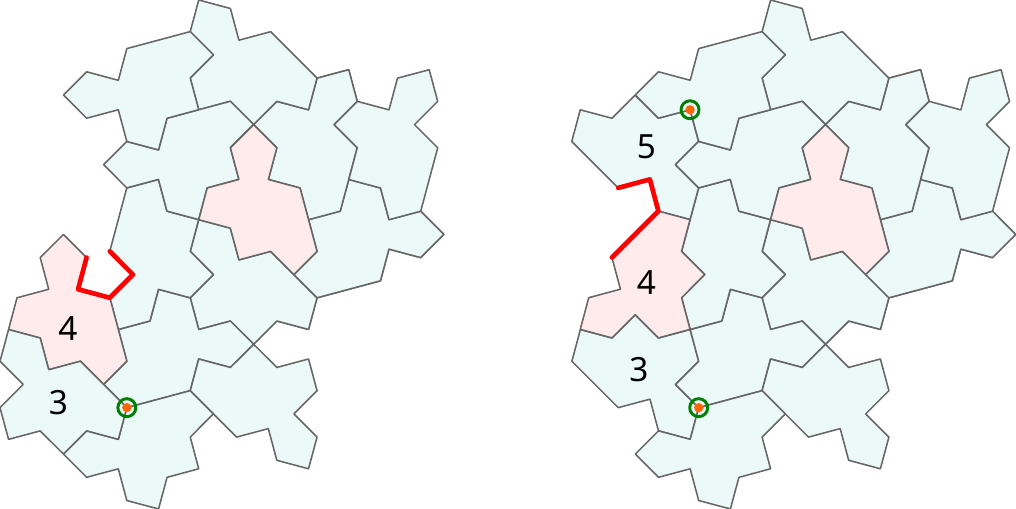}
}
\end{proof}

\subsection{Cc types}\label{ss:pf-cc-types}

The proposition below is \Cref{prop:T1-cc}.

\begin{proposition*}
Every cc of a $\hT 1$ is necessarily as on the picture below:\footnote{Actually we will see in \Cref{cor:T1-tto} that the lower left arrow can only have one orientation.}

\nopagebreak

\image{}{fig:T1-a-copy}{
\includegraphics[scale=0.75]{T1-ant-2b.pdf}
}
\end{proposition*}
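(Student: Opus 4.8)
The plan is to reduce the statement to the analysis of one blue hexagon together with what protrudes from its three tips, and then to pin down the local picture at each tip using the already-established forced environment of odd tiles.

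First I would use that a $\hT1$ cluster is, by definition and by \Cref{prop:T}, a single blue hex $B$ whose cc contains no other blue hex. By \Cref{cor:protr-1}, any blue segment protruding from a black dot of $B$ would belong to two adjacent blue hexes, which is impossible here; hence every blue segment of the cc meeting $B$ stems from one of the three white dots of $B$, and these are precisely the three tips of the $\hT1$ in the sense of \Cref{def:tips}. So the cc of $B$ is $B$ together with, at each tip, either nothing or one of the antennae enumerated in \Cref{fig:antenatips}.

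Next I would bring in the odd tiles. By \Cref{prop:cc-descr} the cc of $B$ touches exactly three yellow hexagons $Y_1,Y_2,Y_3$, one assigned to each tip, and each $Y_i$ is the hexagon of an odd Spectre. By \Cref{prop:odd-env-1} — read via \Cref{fig:odd-env-2} in terms of the decoration graph — the immediate environment of each $Y_i$ is a completely determined patch, in which $B$ appears as one of the blue hexes near $Y_i$ at the corresponding tip. Inspecting this patch, one sees that a blue hex playing the role of $B$ (one white dot emitting blue matter towards $Y_i$, no adjacent blue hex) necessarily carries at that tip exactly the antenna of \Cref{fig:T1-a} ending on $Y_i$; in particular the two cases of \Cref{prop:cc-descr} other than ``end of the antenna'' (contact at the tip, or at the next boundary vertex counter-clockwise) are excluded, since they would force a blue hex adjacent to $B$, contradicting that $B$ is a $\hT1$. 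Carrying this out at all three tips and assembling the three local pictures — which overlap consistently around $B$ — yields the configuration of \Cref{fig:T1-a}, together with the $\iD3$–$\iD2$ pairings imposed by the three patches; the one remaining ambiguity, the orientation of the marking on one of the end hexagons, is genuinely open at this point and is only resolved later in \Cref{cor:T1-tto}.

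The main obstacle is the inspection in the previous paragraph: one must check carefully, from \Cref{fig:odd-env-2} (or, alternatively, by a direct case analysis with the visual aids of \Cref{ss:visual-aids}, in the spirit of the proof of \Cref{prop:ne}), that no completion compatible with $B$ being an isolated blue hex differs from the stated antenna, and that the three patches around $Y_1,Y_2,Y_3$ fit together without introducing new constraints. A more computational route would bypass \Cref{prop:odd-env-1}: start from the $\iD3$-piece of $B$ and its three inward dents; none can be filled by a blue $\iD3$ (that would be an adjacent blue hex), and the remaining unwanted completions at each dent are ruled out one at a time using \Cref{cor:protr-1} and \Cref{prop:impo-2,prop:impo-16}, leaving precisely the antenna of \Cref{fig:T1-a} at each tip.
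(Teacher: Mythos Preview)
Your route is genuinely different from the paper's, and the place where it is only sketched is exactly the place where it could fail.

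The paper does \emph{not} argue tip by tip via the odd environment. Instead it fixes once and for all the orientation of the Spectre carried by the $\hT1$ hex $B$ (equivalently: it fixes the marking of $B$, i.e.\ on which side the paired $\iD2$ sits). With that orientation pinned down, the top tip falls into exactly the first two entries of \Cref{fig:tips}; the second of those is already known to be forbidden by \Cref{prop:odd-env-1} (this is noted in the caption of \Cref{fig:tips}), so the top tip is forced. From there the paper simply pushes outward tile by tile with repeated applications of \Cref{prop:autofill-1}, obtaining the whole environment in one sweep; a single remaining indent then admits two completions, which is precisely the residual ambiguity of \Cref{fig:T1-a}. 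No appeal to \Cref{prop:cc-descr} or to assembling three separate odd-environment patches is made.

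Your argument, by contrast, hinges on the sentence ``the two cases of \Cref{prop:cc-descr} other than `end of the antenna' \ldots\ are excluded, since they would force a blue hex adjacent to $B$''. This is not something \Cref{prop:cc-descr} gives you, and it is not obvious: tips of type ``contact at the tip'' and ``contact at the next vertex'' \emph{do} occur for $\hT2$ and $\hT3$ clusters without any adjacent-hex contradiction, so the implication cannot be purely local to the tip. You would really have to open up \Cref{fig:odd-env-2}, locate every blue hex in that patch that can play the role of $B$ with no adjacent blue hex, and verify that each such candidate reaches the yellow hex through the specific antenna shown---and then still check that the three patches around $Y_1,Y_2,Y_3$ glue consistently around $B$ with the correct markings. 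That is a real case analysis, comparable in length to the paper's, and you have not carried it out. Your alternative ``computational route'' in the last paragraph is closer in spirit to what the paper does, but even there the decisive ingredient you omit is \Cref{prop:autofill-1}: once the first tip is determined, it is that lemma, applied several times in succession, that drives the rest of the configuration.
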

\begin{proof}
Let us fix the orientation of the Spectre that the $\hT1$ is associated to:

\image{}{}{
\includegraphics[scale=0.75]{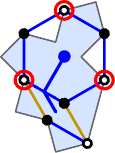}
}

With this orientation, we are in the first two possibilities of \Cref{fig:tips}. Knowing that the second configuration cannot appear, the top tip can only be:

\image{}{}{
\includegraphics[scale=0.75]{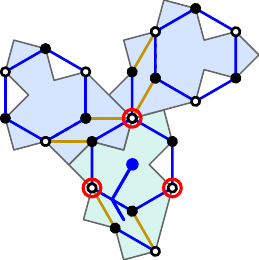}
}

which by repeated use of \Cref{prop:autofill-1} can only complete into

\image{}{}{
\includegraphics[scale=0.75]{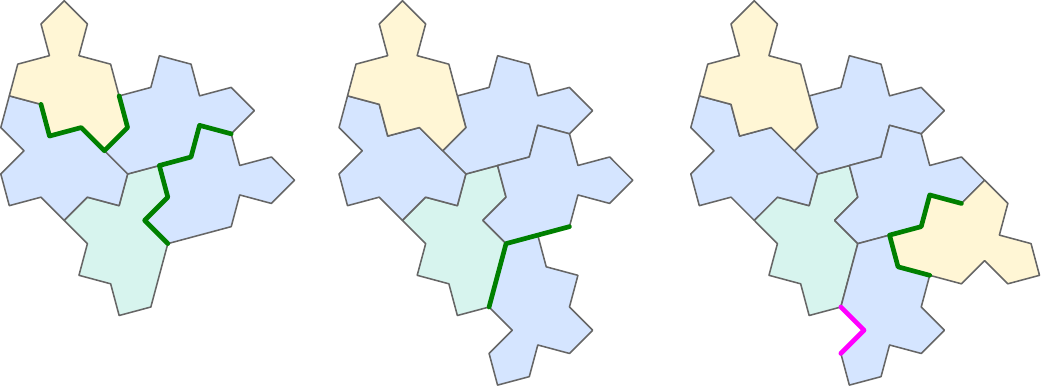}
}

and then looking at what can fit in the pink dent above and completing two more tiles, we get the following two possibilities:

\image{}{}{
\includegraphics[scale=0.75]{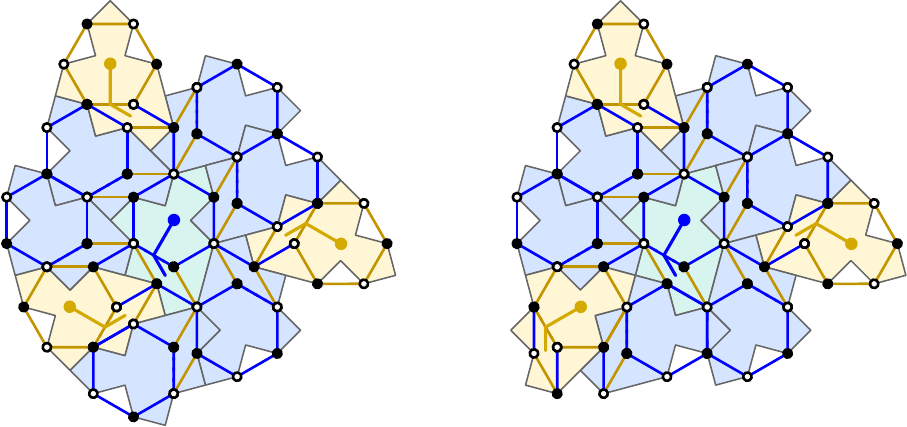}
}
\end{proof}

The proposition below is \Cref{prop:T2-impo}.

\begin{proposition*}
A $\hT 2$ with markings as below cannot occur.

\image{}{}{
\includegraphics[scale=0.75]{T2-impo-b.pdf}
}
\end{proposition*}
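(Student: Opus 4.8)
The plan is to argue by contradiction, following the pattern of the proof of \Cref{prop:T1-cc}. First I would fix a concrete orientation for the Spectre underlying one of the three hexes of the $\hT2$. By the remark after \Cref{prop:blue-hex-touch}, adjacent hexes of a cluster are translates of one another, so fixing one orientation fixes all three; the markings shown in the figure then pin down which indent of each $\iD3$ carries its paired $\iD2$, i.e.\ the positions of the three rhombs. This yields a definite partial patch of $\iD3$'s and $\iD2$'s around the cluster — equivalently, a definite partial decoration graph of hexes, rhombs and squares — on which the immediate boundary can be read off.

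Next I would locate the boundary vertex where the three prescribed rhomb positions conflict: in the figure this should be an inward $1/4$-dent squeezed between two of the paired rhombs (or, on the decoration graph, a slot that only admits squares adjacent to a rhomb that ought to be blue). I would enumerate the ways a tile can sit at that dent: the angle- and edge-colour matching rules recalled in \Cref{ss:visual-aids} leave only a handful of orientations. For each of them I would push the forced consequences with repeated use of \Cref{prop:autofill-1}, and check that every branch runs into one of the standard obstructions already available: an outline forbidden by \Cref{prop:impo-11} (or by the derived \Cref{prop:impo-13,prop:impo-12}); a dent with no admissible filler; a forced adjacency of two odd tiles, contradicting \Cref{prop:odd-env-1} (equivalently \Cref{cor:odds}); a blue hex that becomes surrounded by hexes, so has no room for its companion $\iD2$, enlarging the cluster past a $\hT3$ and contradicting \Cref{prop:T}; or a yellow $\iD2$ whose companion yellow $\iD3$ has no room, as in \Cref{prop:impo-16}. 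Since every branch is excluded, the marked $\hT2$ cannot occur. (This exclusion is exactly what makes the subsequent classification \Cref{prop:T2-cc-1,prop:T2-cc-2} exhaustive: the only marking patterns that survive are those two.)

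The main obstacle is the bookkeeping rather than any conceptual difficulty: one has to choose the vertex to attack so that the branching stays small, and carry each branch far enough to hit a recognizable obstruction. As the author notes in the analogous situation of the odd-tile environment, there does not seem to be a shortcut around this finite inspection; the only idea is that translating the three marking positions into actual $\iD3$/$\iD2$ placements over-constrains a single vertex on the cluster's boundary, after which every completion fails.
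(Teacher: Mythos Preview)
Your plan is the same case-analysis strategy the paper uses, and it would go through; the differences are only in sharpness. In the paper the author first notes the order-$3$ rotational symmetry of the marked $\hT2$, then picks one inward dent on the cluster boundary and observes that, \emph{because the cluster is a $\hT2$}, no further blue hex may fill it. That single observation cuts the branching down to exactly two cases rather than the ``handful'' you anticipate. Both cases are then dispatched by a single obstruction, namely \Cref{prop:odd-env-1}: each forces a tile adjacent to an odd tile whose orientation contradicts the fixed environment of \Cref{fig:odd-env}. So among the list of possible contradictions you enumerate, only the odd-tile environment is actually needed, and \Cref{prop:autofill-1} is not invoked at all.

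In short: your outline is correct and matches the paper's route, but you can replace the open-ended branching with the cleaner ``no new blue hex, hence two fillers, each violating \Cref{prop:odd-env-1}'' argument.
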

\begin{proof}

Initially, there is an order 3 rotational symmetry.
The inward dent of the blue segment highlighted in red below can only have two ways to be filled, because no other blue hex can touch it by definition of a $\hT2$:

\image{}{}{
\includegraphics[scale=0.75]{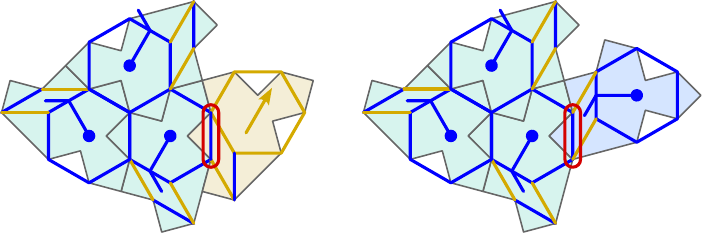}
}

The one of the left contradicts the orientation of the bottom-most green tile in \Cref{prop:odd-env-1} at the tile outlined in red below left.
The one on the right implies the configuration below right, which yields the same contradiction at the tile outlined in red.

\image{}{fig:conf2}{
\includegraphics[scale=0.75]{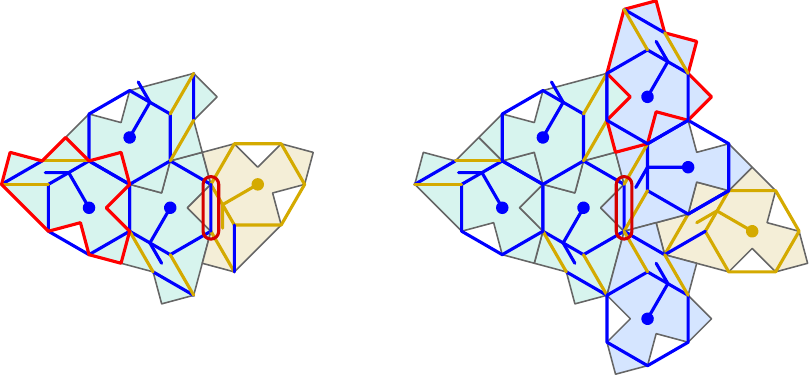}
}

\end{proof}

The proposition below is \Cref{prop:T2-cc-1}.

\begin{proposition*}
A $\hT 2$ with markings as below left has a cc as below right:

\nopagebreak

\image{}{}{
\begin{tikzpicture}
\node at(-2,0) {\includegraphics[scale=0.75]{T2-1-b.pdf}};
\node at(2,0) {\includegraphics[scale=0.66]{T2-ant-1b.pdf}};
\end{tikzpicture}
}
\end{proposition*}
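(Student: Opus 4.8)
The plan is to follow the template of the proof of \Cref{prop:T1-cc}: fix the orientation of the three Spectres whose hexes form the given $\hT 2$, use the prescribed marking (left-hand figure) to pin down, via \Cref{fig:marking-a}, where the three paired $\iD2$ pieces sit, and then analyse the environment of each of the three tips of the cluster by a chain of forced tile placements.

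First I would fix coordinates so that the $\hT 2$ points up with its three underlying $\iD3$ orientations as in the left-hand figure. By \Cref{fig:marking-a} this also fixes the positions of the three paired yellow rhombs, hence the decoration graph in a neighbourhood of the cluster. By definition of a cluster no further blue hex is adjacent to the $\hT 2$ along any of its three sides, and by \Cref{cor:protr-1} a blue segment protruding from any black dot of the cluster boundary already belongs to two adjacent blue hexes, which would enlarge the cluster; hence the cc can only grow at the three tips, through antennae stemming from the white dots there. So the proposition amounts to: (a) at each tip exactly one antenna is forced and it is the one drawn; (b) beyond the $\hT 2$ and those three antennae there is no further blue hex in the cc.

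For (a) I would argue tip by tip. At a given tip, \Cref{fig:tips} lists the a priori possible neighbourhoods (those without an extra hex at the tip, which is automatic here since the cluster is a $\hT 2$). The fixed $\iD3$ orientations and rhomb positions rule out all but a few of these: configurations forcing two odd tiles to be adjacent are killed by \Cref{prop:odd-env-1}, and the remaining ambiguous dents are resolved by repeated application of \Cref{prop:autofill-1}, exactly as in the proofs of \Cref{prop:T1-cc} and \Cref{prop:T2-impo}, any alternative cascading into an outline forbidden by \Cref{prop:impo-2}, \Cref{prop:impo-16}, or (through \Cref{prop:autofill-1}) \Cref{prop:impo-11}. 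What survives at each tip is the length-one antenna with a blue terminal hex shown in the right-hand figure. For (b), once the three antennae are placed, \Cref{cor:protr-1} applied at the antenna hexes, together with the list of antenna ends in \Cref{fig:antenatips}, shows that no further blue hex attaches; the surrounding yellow hexes and the rhomb/square interfaces are then read off as in \Cref{prop:cc-descr}, giving exactly the displayed cc.

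The main obstacle will be the case-checking at a tip: one must verify, for this particular marking, that every alternative to the displayed antenna is eliminated, either directly by \Cref{prop:odd-env-1} or through a forced cascade of \Cref{prop:autofill-1} ending in a configuration forbidden by \Cref{prop:impo-2}, \Cref{prop:impo-16}, or \Cref{prop:impo-11}. This is the same bookkeeping as in the proofs already displayed, but the marking here has less symmetry than in \Cref{prop:T2-impo}, so the three tips cannot all be reduced to a single picture a priori and may need to be treated separately; one must also check at the end that the three antennae, once placed, are mutually compatible and create none of the forbidden configurations between them.
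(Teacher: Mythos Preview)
Your plan is essentially the paper's own argument: fix the marking, observe via \Cref{cor:protr-1} that the cc can only grow through antennae at the three white-dot tips, and then resolve each tip by a cascade of forced placements using \Cref{prop:autofill-1} together with the exclusions of \Cref{prop:impo-11}, \Cref{prop:impo-2} and \Cref{prop:odd-env-1}. The paper's proof is exactly this, presented as a single chain of pictorial implications ending in two possibilities at one corner.

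One factual correction to your last paragraph: the marking treated here \emph{does} have order~$3$ rotational symmetry. In a $\hT2$ each hex has exactly two external inward dents available for its paired $\iD2$, so up to rotation there are two symmetric markings; \Cref{prop:T2-impo} eliminates one of them and the present proposition handles the other (the non-symmetric markings are dealt with in \Cref{prop:T2-cc-2}). The paper exploits this symmetry explicitly: it analyses only the lower-left tip and then writes that ``the other two corners have the same possibilities by rotation''. So you need neither treat the three tips separately nor verify inter-tip compatibility at the end.

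A smaller slip: an antenna cannot end in a ``blue terminal hex''---such a hex would be adjacent to the cluster and would enlarge it. As recorded in \Cref{fig:antenatips} and \Cref{fig:tips-2}, the far end of an antenna is a contact vertex with a \emph{yellow} hex. Once you have identified which antenna type occurs at a tip from the list in \Cref{fig:antenatips}, part~(b) of your outline is automatic; there is no further growth to rule out.
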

\begin{proof}
We have the following series of implications, with the red boundaries being places where no tile can fit.

\image{}{}{
\includegraphics[scale=0.63]{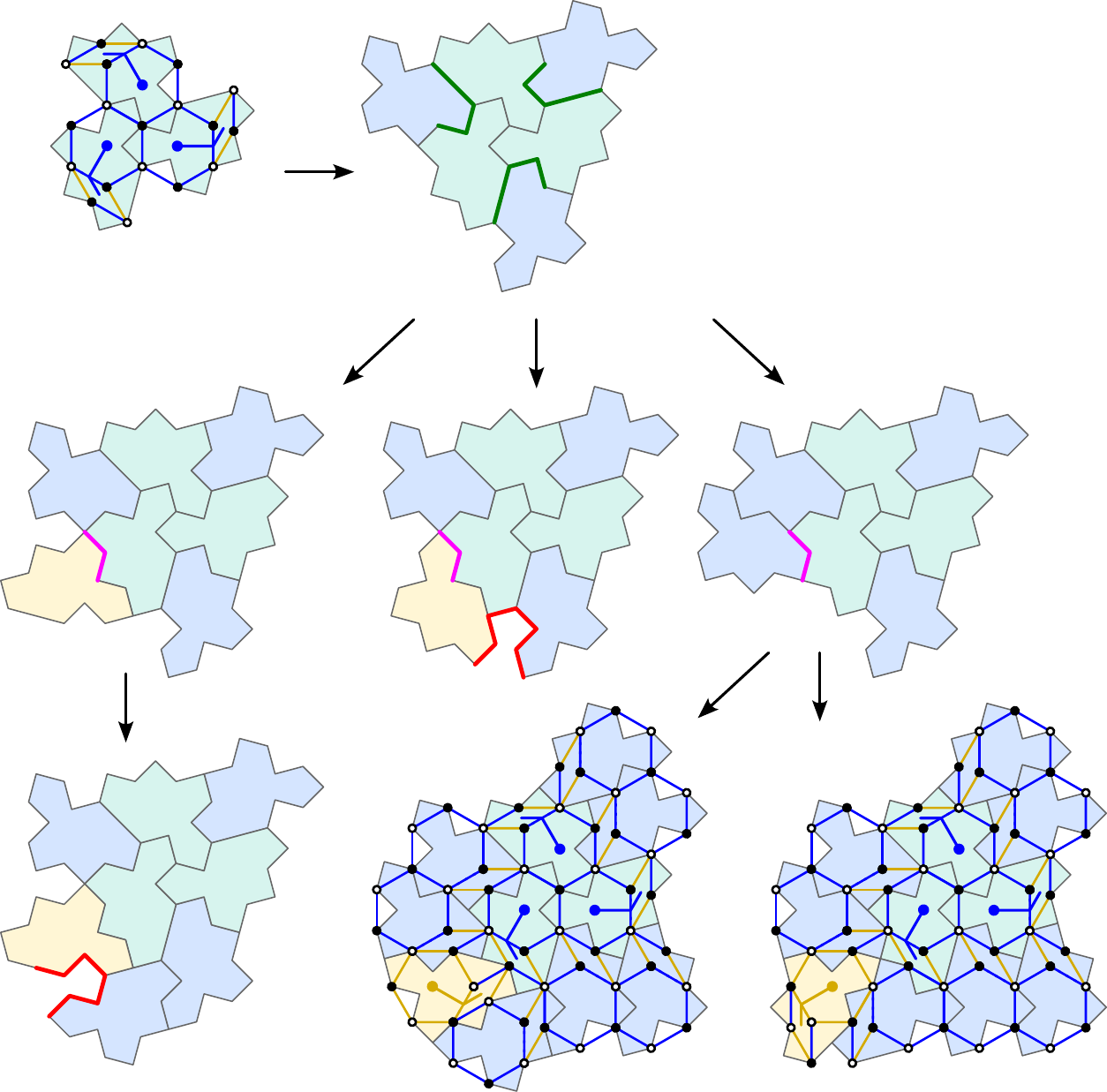}
}

In the end there are two possibilities for the lower left corner of the T2, and the other two corners have the same possibilities by rotation.
\end{proof}

The proof of \Cref{prop:T2-cc-2} is by a similar kind of analysis and is omitted here.

\medskip

The lemma below is \Cref{lem:T3-env}

\begin{lemma*}
A $\hT3$ is environed as follows, in terms of the $\iD3$ and $\iD2$ induced tiling:

\image{}{fig:T3-env-copy}{
\includegraphics[scale=0.75]{T3-env-5.pdf}
}
\end{lemma*}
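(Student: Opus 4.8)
The plan is to follow the method already used for \Cref{prop:T1-cc} and \Cref{prop:T2-cc-1}: fix a reference orientation for the cluster and propagate forced tiles outward, discarding bad branches with the visual-aid lemmas of \Cref{ss:visual-aids} and with \Cref{prop:odd-env-1}. Up to rotation we may assume the $\hT3$ points up; since two $\iD3$'s, one filling an inward dent of the other, carry the same orientation, all six $\iD3$'s of the cluster then have a determined common orientation. The configuration has an order-$3$ rotational symmetry, so it suffices to determine the environment along one side and around one corner and then rotate; I would organise the argument around the three ``corner'' hexes and the three ``edge-middle'' hexes of the triangular arrangement.

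\medskip

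\textbf{Step 1 (locating the paired $\iD2$'s).} As noted right after \Cref{fig:clus}, each of the six $\iD3$'s of the cluster has a paired $\iD2$, and the $\iD3$ together with its paired $\iD2$ surrounds a black dot. By \Cref{lem:completion}, every black dot lying on an edge shared by two blue hexes is already surrounded by three blue hexes and hence leaves no room for a rhomb; therefore the paired $\iD2$ of each hex is anchored at a black dot on the \emph{outer} boundary of the cluster. For each edge-middle hex only one such black dot is available (the others being excluded by the previous sentence, or because the rhomb would overlap a neighbouring hex or its rhomb), so its paired $\iD2$ is forced in position and orientation. For a corner hex there are a priori two or three candidate anchors; I would eliminate the spurious ones with \Cref{cor:protr-1} (a blue segment protruding from the relevant black dot would force an extra adjacent blue hex, contradicting that clusters are at most $\hT3$, \Cref{prop:T}), with \Cref{prop:impo-16} when a yellow $\iD2$ is the alternative, and otherwise by noting that the companion $\iD3$ of the alternative $\iD2$ has no room. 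This pins down all six $\iD3$/$\iD2$ pairs, that is, the six Spectres of the cluster together with their companions.

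\medskip

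\textbf{Step 2 (propagation).} Each of the six Spectre decorations now forces tiles across its inward dents by repeated use of \Cref{prop:autofill-1}; the inward dents on the portion of the boundary already determined can be completed in essentially one way, the few alternatives being ruled out by \Cref{prop:impo-11}, \Cref{prop:impo-13}, \Cref{prop:impo-12}, or by \Cref{prop:odd-env-1} applied to whichever newly appearing neighbour is odd. Rotating by $1/3$ and $2/3$ closes up the picture and produces exactly the ring of determined pieces in the statement; the pieces drawn in the two pale blue shades are precisely those $\iD3$/$\iD2$ whose pairing is not yet forced, and the same case analysis confirms that no further pairing is implied. The equivalent depiction in terms of the blue/yellow decoration graph is then automatic, since by \Cref{prop:d32_hrs} the decoration graph is a function of the $\iD3$/$\iD2$ tiling.

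\medskip

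The main obstacle is the bookkeeping in Steps 1 and 2: there is no conceptual difficulty, but one must enumerate, for each corner hex, the candidate anchors of its paired rhomb, and then walk around the whole bumpy-triangular boundary of the cluster checking each inward dent and invoking the appropriate impossibility lemma. The delicate point is to claim \emph{exactly} the partial environment shown --- neither more nor less --- which requires verifying that the pale blue pieces genuinely admit two pairings compatible with everything deduced so far.
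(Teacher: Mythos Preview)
Your plan follows the pattern used for $\hT1$ and $\hT2$, but the paper takes a different route here, and your Step~1 contains a real gap.

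For the three edge-middle hexes your anchor count is correct: each has four edges internal to the cluster, so exactly one of the three admissible attachment edges for the paired $\iD2$ is external. But for a corner hex only two edges are internal, and \emph{two} of the three attachment edges remain external. The tools you cite do not eliminate either: \Cref{cor:protr-1} concerns a blue segment already protruding from a black dot of the hex (none does at this stage), \Cref{prop:impo-16} rules out a yellow $\iD2$, not a second blue one, and ``no room for the companion'' does not bite since both candidate rhombs sit on the open outer boundary. Both placements are in fact compatible with the bare $\hT3$, and the paper's figure records this by leaving those pieces in pale blue (undetermined pairing). This is confirmed by \Cref{prop:T3-cc}, whose proof \emph{starts from} the environment established here and opens with ``the Spectre containing the lower left blue hex can be in two positions'', resolving the ambiguity only after further propagation and an appeal to \Cref{prop:odd-env-1}. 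So the sentence ``this pins down all six $\iD3$/$\iD2$ pairs'' is false, and since Step~2 feeds all six Spectre outlines into \Cref{prop:autofill-1}, the plan as written does not go through.

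The paper avoids the issue by working entirely at the $\iD3$/$\iD2$ level, never needing full Spectre boundaries. It fills the outer inward dents directly (a blue $\iD3$ would contradict the cluster being a $\hT3$; a yellow $\iD2$ is excluded by a direct clash), reads off the three forced pairings, and then grows the ring outward by repeated ``the alternative leaves no room for its companion $\iD3$'' and ``this $\iD2$ would be unpairable'' arguments, together with one use of \Cref{lem:completion}. None of the Spectre-level visual aids of \Cref{ss:visual-aids} are invoked. This is precisely what lets the lemma assert a partial environment while honestly leaving some pairings open. If you want to salvage your approach, fix only the three edge-middle Spectres in Step~1, propagate from those, and accept that the corner pairings remain ambiguous; but at that point you are essentially redoing the paper's argument with an extra layer of translation between the Spectre and $\iD3$/$\iD2$ pictures.
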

\begin{proof}
We claim that the pink inward dents below must be filled as follows:

\image{}{}{
\includegraphics[scale=0.75]{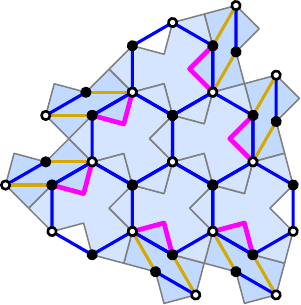}
}

Indeed the dent filling it must be supported by a blue segment.
By definition of a $\hT3$ we cannot use a blue $\iD3$ (it has a blue hex) so we are left with a yellow $\iD2$ or a blue $\iD2$.
But the yellow one does not fit, as the following figure shows:

\image{}{fig:pp-1}{
\includegraphics[scale=0.75]{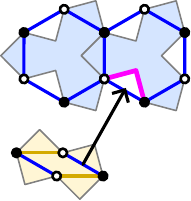}
}

We can immediately see that three $\iD3$ have only one adjacent $\iD2$ so must be paired with them:

\image{}{}{
\includegraphics[scale=0.75]{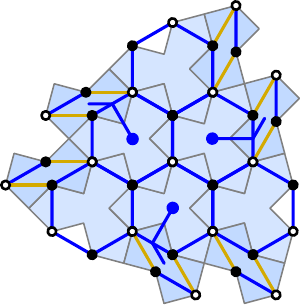}
}

In the following configuration, the pink hollow can only be filled by a $\iD3$, as the only other fitting piece is a yellow $\iD2$ whose paired $\iD3$ cannot fit.

\image{}{}{
\includegraphics[scale=0.75]{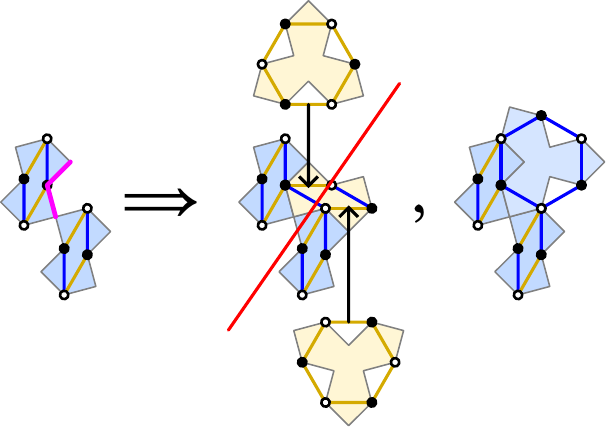}
}

So we get:

\image{}{}{
\includegraphics[scale=0.75]{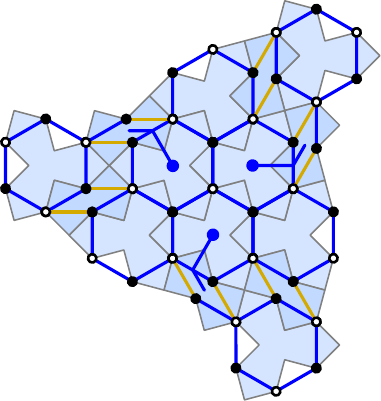}
}

The following configuration on the left leads to the presence of an $\iD2$ (in orange) which cannot be paired.

\image{}{}{
\includegraphics[scale=0.75]{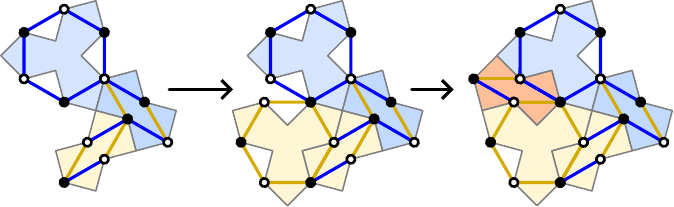}
}

As a consequence we get:

\image{}{}{
\includegraphics[scale=0.75]{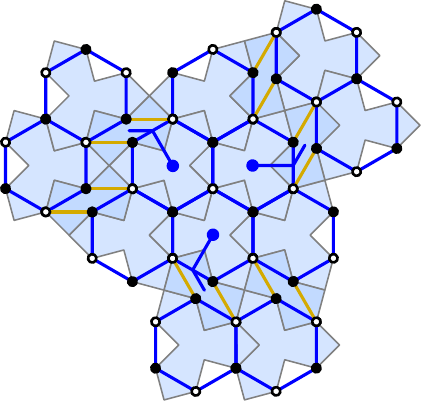}
}

Now by the completion rule of \Cref{lem:completion} we get the left frame below, and then the cyan tile can only have their associated $\iD2$ placed as below right, and this allows to add one more $\iD2$ by the principle of \Cref{fig:pp-1}  

\image{}{}{
\includegraphics[scale=0.75]{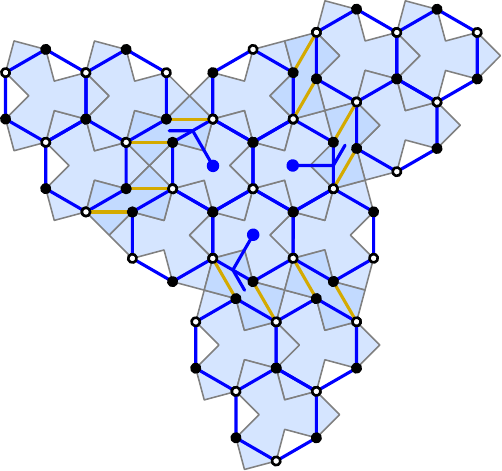}
\quad
\includegraphics[scale=0.75]{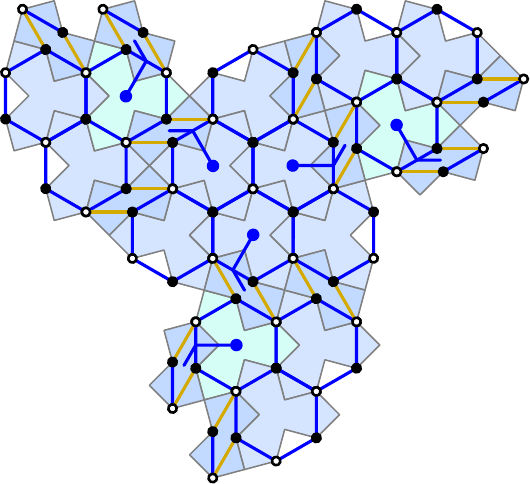}
}

\end{proof}

The proposition below is \Cref{prop:T3-cc}
\begin{proposition*}
A T3 has no antenna: it is its own cc. It can only be as follows on its lower left corner, and similarly for the other two by rotation.

\nopagebreak

\image{}{}{
\includegraphics[scale=0.66]{T3-ant-1b.pdf}
}
\end{proposition*}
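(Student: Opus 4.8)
The plan is to derive everything from \Cref{lem:T3-env}, which already pins down the entire geometric environment of a $\hT3$ as a fixed patch of hexes, rhombs and squares (equivalently of $\iD3$'s and $\iD2$'s), leaving undetermined only the Spectre-pairing of the few $\iD2$'s drawn in the two pale blue shades in \Cref{fig:T3-env}. So the statement splits into two essentially independent parts: (i) the $\hT3$ carries no antenna, hence coincides with its own cc; and (ii) the enumeration of the admissible markings near a corner.

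For (i), recall that an antenna is a blue segment protruding from a blue hex without lying on a hex edge. By \Cref{cor:protr-1} such a segment must stem from a white dot of the hex, and for a cluster the white dots that are not shared with a neighbouring hex of the same cluster are exactly its tips (\Cref{def:tips}). But in \Cref{fig:T3-env} each of the three tips of the $\hT3$ is already a vertex of a yellow hexagon occupying the angular sector into which an antenna would have to grow, so no free blue segment protrudes there. Hence the $\hT3$ has no antenna, it equals its own cc, and its three contact vertices with yellow hexagons are as described in \Cref{prop:cc-descr}.

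For (ii), fix one corner, say the lower left. The corner hex of the $\hT3$ is a $\iD3$ that must be paired with an adjacent $\iD2$; inspection of \Cref{fig:T3-env} near that corner shows there are exactly two such available rhombs, and choosing one of them forces, via the requirement that every $\iD2$ in the patch be paired with some adjacent $\iD3$, the pairing of the remaining pale blue $\iD2$'s in a neighbourhood of that corner --- in each case reproducing one of the two pictures of \Cref{fig:T3}. The order-$3$ rotational symmetry of the environment in \Cref{fig:T3-env} then yields the same dichotomy, independently, at each of the other two corners, which is the ``similarly for the other two by rotation'' clause.

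The main obstacle is the bookkeeping in part (ii): one must verify, piece by piece along \Cref{fig:T3-env}, that the corner $\iD3$ genuinely admits exactly two partners and that each choice propagates to a unique consistent local marking without forcing an unpairable $\iD2$ or an overlapping tile --- essentially the same forced-completion style of argument already used to prove \Cref{lem:T3-env} itself. Part (i), by contrast, is immediate once \Cref{lem:T3-env} is in hand.
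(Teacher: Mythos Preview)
Your overall plan---lean on \Cref{lem:T3-env}, then case-split on how the corner $\iD3$ is paired---is exactly the paper's strategy, but two concrete things go wrong.

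For (i), your argument rests on a misreading of \Cref{fig:T3-env}: as its caption states, \emph{every} $\iD3$ and $\iD2$ in that patch is blue. There are no yellow hexagons sitting at the tips of the $\hT3$ in \Cref{lem:T3-env}; the yellow hexes only enter once the case analysis of (ii) has been pushed beyond what the lemma supplies. So ``no antenna'' cannot be read off the lemma directly. In the paper it is not argued as a separate step at all---it simply falls out of the corner-by-corner enumeration.

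For (ii), the bookkeeping is more delicate than you describe. The two pairings of the corner $\iD3$ do not each propagate to a unique valid picture using only ``unpairable $\iD2$'' or ``overlap'' obstructions confined to the blue patch. In the paper's argument one of the branches forces a yellow $\iD2$ and its paired yellow $\iD3$ into the picture, and then a further forced blue tile adjacent to that odd tile lands in a position that violates \Cref{prop:odd-env-1} (the rigid neighbourhood of an isolated odd tile). That is the mechanism that kills the extra branch; it lives outside the purely blue environment of \Cref{lem:T3-env} and outside the class of contradictions you list. Without invoking \Cref{prop:odd-env-1} at this point, your enumeration does not close.
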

\begin{proof}
The Spectre containing the lower left blue hex can be in two positions. For each, let us see what this implies on the environment given by \Cref{fig:T3-env-copy}.

\image{}{}{
\includegraphics[scale=0.75]{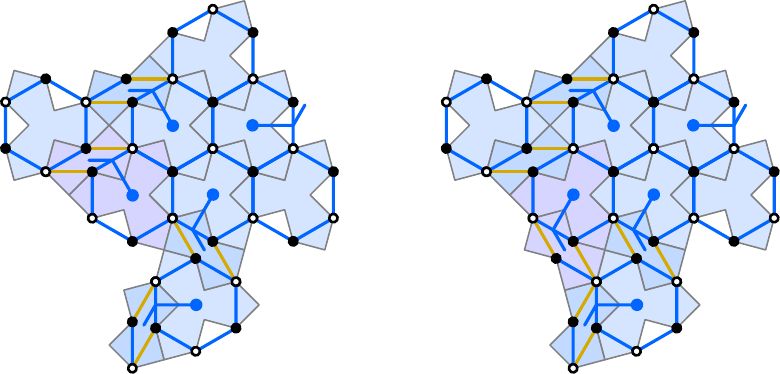}
}

They imply:

\image{}{fig:104}{
\includegraphics[scale=0.68]{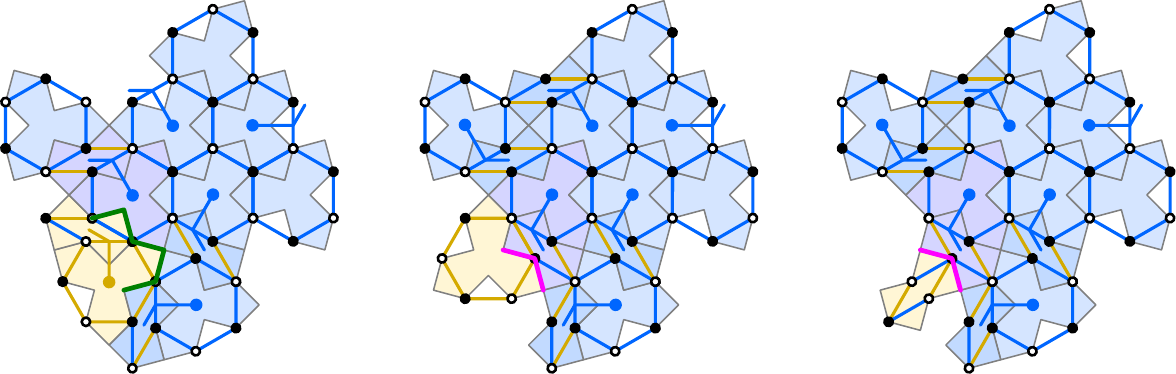}
}

The central case implies the first two below, and the right case implies the right one: first the paired yellow $\iD3$ can only fit on one side of the yellow $\iD2$, then the the mauve tile is a forced blue tile, but its position with respect to the yellow tile contradicts \Cref{prop:odd-env-1}.

\image{}{}{
\includegraphics[scale=0.68]{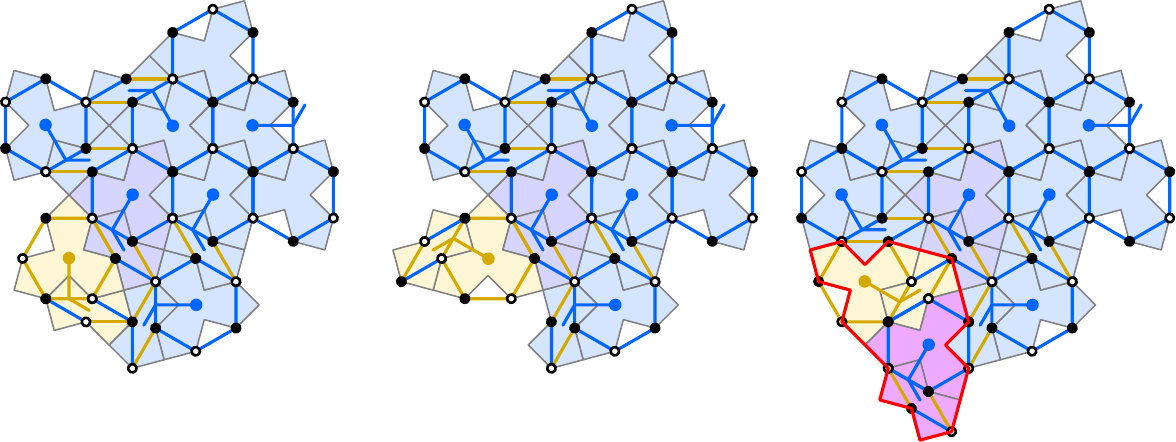}
}
\end{proof}

\subsection{Environment of \texorpdfstring{$\hT'n$}{T'n} clusters}\label{ss:Tpn-env-pfs}

We will make implicit or explicit use of the visual aids of \Cref{ss:vis}, which we encourage the reader to consult again.

\bigskip

The statement below is a copy of \Cref{prop:no-5th}.

\nopagebreak

\begin{proposition*}
The fifth configuration of \Cref{fig:vlist} cannot occur in a whole plane tiling.
\end{proposition*}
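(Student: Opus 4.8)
The plan is to suppose the fifth configuration of \Cref{fig:vlist} occurs at some triangle vertex $v$ in a whole plane tiling by the triangle pieces (equivalently by the packs of \Cref{fig:packed-deco}) and to force a contradiction by propagation. Since $v$ carries a $\hT'1$, none of the six triangle edges at $v$ is of type $\iV$, and — once the orientation in the figure is fixed — the black/white dot on the central yellow hex is determined and the dots alternate around $v$, so all the orientation data at $v$ is rigid. I would begin by reading off from \Cref{fig:vlist} the six triangle pieces surrounding $v$ together with the labels of the interface-halves at $v$, and note which packs these pieces belong to.

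Then I would grow the patch outward one forced piece at a time. The engine is the matching rule ($+$ with $-$, $\iI$ with $\iI$, $\iV+$ with $\iV-$) together with the visual rules of \Cref{ss:vis}: whenever a boundary triangle vertex has only one or two contiguous free slots left, \Cref{lem:fill-1,lem:fill-2,lem:fill-3} (or a direct one-step check, there being only four packs) pin down the admissible continuations. Useful structural facts to keep at hand: the only piece with a $\iV-$ edge is $(\iV-,\iV-,\iV-)$, and two such cannot share an edge, so any forced $\iV$-type edge triggers the rigid hexagonal $\hT'3$-block of \Cref{prop:Tp3}; the configuration of \Cref{lem:no-2nd} never appears; and $(\iB+,\iB+,\iB+)$ never appears (proved in \Cref{sec:triset}). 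I expect the forcing to be driven, within a bounded number of steps, into one of these forbidden patterns or into a triangle-vertex hole that no pack can fill. A convenient alternative is \Cref{cor:AB-into-green}: as soon as the patch exposes an $\iA$- or $\iB$-type interface it must extend to the whole rigid block of \Cref{fig:N1}, and one then checks that this block clashes — in piece type or in dot orientation — with what the fifth configuration already imposes at $v$.

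The step I expect to be the main obstacle is purely the \emph{bookkeeping of branchings}: at a handful of places the continuation is not unique, so the argument must follow each branch and exhibit a forbidden sub-configuration in every one, all while tracking the orientation classes correctly (recall that $\iV$ edges preserve, and $\iI$ edges invert, the class modulo $1/3$ of a yellow hex). No individual step is deep; the work lies in making the tree of cases finite and complete, and in checking the orientations at each node. Once the fifth configuration is ruled out, it follows immediately that every $\hT'1$ is the centre of the eighth configuration of \Cref{fig:vlist}, as used in \Cref{ss:tp1}.
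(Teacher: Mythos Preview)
Your proposal is correct and takes essentially the same approach as the paper: propagate outward from the assumed configuration, forcing pieces step by step via the matching rules and the visual aids of \Cref{ss:vis}, until an unfillable triangle-vertex hole is reached. The paper's own proof is in fact just as terse---it consists of a single figure (\Cref{fig:exclu}) displaying the chain of forced deductions and the resulting hole---so your outline accurately anticipates both the method and the nature of the contradiction; all that remains is the concrete bookkeeping you already flagged.
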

\begin{proof}
We have the following sequence of deductions

\nopagebreak

\image{}{fig:exclu}{
\includegraphics[scale=0.21]{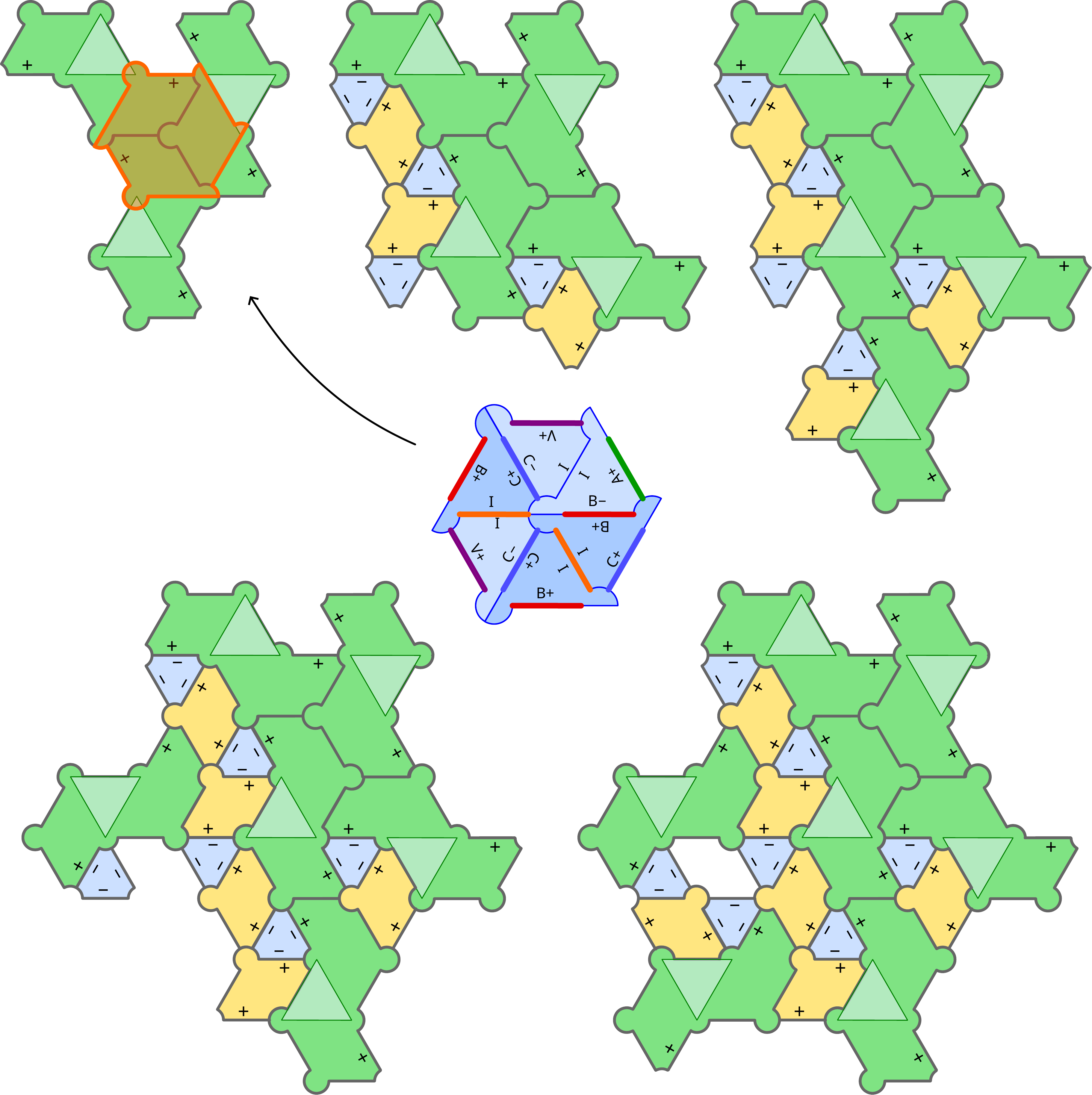}
}

The last one leaves a hole that cannot be filled.
\end{proof}

\medskip

The following statement is a copy of \Cref{lem:Tp2-clock}.

\nopagebreak

\begin{lemma*}
Consider two tips following each other in the clockwise order around a $\hT'2$ cluster of a whole plane tiling by the packs.
Then, using the nomenclature of \Cref{fig:Tp2-corner}, their respective types cannot be in the following list (each item is to be read in the clockwise order):
$(n,n)$,
$(a2',a2')$,
$(a2',n)$,
$(a1,a2')$,
$(a1',a2')$.
\end{lemma*}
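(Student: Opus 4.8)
The plan is to treat the five forbidden pairs one at a time, in each case drawing the two prescribed tip configurations of \Cref{fig:Tp2-corner} at two clockwise-consecutive tips of the single $(\iV-,\iV-,\iV-)$ packed triangle that, by the analysis of \Cref{ss:tp2}, represents the $\hT'2$, and then propagating forced pieces until a contradiction surfaces. Throughout I will use the visual rules of \Cref{ss:vis}: \Cref{lem:fill-1} (a triangle vertex with exactly two contiguous uncovered sectors and no sign on the exposed interfaces must be completed by a green pack, with its centre dot in one of four positions), \Cref{lem:fill-2} (the displayed configuration of two $(\iV-,\iV-,\iV-)$ pieces forces a yellow pack), and \Cref{lem:fill-3} (the forced environment of any green pack), together with the two bookkeeping constraints already in force: the white and black dots alternate around every triangle vertex, and an $\iI$-type interface inverts the orientation class modulo $1/3$.

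For $(n,n)$ and $(a2',a2')$ I would invoke the focus-site argument already made around \Cref{fig:focus}: reading off \Cref{fig:Tp2-corner}, a tip of type $n$ (respectively of type $a2'$) occupies two of the three sites highlighted in \Cref{fig:focus} around the $\hT'2$, so two tips of the same one of these types — consecutive or not — would force two pieces onto a common site, an overlap. This settles the first two pairs with essentially no extra work beyond pointing at the relevant pieces in the figure.

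The substance is in the three pairs pairing a length-two antenna with a neighbour: $(a2',n)$, $(a1,a2')$ and $(a1',a2')$, each read clockwise. In each I would place the two corner configurations at consecutive tips and exploit that the antenna $a2'$ pins down, via \Cref{lem:fill-3} applied to the green pack at its far end, a determined string of pieces running back toward the side of the $\hT'2$ joining the two tips; the adjacent configuration then pins down its own string from the other tip; and the two strings meet at a triangle vertex along that side whose remaining sectors admit no filling — either because the only candidate piece would butt a second $-$ interface against an existing $-$ interface (forbidden), or because it would break the black/white alternation of dots at that vertex, or because it would force the wrong orientation class across an $\iI$-edge. Concretely I expect each of the three to terminate in an unfillable triangle-vertex hole, exactly as in the proof of \Cref{prop:no-5th} (see \Cref{fig:exclu}) and as in the environment computation of \Cref{lem:Tp3-env}; the write-up will be three short sequences of figures.

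The main obstacle is keeping the propagation honest in these three antenna cases: one must check at each step that the next piece is \emph{uniquely} forced, so that the eventual obstruction is intrinsic rather than the consequence of a premature choice. This is, however, precisely the situation of \Cref{lem:fill-1} and of the proof of \Cref{prop:around-tri} — at each step either only one packed piece carries an interface of the required shape with a free half-disk on the correct side, or the dot-alternation rule leaves no alternative — so no branching ever occurs, and once the figures are drawn the contradictions are immediate.
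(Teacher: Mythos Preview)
Your treatment of the three antenna cases $(a2',n)$, $(a1,a2')$, $(a1',a2')$ is in line with the paper: the paper's proof is precisely a sequence of forced-piece deductions, one per case, each terminating in an unfillable vertex, invoking \Cref{lem:fill-1} and \Cref{lem:fill-3} (and in one spot the observation that a certain dot can only be filled by a yellow pack because no green pack fits). So your plan for those three cases is the right one.

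However, your shortcut for $(n,n)$ and $(a2',a2')$ is wrong. The focus-site argument around \Cref{fig:focus} does \emph{not} say that a tip of type $n$ or of type $a2$ occupies two of the three highlighted sites. What the text actually asserts is that the \emph{central-column} configurations of \Cref{fig:Tp2-corner} --- these are $t$ and $t'$, not $n$ or $a2$ --- each occupy two sites; and the extended conclusion drawn there is only that one cannot have, in clockwise order, ($n$ or $t$ or $t'$) followed by ($t$ or $t'$ or $a1$ or $a1'$). In particular $n$ occupies only the ``forward'' site and $a2$ is not mentioned at all; neither $(n,n)$ nor $(a2,a2)$ is ruled out by that paragraph. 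Indeed, if it were, the lemma would be redundant for those two pairs, whereas the paper explicitly lists all five pairs as requiring their own deduction chain.

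So you still owe a genuine argument for $(n,n)$ and for $(a2',a2')$. These are handled in the paper exactly like the other three: place the two copies at consecutive tips, propagate using the visual rules of \Cref{ss:vis}, and reach an unfillable hole. You should expect each to require its own short figure sequence; there is no free lunch from \Cref{fig:focus}.
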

\begin{proof}
We have the following deductions for each case, each leading to an impossibility indicated in red:

\image{The implication with an $E$ is deduced from the visual rule of \Cref{lem:fill-3}, those with a $C$ from that of \Cref{lem:fill-1}. For the $A$, the mauve dot can only be filled by a yellow piece, since no green piece can fit that would fill it.}{}{
\makebox[\textwidth][c]{\includegraphics[scale=0.22]{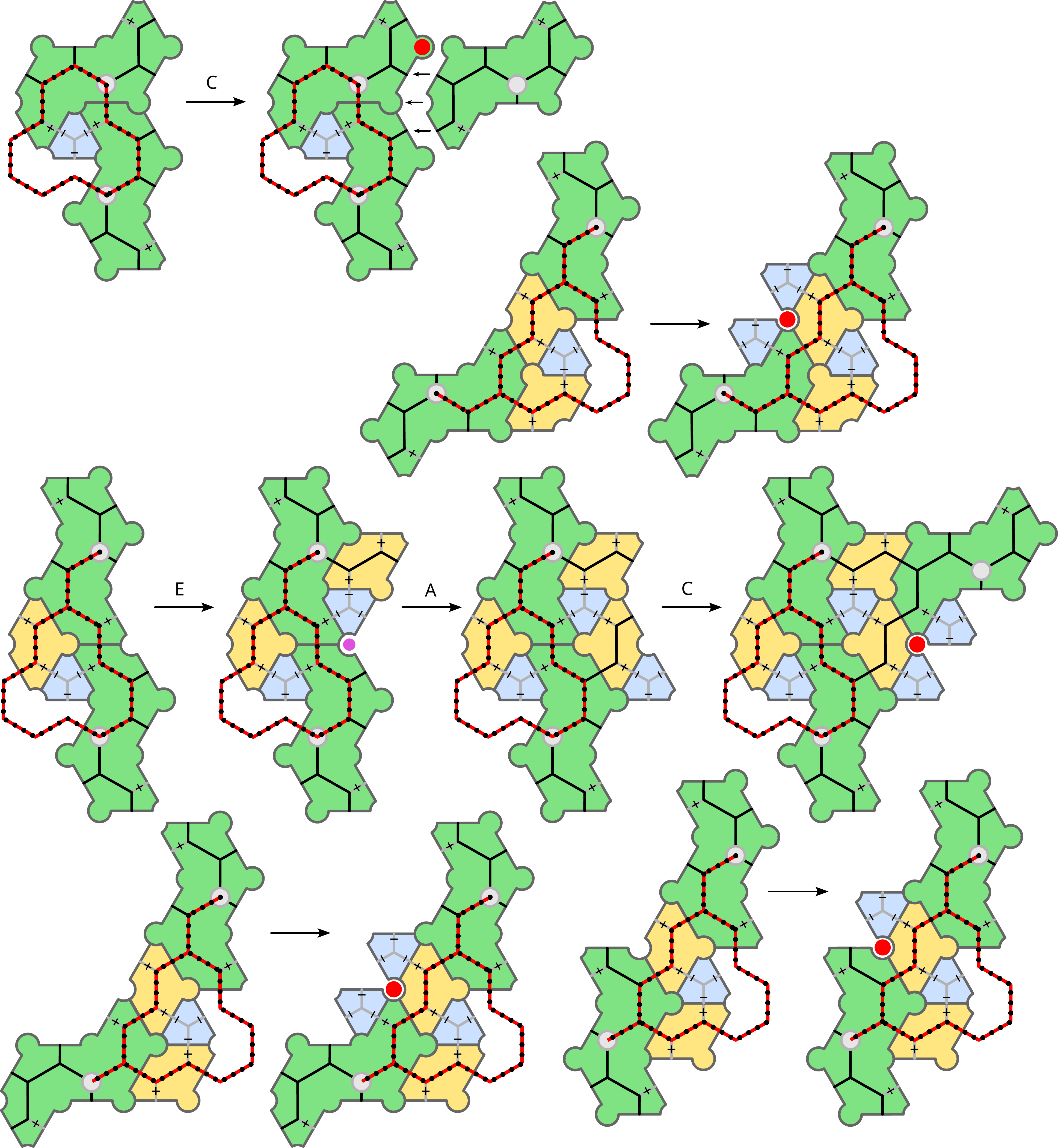}}
}
\end{proof}

\clearpage

The statement below is a copy of \Cref{lem:Tp3-env}.

\nopagebreak

\begin{lemma*}
The environment of a $\hT'3$ is as on the figure below:

\nopagebreak

\image{}{}{
\includegraphics[scale=0.3]{Tp3-env-c.pdf}
}
\end{lemma*}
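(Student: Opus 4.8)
The plan is to argue at this level exactly as \Cref{prop:T3-cc} and \Cref{lem:T3-env} argued one level below, but using the packed triangle pieces of \Cref{fig:packed-deco} and the visual rules of \Cref{ss:vis} (\Cref{lem:fill-1,lem:fill-2,lem:fill-3}) in place of the $\iD3$/$\iD2$ completion rules. The seed is \Cref{prop:Tp3}: the six yellow hexes of a $\hT'3$ are the triangle vertices of the rigid configuration formed by a $(\iV+,\iV+,\iV+)$ triangle flanked by three $(\iV-,\iV-,\iV-)$ triangles, and this configuration meets no further $\iV$-type edge. So everything reduces to propagating constraints outward from this seed, and by the threefold rotational symmetry of the seed it is enough to understand one of its three outer corners (the tips of the $\hT'3$).

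First I would identify the tiles along the six free $\iV-$ edges of the outer $(\iV-,\iV-,\iV-)$ triangles. A $\iV-$ edge only occurs on the $(\iV-,\iV-,\iV-)$ piece, and a $\iV+$ edge only on $(\iV+,\iV+,\iV+)$, $(\iI,\iV+,\iC+)$ or $(\iI,\iV+,\iC-)$; a second $(\iV+,\iV+,\iV+)$ is impossible since it would create another $\hT'3$ sharing a triangle vertex with ours, contradicting \Cref{prop:Tp3}. Hence each such edge carries an $(\iI,\iV+,\iC\pm)$ triangle; equivalently, the centre and the three outer triangles are the single-triangle packs, and the pieces hanging off them are green or yellow packs (the classification in the proof of \Cref{thm:pack}). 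The white/black dot alternation around each triangle vertex, recorded on \Cref{fig:triset-7h}, then fixes whether $\iC+$ or $\iC-$ appears at each site.

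Second, at one tip I would close up the neighbourhood: \Cref{lem:fill-3} gives the forced environment of each green pack sitting on a $\iC$-edge, \Cref{lem:fill-1} completes any triangle vertex that ends up with only two contiguous free slots and unsigned free interfaces, and \Cref{lem:fill-2} handles the remaining gaps; branches that violate the ``one yellow segment per triangle vertex'' rule or the dot parity are discarded on the spot, and \Cref{lem:no-2nd} and \Cref{prop:no-5th} are invoked where the purely local rules do not decide. This leaves, up to the rotation, a single completion. Reassembling the three tips and checking global consistency (no overlapping half-disks, exactly one yellow segment at each triangle vertex) gives precisely the figure claimed; as a check, it is the mirror image of the union of the $\hT3$ environment of \Cref{lem:T3-env} with the cc-completion \Cref{prop:T3-cc}, the expected self-similarity.

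The main obstacle will be the bookkeeping at the tips. There are several a priori ways to extend each $(\iI,\iV+,\iC\pm)$ triangle, and the local filling rules alone return a short list rather than a unique picture; it is only by carrying the white/black dot colours correctly through every step --- together with the two exclusion results above --- that the list collapses to one environment, exactly as the parity constraint collapsed the analogous list in the proof of \Cref{prop:T3-cc}.
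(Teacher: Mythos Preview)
Your plan is essentially the paper's, and it will work. Two remarks on how the paper streamlines what you describe.

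First, after invoking \Cref{prop:Tp3} the paper does not leave ``green or yellow'' open on all six free $\iV-$ edges. It observes that on three of them (the ones adjacent to another outer $(\iV-,\iV-,\iV-)$ of the seed) a green pack simply has no room: one of the $-,-,-$ triangles already occupies the slot a green piece would need. So those three edges must carry a \emph{yellow} pack; the second $\iV+$ side of each yellow pack then forces a fresh $(\iV-,\iV-,\iV-)$ triangle (the only source of $\iV-$). This gives a determined first ring around the $\hT'3$ before any branching. Your version --- keeping green/yellow open everywhere and letting dot parity plus the visual rules of \Cref{ss:vis} sort it out --- reaches the same place, but with more cases.

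Second, be careful with ``a single completion''. The paper's case analysis on each tip (now only on the outer $\iV-$ side, thanks to the previous paragraph) leaves \emph{two} valid configurations, not one: a yellow placement and a green placement both survive. The lemma's figure records this residual freedom, and the paragraph after the lemma in \Cref{ss:tp3} notes that the outcome is exactly the mirror of the two cases in \Cref{prop:T3-cc}. Your final check (``mirror image of the union of \Cref{lem:T3-env} with \Cref{prop:T3-cc}'') is the right sanity test; just make sure the write-up does not claim uniqueness at the tip. Also, you will not need \Cref{lem:no-2nd} or \Cref{prop:no-5th} here --- the room constraint, \Cref{prop:Tp3}, and the visual rules of \Cref{ss:vis} suffice.
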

\begin{proof}
First, the environment of a $\hT'3$, described using the triangle tileset and the packed tileset, must be as follows:

\nopagebreak

\image{}{fig:T3-env-2-b}{
\includegraphics[scale=0.4]{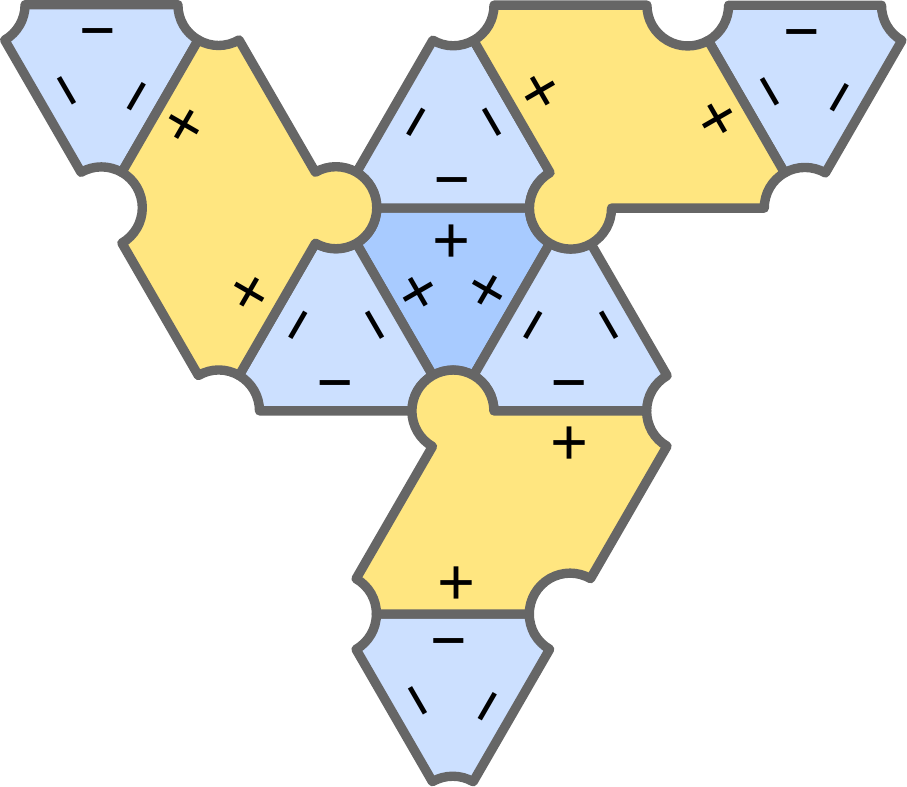}
\quad
\includegraphics[scale=0.4]{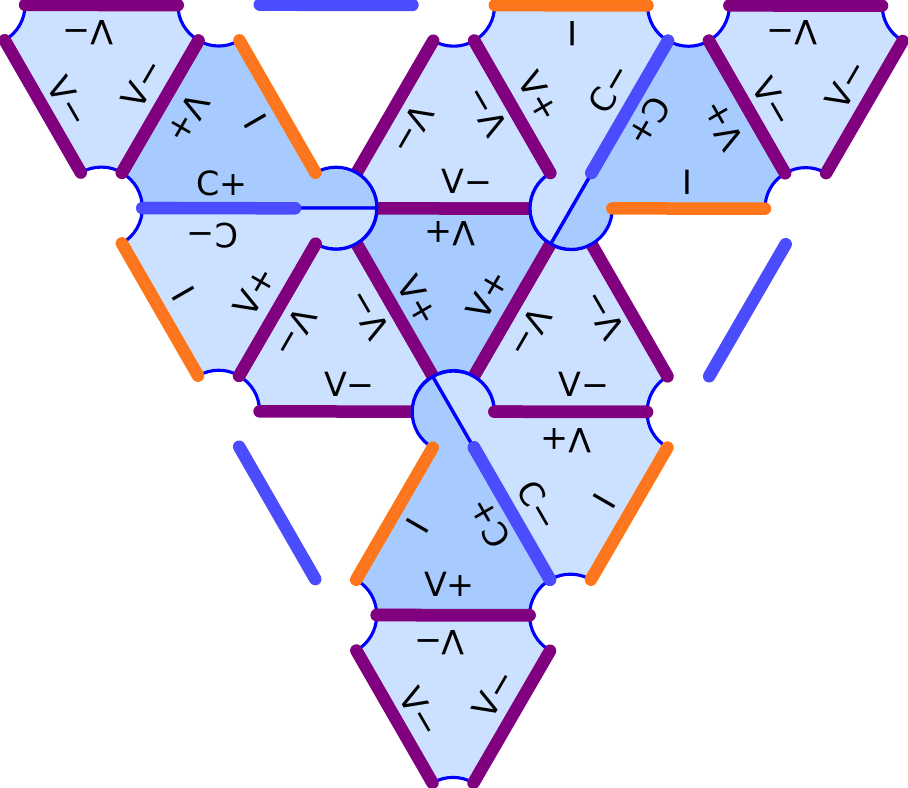}
}

With the packed tileset, it is quite easy to prove: the highlighted segments in the following figure cannot match a green pack, because there is no room (an already placed $-,-,-$ triangle is taking the room). They cannot match a $+,+,+$ triangle because of \Cref{prop:Tp3}. So this must be a yellow pack. The other $+$ side of the yellow pack must match the only pack with a $-$, namely the $-,-,-$ triangle.

\image{Let us focus on the highlighted sides.}{}{
\includegraphics[scale=0.5]{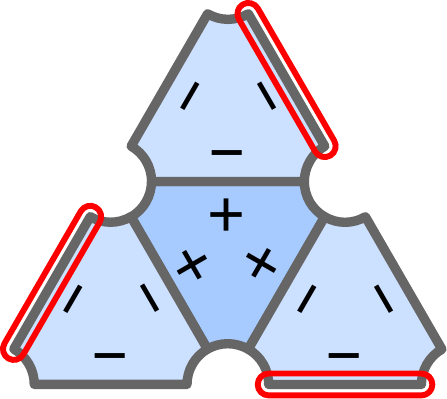}
}

Similarly as for the $\hT'2$, from the $\hT'3$ environment, we get the situations depicted below, where red dots also mean an impossibility, and the explanations are below the figure.

\nopagebreak

\image{}{}{
\makebox[\textwidth][c]{\includegraphics[scale=0.25]{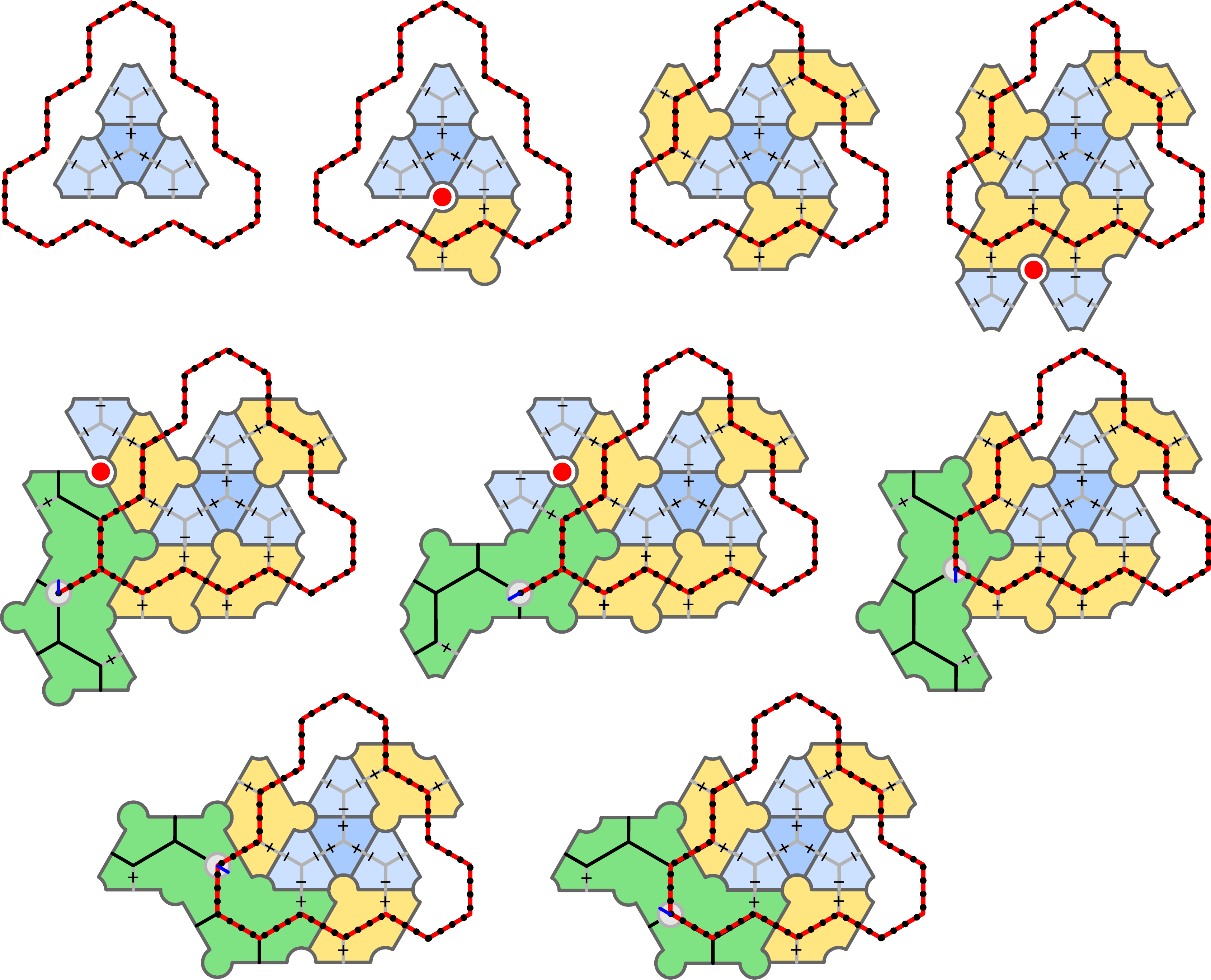}}
}

The image is the initial $\hT'3$. We saw the right `$-$' segment of the bottom part has to be in contact with one of the two $+$ of a yellow one.
The second image shows that one of these yellow $+$ does not leave a possibility to complete the tiling. So we must be in the situation of the third image.
The fourth image and the second row explore (im)possibilities in the case the left `$-$' segment of the bottom part of the initial configuration is in contact with a yellow $+$: only one case is not ruled out.
The last row explores the case where it is in contact with a green $+$. There, no case is ruled out.
\end{proof}

\subsection{Recognizing the hexagons of \texorpdfstring{\cite{chiral}}{[\ref{labelnumber-chiral}]}}\label{ss:recog-pfs}

The statement below is a copy of \Cref{lem:gr-pack-env-2}.

\begin{lemma*}
Every green pack is environed as follows:

\nopagebreak

\image{}{fig:gpe-copy}{
\includegraphics[scale=0.33]{green-pack-env-2.pdf}
}
\end{lemma*}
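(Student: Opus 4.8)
The plan is to bootstrap from \Cref{lem:fill-3} (equivalently \Cref{fig:green-pack-env}), which already pins down the first ring of packs forced around a green pack, and then to push the forced region outward by one more layer until it coincides with \Cref{fig:gpe-copy}. Recall where \Cref{lem:fill-3} itself comes from: a green pack contains the unique $\hT1$ triangle (the $(\iA-,\iI,\iB-)$ piece), whose neighbourhood \Cref{fig:N1} is rigid by \Cref{prop:around-tri}, and the surrounding triangles group, via \Cref{thm:pack}, into the packs shown. So the only thing left is to analyse the exposed boundary of that first-ring configuration and show it forces the extra layer in \Cref{fig:gpe-copy}; this is best presented as a single annotated picture with deduction order marked, in the style of the proof of \Cref{prop:around-tri}.

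Concretely, I would enumerate the ``open slots'' along the boundary of the already-forced region. Each slot is of one of two kinds: (a) an exposed interface of a placed pack still carrying a $+$, $-$ or $\iI$ label, or (b) a triangle vertex already surrounded by several placed triangles, with only a small angular gap left. For slots of type (a) I would use the matching rule ($\iA+$ with $\iA-$, $\iB+$ with $\iB-$, $\iC+$ with $\iC-$, $\iV+$ with $\iV-$, $\iI$ with $\iI$) together with the black/white half-disks already present: since half-disks must alternate around every triangle vertex, most labelled interfaces admit exactly one completion, and a $\iV+$ slot forces a $(\iV-,\iV-,\iV-)$ triangle because that is the only piece carrying a $\iV-$ side. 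For slots of type (b) I would invoke the visual rules \Cref{lem:fill-1} (two contiguous free neighbours, unsigned interfaces $\Rightarrow$ a green pack, in one of four dot positions) and \Cref{lem:fill-2} (the flagged configuration $\Rightarrow$ a yellow piece), and otherwise close the gap by direct inspection of the short list of four packs. Throughout I would also call on the already-established exclusions: no two $\iV-$ sides ever meet, no two $-$ interfaces ever meet, the configuration of \Cref{lem:no-2nd} never occurs, and the $(\iV+,\iV+,\iV+)$ / $(\iV-,\iV-,\iV-)$ placement constraints of \Cref{prop:Tp3,prop:Tp2}. Each open slot then resolves to a unique piece; placing it opens new slots of the same two kinds, and since the boundary of \Cref{fig:gpe-copy} is finite the process terminates with the forced region equal to the claimed figure. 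As in \Cref{lem:fill-3}, the deductions only ever look a bounded distance from the green pack, so no hypothesis beyond the tiling covering a large enough neighbourhood is needed.

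The main obstacle is pure bookkeeping rather than a new idea: one must track exactly which black/white half-disks are already present at each triangle vertex so as to see correctly which $\iI$- and $\iC$-type completions are blocked by overlap and which survive. This is the same delicate point as in the proofs of \Cref{prop:around-tri}, \Cref{lem:Tp2-clock} and \Cref{lem:Tp3-env}, and as there it is handled reliably only by drawing the half-disks explicitly at every stage rather than arguing about them in prose.
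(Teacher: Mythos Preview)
Your strategy---start from \Cref{lem:fill-3} and push outward by local forcing---is sound in spirit, but the paper's proof reveals that the claim ``each open slot then resolves to a unique piece'' is too optimistic with only the exclusions you list. The paper proceeds by contradiction: starting from \Cref{fig:green-pack-env}, if the environment were \emph{not} as in \Cref{fig:gpe-copy}, then a chain of forced placements leads to the green-pack configuration already ruled out in \Cref{fig:exclu} (the figure inside the proof of \Cref{prop:no-5th}). In other words, at one particular slot there genuinely are two candidate completions, and the wrong branch is killed not by any of the local rules you cite (\Cref{lem:fill-1}, \Cref{lem:fill-2}, \Cref{lem:no-2nd}, \Cref{prop:Tp2}, \Cref{prop:Tp3}) but by following it several steps and recognising the forbidden arrangement from \Cref{prop:no-5th}.

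So the gap in your proposal is concrete: you do not invoke \Cref{prop:no-5th} / \Cref{fig:exclu}, and without it the forward-forcing argument will stall at a genuine two-way branch. Your framework can be repaired by adding that exclusion to your toolkit and being prepared, at the critical slot, to chase the alternative branch a few steps before it dies---at which point your argument and the paper's become essentially the same, just organised as forward forcing with one branch-elimination rather than as a single proof by contradiction.
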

\begin{proof}
Otherwise we would have the following set of deductions, starting from \Cref{fig:green-pack-env}.

\nopagebreak

\image{}{}{
\includegraphics[scale=0.28]{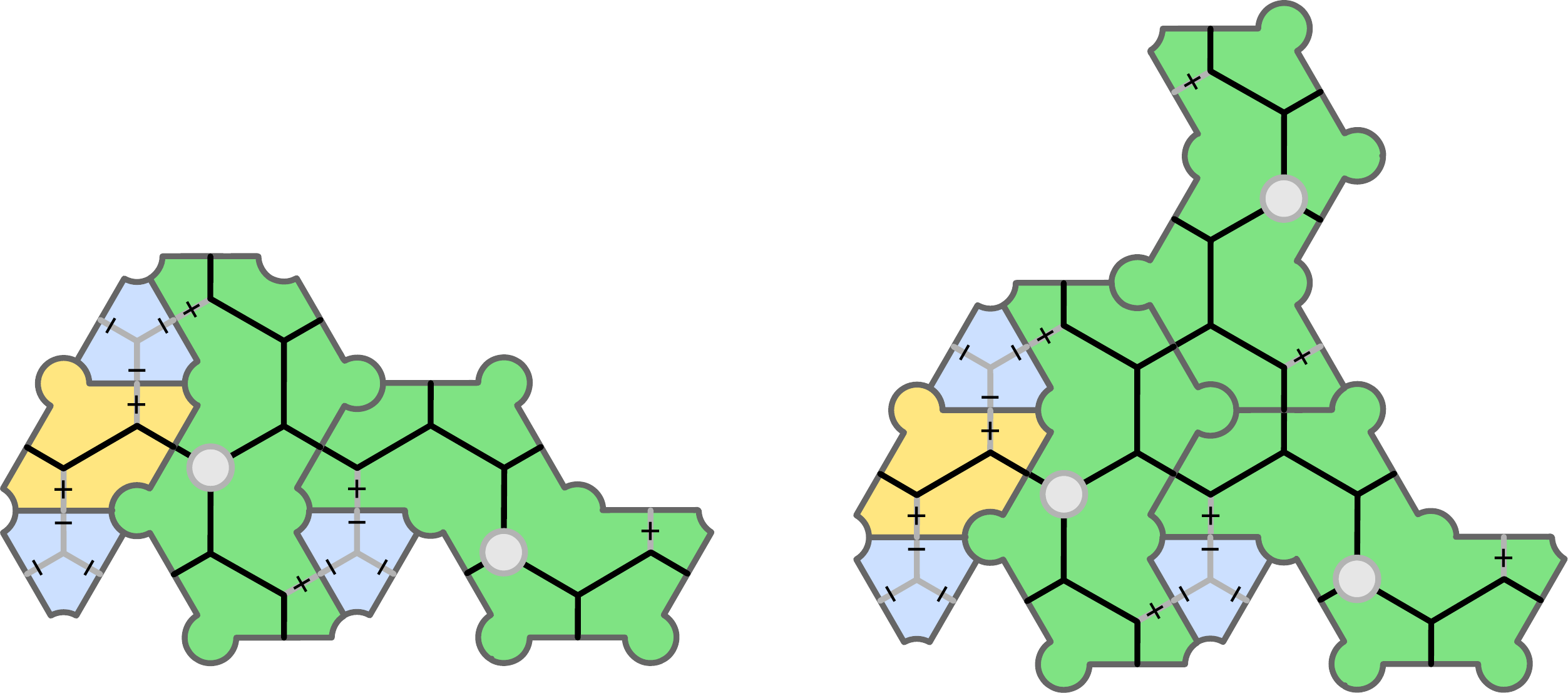}
}

Leading to a configuration of green tiles that was excluded in \Cref{fig:exclu}.
\end{proof}

The following statement is a copy of \Cref{lem:impo-arr-1}.

\nopagebreak

\begin{lemma*}
The following arrangement cannot occur in a whole plane tiling:

\nopagebreak

\image{}{}{
\includegraphics[scale=0.45]{impo-b.pdf}
}
\end{lemma*}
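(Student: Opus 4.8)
The plan is to argue by contradiction in exactly the same spirit as the neighbouring impossibility results (\Cref{lem:no-2nd}, \Cref{prop:no-5th}, \Cref{lem:gr-pack-env-2}): assume the displayed arrangement sits inside a whole plane tiling, then propagate forced tiles outward until a triangle vertex is reached whose circular hole cannot be completed, or until a configuration already known to be forbidden reappears. Since the argument will be purely local, no appeal to the global structure is needed.

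First I would translate the picture into the packed tileset of \Cref{fig:packed-deco}, because there the branching is smallest (only four packs) and the half-disk overlap constraints together with the $+/-$ matching rule are easiest to read off. Wherever a green pack occurs in the arrangement, I would immediately invoke \Cref{lem:gr-pack-env-2} to replace its neighbourhood by the full forced environment of \Cref{fig:gr-pack-env-2}; this typically fixes a large block of tiles in one stroke. The remaining free interfaces are then filled using the visual rules of \Cref{ss:vis}: \Cref{lem:fill-1} whenever a triangle vertex has exactly two contiguous free neighbouring slots bounded by sign-less interfaces (forcing a green pack in one of four positions), \Cref{lem:fill-2} for the $(\iV-,\iV-,\iV-)$ pattern around a vertex (forcing a yellow pack), and \Cref{lem:fill-3} to close off each green pack created along the way. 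At every step the choices are pinned down by: the fact that there is a unique $(\iV-,\iV-,\iV-)$ and a unique $(\iV+,\iV+,\iV+)$ tile; the rule that a $\iV-$ (resp.\ $\iA-$, $\iB-$, $\iC-$) side must face a $\iV+$ (resp.\ $\iA+$, $\iB+$, $\iC+$) side; the prohibition of two half-disks overlapping at a shared vertex; and the requirement that black and white dots alternate around each triangle vertex.

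Concretely I expect the deduction to run for a handful of generations of tiles and then terminate in one of three ways, all already available: either (i) a triangle vertex hole remains that no tile can fill (as in the last frame of \Cref{fig:exclu}); or (ii) the forced tiles reproduce the green-tile configuration excluded in \Cref{fig:exclu} — the same endgame used for \Cref{lem:gr-pack-env-2}; or (iii) a $\iV-$ side is forced into contact with the $\iI$ of an already placed triangle, contradicting the sign rule, exactly as in the proof of \Cref{prop:around-tri}. As in the other deferred proofs, a short sequence of figures with the dead-end holes circled in red would make the argument transparent.

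The main obstacle is bookkeeping rather than conceptual: the displayed arrangement is not yet a complete vertex star, so at the first few steps there can be two or three admissible continuations, and each branch must be driven to its own contradiction. Keeping the case tree from exploding is done, as elsewhere in the paper, by working with the packed tileset (four pieces) instead of the triangle tileset, by applying \Cref{lem:gr-pack-env-2} as early as possible so that whole neighbourhoods collapse at once, and by reusing the already-excluded configurations of \Cref{fig:exclu} as terminal states rather than re-deriving the impossibility from scratch.
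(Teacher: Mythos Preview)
Your plan is essentially the paper's own approach: translate to the packed tileset, branch on the few admissible fillings, invoke \Cref{lem:gr-pack-env-2} around any green pack that appears, and propagate with the visual rules of \Cref{ss:vis} until a triangle-vertex disc cannot be filled. In the paper's execution there is exactly one branching point (the ``bottom'' triangle is filled either by a yellow pack, which dies immediately, or by a green pack, after which \Cref{lem:gr-pack-env-2} plus two more forced steps produce the unfillable disc --- your endgame~(i)); so your generic template is accurate, and the actual case tree is smaller than you feared.
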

\begin{proof}
If the arrangement appears then there must be packs as follows:

\nopagebreak

\image{}{}{
\includegraphics[scale=0.35]{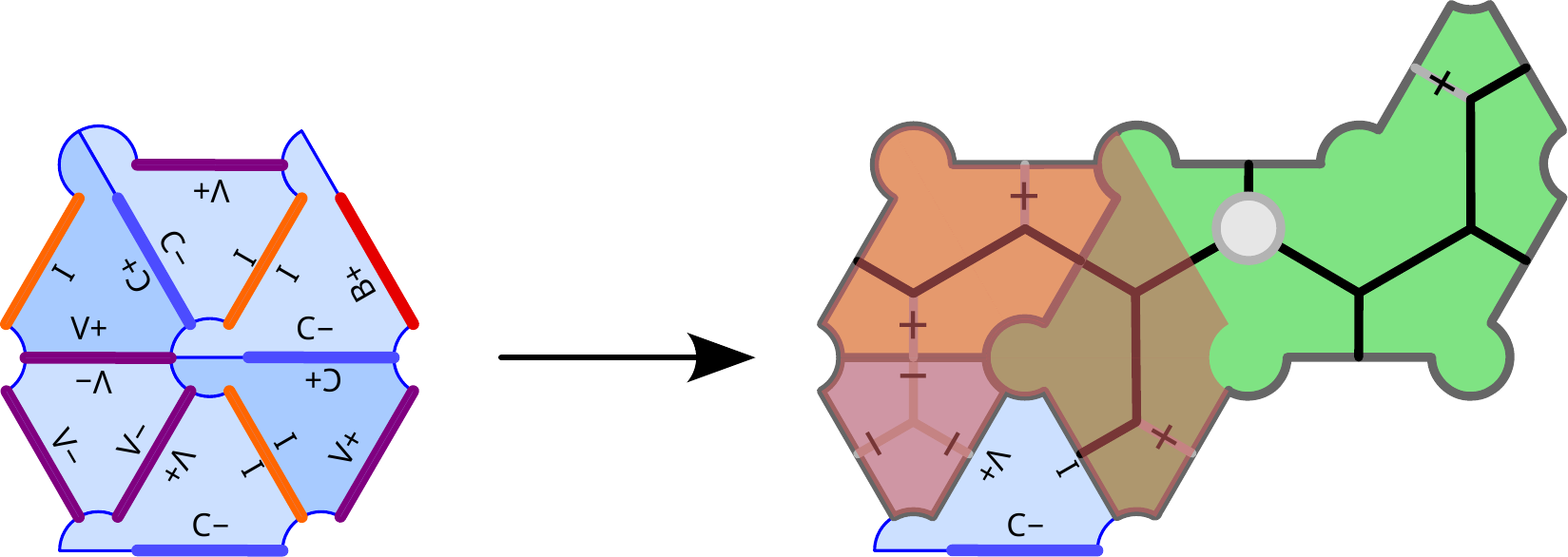}
}

The bottom triangle can only be filled by the tips highlighted in dark green in the figure below (the ones in red have the wrong triangle type).

\image{}{}{
\includegraphics[scale=0.35]{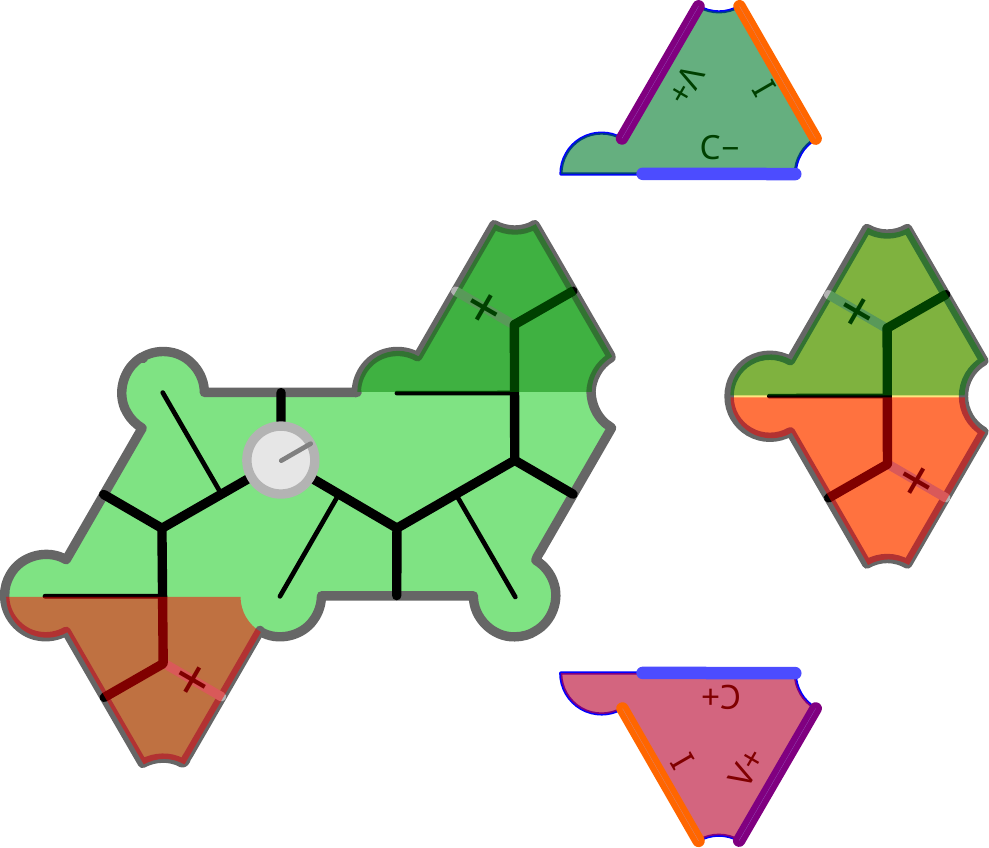}
}

If the bottom is filled by a yellow piece then this leads to the following impossibility:

\image{}{}{
\includegraphics[scale=0.3]{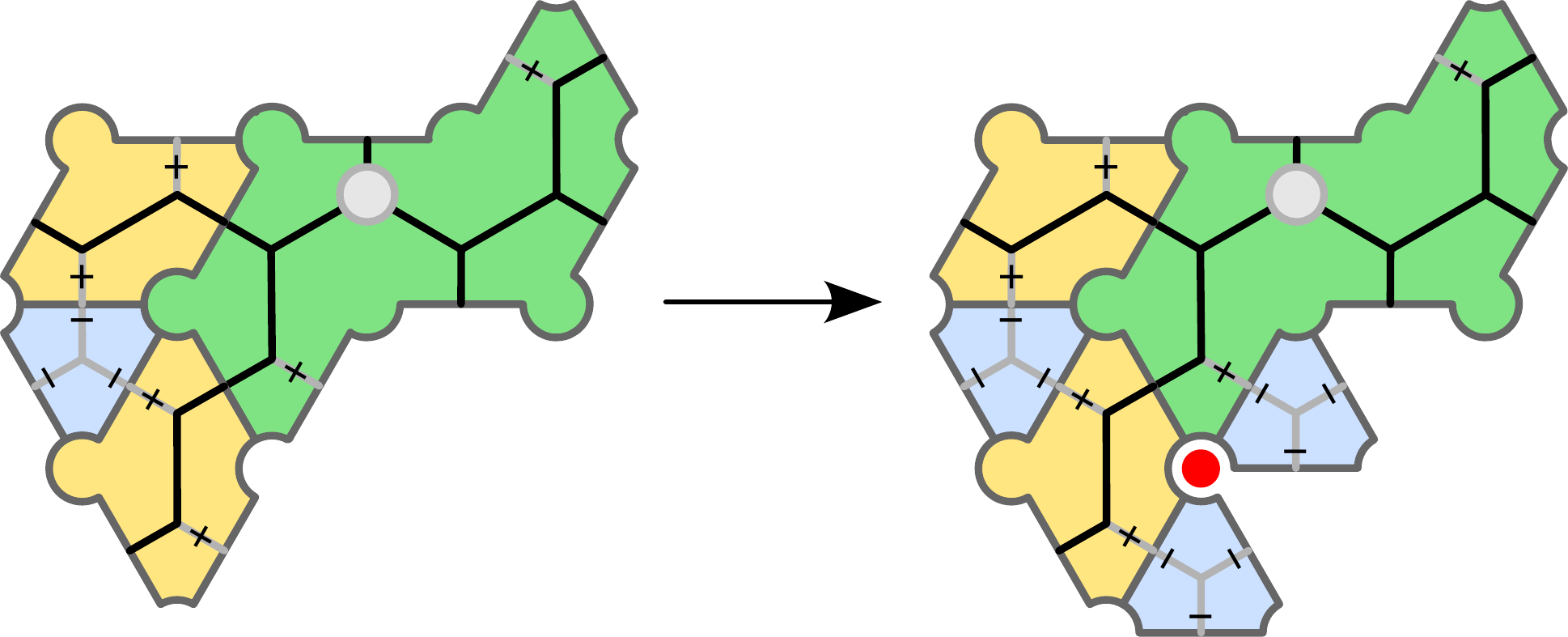}
}

So it is filled by a green pack. We then have the following set of deductions:

\nopagebreak

\image{For the first frame we used \Cref{fig:gpe-copy}. For the second and third one the visual rules of \Cref{ss:vis}.}{}{
\makebox[\textwidth][c]{\includegraphics[scale=0.21]{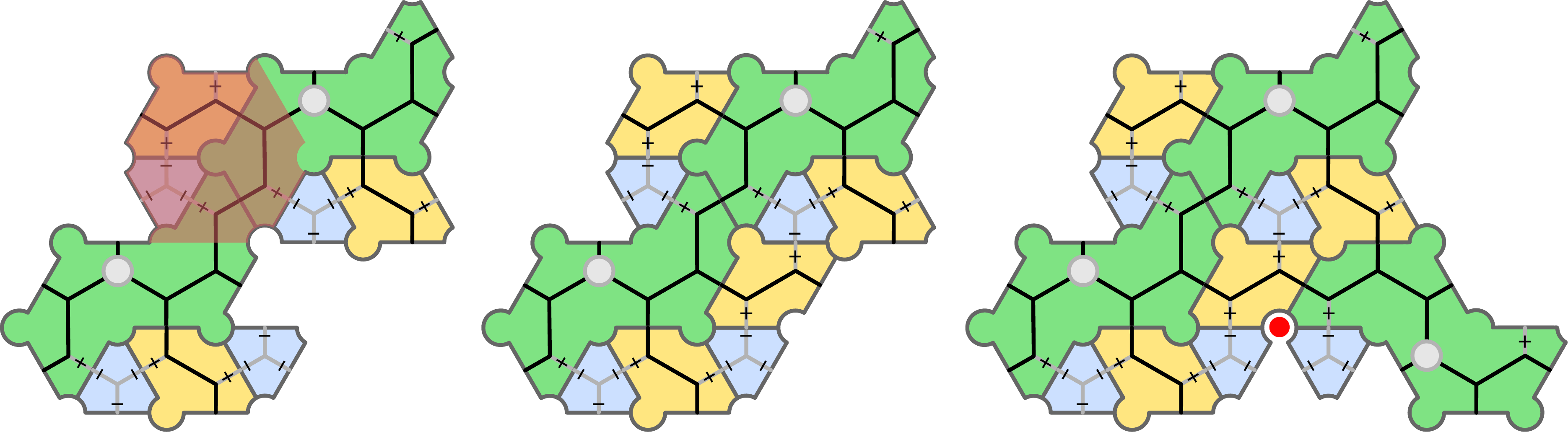}}
}

Leading to a non-fillable disc in red.
\end{proof}

The following statement is a copy of \Cref{lem:impo-arr-2}.

\nopagebreak

\begin{lemma*}
The following arrangement cannot occur in a whole plane tiling:

\nopagebreak

\image{}{}{
\includegraphics[scale=0.45]{impo-c.pdf}
}
\end{lemma*}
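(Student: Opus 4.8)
The plan is to mimic the strategy already used for \Cref{lem:impo-arr-1}: assume that the forbidden arrangement of the figure \texttt{impo-c.pdf} does occur in a whole plane tiling, translate it into the language of the packed tileset of \Cref{fig:packed-deco}, and then chase forced packs until either a non-fillable hole appears, or an already-excluded configuration is reproduced. Since \Cref{lem:impo-arr-1} and \Cref{lem:gr-pack-env-2} are now available, this proof can be shorter than it would otherwise be, as several branches collapse by invoking them directly.

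First I would reinterpret the given arrangement in terms of packs. As in the proof of \Cref{lem:impo-arr-1}, the triangle edges and vertices visible in the arrangement determine — up to the matching rules $+/-$ — which packed pieces must sit around it; in particular a $\iV-$ side forces a $(\iV-,\iV-,\iV-)$ triangle and hence, by \Cref{prop:Tp3}, a whole green pack, while an orange ($\iI$-type) or violet ($\iV$-type) edge already present constrains the neighbouring green and yellow packs. This first step should pin down a small cluster of forced packs around the arrangement, leaving one or two triangle vertices with exactly two contiguous empty neighbouring triangles and no $+/-$ sign on the free interfaces — precisely the hypothesis of \Cref{lem:fill-1}.

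Then I would run the case analysis. At each under-determined vertex, \Cref{lem:fill-1} leaves only a green pack in one of four positions (or \Cref{lem:fill-2} forces a yellow pack); in each case \Cref{lem:gr-pack-env-2} fixes the rest of that green pack's environment, and the visual rules of \Cref{ss:vis} propagate the forcing further. I expect every branch to terminate in one of the standard ways already used in this section: a disc at a hex centre that cannot be completed; a second-generation configuration of green tiles reproducing the one excluded in \Cref{fig:exclu}; a copy of the arrangement of \Cref{lem:impo-arr-1}, which we may now invoke; or a $(\iV-,\iV-,\iV-)$ triangle forced into contact with another $\iV-$ interface, contradicting the rule that no two $-$ interfaces match. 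I would present this, in the house style of the surrounding section, as a sequence of figures analogous to \texttt{impo-b-pf-*.pdf}, with a one-line justification (``\Cref{lem:fill-1} here'', ``\Cref{lem:fill-3} here'', ``\Cref{lem:impo-arr-1} here'') under each frame.

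The main obstacle will be the bookkeeping of the branching rather than any single hard idea: unlike \Cref{prop:Tp3} or the fill lemmas there is no clean invariant, so one must be disciplined about distinguishing packs that are genuinely \emph{forced} at a given step from those merely \emph{compatible}, and make sure that every branch is actually closed. Drawing the deduction trees explicitly is the reliable way to do this, and keeping the number of initial forced packs as large as possible (by exploiting \Cref{prop:Tp3} and \Cref{lem:gr-pack-env-2} up front) is what keeps the number of branches manageable.
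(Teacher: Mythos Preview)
Your plan is essentially the paper's own approach: translate the arrangement into the packed tileset, branch on an initial alternative, and chase forced packs using the visual rules and the green-pack environment (\Cref{lem:gr-pack-env-2}) until each branch dies at a non-fillable disc. In the paper's execution this turns out to be lighter than you anticipate: only the green-pack environment is invoked (once, marked ``$E$''), and both branches terminate directly in an unfillable disc, so neither \Cref{lem:impo-arr-1} nor the \Cref{fig:exclu} configuration is needed.
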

\begin{proof}
Otherwise we would have the following set of deductions where we start by an alternative:

\nopagebreak

\image{For the arrow with an $E$ indicates we used \Cref{fig:gpe-copy}}{}{
\includegraphics[scale=0.25]{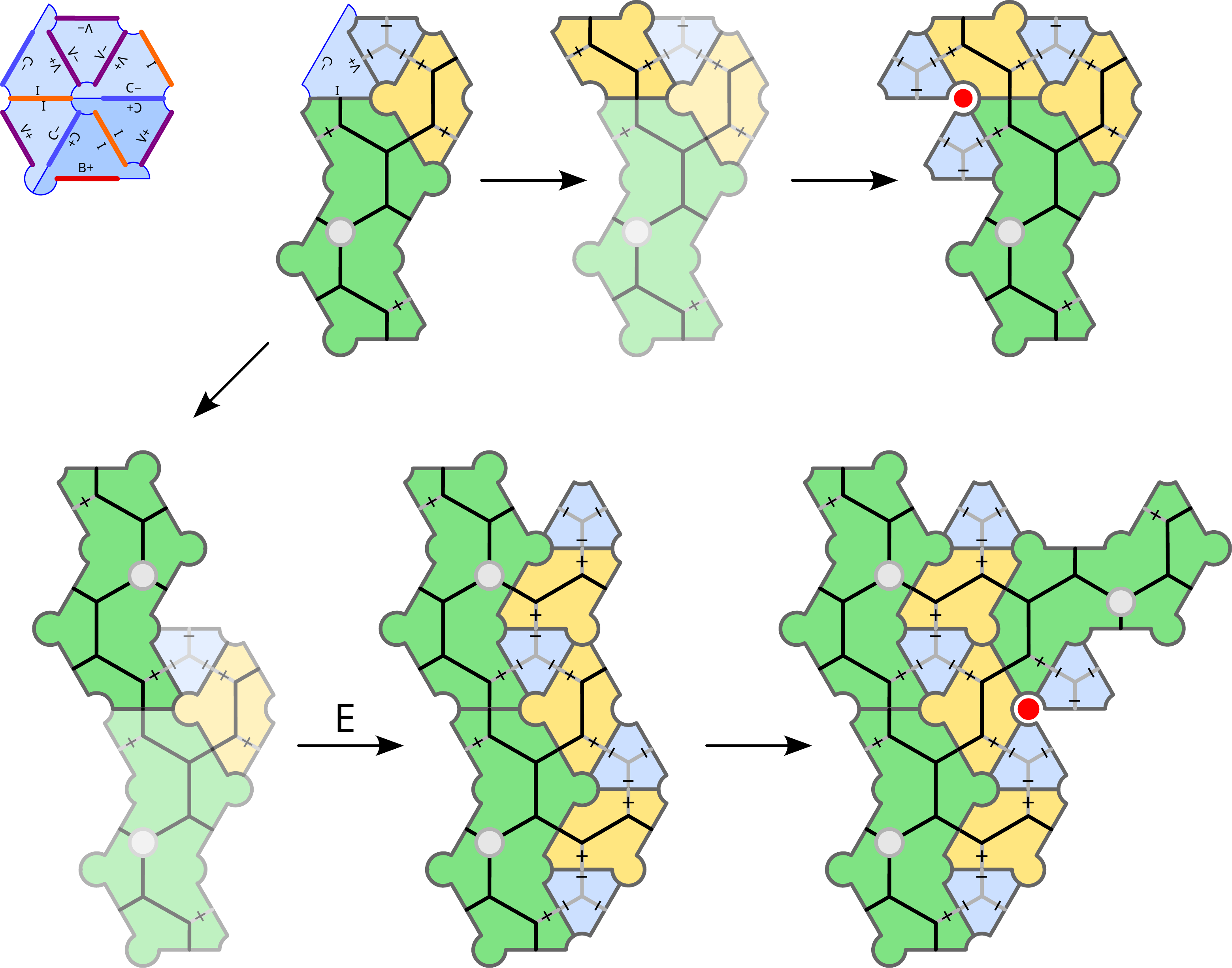}
}

Each alternative lead to to a disc indicated in red and that is impossible to fill.
\end{proof}

\section{Double level hierarchy on the honeycomb partition does not require dots}\label{sec:double}

In this section we come back to the partition of the honeycomb into triangular clusters, as on \Cref{fig:bo}. Recall that hexagons in the honeycomb correspond to blue hexes of the decoration graph associated to a Spectre tiling.
We explained in \Cref{sec:analysis} that if we add the position of the dots corresponding to contracted yellow hexes to the picture of the honeycomb partition, it has a unique hierarchical structure.

\medskip
 
Can we do without the dots?

\medskip

Deducing the position of the dots from the sole partition is not as trivial as one would think. In particular it may require to look at arbitrary large patches.

Let us be more precise. In the partition, the clusters can be thought of as related as are triangles on a regular triangular lattice.
There is one dot corresponding to every vertex of this lattice, i.e.\ to every circular group of 6 adjacent clusters. 
There are no other dots.
We observe only 3 kinds of cluster groups on the honeycomb, up to a rotation of 1/6:

\image{}{fig:ABC}{
\includegraphics[scale=0.15]{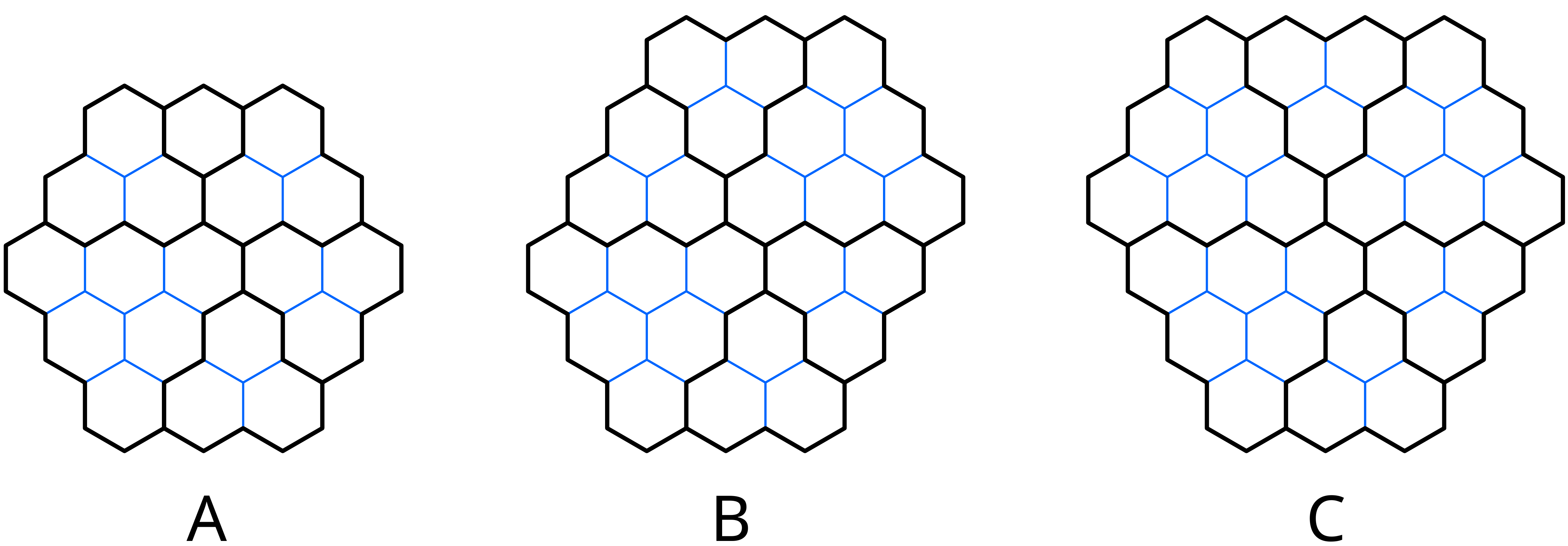}
}

In group $C$ the corresponding dot can only be placed on one vertex, but in the other two, they have two possibilities:

\image{}{fig:ABC-dots}{
\includegraphics[scale=0.15]{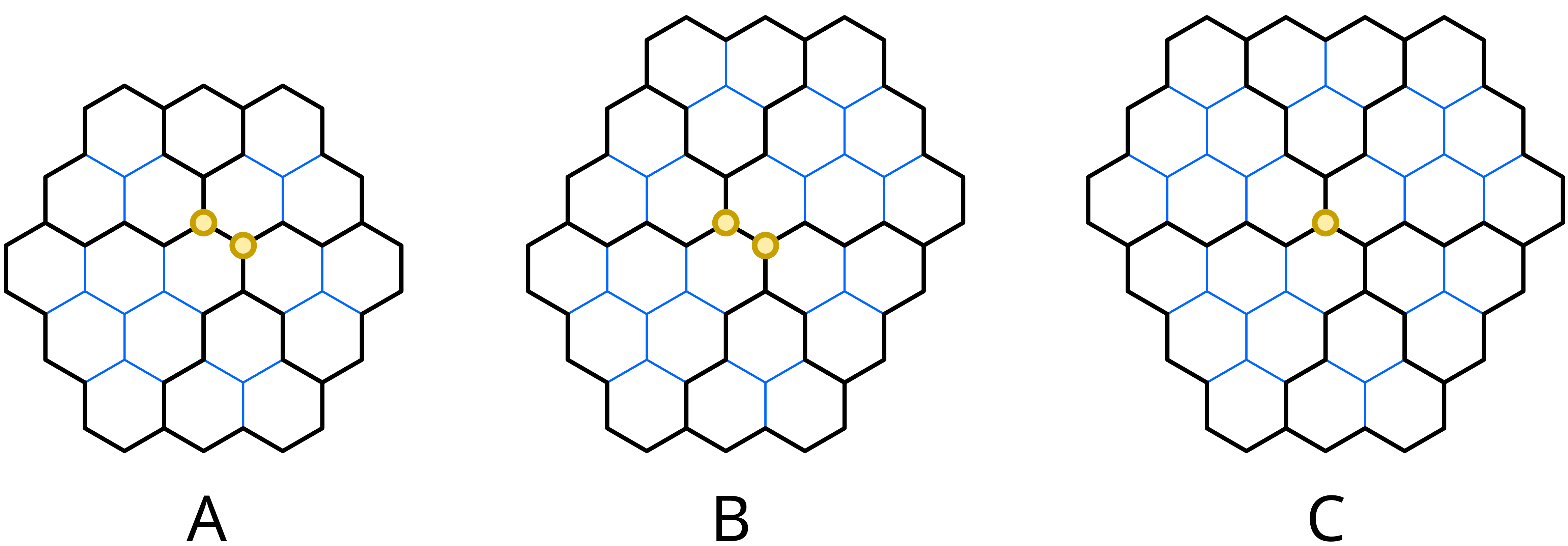}
}

These claims can be deduced from the study of \Cref{sec:analysis}: for instance from the list of possible vertex environments in  \Cref{lem:vlist-2} and the cc's that the triangle pieces represent, see \Cref{fig:labeled-cc-2}.

Looking at larger groups of clusters around a group of 6 adds constraints to the position of the dot.
Unfortunately, determining the placement of a dot on a group A or B may require to look at arbitrary larger groups depending on the situation: this will follow from the end of this section.

\medskip

Remarkably, it is possible to describe the \emph{second iterate} of the hierarchy/substitution uniquely in terms of the partition, i.e.\ without the dots.

To explain this, let us first note that in the partitioned honeycomb with dots, from \Cref{fig:N1} in \Cref{prop:around-tri}, we get that the environment of a $\hT1$ cc is as follows, up to a rotation of a multiple of $1/6$:

\image{On the right, we only retained the clusters: the figure then gets an order 3 rotation symmetry}{fig:T1-env-h}{
\includegraphics[scale=0.25]{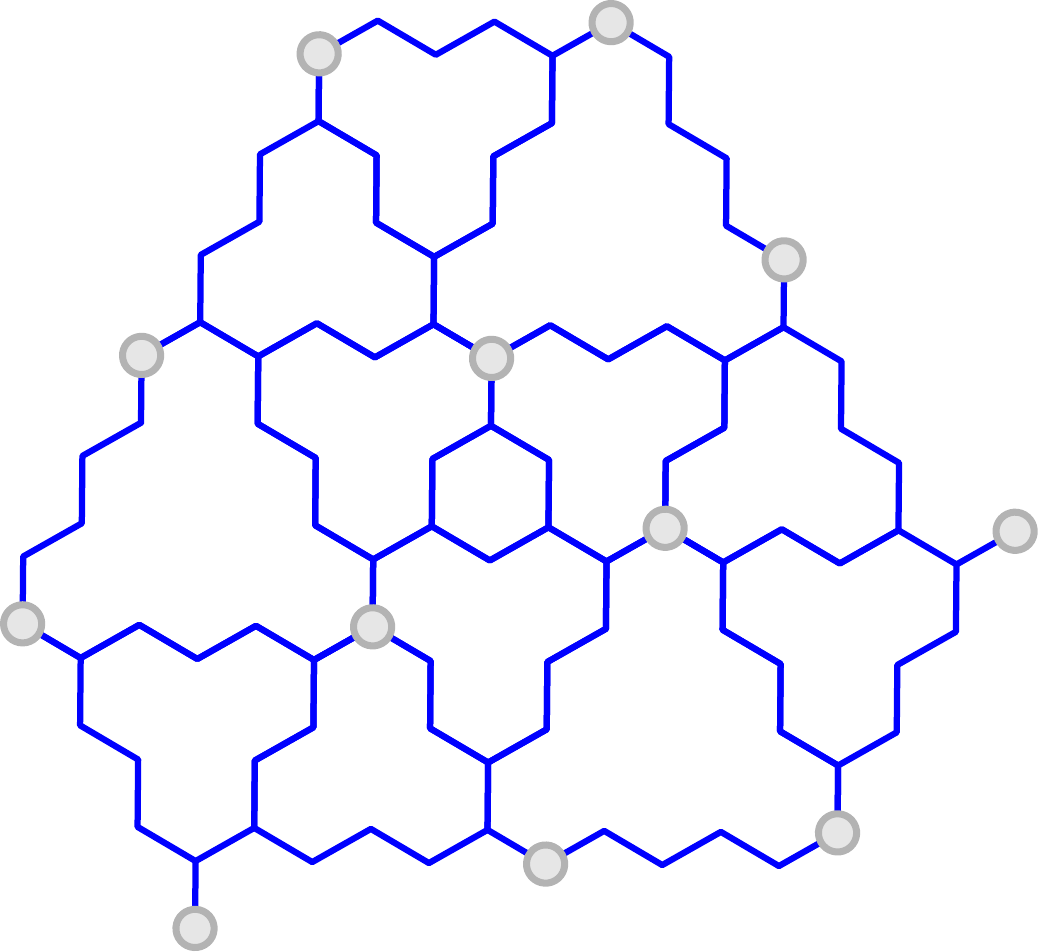}
\quad
\includegraphics[scale=0.25]{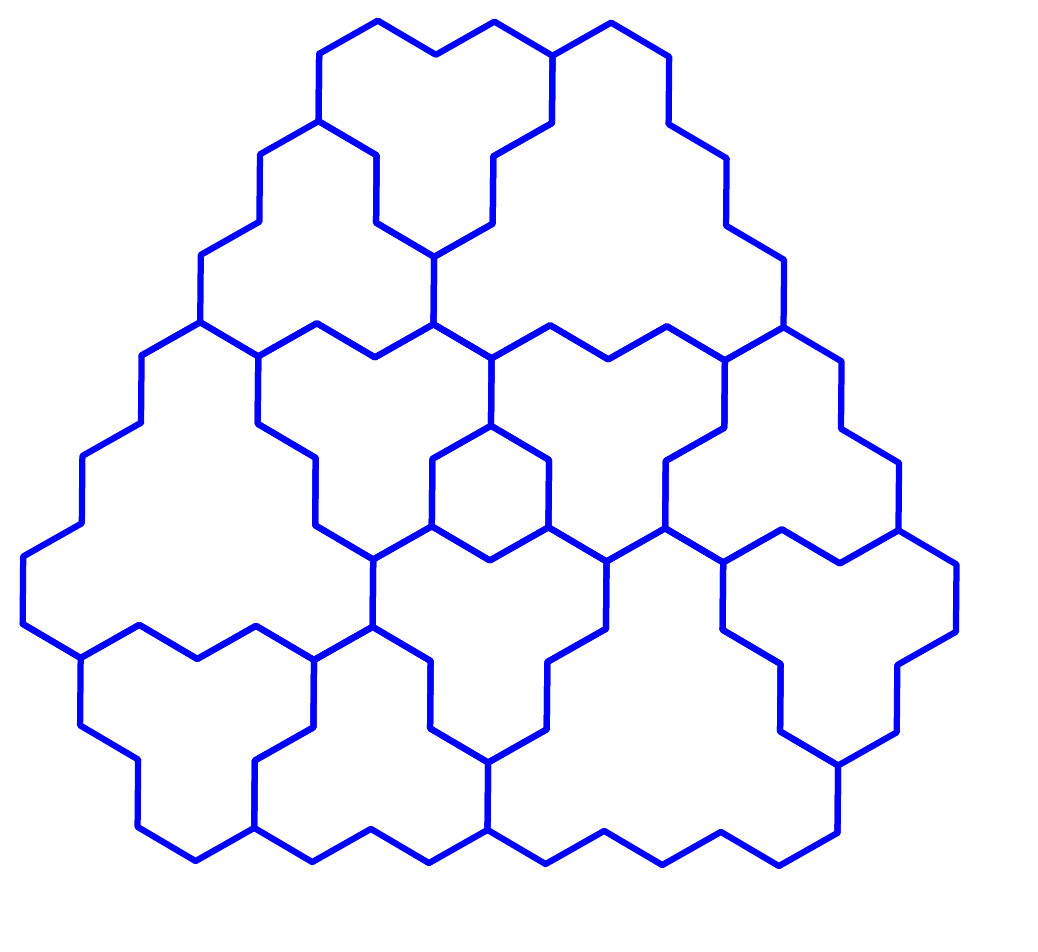}
}

In \Cref{fig:sr-10b,fig:sr-10c} the $\hT1$ are the hexagons containing the purple dots. Let us place the cluster group of the right figure above for each of the 8 occurring (i.e.\ disregarding the \nth{4}) situations of these figures. For each purple dot, it is possible to know the orientation of the group (there are only 2 possibilities because of the order 3 symmetry) from the orientation of the dot before the substitution (indicated in \Cref{fig:sr,fig:sr3c}): this is because it is either known or has two known possibilities that differ from $1/3$.

\image{We placed neighbourhoods at purple dots. Where they overlap, the shade is darker. We added yellow and light blue clusters coming from \Cref{fig:sr-10b,fig:sr-10c}. Then we placed gray clusters in the centre as the only possible ones filling the gaps and respecting \Cref{fig:ABC}.}{fig:su-h}{
\includegraphics[scale=0.11]{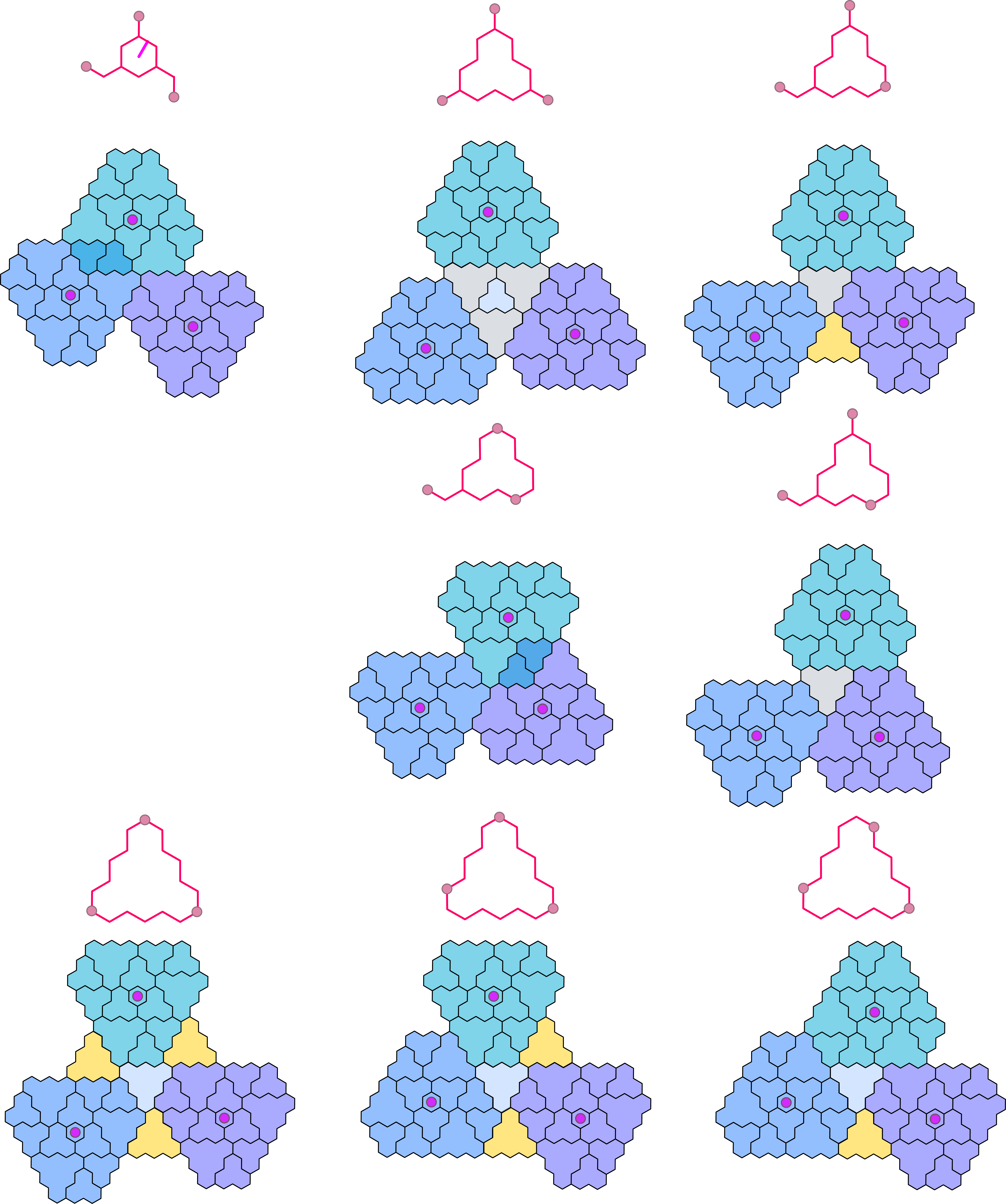}
}

Now the purple dots correspond to hexes two levels up (call them \term{green hexes}). The $\iV$-type interfaces of the cc's one level up (in red on the last figure) link dots whose corresponding ``green'' hex two levels up is in the same green clusters, while $\iI$-type interfaces link dots whose corresponding green hexes are in adjacent green clusters: see \Cref{def:V-clu,def:I-adj,fig:8-6,fig:pack} and the whole analysis we made in of the hierarchy (but apply all this one level up).
\Cref{fig:su-h} implies it is possible to deduce the blue honeycomb partition from the green honeycomb partition two levels above. More precisely the green honeycomb clusters will provide groups of 1, 3 or 6 purple dots in a triangle configuration on the blue honeycomb, with the corresponding blue hexes being separated by a determined vector (associated to $\iV$-type interfaces): using complex numbers, if two adjacent hexagons of the blue honeycomb are separated by vector $1$, and thus the whole blue honeycomb by vectors in $\Lambda = \Z+\rho\Z$ where $\rho = e^{i 2\pi/6}$ is a primitive cube root of $1$, then the vector is in $(8+\rho)\times\Lambda$. The way the triangle points is also known.
Adjacent green honeycomb clusters are separated by a vector that is known too, $7$ times a power of $\rho$ which is determined by which side of the two green clusters touch.
Note also that the partition of the green honeycomb into green clusters follows from the vectors between nearby $\hT1$ in the blue honeycomb: the $\iV$-type and $\iI$-type interfaces are characterized by this vector being respectively in $(8+\rho)\times \rho^\Z$ and $7\times \rho^\Z$. The other interfaces have incompatible vectors : $\iA$ gives $(5+\rho^{-1})\times \rho^\Z$, $\iB$ gives $(6+2\times\rho^{-1})\times \rho^\Z$ and $\iC$ gives $(8+\rho^{-1})\times \rho^\Z$.

\medskip

Finally we explain how to get dot ambiguity from a given partition.
Start from the environment of a $\hT1$ as on \Cref{fig:T1-env-h}.
The partition has order 3 rotation symmetry, while the dots cannot.
Perform the order two substitution for the partition an arbitrary number of times.
Note that a $\hT1$ lies a the centre again, and that the obtained arrangement of clusters must keep the order three symmetry, while as we noted, the dots cannot have this symmetry. There will be differences in the dots near the outer boundary of the big arrangement, and knowing one may determine the orientation at the centre.
This way we can already make an whole plane tiling for which the blue honeycomb partition has order three symmetry around an $\hT1$, whose orientation is thus not determined in advanced.
But better: knowing that there are non-symmetric situations with a $\hT1$ whose orientation is forced in \Cref{fig:su-h}, it is possible to construct from any whole plane tiling, using an even number of substitution, a whole plane partitions orientation that at some $\hT1$ is only determined if one looks at a big enough situation.

We end with the following picture, showing occurrences of the shape in \Cref{fig:T1-env-h}, and the way the same shape may appear two levels up in some whole plane tiling.

\image{}{}{
\includegraphics[width=\linewidth]{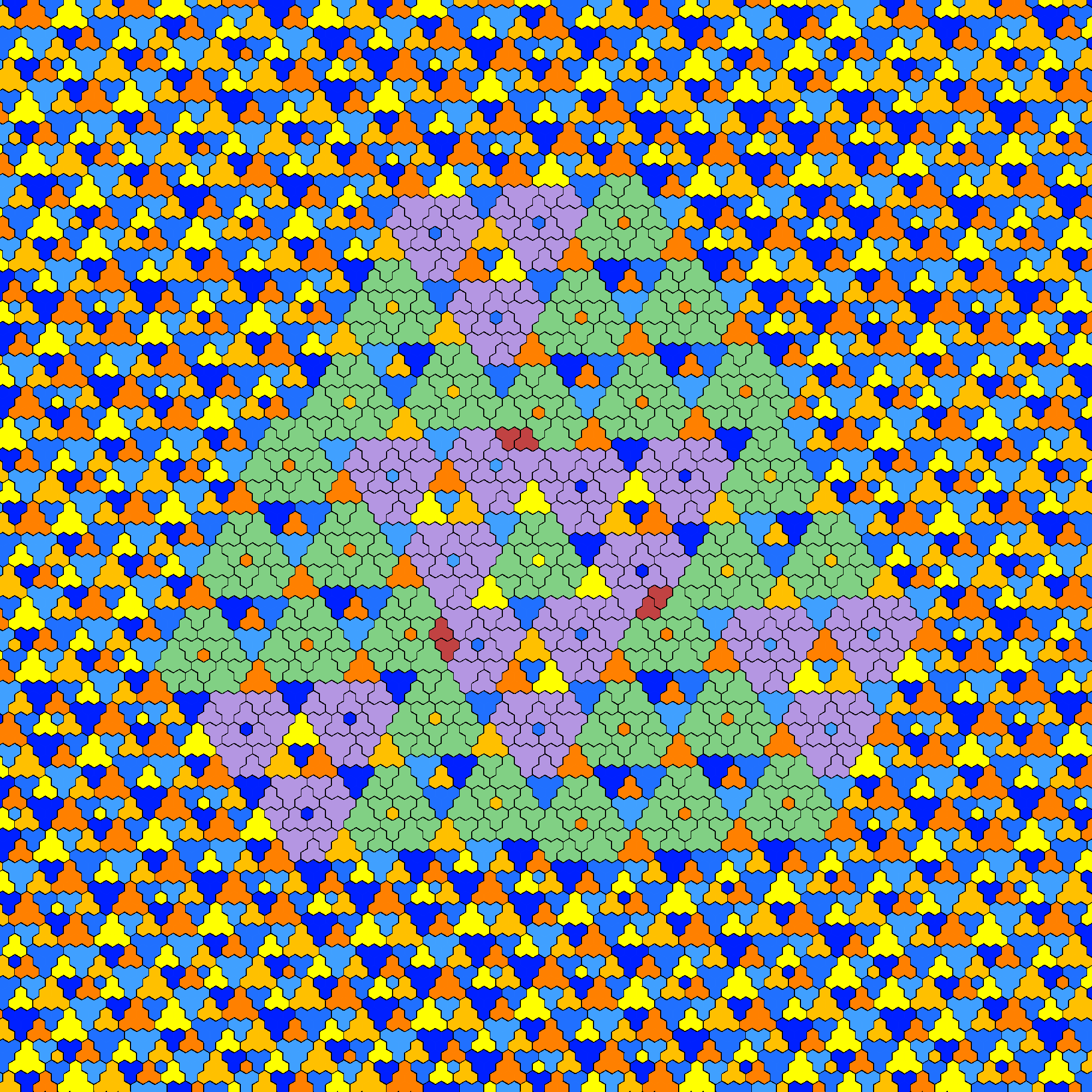}
}

\printbibliography

\end{document}